\newcommand{\e}{\ensuremath{\epsilon}}
\newcommand{\nn}{\ensuremath{\textbf{n}}}
\newcommand{\NN}{\ensuremath{\textbf{N}}}
\newcommand{\rto}{\ensuremath{\rightarrow}}
\newcommand{\lem}{\ensuremath{\lesssim}}
\newtheorem{theorem}{Theorem}[section]
\newtheorem{lemma}[theorem]{Lemma}
\newtheorem{proposition}[theorem]{Proposition}
\newtheorem{remark}[theorem]{Remark}
\numberwithin{equation}{section}
\title{\Large Regularity Structure, Vorticity Layer and Convergence Rates of Inviscid Limit of Free Surface Navier-Stokes Equations
with or without Surface Tension}
\author{\normalsize Fuzhou Wu\thanks{E-mail: michael8723@gmail.com; fuzhou.wu@yahoo.com; wufz12@mails.tsinghua.edu.cn} \\
\small\it  Yau Mathematical Sciences Center, Tsinghua University\\
\small\it  Beijing 100084, China \\[3pt]
\small\it  Center of Mathematical Sciences and Applications, Harvard University\\
\small\it  Cambridge, Massachusetts 02138, USA
}
\date{}
\begin{document}
\maketitle
\setlength\parindent{2em}
\setlength\parskip{5pt}

\begin{abstract}
\normalsize{
In this paper, we study the inviscid limit of the free surface incompressible Navier-Stokes equations with or without surface tension.
By delicate estimates, we prove the weak boundary layer of the velocity of the free surface Navier-Stokes equations and
the existence of strong or weak vorticity layer for different conditions.
When the limit of the difference between the initial Navier-Stokes vorticity and the initial Euler vorticity
is nonzero, or the tangential projection on the free surface of the Euler strain tensor multiplying by normal vector is nonzero,
there exists a strong vorticity layer. Otherwise, the vorticity layer is weak.
We estimate convergence rates of tangential derivatives and the first order standard normal derivative in energy norms,
we show that not only tangential derivatives and standard normal derivative have different convergence rates,
but also their convergence rates are different for different Euler boundary data.
Moreover, we determine regularity structure of the free surface Navier-Stokes solutions with or without surface tension,
surface tension changes regularity structure of the solutions.
}
\\
\par
\small{
\textbf{Keywords}: free surface Navier-Stokes equations, free surface Euler equations, inviscid limit,
strong vorticity layer, weak vorticity layer, regularity structure
}
\end{abstract}

\tableofcontents

\section{Introduction}
In this paper, we study the inviscid limit of the free surface incompressible Navier-Stokes equations with or without surface tension
(see \cite{Masmoudi_Rousset_2012_FreeBC,Wang_Xin_2015,Elgindi_Lee_2014}):
\begin{equation}\label{Sect1_NavierStokes_Equation}
\left\{\begin{array}{ll}
u_t + u\cdot\nabla u + \nabla p = \e\triangle u,\hspace{1.5cm} x\in\Omega_t, \\[7pt]
\nabla\cdot u =0, \hspace{3.82cm} x\in\Omega_t,\\[7pt]
\partial_t h = u\cdot \NN, \hspace{3.45cm} x\in\Sigma_t,\\[7pt]
p\nn -2\e \mathcal{S}u\,\nn =gh\nn -\sigma H\nn, \hspace{1.07cm} x\in\Sigma_t,\\[7pt]
(u,h)|_{t=0} = (u_0^{\e},h_0^{\e}).
\end{array}\right.
\end{equation}
where $x=(y,z)$, $y$ is the horizontal variable, $z$ is the vertical variable,
the normalized pressure $p=p^F + g z$, $p^F$ is the hydrodynamical pressure of the fluid, $g z$ corresponds to the gravitational force.
The surface tension in the dynamical boundary condition $(\ref{Sect1_NavierStokes_Equation})_4$, namely
$H = - \nabla_x \cdot \big(\frac{(-\nabla_y h,1)}{\sqrt{1+|\nabla_y h|^2}}\big)
= \nabla_y\cdot \big(\frac{\nabla_y h}{\sqrt{1+|\nabla_y h|^2}}\big)$, is twice the mean curvature of the free surface $\Sigma_t$.
The initial data satisfies the compatibility condition $\Pi\mathcal{S}u_0^{\e} \nn|_{z=0} =0$.
Some notations are defined as follows:
\begin{equation}\label{Sect1_FreeSurface_Definition}
\begin{array}{ll}
\Omega_t =\{x\in\mathbb{R}^3|\, -\infty< z<h(t,y)\},\\[6pt]
\Sigma_t = \{x\in\mathbb{R}^3|\, z =h(t,y)\},\\[6pt]
\NN=(-\nabla h, 1)^{\top},\quad  \nn=\frac{\NN}{|\NN|}, \\[6pt]
\mathcal{S}u =\frac{1}{2}(\nabla u +(\nabla u)^{\top}),
\end{array}
\end{equation}
where the symbol $^{\top}$ means the transposition of matrices or vectors.
We suppose $h(t,y)\rto 0$ as $|y|\rto +\infty$ for any $t\geq 0$.

In this paper, we are interested in the free surface and have no interest in the fluid dynamics on the bottom of $\Omega_t$,
thus we simply assume $-\infty< z<h(t,y)$.
Also, we neglect the Coriolis effect generated by the planetary rotation, then there is no Ekman layer near the free surface even if Rossby number is small.

Let $\e\rto 0$ in $(\ref{Sect1_NavierStokes_Equation})$, we formally get the following free surface Euler equations:
\begin{equation}\label{Sect1_Euler_Equation}
\left\{\begin{array}{ll}
u_t + u\cdot\nabla u + \nabla p = 0,\hspace{2.48cm} x\in\Omega_t, \\[7pt]
\nabla\cdot u =0, \hspace{4.34cm} x\in\Omega_t,\\[7pt]
\partial_t h = u\cdot \NN, \hspace{4cm} x\in\Sigma_t,\\[7pt]
p =gh -\sigma H, \hspace{3.8cm} x\in\Sigma_t,\\[7pt]
(u,h)|_{t=0} = (u_0,h_0) := \lim\limits_{\e\rto 0}(u_0^{\e},h_0^{\e}) ,
\end{array}\right.
\end{equation}
where $(u_0,h_0) = \lim\limits_{\e\rto 0}(u_0^{\e},h_0^{\e})$ is in the pointwise sense or even the $L^2$ sense
(see \cite{Masmoudi_Rousset_2012_FreeBC,Elgindi_Lee_2014,Mei_Wang_Xin_2015} for the sufficient conditions of the inviscid limit),
$(u_0,h_0)$ are independent of $\e$. Note that except for $(u_0,h_0) =\lim\limits_{\e\rto 0}(u_0^{\e}, h_0^{\e})$, we do not restrict their derivatives,
especially normal derivatives. 
Furtherly, note that the Navier-slip boundary case requires $u_0 =\lim\limits_{\e\rto 0} u_0^{\e}$ (see \cite{Iftimie_Planas_2006}),
while the Dirichlet boundary case requires $u_0^{\e}(y_1,y_2) \sim u_0(y_1,y_2) + u_0^P(y_1,\frac{y_2}{\sqrt{\e}}) +o(\e)$,
where $u_0^P$ is the initial data of Prandtl equations with Dirichlet boundary condition (see \cite{Sammartino_Caflisch_1998}).

The following Taylor sign condition should be imposed on $(\ref{Sect1_Euler_Equation})$ if $\sigma=0$ ,
\begin{equation}\label{Sect1_TaylorSign_1}
\begin{array}{ll}
g - \partial_z p|_{z=0} \geq\delta_p>0.
\end{array}
\end{equation}

In this paper, either $\sigma=0$ or $\sigma>0$ is fixed, we do not study the zero surface tension limit.
For both $(\ref{Sect1_NavierStokes_Equation})$ and $(\ref{Sect1_Euler_Equation})$, the analysis for the fixed $\sigma>0$ case is very different from that
for the $\sigma=0$ case.

In order to describe the strength of the initial vorticity layer, we define
\begin{equation}\label{Sect1_Vorticity_Layer_Profile_1}
\begin{array}{ll}
\varpi^{bl}_0 = \nabla\times u_0^{\e} - \nabla\times u_0 = \nabla\times u_0^{\e} - \nabla\times\lim\limits_{\e\rto 0} u_0^{\e}.
\end{array}
\end{equation}
We emphasize that the initial vorticity layer means a boundary layer at the initial time rather than a time layer in the vicinity of the initial time. 

If $u_0^{\e}$ has a profile $u_0^{\e}(y,z) \sim u_0(y,z) + \sqrt{\e} u_0^{bl}(y,\frac{z}{\sqrt{\e}})$ in its asymptotic expansion, then $\partial_z u_0^{\e}$ does not converge uniformly to $\partial_z u_0$, and then
\begin{equation}\label{Sect1_Vorticity_Layer_Profile_2}
\begin{array}{ll}
\lim\limits_{\e\rto 0}\varpi^{bl}_0 = (-\partial_z u_0^{bl,2}, \partial_z u_0^{bl,1}, 0)^{\top} \neq 0,
\end{array}
\end{equation}
which means the initial vorticity layer is strong.

For strong initial vorticity layer, there is a special case: 
if the Euler boundary data satisfies $\Pi\mathcal{S}u_0 \nn|_{z=0} =0$, then $\lim\limits_{\e\rto 0}\varpi^{bl}_0|_{z=0} =0$ 
on the free surface due to the compatibility condition $\Pi\mathcal{S}u_0^{\e} \nn|_{z=0} =0$.
However, it can not prevent $(\ref{Sect1_Vorticity_Layer_Profile_2})$ from holding in the vicinity of the free surface.
For example, we choose the boundary layer profile to be
$u_0^{bl}(y,\frac{z}{\sqrt{\e}}) = \exp\{- (\frac{z}{\sqrt{\e}})^2\} (1,1,0)^{\top}$,
for which
\begin{equation}\label{Sect1_Example_1_0}
\begin{array}{ll}
\lim\limits_{\e\rto 0}\varpi^{bl}_0 \big|_{z=0} =0, \quad
\lim\limits_{\e\rto 0}\varpi^{bl}_0 \big|_{z = -\sqrt{\e}} = 2e^{-1}(-1,1,0)^{\top} \neq 0.
\end{array}
\end{equation}
On the contrary, if $\lim\limits_{\e\rto 0}\varpi^{bl}_0 =0$, then $\lim\limits_{\e\rto 0}\varpi^{bl}_0|_{z=0} =0$ due to its continuity,
and then $\Pi\mathcal{S}u_0 \nn|_{z=0} =0$ at the initial time.

If $u_0^{\e}$ has a profile $u_0^{\e}(y,z) \sim u_0(y,z) + \e^{\frac{1}{2} + \delta_{ubl}} u_0^{bl}(y,\frac{z}{\sqrt{\e}})$ in its asymptotic expansion,
where $\delta_{ubl}>0$, then $\lim\limits_{\e\rto 0}\varpi^{bl}_0=0$,
which means the initial vorticity layer is weak.

In order to describe the discrepancy between boundary value of Navier-Stokes vorticity and that of Euler vorticity,
we investigate whether the Euler boundary data satisfies $\Pi\mathcal{S} u\nn|_{\Sigma_t} =0$.
If $\Pi\mathcal{S} u\nn|_{\Sigma_t} =0$, the boundary value of Navier-Stokes vorticity converges to that of Euler vorticity;
otherwise there is a discrepancy.

It is easy to have $\Pi\mathcal{S} u\nn|_{\Sigma_t} \neq 0$ in $(0,T]$, because it satisfies the forced transport equation.
While $\Pi\mathcal{S} u\nn|_{\Sigma_t} =0$ in $[0,T]$ is nontrivial. However, we can construct the Euler velocity field
satisfying $\Pi\mathcal{S} u\nn|_{\Sigma_t} =0$ and finite energy.
The scenario of our problem is as follows: construct a Euler velocity field satisfying $\Pi\mathcal{S} u\nn|_{\Sigma_t} =0$,
let Navier-Stokes initial data is a small perturbation of the Euler initial data, then we study the inviscid limit of 
Navier-Stokes solutions.

One example of $\Pi\mathcal{S} u\nn|_{\Sigma_t} =0$ is that
\begin{equation}\label{Sect1_Example_1_1}
\begin{array}{ll}
u= (-y_2 e^{-y_1^2 -y_2^2 -z^2}, y_1 e^{-y_1^2 -y_2^2 -z^2}, 0),\quad\,
h=0,
\end{array}
\end{equation}
and the pressure $p$ is the solution of the Poisson equation:
\begin{equation}\label{Sect1_Example_1_2}
\left\{\begin{array}{ll}
- \triangle p = e^{-2 (y_1^2 + y_2^2 + z^2)(-2 + 4 y_1^2 + 4 y_2^2 - 4y_1^2y_2^2)}, \\[4pt]
p|_{z=0} =0.
\end{array}\right.
\end{equation}
Then the Euler boundary data satisfies
\begin{equation}\label{Sect1_Example_1_3}
\begin{array}{ll}
\mathcal{S}u \nn |_{z=0}
= [y_2 z e^{-y_1^2 -y_2^2 -z^2}, -y_1 z e^{-y_1^2 -y_2^2 -z^2}, 0]^{\top} \big|_{z=0} =0.
\end{array}
\end{equation}
By deforming symmetrically the velocity field $(\ref{Sect1_Example_1_1})$ where $h$ is also symmetric, one may construct infinitely many velocity fields satisfying $\Pi\mathcal{S} u\nn|_{\Sigma_t} =0$.

\subsection{Survey of Previous Results}
In this survey, we introduce the previous results on the well-posedness and inviscid limits.

As to the irrotational fluids, refer to S. Wu \cite{Wu_2D_1997,Wu_3D_1999,Wu_2D_2009,Wu_3D_2011}, Germain, Masmoudi and Shatah \cite{Germain_Masmoudi_Shatah_2012},
Ionescu and Pusateri \cite{Ionescu_Pusateri_2015}, Alazard and Delort \cite{Alazard_Delort_2013} for the water waves without surface tension,
refer to K. Beyer and M. G$\ddot{u}$nther \cite{Beyer_Gunther_1998},
Germain, Masmoudi and Shatah \cite{Germain_Masmoudi_Shatah_2015} for the water waves with surface tension.

Before introducing previous results on the boundary layer and inviscid limit problem, we survery there some well-posedness results.
The free surface Navier-Stokes equations have both local and global well-posedness results, while the free surface Euler equations only have local well-posedness results.

As to the free surface Navier-Stokes equations, refer to
Beale \cite{Beale_1981}, Hataya \cite{Hataya_2009}, Guo and Tice \cite{Guo_Tice_2013_ARMA,Guo_Tice_2013_InftyDomain,Guo_Tice_2013_Local}
for the zero surface tension, refer to
Beale \cite{Beale_1984}, Tani \cite{Tani_1996}, Tanaka and Tani \cite{Tani_Tanaka_1995}
for the surface tension case.
Especially, \cite{Beale_1984,Tani_Tanaka_1995,Hataya_2009,Guo_Tice_2013_ARMA,Guo_Tice_2013_InftyDomain} proved the global in time results for the small initial data. Note that the viscosity is capable of producing the global well-posedness, while the surface tension only provides the regularizing effect on the free surface and enhance the decay rates of the solutions (see \cite{Guo_Tice_2013_ARMA}).

The general free surface Euler equations which are much more difficult and only have local results. Refer to
Lindblad \cite{Lindblad_2005}, Coutand and Shkiller \cite{Coutand_Shkoller_2007},
Shatah and Zeng \cite{Shatah_Zeng_2008}, Zhang and Zhang \cite{Zhang_Zhang_2008} for the zero surface tension case,
refer to
Coutand and Shkiller \cite{Coutand_Shkoller_2007}, Shatah and Zeng \cite{Shatah_Zeng_2008}
for the surface tension case.

As the viscosity approaches zero, we hope that the solutions of Navier-Stokes equations converge to the solutions of Euler equations.
However, this is only proved in the whole spaces where there are no boundary conditions,
see \cite{Swann_1971,Kato_1972,DiPerna_Majda_1987_CPAM,DiPerna_Majda_1987_CMP,Constantin_1986,Masmoudi_2007}.
However, in the presence of boundaries, the inviscid limit problem will be challenging due to the formation of boundary layers.

For Navier-Stokes equations with Dirichlet boundary condition in the fixed domain, $u|_{\partial\Omega} =0$, strong boundary layer whose width is $O(\sqrt{\e})$ and amplitude is $O(1)$ forms near the boundary.
Namely, the Navier-Stokes solution is expected to behave like $u^{\e} \sim u^0 + u^{bl}(t,y, z/{\sqrt{\e}})$ where $u^0$ is the Euler solution satisfying characteristic boundary condition $u\cdot\nn|_{\partial\Omega} =0$, $u^{bl}(t,y, z/{\sqrt{\e}})$ is the boundary layer profile. The inviscid limit is not rigorously verified except for the following two cases, i. e., the analytic setting (see \cite{Asano_1988,Sammartino_Caflisch_1998}) and the case where the vorticity is located away from the boundary (see \cite{Maekawa_2013,Maekawa_2014}).

For Navier-Stokes equations with Navier-slip boundary condition in the fixed domain,
$\Pi(2\mathcal{S} u\nn + \gamma_{s}\, u)|_{\partial\Omega} =0,\  u\cdot \nn|_{\partial\Omega} =0$,
weak boundary layer whose width and amplitude are $O(\sqrt{\e})$ forms near the boundary.
Namely, the Navier-Stokes solution is expected to behave like $u^{\e} \sim u^0 + \sqrt{\e} u^{bl}(t,y, z/{\sqrt{\e}})$,
where $u^0$ is the Euler solution satisfying characteristic boundary condition $u\cdot\nn|_{\partial\Omega} =0$.
For the inviscid limit, refer to Iftimie and Planas \cite{Iftimie_Planas_2006}, Iftimie and Sueur \cite{Iftimie_Sueur_2011},
Masmoudi and Rousset \cite{Masmoudi_Rousset_2012_NavierBC}, Xiao and Xin \cite{Xiao_Xin_2013}. Note that $H^1$ convergence
is satisfied for general Navier-slip boundary condition or curved boundary, while $H^3$ convergence happens for
complete slip boundary condition $\omega\times\nn|_{\partial\Omega} =0,\, u\cdot \nn|_{\partial\Omega} =0$ and flat boundary
(see \cite{Xiao_Xin_2007,Beirao_Crispo_2011}).

For the free surface Navier-Stokes equations with kinetical and dynamical boundary conditions in the moving domain, the recent works on the inviscid limit
are studied in conormal Sobolev spaces for which the normal differential operators vanish on the free surface.
Masmoudi and Rousset \cite{Masmoudi_Rousset_2012_FreeBC} proved the uniform estimates and inviscid limit of the free surface incompressible Navier-Stokes equations without surface tension in conormal Sobolev spaces.
By extending this conormal analysis framework, Wang and Xin \cite{Wang_Xin_2015}, Elgindi and Lee \cite{Elgindi_Lee_2014} proved the inviscid limit of the free surface incompressible Navier-Stokes equations with surface tension, Mei, Wang and Xin \cite{Mei_Wang_Xin_2015} proved the inviscid limit of the free surface compressible Navier-Stokes equations with or without surface tension.
\cite{Masmoudi_Rousset_2012_FreeBC} pointed out the free surface Navier-Stokes solutions are expected to behave like $u^{\e} \sim u^0
+ \sqrt{\e} u^{bl}(t,y, z/{\sqrt{\e}})$, where $u^0$ is the free surface Euler solutions.

\subsection{Formulation of the Problem and Our Motivations}

We first study N-S (abbreviation of Navier-Stokes) equations $(\ref{Sect1_NavierStokes_Equation})$ with $\sigma=0$.
In this subsection, we formulate the free boundary problem into the fixed coordinates domain $\mathbb{R}^3_{-}$.
Similar to \cite{Masmoudi_Rousset_2012_FreeBC}, we define
the diffeomorphism between $\mathbb{R}^3_{-}$ and the moving domain $\Omega_t$:
\begin{equation}\label{Sect1_LagrCoord_Definition_1}
\begin{array}{ll}
\Phi(t,\cdot) : \mathbb{R}^3_{-} = \mathbb{R}^2\times (-\infty,0) \quad \rto \quad \Omega_t, \\[4pt]
\hspace{2.75cm} x=(y,z) \quad \rto \quad (y,\varphi(t,y,z)),
\end{array}
\end{equation}
and define $\varphi$ as
\begin{equation}\label{Sect1_LagrCoord_Definition_2}
\begin{array}{ll}
\varphi(t,y,z) = Az + \eta(t,y,z),
\end{array}
\end{equation}
where $A>0$ is constant to be determined, $\eta$ is defined as
\begin{equation}\label{Sect1_LagrCoord_Definition_3}
\begin{array}{ll}
\eta(t,y,z) = \psi \ast_y h(t,y) ,
\end{array}
\end{equation}
here the symbol $\ast_y$ is a convolution in the $y$ variable and $\psi$ decays sufficiently fast in $z$ such that
$(1-z)\psi,\ \psi,\ \partial_z\psi, \cdots, \partial_z^{m+1}\psi \in L^1(\mathrm{d}z)$.
For example, $\psi = \mathcal{F}^{-1}[\frac{1}{(1-z)^4} e^{-(1-z)^2(1+|\xi|^2)}]$ where $\mathcal{F}^{-1}$ is the inverse Fourier transformation
with respect to $\xi\in\mathbb{R}^2$.

The constant $A>0$ is suitably chosen such that $\Phi$ is a diffeomorphism, namely
\begin{equation}\label{Sect1_LagrCoord_Definition_4}
\begin{array}{ll}
\partial_z \varphi(0,y,z) \geq 1, \quad \forall x\in\mathbb{R}^3_{-}.
\end{array}
\end{equation}

By the diffeomorphism $(\ref{Sect1_LagrCoord_Definition_1})$, we have
\begin{equation}\label{Sect1_LagrCoord_Definition_5}
\begin{array}{ll}
v(t,x) = u(t,y,\varphi(t,y,z)), \hspace{1.1cm} q(t,x) = p(t,y,\varphi(t,y,z)), \hspace{1cm} \forall x\in\mathbb{R}^3_{-}, \\[6pt]

\partial^{\varphi}_i v(t,x) = \partial_i u(t,y,\varphi(t,y,z)), \quad \partial^{\varphi}_i q(t,x) = \partial_i p(t,y,\varphi(t,y,z)), \quad i =t,1,2,3,
\end{array}
\end{equation}
while $h(t,y)$ does not change.

Then the free surface Navier-Stokes equations $(\ref{Sect1_NavierStokes_Equation})$ with $\sigma=0$ are equivalent to the following system:
\begin{equation}\label{Sect1_NS_Eq}
\left\{\begin{array}{ll}
\partial_t^{\varphi} v + v\cdot\nabla^{\varphi} v + \nabla^{\varphi} q = \e\triangle^{\varphi} v, \hspace{1.04cm} x\in\mathbb{R}^3_{-}, \\[7pt]
\nabla^{\varphi}\cdot v =0, \hspace{4cm} x\in\mathbb{R}^3_{-},\\[7pt]
\partial_t h = v(t,y,0)\cdot N, \hspace{2.78cm} z=0,\\[7pt]
q\nn -2\e \mathcal{S}^{\varphi}v\,\nn =gh\nn, \hspace{2.45cm} z=0,\\[7pt]
(v,h)|_{t=0} = (v_0^{\e},h_0^{\e}),
\end{array}\right.
\end{equation}
where
\begin{equation}\label{Sect1_NS_Eq_ComplementDef}
\begin{array}{ll}
\NN=(-\nabla h(t,y), 1)^{\top},\quad  \nn=\frac{\NN}{|\NN|}, \\[7pt]
\mathcal{S}^{\varphi}v =\frac{1}{2}(\nabla^{\varphi} v +\nabla^{\varphi} v^{\top}).
\end{array}
\end{equation}

Obviously, let $\e\rto 0$ in $(\ref{Sect1_NS_Eq})$, we formally get the following free surface Euler equations:
\begin{equation}\label{Sect1_Euler_Eq}
\left\{\begin{array}{ll}
\partial_t^{\varphi} v + v\cdot\nabla^{\varphi} v + \nabla^{\varphi} q = 0, \hspace{1.4cm} x\in\mathbb{R}^3_{-}, \\[7pt]
\nabla^{\varphi}\cdot v =0, \hspace{3.7cm} x\in\mathbb{R}^3_{-},\\[7pt]
\partial_t h = v(t,y,0)\cdot N, \hspace{2.48cm} z=0,\\[7pt]
q =gh, \hspace{4.24cm} z=0,\\[7pt]
(v,h)|_{t=0} = (v_0,h_0),
\end{array}\right.
\end{equation}
where $v_0$ is the limit of $v_0^{\e}$ in the $L^2$ sense,
$h_0$ is the limit of $h_0^{\e}$ in the $L^2$ sense for $\sigma=0$
and in the $H^1$ sense for $\sigma>0$, $(v_0,h_0)$ is independent of $\e$.
The following Taylor sign condition should be imposed on $(\ref{Sect1_Euler_Eq})$ when $\sigma=0$,
\begin{equation}\label{Sect1_TaylorSign_2}
\begin{array}{ll}
g - \partial_z^{\varphi} q |_{z=0} \geq\delta_q>0.
\end{array}
\end{equation}

D. Coutand and S. Shkoller (see \cite{Coutand_Shkoller_2007}) proved the well-posedness of the free surface
incompressible Euler equations $(\ref{Sect1_Euler_Eq})$ without surface tension.
We state their results in our formulation as follows:

{\it
Suppose the Taylor sign condition $(\ref{Sect1_TaylorSign_2})$ holds at $t=0$, $h_0\in H^3(\mathbb{R}^2), v_0\in H^3(\mathbb{R}^3_{-})$, then there exists $T>0$ and a unique solution $(v,q,h)$ of $(\ref{Sect1_Euler_Eq})$
with $v\in L^{\infty}([0,T],H^3(\mathbb{R}^3_{-})),\nabla q\in L^{\infty}([0,T],H^2(\mathbb{R}^3_{-})), h\in L^{\infty}([0,T],H^{3}(\mathbb{R}^2))$. }

Though conormal derivatives of the Navier-Stokes solutions and conormal derivatives of Euler solutions vanish on the free boundary,
their differences oscillate dramatically in the vicinity of the free boundary, thus the conormal functional spaces are not suitable
for studying the convergence rates of inviscid limit. Thus, we define the following functional spaces:
\begin{equation}\label{Sect1_Define_Spapces}
\begin{array}{ll}
\|v\|_{X^{m,s}}^2 := \sum\limits_{\ell\leq m, |\alpha|\leq m+s-\ell}\|\partial_t^{\ell} \mathcal{Z}^{\alpha} v\|_{L^2(\mathbb{R}^3_{-})}^2 \, , \hspace{0.83cm}
\|v\|_{X^{m}}^2 := \|v\|_{X^{m,0}}^2 \, , \\[17pt]

\|v\|_{X_{tan}^{m,s}}^2 := \sum\limits_{\ell\leq m, |\alpha|\leq m+s-\ell}\|\partial_t^{\ell} \partial_y^{\alpha} v\|_{L^2(\mathbb{R}^3_{-})}^2 \, , \hspace{0.91cm}
\|v\|_{X_{tan}^{m}}^2 := \|v\|_{X_{tan}^{m,0}}^2 \, , \\[17pt]

|h|_{X^{m,s}}^2 := \sum\limits_{\ell\leq m, |\alpha|\leq m+s-\ell}|\partial_t^{\ell} \partial_y^{\alpha} h|_{L^2(\mathbb{R}^2)}^2 \, , \hspace{1.27cm}
|h|_{X^{m}}^2 := |h|_{X^{m,0}}^2 \, , \\[20pt]

\|v\|_{Y_{tan}^{m,s}}^2 := \sum\limits_{\ell\leq m, |\alpha|\leq m+s-\ell}\|\partial_t^{\ell} \partial_y^{\alpha} v\|_{L^{\infty}(\mathbb{R}^3_{-})}^2 \, , \hspace{0.83cm}
\|v\|_{Y_{tan}^{m}}^2 := \|v\|_{Y_{tan}^{m,0}}^2 \, , \\[17pt]

|h|_{Y^{m,s}}^2 := \sum\limits_{\ell\leq m, |\alpha|\leq m+s-\ell}|\partial_t^{\ell} \partial_y^{\alpha} h|_{L^{\infty}(\mathbb{R}^2)}^2 \, , \hspace{1.2cm}
|h|_{Y^{m}}^2 := |h|_{Y^{m,0}}^2 \, ,
\end{array}
\end{equation}
where the differential operators $\mathcal{Z}_1=\partial_{y_1}, \mathcal{Z}_2=\partial_{y_2}, \mathcal{Z}_3 =\frac{z}{1-z}\partial_z$
(see \cite{Masmoudi_Rousset_2012_FreeBC,Elgindi_Lee_2014,Wang_Xin_2015,Mei_Wang_Xin_2015}).
Also, we use $|\cdot|_m$ to denote the standard Sobolev norm defined in the horizontal space $\mathbb{R}^2$.

Assume $\omega^{\e}=\nabla^{\varphi^{\e}}\times v^{\e}$, $\omega=\nabla^{\varphi}\times v$
are Navier-Stokes vorticity, Euler vorticity respectively, $\hat{\omega} =\omega^{\e} -\omega$.
In this paper, bounded variables or quantities mean that they are bounded by $O(1)$,
small variables or quantities mean that they are bounded by $O(\e^{\beta})$ for
some $\beta>0$. Now we state our motivations of this paper.

1. As $\e\rto 0$, \cite{Masmoudi_Rousset_2012_FreeBC} showed that the velocity converges in $L^2$ and $L^{\infty}$ norms,
the height function converges in $L^2$ and $W^{1,\infty}$ norms.
We can expect that their tangential derivatives converges, but we still do not know whether the vorticity and normal derivatives of the velocity
converge in $L^{\infty}$ norm. If they do not converge in the $L^{\infty}$ norm, there are must be a strong vorticity layer in the vicinity of the free surface.
\cite{Masmoudi_Rousset_2012_FreeBC} pointed the N-S solution is expected to behave like $u^{\e} \sim u^0 + \sqrt{\e} u^{bl}(t,y, z/{\sqrt{\e}})$,
however, this is not rigorously proved. It is expected that the velocity of the free surface N-S equations has a weak boundary layer,
we have to prove the existence of strong vorticity layer for some sufficient conditions. Note that the energy norms are too weak, thus we use the $L^{\infty}$ norm to describe the existence of strong boundary layers.

2. We want to know the sufficient and necessary conditions for the existence of strong vorticity layer,
we also want to know these conditions for the weak vorticity layer.
We show that there are two sufficient conditions for the strong vorticity layer,
note that these two conditions are almost independent.
One condition is that the initial vorticity layer is strong,
then it is transported by the velocity field for any small $\e$, and then we get a strong vorticity layer when $t\in (0,T]$.
Another condition is that the Euler boundary data satisfies $\Pi\mathcal{S}^{\varphi} v \nn|_{z=0} \neq 0$ in $(0,T]$,
then there is a discrepancy between N-S vorticity and Euler vorticity, and then we have a strong vorticity layer.
When neither of two sufficient conditions is satisfied, we show that
the vorticity layer is weak.

3. \cite{Masmoudi_Rousset_2012_FreeBC,Wang_Xin_2015,Elgindi_Lee_2014} proved the uniform regularity and inviscid limit of the free surface N-S equations
with or without surface tension. In order to prove the uniform regularities, \cite{Masmoudi_Rousset_2012_FreeBC,Elgindi_Lee_2014,Wang_Xin_2015,Mei_Wang_Xin_2015} controlled the bounded quantities in conormal functional spaces and applied
the following integration by parts formula to the a priori estimates:
\begin{equation}\label{Sect1_Formulas_CanNotUse}
\begin{array}{ll}
\frac{\mathrm{d}}{\mathrm{d}t}\int\limits_{\mathbb{R}^3_{-}} f \mathrm{d}\mathcal{V}_t
=\int\limits_{\mathbb{R}^3_{-}} \partial_t^{\varphi} f \mathrm{d}\mathcal{V}_t
+ \int\limits_{\{z=0\}} f v\cdot\NN \mathrm{d}y,  \\[12pt]

\int\limits_{\mathbb{R}^3_{-}} \vec{a} \cdot\nabla^{\varphi} f \mathrm{d}\mathcal{V}_t
= \int\limits_{\{z=0\}} \vec{a}\cdot \NN\, f  \mathrm{d}y
- \int\limits_{\mathbb{R}^3_{-}} \nabla^{\varphi}\cdot \vec{a}\, f \mathrm{d}\mathcal{V}_t, \\[12pt]

\int\limits_{\mathbb{R}^3_{-}} \vec{a} \cdot (\nabla^{\varphi}\times \vec{b}) \,\mathrm{d}\mathcal{V}_t
= \int\limits_{\{z=0\}} \vec{a} \cdot (\NN \times \vec{b}) \,\mathrm{d}y
+ \int\limits_{\mathbb{R}^3_{-}} (\nabla^{\varphi}\times \vec{a}) \cdot \vec{b} \,\mathrm{d}\mathcal{V}_t,
\end{array}
\end{equation}
where $\mathrm{d}\mathcal{V}_t = \partial_z\varphi \mathrm{d}y\mathrm{d}z$ is defined on $\mathbb{R}^3_{-}$ but measures the volume element of $\Omega_t$.
Refer to \cite{Masmoudi_Rousset_2012_FreeBC} for the first and second formulae in $(\ref{Sect1_Formulas_CanNotUse})$.
As to the last formula $(\ref{Sect1_Formulas_CanNotUse})_3$ used in the fixed domain, refer to \cite{Wang_2016,Wang_Xin_Yong_2015,Xiao_Xin_2013}.

Motivated by \cite{Masmoudi_Rousset_2012_FreeBC,Wang_Xin_2015,Elgindi_Lee_2014}, we want to know convergence rates of the inviscid limit, which involves
two moving domain, we denote Navier-Stokes domain and Euler domain by $\Omega^{\e}$ and $\Omega$ respectively.
In general, $\Omega^{\e}$ and $\Omega$ do not coincide, we can not compare these two velocity fields.
Thus, we have to map $\Omega^{\e}$ and $\Omega$ to the common fixed coordinate domain $\mathbb{R}^3_{-}$,
namely $\Omega^{\e} = \Phi^{\e}(\mathbb{R}^3_{-}), \Omega = \Phi(\mathbb{R}^3_{-})$.
For any $x\in\mathbb{R}^3_{-}$, two points $\Phi^{\e}(x)$ and $\Phi(x)$ do not coincide in general, However,
$\Phi^{\e}(x)$ converges to $\Phi(x)$ pointwisely as $\e\rto 0$, thus $|v^{\e}(x) - v(x)|$
and $|\partial_t^{\ell}\mathcal{Z}^{\alpha} v^{\e}(x) - \partial_t^{\ell}\mathcal{Z}^{\alpha} v(x)|$
must be small quantities. We have to overcome many difficulties involving two different moving domains to close the estimates of
$|\partial_t^{\ell}\mathcal{Z}^{\alpha} v^{\e}(x) - \partial_t^{\ell}\mathcal{Z}^{\alpha} v(x)|$.

4. \cite{Xiao_Xin_2007,Xiao_Xin_2011,Xiao_Xin_2013,Wang_2016} studied the inviscid limit of
the incompressible or compressible N-S equations with Navier-slip boundary condition,
where the initial Navier-Stokes data and initial Euler data are exactly the same and independent of $\e$.
If the Navier-Stokes boundary condition satisfies $\omega^{\e}\times \nn|_{z=0} =0$ and the boundary is flat,
then the Euler boundary data also satisfies $\omega\times \nn|_{z=0} =0$,
$\|u^{\e} -u\|_{L^2} \lem O(\e), \|\omega^{\e} -\omega\|_{L^2} + \|u^{\e} -u\|_{H^1} \lem O(\e^{\frac{3}{4}})$,
\cite{Xiao_Xin_2007} proved the $H^3$ convergence.
While if the Euler boundary data is general or the boundary is curved, then $\|u^{\e} -u\|_{L^2} \lem O(\e^{\frac{3}{4}}),
\|\omega^{\e} -\omega\|_{L^2} + \|u^{\e} -u\|_{H^1} \lem O(\e^{\frac{1}{4}})$. \cite{Iftimie_Sueur_2011}
showed that it is impossible to prove $H^2$ convergence.

We are also interested in the convergence rates of the inviscid limit of the free boundary problem for Navier-Stokes equations.
However, in our formulation of the free boundary problem,
the diffeomorphism between the fixed coordinates $\mathbb{R}^3_{-}$
and two moving domains are twisted, the differential operators in N-S and
Euler equations are also twisted, then the estimates of tangential derivatives and
the estimates of normal derivatives can not be decoupled, we even can not develop the $L^2$ estimate of
$(v^{\e}-v,h^{\e}-h)$ themselves without involving the normal derivative $\partial_z v^{\e} - \partial_z v$.
Thus, we want to know whether tangential derivatives and normal derivatives have different convergence rates.

If the Euler boundary data satisfies $\Pi\mathcal{S}^{\varphi} v|_{z=0} \neq 0$, $\omega^{\e}|_{z=0}$ does not converge to $\omega|_{z=0}$,
we want to know how to calculate convergence rates of the vorticity in the energy norm.
If $\Pi\mathcal{S}^{\varphi} v|_{z=0} = 0$, $\omega^{\e}|_{z=0}\rto \omega|_{z=0}$, we want to know how to improve the convergence rates.

5. To estimate the convergence rate of the inviscid limit, we need to use the time derivatives.
However, time derivatives can not be expressed in terms of space derivatives by using the equations,
since we work on conormal spaces instead of standard Sobolev spaces.
Thus, we prove the uniform regularity concluding time derivatives and determine the regularity structure of N-S solutions and Euler solutions in conormal functional spaces. When time derivatives are included, uniform estimates of tangential derivatives will be different from \cite{Masmoudi_Rousset_2012_FreeBC}.
Moreover, our estimates of normal derivatives are based on the estimates of vorticity rather than
those of $\Pi\mathcal{S}^{\varphi} v\nn$ (see \cite{Masmoudi_Rousset_2012_FreeBC,Wang_Xin_2015,Elgindi_Lee_2014}).

\subsection{Main Results for N-S Equations without Surface Tension}

\cite{Masmoudi_Rousset_2012_FreeBC} proved the uniform regularity of space derivatives
of the free surface Navier-Stokes equations (\ref{Sect1_NS_Eq}), while the following proposition
concerns the uniform regularity of time derivatives.
\begin{proposition}\label{Sect1_Proposition_TimeRegularity}
For $m\geq 6$, assume the initial data $(v_0^{\e},h_0^{\e})$ satisfy the compatibility condition $\Pi\mathcal{S}^{\varphi} v_0^{\e}\nn|_{z=0} =0$ and the regularities:
\begin{equation}\label{Sect1_Proposition_TimeRegularity_1}
\begin{array}{ll}
\sup\limits_{\e\in (0,1]} \big(
|h_0^{\e}|_{X^{m-1,1}} + \e^{\frac{1}{2}}|h_0^{\e}|_{X^{m-1,\frac{3}{2}}}
+ \|v_0^{\e}\|_{X^{m-1,1}} + \|\omega_0^{\e}\|_{X^{m-1}} \\[7pt]\quad
+ \|\omega_0^{\e}\|_{1,\infty} + \e^{\frac{1}{2}}\|\partial_z \omega_0^{\e}\|_{L^{\infty}}\big) \leq C_0,
\end{array}
\end{equation}
where $C_0>0$ is suitably small such that the Taylor sign condition $g-\partial_z^{\varphi^{\e}} q^{\e} |_{z=0} \geq c_0 >0$,
then the unique Navier-Stokes solution to $(\ref{Sect1_NS_Eq})$ satisfies
\begin{equation}\label{Sect1_Proposition_TimeRegularity_2}
\begin{array}{ll}
\sup\limits_{t\in [0,T]} \big(
|h^{\e}|_{X^{m-1,1}}^2 + \e^{\frac{1}{2}}|h^{\e}|_{X^{m-1,\frac{3}{2}}}^2
 + \|v^{\e}\|_{X^{m-1,1}}^2 + \|\partial_z v^{\e}\|_{X^{m-2}}^2 + \|\omega^{\e}\|_{X^{m-2}}^2 \\[7pt]\quad
+ \|\partial_z v^{\e}\|_{1,\infty}^2
+ \e^{\frac{1}{2}}\|\partial_{zz}v^{\e}\|_{L^{\infty}}^2 \big)

+ \|\partial_t^m h\|_{L^4([0,T],L^2)}^2
+ \e\|\partial_t^m h\|_{L^4([0,T],H^{\frac{1}{2}})}^2 \\[7pt]\quad

+ \e\int\limits_0^T \|\nabla v^{\e}\|_{X^{m-1,1}}^2 + \|\nabla\partial_z v^{\e}\|_{X^{m-2}}^2 \,\mathrm{d}t \leq C.
\end{array}
\end{equation}

As $\e\rto 0$, the Euler solution to $(\ref{Sect1_Euler_Eq})$ satisfies the following regularities:
\begin{equation}\label{Sect1_Proposition_TimeRegularity_3}
\begin{array}{ll}
\sup\limits_{t\in [0,T]} \big(
|h|_{X^{m-1,1}} + \|v\|_{X^{m-1,1}} + \|\partial_z v\|_{X^{m-2}} + \|\omega\|_{X^{m-2}} \\[8pt]\quad
+ \|\partial_z v\|_{1,\infty} \big)

+ \|\partial_t^m h\|_{L^4([0,T],L^2)}^2 \leq C,
\end{array}
\end{equation}
where the Taylor sign condition $g-\partial_z^{\varphi} q |_{z=0} \geq c_0 >0$ holds.
\end{proposition}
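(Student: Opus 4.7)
The plan is to extend the uniform conormal regularity framework of \cite{Masmoudi_Rousset_2012_FreeBC} so that it accommodates one full order of time derivatives, since time differentiation is indispensable for the convergence-rate estimates pursued later in the paper. The a priori estimate would be closed by a standard continuity argument under the smallness of $C_0$, which also ensures that the Taylor sign condition $g - \partial_z^{\varphi^\e} q^\e|_{z=0} \geq c_0$ persists on a uniform time interval $[0,T]$. The Euler statement $(\ref{Sect1_Proposition_TimeRegularity_3})$ is then recovered by letting $\e\to 0$ in the uniform bounds: the viscous dissipation, the $\e^{1/2}$-weighted $X^{m-1,3/2}$ control of $h^\e$, the $L^\infty$ bound on $\e^{1/2}\partial_{zz}v^\e$, and the $\e$-weighted $L^4_t H^{1/2}$ bound on $\partial_t^m h$ all drop out, leaving the $\e$-independent skeleton as a regularity bound for the free surface Euler solution.

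The central ingredient is a time-tangential conormal energy identity: one applies $\partial_t^\ell \partial_y^\alpha$ to $(\ref{Sect1_NS_Eq})_1$ for $\ell\leq m-1$ and $\ell+|\alpha|\leq m$, pairs with $\partial_t^\ell \partial_y^\alpha v^\e$, and integrates against $\mathrm{d}\mathcal{V}_t$ using $(\ref{Sect1_Formulas_CanNotUse})$. The boundary contributions from the pressure and the viscous stress combine via the dynamical condition $q\nn - 2\e\mathcal{S}^\varphi v\nn = gh\nn$ and the kinematical condition $\partial_t h = v\cdot N$ to produce $\tfrac{g}{2}\tfrac{d}{dt}|\partial_t^\ell \partial_y^\alpha h|_{L^2}^2$ plus the viscous dissipation $\e\|\nabla\partial_t^\ell \partial_y^\alpha v^\e\|_{L^2}^2$; this yields the bounds on $|h^\e|_{X^{m-1,1}}$, $\|v^\e\|_{X^{m-1,1}}$ and on $\e\int_0^T\|\nabla v^\e\|_{X^{m-1,1}}^2\,\mathrm{d}t$. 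The refined $\e^{1/2}|h^\e|_{X^{m-1,3/2}}$ is gained by a trace inequality trading a boundary tangential half-derivative against $\e^{1/2}$ using the already-controlled dissipation. The vorticity bound $\|\omega^\e\|_{X^{m-2}}$ comes from an analogous energy estimate on the transport--diffusion equation for $\omega^\e$, whose boundary data is controlled through the compatibility condition $\Pi\mathcal{S}^\varphi v_0^\e\nn|_{z=0}=0$ and the evolution equation it induces for $\Pi\mathcal{S}^\varphi v^\e\nn$. The normal derivative $\partial_z v^\e$ is algebraically reconstructed from $\omega^\e$, tangential derivatives of $v^\e$, and $\partial_y\eta$ via $\omega^\e=\nabla^{\varphi^\e}\times v^\e$ combined with $\nabla^{\varphi^\e}\cdot v^\e=0$, while the $L^\infty$ bounds $\|\partial_z v^\e\|_{1,\infty}$ and $\e^{1/2}\|\partial_{zz}v^\e\|_{L^\infty}$ follow from a maximum principle and parabolic regularization applied to the vorticity equation, together with Sobolev embedding in conormal spaces.

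The main obstacle will be the top-order time derivative $\partial_t^m h$, for which an $L^\infty_t L^2_y$ bound is unavailable: the energy estimate above only delivers $\partial_t^{m-1} h\in L^\infty_t L^2$, and differentiating once more in the kinematical condition would cost a conormal derivative on $v^\e$ that is not at hand. The remedy is to time-differentiate the dynamical boundary condition $(\ref{Sect1_NS_Eq})_4$ a total of $m-1$ times, use the Taylor sign $g - \partial_z^{\varphi^\e}q^\e \geq c_0$ to express $\partial_t^m h$ in terms of interior quantities and the viscous stress, and interpolate against the dissipation by a Gagliardo--Nirenberg argument; the $L^4$-in-time integrability and its $\e^{1/2}$-weighted $H^{1/2}$ refinement arise from the $t^{1/2}$ smoothing of the heat semigroup at the boundary. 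A secondary nuisance is the bookkeeping of commutators between $\partial_t^\ell \partial_y^\alpha$ and the twisted operators $\partial_t^\varphi, \nabla^\varphi, \triangle^\varphi$, whose remainders all contain factors $\partial_t^j\eta$ or $\partial_y^\beta\eta$; these are reabsorbed into $|h^\e|_{X^{m-1,1}}$ through the convolution $\eta=\psi\ast_y h$ and the decay hypothesis on $\psi$, which is long but routine.
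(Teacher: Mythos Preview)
Your proposal has a genuine gap: you never invoke Alinhac's good unknown, and for the $\sigma=0$ problem this is not optional. When you apply $\partial_t^\ell\mathcal{Z}^\alpha$ to $\nabla^{\varphi} q$ at top order $\ell+|\alpha|=m$, the commutator $[\partial_t^\ell\mathcal{Z}^\alpha,\nabla^{\varphi}]q$ produces $\partial_z^{\varphi}q\cdot\nabla^{\varphi}\partial_t^\ell\mathcal{Z}^\alpha\eta$, and the analogous commutator in the divergence-free condition produces $\partial_z^{\varphi}v\cdot\nabla^{\varphi}\partial_t^\ell\mathcal{Z}^\alpha\eta$. After integrating the pressure by parts you are left with a pairing of $\partial_t^\ell\mathcal{Z}^\alpha q$ against $\partial_z^{\varphi}v\cdot\nabla^{\varphi}\partial_t^\ell\mathcal{Z}^\alpha\eta$; closing this would require $m$-th order control of $q$ that you do not have. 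The paper (see equations~\eqref{Sect1_Good_Unknown_1} and~\eqref{Sect2_Tangential_Estimate_5}) resolves this by working with $V^{\ell,\alpha}=\partial_t^\ell\mathcal{Z}^\alpha v-\partial_z^{\varphi}v\,\partial_t^\ell\mathcal{Z}^\alpha\eta$ and $Q^{\ell,\alpha}=\partial_t^\ell\mathcal{Z}^\alpha q-\partial_z^{\varphi}q\,\partial_t^\ell\mathcal{Z}^\alpha\eta$, for which $\nabla^{\varphi}\cdot V^{\ell,\alpha}$ is genuinely lower order. This is exactly the mechanism that, on the boundary, turns the dynamical condition into $(g-\partial_z^{\varphi}q)\partial_t^\ell\mathcal{Z}^\alpha h\,\NN$ rather than $g\,\partial_t^\ell\mathcal{Z}^\alpha h\,\NN$; the Taylor sign condition is not merely a persistence hypothesis but is what makes the resulting boundary energy $\tfrac{d}{dt}\int(g-\partial_z^{\varphi}q)|\partial_t^\ell\mathcal{Z}^\alpha h|^2\,\mathrm{d}y$ coercive. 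Your displayed $\tfrac{g}{2}\tfrac{d}{dt}|\partial_t^\ell\partial_y^\alpha h|_{L^2}^2$ is therefore incorrect and signals that the cancellation has not been set up.

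Two further points where your outline diverges from what actually works. First, the pure time-derivative case $|\alpha|=0$, $0\le\ell\le m-1$ must be handled separately: on the infinite-depth domain $\|\partial_t^\ell q\|_{L^2}$ is unbounded, so you cannot integrate the pressure by parts and cannot use the dynamical boundary condition; the paper instead keeps $\partial_t^\ell\nabla^{\varphi}q$ on the right and bounds it via the pressure gradient estimate (Lemma~\ref{Sect2_Tangential_Estimate_Lemma}, case $|\alpha|=0$). Second, your route to $\|\partial_t^m h\|_{L^4_tL^2}$ via differentiating the dynamical condition and heat-semigroup interpolation is not what is done: the bound comes directly from the kinematical condition $\partial_t^m h=\partial_t^{m-1}(v\cdot\NN)$ together with the trace inequality and the $L^4_t X^{m-1}$ control of $\partial_z v$ obtained in Lemma~\ref{Sect2_Vorticity_Lemma} by the $\omega_h^{nhom}/\omega_h^{hom}$ splitting and paradifferential estimate. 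The normal-derivative bounds in the paper are obtained from the vorticity equation with the explicit boundary values $\omega_h|_{z=0}=\textsf{F}^{1,2}[\nabla\varphi](\partial_j v^i)$ (Lemma~\ref{Sect2_Vorticity_H_Eq_BC_Lemma}), not from the evolution of $\Pi\mathcal{S}^{\varphi}v\,\nn$.
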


For the initial regularities $(\ref{Sect1_Proposition_TimeRegularity_1})$, we can not prove $\|\partial_t^m v^{\e}\|_{L^4([0,T],L^2)}$.
To prove $\|\partial_t^m v^{\e}\|_{L^4([0,T],L^2)}$, it requires $(\ref{Sect1_Proposition_TimeRegularity_1})$
as well as $\partial_t^m v_0^{\e}, \partial_t^m h_0^{\e}\in L^2(\mathbb{R}^3_{-})$.

Note that when $\sigma=0$, we must use the following Alinhac's good unknown (see \cite{Alinhac_1989,Masmoudi_Rousset_2012_FreeBC}) to estimate tangential derivatives:
\begin{equation}\label{Sect1_Good_Unknown_1}
\begin{array}{ll}
V^{\ell,\alpha} = \partial_t^{\ell}\mathcal{Z}^{\alpha} v - \partial_z^{\varphi}v \partial_t^{\ell}\mathcal{Z}^{\alpha} \eta, \ 0<\ell+|\alpha|\leq m, \ell\leq m-1,\\[6pt]

Q^{\ell,\alpha} = \partial_t^{\ell}\mathcal{Z}^{\alpha} q - \partial_z^{\varphi}q \partial_t^{\ell}\mathcal{Z}^{\alpha} \eta, \ 0<\ell+|\alpha|\leq m, \ell\leq m-1.
\end{array}
\end{equation}

Our proof of Proposition $\ref{Sect1_Proposition_TimeRegularity}$ is different from \cite{Masmoudi_Rousset_2012_FreeBC}:
(i) $\|\partial_t^{\ell} q\|_{L^2}$ has no bound in general.
When $|\alpha|=0$, we estimate $V^{\ell,0}$ and $\nabla\partial_t^{\ell}q$ where $0\leq \ell \leq m-1$,
the dynamical boundary condition can not be used.

(ii) \cite{Masmoudi_Rousset_2012_FreeBC} as well as \cite{Wang_Xin_2015,Elgindi_Lee_2014,Mei_Wang_Xin_2015} estimated normal derivatives by using
$\Pi\mathcal{S}^{\varphi}\nn$ and its evolution equations.
While in this paper, we estimate normal derivatives by using the vorticity and the following equations:
\begin{equation}\label{Sect1_Vorticity_H_Eq}
\left\{\begin{array}{ll}
\partial_t^{\varphi} \omega_h + v\cdot\nabla^{\varphi}\omega_h - \e\triangle^{\varphi}\omega_h = \vec{\textsf{F}}^0[\nabla\varphi](\omega_h,\partial_j v^i),
\\[7pt]

\omega^1|_{z=0} =\textsf{F}^1 [\nabla\varphi](\partial_j v^i),
\\[6pt]

\omega^2|_{z=0} =\textsf{F}^2 [\nabla\varphi](\partial_j v^i),
\end{array}\right.
\end{equation}
where $j=1,2,\ i=1,2,3$, $\vec{\textsf{F}}^0[\nabla\varphi](\omega_h,\partial_j v^i)$ is a quadratic polynomial vector with respect to $\omega_h$ and $\partial_j v^i$, $\textsf{F}^1 [\nabla\varphi](\partial_j v^i)$, $\textsf{F}^2 [\nabla\varphi](\partial_j v^i)$ are polynomials with respect to $\partial_j v^i$, all the coefficients are fractions of $\nabla\varphi$.

(iii) In \cite{Masmoudi_Rousset_2012_FreeBC}, the Taylor sign condition is $g-\partial_z^{\varphi^{\e}}q^{\e,E}|_{z=0} \geq c_0>0$,
that is imposed on the Euler part of the pressure $q^{\e}$. \ $q^{\e}$ has a decomposition $q^{\e} = q^{\e,E}+ q^{\e,NS}$ which satisfy
\begin{equation}\label{Sect1_Pressure_EulerPart}
\begin{array}{ll}
\left\{\begin{array}{ll}
\triangle^{\varphi^{\e}} q^{\e,E} = -\partial_i^{\varphi^{\e}} v^{\e,j}\partial_j^{\varphi^{\e}} v^{\e,i},
\\[6pt]
q^{\e,E}|_{z=0} = g h^{\e}.
\end{array}\right.
\qquad

\left\{\begin{array}{ll}
\triangle^{\varphi^{\e}} q^{\e,NS} = 0,
\\[6pt]
q^{\e,NS}|_{z=0} = 2\e\mathcal{S}^{\varphi^{\e}} v\nn\cdot\nn.
\end{array}\right.
\end{array}
\end{equation}
However, the force term of $q^{\e,E}$ has boundary layer in the vicinity of the free boundary in general,
thus $\partial_z^{\varphi^{\e}} q^{\e,E}|_{z=0}$ may also have boundary layer,
it is unknown whether $\partial_z^{\varphi^{\e}} q^{\e,E}|_{z=0}$ converges pointwisely to $\partial_z^{\varphi} q|_{z=0}$ or not.
Different from \cite{Masmoudi_Rousset_2012_FreeBC}, our Taylor sign condition is $g-\partial_z^{\varphi^{\e}} q^{\e}|_{z=0} \geq c_0 >0$.
Since $\partial_z^{\varphi^{\e}} q^{\e}|_{z=0} = \e\triangle^{\varphi^{\e}} v^3 - \partial_t v^3 - v_y^{\e} \cdot\nabla_y v^{\e,3}$
and $\|\partial_{zz} v\|_{L^{\infty}},\sqrt{\e}\|\partial_{zz} v\|_{L^{\infty}}$ are bounded, thus $\partial_z^{\varphi^{\e}} q^{\e}|_{z=0}$ converges to $\partial_z^{\varphi} q|_{z=0}$ pointwisely.

\vspace{0.25cm}
For classical solutions to the free surface Navier-Stokes equations $(\ref{Sect1_NS_Eq})$ with $\sigma=0$, we will estimate the convergence rates of
the velocity later, which implies the weak boundary layer of the velocity. Before estimating the convergence rates, we show
the following theorem which states the existence of strong vorticity layer.
\begin{theorem}\label{Sect1_Thm_StrongLayer}
Assume $T>0$ is finite, fixed and independent of $\e$, $(v^{\e},h^{\e})$ is the solution in $[0,T]$ of Navier-Stokes equations $(\ref{Sect1_NS_Eq})$ with initial data $(v^{\e}_0,h^{\e}_0)$ satisfying $(\ref{Sect1_Proposition_TimeRegularity_1})$, $\omega^{\e}$ is its vorticity.
$(v,h)$ is the solution in $[0,T]$ of Euler equations $(\ref{Sect1_Euler_Eq})$ with initial data $(v_0,h_0)\in X^{m-1,1}(\mathbb{R}^3_{-}) \times X^{m-1,1}(\mathbb{R}^2)$, $\omega$ is its vorticity.

(1) If the initial Navier-Stokes velocity satisfies $\lim\limits_{\e\rto 0}(\nabla^{\varphi^{\e}}\times v_0^{\e})
- \nabla^{\varphi}\times\lim\limits_{\e\rto 0} v_0^{\e} \neq 0$ in the initial set $\mathcal{A}_0$,
the Euler boundary data satisfies $\Pi\mathcal{S}^{\varphi} v\nn|_{z=0} = 0$ in $[0,T]$,
then the Navier-Stokes solution of $(\ref{Sect1_NS_Eq})$ has a strong vorticity layer satisfying
\begin{equation}\label{Sect1_Thm_StrongLayer_1}
\begin{array}{ll}
\lim\limits_{\e\rto 0}\|\omega^{\e} -\omega\|_{L^{\infty}(\mathcal{X}(\mathcal{A}_0)\times (0,T])} \neq 0, \\[9pt]
\lim\limits_{\e\rto 0}\|\partial_z^{\varphi^{\e}} v^{\e} -\partial_z^{\varphi} v\|_{L^{\infty}(\mathcal{X}(\mathcal{A}_0)\times (0,T])} \neq 0, \\[9pt]
\lim\limits_{\e\rto 0}\|\mathcal{S}^{\varphi^{\e}}v^{\e} -\mathcal{S}^{\varphi}v\|_{L^{\infty}(\mathcal{X}(\mathcal{A}_0)\times (0,T])} \neq 0,\\[9pt]
\lim\limits_{\e\rto 0}\|\nabla^{\varphi^{\e}} q^{\e} -\nabla^{\varphi} q\|_{L^{\infty}(\mathcal{X}(\mathcal{A}_0)\times (0,T])} \neq 0.
\end{array}
\end{equation}
where $\mathcal{X}(\mathcal{A}_0) = \{\mathcal{X}(t,x)\big|\mathcal{X}(0,x)\in\mathcal{A}_0, \partial_t \mathcal{X}(t,x) = v(t,\Phi^{-1}\circ \mathcal{X})\}$.

(2) If $\lim\limits_{\e\rto 0}(\nabla^{\varphi^{\e}}\times v_0^{\e})
- \nabla^{\varphi}\times\lim\limits_{\e\rto 0} v_0^{\e} = 0$, the Euler boundary data satisfies $\Pi\mathcal{S}^{\varphi} v\nn|_{z=0} \neq 0$ in $(0,T]$,
then the Navier-Stokes solution of $(\ref{Sect1_NS_Eq})$ has a strong vorticity layer satisfying
\begin{equation}\label{Sect1_Thm_WeakLayer_1}
\begin{array}{ll}
\lim\limits_{\e\rto 0}\big|\omega^{\e}|_{z=0} -\omega|_{z=0}\big|_{L^{\infty}(\mathbb{R}^2\times (0,T])} \neq 0, \\[9pt]

\lim\limits_{\e\rto 0}\|\omega^{\e} -\omega\|_{L^{\infty}(\mathbb{R}^2\times [0, O(\e^{\frac{1}{2} -\delta_z}))\times (0,T])} \neq 0, \\[9pt]

\lim\limits_{\e\rto 0}\|\partial_z^{\varphi^{\e}} v^{\e} -\partial_z^{\varphi} v\|
_{L^{\infty}(\mathbb{R}^2\times [0, O(\e^{\frac{1}{2} -\delta_z}))\times (0,T])} \neq 0, \\[9pt]
\lim\limits_{\e\rto 0}\|\mathcal{S}^{\varphi^{\e}}v^{\e} -\mathcal{S}^{\varphi}v\|
_{L^{\infty}(\mathbb{R}^2\times [0, O(\e^{\frac{1}{2} -\delta_z}))\times (0,T])} \neq 0,\\[9pt]
\lim\limits_{\e\rto 0}\|\nabla^{\varphi^{\e}} q^{\e} -\nabla^{\varphi} q\|
_{L^{\infty}(\mathbb{R}^2\times [0, O(\e^{\frac{1}{2} -\delta_z}))\times (0,T])} \neq 0,
\end{array}
\end{equation}
for some constant $\delta_z \geq 0$.

(3) $\lim\limits_{\e\rto 0}(\nabla^{\varphi^{\e}}\times v_0^{\e})
- \nabla^{\varphi}\times\lim\limits_{\e\rto 0} v_0^{\e} = 0$ and $\Pi\mathcal{S}^{\varphi} v\nn|_{z=0} = 0$ in $[0,T]$
are necessary conditions for the Navier-Stokes solution of $(\ref{Sect1_NS_Eq})$ to have a weak vorticity layer satisfying
\begin{equation}\label{Sect1_Thm_WeakerLayer_1}
\begin{array}{ll}
\lim\limits_{\e\rto 0}\|\omega^{\e} -\omega\|_{L^{\infty}(\mathfrak{Cl}(\mathbb{R}^3_{-})\times (0,T])} = 0, \\[9pt]
\lim\limits_{\e\rto 0}\|\partial_z^{\varphi^{\e}} v^{\e} -\partial_z^{\varphi} v\|_{L^{\infty}(\mathfrak{Cl}(\mathbb{R}^3_{-})\times (0,T])} = 0, \\[9pt]
\lim\limits_{\e\rto 0}\|\mathcal{S}^{\varphi^{\e}}v^{\e} -\mathcal{S}^{\varphi}v\|_{L^{\infty}(\mathfrak{Cl}(\mathbb{R}^3_{-})\times (0,T])} = 0,\\[9pt]
\lim\limits_{\e\rto 0}\|\nabla^{\varphi^{\e}} q^{\e} -\nabla^{\varphi} q\|_{L^{\infty}(\mathfrak{Cl}(\mathbb{R}^3_{-})\times (0,T])} = 0,
\end{array}
\end{equation}
where $\mathfrak{Cl}(\mathbb{R}^3_{-}) = \mathbb{R}^3_{-}\cup \{x|z=0\}$ is the closure of $\mathbb{R}^3_{-}$.
\end{theorem}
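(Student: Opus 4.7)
The plan is to control the vorticity difference $\hat{\omega} = \omega^{\e} - \omega$ in $L^{\infty}$ on the fixed coordinate domain $\mathbb{R}^3_{-}$, and then translate the layer information for $\hat{\omega}$ into the corresponding statements about $\partial_z^{\varphi} v$, $\mathcal{S}^{\varphi} v$, and $\nabla^{\varphi} q$. First I would derive the evolution equation satisfied by $\hat{\omega}$ by subtracting the two vorticity systems of the form $(\ref{Sect1_Vorticity_H_Eq})$; the result is a transport-diffusion equation
\begin{equation*}
\partial_t^{\varphi}\hat{\omega} + v\cdot\nabla^{\varphi}\hat{\omega} - \e\triangle^{\varphi}\hat{\omega} = F\bigl[\nabla\varphi^{\e},\nabla\varphi,v^{\e},v,\omega^{\e},\omega\bigr],
\end{equation*}
where $F$ collects the vortex-stretching terms, the corrections caused by the two distinct parameterizations $\varphi^{\e}$ and $\varphi$, the transport errors proportional to $v^{\e}-v$, and the ``Euler viscous residue'' $\e\triangle^{\varphi}\omega$. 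By Proposition~\ref{Sect1_Proposition_TimeRegularity}, every ingredient in $F$ is bounded in $L^{\infty}$ uniformly in $\e$; moreover, the pieces that vanish in the inviscid limit (those containing $v^{\e}-v$, $\varphi^{\e}-\varphi$, or pre-multiplied by $\e$) are of size $O(\e^{\beta})$ for some $\beta>0$, once a continuous-dependence estimate between the two solutions is closed.

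For part (1), the assumption $\Pi\mathcal{S}^{\varphi}v\,\nn|_{z=0}=0$ in $[0,T]$ combined with the boundary formulas $(\ref{Sect1_Vorticity_H_Eq})_2$--$(\ref{Sect1_Vorticity_H_Eq})_3$ for $\omega^{\e}|_{z=0}$ forces $\hat{\omega}|_{z=0}\rto 0$ uniformly, so no new vorticity discrepancy is pumped in from the free surface. Integrating the transport-diffusion equation along the Euler characteristic flow $\mathcal{X}(t,x)$ yields
\begin{equation*}
\hat{\omega}\bigl(t,\mathcal{X}(t,x)\bigr) = \hat{\omega}_0(x) + R^{\e}(t,x),\qquad \|R^{\e}\|_{L^{\infty}}\rto 0 \text{ as } \e\rto 0,
\end{equation*}
because the viscous term only smooths on the scale $\sqrt{\e T}$, which cannot extinguish an $O(1)$ initial discrepancy in $L^{\infty}$. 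Since $\hat{\omega}_0$ does not vanish on $\mathcal{A}_0$ in the limit by hypothesis, the first assertion of $(\ref{Sect1_Thm_StrongLayer_1})$ follows. The remaining three estimates in $(\ref{Sect1_Thm_StrongLayer_1})$ are then obtained by reconstructing $\partial_z^{\varphi}v$ from $\hat{\omega}$ using the divergence-free constraint and $\omega=\nabla^{\varphi}\times v$, and by transferring the resulting strong layer into the convective term $v\cdot\nabla^{\varphi}v$ of the momentum equation, which forces the corresponding $L^{\infty}$ jump in $\nabla^{\varphi}q$.

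For part (2), the boundary formula $(\ref{Sect1_Vorticity_H_Eq})_2$--$(\ref{Sect1_Vorticity_H_Eq})_3$ shows directly that $\Pi\mathcal{S}^{\varphi}v\,\nn|_{z=0}\neq 0$ in $(0,T]$ forces $\lim_{\e\rto 0}(\omega^{\e}-\omega)|_{z=0}\neq 0$ in $L^{\infty}(\mathbb{R}^2\times(0,T])$, which is the first claim of $(\ref{Sect1_Thm_WeakLayer_1})$. To spread this boundary jump into an $O(\e^{1/2-\delta_z})$-thick strong layer I would rescale $\zeta = z/\sqrt{\e}$ and match $\hat{\omega}$ to an error-function-type boundary-layer profile, using a parabolic maximum-principle comparison for the transport-diffusion equation to establish a uniform lower bound inside $\{-\e^{1/2-\delta_z}<z<0\}$; the remaining three inequalities in $(\ref{Sect1_Thm_WeakLayer_1})$ follow exactly as in part (1). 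Part (3) is then the contrapositive of parts (1) and (2): if either hypothesis (weak initial vorticity layer or $\Pi\mathcal{S}^{\varphi}v\,\nn|_{z=0}=0$) failed, one of the two mechanisms above would obstruct the $L^{\infty}$ convergence claimed in $(\ref{Sect1_Thm_WeakerLayer_1})$.

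The main obstacle is obtaining the uniform-in-$\e$ $L^{\infty}$ control of $F$ without losing powers of $\e$. The a priori estimates of Proposition~\ref{Sect1_Proposition_TimeRegularity} provide only conormal $X^{m,s}$-regularity, so products such as $\omega\cdot\nabla^{\varphi^{\e}-\varphi}v$ and $(\omega^{\e}-\omega)\cdot\nabla^{\varphi^{\e}}v^{\e}$ must be handled by conormal Sobolev embeddings. The critical role is played by the weighted bounds $\|\partial_z v^{\e}\|_{1,\infty}$ and $\e^{1/2}\|\partial_{zz}v^{\e}\|_{L^{\infty}}$ from $(\ref{Sect1_Proposition_TimeRegularity_2})$, which are precisely what is needed to absorb the singular viscous term $\e\triangle^{\varphi^{\e}}\omega^{\e}$ and to make the stretching terms bounded uniformly; without these the $L^{\infty}$ propagation argument underlying every part of the theorem would break down.
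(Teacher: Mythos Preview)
Your overall architecture---derive the transport--diffusion equation for $\hat\omega$, pass to Lagrangian coordinates to strip off convection, and then separate the three inputs (initial discrepancy, boundary discrepancy, small forcing)---is the paper's strategy. The executions differ in two places.

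For part~(1), the paper does not stop at the heuristic ``viscous smoothing on scale $\sqrt{\e T}$ cannot kill an $O(1)$ datum''. After the Lagrangian change and an exponential weight $W=e^{-\gamma t}\hat\omega_h$, it splits $W=W^{f\!o}+W^{bdy}+W^{ini}$ so that each piece carries exactly one source. The forcing and boundary pieces are killed in $L^\infty$ by the maximum principle (the damping coefficient $\gamma a_0-f^7>0$ is essential). For $W^{ini}$ the damping is removed by a second exponential factor and the remaining uniformly parabolic equation is solved via its fundamental solution $\textsf{H}(x/\sqrt{\e t})$ plus Duhamel, giving $\tilde W^{ini}(t,x)\to\hat\omega_{h,0}(x)$ pointwise. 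Your formula $\hat\omega(t,\mathcal X(t,x))=\hat\omega_0(x)+R^\e$ with $\|R^\e\|_{L^\infty}\to0$ is exactly what this machinery produces, but ``integrating along characteristics'' alone does not justify it: in Lagrangian variables you still face a genuine parabolic equation, not an ODE.

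For part~(2), the paper takes a genuinely different route from your rescaled-profile/comparison idea. It Fourier-transforms in $(t,y)$ with $z$ as a parameter, solves the resulting second-order ODE in $z$ explicitly as $\exp\bigl\{\tfrac{-A_1+\sqrt{A_1^2-4A_0}}{2}\,\tfrac{z}{\sqrt\e}\bigr\}$ times the transformed boundary datum, and reads the layer thickness off the real part of the exponent: for $|z|=O(\e^{1/2+\delta_z})$ the exponential is of order one, for $|z|=O(\e^{1/2-\delta_z})$ it decays. Variable coefficients are handled by symbolic/paradifferential calculus. Your maximum-principle approach is plausible, but maximum principles deliver upper bounds; the \emph{lower} bound you need would require an explicit subsolution for the variable-coefficient operator, which is exactly the difficulty the paper avoids by working on the Fourier side.
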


We give some remarks on Theorem $\ref{Sect1_Thm_StrongLayer}$:
\begin{remark}\label{Sect1_Remark_StrongLayer}
(i) To represent $\partial_z^{\varphi^{\e}} v^{\e} - \partial_z^{\varphi} v$ is more natural than $\partial_z v^{\e} - \partial_z v$.
However, $\lim\limits_{\e\rto 0}\|\partial_z^{\varphi^{\e}} v^{\e} - \partial_z^{\varphi} v\|_{L^{\infty}}\neq 0$
results from $\lim\limits_{\e\rto 0}\|\partial_z v^{\e} - \partial_z v\|_{L^{\infty}}\neq 0$
and $\lim\limits_{\e\rto 0}\|\partial_z(\eta^{\e} -\eta)\|_{L^{\infty}} =0$, due to
the formula:
\begin{equation}\label{Sect1_Transport_DifferenceEq}
\begin{array}{ll}
\partial_z^{\varphi^{\e}} v^{\e} - \partial_z^{\varphi} v
= \partial_z^{\varphi^{\e}}(v^{\e} -v) - \partial_z^{\varphi} v \, \partial_z^{\varphi^{\e}} (\eta^{\e} -\eta) \\[5pt]\hspace{1.95cm}

= \frac{1}{\partial_z\varphi^{\e}} \cdot\partial_z(v^{\e} -v)
- \partial_z^{\varphi} v \, \frac{1}{\partial_z\varphi^{\e}} \cdot \partial_z (\eta^{\e} -\eta).
\end{array}
\end{equation}

(ii) The energy norm $\|\cdot\|_{L^2}$ is weaker than the $L^{\infty}$ norm, because
$\|\omega^{\e}-\omega\|_{L^2(\mathbb{R}^3_{-})} =0$, even though we have the profile
$\omega^{\e}(t,y,z) \sim \omega(t,y,z) + \omega^{bl}(t,y,\frac{z}{\sqrt{\e}})$. While
$\|\omega^{\e}-\omega\|_{L^{\infty}(\mathbb{R}^3_{-})} \neq 0$. Thus, we use the $L^{\infty}$ norm
to describe the strong vorticity layer.

(iii) $\textsf{S}_n =\Pi \mathcal{S}^{\varphi} v \nn$ satisfies the forced transport equations:
\begin{equation}\label{Sect1_Transport_Eq_Sn}
\begin{array}{ll}
\partial_t^{\varphi} \textsf{S}_n + v\cdot\nabla^{\varphi} \textsf{S}_n =
-\frac{1}{2}\Pi\big((\nabla^{\varphi} v)^2+((\nabla^{\varphi} v)^{\top})^2\big) \nn
- \Pi((\mathcal{D}^{\varphi})^2 q)\nn\\[8pt]\quad
+ (\partial_t^{\varphi}\Pi + v\cdot\nabla^{\varphi}\Pi)\mathcal{S}^{\varphi} v\nn
+ \Pi\mathcal{S}^{\varphi} v(\partial_t^{\varphi}\nn + v\cdot\nabla^{\varphi}\nn),
\end{array}
\end{equation}
where $\big((\mathcal{D}^{\varphi})^2 q\big)$ is the Hessian matrix of $q$.
The equation $(\ref{Sect1_Transport_Eq_Sn})$ implies that even if $\textsf{S}_n|_{t=0} = 0$, then $\textsf{S}_n \neq 0$ in $(0,T]$
is possible due to the force terms of $(\ref{Sect1_Transport_Eq_Sn})$.

However, $\textsf{S}_n|_{z=0} \equiv 0$ in $[0,T]$ can be constructed, see an example constructed in $(\ref{Sect1_Example_1_1}), (\ref{Sect1_Example_1_2}),(\ref{Sect1_Example_1_3})$.

(iv) $\Pi\mathcal{S}^{\varphi} v\nn|_{z=0} = 0$ at $t=0$ implies that $\lim\limits_{\e\rto 0}(\nabla^{\varphi^{\e}}\times v_0^{\e})|_{z=0}
- \nabla^{\varphi}\times\lim\limits_{\e\rto 0} v_0^{\e}|_{z=0} = 0$. But it does not contradict with
$\lim\limits_{\e\rto 0}(\nabla^{\varphi^{\e}}\times v_0^{\e})
- \nabla^{\varphi}\times\lim\limits_{\e\rto 0} v_0^{\e} \neq 0$ in the initial set $\mathcal{A}_0$,
see $(\ref{Sect1_Example_1_0})$ where $\mathcal{A}_0 = \{x|z= -\sqrt{\e}\}$ in local coordinates.
If $\Pi\mathcal{S}^{\varphi} v\nn|_{z=0} \neq 0$ in $[0,T]$ and $\lim\limits_{\e\rto 0}(\nabla^{\varphi^{\e}}\times v_0^{\e})
- \nabla^{\varphi}\times\lim\limits_{\e\rto 0} v_0^{\e} \neq 0$ in the initial set $\mathcal{A}_0$,
then it is easy to know the results are the union of $(\ref{Sect1_Thm_StrongLayer_1})$ and $(\ref{Sect1_Thm_WeakLayer_1})$.

(v) $\NN\cdot \partial_z^{\varphi} v$ and $\NN\cdot \partial_z v$ do not have boundary layer, but
$\partial_z v^3$ has boundary layer in general. Similarly, $\NN\cdot \omega$ does not have boundary layer, but
$\omega^3$ has boundary layer in general. The reason is that both $v|_{z=0}$ and $\omega|_{z=0}$ are not perpendicular to the free surface in general.
\end{remark}

The proof of Theorem $\ref{Sect1_Thm_StrongLayer}$ is based on the analysis of the limit of $\hat{\omega} = \omega^{\e} -\omega$
which satisfies the following equations:
\begin{equation}\label{Sect1_N_Derivatives_Difference_Eq}
\left\{\begin{array}{ll}
\partial_t^{\varphi^{\e}} \hat{\omega}_h
+ v^{\e} \cdot\nabla^{\varphi^{\e}} \hat{\omega}_h
- \e\triangle^{\varphi^{\e}}\hat{\omega}_h
= \vec{\textsf{F}}^0[\nabla\varphi^{\e}](\omega_h^{\e},\partial_j v^{\e,i})
- \vec{\textsf{F}}^0[\nabla\varphi](\omega_h,\partial_j v^i) \\[5pt]\quad
+ \e\triangle^{\varphi^{\e}}\omega_h
+ \partial_z^{\varphi}\omega_h \partial_t^{\varphi^{\e}} \hat{\eta}
+ \partial_z^{\varphi} \omega_h\, v^{\e}\cdot \nabla^{\varphi^{\e}}\hat{\eta}
- \hat{v}\cdot\nabla^{\varphi} \omega_h , \\[8pt]

\hat{\omega}_h|_{z=0} =\textsf{F}^{1,2} [\nabla\varphi^{\e}](\partial_j v^{\e,i}) - \omega^b_h, \\[6pt]

\hat{\omega}_h|_{t=0} = (\hat{\omega}_0^1, \hat{\omega}_0^2)^{\top},
\end{array}\right.
\end{equation}
where $\vec{\textsf{F}}^0[\nabla\varphi](\omega_h,\partial_j v^i)$ and
$\textsf{F}^{1,2} [\nabla\varphi^{\e}](\partial_j v^{\e,i})$
= $(\textsf{F}^1 [\nabla\varphi](\partial_j v^i), \textsf{F}^2 [\nabla\varphi](\partial_j v^i))^{\top}$ are defined in $(\ref{Sect1_Vorticity_H_Eq})$.
Note that in $\vec{\textsf{F}}^0[\nabla\varphi](\omega_h,\partial_j v^i)$, $\omega_h$ has degree one.

By introducing Lagrangian coordinates $(\ref{Sect3_Preliminaries_Vorticity_Eq_3})$,
the equations $(\ref{Sect1_N_Derivatives_Difference_Eq})$ can be transformed into the heat equation with damping and force terms.

By splitting $(\ref{Sect3_BoundarLayer_Initial_Eq_1})$ and estimating $(\ref{Sect3_BoundarLayer_Initial_Eq_Force})$
and $(\ref{Sect3_BoundarLayer_Initial_Eq_Initial})$, we investigate the effect of the initial vorticity layer.
If $\lim\limits_{\e\rto 0}\big\|\hat{\omega}_h|_{t=0}\big\|_{L^{\infty}(\mathcal{A}_0)} \neq 0$,
we prove that the limit of $\hat{\omega}_h$ is equal to that of the initial vorticity layer in Lagrangian coordinates,
thus the limit of the initial vorticity layer is transported in Eulerian coordinates.
Namely, $\lim\limits_{\e\rto 0}\|\hat{\omega}\|_{L^{\infty}(\mathcal{X}(\mathcal{A}_0)\times (0,T])} \neq 0$.

By splitting $(\ref{Sect4_BoundarLayer_Boundary_Eq_1})$ and estimating $(\ref{Sect4_BoundarLayer_Boundary_Force})$
and $(\ref{Sect4_BoundarLayer_Boundary_BC})$, we investigate the effect of
the discrepancy of boundary values of the vorticities for the inviscid limits.
If $\hat{\omega}_h|_{z=0} \neq 0$ and $\lim\limits_{\e\rto 0}\big\|\hat{\omega}_h|_{t=0}\big\|_{L^{\infty}} = 0$,
there is a discrepancy between N-S vorticity and Euler vorticity,
we prove that $\lim\limits_{\e\rto 0}\|\omega^{\e} -\omega\|_{L^{\infty}(\mathbb{R}^2\times [0, O(\e^{\frac{1}{2} -\delta_z}))\times (0,T])} \\
\neq 0$ by using symbolic analysis.

The following theorem concerns the convergence rates of the inviscid limits of $(\ref{Sect1_NS_Eq})$. Note that
if some functional space has negative indices, then such a estimate does not exist.

\begin{theorem}\label{Sect1_Thm_ConvergenceRates}
Assume $T>0$ is finite, fixed and independent of $\e$, $(v^{\e},h^{\e})$ is the solution in $[0,T]$ of Navier-Stokes equations $(\ref{Sect1_NS_Eq})$ with initial data $(v^{\e}_0,h^{\e}_0)$ satisfying $(\ref{Sect1_Proposition_TimeRegularity_1})$, $\omega^{\e}$ is its vorticity.
$(v,h)$ is the solution in $[0,T]$ of Euler equations $(\ref{Sect1_Euler_Eq})$ with initial data $(v_0,h_0)\in X^{m-1,1}(\mathbb{R}^3_{-}) \times X^{m-1,1}(\mathbb{R}^2)$, $\omega$ is its vorticity.
$g-\partial_z^{\varphi^{\e}} q^{\e} |_{z=0} \geq c_0 >0$, $g-\partial_z^{\varphi} q |_{z=0} \geq c_0 >0$.
Assume there exists an integer $k$ where $1\leq k\leq m-2$, such that $\|v^{\e}_0 -v_0\|_{X^{k-1,1}(\mathbb{R}^3_{-})} =O(\e^{\lambda^v})$, $|h^{\e}_0 -h_0|_{X^{k-1,1}(\mathbb{R}^2)} =O(\e^{\lambda^h})$, $\|\omega^{\e}_0 - \omega_0\|_{X^{k-1}(\mathbb{R}^3_{-})} =O(\e^{\lambda^{\omega}_1})$, where
$\lambda^v>0, \lambda^h>0, \lambda^{\omega}_1>0$.

If the Euler boundary data satisfies $\Pi\mathcal{S}^{\varphi} v\nn|_{z=0}\neq 0$ in $[0,T]$, then the convergence rates of the inviscid limit satisfy
\begin{equation}\label{Sect1_Thm4_ConvergenceRates_1}
\begin{array}{ll}
\|v^{\e} -v\|_{X_{tan}^{k-1,1}} + |h^{\e} -h|_{X^{k-1,1}} = O(\e^{\min\{\frac{1}{4},\lambda^v,\lambda^h, \lambda^{\omega}_1\}}), \\[7pt]
\|\NN^{\e}\cdot \partial_z^{\varphi^{\e}} v^{\e} - \NN\cdot \partial_z^{\varphi} v\|_{X_{tan}^{k-1}} + \|\NN^{\e}\cdot \omega^{\e} - \NN\cdot \omega\|_{X_{tan}^{k-1}}  = O(\e^{\min\{\frac{1}{4},\lambda^v,\lambda^h, \lambda^{\omega}_1\}}), \\[7pt]
\|\partial_z^{\varphi^{\e}} v^{\e} -\partial_z^{\varphi} v\|_{X_{tan}^{k-2}} + \|\omega^{\e} -\omega\|_{X_{tan}^{k-2}}
= O(\e^{\min\{\frac{1}{8},\frac{\lambda^v}{2},\frac{\lambda^h}{2}, \frac{\lambda^{\omega}_1}{2}\}}), \\[7pt]
\|\nabla^{\varphi^{\e}} q^{\e} - \nabla^{\varphi} q\|_{X_{tan}^{k-2}}
+ \|\triangle^{\varphi^{\e}} q^{\e} - \triangle^{\varphi} q\|_{X_{tan}^{k-2}}
= O(\e^{\min\{\frac{1}{8},\frac{\lambda^v}{2},\frac{\lambda^h}{2}, \frac{\lambda^{\omega}_1}{2}\}}),
\end{array}
\end{equation}

\begin{equation*}
\begin{array}{ll}
\|v^{\e} -v\|_{Y_{tan}^{k-3}} + |h^{\e} -h|_{Y^{k-3}} = O(\e^{\min\{\frac{1}{8},\frac{\lambda^v}{2},\frac{\lambda^h}{2}, \frac{\lambda^{\omega}_1}{2}\}}), \\[7pt]
\|\NN^{\e}\cdot \partial_z^{\varphi^{\e}} v^{\e} - \NN\cdot \partial_z^{\varphi} v\|_{Y_{tan}^{k-4}} + \|\NN^{\e}\cdot \omega^{\e} - \NN\cdot \omega\|_{Y_{tan}^{k-4}} = O(\e^{\min\{\frac{1}{8},\frac{\lambda^v}{2},\frac{\lambda^h}{2}, \frac{\lambda^{\omega}_1}{2}\}}).
\end{array}
\end{equation*}

If the Euler boundary data satisfies $\Pi\mathcal{S}^{\varphi} v\nn|_{z=0} = 0$ in $[0,T]$,
assume $\|\omega^{\e}_0 - \omega_0\|_{X^{k-2}(\mathbb{R}^3_{-})} =O(\e^{\lambda^{\omega}_2})$ where $\lambda^{\omega}_2>0$,
then the convergence rates of the inviscid limit satisfy
\begin{equation}\label{Sect1_Thm4_ConvergenceRates_2}
\begin{array}{ll}
\|v^{\e} -v\|_{X_{tan}^{k-2,1}} + |h^{\e} -h|_{X^{k-2,1}} = O(\e^{\min\{\frac{1}{2},\lambda^v,\lambda^h, \lambda^{\omega}_2\}}), \\[7pt]
\|\NN^{\e}\cdot \partial_z^{\varphi^{\e}} v^{\e} - \NN\cdot \partial_z^{\varphi}v\|_{X_{tan}^{k-2}} + \|\NN^{\e}\cdot \omega^{\e} - \NN\cdot \omega\|_{X_{tan}^{k-2}}  = O(\e^{\min\{\frac{1}{2},\lambda^v,\lambda^h, \lambda^{\omega}_2\}}), \\[7pt]
\|\partial_z^{\varphi^{\e}} v^{\e} -\partial_z^{\varphi} v\|_{X_{tan}^{k-3}} + \|\omega^{\e} -\omega\|_{X_{tan}^{k-3}}
= O(\e^{\min\{\frac{1}{4},\frac{\lambda^v}{2},\frac{\lambda^h}{2}, \frac{\lambda^{\omega}_2}{2}\}}), \\[7pt]
\|\nabla^{\varphi^{\e}} q^{\e} - \nabla^{\varphi} q\|_{X_{tan}^{k-3}}
+ \|\triangle^{\varphi^{\e}} q^{\e} - \triangle^{\varphi} q\|_{X_{tan}^{k-3}}
= O(\e^{\min\{\frac{1}{4},\frac{\lambda^v}{2},\frac{\lambda^h}{2}, \frac{\lambda^{\omega}_2}{2}\}}),
\\[7pt]

\|v^{\e} -v\|_{Y_{tan}^{k-4}} + |h^{\e} -h|_{Y^{k-4}} = O(\e^{\min\{\frac{1}{4},\frac{\lambda^v}{2},\frac{\lambda^h}{2}, \frac{\lambda^{\omega}_2}{2}\}}), \\[7pt]
\|\NN^{\e}\cdot \partial_z^{\varphi^{\e}} v^{\e} - \NN\cdot \partial_z^{\varphi} v\|_{Y_{tan}^{k-5}} + \|\NN^{\e}\cdot \omega^{\e} - \NN\cdot \omega\|_{Y_{tan}^{k-5}} = O(\e^{\min\{\frac{1}{4},\frac{\lambda^v}{2},\frac{\lambda^h}{2}, \frac{\lambda^{\omega}_2}{2}\}}).
\end{array}
\end{equation}
\end{theorem}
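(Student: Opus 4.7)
The plan is to derive closed energy inequalities for the differences
$\hat v = v^{\e}-v$, $\hat h = h^{\e}-h$, $\hat q = q^{\e}-q$, $\hat\eta = \eta^{\e}-\eta$.
Subtracting $(\ref{Sect1_Euler_Eq})$ from $(\ref{Sect1_NS_Eq})$ yields a linearised transport--pressure--height system
driven on the right by the viscous source $\e\triangle^{\varphi^{\e}}v$ and on the boundary by
$\hat q|_{z=0}=g\hat h-2\e\mathcal{S}^{\varphi^{\e}}v^{\e}\nn^{\e}\cdot\nn^{\e}$.
Every $\nabla^{\varphi^{\e}}$ must be rewritten as $\nabla^{\varphi}$ plus a remainder proportional to
$\partial_z\hat\eta$, and because $\hat\eta=\psi\ast_y\hat h$, those remainders are controlled by the $|\hat h|$-norms
and feed back tamely. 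To cancel the worst commutator of $\partial_t^{\ell}\partial_y^{\alpha}$ with the moving coefficients,
the tangential estimates will be carried out on the Alinhac good unknowns of the difference,
$\hat V^{\ell,\alpha}=\partial_t^{\ell}\partial_y^{\alpha}\hat v-\partial_z^{\varphi^{\e}}v^{\e}\,\partial_t^{\ell}\partial_y^{\alpha}\hat\eta$
and the analogous $\hat Q^{\ell,\alpha}$, cf.\ $(\ref{Sect1_Good_Unknown_1})$.

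First I would run the tangential $X_{tan}^{k-1,1}$ estimate. Pairing the equation for $\hat V^{\ell,\alpha}$ with itself,
the pressure term, after invoking the Taylor sign condition $g-\partial_z^{\varphi^{\e}}q^{\e}|_{z=0}\geq c_0>0$,
yields coercivity on $|\hat h|_{X^{k-1,1}}$. The viscous contribution splits into
$\e(\triangle^{\varphi^{\e}}\hat v,\hat V^{\ell,\alpha})$, which is nonpositive up to commutators bounded by
Proposition~$\ref{Sect1_Proposition_TimeRegularity}$, plus the decisive boundary integral
$\e\int_{z=0}\mathcal{S}^{\varphi^{\e}}v\,\nn^{\e}\cdot\hat V^{\ell,\alpha}\,\mathrm dy$.
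When $\Pi\mathcal{S}^{\varphi}v\nn|_{z=0}\neq 0$ this integral is only $O(\sqrt{\e})\|\hat V\|_{L^2}$,
and after a Gr\"onwall argument combined with the data rates $O(\e^{\lambda^v}+\e^{\lambda^h}+\e^{\lambda^\omega_1})$
the bound $O(\e^{\min\{1/4,\lambda^v,\lambda^h,\lambda^{\omega}_1\}})$ of $(\ref{Sect1_Thm4_ConvergenceRates_1})$ comes out.
When $\Pi\mathcal{S}^{\varphi}v\nn|_{z=0}=0$ the boundary integrand is gained a factor $\sqrt{\e}$, the rate doubles to $O(\e^{1/2})$,
and one loses one tangential derivative to accommodate the extra trace identity, producing the space $X_{tan}^{k-2,1}$ of
$(\ref{Sect1_Thm4_ConvergenceRates_2})$.

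Next, I would run the vorticity-level estimate on $\hat\omega_h$ via $(\ref{Sect1_N_Derivatives_Difference_Eq})$,
which is a convection--diffusion equation with forcing $\e\triangle^{\varphi^{\e}}\omega+(\text{commutators})$ and boundary datum
$\hat\omega_h|_{z=0}=\textsf{F}^{1,2}[\nabla\varphi^{\e}](\partial_j v^{\e,i})-\omega^b_h$.
This boundary datum is $O(1)$ in case (1) and $o(1)$ in case (2); testing in $X_{tan}^{k-2}$ (respectively $X_{tan}^{k-3}$)
halves the tangential rate, because the parabolic dissipation provides only $\sqrt{\e}$-coercivity after the nonzero datum is
distributed through a parabolic lifting in the spirit of $\cite{Masmoudi_Rousset_2012_FreeBC}$.
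The full $\partial_z^{\varphi^{\e}}v^{\e}-\partial_z^{\varphi}v$ is then reconstructed from $\hat\omega_h$ and the divergence identity
$\nabla^{\varphi^{\e}}\cdot\hat v=-(\nabla^{\varphi^{\e}}-\nabla^{\varphi})\cdot v$: the normal component
$\NN^{\e}\cdot\partial_z^{\varphi^{\e}}v^{\e}-\NN\cdot\partial_z^{\varphi}v$ inherits the better tangential rate, while the full
normal derivative inherits the slower vorticity rate, exactly the dichotomy recorded in the theorem. The pressure differences
$\nabla^{\varphi^{\e}}q^{\e}-\nabla^{\varphi}q$ and $\triangle^{\varphi^{\e}}q^{\e}-\triangle^{\varphi}q$ are then read off the momentum
equation, and the $Y_{tan}$-rates follow from the anisotropic Sobolev embedding
$\|f\|_{L^{\infty}}\lem\|f\|_{X_{tan}^{2}}^{1/2}\|\partial_z f\|_{X_{tan}^{2}}^{1/2}$, which loses two tangential indices.

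The hard part will be that the two diffeomorphisms $\varphi^{\e}$ and $\varphi$ prevent the $L^2$ estimate of $\hat v$ from being
closed in isolation: the commutator $[\varphi^{\e}-\varphi,\partial_z]\partial_z v$ in the momentum equation cannot be absorbed by
the viscous dissipation $\e\|\nabla\hat v\|_{X^{k-1,1}}^2$ alone, so the tangential bound depends on the vorticity bound and vice versa.
The remedy is to treat the two energies $\hat V^{\ell,\alpha}$ and $\hat\omega_h$ on the same footing, fold them into a single Gr\"onwall
inequality with balancing $\sqrt{\e}$-weights, and read off the rates from the relative strength of the boundary datum in the two cases.
The exponents $1/4$ and $1/8$ in case (1), $1/2$ and $1/4$ in case (2), and the systematic halving from $X_{tan}$- to $Y_{tan}$-levels
then all emerge as direct reflections of the boundary-layer width $\sqrt{\e}$ and of this coupling.
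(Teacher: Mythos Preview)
Your overall architecture---Alinhac good unknowns for the tangential difference, the vorticity equation $(\ref{Sect1_N_Derivatives_Difference_Eq})$ for the normal derivative, and a coupled Gr\"onwall---is exactly the paper's route. But you have misidentified where the $\sqrt{\e}$ loss originates, and this matters because your stated mechanism does not actually produce the rate $\e^{1/4}$.

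The boundary integral $\e\int_{z=0}\mathcal{S}^{\varphi^{\e}}v\,\nn^{\e}\cdot\hat V^{\ell,\alpha}\,\mathrm dy$ is \emph{not} the source of $\sqrt{\e}$: on $\{z=0\}$ the Navier--Stokes normal derivative $\partial_z v^{\e}$ can be rewritten in terms of tangential derivatives via $\Pi\mathcal{S}^{\varphi^{\e}}v^{\e}\nn^{\e}|_{z=0}=0$ (Lemma~\ref{Sect2_Vorticity_H_Eq_BC_Lemma}), so this term contributes only $O(\e)$ to the energy inequality. Moreover, if it were $O(\sqrt{\e})\|\hat V\|_{L^2}$ as you claim, Young's inequality would give $\|\hat V\|^2=O(\e)$ and hence rate $\e^{1/2}$, not $\e^{1/4}$. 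The actual $\sqrt{\e}$ enters through the commutator remainders in $\mathcal{I}_4$ (your difference equation), which force $\|\partial_z\hat v\|_{L^4([0,T],X^{k-1})}^2$ onto the right side of the tangential estimate. That quantity is then controlled by splitting $\hat\omega_h=\hat\omega_h^{nhom}+\hat\omega_h^{hom}$ and applying the Masmoudi--Rousset $H^{1/4}_t$ parabolic trace estimate to $\hat\omega_h^{hom}$, yielding $\|\hat\omega_h^{hom}\|_{L^4_t L^2}^2\lesssim\sqrt{\e}\,|\hat\omega_h|_{z=0}|_{L^2_t L^2}^2=O(\sqrt{\e})$ because the boundary datum $\textsf{F}^{1,2}[\nabla\varphi^{\e}](\partial_j v^{\e,i})-\omega^b_h$ is $O(1)$ when $\Pi\mathcal{S}^{\varphi}v\nn|_{z=0}\neq0$. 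Substituting back and running Gr\"onwall gives $\|\hat v\|_{X^{k-1,1}}^2=O(\sqrt{\e})$, hence the rate $\e^{1/4}$. In case~(2) the boundary datum becomes $O(|\hat v|_{z=0}|+|\hat h|)$, which buys one more $\sqrt{\e}$ at the price of one tangential index.

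Two further corrections. First, your good unknown should be built with the \emph{Euler} quantities $\partial_z^{\varphi}v,\partial_z^{\varphi}q$, not $\partial_z^{\varphi^{\e}}v^{\e}$: the latter carries the boundary layer and would pollute the commutator calculus. Second, the halving to $\e^{1/8}$ for $\|\hat\omega\|_{X^{k-2}}$ is not a ``$\sqrt{\e}$-coercivity'' effect; it comes from testing the $\hat v$-equation against $\nabla^{\varphi^{\e}}\times(\nabla^{\varphi^{\e}}\times\partial_t^{\ell}\mathcal Z^{\alpha}\hat v)$, where the boundary trace $\NN^{\e}\times(\nabla^{\varphi^{\e}}\times\hat v)|_{z=0}$ is $O(1)$ in case~(1), so the resulting boundary integrals are \emph{linear} in $\|\hat v\|$ and $\|\nabla\hat q\|$ rather than quadratic. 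This produces $\|\hat\omega\|_{X^{k-2}}^2\lesssim\int_0^t\|\hat v\|_{X^{k-2}}\,\mathrm dt+O(\e)=O(\e^{1/4})$, whence $\|\hat\omega\|_{X^{k-2}}=O(\e^{1/8})$.
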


We give some remarks on Theorem $\ref{Sect1_Thm_ConvergenceRates}$:
\begin{remark}\label{Sect1_Remark_ConvergenceRates}
(i) Convergence rates are represented in functional spaces containing only tangential derivatives,
 such as $X_{tan}^{k-1,1}$ and $Y_{tan}^{k-3}$, because we are actually interested in
standard derivatives rather than conormal derivatives.
Theorem $\ref{Sect1_Thm_StrongLayer}$ has already described the behaviors of standard normal derivatives.
However, we have to use conormal Sobolev spaces to estimate convergence rates,
the initial data is also required to converge in conormal Sobolev spaces,
thus the index $k$ depends on the strength of the initial vorticity layer.
The weaker the initial vorticity layer is, the larger $k$ is.

(ii) In general, $\nabla\cdot (v^{\e} -v) \neq 0$, $q^{\e} -q$ is infinite for the infinite fluid depth.
Thus, we even can not obtain the $L^2$ estimate of
$(v^{\e}-v,h^{\e}-h)$ without involving $\partial_z v^{\e} - \partial_z v$.
The estimates of tangential derivatives and the estimates of normal derivatives
can not be decoupled, but tangential derivatives and normal derivatives have different convergence rates.

(iii) The convergence rate of the initial vorticity is related to whether the Euler boundary data satisfies
$\Pi\mathcal{S}^{\varphi}v \nn|_{z=0,t=0} = 0$ or not.
In general, $\lambda^{\omega}_1 \neq \lambda^{\omega}_2$.
The convergence rates of the inviscid limit for the free boundary problem are slower than those of the Navier-slip boundary case.

(iv)  To represent $\partial_z^{\varphi^{\e}} v^{\e} - \partial_z^{\varphi} v$ is more natural than $\partial_z v^{\e} - \partial_z v$.
However, the estimate of $\|\partial_z^{\varphi^{\e}} v^{\e} - \partial_z^{\varphi} v\|_{X_{tan}^{k-2}}$
results from the estimate of $\|\partial_z v^{\e} - \partial_z v\|_{X_{tan}^{k-2}}$
and $\|\partial_z(\eta^{\e} -\eta)\|_{X_{tan}^{k-2}}$, due to the formula $(\ref{Sect1_Transport_DifferenceEq})$.
The $L^{\infty}$ type estimates in $(\ref{Sect1_Thm4_ConvergenceRates_1})$ are based on the formula $\|f\|_{L^{\infty}}^2 \lem
\|f\|_{H_{tan}^{s_1}}\|\partial_z f\|_{H_{tan}^{s_2}}$ where $s_1+s_2>2$ (see \cite{Masmoudi_Rousset_2012_FreeBC}).
$\|v^{\e} -v\|_{X_{tan}^k}$ and $|h^{\e}-h|_{X^k}$ can not be estimated because we can not control $\|\partial_t^k(q^{\e} -q)\|_{L^2}$.

(v) For the finite fluid depth $\mathbb{R}^2\times [-L,0]$ and $L>0$ can be very small, if the initial data satisfy
$\|v^{\e}_0 -v_0\|_{X^{m-1,1}(\mathbb{R}^2\times [-L,0])} =O(\e^{\lambda^v},L)$, $|h^{\e}_0 -h_0|_{X^{m-1,1}(\mathbb{R}^2)} =O(\e^{\lambda^h})$.
$\|\omega^{\e}_0 - \omega_0\|_{X^{m-1}(\mathbb{R}^2\times [-L,0])} =O(\e^{\lambda^{\omega}_1},L)$.
If $\Pi\mathcal{S}^{\varphi} v\nn|_{z=0} \neq 0$ in $[0,T]$,
then the pressure itself has $L^2$ type estimates:
\begin{equation*}
\begin{array}{ll}
\|q^{\e}(\cdot,z) - q(\cdot,z)\|_{X_{tan}^{k-2}(\mathbb{R}^2)} \lem \big|q^e|_{z=0} - q|_{z=0}\big|_{X_{tan}^{k-2}}
+ \|\partial_z q^{\e} -\partial_z q\|_{X_{tan}^{k-2}} \\[7pt]\hspace{4.02cm}

\lem O(\e^{\min\{\frac{1}{8},\frac{\lambda^v}{2},\frac{\lambda^h}{2}, \frac{\lambda^{\omega}_1}{2}\}}), \\[8pt]

\|q^{\e} - q\|_{X_{tan}^{k-2}(\mathbb{R}^2\times [-L,0])} \lem O(\e^{\min\{\frac{1}{8},\frac{\lambda^v}{2},\frac{\lambda^h}{2}, \frac{\lambda^{\omega}_1}{2}\}},L), \\[6pt]

\|\omega^{\e} - \omega\|_{X_{tan}^{k-2}(\mathbb{R}^2\times [-L,0])} \lem O(\e^{\min\{\frac{1}{8},\frac{\lambda^v}{2},\frac{\lambda^h}{2}, \frac{\lambda^{\omega}_1}{2}\}}), \\[8pt]

\|v^{\e} - v\|_{X_{tan}^{k-2}(\mathbb{R}^2\times [-L,0])} \lem O(\e^{\min\{\frac{1}{4},\lambda^v,\lambda^h, \lambda^{\omega}_1\}},L), \\[8pt]
\|h^{\e} - h\|_{X_{tan}^{k-1}(\mathbb{R}^2)} \lem O(\e^{\min\{\frac{1}{4},\lambda^v,\lambda^h, \lambda^{\omega}_1\}}).
\end{array}
\end{equation*}

If $\Pi\mathcal{S}^{\varphi} v\nn|_{z=0} = 0$ in $[0,T]$, we only adjust the indices of $\e$ in the above convergence rates,
the results are similar.
\end{remark}

Now we show our strategies of the proofs.
Denote $\hat{v} = v^{\e} -v$, $\hat{h}= h^{\e} -h$, $\hat{q} = q^{\e} -q$,
then $\hat{v},\hat{h},\hat{q}$ satisfy the following equations
\begin{equation}\label{Sect1_T_Derivatives_Difference_Eq}
\left\{\begin{array}{ll}
\partial_t^{\varphi^{\e}}\hat{v} -\partial_z^{\varphi} v \partial_t^{\varphi^{\e}}\hat{\eta}
+ v^{\e} \cdot\nabla^{\varphi^{\e}} \hat{v} -v^{\e}\cdot \nabla^{\varphi^{\e}}\hat{\eta}\, \partial_z^{\varphi} v
+ \nabla^{\varphi^{\e}} \hat{q} -\partial_z^{\varphi} q\nabla^{\varphi^{\e}}\hat{\eta}
 \\[5pt]\quad
= 2\e \nabla^{\varphi^{\e}} \cdot\mathcal{S}^{\varphi^{\e}} \hat{v} + \e \triangle^{\varphi^{\e}} v - \hat{v}\cdot\nabla^{\varphi} v, 
\hspace{3.96cm}  x\in\mathbb{R}^3_{-}, \\[8pt]

\nabla^{\varphi^{\e}}\cdot \hat{v} = \partial_z^{\varphi}v \cdot\nabla^{\varphi^{\e}}\hat{\eta}, \hspace{6.38cm} x\in\mathbb{R}^3_{-},\\[8pt]

\partial_t \hat{h} + v_y\cdot \nabla_y \hat{h} = \hat{v}\cdot\NN^{\e},  \hspace{6cm} \{z=0\},
\\[7pt]

(\hat{q} -g \hat{h})\NN^{\e} -2\e \mathcal{S}^{\varphi^\e}\hat{v}\,\NN^\e = 2\e \mathcal{S}^{\varphi^\e}v\,\NN^\e, \hspace{3.8cm} \{z=0\},
\\[6pt]

(\hat{v},\hat{h})|_{t=0} = (v_0^\e -v_0,h_0^\e -h_0).
\end{array}\right.
\end{equation}

In order to close the estimates for $(\ref{Sect1_T_Derivatives_Difference_Eq})$,
we define the following variables which is similar to Alinhac's good unknown (see \cite{Alinhac_1989,Masmoudi_Rousset_2012_FreeBC}):
\begin{equation}\label{Sect1_Good_Unknown_2}
\begin{array}{ll}
\hat{V}^{\ell,\alpha} = \partial_t^{\ell}\mathcal{Z}^{\alpha}\hat{v} - \partial_z^{\varphi}v \partial_t^{\ell}\mathcal{Z}^{\alpha}\hat{\eta}, \quad\
\hat{Q}^{\ell,\alpha} = \partial_t^{\ell}\mathcal{Z}^{\alpha}\hat{q} - \partial_z^{\varphi}q \partial_t^{\ell}\mathcal{Z}^{\alpha}\hat{\eta}.
\end{array}
\end{equation}
When $|\alpha|>0$, we study the equations $(\ref{Sect5_TangentialEstimates_Diff_Eq})$ and estimate $\hat{V}^{\ell,\alpha}, \hat{Q}^{\ell,\alpha}$.
When $|\alpha|=0$, we study the equations $(\ref{Sect5_Tangential_Estimates_Time})$ and estimate $\hat{V}^{\ell,0}$ and $\partial_t^{\ell}\nabla \hat{q}$.

When we prove the estimates, we always rewrite the viscous terms by using the formula
$\triangle^{\varphi} v = 2\nabla^{\varphi}\cdot\mathcal{S}^{\varphi} v -\nabla^{\varphi}(\nabla^{\varphi}\cdot v)
= \nabla^{\varphi}(\nabla^{\varphi}\cdot v) -\nabla^{\varphi}\times(\nabla^{\varphi}\times v)$.
The dissipation of the velocity and the vorticity are controlled by using inequalities:
\begin{equation}\label{Sect1_Good_Useful_Inequalities}
\begin{array}{ll}
\|\nabla v\|_{L^2}^2 \lem \int\limits_{\mathbb{R}^3_{-}}|\mathcal{S}^{\varphi} v|^2 \,\mathrm{d}\mathcal{V}_t + \|v\|_{L^2}^2, \\[8pt]
\|\nabla \omega\|_{L^2}^2 \lem \int\limits_{\mathbb{R}^3_{-}}|\nabla^{\varphi}\times \omega|^2 \,\mathrm{d}\mathcal{V}_t + \|\omega\|_{L^2}^2
+ |\omega\cdot \nn|_{\frac{1}{2}},
\end{array}
\end{equation}
where $(\ref{Sect1_Good_Useful_Inequalities})_1$ is Korn's inequality (see \cite{Masmoudi_Rousset_2012_FreeBC,Wang_2003}),
$(\ref{Sect1_Good_Useful_Inequalities})_2$ can be proved by using Hodge decomposition and $\nabla^{\varphi}\cdot \omega =0$.

\subsection{Main Results for N-S Equations with Surface Tension}

We studied the free surface Navier-Stokes equations with surface tension,
the free surface Navier-Stokes equations $(\ref{Sect1_NavierStokes_Equation})$ with fixed $\sigma>0$ are equivalent to the following system:
\begin{equation}\label{Sect1_NS_Eq_ST}
\left\{\begin{array}{ll}
\partial_t^{\varphi} v + v\cdot\nabla^{\varphi} v + \nabla^{\varphi} q = \e\triangle^{\varphi} v, \hspace{1.04cm} x\in\mathbb{R}^3_{-}, \\[6pt]
\nabla^{\varphi}\cdot v =0, \hspace{4cm} x\in\mathbb{R}^3_{-},\\[6pt]
\partial_t h = v(t,y,0)\cdot N, \hspace{2.77cm} z=0,\\[6pt]
q\nn -2\e \mathcal{S}^{\varphi}v\,\nn =gh\nn  - \sigma H\nn, \hspace{1.25cm} z=0,\\[6pt]
(v,h)|_{t=0} = (v_0^{\e},h_0^{\e}),
\end{array}\right.
\end{equation}
where $H = - \nabla_x \cdot \big(\frac{(-\nabla_y h,1)}{\sqrt{1+|\nabla_y h|^2}}\big)
= \nabla_y\cdot \big(\frac{\nabla_y h}{\sqrt{1+|\nabla_y h|^2}}\big)$.

Let $\e \rto 0$, we formally get the free surface Euler equations with surface tension:
\begin{equation}\label{Sect1_Euler_Eq_ST}
\left\{\begin{array}{ll}
\partial_t^{\varphi} v + v\cdot\nabla^{\varphi} v + \nabla^{\varphi} q = 0, \hspace{1.69cm} x\in\mathbb{R}^3_{-}, \\[6pt]
\nabla^{\varphi}\cdot v =0, \hspace{4cm} x\in\mathbb{R}^3_{-},\\[6pt]
\partial_t h = v(t,y,0)\cdot N, \hspace{2.78cm} z=0,\\[6pt]
q =gh -\sigma H, \hspace{3.62cm} z=0,\\[6pt]
(v,h)|_{t=0} = (v_0,h_0),
\end{array}\right.
\end{equation}
where $(v_0,h_0) = \lim\limits_{\e\rto 0}(v_0^{\e},h_0^{\e})$,
we do not need the Taylor sign condition $(\ref{Sect1_TaylorSign_2})$ for $(\ref{Sect1_Euler_Eq_ST})$ when $\sigma>0$ is fixed.

For the free surface N-S equations $(\ref{Sect1_NS_Eq_ST})$, we show the regularity structure of $(\ref{Sect1_NS_Eq_ST})$ and $(\ref{Sect1_Euler_Eq_ST})$ with $\sigma>0$.
\begin{proposition}\label{Sect1_Proposition_Regularity_Tension}
Fix $\sigma>0$. For $m\geq 6$, assume the initial data $(v_0^{\e},h_0^{\e})$ satisfy the compatibility condition $\Pi\mathcal{S}^{\varphi} v_0^{\e}\nn|_{z=0} =0$ and the regularities:
\begin{equation}\label{Sect1_Proposition_Regularity_Tension_1}
\begin{array}{ll}
\sup\limits_{\e\in (0,1]} \big(
|h_0^{\e}|_{X^{m}} + \e^{\frac{1}{2}}|h_0^{\e}|_{X^{m,\frac{1}{2}}} + \sigma|h_0^{\e}|_{X^{m,1}}
+ \|v_0^{\e}\|_{X^m} + \|\omega_0^{\e}\|_{X^{m-1}} \\[7pt]\quad
+ \e\|\nabla v_0\|_{X^{m-1,1}}^2 + \e\|\nabla\omega_0\|_{X^{m-1}}^2
+ \|\omega_0^{\e}\|_{1,\infty} + \e^{\frac{1}{2}}\|\partial_z \omega_0^{\e}\|_{L^{\infty}}\big)
\leq C_0,
\end{array}
\end{equation}
then the unique Navier-Stokes solution to $(\ref{Sect1_NS_Eq})$ satisfies
\begin{equation}\label{Sect1_Proposition_Regularity_Tension_2}
\begin{array}{ll}
\sup\limits_{t\in [0,T]} \big(
|h^{\e}|_{X^{m-1,1}}^2 + \e^{\frac{1}{2}}|h^{\e}|_{X^{m-1,\frac{3}{2}}}^2 + \sigma |h^{\e}|_{X^{m-1,2}}^2
 + \|v^{\e}\|_{X^{m-1,1}}^2  + \|\partial_z v^{\e}\|_{X^{m-2}}^2 \\[9pt]\quad
  + \|\omega^{\e}\|_{X^{m-2}}^2
+ \|\partial_z v^{\e}\|_{1,\infty}^2 + \e^{\frac{1}{2}}\|\partial_{zz}v^{\e}\|_{L^{\infty}}^2 \big)

+ \|\partial_z v\|_{L^4([0,T],X^{m-1})}^2 \\[8pt]\quad
+ \|\partial_t^m v\|_{L^4([0,T],L^2)}^2 + \|\partial_t^m h\|_{L^4([0,T],L^2)}^2
+ \e\|\partial_t^m h\|_{L^4([0,T],X^{0,\frac{1}{2}})}^2 \\[6pt]\quad
+ \sigma\|\partial_t^m h\|_{L^4([0,T],X^{0,1})}^2
+ \e\int\limits_0^T \|\nabla v^{\e}\|_{X^{m-1,1}}^2 + \|\nabla\partial_z v^{\e}\|_{X^{m-2}}^2 \,\mathrm{d}t \leq C.
\end{array}
\end{equation}

As $\e\rto 0$, the Euler solution to $(\ref{Sect1_Euler_Eq})$ with the initial data $(v^0,h^0)$ satisfies the following regularities:
\begin{equation}\label{Sect1_Proposition_TimeRegularity_Tension_3}
\begin{array}{ll}
\sup\limits_{t\in [0,T]} \big(
|h|_{X^{m-1,1}} + \sigma |h|_{X^{m-1,2}}^2 + \|v\|_{X^{m-1,1}} + \|\partial_z v\|_{X^{m-2}} + \|\omega\|_{X^{m-2}} \big)

\\[9pt]\quad
+ \|\partial_z v\|_{L^4([0,T],X^{m-1})}^2 + \|\partial_t^m v\|_{L^4([0,T],L^2)}^2 + \|\partial_t^m h\|_{L^4([0,T],L^2)}^2 \\[7pt]\quad
+ \sigma\|\partial_t^m h\|_{L^4([0,T],X^{0,1})}^2
\leq C.
\end{array}
\end{equation}
\end{proposition}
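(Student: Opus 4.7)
The plan is to follow the a priori energy framework established for the zero surface tension case (Proposition \ref{Sect1_Proposition_TimeRegularity}), and track carefully how the surface tension term $-\sigma H\nn$ modifies the boundary energy identity. By the continuation argument, it suffices to establish the stated bounds under the bootstrap assumption that the left-hand side of (\ref{Sect1_Proposition_Regularity_Tension_2}) is bounded by a fixed large constant, and then propagate the initial bound (\ref{Sect1_Proposition_Regularity_Tension_1}) to short times uniformly in $\e$. I would split the a priori estimate into three blocks: (a) tangential estimates using Alinhac's good unknowns $V^{\ell,\alpha}, Q^{\ell,\alpha}$ from (\ref{Sect1_Good_Unknown_1}); (b) normal-derivative and vorticity estimates based on (\ref{Sect1_Vorticity_H_Eq}); (c) $L^{\infty}$ estimates via parabolic maximum principle / embedding.

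For block (a), I apply $\partial_t^{\ell}\mathcal{Z}^{\alpha}$ with $\ell+|\alpha|\leq m$, $\ell\leq m-1$ to (\ref{Sect1_NS_Eq_ST}) and rewrite the resulting system in terms of $V^{\ell,\alpha}, Q^{\ell,\alpha}$ so that the differentiated operator commutes with $\partial_t^{\varphi}$ and $\nabla^{\varphi}$ up to controllable commutators, as in \cite{Masmoudi_Rousset_2012_FreeBC}. Testing against $V^{\ell,\alpha}$ and integrating by parts via (\ref{Sect1_Formulas_CanNotUse}), the boundary integral produces $\int_{z=0} (q\nn - 2\e\mathcal{S}^{\varphi}v\,\nn)\cdot V^{\ell,\alpha}\,\mathrm{d}y$, into which I substitute the dynamic boundary condition $q\nn - 2\e\mathcal{S}^{\varphi}v\,\nn = gh\nn - \sigma H\nn$. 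The $gh$ piece is treated exactly as in the $\sigma=0$ case and yields $|h|_{X^{m-1,1}}^2$. The new $-\sigma H$ term, via the identity $H = \nabla_y\cdot\big(\nabla_y h/\sqrt{1+|\nabla_y h|^2}\big)$, produces after tangential integration by parts a positive energy contribution of the form $\sigma\,\frac{\mathrm{d}}{\mathrm{d}t}|h|_{X^{m-1,2}}^2$ plus lower-order quasilinear remainders; this is precisely the source of the $\sigma|h^{\e}|_{X^{m-1,2}}^2$ term in (\ref{Sect1_Proposition_Regularity_Tension_2}). The viscous part in (\ref{Sect1_NS_Eq_ST}) still provides the Korn-based dissipation (\ref{Sect1_Good_Useful_Inequalities})$_1$, together with the usual $\e^{1/2}|h|_{X^{m-1,3/2}}^2$ trace contribution coming from $\e\int_{z=0}\mathcal{S}^{\varphi}v\,\nn\cdot V^{\ell,\alpha}$. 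For the $|\alpha|=0$ case I treat $\nabla\partial_t^{\ell} q$ separately through its elliptic problem (cf.\ (\ref{Sect1_Pressure_EulerPart})), avoiding the $L^2$ bound on $q$ itself.

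For block (b), I estimate normal derivatives not through $\Pi\mathcal{S}^{\varphi}v\,\nn$, but through the vorticity system (\ref{Sect1_Vorticity_H_Eq}): apply $\partial_t^{\ell}\mathcal{Z}^{\alpha}$ with $\ell+|\alpha|\leq m-2$, perform a standard parabolic energy estimate on the damped heat equation, and control the boundary trace of $\omega_h$ by $\textsf{F}^{1,2}[\nabla\varphi](\partial_j v^i)$, which is already bounded by tangential quantities of $v$ from block (a). The dissipation inequality (\ref{Sect1_Good_Useful_Inequalities})$_2$ then converts vorticity control into control of $\|\nabla\omega\|_{L^2}$ and, by Hodge decomposition together with $\nabla^{\varphi}\cdot v=0$, into $\|\partial_z v^{\e}\|_{X^{m-2}}^2$. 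The $L^{\infty}$-estimates $\|\partial_z v^{\e}\|_{1,\infty}^2 + \e^{1/2}\|\partial_{zz}v^{\e}\|_{L^{\infty}}^2$ are obtained by a maximum-principle/heat-kernel estimate on the (\ref{Sect1_Vorticity_H_Eq}) system after flattening via Lagrangian coordinates, using the initial bounds on $\|\omega_0^{\e}\|_{1,\infty}$ and $\e^{1/2}\|\partial_z\omega_0^{\e}\|_{L^{\infty}}$ from (\ref{Sect1_Proposition_Regularity_Tension_1}).

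The pure time-derivative bounds $\|\partial_t^m v\|_{L^4([0,T],L^2)}$ and $\|\partial_t^m h\|_{L^4([0,T],L^2)}$ (together with their $\e^{1/2}$ and $\sigma$ weighted trace variants) require an extra step: differentiate the kinematic and dynamic boundary conditions $m$ times in $t$, use the good unknown at $\ell=m$, and close an $L^4_t$ estimate by combining $L^2_t L^2_x$ dissipation with $L^{\infty}_t$ control, exploiting the surface-tension-induced gain $\sigma|\partial_t^m h|_{X^{0,1}}^2$ on the free surface. The Euler limit (\ref{Sect1_Proposition_TimeRegularity_Tension_3}) is then obtained by dropping all $\e$-weighted dissipation terms and passing to the limit in the uniform bounds. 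The main obstacle I foresee is the simultaneous closure of (a) and (b): the tangential estimate needs $L^{\infty}$ control of $\partial_z v$, while the vorticity estimate needs $\|v\|_{X_{tan}^{m-1,1}}$, and the surface tension term couples $\sigma|h|_{X^{m-1,2}}$ back into the tangential commutators through $\nabla^2\varphi$. Handling this coupling — and in particular tracking the quasilinear $\sigma$-dependent commutators produced by differentiating $H$ — is where the most delicate bookkeeping will occur.
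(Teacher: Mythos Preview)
Your overall architecture is reasonable, but it diverges from the paper's proof in two significant ways, one of which is a genuine gap.

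First, the paper explicitly does \emph{not} use Alinhac's good unknowns $V^{\ell,\alpha}, Q^{\ell,\alpha}$ in the fixed $\sigma>0$ case (see the remark following (\ref{Sect1_HardyIneq}) and the opening line of the proof of Lemma~\ref{Sect6_Tangential_Estimate_Lemma}). For fixed $\sigma>0$, the surface-tension energy $\sigma|h|_{X^{m-1,2}}^2$ provides one extra derivative on $h$, which is enough to control directly the commutators $[\partial_t^{\ell}\mathcal{Z}^{\alpha}, \partial_i^{\varphi}]$ acting on $v$ and $q$ without the good-unknown cancellation; the paper works directly with $\partial_t^{\ell}\mathcal{Z}^{\alpha}v$, $\partial_t^{\ell}\mathcal{Z}^{\alpha}q$ and needs no Taylor sign condition. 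Your Alinhac route is essentially the Wang--Xin approach and can be made to work, but it is more elaborate than what the paper actually does here.

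Second, and more seriously: your pressure estimate for the $|\alpha|=0$ step cites the Dirichlet elliptic problem (\ref{Sect1_Pressure_EulerPart}). With surface tension the Dirichlet data becomes $q|_{z=0} = gh - \sigma H + 2\e\,\mathcal{S}^{\varphi}v\,\nn\cdot\nn$, and the gradient estimate $\|\nabla q\|_{X^{m-1}}$ then demands $\sigma|H|_{X^{m-1,\frac12}} \sim \sigma|h|_{X^{m-1,\frac52}}$, half a derivative beyond what the energy $\sigma|h|_{X^{m-1,2}}$ supplies. The paper states explicitly (just before (\ref{Sect1_Pressure_Neumann})) that this route ``can not be closed due to the less regularity of $h$'' and switches to the Neumann problem (\ref{Sect1_Pressure_Neumann}). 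But the Neumann data contains $\partial_t v\cdot\NN$, so bounding $\|\nabla q\|_{X^{m-1}}$ now forces control of $\|\partial_t^m v\|_{L^2}$, which only closes through Lemma~\ref{Sect6_TimeDer_Estimate_Lemma}. That lemma handles the dangerous term $\int \partial_t^m q\,\nabla^{\varphi}\!\cdot\partial_t^m v\,\mathrm{d}\mathcal{V}_t$ by (i) integrating by parts in $t$ to drop to $\partial_t^{m-1}q$, (ii) applying Hardy's inequality (\ref{Sect1_HardyIneq}) to $\frac{1}{1-z}\partial_t^{m-1}q$ on the infinite-depth domain, and (iii) squaring and integrating in time a second time to reach $L^4_t$. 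Your description of the $\partial_t^m$ step mentions none of these devices, and without the Neumann pressure estimate plus the Hardy/double-time-integration mechanism this block does not close.
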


For any $\sigma\geq 0$, the equation of vorticity and its equivalent boundary condition $\Pi\mathcal{S}^{\varphi}v \nn|_{z=0} =0$ are the same,
thus the estimates of normal derivatives are also the same. Thus, our proof of Proposition $\ref{Sect1_Proposition_Regularity_Tension}$
is the same as the $\sigma=0$ case.
Note that our proof is different from \cite{Masmoudi_Rousset_2012_FreeBC,Wang_Xin_2015}.

However, the estimates of the pressure are very different from the $\sigma=0$ case.
If we couple $\triangle^{\varphi} q = -\partial_j^{\varphi} v^i \partial_i^{\varphi} v^j$ with
its nonhomogeneous Dirichlet boundary condition $q|_{z=0} = g h - \sigma H + 2\e\mathcal{S}^{\varphi} v\nn\cdot\nn$,
the estimates can not be closed due to the less regularity of $h$.

The elliptic equation of the pressure coupled with its Neumann boundary condition is as follows (see \cite{Wang_Xin_2015,Elgindi_Lee_2014,Masmoudi_Rousset_2012_NavierBC}):
\begin{equation}\label{Sect1_Pressure_Neumann}
\left\{\begin{array}{ll}
\triangle^{\varphi} q = -\partial_j^{\varphi} v^i \partial_i^{\varphi} v^j, \\[5pt]

\nabla^{\varphi} q\cdot\NN|_{z=0} = - \partial_t^{\varphi} v\cdot\NN - v\cdot\nabla^{\varphi} v\cdot\NN
+ \e\triangle^{\varphi} v\cdot\NN,
\end{array}\right.
\end{equation}
Using $(\ref{Sect1_Pressure_Neumann})$ to estimate the pressure, we have to prove $\partial_t^m v\in L^4([0,T],L^2)$.
When we estimate $\partial_t^m v$, we have to overcome the difficulties generated by $\partial_t^m q$.
Besides integrating the energy estimates in time twice (see \cite{Wang_Xin_2015}), we apply the following Hardy's inequality (see \cite{Masmoudi_Wong_2015})
to the terms $\partial_z (\partial_t^{m-1}q \cdot f)$ where $\|f\|_{L^2(\mathbb{R}^3_{-}})$ and $\|(1-z)\partial_z f\|_{L^2(\mathbb{R}^3_{-}})$
are bounded.
\begin{equation}\label{Sect1_HardyIneq}
\begin{array}{ll}
\|\frac{1}{1-z} \partial_t^{\ell} q\|_{L^2(\mathbb{R}^3_{-})}
\lem \big|\partial_t^{\ell} q|_{z=0}\big|_{L^2(\mathbb{R}^2)} + \|\partial_z \partial_t^{\ell} q\|_{L^2(\mathbb{R}^3_{-})},
\quad 0\leq\ell\leq m-1.
\end{array}
\end{equation}

Note that \cite{Wang_Xin_2015} considered the finite fluid depth,
for which $\|\partial_t^{\ell} q\|_{L^2(\mathbb{R}^2\times [-L,0])}$ is bounded.
While we consider the infinite fluid depth in this paper, $\|\partial_t^{\ell} q\|_{L^2}$ has no bound in general.
Another difference is that \cite{Wang_Xin_2015} needs Taylor sign condition and Alinhac's good unknown $(\ref{Sect1_Good_Unknown_1})$
to estimate tangential derivatives. While we do not need them for the fixed $\sigma>0$.

D. Coutand and S. Shkoller (see \cite{Coutand_Shkoller_2007}) proved the well-posedness of
the free surface incompressible Euler equations $(\ref{Sect1_Euler_Eq_ST})$ with surface tension.
We state their results in our formulation as follows:

{\it
Suppose that $\sigma>0$ is fixed, $h_0\in H^{5.5}(\mathbb{R}^2), v_0\in H^{4.5}(\mathbb{R}^3_{-})$, then there exists $T>0$ and a solution $(v,q,h)$ of $(\ref{Sect1_Euler_Eq_ST})$
with $v\in L^{\infty}([0,T],H^{4.5}(\mathbb{R}^3_{-})), \nabla q\in L^{\infty}([0,T],H^3(\mathbb{R}^3_{-})), h\in L^{\infty}([0,T],H^{5.5}(\mathbb{R}^2))$. The solution is unique if
$h_0\in H^{6.5}(\mathbb{R}^2), v_0\in H^{5.5}(\mathbb{R}^3_{-})$. }

\vspace{0.3cm}

Since the equations of the vorticity and its equivalent boundary condition $\Pi\mathcal{S}^{\varphi}v\nn|_{z=0} =0$ are the same as the $\sigma=0$ case,
Theorem $\ref{Sect1_Thm_StrongLayer}$ is also valid for the equations $(\ref{Sect1_NS_Eq})$ with $\sigma>0$. Thus, we are mainly concerned with convergence rates of the inviscid limit. The results are stated in the following theorem:
\begin{theorem}\label{Sect1_Thm_StrongLayer_ST}
Assume $T>0$ is finite, fixed and independent of $\e$, $(v^{\e},h^{\e})$ is the solution in $[0,T]$ of Navier-Stokes equations $(\ref{Sect1_NS_Eq})$ with initial data $(v^{\e}_0,h^{\e}_0)$ satisfying $(\ref{Sect1_Proposition_TimeRegularity_1})$, $\omega^{\e}$ is its vorticity.
$(v,h)$ is the solution in $[0,T]$ of Euler equations $(\ref{Sect1_Euler_Eq})$ with initial data $(v_0,h_0)\in X^{m-1,1}(\mathbb{R}^3_{-}) \times X^{m-1,1}(\mathbb{R}^2)$, $\omega$ is its vorticity.
Assume there exists an integer $k$ where $1\leq k\leq m-2$, such that $\|v^{\e}_0 -v_0\|_{X^{k}(\mathbb{R}^3_{-})} =O(\e^{\lambda^v})$,
$|h^{\e}_0 -h_0|_{X^{k}(\mathbb{R}^2)} =O(\e^{\lambda^h})$, $\|\omega^{\e}_0 - \omega_0\|_{X^{k-1}(\mathbb{R}^3_{-})} =O(\e^{\lambda^{\omega}_1})$, where
$\lambda^v>0, \lambda^h>0, \lambda^{\omega}_1>0$.

If the Euler boundary data satisfies $\Pi\mathcal{S}^{\varphi} v\nn|_{z=0}\neq 0$ in $[0,T]$, then the convergence rates of the inviscid limit satisfy
\begin{equation}\label{Sect1_Thm7_ConvergenceRates_1}
\begin{array}{ll}
\|v^{\e} -v\|_{X_{tan}^{k-1,1}} + |h^{\e} -h|_{X^{k-1,2}} = O(\e^{\min\{\frac{1}{4},\lambda^v,\lambda^h, \lambda^{\omega}_1\}}), \\[7pt]
\|\NN^{\e}\cdot \partial_z^{\varphi^{\e}} v^{\e} - \NN\cdot \partial_z^{\varphi} v\|_{X_{tan}^{k-1}} + \|\NN^{\e}\cdot \omega^{\e} - \NN\cdot \omega\|_{X_{tan}^{k-1}}  = O(\e^{\min\{\frac{1}{4},\lambda^v,\lambda^h, \lambda^{\omega}_1\}}), \\[7pt]
\|\partial_z^{\varphi^{\e}} v^{\e} -\partial_z^{\varphi} v\|_{X_{tan}^{k-2}} + \|\omega^{\e} -\omega\|_{X_{tan}^{k-2}}
= O(\e^{\min\{\frac{1}{8},\frac{\lambda^v}{2},\frac{\lambda^h}{2}, \frac{\lambda^{\omega}_1}{2}\}}), \\[7pt]

\|\nabla^{\varphi^{\e}} q^{\e} - \nabla^{\varphi} q\|_{X_{tan}^{k-2}} + \|\triangle^{\varphi^{\e}} q^{\e} - \triangle^{\varphi} q\|_{X_{tan}^{k-2}}
= O(\e^{\min\{\frac{1}{8},\frac{\lambda^v}{2},\frac{\lambda^h}{2}, \frac{\lambda^{\omega}_1}{2}\}}),
\\[7pt]

\|v^{\e} -v\|_{Y_{tan}^{k-3}} + |h^{\e} -h|_{Y^{k-2}} = O(\e^{\min\{\frac{1}{8},\frac{\lambda^v}{2},\frac{\lambda^h}{2}, \frac{\lambda^{\omega}_1}{2}\}}), \\[7pt]
\|\NN^{\e}\cdot \partial_z^{\varphi^{\e}} v^{\e} - \NN\cdot \partial_z^{\varphi} v\|_{Y_{tan}^{k-4}} + \|\NN^{\e}\cdot \omega^{\e} - \NN\cdot \omega\|_{Y_{tan}^{k-4}} = O(\e^{\min\{\frac{1}{8},\frac{\lambda^v}{2},\frac{\lambda^h}{2}, \frac{\lambda^{\omega}_1}{2}\}}).
\end{array}
\end{equation}

If the Euler boundary data satisfies $\Pi\mathcal{S}^{\varphi} v\nn|_{z=0} = 0$ in $[0,T]$,
assume $\|\omega^{\e}_0 - \omega_0\|_{X^{k-2}(\mathbb{R}^3_{-})} =O(\e^{\lambda^{\omega}_2})$ where $\lambda^{\omega}_2>0$,
then the convergence rates of the inviscid limit satisfy
\begin{equation}\label{Sect1_Thm7_ConvergenceRates_2}
\begin{array}{ll}
\|v^{\e} -v\|_{X_{tan}^{k-2,1}} + |h^{\e} -h|_{X^{k-2,2}} = O(\e^{\min\{\frac{1}{2},\lambda^v,\lambda^h, \lambda^{\omega}_2\}}), \\[7pt]
\|\NN^{\e}\cdot \partial_z^{\varphi^{\e}} v^{\e} - \NN\cdot \partial_z^{\varphi} v\|_{X_{tan}^{k-2}} + \|\NN^{\e}\cdot \omega^{\e} - \NN\cdot \omega\|_{X_{tan}^{k-2}}  = O(\e^{\min\{\frac{1}{2},\lambda^v,\lambda^h, \lambda^{\omega}_2\}}), \\[7pt]
\|\partial_z^{\varphi^{\e}} v^{\e} -\partial_z^{\varphi} v\|_{X_{tan}^{k-3}} + \|\omega^{\e} -\omega\|_{X_{tan}^{k-3}}
= O(\e^{\min\{\frac{1}{4},\frac{\lambda^v}{2},\frac{\lambda^h}{2}, \frac{\lambda^{\omega}_2}{2}\}}), \\[7pt]

\|\nabla^{\varphi^{\e}} q^{\e} - \nabla^{\varphi} q\|_{X_{tan}^{k-3}} + \|\triangle^{\varphi^{\e}} q^{\e} - \triangle^{\varphi} q\|_{X_{tan}^{k-3}}
= O(\e^{\min\{\frac{1}{4},\frac{\lambda^v}{2},\frac{\lambda^h}{2}, \frac{\lambda^{\omega}_2}{2}\}}),
\\[7pt]

\|v^{\e} -v\|_{Y_{tan}^{k-4}} + |h^{\e} -h|_{Y^{k-3}} = O(\e^{\min\{\frac{1}{4},\frac{\lambda^v}{2},\frac{\lambda^h}{2}, \frac{\lambda^{\omega}_2}{2}\}}), \\[7pt]
\|\NN^{\e}\cdot \partial_z^{\varphi^{\e}} v^{\e} - \NN\cdot \partial_z^{\varphi} v\|_{Y_{tan}^{k-5}} + \|\NN^{\e}\cdot \omega^{\e} - \NN\cdot \omega\|_{Y_{tan}^{k-5}} = O(\e^{\min\{\frac{1}{4},\frac{\lambda^v}{2},\frac{\lambda^h}{2}, \frac{\lambda^{\omega}_2}{2}\}}).
\end{array}
\end{equation}
\end{theorem}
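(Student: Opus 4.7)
The proof proceeds along the same lines as Theorem \ref{Sect1_Thm_ConvergenceRates}, but with two structural changes forced by $\sigma>0$. The first is that the difference system $(\ref{Sect1_T_Derivatives_Difference_Eq})$ acquires a surface-tension correction on the free boundary: the dynamical boundary condition becomes
\begin{equation*}
(\hat q-g\hat h)\NN^\e + \sigma\bigl(H(h^\e)-H(h)\bigr)\NN^\e - 2\e\mathcal S^{\varphi^\e}\hat v\,\NN^\e = 2\e\mathcal S^{\varphi^\e}v\,\NN^\e.
\end{equation*}
Writing $H(h^\e)-H(h)=\nabla_y\!\cdot\!\bigl(A(h^\e,h)\nabla_y\hat h\bigr)$ for a symmetric matrix $A$ uniformly positive for small $|\nabla h|$, we see that the $\sigma$-term supplies a genuine second-order gain on $\hat h$. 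The second change is that the Taylor sign condition is no longer needed: the boundary integrals involving $g\hat h$ are absorbed into the curvature term. I plan to introduce the good unknowns $\hat V^{\ell,\alpha},\hat Q^{\ell,\alpha}$ of $(\ref{Sect1_Good_Unknown_2})$ exactly as in the $\sigma=0$ case and to use the regularity structure of Proposition \ref{Sect1_Proposition_Regularity_Tension} as a black box for all ``bounded'' quantities appearing in the commutators.

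For tangential estimates with $\ell+|\alpha|\le k$ and $|\alpha|>0$, I would apply $\partial_t^\ell\mathcal Z^\alpha$ to the difference system, derive the evolution equation for $\hat V^{\ell,\alpha}$ parallel to Section 5 of the paper, test it against $\hat V^{\ell,\alpha}\,\partial_z\varphi^\e$, and use $\nabla^{\varphi^\e}\!\cdot\!\hat V^{\ell,\alpha}$ plus the boundary condition to trade the pressure term for a boundary integral. The crucial difference: the surface-tension part of that boundary integral produces, after integration by parts in $y$, the positive quantity $\tfrac{\sigma}{2}\tfrac{d}{dt}\!\!\int|\sqrt{A}\,\partial_t^\ell\partial_y^\alpha\nabla_y\hat h|^2\,dy$, which is exactly the enhancement that upgrades $|\hat h|_{X^{k-1,1}}$ (obtained in Theorem \ref{Sect1_Thm_ConvergenceRates}) to $|\hat h|_{X^{k-1,2}}$. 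The remaining commutator, viscous and transport terms are handled as in the $\sigma=0$ proof, yielding either the rate $O(\e^{\min\{1/4,\lambda^v,\lambda^h,\lambda^\omega_1\}})$ or, when $\Pi\mathcal S^\varphi v\,\nn|_{z=0}=0$, the rate $O(\e^{\min\{1/2,\lambda^v,\lambda^h,\lambda^\omega_2\}})$. The case $|\alpha|=0$ is treated with $\partial_t^\ell\nabla\hat q$ and $\hat V^{\ell,0}$ together, avoiding direct estimates of $\|\partial_t^\ell\hat q\|_{L^2}$; the surface-tension boundary term, again positive, makes the Taylor sign hypothesis unnecessary.

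Since the vorticity equation $(\ref{Sect1_Vorticity_H_Eq})$ and its equivalent boundary condition $\Pi\mathcal S^\varphi v\,\nn|_{z=0}=0$ are identical to the $\sigma=0$ case, the normal derivative estimates $\|\partial_z^{\varphi^\e}v^\e-\partial_z^\varphi v\|_{X_{tan}^{k-2}}$ and $\|\omega^\e-\omega\|_{X_{tan}^{k-2}}$ follow verbatim from the analysis of $(\ref{Sect1_N_Derivatives_Difference_Eq})$. The rate loses a factor $\e^{1/2}$ relative to the tangential rate because $\e\|\nabla\hat\omega\|_{L^2}^2$ is the only dissipation available, as in Theorem \ref{Sect1_Thm_ConvergenceRates}(ii). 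For the pressure, applying $\partial_z^{\varphi^\e}$ and using $\triangle^{\varphi^\e}\hat q=\text{quadratic differences}$ together with the Neumann condition obtained from the momentum equation reduces $\|\nabla^{\varphi^\e}\hat q\|_{X_{tan}^{k-2}}$ to the already-estimated velocity differences. The $L^\infty$ estimates in $Y_{tan}$ follow from the anisotropic Sobolev inequality $\|f\|_{L^\infty}^2\lesssim\|f\|_{H^{s_1}_{tan}}\|\partial_z f\|_{H^{s_2}_{tan}}$ with $s_1+s_2>2$, and lose two conormal indices and a factor $\e^{1/2}$; the gain from surface tension transfers directly and explains $|h^\e-h|_{Y^{k-2}}$ in place of $|h^\e-h|_{Y^{k-3}}$.

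The main obstacle is closing the surface-tension boundary term without the Taylor sign condition. Because $A(h^\e,h)$ depends quasilinearly on $\nabla h^\e,\nabla h$, commuting $\partial_t^\ell\partial_y^\alpha$ inside $H(h^\e)-H(h)$ produces commutators of the form $\partial_t^\ell\partial_y^\alpha\bigl(A(h^\e,h)\bigr)\nabla_y\hat h$, which must be controlled by $|\hat h|_{X^{k-1,2}}$ itself after a Gronwall absorption; the coefficient here is bounded only by $|\nabla h|_{Y^{k-1}}+|\nabla h^\e|_{Y^{k-1}}$ times lower-order norms, so one must be careful to keep a small constant in front to permit absorption. The second delicate point, inherited from the $\sigma=0$ analysis, is the treatment of $\e\triangle^{\varphi^\e}v$ in the momentum equation, which contributes $O(\sqrt\e)$ through the trace on $\{z=0\}$ and dictates the $\e^{1/4}$ or $\e^{1/2}$ ceiling in each rate.
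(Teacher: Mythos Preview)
Your overall architecture is close to the paper's, and you correctly identify that the surface-tension contribution in the dynamical boundary condition yields a coercive $\sigma\,\tfrac{d}{dt}\int|\sqrt{A}\,\nabla_y\partial_t^\ell\partial_y^\alpha\hat h|^2\,dy$ term, which is precisely why the Taylor sign condition is dispensable and why $|\hat h|$ gains one tangential derivative over the $\sigma=0$ result. The paper in fact exploits this to drop Alinhac's good unknowns altogether and works directly with $\partial_t^\ell\mathcal Z^\alpha\hat v,\ \partial_t^\ell\mathcal Z^\alpha\hat q$; your choice to keep $\hat V^{\ell,\alpha},\hat Q^{\ell,\alpha}$ is not wrong, merely heavier.

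There is, however, a genuine gap in how you close the pressure. Inside the tangential estimate at level $\ell+|\alpha|\le k$, the commutator $[\partial_t^\ell\mathcal Z^\alpha,\nabla^{\varphi^\e}]\hat q$ forces you to control $\|\nabla\hat q\|_{X^{k-1}}$, not merely $\|\nabla\hat q\|_{X^{k-2}}$ as you write. For $\sigma>0$ the Dirichlet route of Lemma~\ref{Sect5_Pressure_Lemma} is unavailable: the boundary value $\hat q|_{z=0}=g\hat h-\sigma\hat H+O(\e)$ contains $\nabla_y^2\hat h$, so the $H^{1/2}$ trace bound would demand $|\hat h|_{X^{k-1,5/2}}$, half a derivative more than the $|\hat h|_{X^{k-1,2}}$ that surface tension provides. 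This is exactly the obstruction the paper flags in Section~6. The Neumann route you invoke does work, but it comes at a price: the boundary data $\nabla^{\varphi^\e}\hat q\cdot\NN^\e|_{z=0}$ contains $\partial_t\hat v\cdot\NN^\e$, so $\|\nabla\hat q\|_{X^{k-1}}$ requires $\|\partial_t^k\hat v\|_{L^2}$, a top-order time derivative that lies \emph{outside} the space $X^{k-1,1}$ you are estimating.

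The paper closes this loop with a separate lemma for $\partial_t^k\hat v$ and $\partial_t^k\hat h$ in $L^4_t L^2_x$, obtained by integrating the energy identity in time \emph{twice} (after Wang--Xin) and handling the resulting undifferentiated pressure term $\int\partial_t^k\hat q\,\nabla^{\varphi^\e}\!\cdot\partial_t^k\hat v$ via Hardy's inequality $(\ref{Sect1_HardyIneq})$: one writes $\partial_t^k\hat q=\partial_t(\partial_t^{k-1}\hat q)$, integrates by parts in $t$ and then in $z$, and uses $\|\tfrac{1}{1-z}\partial_t^{k-1}\hat q\|_{L^2}\lesssim|\partial_t^{k-1}\hat q|_{z=0}|_{L^2}+\|\partial_z\partial_t^{k-1}\hat q\|_{L^2}$ to avoid ever estimating $\|\partial_t^{k-1}\hat q\|_{L^2}$ itself (which is infinite on the half-space). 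This $L^4_t$ estimate for $\partial_t^k\hat v$ is the missing ingredient in your plan; without it, neither the Neumann pressure bound at level $k-1$ nor the tangential Gronwall closes.
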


Beside Remark $\ref{Sect1_Remark_ConvergenceRates}$, we supplement the following remarks:
\begin{remark}\label{Sect1_Remark_ST}
(i) When $\sigma>0$, the surface tension changes the regularity structure of Navier-Stokes solutions and Euler solutions,
it does not change convergence rates of the inviscid limit. For the fixed $\sigma>0$, we need neither Taylor sign condition
nor Alinhac's good unknown. However, if $\sigma\rto 0$ is allowed, we need both Taylor sign condition and Alinhac's good unknown to close
a priori estimates.

(ii). \cite{Ambrose_Masmoudi_2005,Ambrose_Masmoudi_2009,Wang_Xin_2015} studied the zero surface tension limit of water waves or the free surface N-S equations, but convergence rates of the zero surface tension limit are unknown.
For the equations $(\ref{Sect1_NS_Eq_ST})$, $\sigma\rto 0$ is very different from $\e\rto 0$, because
$\e\rto 0$ implies some boundary layers generate, $\sigma\rto 0$ implies the height function $h$ loses some regularities.
By using the variables $(\ref{Sect1_Good_Unknown_2})$, it is not difficult for extending our estimates to provide the convergence rates.

Assume $1\leq k\leq m-2$, $\|v^{\e}_0 -v_0\|_{X^{k}(\mathbb{R}^3_{-})} =O(\sigma^{\mu^v}, \e^{\lambda^v})$, $|h^{\e}_0 -h_0|_{X^{k}(\mathbb{R}^2)}  =O(\sigma^{\mu^h}, \e^{\lambda^h})$, $\|\omega^{\e}_0 - \omega_0\|_{X^{k-1}(\mathbb{R}^3_{-})} =O(\sigma^{\mu^{\omega}}, \e^{\lambda^{\omega}_1})$,
$g- \partial_z^{\varphi^{\e}} q^{\e} \geq c_0>0$.
If $\Pi\mathcal{S}^{\varphi} v\nn|_{z=0}$ $\neq 0$ in $[0,T]$, we have the convergence rates of the inviscid limit:
\begin{equation*}
\begin{array}{ll}
\|v^{\e} -v\|_{X_{tan}^{k-1,1}} + |h^{\e} -h|_{X^{k-1,1}} = O(\sigma^{\min\{\mu^v,\mu^h,\mu^{\omega}\}},
\e^{\min\{\frac{1}{4},\lambda^v,\lambda^h, \lambda^{\omega}_1\}}), \\[6pt]
\|\partial_z^{\varphi^{\e}} v^{\e} -\partial_z^{\varphi} v\|_{X_{tan}^{k-2}} + \|\omega^{\e} -\omega\|_{X_{tan}^{k-2}}
= O(\sigma^{\min\{\mu^v,\mu^h,\mu^{\omega}\}},
\e^{\min\{\frac{1}{8},\frac{\lambda^v}{2},\frac{\lambda^h}{2}, \frac{\lambda^{\omega}_1}{2}\}}), \\[6pt]

\|\nabla^{\varphi^{\e}} q^{\e} - \nabla^{\varphi} q\|_{X_{tan}^{k-2}} + \|\triangle^{\varphi^{\e}} q^{\e} - \triangle^{\varphi} q\|_{X_{tan}^{k-2}}
\\[5pt]\hspace{3.2cm}
= O(\sigma^{\min\{\mu^v,\mu^h,\mu^{\omega}\}},
\e^{\min\{\frac{1}{8},\frac{\lambda^v}{2},\frac{\lambda^h}{2}, \frac{\lambda^{\omega}_1}{2}\}}),
\\[6pt]

\|v^{\e} -v\|_{Y_{tan}^{k-3}} + |h^{\e} -h|_{Y^{k-3}} = O(\sigma^{\min\{\mu^v,\mu^h,\mu^{\omega}\}},
\e^{\min\{\frac{1}{8},\frac{\lambda^v}{2},\frac{\lambda^h}{2}, \frac{\lambda^{\omega}_1}{2}\}}).
\end{array}
\end{equation*}

If $\Pi\mathcal{S}^{\varphi} v\nn|_{z=0}= 0$ in $[0,T]$, we only adjust the indices of $\e$ in the above convergence rates,
the results are similar.
\end{remark}

\vspace{0.2cm}
The rest of the paper is organized as follows: In Section 2, we study the boundary value of the vorticity, determine the regularity structure of
N-S solutions with $\sigma=0$. In Section 3, we study the strong vorticity layer caused by the strong initial vorticity layer.
In Section 4, we study the strong vorticity layer caused by the discrepancy between boundary values of the vorticities.
In Section 5, we estimate the convergence rates of the inviscid limit for $\sigma=0$. In Section 6, we determine the regularity structure
of N-S solutions with $\sigma>0$. In Section 7, we estimate the convergence rates of the inviscid limit for $\sigma>0$.
In the Appendices A and B, we derive the equations and their boundary conditions which are useful for a priori estimates.

\section{Vorticity, Normal Derivatives and Regularity Structure of Navier-Stokes Solutions for $\sigma=0$}
In this section, we determine the relationship between the vorticity on the free boundary and normal derivatives of the velocity on the free boundary,
and derive the equations of $\omega_h =(\omega^1,\omega^2)$ and their boundary conditions.
When $\sigma=0$, we prove Proposition $\ref{Sect1_Proposition_TimeRegularity}$ on the regularities of Navier-Stokes solutions and Euler solutions.
For simplicity, we omit the superscript ${}^{\e}$ in this section, which represents Navier-Stokes solutions.

\subsection{Vorticity and Normal Derivatives on the Free Boundary}
The following lemma states that the normal derivatives $(\partial_z v^1,\partial_z v^2)$ can be estimated by
the tangential vorticity $\omega_h$.
\begin{lemma}\label{Sect2_NormalDer_Vorticity_Lemma}
Assume $v$ and $\omega$ are the velocity and vorticity of the free surface Navier-Stokes equations $(\ref{Sect1_NS_Eq})$ respectively,
$\|v\|_{X^{m-1,1}} + |h|_{X^{m-1,1}} <+\infty$, then
\begin{equation}\label{Sect2_NormalDer_Vorticity_Estimate}
\begin{array}{ll}
\|\partial_z v^1\|_{X^k} + \|\partial_z v^2\|_{X^k}
\lem \|\omega_h\|_{X^k} + \|v\|_{X^{k,1}} + |h|_{X^{k,\frac{1}{2}}}, \quad k\leq m-1.
\end{array}
\end{equation}

\end{lemma}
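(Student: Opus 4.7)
The plan is to reduce the lemma to an algebraic identity expressing $\partial_z v^1$ and $\partial_z v^2$ in terms of $\omega_h$, tangential derivatives of $v$, and coefficients depending rationally on $\nabla\varphi$, and then to estimate this identity in the conormal spaces.

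Writing out the defining identities $\omega^1 = \partial_2^\varphi v^3 - \partial_3^\varphi v^2$ and $\omega^2 = \partial_3^\varphi v^1 - \partial_1^\varphi v^3$ using $\partial_i^\varphi = \partial_i - \frac{\partial_i\varphi}{\partial_z\varphi}\partial_z$ for $i=1,2$ and $\partial_3^\varphi = \frac{1}{\partial_z\varphi}\partial_z$, one obtains
\begin{align*}
\partial_z v^1 &= (\partial_z\varphi)\,\omega^2 + (\partial_z\varphi)\,\partial_1 v^3 - (\partial_1\varphi)\,\partial_z v^3,\\
\partial_z v^2 &= -(\partial_z\varphi)\,\omega^1 + (\partial_z\varphi)\,\partial_2 v^3 - (\partial_2\varphi)\,\partial_z v^3.
\end{align*}
The remaining unknown $\partial_z v^3$ is then eliminated via the incompressibility $\nabla^\varphi\cdot v = 0$, which after multiplication by $\partial_z\varphi$ reads $\partial_z v^3 = -(\partial_z\varphi)(\partial_1 v^1 + \partial_2 v^2) + (\partial_1\varphi)\,\partial_z v^1 + (\partial_2\varphi)\,\partial_z v^2$. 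The three equations form a linear system for $(\partial_z v^1, \partial_z v^2, \partial_z v^3)$ whose determinant equals $1 + (\partial_1\varphi)^2 + (\partial_2\varphi)^2 \geq 1$, so it inverts explicitly to yield
$$\partial_z v^j = \mathcal{R}_j(\nabla\varphi)\cdot\bigl(\omega^1,\omega^2,\partial_1 v^1,\partial_1 v^3,\partial_2 v^2,\partial_2 v^3\bigr),\qquad j=1,2,$$
where $\mathcal{R}_j$ is a rational function of $\nabla\varphi$ whose denominator is $1 + (\partial_1\varphi)^2 + (\partial_2\varphi)^2$.

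Next I would apply $\partial_t^\ell \mathcal{Z}^\alpha$ with $\ell + |\alpha|\leq k$ to this identity and expand by the Leibniz rule. The $\omega_h$-factor contributes $\|\omega_h\|_{X^k}$, while each tangential derivative $\partial_i v^j$ on the right-hand side gains one extra tangential-conormal differentiation, so its $X^k$ norm is controlled by $\|v\|_{X^{k,1}}$. The coefficients $\mathcal{R}_j(\nabla\varphi)$ depend only on $\nabla\varphi = (\nabla_y\eta,\,A+\partial_z\eta)$ with $\eta = \psi \ast_y h$; since $\psi$ decays rapidly in $z$, a standard trace-type computation in the Fourier variable dual to $y$ yields $\|\mathcal{Z}^\beta \nabla\eta\|_{L^2(\mathbb{R}^3_-)} \lem |h|_{X^{|\beta|,1/2}(\mathbb{R}^2)}$ for $|\beta|\leq k$, which is exactly the origin of the $|h|_{X^{k,1/2}}$ term.

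The main obstacle is organizing the conormal product estimates around the rational coefficient $\mathcal{R}_j(\nabla\varphi)$: the Leibniz expansion of a quotient whose denominator is nonlinear in $\nabla\varphi$ must be controlled term by term, deciding at each instance whether the top-order derivatives should fall on the principal factors $\omega_h$ and $\partial v$ or on $\nabla\varphi$. I would handle this via the standard conormal Moser inequality $\|fg\|_{X^k} \lem \|f\|_{L^\infty}\|g\|_{X^k} + \|f\|_{X^k}\|g\|_{L^\infty}$, combined with the a priori $L^\infty$ bounds on low-order conormal derivatives of $\nabla\varphi$ supplied by the hypothesis $\|v\|_{X^{m-1,1}} + |h|_{X^{m-1,1}} < +\infty$ (via Sobolev embedding, since $m\geq 6$); in each product all but one factor can then be placed in $L^\infty$, and the estimate closes.
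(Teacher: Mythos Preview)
Your proposal is correct and follows essentially the same approach as the paper: both invert the linear algebraic relation among $\partial_z v^1,\partial_z v^2,\partial_z v^3$, the horizontal vorticity $\omega_h$, and the tangential derivatives $\partial_j v^i$ obtained from the definition of $\omega$ together with the divergence-free condition, observing that the relevant determinant is $1+(\partial_1\varphi)^2+(\partial_2\varphi)^2$ (the paper first substitutes for $\partial_z v^3$ to reduce to a $2\times2$ system, you keep it as $3\times3$, but the algebra is identical). Your treatment of the product estimate via a conormal Moser inequality and of the $\nabla\eta$-contribution via the smoothing in $\psi\ast_y h$ is also in line with the paper's (more terse) final step.
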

\begin{proof}
We calculate the vorticity:
\begin{equation}\label{Sect2_NormalDer_Vorticity_Estimate_1}
\left\{\begin{array}{ll}
\omega^1 = \partial_2^{\varphi} v^3 - \partial_z^{\varphi} v^2
= \partial_2 v^3 - \frac{\partial_2\varphi}{\partial_z\varphi}\partial_z v^3
- \frac{1}{\partial_z\varphi}\partial_z v^2, \\[8pt]

\omega^2 = \partial_z^{\varphi} v^1 -\partial_1^{\varphi} v^3
= - \partial_1 v^3 + \frac{\partial_1\varphi}{\partial_z\varphi}\partial_z v^3
+ \frac{1}{\partial_z\varphi}\partial_z v^1, \\[8pt]

\omega^3 = \partial_1^{\varphi} v^2 - \partial_2^{\varphi} v^1
= \partial_1 v^2 - \frac{\partial_1\varphi}{\partial_z\varphi}\partial_z v^2 - \partial_2 v^1 + \frac{\partial_2\varphi}{\partial_z\varphi}\partial_z v^1.
\end{array}\right.
\end{equation}

Plug the following divergence free condition
\begin{equation}\label{Sect2_DivFreeCondition}
\begin{array}{ll}
\partial_z v^3 = \partial_1\varphi\partial_z v^1 + \partial_2\varphi\partial_z v^2 - \partial_z\varphi(\partial_1 v^1 + \partial_2 v^2),
\end{array}
\end{equation}
into $(\ref{Sect2_NormalDer_Vorticity_Estimate_1})$, we get
\begin{equation}\label{Sect2_NormalDer_Vorticity_Estimate_2}
\left\{\begin{array}{ll}
\omega^1 = - \frac{\partial_1\varphi\partial_2\varphi}{\partial_z\varphi}\partial_z v^1
- \frac{1 + (\partial_2\varphi)^2}{\partial_z\varphi}\partial_z v^2
+ \partial_2 v^3 + \partial_2\varphi(\partial_1 v^1 + \partial_2 v^2),
\\[8pt]

\omega^2 = \frac{1+(\partial_1\varphi)^2}{\partial_z\varphi}\partial_z v^1
+ \frac{\partial_1\varphi\partial_2\varphi}{\partial_z\varphi}\partial_z v^2
- \partial_1 v^3 - \partial_1\varphi(\partial_1 v^1 + \partial_2 v^2).
\end{array}\right.
\end{equation}

It follows from $(\ref{Sect2_NormalDer_Vorticity_Estimate_2})$ that
\begin{equation}\label{Sect2_NormalDer_Vorticity_Estimate_3}
\left\{\begin{array}{ll}
\frac{\partial_1\varphi\partial_2\varphi}{\partial_z\varphi}\partial_z v^1
+ \frac{1 + (\partial_2\varphi)^2}{\partial_z\varphi}\partial_z v^2
= - \omega^1 + \partial_2 v^3 + \partial_2\varphi(\partial_1 v^1 + \partial_2 v^2),
\\[8pt]

\frac{1+(\partial_1\varphi)^2}{\partial_z\varphi}\partial_z v^1
+ \frac{\partial_1\varphi\partial_2\varphi}{\partial_z\varphi}\partial_z v^2
= \omega^2 + \partial_1 v^3 + \partial_1\varphi(\partial_1 v^1 + \partial_2 v^2).
\end{array}\right.
\end{equation}

For $(\ref{Sect2_NormalDer_Vorticity_Estimate_3})$, the determinant of the coefficient matrix of $(\partial_z v^1, \partial_z v^2)^{\top}$ is
\begin{equation}\label{Sect2_NormalDer_Vorticity_Estimate_4}
\begin{array}{ll}
\left|\begin{array}{cc}
\frac{\partial_1\varphi\partial_2\varphi}{\partial_z\varphi}
& \frac{1 + (\partial_2\varphi)^2}{\partial_z\varphi} \\[4pt]
\frac{1+(\partial_1\varphi)^2}{\partial_z\varphi}
& \frac{\partial_1\varphi\partial_2\varphi}{\partial_z\varphi}
\end{array}\right|

= -\frac{1+(\partial_1\varphi)^2 + (\partial_2\varphi)^2}{(\partial_z\varphi)^2} \neq 0,
\end{array}
\end{equation}
thus we can solve $\partial_z v^1$ and $\partial_z v^2$ from $(\ref{Sect2_NormalDer_Vorticity_Estimate_3})$, namely
there exist four homogeneous polynomials $f^k[\nabla\varphi](\partial_j v^i),\, k=1,2,3,4$,
which are one order with respect to $\partial_j v^i$, the coefficients are fractions of $\nabla\varphi$.
\begin{equation}\label{Sect2_NormalDer_Vorticity_Estimate_5}
\left\{\begin{array}{ll}
\partial_z v^1 = f^1[\nabla\varphi](\omega^1,\omega^2) + f^2[\nabla\varphi](\partial_j v^i),\ j=1,2,\ i=1,2,3, \\[8pt]
\partial_z v^2 = f^3[\nabla\varphi](\omega^1,\omega^2) + f^4[\nabla\varphi](\partial_j v^i),\ j=1,2,\ i=1,2,3,
\end{array}\right.
\end{equation}
then we have the estimates:
\begin{equation}\label{Sect2_NormalDer_Vorticity_Estimate_6}
\begin{array}{ll}
\|\partial_z v^1\|_{X^k} + \|\partial_z v^2\|_{X^k}
\lem \|\omega_h\|_{X^k} + \sum\limits_{i,j}\|\partial_j v^i\|_{X^k} + \|\nabla\varphi\|_{X^k}.
\end{array}
\end{equation}
Thus, Lemma $\ref{Sect2_NormalDer_Vorticity_Lemma}$ is proved.
\end{proof}

Because the Navier-Stokes boundary data satisfies $\Pi\mathcal{S}^{\varphi} v \nn|_{z=0} =0$, 
the following lemma claims that the boundary value of normal derivatives and tangential vorticity
can be expressed in terms of that of tangential derivatives,
the tangential vorticity $\omega_h$ satisfies $(\ref{Sect1_Vorticity_H_Eq})$.
\begin{lemma}\label{Sect2_Vorticity_H_Eq_BC_Lemma}
Assume $v$ and $\omega$ are the velocity and vorticity of the free surface Navier-Stokes equations $(\ref{Sect1_NS_Eq})$ respectively.
If $\e>0$, then there exist polynomials $\vec{\textsf{F}}^0[\nabla\varphi](\omega_h,\partial_j v^i)$, $\textsf{F}^1 [\nabla\varphi](\partial_j v^i)$, $\textsf{F}^2 [\nabla\varphi](\partial_j v^i)$ such that $\omega_h$ satisfies $(\ref{Sect1_Vorticity_H_Eq})$, 
where $\vec{\textsf{F}}^0[\nabla\varphi](\omega_h,\partial_j v^i)$ is a quadratic polynomial vector with respect to $\omega_h$ and $\partial_j v^i$, $\textsf{F}^1 [\nabla\varphi](\partial_j v^i)$, $\textsf{F}^2 [\nabla\varphi](\partial_j v^i)$ are polynomials with respect to $\partial_j v^i$, all the coefficients are fractions of $\nabla\varphi$.
\end{lemma}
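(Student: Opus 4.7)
The plan is to split the lemma into two independent pieces: derive the interior equation by taking the curl of the momentum equation, and then derive the boundary formulas by combining the dynamical compatibility condition with incompressibility to solve algebraically for the normal derivatives on $\{z=0\}$.

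For the interior equation, I would apply $\nabla^{\varphi}\times$ to the momentum equation $(\ref{Sect1_NS_Eq})_1$. The pressure term disappears since $\nabla^{\varphi}\times\nabla^{\varphi} q = 0$, the advection term produces the transport plus vortex-stretching combination $v\cdot\nabla^{\varphi}\omega - (\omega\cdot\nabla^{\varphi}) v$, and the viscous term becomes $\varepsilon\triangle^{\varphi}\omega$ (all of this works in the $\varphi$-coordinates by the same identities as the Euclidean case, the corrections coming only through the definition of $\partial_i^{\varphi}$). Reading off the first two components gives $\partial_t^{\varphi}\omega^k + v\cdot\nabla^{\varphi}\omega^k - \varepsilon\triangle^{\varphi}\omega^k = \omega^j\partial_j^{\varphi}v^k$ for $k=1,2$. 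To absorb the $\omega^3$ contribution into a quantity depending only on $\omega_h$ and $\partial_j v^i$, I would substitute the algebraic identity $\omega^3 = \partial_1^{\varphi}v^2-\partial_2^{\varphi}v^1$, which is purely first order in $v$. Since $\omega^j\partial_j^{\varphi}v^k$ is already bilinear in $\omega$ and $\partial v$, after this substitution it becomes a quadratic polynomial in $(\omega_h,\partial_j v^i)$ with coefficients that are rational in $\nabla\varphi$; this is the promised $\vec{\textsf{F}}^0$.

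For the boundary conditions, I would first extract from the dynamical condition $(\ref{Sect1_NS_Eq})_4$ the tangential constraint $\Pi\mathcal{S}^{\varphi}v\,\nn|_{z=0}=0$, which holds for all $t$ as long as $\varepsilon>0$ (the initial compatibility condition ensures continuity up to $t=0$). Pairing this identity with tangent vectors $\tau^1=(1,0,\partial_1 h)$ and $\tau^2=(0,1,\partial_2 h)$ yields two scalar equations that are linear in $\partial_j^{\varphi}v^i|_{z=0}$. Together with the divergence-free relation $(\ref{Sect2_DivFreeCondition})$, I obtain three scalar equations involving the three normal derivatives $\partial_z v^1,\partial_z v^2,\partial_z v^3$ at $z=0$, with coefficients rational in $\nabla\varphi$ and with inhomogeneity depending only on tangential derivatives $\partial_j v^i|_{z=0}$, $j=1,2$. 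Eliminating $\partial_z v^3$ via incompressibility reduces the problem to a $2\times 2$ system for $(\partial_z v^1,\partial_z v^2)$; its coefficient matrix is exactly the one appearing in $(\ref{Sect2_NormalDer_Vorticity_Estimate_3})$, whose determinant was computed in $(\ref{Sect2_NormalDer_Vorticity_Estimate_4})$ to be $-(1+(\partial_1\varphi)^2+(\partial_2\varphi)^2)/(\partial_z\varphi)^2\neq 0$. Hence one can solve for $\partial_z v^i|_{z=0}$ as a linear expression in $\partial_j v^i|_{z=0}$ ($j=1,2$) with coefficients rational in $\nabla\varphi$.

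Substituting these expressions into the definitions $\omega^1=\partial_2^{\varphi}v^3-\partial_3^{\varphi}v^2$ and $\omega^2=\partial_3^{\varphi}v^1-\partial_1^{\varphi}v^3$ eliminates all normal derivatives and produces the explicit formulas $\textsf{F}^1[\nabla\varphi](\partial_j v^i)$ and $\textsf{F}^2[\nabla\varphi](\partial_j v^i)$. The main obstacle in this argument is purely algebraic rather than analytic: one must keep careful track of which terms are tangential and which are normal in the twisted operators $\partial_i^{\varphi}$, and one must verify the non-degeneracy of the boundary linear system. Both are manageable because the determinant computation is explicit and is essentially identical to the one already carried out in Lemma $\ref{Sect2_NormalDer_Vorticity_Lemma}$, so the assumption $\partial_z\varphi\geq 1$ underlying the diffeomorphism guarantees invertibility uniformly; the remainder of the proof is bookkeeping.
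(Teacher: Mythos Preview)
Your overall strategy is sound but there are two concrete gaps, one in each half of the argument.

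For the interior equation, substituting $\omega^3=\partial_1^{\varphi}v^2-\partial_2^{\varphi}v^1$ does not by itself put the stretching term into the form claimed. Every operator $\partial_j^{\varphi}$ with $j=1,2,3$ carries a $\partial_z$ contribution (e.g.\ $\partial_1^{\varphi}v^k=\partial_1 v^k-\tfrac{\partial_1\varphi}{\partial_z\varphi}\partial_z v^k$), so after your substitution the right-hand side still contains $\partial_z v^1,\partial_z v^2,\partial_z v^3$. To land in a polynomial of $(\omega_h,\partial_j v^i)$ with $j=1,2$ only, you must additionally invoke the interior identities of Lemma~\ref{Sect2_NormalDer_Vorticity_Lemma}, namely $(\ref{Sect2_NormalDer_Vorticity_Estimate_5})$, which express $\partial_z v^1,\partial_z v^2$ as linear combinations of $\omega_h$ and tangential derivatives, together with the divergence-free relation $(\ref{Sect2_DivFreeCondition})$ for $\partial_z v^3$. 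This is exactly what the paper does in $(\ref{Sect2_Vorticity_H_Eq_2})$; without it your $\vec{\textsf{F}}^0$ is not well-defined as stated.

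For the boundary system, your identification of the coefficient matrix is wrong. The $2\times2$ system produced by $\Pi\mathcal{S}^{\varphi}v\,\nn|_{z=0}=0$ (equivalently $\mathcal{S}^{\varphi}v\,\nn\times\nn=0$) after eliminating $\partial_z v^3$ is \emph{not} the matrix in $(\ref{Sect2_NormalDer_Vorticity_Estimate_3})$; that matrix encodes the relation between $\omega_h$ and $\partial_z v_h$, which is a different linear map. The paper carries out the computation explicitly and arrives at the matrix $\textsf{M}$ of $(\ref{Sect2_Vorticity_H_BC_9})$, whose invertibility is obtained via strict diagonal dominance and therefore requires $|\nabla h|_{\infty}$ to be suitably small. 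Your appeal to the determinant $(\ref{Sect2_NormalDer_Vorticity_Estimate_4})$ is unjustified here; you either need to compute the actual matrix coming from the strain condition and verify its non-degeneracy, or follow the paper and use the smallness assumption on $\nabla h$.
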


\begin{proof}
Firstly, we investigate the following quantity on the free boundary:
\begin{equation}\label{Sect2_Vorticity_H_BC_1}
\begin{array}{ll}
\mathcal{S}^{\varphi}v \nn
= \left(\begin{array}{c}
n^1\partial_1^{\varphi} v^1 + \frac{n^2}{2}(\partial_1^{\varphi} v^2 + \partial_2^{\varphi} v^1)
+ \frac{n^3}{2}(\partial_1^{\varphi} v^3 + \partial_z^{\varphi} v^1) \\[4pt]

\frac{n^1}{2}(\partial_1^{\varphi} v^2 + \partial_2^{\varphi} v^1) + n^2\partial_2^{\varphi} v^2
+ \frac{n^3}{2}(\partial_2^{\varphi} v^3 + \partial_z^{\varphi} v^2) \\[4pt]

\frac{n^1}{2}(\partial_1^{\varphi} v^3 + \partial_z^{\varphi} v^1) + \frac{n^2}{2}(\partial_2^{\varphi} v^3 + \partial_z^{\varphi} v^2)
- n^3\partial_1^{\varphi} v^1 - n^3\partial_2^{\varphi} v^2
\end{array}\right).
\end{array}
\end{equation}

Since $\Pi \mathcal{S}^{\varphi} v \nn =0$, $\mathcal{S}^{\varphi}v\nn = (\mathcal{S}^{\varphi}v \nn\cdot\nn)\nn$,
then $\mathcal{S}^{\varphi}v \nn$ is parallel to $\nn$. By using $\mathcal{S}^{\varphi}v \nn\times\nn =0$, we have
\begin{equation}\label{Sect2_Vorticity_H_BC_2}
\left\{\begin{array}{ll}
n^3[n^1\partial_1^{\varphi} v^1 + \frac{n^2}{2}(\partial_1^{\varphi} v^2 + \partial_2^{\varphi} v^1)
+ \frac{n^3}{2}(\partial_1^{\varphi} v^3 + \partial_z^{\varphi} v^1)] \\[5pt]\quad

= n^1[\frac{n^1}{2}(\partial_1^{\varphi} v^3 + \partial_z^{\varphi} v^1) + \frac{n^2}{2}(\partial_2^{\varphi} v^3 + \partial_z^{\varphi} v^2)
- n^3\partial_1^{\varphi} v^1 - n^3\partial_2^{\varphi} v^2],

\\[10pt]

n^3[\frac{n^1}{2}(\partial_1^{\varphi} v^2 + \partial_2^{\varphi} v^1) + n^2\partial_2^{\varphi} v^2
+ \frac{n^3}{2}(\partial_2^{\varphi} v^3 + \partial_z^{\varphi} v^2)] \\[5pt]\quad

= n^2[\frac{n^1}{2}(\partial_1^{\varphi} v^3 + \partial_z^{\varphi} v^1) + \frac{n^2}{2}(\partial_2^{\varphi} v^3 + \partial_z^{\varphi} v^2)
- n^3\partial_1^{\varphi} v^1 - n^3\partial_2^{\varphi} v^2],

\\[10pt]

n^2[n^1\partial_1^{\varphi} v^1 + \frac{n^2}{2}(\partial_1^{\varphi} v^2 + \partial_2^{\varphi} v^1)
+ \frac{n^3}{2}(\partial_1^{\varphi} v^3 + \partial_z^{\varphi} v^1)] \\[5pt]\quad

= n^1[\frac{n^1}{2}(\partial_1^{\varphi} v^2 + \partial_2^{\varphi} v^1) + n^2\partial_2^{\varphi} v^2
+ \frac{n^3}{2}(\partial_2^{\varphi} v^3 + \partial_z^{\varphi} v^2)].
\end{array}\right.
\end{equation}

Firstly, we solve $\partial_z^{\varphi} v^1$ from $(\ref{Sect2_Vorticity_H_BC_2})$:
\begin{equation}\label{Sect2_Vorticity_H_BC_4}
\begin{array}{ll}
[\frac{(n^3)^2}{2} - \frac{(n^1)^2}{2} - \frac{(n^2)^2}{2}]\partial_z^{\varphi} v^1

= -[\frac{(n^3)^2}{2} - \frac{(n^1)^2}{2} - \frac{(n^2)^2}{2}]\partial_1^{\varphi} v^3 \\[6pt]\quad

+ (\frac{n^1(n^2)^2}{n^3} - 2n^1n^3)\partial_1^{\varphi} v^1
- (n^1n^3 + \frac{n^1(n^2)^2}{n^3})\partial_2^{\varphi} v^2 \\[6pt]\quad

+ [\frac{(n^2)^2}{n^3}\frac{n^2}{2}- \frac{n^1n^2}{n^3}\frac{n^1}{2} -\frac{n^2n^3}{2}](\partial_1^{\varphi} v^2 + \partial_2^{\varphi} v^1),
\end{array}
\end{equation}

Secondly, we solve $\partial_z^{\varphi} v^2$ from $(\ref{Sect2_Vorticity_H_BC_2})$:
\begin{equation}\label{Sect2_Vorticity_H_BC_6}
\begin{array}{ll}
[\frac{(n^3)^2}{2} - \frac{(n^2)^2}{2} - \frac{(n^1)^2}{2}]\partial_z^{\varphi} v^2

= -[\frac{(n^3)^2}{2} - \frac{(n^2)^2}{2} - \frac{(n^1)^2}{2}]\partial_2^{\varphi} v^3 \\[6pt]\quad

- (n^2n^3 + \frac{(n^1)^2n^2}{n^3})\partial_1^{\varphi} v^1
+ (\frac{(n^1)^2n^2}{n^3} -2n^2n^3)\partial_2^{\varphi} v^2 \\[6pt]\quad

+ (\frac{(n^1)^2}{n^3}\frac{n^1}{2} -\frac{n^1n^3}{2} - \frac{n^1n^2}{n^3}\frac{n^2}{2})
(\partial_1^{\varphi} v^2 + \partial_2^{\varphi} v^1).
\end{array}
\end{equation}

It follows from $(\ref{Sect2_Vorticity_H_BC_4})$ and $(\ref{Sect2_Vorticity_H_BC_6})$ that
\begin{equation*}
\begin{array}{ll}
\big[(n^1)^2 + \frac{(n^3)^2}{2} + \frac{1}{2}\frac{(n^1)^4}{(n^3)^2} - \frac{1}{2}\frac{(n^2)^4}{(n^3)^2}
\big]\partial_z v^1
+ \big[n^1n^2 + \frac{(n^1)^3n^2}{(n^3)^2} + \frac{n^1(n^2)^3}{(n^3)^2} \big]\partial_z v^2 \\[8pt]

= -[\frac{(n^3)^2}{2} - \frac{(n^1)^2}{2} - \frac{(n^2)^2}{2}]\big[\partial_z\varphi\partial_1 v^3 - \partial_1\varphi
[- \partial_z\varphi(\partial_1 v^1 + \partial_2 v^2)] \big]
\\[5pt]\quad

+ (\frac{n^1(n^2)^2}{n^3} - 2n^1n^3)(\partial_z\varphi\partial_1 v^1)
- (n^1n^3 + \frac{n^1(n^2)^2}{n^3})(\partial_z\varphi\partial_2 v^2) \\[5pt]\quad

+ [\frac{(n^2)^2}{n^3}\frac{n^2}{2}- \frac{n^1n^2}{n^3}\frac{n^1}{2} -\frac{n^2n^3}{2}]
(\partial_z\varphi\partial_1 v^2 + \partial_z\varphi\partial_2 v^1),
\end{array}
\end{equation*}

\begin{equation*}
\begin{array}{ll}
\big[n^1n^2 + \frac{(n^1)^3n^2}{(n^3)^2} + \frac{n^1(n^2)^3}{(n^3)^2} \big]\partial_z v^1
+ \big[(n^2)^2 + \frac{1}{2}(n^3)^2 + \frac{(n^2)^4}{2(n^3)^2} - \frac{(n^1)^4}{2(n^3)^2}\big]\partial_z v^2 \\[8pt]

= -[\frac{(n^3)^2}{2} - \frac{(n^2)^2}{2} - \frac{(n^1)^2}{2}]\big[\partial_z\varphi\partial_2 v^3 - {\partial_z\varphi}
[- \partial_z\varphi(\partial_1 v^1 + \partial_2 v^2)]\big] \\[5pt]\quad

- (n^2n^3 + \frac{(n^1)^2n^2}{n^3})(\partial_z\varphi\partial_1 v^1)
+ (\frac{(n^1)^2n^2}{n^3} -2n^2n^3)(\partial_z\varphi\partial_2 v^2) \\[5pt]\quad

+ (\frac{(n^1)^2}{n^3}\frac{n^1}{2} -\frac{n^1n^3}{2} - \frac{n^1n^2}{n^3}\frac{n^2}{2})
(\partial_z\varphi\partial_1 v^2 + \partial_z\varphi\partial_2 v^1).
\end{array}
\end{equation*}
where the coefficient matrix of $(\partial_z v^1, \partial_z v^2)^{\top}$ is
\begin{equation}\label{Sect2_Vorticity_H_BC_9}
\begin{array}{ll}
\textsf{M} = \left(\begin{array}{cc}
(n^1)^2 + \frac{(n^3)^2}{2} + \frac{1}{2}\frac{(n^1)^4}{(n^3)^2} - \frac{1}{2}\frac{(n^2)^4}{(n^3)^2}
& n^1n^2 + \frac{(n^1)^3n^2}{(n^3)^2} + \frac{n^1(n^2)^3}{(n^3)^2} \\[4pt]
n^1n^2 + \frac{(n^1)^3n^2}{(n^3)^2} + \frac{n^1(n^2)^3}{(n^3)^2}
& (n^2)^2 + \frac{1}{2}(n^3)^2 + \frac{(n^2)^4}{2(n^3)^2} - \frac{(n^1)^4}{2(n^3)^2}
\end{array}\right).
\end{array}
\end{equation}

Assume $|\nabla h|_{\infty}$ is suitably small, then $n^3$ is suitably large and $|n^1|+|n^2|$ is suitably small such that
$\textsf{M}$ is strictly diagonally dominant matrix. By Levy-Desplanques theorem, $\textsf{M}$ is nondegenerate,
thus we can solve $\partial_z v^1$ and $\partial_z v^2$, namely
there exist two homogeneous polynomials $f^5[\nabla\varphi](\partial_j v^i)$ and $f^6[\nabla\varphi](\partial_j v^i)$,
which are one order with respect to $\partial_j v^i$, the coefficients are fractions of $\nabla\varphi$.
\begin{equation}\label{Sect2_Vorticity_H_BC_10}
\left\{\begin{array}{ll}
\partial_z v^1 = f^5[\nabla\varphi](\partial_j v^i), \ j=1,2,\ i=1,2,3,\\[6pt]
\partial_z v^2 = f^6[\nabla\varphi](\partial_j v^i), \ j=1,2,\ i=1,2,3.
\end{array}\right.
\end{equation}

By $(\ref{Sect2_NormalDer_Vorticity_Estimate_2})$, we have the boundary values of $\omega_h = (\omega^1,\omega^2)$:
\begin{equation}\label{Sect2_Vorticity_H_BC_11}
\left\{\begin{array}{ll}
\omega^1 = - \frac{\partial_1\varphi\partial_2\varphi}{\partial_z\varphi}\partial_z v^1
- \frac{1 + (\partial_2\varphi)^2}{\partial_z\varphi}\partial_z v^2
+ \partial_2 v^3 + \partial_2\varphi(\partial_1 v^1 + \partial_2 v^2) \\[6pt]\hspace{0.47cm}

= - \frac{\partial_1\varphi\partial_2\varphi}{\partial_z\varphi}f^5[\nabla\varphi](\partial_j v^i)
- \frac{1 + (\partial_2\varphi)^2}{\partial_z\varphi}f^6[\nabla\varphi](\partial_j v^i)
+ \partial_2 v^3 + \partial_2\varphi(\partial_1 v^1 + \partial_2 v^2) \\[6pt]\hspace{0.39cm}

:= \textsf{F}^1 [\nabla\varphi](\partial_j v^i),
\\[8pt]

\omega^2 = \frac{1+(\partial_1\varphi)^2}{\partial_z\varphi}\partial_z v^1
+ \frac{\partial_1\varphi\partial_2\varphi}{\partial_z\varphi}\partial_z v^2
- \partial_1 v^3 - \partial_1\varphi(\partial_1 v^1 + \partial_2 v^2) \\[6pt]\hspace{0.47cm}

= \frac{1+(\partial_1\varphi)^2}{\partial_z\varphi}f^5[\nabla\varphi](\partial_j v^i)
+ \frac{\partial_1\varphi\partial_2\varphi}{\partial_z\varphi}f^6[\nabla\varphi](\partial_j v^i)
- \partial_1 v^3 - \partial_1\varphi(\partial_1 v^1 + \partial_2 v^2) \\[6pt]\hspace{0.39cm}

:= \textsf{F}^2 [\nabla\varphi](\partial_j v^i)
\end{array}\right.
\end{equation}

Since $\omega_h$ satisfies the equation:
\begin{equation}\label{Sect2_Vorticity_H_Eq_1}
\begin{array}{ll}
\partial_t^{\varphi} \omega_h + v\cdot\nabla^{\varphi}\omega_h - \e\triangle^{\varphi}\omega_h = \omega_h\cdot\nabla^{\varphi}_h v_h + \omega^3\partial_z^{\varphi} v_h,
\end{array}
\end{equation}
where the force term can be transformed as follows:
\begin{equation}\label{Sect2_Vorticity_H_Eq_2}
\begin{array}{ll}
\omega_h\cdot\nabla^{\varphi}_h v_h + \omega^3\partial_z^{\varphi} v_h \\[7pt]
= \omega_1 (\partial_1 v_h - \frac{\partial_1\varphi}{\partial_z\varphi}\partial_z v_h)
+ \omega_2 (\partial_2 v_h - \frac{\partial_2\varphi}{\partial_z\varphi}\partial_z v_h) \\[6pt]\quad
+ (\partial_1 v^2 - \frac{\partial_1\varphi}{\partial_z\varphi}\partial_z v^2 - \partial_2 v^1 + \frac{\partial_2\varphi}{\partial_z\varphi}\partial_z v^1) \frac{1}{\partial_z\varphi}\partial_z v_h \\[7pt]

= \omega_1 \partial_1 v_h + \omega_2 \partial_2 v_h

- \omega_1 \frac{\partial_1\varphi}{\partial_z\varphi}\vec{f}^{5,6}[\nabla\varphi](\partial_j v^i)
- \omega_2 \frac{\partial_2\varphi}{\partial_z\varphi}\vec{f}^{5,6}[\nabla\varphi](\partial_j v^i) \\[6pt]\quad
+ [\partial_1 v^2 - \frac{\partial_1\varphi}{\partial_z\varphi}f^6[\nabla\varphi](\partial_j v^i) - \partial_2 v^1 + \frac{\partial_2\varphi}{\partial_z\varphi}f^5[\nabla\varphi](\partial_j v^i)] \frac{1}{\partial_z\varphi}\vec{f}^{5,6}[\nabla\varphi](\partial_j v^i)

\\[7pt]
= \vec{\textsf{F}}^0[\nabla\varphi](\omega_h,\partial_j v^i), \ j=1,2,\ i=1,2,3,
\end{array}
\end{equation}
where $\vec{\textsf{F}}^0[\nabla\varphi](\omega_h,\partial_j v^i)$ is a quadratic polynomial vector with $\omega_h$ and $\partial_j v^i$, the coefficients are fractions of $\nabla\varphi$, $\omega_h$ has degree one.

Namely, $\omega_h$ satisfies the equation $(\ref{Sect1_Vorticity_H_Eq})_1$.
Thus, Lemma $\ref{Sect2_Vorticity_H_Eq_BC_Lemma}$ is proved.
\end{proof}

\subsection{Estimates of Derivatives including Time Derivatives}

For the free surface N-S equations $(\ref{Sect1_NS_Eq})$, we develop a priori estimates of tangential derivatives including time derivatives.
The estimates for normal derivatives are very different from \cite{Masmoudi_Rousset_2012_FreeBC}. \cite{Masmoudi_Rousset_2012_FreeBC} used
the variable $\textsf{S}_n =\Pi \mathcal{S}^{\varphi} v \nn$, while we investigate the vorticity in this paper.

$q$ satisfies the elliptic equation with its nonhomogeneous Dirichlet boundary condition
\begin{equation}\label{Sect2_Tangential_Estimate_Pressure_1}
\left\{\begin{array}{ll}
\triangle^{\varphi} q = -\partial_j^{\varphi} v^i \partial_i^{\varphi} v^j, \\[5pt]

q|_{z=0} = gh +2\e\mathcal{S}^{\varphi} v\nn\cdot \nn,
\end{array}\right.
\end{equation}
then it is standard to prove the gradient estimate of $q$:
\begin{equation}\label{Sect2_Tangential_Estimate_Pressure_2}
\begin{array}{ll}
\|\nabla q\|_{X^{m-1}} \lem \|\partial_i^{\varphi} v^j \partial_j^{\varphi} v^i\|_{X^{m-1}} + \big|q|_{z=0}\big|_{X^{m-1,\frac{1}{2}}} \\[8pt]
\lem \|v\|_{X^{m-1,1}} + \|\partial_z v\|_{X^{m-1}} + g|h|_{X^{m-1,\frac{1}{2}}}
+ \e|v_{z=0}|_{X^{m-1,\frac{3}{2}}} + \e|h|_{X^{m-1,\frac{3}{2}}}.
\end{array}
\end{equation}
Note that \cite{Masmoudi_Rousset_2012_FreeBC} estimated the pressure by decomposition $q^{\e} = q^{\e,E} + q^{\e,NS}$,
which satisfy two systems $(\ref{Sect1_Pressure_EulerPart})$. While our estimate is standard.

In order to close the estimates of tangential derivatives of $v$, that is to bound $\|\partial_t^{\ell} v\|_{L^2}$ and 
$\sqrt{\e}\|\nabla \partial_t^{\ell}\mathcal{Z}^{\alpha}v\|_{L^2}$, we must prove two preliminary lemmas of 
$h$ by using the kinetical boundary condition $(\ref{Sect1_NS_Eq})_3$.

The first preliminary lemma concerns $|\partial_t^{\ell} h|_{L^2}$ where $0\leq\ell\leq m-1$.
Note that the estimates of mix derivatives $\partial_t^{\ell}\mathcal{Z}^{\alpha} h$
will be obtained when we estimate mix derivatives $\partial_t^{\ell}\mathcal{Z}^{\alpha} v$, where $|\alpha|>0$.
\begin{lemma}\label{Sect2_Height_Estimates_Lemma}
Assume $0\leq\ell\leq m-1$,
$|\partial_t^{\ell}h|_{L^2}$ have the estimates:
\begin{equation}\label{Sect2_Height_Estimates_Lemma_Eq}
\begin{array}{ll}
|\partial_t^{\ell}h|_{L^2}^2

\lem |h_0|_{X^{m-1}}^2
+ \int\limits_0^t |h|_{X^{m-1,1}}^2 + \|v\|_{X^{m-1,1}}^2 \,\mathrm{d}t
+ \|\partial_z v\|_{L^4([0,T],X^{m-1})}^2.
\end{array}
\end{equation}
\end{lemma}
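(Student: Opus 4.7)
The plan is to exploit the kinematic boundary condition $\partial_t h = v|_{z=0}\cdot \NN$ directly, differentiating it $\ell-1$ times in $t$ and running an $L^2(\mathbb{R}^2)$ energy estimate. Expanding $\NN=(-\nabla_y h,1)^\top$ and Leibniz,
\begin{equation*}
\partial_t^{\ell+1} h \;=\; \partial_t^\ell v^3\big|_{z=0} \;-\; \sum_{j=0}^{\ell}\binom{\ell}{j}\,\partial_t^{j} v_y\big|_{z=0}\cdot \partial_t^{\ell-j}\nabla_y h,
\end{equation*}
so that $\tfrac12\tfrac{d}{dt}|\partial_t^\ell h|_{L^2}^2 = \int_{\mathbb{R}^2} \partial_t^\ell h \cdot \partial_t^{\ell+1} h\,\mathrm dy$ decomposes into a trace integral against $\partial_t^\ell v^3|_{z=0}$ plus Leibniz cross-terms. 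For $\ell=0$ one integrates the ODE directly; for $\ell\geq 1$ the argument is structured as follows.

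The potentially problematic term is $j=0$, in which $\partial_t^\ell\nabla_y h$ has the same time-order as $\partial_t^\ell h$ itself. I would isolate this term and integrate by parts in the tangential direction:
\begin{equation*}
-\int_{\mathbb{R}^2}(v_y|_{z=0}\cdot\nabla_y\partial_t^\ell h)\,\partial_t^\ell h\,\mathrm dy \;=\; \tfrac12\int_{\mathbb{R}^2}(\nabla_y\cdot v_y)|_{z=0}\,|\partial_t^\ell h|^2\mathrm dy \;\lem\; \|v\|_{Y^1_{tan}}|\partial_t^\ell h|_{L^2}^2,
\end{equation*}
which is absorbed via Gronwall. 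The remaining Leibniz pieces ($1\leq j\leq\ell$) pair $\partial_t^\ell h$ with products $\partial_t^{j}v_y\cdot\partial_t^{\ell-j}\nabla_y h$, and are estimated by Cauchy--Schwarz after putting the higher-order factor in $L^2(\mathbb{R}^2)$ and the lower-order factor in $L^\infty$; the trace $|\cdot|_{z=0}$ is handled via $|f|_{L^2(\mathbb{R}^2)}^2\lem \|f\|_{L^2}\|\partial_z f\|_{L^2}+\|f\|_{L^2}^2$, together with $L^\infty$-type conormal Sobolev embeddings. All these contributions are controlled by $|h|_{X^{m-1,1}}^2+\|v\|_{X^{m-1,1}}^2$ up to a time integral.

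The remaining term $\int \partial_t^\ell h\cdot \partial_t^\ell v^3|_{z=0}\,\mathrm dy$ is the most delicate. Trace gives $|\partial_t^\ell v^3|_{z=0}|_{L^2}^2\lem \|\partial_t^\ell v\|_{L^2}\|\partial_z\partial_t^\ell v\|_{L^2}+\|\partial_t^\ell v\|_{L^2}^2$. For $\ell\leq m-2$, Proposition~\ref{Sect1_Proposition_TimeRegularity} bounds $\|\partial_z\partial_t^\ell v\|_{L^2}$ uniformly in $t$ through $\|\partial_z v\|_{X^{m-2}}$, and this term fits into $\int_0^t\|v\|_{X^{m-1,1}}^2\mathrm ds$. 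For $\ell=m-1$, however, $\partial_t^{m-1}\partial_z v$ is \emph{not} in $L^\infty_tL^2$: one only has $L^4_t$ control through $\|\partial_z v\|_{L^4([0,T],X^{m-1})}$. Here I would apply H\"older/Cauchy--Schwarz in time,
\begin{equation*}
\int_0^t \|\partial_t^{m-1} v\|_{L^2}\,\|\partial_z\partial_t^{m-1} v\|_{L^2}\,\mathrm ds \;\lem\; \|v\|_{L^\infty_tX^{m-1}}\,t^{3/4}\,\|\partial_z v\|_{L^4([0,T],X^{m-1})},
\end{equation*}
and absorb via Young to produce precisely the $\|\partial_z v\|_{L^4([0,T],X^{m-1})}^2$ factor appearing in the statement. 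Summing the contributions and applying Gronwall to the resulting differential inequality in $|\partial_t^\ell h|_{L^2}^2$ closes the estimate.

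The main obstacle is the borderline case $\ell=m-1$: the top time derivative of $\partial_z v$ sits only in $L^4_t L^2$, so ordinary time integration of the trace estimate fails, and one must carefully balance $L^\infty_t$ versus $L^4_t$ norms. A subsidiary technical annoyance is keeping the multi-index bookkeeping in the Leibniz expansion consistent with the weighted norm $|h|_{X^{m-1,1}}=\sum_{\ell+|\alpha|\leq m, \ell\leq m-1}|\partial_t^\ell\partial_y^\alpha h|_{L^2}$, so that each intermediate product genuinely falls within the allowed index range rather than creating a derivative of order $m$ in $t$ alone.
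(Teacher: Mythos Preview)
Your overall strategy---differentiate the kinematic boundary condition, run an $L^2(\mathbb{R}^2)$ energy estimate, integrate by parts the top-order transport term, control the trace $\partial_t^\ell v|_{z=0}$ via $\|v\|\cdot\|\partial_z v\|$, and close by Gronwall---is exactly what the paper does. The paper writes the equation in advective form $\partial_t h + v_y\cdot\nabla_y h = v^3$ first and then applies $\partial_t^\ell$, which packages your $j=0$ term and your Leibniz remainders into a single commutator $[\partial_t^\ell, v_y\cdot\nabla_y]h$; but this is cosmetic.

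There is, however, a logical gap in your treatment of the trace term. For $\ell\leq m-2$ you invoke Proposition~\ref{Sect1_Proposition_TimeRegularity} to bound $\|\partial_z\partial_t^\ell v\|_{L^2}$ in $L^\infty_t$. That is circular: this lemma is a building block in the proof of Lemma~\ref{Sect2_Tangential_Estimate_Lemma}, which in turn feeds into Proposition~\ref{Sect1_Proposition_TimeRegularity}. The paper avoids this entirely by \emph{not} splitting cases: after the trace inequality it simply writes
\[
\frac{\mathrm d}{\mathrm dt}|\partial_t^\ell h|_{L^2}^2 \;\lem\; |\partial_t^\ell h|_{L^2}^2 + |h|_{X^{m-1,1}}^2 + \|v\|_{X^{m-1,1}}^2 + \|\partial_z v\|_{X^{m-1}}^2,
\]
integrates in time, and absorbs the last term uniformly for all $\ell\leq m-1$ via H\"older in time,
\[
\int_0^t \|\partial_z v\|_{X^{m-1}}^2\,\mathrm ds \;\leq\; \sqrt{t}\,\|\partial_z v\|_{L^4([0,T],X^{m-1})}^2.
\]
So your $\ell=m-1$ argument (the $L^4_t$ route) is in fact the correct one throughout, and your $\ell\leq m-2$ shortcut should be discarded. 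Once you make that change, your proof coincides with the paper's.
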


\begin{proof}
By the kinetical boundary condition $(\ref{Sect1_NS_Eq})_3$, we have $\partial_t h + v_y\cdot\nabla_y h = v^3$,
apply $\partial_t^{\ell}$ to the above equation, we get
\begin{equation}\label{Sect2_Height_Estimates_Lemma_Eq_1}
\begin{array}{ll}
\partial_t \partial_t^{\ell}h + v_y\cdot \nabla_y \partial_t^{\ell}h = \partial_t^{\ell}v^3 - [\partial_t^{\ell}, v_y\cdot \nabla_y]h.
\end{array}
\end{equation}

Multiply $(\ref{Sect2_Height_Estimates_Lemma_Eq_1})$ with $\partial_t^{\ell}h$, integrate in $\mathbb{R}^2$, we have
\begin{equation}\label{Sect2_Height_Estimates_Lemma_Eq_2}
\begin{array}{ll}
\frac{\mathrm{d}}{\mathrm{d}t}\int\limits_{\mathbb{R}^2} |\partial_t^{\ell}h|^2 \,\mathrm{d}y

= 2\int\limits_{\mathbb{R}^2}\big( \partial_t^{\ell}v^3 - [\partial_t^{\ell}, v_y\cdot \nabla_y]h \big) \partial_t^{\ell}h \,\mathrm{d}y
+ \int\limits_{\mathbb{R}^2} |\partial_t^{\ell}h|^2 \nabla_y\cdot v_y  \,\mathrm{d}y \\[7pt]

\lem |\partial_t^{\ell}h|_{L^2}^2 + |h|_{X^{\ell,1}}^2 + \big|v|_{z=0}\big|_{X^{\ell}}^2 \\[7pt]
\lem |\partial_t^{\ell}h|_{L^2}^2 + |h|_{X^{k-1,1}}^2 + \|v\|_{X^{k-1,1}}^2 + \|\partial_z v\|_{X^{k-1}}^2.
\end{array}
\end{equation}

Sum $\ell$, integrate $(\ref{Sect2_Height_Estimates_Lemma_Eq_2})$ in time and apply the integral form of Gronwall's inequality, we have
\begin{equation}\label{Sect2_Height_Estimates_Lemma_Eq_3}
\begin{array}{ll}
\int\limits_{\mathbb{R}^2} |\partial_t^{\ell}h|^2 \,\mathrm{d}y
\lem |h_0|_{X^{m-1}}^2
+ \int\limits_0^t |h|_{X^{m-1,1}}^2 + \|v\|_{X^{m-1,1}}^2 + \|\partial_z v\|_{X^{m-1}}^2
\,\mathrm{d}t \\[6pt]

\lem |h_0|_{X^{m-1}}^2
+ \int\limits_0^t |h|_{X^{m-1,1}}^2 + \|v\|_{X^{m-1,1}}^2 \,\mathrm{d}t
+ \|\partial_z v\|_{L^4([0,T],X^{m-1})}^2.
\end{array}
\end{equation}
Thus, Lemma $\ref{Sect2_Height_Estimates_Lemma}$ is proved.
\end{proof}

The second preliminary lemma concerns $\sqrt{\e}|\partial_t^{\ell}\mathcal{Z}^{\alpha} h|_{\frac{1}{2}}$,
by which we bound $\sqrt{\e}\|\mathcal{S}^{\varphi}\partial_t^{\ell}\mathcal{Z}^{\alpha}\eta\|_{L^2}$ and then 
we can bound $\sqrt{\e}\|\mathcal{S}^{\varphi}\partial_t^{\ell}\mathcal{Z}^{\alpha} v\|_{L^2}$.

\begin{lemma}\label{Sect2_Height_Viscous_Estimates_Lemma}
Assume $0\leq\ell\leq m-1$,
$\sqrt{\e}|\partial_t^{\ell}\mathcal{Z}^{\alpha}h|_{\frac{1}{2}}$ have the estimates:
\begin{equation}\label{Sect2_Height_Viscous_Estimates_Lemma_Eq}
\begin{array}{ll}
\e|h|_{X^{m-1,\frac{3}{2}}}^2 \leq \e|h_0|_{X^{m-1,\frac{3}{2}}}^2 + \int\limits_0^t|h|_{X^{m-1,1}}^2
+ \e\sum\limits_{\ell\leq m-1,\ell+|\alpha|\leq m}|\nabla V^{\ell,\alpha}|_{L^2}^2 \,\mathrm{d}t.
\end{array}
\end{equation}
\end{lemma}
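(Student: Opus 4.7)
The plan is to perform a tangential $H^{1/2}$-energy estimate on the kinematic boundary condition, exchanging the top-order boundary trace of $v$ for the trace of the Alinhac good unknown $V^{\ell,\alpha}$ from $(\ref{Sect1_Good_Unknown_1})$. Concretely, apply $\partial_t^{\ell}\partial_y^{\alpha}$ (with $\ell\leq m-1$ and $\ell+|\alpha|\leq m$) to the boundary condition $\partial_t h+v_y\cdot\nabla_y h = v^3|_{z=0}$ and set $g:=\partial_t^{\ell}\partial_y^{\alpha} h$; this produces a transport-type equation
\[
\partial_t g + v_y\cdot\nabla_y g = \partial_t^{\ell}\partial_y^{\alpha} v^3\big|_{z=0} - [\partial_t^{\ell}\partial_y^{\alpha},\,v_y\cdot\nabla_y]\, h .
\]
Since $\eta|_{z=0}=h$ and $\mathcal{Z}_3$ vanishes on $\{z=0\}$, the operator $\mathcal{Z}^{\alpha}$ restricts to $\partial_y^{\alpha}$ on the boundary, so the definition of the good unknown gives $\partial_t^{\ell}\partial_y^{\alpha} v^3|_{z=0} = V^{\ell,\alpha,3}|_{z=0} + (\partial_z^{\varphi} v^3|_{z=0})\,g$.

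Next, I test the resulting equation against $\Lambda g$ where $\Lambda:=(-\triangle_y)^{1/2}$ and integrate over $\mathbb{R}^2$: by duality, $\tfrac{1}{2}\tfrac{\mathrm{d}}{\mathrm{d}t}|g|_{H^{1/2}}^2$ plus a transport piece equals the dual pairing $\langle V^{\ell,\alpha,3}|_{z=0}, \Lambda g\rangle$ plus lower-order contributions. The transport term is absorbed via integration by parts and a Kato--Ponce commutator estimate paired with the $W^{1,\infty}$ a priori bound on $v_y$, and the critical boundary contribution obeys, by the standard trace inequality,
\[
\bigl|\langle V^{\ell,\alpha,3}|_{z=0}, \Lambda g\rangle\bigr| \lem \bigl(\|V^{\ell,\alpha}\|_{L^2}+\|\nabla V^{\ell,\alpha}\|_{L^2}\bigr)\,|g|_{H^{1/2}}.
\]
Multiplying by $\e$, applying Young's inequality in the form $2\sqrt{\e}\,a\cdot\sqrt{\e}\,b\leq\e a^2+\e b^2$, integrating in time and summing over admissible $(\ell,\alpha)$ yields
\[
\e|h|_{X^{m-1,\frac{3}{2}}}^2 \lem \e|h_0|_{X^{m-1,\frac{3}{2}}}^2 + \int_0^t\!\Bigl(\e\!\!\sum_{\ell,\alpha}\!\|\nabla V^{\ell,\alpha}\|_{L^2}^2 + \e\!\!\sum_{\ell,\alpha}\!\|V^{\ell,\alpha}\|_{L^2}^2 + \e|h|_{X^{m-1,\frac{3}{2}}}^2 + (\text{l.o.t.})\Bigr)\mathrm{d}s.
\]
The $\e\|V^{\ell,\alpha}\|_{L^2}^2$ piece is handled by the already established tangential bound on $V^{\ell,\alpha}$; the lower-order contributions (from the commutator and from $(\partial_z^{\varphi} v^3)\,g$) are absorbed into $|h|_{X^{m-1,1}}^2$ via the Gagliardo--Nirenberg interpolation $|g|_{H^{1/2}}^2\lem|g|_{L^2}|g|_{H^1}$ whenever $\ell+|\alpha|<m$; and the Gronwall factor $\e|h|_{X^{m-1,3/2}}^2$ is absorbed in the usual way.

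The main obstacle is the top-order case $\ell+|\alpha|=m$, where the interpolation above would require one more tangential derivative of $g$ than is available in $|h|_{X^{m-1,1}}$. This is resolved by invoking the boundary condition a second time: when $\ell\geq 1$, one rewrites $\partial_t g = \partial_t^{\ell-1}\partial_y^{\alpha}(v\cdot N)$ and re-applies the good-unknown substitution with one fewer time derivative, keeping the residual tangential count inside $X^{m-1,1}$; when $\ell=0$ and $|\alpha|=m$, the interpolation is applied directly at spatial level and the $|g|_{H^1}$ factor is then controlled by the trace of $V^{0,\alpha}$ itself via the identity $V^{0,\alpha,3}|_{z=0}= \partial_y^{\alpha}v^3|_{z=0} - (\partial_z^{\varphi}v^3|_{z=0})\,g$. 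A secondary technical point is uniformly bounding the Kato--Ponce commutator $[\Lambda^{1/2}, v_y\cdot\nabla_y]$, which relies on the $W^{1,\infty}$ control of $v_y$ and $\nabla h$ provided by the standing a priori regularity.
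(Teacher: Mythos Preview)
Your approach is essentially the same as the paper's: perform an $H^{1/2}$ energy estimate on the kinematic boundary condition, convert the top-order boundary trace of $v$ into bulk norms of the good unknown $V^{\ell,\alpha}$, and close by Gronwall. The paper implements this slightly differently --- it applies $\partial_t^{\ell}\mathcal{Z}^{\alpha}\Lambda^{1/2}$ (with $\Lambda=(1-\triangle_y)^{1/2}$) to the equation and runs a plain $L^2$ estimate, rather than testing against $\Lambda g$ --- but the two dualizations are equivalent.

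Where the arguments diverge is in the bookkeeping of the source and commutator terms, and here your version is more complicated than necessary. The paper simply bounds the entire right-hand side by
\[
\e|h|_{X^{m-1,\frac{3}{2}}}^2 + \e\big|v|_{z=0}\big|_{X^{m-1,\frac{3}{2}}}^2,
\]
sends the first term to Gronwall, and handles the second by the trace inequality $\e\big|v|_{z=0}\big|_{X^{m-1,3/2}}^2\lem \e\|v\|_{X_{tan}^{m-1,2}}^2+\e\|\partial_z v\|_{X_{tan}^{m-1,1}}^2$, followed by substituting $\partial_t^{\ell}\mathcal{Z}^{\alpha}v=V^{\ell,\alpha}+\partial_z^{\varphi}v\,\partial_t^{\ell}\mathcal{Z}^{\alpha}\eta$ \emph{in the bulk}. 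The $\eta$-pieces are harmless (the only dangerous one, $\partial_{zz}^{\varphi}v\cdot\partial_t^{\ell}\mathcal{Z}^{\alpha}\eta$, is controlled because $\sqrt{\e}\|\partial_{zz}^{\varphi}v\|_{L^\infty}$ is bounded), so everything collapses to $\e\sum\|\nabla V^{\ell,\alpha}\|_{L^2}^2+|h|_{X^{m-1,1}}^2$ uniformly in $(\ell,\alpha)$. In particular, there is no ``top-order obstacle'': the contributions you propose to tame by interpolation and a second invocation of the boundary condition all carry the factor $\e$ and are simply absorbed by Gronwall at the $X^{m-1,3/2}$ level. Your extra maneuvers are not wrong, but they are unnecessary.
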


\begin{proof}
Let $\Lambda$ be a differential operator with respect to $y$, whose Fourier multiplier is $(1+|\xi|^2)^{\frac{1}{2}}$,  
so $|\Lambda^{\frac{1}{2}} h|_{L^2} = |h|_{\frac{1}{2}}$.

By the kinetical boundary condition $(\ref{Sect1_NS_Eq})_3$, 
we have $\partial_t h + v_y\cdot\nabla_y h = v^3$,
apply $\partial_t^{\ell}\mathcal{Z}^{\alpha}\Lambda^{\frac{1}{2}}$ to the above equation, we get
\begin{equation}\label{Sect2_Height_Viscous_Estimates_Lemma_Eq_1}
\begin{array}{ll}
\partial_t \partial_t^{\ell}\mathcal{Z}^{\alpha}\Lambda^{\frac{1}{2}} h 
+ v_y\cdot \nabla_y \partial_t^{\ell}\mathcal{Z}^{\alpha}\Lambda^{\frac{1}{2}} h 
= \partial_t^{\ell}\mathcal{Z}^{\alpha}\Lambda^{\frac{1}{2}}v^3 
- [\partial_t^{\ell}\mathcal{Z}^{\alpha}\Lambda^{\frac{1}{2}}, v_y\cdot \nabla_y]h.
\end{array}
\end{equation}

Multiply $(\ref{Sect2_Height_Viscous_Estimates_Lemma_Eq_1})$ with $\e\partial_t^{\ell}\mathcal{Z}^{\alpha}\Lambda^{\frac{1}{2}} h$, 
integrate in $\mathbb{R}^2$, we have
\begin{equation}\label{Sect2_Height_Viscous_Estimates_Lemma_Eq_2}
\begin{array}{ll}
\e\frac{\mathrm{d}}{\mathrm{d}t}\int\limits_{\mathbb{R}^2} |\partial_t^{\ell}\mathcal{Z}^{\alpha}\Lambda^{\frac{1}{2}} h|^2 \,\mathrm{d}y

= 2\e\int\limits_{\mathbb{R}^2}\big(\partial_t^{\ell}\mathcal{Z}^{\alpha}\Lambda^{\frac{1}{2}} v^3 
- [\partial_t^{\ell}\mathcal{Z}^{\alpha}\Lambda^{\frac{1}{2}}, v_y\cdot \nabla_y]h \big) 
\partial_t^{\ell}\mathcal{Z}^{\alpha}\Lambda^{\frac{1}{2}} h \,\mathrm{d}y \\[7pt]\quad
+ \e\int\limits_{\mathbb{R}^2} |\partial_t^{\ell}\mathcal{Z}^{\alpha}\Lambda^{\frac{1}{2}}h|^2 \nabla_y\cdot v_y  \,\mathrm{d}y \\[7pt]

\lem \e|h|_{X^{m-1,\frac{3}{2}}}^2 + \e\big|v|_{z=0}\big|_{X^{m-1,\frac{3}{2}}}^2 
+ \e|\partial_t^{\ell}\mathcal{Z}^{\alpha}\Lambda^{\frac{1}{2}} h|_{L^2}^2\\[7pt]
\lem \e|h|_{X^{m-1,\frac{3}{2}}}^2 
+ \e\|v\|_{X_{tan}^{m-1,2}}^2 + \e\|\partial_z v\|_{X_{tan}^{m-1,1}}^2 
+ \e|\partial_t^{\ell}\mathcal{Z}^{\alpha}\Lambda^{\frac{1}{2}} h|_{L^2}^2\\[8pt]

\lem \e|h|_{X^{m-1,\frac{3}{2}}}^2 
+ \sum\limits_{\ell\leq m-1,\ell+|\alpha|\leq m}(\e\|\nabla_y V^{\ell,\alpha}\|_{L^2}^2 + \e\|\nabla_y\partial_t^{\ell}\mathcal{Z}^{\alpha}\eta\|_{L^2}^2) \\[12pt]\quad
+ \sum\limits_{\ell\leq m-1,\ell+|\alpha|\leq m}\big[\e\|\partial_z V^{\ell,\alpha}\|_{L^2}^2 
+ \|\partial_z^{\varphi} v\|_{L^{\infty}}^2 \cdot \e\|\partial_z \partial_t^{\ell}\mathcal{Z}^{\alpha}\eta\|_{L^2}^2 \\[12pt]\quad
+ (\sqrt{\e}\|\partial_{zz}^{\varphi} v\|_{L^{\infty}})^2 \|\partial_t^{\ell}\mathcal{Z}^{\alpha}\eta\|_{L^2}^2
+ \e|\partial_t^{\ell}\mathcal{Z}^{\alpha}\Lambda^{\frac{1}{2}} h|_{L^2}^2 \big] \\[7pt]

\lem \e|h|_{X^{m-1,\frac{3}{2}}}^2 +|h|_{X^{m-1,1}}^2 + \e\sum\limits_{\ell\leq m-1,\ell+|\alpha|\leq m}|\nabla V^{\ell,\alpha}|_{L^2}^2.
\end{array}
\end{equation}

Sum $\ell,\alpha$, integrate $(\ref{Sect2_Height_Viscous_Estimates_Lemma_Eq_2})$ in time and apply the integral form of Gronwall's inequality, 
we have $(\ref{Sect2_Height_Viscous_Estimates_Lemma_Eq})$.
Thus, Lemma $\ref{Sect2_Height_Viscous_Estimates_Lemma}$ is proved.
\end{proof}

The following lemma concerns the estimates of tangential derivatives. The proof is different from \cite{Masmoudi_Rousset_2012_FreeBC}
when we estimate $\partial_t^{\ell} v$ where $1\leq \ell\leq m-1$, since $\|\partial_t^{\ell} q\|$ has no bound for infinite fluid depth.
\begin{lemma}\label{Sect2_Tangential_Estimate_Lemma}
Assume the conditions are the same with those of Proposition $\ref{Sect1_Proposition_TimeRegularity}$,
then $v$ and $h$ satisfy the a priori estimate:
\begin{equation}\label{Sect2_Tangential_Estimate}
\begin{array}{ll}
\|v\|_{X^{m-1,1}}^2 + |h|_{X^{m-1,1}}^2 + \e |h|_{X^{m-1,\frac{3}{2}}}^2
+ \e\int\limits_0^t \|\nabla v\|_{X^{m-1,1}}^2 \,\mathrm{d}t \\[5pt]
\lem \|v_0\|_{X^{m-1,1}}^2 + |h_0|_{X^{m-1,1}}^2 + \e |h_0|_{X^{m-1,\frac{3}{2}}}^2
+ \|\partial_z v\|_{L^4([0,T],X^{m-1})}^2.
\end{array}
\end{equation}
\end{lemma}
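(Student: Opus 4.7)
The plan is to carry out tangential energy estimates on the Alinhac good unknowns $(V^{\ell,\alpha},Q^{\ell,\alpha})$ defined in (1.31), treating the cases $|\alpha|>0$ and $|\alpha|=0$ separately. For $\ell+|\alpha|\le m-1$ with $|\alpha|>0$ I would apply $\partial_t^\ell\mathcal Z^\alpha$ to the momentum and incompressibility equations in (1.24), rewrite each $\partial_t^\ell\mathcal Z^\alpha(\partial_t^\varphi v,\ \nabla^\varphi q,\ \triangle^\varphi v)$ in terms of $V^{\ell,\alpha}$ and $Q^{\ell,\alpha}$ modulo commutator terms that are at most of order $\ell+|\alpha|$ in tangential derivatives (controlled by $\|v\|_{X^{m-1,1}}$, $|h|_{X^{m-1,1}}$, $\|\partial_z v\|_{X^{m-1}}$ and fractions of $\nabla\varphi$). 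Then test against $V^{\ell,\alpha}$ and integrate over $\mathbb R^3_-$ with the measure $d\mathcal V_t=\partial_z\varphi\,dy\,dz$, using the three identities in (1.21). The convection term gives a clean $\tfrac12\frac{d}{dt}\|V^{\ell,\alpha}\|^2$ (plus a lower order boundary and bulk remainder), and rewriting $\triangle^\varphi v=2\nabla^\varphi\!\cdot\mathcal S^\varphi v-\nabla^\varphi(\nabla^\varphi\!\cdot v)$ together with Korn's inequality (1.35)$_1$ converts the viscous term into the dissipation $\e\|\nabla V^{\ell,\alpha}\|_{L^2}^2$ (which, combined with commutator bookkeeping, recovers $\e\|\nabla v\|_{X^{m-1,1}}^2$).

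The boundary contribution is the main structural step. Since $\Pi\mathcal S^\varphi v\,\nn=0$, the dynamic boundary condition $(1.24)_4$ projected on $\nn$ yields $q|_{z=0}=gh+2\e\mathcal S^\varphi v\nn\!\cdot\!\nn$, so the good unknown satisfies $Q^{\ell,\alpha}|_{z=0}=g\,\partial_t^\ell\mathcal Z^\alpha h+2\e\,\partial_t^\ell\mathcal Z^\alpha(\mathcal S^\varphi v\nn\!\cdot\!\nn)+(\text{commutators with }\partial_z^\varphi q,\partial_z^\varphi v)$. Pairing $Q^{\ell,\alpha}V^{\ell,\alpha}\!\cdot\!\NN$ on $\{z=0\}$ with the kinematic identity $\partial_t h=v\cdot\NN$ (after commuting) produces the flux $\tfrac{g}{2}\frac{d}{dt}|\partial_t^\ell\mathcal Z^\alpha h|_{L^2}^2$, which is the source of the $|h|_{X^{m-1,1}}^2$ bound; the viscous piece $2\e\,\partial_t^\ell\mathcal Z^\alpha(\mathcal S^\varphi v\nn\!\cdot\!\nn)$ contributes a boundary term that, after one integration by parts in $y$ and using Lemma \ref{Sect2_Height_Viscous_Estimates_Lemma}, is absorbed by $\e|h|_{X^{m-1,3/2}}^2$ plus a fraction of the dissipation. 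The Taylor sign condition $g-\partial_z^\varphi q|_{z=0}\ge c_0>0$ is used exactly to make the $g|\partial_t^\ell\mathcal Z^\alpha h|_{L^2}^2$ term dominate the bad commutator $\partial_z^\varphi q\,\partial_t^\ell\mathcal Z^\alpha\eta$ arising from the definition of $Q^{\ell,\alpha}$.

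The case $|\alpha|=0$ is the main obstacle, because for the infinite depth $\|\partial_t^\ell q\|_{L^2}$ is not bounded, so the dynamic boundary condition cannot be used to close the estimate for $\partial_t^\ell v$ directly. My remedy is to estimate $V^{\ell,0}$ together with $\nabla\partial_t^\ell q$, where the latter is controlled by the elliptic estimate (2.16) for the pressure system (2.15) applied to $\partial_t^\ell q$: solving the Dirichlet problem with boundary data $\partial_t^\ell(gh+2\e\mathcal S^\varphi v\nn\!\cdot\!\nn)$ gives $\|\nabla\partial_t^\ell q\|_{L^2}\lesssim\|v\|_{X^{m-1,1}}+\|\partial_z v\|_{X^{m-1}}+g|h|_{X^{m-1,1/2}}+\e|h|_{X^{m-1,3/2}}$. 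With this in hand, testing the good-unknown equation against $V^{\ell,0}$ and integrating by parts on the pressure gradient using $(1.21)_2$ (no boundary term appears because $V^{\ell,0}\cdot\NN$ at $z=0$ is tied to the kinematic equation and the resulting flux is absorbed into $|\partial_t^\ell h|_{L^2}^2$ via Lemma \ref{Sect2_Height_Estimates_Lemma}) closes the estimate without ever seeing $\|\partial_t^\ell q\|_{L^2}$.

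To finish, I collect all $\ell+|\alpha|\le m-1$, $\ell\le m-1$ terms, invoke Lemmas \ref{Sect2_Height_Estimates_Lemma} and \ref{Sect2_Height_Viscous_Estimates_Lemma} to reconstruct $|h|_{X^{m-1,1}}^2+\e|h|_{X^{m-1,3/2}}^2$ from $V^{\ell,\alpha}$ and the tangential pieces of $h$, bound all remainders by products of the target energy and quantities controlled by the induction hypothesis (noting that the $\|\partial_z v\|_{X^{m-1}}$ loss is absorbed in time by the $L^4_t X^{m-1}$ norm on the right-hand side via Hölder in $t$), and apply the integral Gronwall inequality. This yields exactly (2.18).
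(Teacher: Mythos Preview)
Your treatment of the case $|\alpha|\ge 1$ is essentially the paper's argument: test the good-unknown system against $V^{\ell,\alpha}$, convert the boundary flux via the kinematic condition into $\tfrac12\frac{d}{dt}\int_{z=0}(g-\partial_z^{\varphi}q)|\partial_t^{\ell}\mathcal Z^{\alpha}h|^2\,dy$, and close via the Taylor sign condition. That part is fine.

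The gap is in your $|\alpha|=0$ step. You correctly obtain $\|\nabla\partial_t^{\ell}q\|_{L^2}$ from the Dirichlet elliptic problem, but then you propose to \emph{integrate by parts} on $\int_{\mathbb R^3_-}\nabla^{\varphi}\partial_t^{\ell}q\cdot V^{\ell,0}\,d\mathcal V_t$. After integration by parts the boundary piece is indeed manageable (as you say, $\partial_t^{\ell}q|_{z=0}=g\partial_t^{\ell}h+2\e(\ldots)$ pairs with $V^{\ell,0}\cdot\NN$ through the kinematic equation), but the bulk piece becomes $-\int_{\mathbb R^3_-}\partial_t^{\ell}q\,\nabla^{\varphi}\!\cdot V^{\ell,0}\,d\mathcal V_t$, and for $\ell\ge 1$ the divergence $\nabla^{\varphi}\!\cdot V^{\ell,0}$ is \emph{not} zero: it equals the commutator $-[\partial_t^{\ell},\nabla^{\varphi}\!\cdot\,]v+\ldots$, which is a bounded but nonvanishing function. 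Pairing it with $\partial_t^{\ell}q$ forces $\|\partial_t^{\ell}q\|_{L^2}$ back into the estimate, exactly the quantity you set out to avoid on the infinite-depth domain. So this route does see $\|\partial_t^{\ell}q\|_{L^2}$ after all.

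The paper's remedy is simpler than what you propose: for $|\alpha|=0$ it does \emph{not} integrate by parts on the pressure at all. It keeps the term $-\int_{\mathbb R^3_-}\partial_t^{\ell}\nabla^{\varphi}q\cdot V^{\ell,0}\,d\mathcal V_t$ and bounds it by Cauchy--Schwarz as $\|\nabla q\|_{X^{m-1}}^2+\|V^{\ell,0}\|_{L^2}^2$, using precisely the gradient estimate (2.16) you already derived. Neither the divergence-free condition nor the dynamical boundary condition is used for this case. The only remaining boundary term is the viscous one, $\int_{z=0}2\e\,\mathcal S^{\varphi}V^{\ell,0}\NN\cdot V^{\ell,0}\,dy$; here the paper invokes the boundary identities $\partial_z v^1|_{z=0}=f^5[\nabla\varphi](\partial_j v^i)$, $\partial_z v^2|_{z=0}=f^6[\nabla\varphi](\partial_j v^i)$ (Lemma~2.2) to rewrite $\partial_z v|_{z=0}$ purely in tangential derivatives, so the term is absorbed by $\e\|\nabla v\|_{X^{m-1,1}}^2+\e|h|_{X^{m-1,3/2}}^2$. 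You do not mention this boundary reduction, and it is the mechanism that closes the $|\alpha|=0$ estimate without the dynamical condition.
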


\begin{proof}
Apply $\partial_t^{\ell}\mathcal{Z}^{\alpha}$ to $(\ref{Sect1_NS_Eq})$, 
we use the following commutator (see \cite{Masmoudi_Rousset_2012_FreeBC}):
\begin{equation}\label{Sect2_Tangential_Estimate_1}
\begin{array}{ll}
[\partial_t^{\ell}\mathcal{Z}^{\alpha}, \partial_i^{\varphi}] f
= - \partial_z^{\varphi} f\, \partial_i^{\varphi} (\partial_t^{\ell}\mathcal{Z}^{\alpha}\eta) + b.t.\, ,\quad i=t,1,2,3.
\end{array}
\end{equation}
where the abbreviation $b.t.$ represents bounded terms in this paper. Note that $\partial_t^m \varphi$
and $\partial_t^m h$ are bounded in $L^4([0,T],L^2)$, thus they are also represented by $b.t.$ in $(\ref{Sect2_Tangential_Estimate_1})$.

Similar to \cite{Masmoudi_Rousset_2012_FreeBC},
we choose the Alinhac's good unknown $(\ref{Sect1_Good_Unknown_1})$ as our variable, then $V^{\ell,\alpha}$ and $Q^{\ell,\alpha}$ satisfies
\begin{equation}\label{Sect2_Tangential_Estimate_5}
\left\{\begin{array}{ll}
\partial_t^{\varphi} V^{\ell,\alpha} + v\cdot\nabla^{\varphi} V^{\ell,\alpha} + \nabla^{\varphi} Q^{\ell,\alpha}
-2\e\nabla^{\varphi}\cdot\mathcal{S}^{\varphi} V^{\ell,\alpha} \\[5pt]\quad

= -\partial_t^{\varphi}\partial_z^{\varphi} v \, \partial_t^{\ell}\mathcal{Z}^{\alpha} \eta
- v\cdot\nabla^{\varphi} \partial_z^{\varphi} v \, \partial_t^{\ell}\mathcal{Z}^{\alpha} \eta
- \nabla^{\varphi} \partial_z^{\varphi} q \, \partial_t^{\ell}\mathcal{Z}^{\alpha} \eta \\[7pt]\quad

+ 2\e\nabla^{\varphi}\cdot (\mathcal{S}^{\varphi} \partial_z^{\varphi} v \, \partial_t^{\ell}\mathcal{Z}^{\alpha} \eta)
- 2\e \partial_z^{\varphi}\mathcal{S}^{\varphi} v_y\cdot \nabla_y \partial_t^{\ell}\mathcal{Z}^{\alpha} \eta + b.t. \, , \\[10pt]

\nabla^{\varphi}\cdot V^{\ell,\alpha} = - (\nabla^{\varphi}\cdot\partial_z^{\varphi} v) \, \partial_t^{\ell}\mathcal{Z}^{\alpha} \eta + b.t.\,
= b.t. \, , \\[10pt]

\partial_t \partial_t^{\ell}\mathcal{Z}^{\alpha} h
+ v_y \cdot\nabla_y \partial_t^{\ell}\mathcal{Z}^{\alpha} h
= V^{\ell,\alpha} \cdot \NN
+ b.t.\, ,  \\[10pt]

Q^{\ell,\alpha}\NN
-2\e \mathcal{S}^{\varphi} V^{\ell,\alpha}\,\NN
\\[7pt]\quad
= (g - \partial_z^{\varphi}q) \partial_t^{\ell}\mathcal{Z}^{\alpha}h\NN
+ 2\e (\mathcal{S}^{\varphi}\partial_z^{\varphi} v \,\NN)\, \partial_t^{\ell}\mathcal{Z}^{\alpha} h

- [\partial_t^{\ell}\mathcal{Z}^{\alpha},2\e \mathcal{S}^{\varphi}v\nn\cdot\nn,\NN] \\[8pt]\quad
+ (2\e \mathcal{S}^{\varphi}v - 2\e \mathcal{S}^{\varphi}v\nn\cdot\nn)\,\partial_t^{\ell}\mathcal{Z}^{\alpha}\NN
+2\e [\partial_t^{\ell}\mathcal{Z}^{\alpha},\mathcal{S}^{\varphi}v, \NN]
+ b.t. \, , \\[10pt]

(\partial_t^{\ell}\mathcal{Z}^{\alpha}v, \partial_t^{\ell}\mathcal{Z}^{\alpha}h)|_{t=0}
= (\partial_t^{\ell}\mathcal{Z}^{\alpha}v_0, \partial_t^{\ell}\mathcal{Z}^{\alpha}h_0).
\end{array}\right.
\end{equation}

When $|\alpha|\geq 1, \, 1\leq \ell+|\alpha|\leq m$, we develop the $L^2$ estimate of $V^{\ell,\alpha}$. The estimates are similar to \cite{Masmoudi_Rousset_2012_FreeBC}, but we do not use $g-\partial_z^{\varphi} q^{E}$.
\begin{equation}\label{Sect2_Tangential_Estimate_6}
\begin{array}{ll}
\frac{1}{2}\frac{\mathrm{d}}{\mathrm{d}t}\int\limits_{\mathbb{R}^3_{-}} |V^{\ell,\alpha}|^2 \,\mathrm{d}\mathcal{V}_t
- \int\limits_{\mathbb{R}^3_{-}} Q^{\ell,\alpha} \, \nabla^{\varphi}\cdot V^{\ell,\alpha} \,\mathrm{d}\mathcal{V}_t
+ 2\e \int\limits_{\mathbb{R}^3_{-}} |\mathcal{S}^{\varphi}V^{\ell,\alpha}|^2 \,\mathrm{d}\mathcal{V}_t \\[14pt]

\leq \int\limits_{\{z=0\}} (2\e \mathcal{S}^{\varphi}V^{\ell,\alpha}\NN - Q^{\ell,\alpha}\NN)\cdot V^{\ell,\alpha} \mathrm{d}y
+ \|\partial_z v\|_{X^{m-1}}^2 + \|\nabla q\|_{X^{m-1}}^2
+ \text{b.t.} \\[14pt]

\leq -\int\limits_{\{z=0\}} (g - \partial_z^{\varphi}q) \partial_t^{\ell}\mathcal{Z}^{\alpha}h\NN\cdot V^{\ell,\alpha} \mathrm{d}y
+ \|\partial_z v\|_{X^{m-1}}^2 + \|\nabla q\|_{X^{m-1}}^2
+ \text{b.t.} \\[14pt]

\leq -\int\limits_{\{z=0\}} (g - \partial_z^{\varphi}q) \partial_t^{\ell}\mathcal{Z}^{\alpha}h
(\partial_t \partial_t^{\ell}\mathcal{Z}^{\alpha} h
+ v_y \cdot\nabla_y \partial_t^{\ell}\mathcal{Z}^{\alpha} h) \mathrm{d}y
+ \|\partial_z v\|_{X^{m-1}}^2 \\[11pt]\quad
+ \|\nabla q\|_{X^{m-1}}^2
+ \text{b.t.} \\[10pt]

\leq - \frac{1}{2}\frac{\mathrm{d}}{\mathrm{d}t}
\int\limits_{\{z=0\}} (g - \partial_z^{\varphi}q) |\partial_t^{\ell}\mathcal{Z}^{\alpha}h|^2 \mathrm{d}y
+ \|\partial_z v\|_{X^{m-1}}^2 + \|\nabla q\|_{X^{m-1}}^2
+ \text{b.t.},
\end{array}
\end{equation}
then
\begin{equation}\label{Sect2_Tangential_Estimate_7}
\begin{array}{ll}
\frac{\mathrm{d}}{\mathrm{d}t}\int\limits_{\mathbb{R}^3_{-}} |V^{\ell,\alpha}|^2 \,\mathrm{d}\mathcal{V}_t
+ \frac{\mathrm{d}}{\mathrm{d}t}
\int\limits_{\{z=0\}} (g - \partial_z^{\varphi}q) |\partial_t^{\ell}\mathcal{Z}^{\alpha}h|^2 \mathrm{d}y
+ \e \int\limits_{\mathbb{R}^3_{-}} |\mathcal{S}^{\varphi}V^{\ell,\alpha}|^2 \,\mathrm{d}\mathcal{V}_t \\[14pt]

\lem \|\partial_z v\|_{X^{m-1}}^2 + \|\nabla q\|_{X^{m-1}}^2 + \text{b.t.}
\end{array}
\end{equation}

Since $(g - \partial_z^{\varphi}q)|_{z=0} \geq c_0>0$, a priori estimates can be closed. Thus,
\begin{equation}\label{Sect2_Tangential_Estimate_8}
\begin{array}{ll}
\|\partial_t^{\ell}\mathcal{Z}^{\alpha}v\|^2 + |\partial_t^{\ell}\mathcal{Z}^{\alpha} h|^2 + \e |\partial_t^{\ell}\mathcal{Z}^{\alpha} h|_{\frac{1}{2}}^2
+ \e\int\limits_0^t \|\nabla \partial_t^{\ell}\mathcal{Z}^{\alpha}v\|^2 \,\mathrm{d}t \\[7pt]

\lem \|v_0\|_{X^{m-1,1}}^2 + |h_0|_{X^{m-1,1}}^2 + \e |h_0|_{X^{m-1,\frac{3}{2}}}^2
+ \int\limits_0^T \|\partial_z v\|_{X^{m-1}}^2 + \|\nabla q\|_{X^{m-1}}^2 \,\mathrm{d}t.
\end{array}
\end{equation}
where we used the estimate of $\e |\partial_t^{\ell}\mathcal{Z}^{\alpha} h|_{\frac{1}{2}}^2$ that is proved by Lemma $\ref{Sect2_Height_Viscous_Estimates_Lemma}$.

When $|\alpha|=0$ and $0\leq\ell\leq m-1$, we have no bounds of $q$ and $\partial_t^{\ell} q$.
Without using Hardy's inequality $(\ref{Sect1_HardyIneq})$, we have a simpler method to estimate $V^{\ell,0}$,
we neither use the variable $Q^{\ell,\alpha}$ and nor apply the integration by parts to the pressure terms.
Also, the divergence free condition and the dynamical boundary condition will not be used here. Then
\begin{equation}\label{Sect2_Tangential_Estimate_10}
\begin{array}{ll}
\frac{1}{2}\frac{\mathrm{d}}{\mathrm{d}t}\int\limits_{\mathbb{R}^3_{-}} |V^{\ell,0}|^2 \,\mathrm{d}\mathcal{V}_t
+ 2\e \int\limits_{\mathbb{R}^3_{-}} |\mathcal{S}^{\varphi}V^{\ell,0}|^2 \,\mathrm{d}\mathcal{V}_t \\[14pt]

\leq - \int\limits_{\mathbb{R}^3_{-}} \partial_t^{\ell}\nabla^{\varphi} q \cdot V^{\ell,0} \,\mathrm{d}\mathcal{V}_t
+ \int\limits_{\{z=0\}} 2\e \mathcal{S}^{\varphi}V^{\ell,0}\NN \cdot V^{\ell,0} \mathrm{d}y + \|\partial_z v\|_{X^{m-1}}^2
+ \text{b.t.} \\[14pt]

\lem \|\partial_t^{\ell}\nabla q\|_{L^2}^2
+ \e\int\limits_{\{z=0\}} |V^{\ell,0}|^2 \mathrm{d}y
+ 4\e\int\limits_{\{z=0\}} |\mathcal{S}^{\varphi}V^{\ell,0}|^2 \mathrm{d}y + \|\partial_z v\|_{X^{m-1}}^2
+ \text{b.t.} \\[14pt]

\lem \|\partial_t^{\ell}\nabla q\|_{L^2}^2
+ \e \|\partial_t^{\ell} v|_{z=0}\|_{L^2}^2 + \e |\partial_t^{\ell} h|_{L^2}^2

+ \e\big|\partial_t^{\ell}\partial_y v|_{z=0}\big|_{L^2}^2
+ \e\big|\partial_t^{\ell}\partial_z v|_{z=0}\big|_{L^2}^2 \\[8pt]\quad

+ \e |\partial_t^{\ell} h|_{X^{0,\frac{1}{2}}}^2
+ \|\partial_z v\|_{X^{m-1}}^2 + \text{b.t.}.
\end{array}
\end{equation}

Since $\partial_z v|_{z=0}$ can be expressed in terms of tangential derivatives, see $(\ref{Sect2_Vorticity_H_BC_10})$.
\begin{equation}\label{Sect2_Tangential_Estimate_11}
\begin{array}{ll}
\partial_z v^1 = f^5[\nabla\varphi](\partial_j v^i), \ j=1,2,\ i=1,2,3, \\[7pt]
\partial_z v^2 = f^6[\nabla\varphi](\partial_j v^i), \ j=1,2,\ i=1,2,3, \\[7pt]
\partial_z v^3 = \partial_1\varphi\partial_z v^1 + \partial_2\varphi\partial_z v^2 - \partial_z\varphi(\partial_1 v^1 + \partial_2 v^2) ,
\end{array}
\end{equation}
then
\begin{equation}\label{Sect2_Tangential_Estimate_12}
\begin{array}{ll}
\frac{\mathrm{d}}{\mathrm{d}t}\int\limits_{\mathbb{R}^3_{-}} |V^{\ell,0}|^2 \,\mathrm{d}\mathcal{V}_t
+ \e \int\limits_{\mathbb{R}^3_{-}} |\mathcal{S}^{\varphi}V^{\ell,0}|^2 \,\mathrm{d}\mathcal{V}_t \\[9pt]

\lem \|\partial_t^{\ell}\nabla q\|_{L^2}^2
+ \e\big|\partial_t^{\ell}\partial_y v|_{z=0}\big|_{L^2}^2 + \e |\partial_t^{\ell} h|_{X^{0,\frac{1}{2}}}^2
+ \|\partial_z v\|_{X^{m-1}}^2 + \text{b.t.} \\[9pt]

\lem \|\partial_t^{\ell}\nabla q\|_{L^2}^2
+ \e\|\nabla v\|_{X^{m-1,1}}^2 + \e |\partial_t^{\ell} h|_{X^{0,\frac{1}{2}}}^2
+ \|\partial_z v\|_{X^{m-1}}^2 + \text{b.t.} \ .
\end{array}
\end{equation}

Similarly, we have
\begin{equation}\label{Sect2_Tangential_Estimate_13}
\begin{array}{ll}
\|V^{\ell,0}\|^2 + \e\int\limits_0^t \|\nabla V^{\ell,0}\|^2 \,\mathrm{d}t

\lem \|v_0\|_{X^{m-1,1}}^2 + |h_0|_{X^{m-1,1}}^2 + \e |h_0|_{X^{m-1,\frac{3}{2}}}^2 \\[6pt]\quad
+ \|\nabla q\|_{L^4([0,T],X^{m-1})}^2 + \|\partial_z v\|_{L^4([0,T],X^{m-1})}^2
+ \e\int\limits_0^T \|\nabla v\|_{X^{m-1,1}}^2 \,\mathrm{d}t.
\end{array}
\end{equation}

Combining $(\ref{Sect2_Tangential_Estimate_13})$ and Lemma $\ref{Sect2_Height_Estimates_Lemma}$,
we obtain the estimate of $\partial_t^{\ell} v$:
\begin{equation}\label{Sect2_Tangential_Estimate_14}
\begin{array}{ll}
\int\limits_{\mathbb{R}^2} |\partial_t^{\ell}v|^2 \,\mathrm{d}y

\lem \|v_0\|_{X^{m-1,1}}^2 + |h_0|_{X^{m-1,1}}^2 + \e |h_0|_{X^{m-1,\frac{3}{2}}}^2
+ \|\partial_z v\|_{L^4([0,T],X^{m-1})}^2

\\[7pt]\quad
+ \|\nabla q\|_{L^4([0,T],X^{m-1})}^2
+ \int\limits_0^t |h|_{X^{m-1,1}}^2 + \|v\|_{X^{m-1,1}}^2 \,\mathrm{d}t
+ \e\int\limits_0^T \|\nabla v\|_{X^{m-1,1}}^2 \,\mathrm{d}t.
\end{array}
\end{equation}

Sum $\ell$ and $\alpha$ in $(\ref{Sect2_Tangential_Estimate_8}),(\ref{Sect2_Tangential_Estimate_14})$
and Lemma $\ref{Sect2_Height_Estimates_Lemma}$, we get the estimate $(\ref{Sect2_Tangential_Estimate})$.
Thus, Lemma $\ref{Sect2_Tangential_Estimate_Lemma}$ is proved.
\end{proof}

\vspace{0.2cm}

In order to study $\partial_z v$, \cite{Masmoudi_Rousset_2012_NavierBC} estimated the quantity $\omega_h - 2\alpha u_h^{\bot}$ and got $\|\partial_z v\|_{L^{\infty}([0,T],H_{co}^{m-1})}$. However, for the free surface Navier-Stokes equations $(\ref{Sect1_NS_Eq})$,
it is impossible to obtain such a higher regularity of $\partial_z v$. Similar to \cite{Masmoudi_Rousset_2012_FreeBC},
we estimate $\|\partial_z v\|_{L^4([0,T],X^{m-1})}^2$ to close energy estimates.
\begin{lemma}\label{Sect2_Vorticity_Lemma}
Assume $v$ and $\omega$ are the velocity and vorticity of the free surface Navier-Stokes equations $(\ref{Sect1_NS_Eq})$ respectively.
$\omega_h$ satisfies the following estimate:
\begin{equation}\label{Sect2_Vorticity_Estimate}
\begin{array}{ll}
\|\omega_h\|_{L^4([0,T],X^{m-1})}^2 + \|\partial_z v\|_{L^4([0,T],X^{m-1})}^2\\[7pt]
\lem \big\|\omega_h|_{t=0}\big\|_{X^{m-1}}^2
+ \int\limits_0^T\|v\|_{X^{m-1,1}}^2 + |h|_{X^{m-1,1}}^2 \,\mathrm{d}t
+ \e\int\limits_0^t\|\partial_z v\|_{X^{m-1,1}}^2\,\mathrm{d}t.
\end{array}
\end{equation}
\end{lemma}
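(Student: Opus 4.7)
The plan is to apply the conormal differential operator $\partial_t^\ell \mathcal{Z}^\alpha$ with $\ell+|\alpha|\leq m-1$ to the parabolic equation $(\ref{Sect1_Vorticity_H_Eq})_1$ for $\omega_h$, run the standard $L^2$ energy estimate, and finally invoke Lemma $\ref{Sect2_NormalDer_Vorticity_Lemma}$ to convert the resulting bound on $\omega_h$ into one on $\partial_z v$. First I would apply $\partial_t^\ell\mathcal{Z}^\alpha$ to the equation and collect the commutators with $\partial_t^\varphi$, $v\cdot\nabla^\varphi$, and $\e\triangle^\varphi$ via $(\ref{Sect2_Tangential_Estimate_1})$; this yields a transport-diffusion equation for $\partial_t^\ell\mathcal{Z}^\alpha\omega_h$ whose right-hand side is the conormal image of the quadratic source $\vec{\textsf{F}}^0$ together with commutator remainders, all of which I expect to bound in $L^2$ by $\|\omega_h\|_{X^{m-1}} + \|v\|_{X^{m-1,1}} + |h|_{X^{m-1,1}} + \sqrt{\e}\|\partial_z v\|_{X^{m-1,1}}$ after using the uniform $L^\infty$ bounds on $\partial_z v$, $\sqrt{\e}\partial_{zz}v$ and $\omega$ supplied by Proposition $\ref{Sect1_Proposition_TimeRegularity}$.

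Next I multiply by $\partial_t^\ell\mathcal{Z}^\alpha\omega_h$ and integrate over $\mathbb{R}^3_-$ with $\mathrm{d}\mathcal{V}_t$ using the identities $(\ref{Sect1_Formulas_CanNotUse})$. The transport part yields $\tfrac{1}{2}\tfrac{\mathrm{d}}{\mathrm{d}t}\|\partial_t^\ell\mathcal{Z}^\alpha\omega_h\|_{L^2}^2$ modulo harmless boundary contributions absorbed by the kinematic condition, while the viscous term produces the dissipation $\e\|\nabla^\varphi\partial_t^\ell\mathcal{Z}^\alpha\omega_h\|_{L^2}^2$ plus a boundary term involving the trace of the normal derivative of $\omega_h$. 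To handle that trace I would use a lifting: split $\omega_h = \omega_h^\flat + \omega_h^\sharp$ where $\omega_h^\flat$ is a convenient extension into $\mathbb{R}^3_-$ of the boundary datum $\textsf{F}^{1,2}[\nabla\varphi](\partial_j v^i)$ provided by Lemma $\ref{Sect2_Vorticity_H_Eq_BC_Lemma}$. Since $\textsf{F}^{1,2}$ is polynomial in tangential velocity derivatives with smooth rational coefficients in $\nabla\varphi$, one obtains
\begin{equation*}
\|\omega_h^\flat\|_{X^{m-1}} \lem \|v\|_{X^{m-1,1}} + |h|_{X^{m-1,1}},\qquad
\|\nabla\omega_h^\flat\|_{X^{m-1}} \lem \|\partial_z v\|_{X^{m-1,1}} + \|v\|_{X^{m-1,2}},
\end{equation*}
and $\omega_h^\sharp$ solves a parabolic problem with homogeneous Dirichlet boundary condition, so its energy estimate no longer produces boundary integrals.

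Summing over $\ell+|\alpha|\leq m-1$, integrating in time and applying Gronwall, I would obtain $L^\infty([0,T],X^{m-1})$ control of $\omega_h$ bounded by the right-hand side of $(\ref{Sect2_Vorticity_Estimate})$; because $T$ is finite this subsumes the claimed $L^4$ bound, and Lemma $\ref{Sect2_NormalDer_Vorticity_Lemma}$ then transfers the bound to $\partial_z v$ (the residual $|h|_{X^{m-1,\frac{1}{2}}}$ being absorbed by $|h|_{X^{m-1,1}}$). The main obstacle is the nonhomogeneous Dirichlet trace of $\omega_h$: naive integration by parts on the viscous term produces a boundary term $\e\int_{\{z=0\}}\partial_\nn \omega_h\cdot\omega_h\,\mathrm{d}y$ that cannot be absorbed into the dissipation without losing a factor of $\sqrt{\e}$. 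The lifting, combined with careful Young splittings on the quadratic nonlinearity $\vec{\textsf{F}}^0$ (each term factored into a bounded $L^\infty$-factor furnished by the uniform regularity and an $L^2$-factor controlled by the energy) is what allows the estimate to close into the form stated, with the $\e\int_0^t\|\partial_z v\|_{X^{m-1,1}}^2\,\mathrm{d}t$ term arising precisely from the time integral of $\e\|\nabla\omega_h^\flat\|_{X^{m-1}}^2$.
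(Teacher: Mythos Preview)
Your lifting strategy has a genuine derivative-counting gap at the top order $X^{m-1}$. When you subtract the extension $\omega_h^\flat=\textsf{F}^{1,2}[\nabla\varphi](\partial_j v^i)$, the equation for $\omega_h^\sharp$ acquires the forcing term $(\partial_t^{\varphi}+v\cdot\nabla^{\varphi})\omega_h^\flat$. Estimating $\|\partial_t\omega_h^\flat\|_{X^{m-1}}$ forces you to control $\|\partial_t\partial_y v\|_{X^{m-1}}$, i.e.\ $m+1$ tangential derivatives with up to $m$ time derivatives; this is $\|v\|_{X^{m,1}}$, one order beyond the $X^{m-1,1}$ regularity you have. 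Trading $\partial_t v$ for the equation does not help, since you then need $\|\nabla q\|_{X^{m-1,1}}$ and only $\|\nabla q\|_{X^{m-1}}$ is available. The paper itself records exactly this obstruction in Remark~\ref{Sect2_NormalDer_Remark}: the lifting variable $\zeta=\omega_h-\textsf{F}^{1,2}$ can only be closed in $X^{m-2}$, not $X^{m-1}$. So your claimed $L^\infty([0,T],X^{m-1})$ bound does not follow.

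What the paper does instead is a different decomposition: it writes $\omega_h=\omega_h^{nhom}+\omega_h^{hom}$ where $\omega_h^{nhom}$ carries the forcing and initial data but has \emph{zero} Dirichlet trace, while $\omega_h^{hom}$ solves the homogeneous parabolic equation with zero initial data and the full boundary value $\textsf{F}^{1,2}[\nabla\varphi](\partial_j v^i)$. For $\omega_h^{nhom}$ the standard energy estimate works with no boundary term. For $\omega_h^{hom}$ the key input is a parabolic $L^4_t$--smoothing estimate (Theorem~10.6 in \cite{Masmoudi_Rousset_2012_FreeBC}):
\[
\|\partial_t^{\ell}\mathcal{Z}^{\alpha}\omega_h^{hom}\|_{L^4([0,T],L^2)}^2
\ \lem\ \sqrt{\e}\int_0^T\big|\omega_h^{hom}|_{z=0}\big|_{X^{m-1}}^2\,\mathrm{d}t .
\]
The $\sqrt{\e}$ gain is then paired with the trace bound $\big|v|_{z=0}\big|_{X_{tan}^{m-1,1}}^2\lem\|\partial_z v\|_{X^{m-1,1}}\|v\|_{X^{m-1,1}}$ and a Young splitting to produce precisely the $\e\int_0^T\|\partial_z v\|_{X^{m-1,1}}^2\,\mathrm{d}t$ term on the right of $(\ref{Sect2_Vorticity_Estimate})$. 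This smoothing is the reason the lemma yields only $L^4([0,T],X^{m-1})$ and not $L^\infty$; your argument, had it worked, would have given a stronger conclusion than is actually true at this level.
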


\begin{proof}
By Lemma $\ref{Sect2_Vorticity_H_Eq_BC_Lemma}$, we have the equations of $\omega_h$:
\begin{equation}\label{Sect2_Vorticity_Estimate_1}
\left\{\begin{array}{ll}
\partial_t^{\varphi} \omega_h + v\cdot\nabla^{\varphi}\omega_h - \e\triangle^{\varphi}\omega_h = \vec{\textsf{F}}^0[\nabla\varphi](\omega_h,\partial_j v^i),
\\[8pt]

\omega_h|_{z=0} = \vec{\textsf{F}}^{1,2}[\nabla\varphi](\partial_j v^i), \\[9pt]

\omega_h|_{t=0} = (\omega_0^1, \omega_0^2)^{\top}.
\end{array}\right.
\end{equation}
where $j=1,2,\ i=1,2,3$.

Similar to \cite{Masmoudi_Rousset_2012_FreeBC}, we decompose $\omega_h = \omega_h^{nhom} + \omega_h^{hom}$, such that $\omega_h^{nhom}$ satisfies the nonhomogeneous equations:
\begin{equation}\label{Sect2_Vorticity_Estimate_2}
\left\{\begin{array}{ll}
\partial_t^{\varphi} \omega_h^{nhom} + v\cdot\nabla^{\varphi}\omega_h^{nhom} - \e\triangle^{\varphi}\omega_h^{nhom} = \vec{\textsf{F}}^0[\nabla\varphi](\omega_h,\partial_j v^i),
\\[8pt]

\omega_h^{nhom}|_{z=0} = 0, \\[6pt]

\omega_h^{nhom}|_{t=0} = (\omega_0^1, \omega_0^2)^{\top},
\end{array}\right.
\end{equation}
and $\omega_h^{hom}$ satisfies the homogeneous equations:
\begin{equation}\label{Sect2_Vorticity_Estimate_3}
\left\{\begin{array}{ll}
\partial_t^{\varphi} \omega_h^{hom} + v\cdot\nabla^{\varphi}\omega_h^{hom} - \e\triangle^{\varphi}\omega_h^{hom} = 0,
\\[7pt]

\omega_h^{hom}|_{z=0} = \vec{\textsf{F}}^{1,2}[\nabla\varphi](\partial_j v^i), \\[6pt]

\omega_h^{hom}|_{t=0} = 0.
\end{array}\right.
\end{equation}

$(\ref{Sect2_Vorticity_Estimate_2})_1$ is equivalent to
\begin{equation}\label{Sect2_Vorticity_Estimate_4}
\begin{array}{ll}
\partial_t \omega_h^{nhom} + v_y\cdot\nabla_y\omega_h^{nhom} + V_z \partial_z \omega_h^{nhom}
- \e\triangle^{\varphi}\omega_h^{nhom} = \vec{\textsf{F}}^0[\nabla\varphi](\omega_h,\partial_j v^i).
\end{array}
\end{equation}
where $V_z = \frac{1}{\partial_z\varphi}(v\cdot\NN -\partial_t\varphi)
= \frac{1}{\partial_z\varphi}(v^3 -\partial_t\eta -v_y\cdot\nabla_y\eta)$, see \cite{Masmoudi_Rousset_2012_FreeBC}.

Apply $\partial_t^{\ell}\mathcal{Z}^{\alpha}$, where $\ell+|\alpha|\leq m-1$, to the equations $(\ref{Sect2_Vorticity_Estimate_4})$,
we get
\begin{equation}\label{Sect2_Vorticity_Estimate_6}
\left\{\begin{array}{ll}
\partial_t \partial_t^{\ell}\mathcal{Z}^{\alpha}\omega_h^{nhom}
+ v_y\cdot\nabla_y \partial_t^{\ell}\mathcal{Z}^{\alpha}\omega_h^{nhom}
+ V_z\partial_z \partial_t^{\ell}\mathcal{Z}^{\alpha}\omega_h^{nhom}
- \e\triangle^{\varphi}\partial_t^{\ell}\mathcal{Z}^{\alpha}\omega_h^{nhom} \\[9pt]\quad

= \partial_t^{\ell}\mathcal{Z}^{\alpha}\vec{\textsf{F}}^0[\nabla\varphi](\omega_h,\partial_j v^i)
- [\partial_t^{\ell}\mathcal{Z}^{\alpha}, v_y\cdot\nabla_y] \omega_h^{nhom}
- [\partial_t^{\ell}\mathcal{Z}^{\alpha}, V_z\partial_z] \omega_h^{nhom} \\[9pt]\quad

+ \e\nabla^{\varphi} \cdot [\partial_t^{\ell}\mathcal{Z}^{\alpha}, \nabla^{\varphi}]\omega_h^{nhom}
+ \e[\partial_t^{\ell}\mathcal{Z}^{\alpha}, \nabla^{\varphi}\cdot]\nabla^{\varphi}\omega_h^{nhom} ,
\\[12pt]

\partial_t^{\ell}\mathcal{Z}^{\alpha}\omega_h^{nhom}|_{z=0} = 0, \\[9pt]

\partial_t^{\ell}\mathcal{Z}^{\alpha}\omega_h^{nhom}|_{t=0}
= (\partial_t^{\ell}\mathcal{Z}^{\alpha}\omega_0^1, \partial_t^{\ell}\mathcal{Z}^{\alpha}\omega_0^2)^{\top}.
\end{array}\right.
\end{equation}

Develop the $L^2$ estimate of $(\ref{Sect2_Vorticity_Estimate_6})$, we get
\begin{equation}\label{Sect2_Vorticity_Estimate_7}
\begin{array}{ll}
\frac{\mathrm{d}}{\mathrm{d}t} \|\partial_t^{\ell}\mathcal{Z}^{\alpha} \omega_h^{nhom}\|_{L^2}^2
+ 2\e\|\nabla^{\varphi} \partial_t^{\ell}\mathcal{Z}^{\alpha} \omega_h^{nhom}\|_{L^2}^2 \\[10pt]

\lem \|\partial_t^{\ell}\mathcal{Z}^{\alpha}\omega_h\|_{L^2}^2
+ \|\partial_t^{\ell}\mathcal{Z}^{\alpha}\partial_j v^i\|_{L^2}^2
+ \|\partial_t^{\ell}\mathcal{Z}^{\alpha}\nabla\varphi\|_{L^2}^2
+ \big\|[\partial_t^{\ell}\mathcal{Z}^{\alpha}, V_z\partial_z] \omega_h^{nhom}\big\|_{L^2}^2\\[10pt]\quad

+ \e\int\limits_{\mathbb{R}^3_{-}} \nabla^{\varphi} \cdot [\partial_t^{\ell}\mathcal{Z}^{\alpha}, \nabla^{\varphi}]\omega_h^{nhom}
\cdot \partial_t^{\ell}\mathcal{Z}^{\alpha} \omega_h^{nhom}\,\mathrm{d}\mathcal{V}_t \\[10pt]\quad
+ \e\int\limits_{\mathbb{R}^3_{-}}[\partial_t^{\ell}\mathcal{Z}^{\alpha}, \nabla^{\varphi}\cdot]\nabla^{\varphi}\omega_h^{nhom}
\cdot \partial_t^{\ell}\mathcal{Z}^{\alpha} \omega_h^{nhom}\,\mathrm{d}\mathcal{V}_t + b.t.\, .
\end{array}
\end{equation}

\vspace{-0.1cm}
Now we estimate the last three terms on the right hand of $(\ref{Sect2_Vorticity_Estimate_7})$,
the first term is
\begin{equation}\label{Sect2_Vorticity_Estimate_7_1}
\begin{array}{ll}
\big\|[\partial_t^{\ell}\mathcal{Z}^{\alpha}, V_z\partial_z] \omega_h^{nhom}\big\|_{L^2}^2\\[6pt]

= \sum\limits_{\ell_1 + |\alpha_1|>0}\big\|\frac{1-z}{z}\partial_t^{\ell_1}\mathcal{Z}^{\alpha_1} 
[\frac{1}{\partial_z\varphi}(v^3 -\eta_t - v\cdot\nabla_y \eta)]
\cdot \frac{z}{1-z}\partial_t^{\ell_2}\mathcal{Z}^{\alpha_2}\partial_z \omega_h^{nhom}\big\|_{L^2}^2\\[11pt]

\lem \big\|\partial_z\partial_t^{\ell}\mathcal{Z}^{\alpha} [\frac{1}{A + \partial_z(\psi\ast h)}(v^3 - \psi\ast h_t - v\cdot\nabla_y (\psi\ast h))\big\|_{L^2}^2 + b.t. \\[10pt]

\lem \big\|\partial_t^{\ell}\mathcal{Z}^{\alpha} \partial_z[\frac{1}{A + \partial_z(\psi\ast h)}(v^3 - \psi\ast (v^3 - v_y\cdot\nabla_y h) - v\cdot\nabla_y (\psi\ast h))\big\|_{L^2}^2 + b.t. \\[12pt]

\lem |\partial_z^{m+1}\psi|_{L^1(\mathrm{d}z)}^2 |\partial_t^{\ell}\partial_y^{\alpha_y} h|_{L^2}^2
+ \|\partial_t^{\ell}\mathcal{Z}^{\alpha}\partial_z(\frac{1}{\partial_z\varphi} v)\|_{L^2}^2 \\[10pt]\quad
+ |\partial_z^{m}\psi\|_{L^1(\mathrm{d}z)}^2 \|\partial_t^{\ell}\partial_y^{\alpha_y}\partial_y h\|_{L^2}^2 + b.t. \\[10pt]

\lem |h|_{X^{m-1,1}}^2 + \|\partial_z v\|_{X^{m-1}}^2 + b.t. \,,
\end{array}
\end{equation}
the second term is
\begin{equation}\label{Sect2_Vorticity_Estimate_7_2}
\begin{array}{ll}
\e\int\limits_{\mathbb{R}^3_{-}} \nabla^{\varphi} \cdot [\partial_t^{\ell}\mathcal{Z}^{\alpha}, \nabla^{\varphi}]\omega_h^{nhom}
\cdot \partial_t^{\ell}\mathcal{Z}^{\alpha} \omega_h^{nhom}\,\mathrm{d}\mathcal{V}_t \\[8pt]

= \sum\limits_{i=1}^3
\e\int\limits_{\mathbb{R}^3_{-}} [\partial_t^{\ell}\mathcal{Z}^{\alpha}, \partial_i^{\varphi}]\omega_h^{nhom}
\cdot \partial_i^{\varphi}\partial_t^{\ell}\mathcal{Z}^{\alpha} \omega_h^{nhom}\,\mathrm{d}\mathcal{V}_t \\[10pt]

= \sum\limits_{i=1}^3
\e\int\limits_{\mathbb{R}^3_{-}} \partial_i^{\varphi}\partial_t^{\ell}\mathcal{Z}^{\alpha}\eta\, \partial_z^{\varphi}\omega_h^{nhom}
\cdot \partial_i^{\varphi}\partial_t^{\ell}\mathcal{Z}^{\alpha} \omega_h^{nhom}\,\mathrm{d}\mathcal{V}_t + b.t. 
\hspace{0.7cm}
\end{array}
\end{equation}

\begin{equation*}
\begin{array}{ll}
= \sum\limits_{i=1}^3
\e\int\limits_{\mathbb{R}^3_{-}} \frac{1}{z}\partial_i^{\varphi}\partial_t^{\ell}\mathcal{Z}^{\alpha}(\psi\ast h)\,
\frac{1}{\partial_z\varphi}\mathcal{Z}^3\omega_h^{nhom}
\cdot \partial_i^{\varphi}\partial_t^{\ell}\mathcal{Z}^{\alpha} \omega_h^{nhom}\,\mathrm{d}\mathcal{V}_t + b.t.\\[12pt]

\lem |\partial_z^{m+1} \psi|_{L^1(\mathrm{d}z)}^2 |h|_{X^{m-1,1}}^2 + \e\|\nabla^{\varphi}\partial_t^{\ell}\mathcal{Z}^{\alpha} \omega_h^{nhom}\|_{L^2}^2
+ b.t. \\[10pt]

\lem |h|_{X^{m-1,1}}^2 + \e\|\nabla^{\varphi}\partial_t^{\ell}\mathcal{Z}^{\alpha} \omega_h^{nhom}\|_{L^2}^2
+ b.t. \,.
\end{array}
\end{equation*}
and the third term is
\begin{equation}\label{Sect2_Vorticity_Estimate_7_3}
\begin{array}{ll}
\e\int\limits_{\mathbb{R}^3_{-}}[\partial_t^{\ell}\mathcal{Z}^{\alpha}, \nabla^{\varphi}\cdot]\nabla^{\varphi}\omega_h^{nhom}
\cdot \partial_t^{\ell}\mathcal{Z}^{\alpha} \omega_h^{nhom}\,\mathrm{d}\mathcal{V}_t \\[9pt]

\lem \e\sum\limits_{i=1}^3
\int\limits_{\mathbb{R}^3_{-}}\partial_i^{\varphi}\partial_t^{\ell}\mathcal{Z}^{\alpha}\eta\partial_z^{\varphi}\partial_i^{\varphi}\omega_h^{nhom}
\cdot \partial_t^{\ell}\mathcal{Z}^{\alpha} \omega_h^{nhom}\,\mathrm{d}\mathcal{V}_t + b.t. \\[10pt]

\lem - \e\sum\limits_{i=1}^3
\int\limits_{\mathbb{R}^3_{-}}\partial_z^{\varphi}\partial_i^{\varphi}\partial_t^{\ell}\mathcal{Z}^{\alpha}\eta\partial_i^{\varphi}\omega_h^{nhom}
\cdot \partial_t^{\ell}\mathcal{Z}^{\alpha} \omega_h^{nhom}\,\mathrm{d}\mathcal{V}_t \\[10pt]\quad
- \e\sum\limits_{i=1}^3
\int\limits_{\mathbb{R}^3_{-}}\partial_i^{\varphi}\partial_t^{\ell}\mathcal{Z}^{\alpha}\eta\partial_i^{\varphi}\omega_h^{nhom}
\cdot \partial_z^{\varphi}\partial_t^{\ell}\mathcal{Z}^{\alpha} \omega_h^{nhom}\,\mathrm{d}\mathcal{V}_t + b.t. \\[10pt]

\lem |\partial_z^{m+1} \psi|_{L^1(\mathrm{d}z)}^2 |h|_{X^{m-1,1}}^2 + \e\|\nabla^{\varphi}\partial_t^{\ell}\mathcal{Z}^{\alpha} \omega_h^{nhom}\|_{L^2}^2
+ b.t. \hspace{1.4cm}
\\[9pt]

\lem |h|_{X^{m-1,1}}^2 + \e\|\nabla^{\varphi}\partial_t^{\ell}\mathcal{Z}^{\alpha} \omega_h^{nhom}\|_{L^2}^2
+ b.t..
\end{array}
\end{equation}

Plug $(\ref{Sect2_Vorticity_Estimate_7_1}),(\ref{Sect2_Vorticity_Estimate_7_2}),(\ref{Sect2_Vorticity_Estimate_7_3})$ into $(\ref{Sect2_Vorticity_Estimate_7})$, we get
\begin{equation}\label{Sect2_Vorticity_Estimate_8}
\begin{array}{ll}
\frac{\mathrm{d}}{\mathrm{d}t} \|\partial_t^{\ell}\mathcal{Z}^{\alpha} \omega_h^{nhom}\|_{L^2}^2
+ 2\e\|\nabla^{\varphi} \partial_t^{\ell}\mathcal{Z}^{\alpha} \omega_h^{nhom}\|_{L^2}^2 \\[10pt]

\lem \|\partial_t^{\ell}\mathcal{Z}^{\alpha}\omega_h\|_{L^2}^2
+ \|\partial_z v\|_{X^{m-1}}^2 + |h|_{X^{m-1,1}}^2 + \e\|\nabla^{\varphi}\partial_t^{\ell}\mathcal{Z}^{\alpha} \omega_h^{nhom}\|_{L^2}^2 + b.t.\, .
\end{array}
\end{equation}

Sum $\ell$ and $\alpha$ in $(\ref{Sect2_Vorticity_Estimate_8})$, and integrate $(\ref{Sect2_Vorticity_Estimate_8})$ from $0$ to $t$, we get
\begin{equation}\label{Sect2_Vorticity_Estimate_9}
\begin{array}{ll}
\|\omega_h^{nhom}\|_{X^{m-1}}^2 + \e \int\limits_0^t \|\nabla \omega_h^{nhom}\|_{X^{m-1}}^2 \,\mathrm{d}t \\[7pt]

\lem \big\|\omega_h^{nhom}|_{t=0}\big\|_{X^{m-1}}^2
+ \int\limits_0^t\|\omega_h\|_{X^{m-1}}^2 \,\mathrm{d}t + \int\limits_0^t\|v\|_{X^{m-1,1}}^2  + |h|_{X^{m-1,1}}^2 \,\mathrm{d}t
\\[7pt]

\lem \big\|\omega_h|_{t=0}\big\|_{X^{m-1}}^2
+ \sqrt{t}\|\omega_h\|_{L^4([0,t],X^{m-1})}^2 + \int\limits_0^t\|v\|_{X^{m-1,1}}^2 \,\mathrm{d}t + \int\limits_0^t|h|_{X^{m-1,1}}^2 \,\mathrm{d}t.
\end{array}
\end{equation}

It follows from $(\ref{Sect2_Vorticity_Estimate_9})$ that
\begin{equation}\label{Sect2_Vorticity_Estimate_10}
\begin{array}{ll}
\|\omega_h^{nhom}\|_{X^{m-1}}^4
\lem \big\|\omega_h|_{t=0}\big\|_{X^{m-1}}^4
+ T\|\omega_h\|_{L^4([0,t],X^{m-1})}^4 \\[7pt]\hspace{2.6cm}
+ \big(\int\limits_0^T\|v\|_{X^{m-1,1}}^2 \,\mathrm{d}t\big)^2 + \big(\int\limits_0^T|h|_{X^{m-1,1}}^2 \,\mathrm{d}t\big)^2,
\\[14pt]

\int\limits_0^t\|\omega_h^{nhom}\|_{X^{m-1}}^4 \,\mathrm{d}t
\lem T\big\|\omega_h|_{t=0}\big\|_{X^{m-1}}^4 
+ T\int\limits_0^t \|\omega_h\|_{L^4([0,t],X^{m-1})}^4 \,\mathrm{d}t \\[7pt]\hspace{3.28cm}
+ T\big(\int\limits_0^T\|v\|_{X^{m-1,1}}^2 \,\mathrm{d}t\big)^2 + T\big(\int\limits_0^T|h|_{X^{m-1,1}}^2 \,\mathrm{d}t\big)^2.
\end{array}
\end{equation}

For the homogeneous equations $(\ref{Sect2_Vorticity_Estimate_3})$, the same as the $L^{4}([0,T],L^2)$ estimate in \cite{Masmoudi_Rousset_2012_FreeBC}
and paradifferential calculus (see Theorem 10.6 in \cite{Masmoudi_Rousset_2012_FreeBC}),
when $\ell +|\alpha|\leq m-1$, we have
\begin{equation}\label{Sect2_Vorticity_Estimate_11}
\begin{array}{ll}
\|\partial_t^{\ell}\mathcal{Z}^{\alpha}\omega_h^{hom}\|_{L^4([0,T],L^2(\mathbb{R}^3_{-}))}^2 \\[8pt]
\lem \|\partial_t^{\ell}\mathcal{Z}^{\alpha}\omega_h^{hom}\|_{H^{\frac{1}{4}}([0,T],L^2(\mathbb{R}^3_{-}))}^2 \\[8pt]
\lem \sqrt{\e}\int\limits_0^T\big|\partial_t^{\ell}\mathcal{Z}^{\alpha}\omega_h^{hom}|_{z=0}\big|_{L^2(\mathbb{R}^2)}^2 \,\mathrm{d}t \\[8pt]

\lem \sqrt{\e}\int\limits_0^T\big|\partial_t^{\ell}\mathcal{Z}^{\alpha}\big(\vec{\textsf{F}}^{1,2}[\nabla\varphi](\partial_j v^i)\big)|_{z=0}\big|_{L^2(\mathbb{R}^2)}^2 \,\mathrm{d}t \\[8pt]

\lem \sqrt{\e}\int\limits_0^T|h|_{X^{m-1,1}}^2\,\mathrm{d}t
+ \sqrt{\e}\int\limits_0^T\big|\partial_j v^i|_{z=0}\big|_{X^{m-1}}^2\,\mathrm{d}t \\[8pt]

\lem \sqrt{\e}\int\limits_0^T|h|_{X^{m-1,1}}^2\,\mathrm{d}t
+ \sqrt{\e}\int\limits_0^T\big|v|_{z=0}\big|_{X_{tan}^{m-1,1}}^2\,\mathrm{d}t \\[8pt]

\lem \sqrt{\e}\int\limits_0^T|h|_{X^{m-1,1}}^2\,\mathrm{d}t
+ \sqrt{\e}\int\limits_0^T\|\partial_z v\|_{X_{tan}^{m-1,1}} \|v\|_{X_{tan}^{m-1,1}}\,\mathrm{d}t \\[8pt]

\lem \sqrt{\e}\int\limits_0^T|h|_{X^{m-1,1}}^2\,\mathrm{d}t
+ \int\limits_0^T\|v\|_{X^{m-1,1}}^2\,\mathrm{d}t + \e\int\limits_0^T\|\partial_z v\|_{X^{m-1,1}}^2\,\mathrm{d}t,
\end{array}
\end{equation}
where $\big|\partial_j v^i|_{z=0}\big|_{X^{m-1}} = \big|v|_{z=0}\big|_{H^m}$ since $j=1,2$.

Sum $\alpha$ in $(\ref{Sect2_Vorticity_Estimate_11})$, we get
\begin{equation}\label{Sect2_Vorticity_Estimate_12}
\begin{array}{ll}
\|\omega_h^{hom}\|_{L^4([0,T],X^{m-1})}^2
\lem \int\limits_0^T|h|_{X^{m-1,1}}^2 + \|v\|_{X^{m-1,1}}^2\,\mathrm{d}t + \e\int\limits_0^T\|\partial_z v\|_{X^{m-1,1}}^2\,\mathrm{d}t.
\end{array}
\end{equation}

Square $(\ref{Sect2_Vorticity_Estimate_12})$, we have
\begin{equation}\label{Sect2_Vorticity_Estimate_12_Square}
\begin{array}{ll}
\|\omega_h^{hom}\|_{L^4([0,t],X^{m-1})}^4
\lem \|\omega_h^{hom}\|_{L^4([0,T],X^{m-1})}^4
\\[6pt]
\lem \big(\int\limits_0^T|h|_{X^{m-1,1}}^2\,\mathrm{d}t\big)^2 
+ \big(\int\limits_0^T\|v\|_{X^{m-1,1}}^2\,\mathrm{d}t\big)^2 
+ \big(\e\int\limits_0^T\|\partial_z v\|_{X^{m-1,1}}^2\,\mathrm{d}t\big)^2.
\end{array}
\end{equation}

By $(\ref{Sect2_Vorticity_Estimate_10})$ and $(\ref{Sect2_Vorticity_Estimate_12_Square})$, we have
\begin{equation}\label{Sect2_Vorticity_Estimate_13}
\begin{array}{ll}
\|\omega_h\|_{L^4([0,t],X^{m-1})}^4 \lem \|\omega_h^{nhom}\|_{L^4([0,t],X^{m-1})}^4 + \|\omega_h^{hom}\|_{L^4([0,t],X^{m-1})}^4 \\[7pt]

\lem \big\|\omega_h|_{t=0}\big\|_{X^{m-1}}^4
+ \int\limits_0^t \|\omega_h\|_{L^4([0,t],X^{m-1})}^4 \,\mathrm{d}t 
+ \big(\int\limits_0^T\|v\|_{X^{m-1,1}}^2 \,\mathrm{d}t\big)^2 \\[7pt]\quad
+ \big(\int\limits_0^T|h|_{X^{m-1,1}}^2 \,\mathrm{d}t\big)^2
+ \big(\e\int\limits_0^T\|\partial_z v\|_{X^{m-1,1}}^2\,\mathrm{d}t\big)^2.
\end{array}
\end{equation}

By the integral form of Gronwall's inequality, it is easy to have
\begin{equation}\label{Sect2_Vorticity_Estimate_14}
\begin{array}{ll}
\|\omega_h\|_{L^4([0,T],X^{m-1})}^2 \\[5pt]
\lem \big\|\omega_h|_{t=0}\big\|_{X^{m-1}}^2
+ \int\limits_0^T\|v\|_{X^{m-1,1}}^2 \,\mathrm{d}t
+ \int\limits_0^T|h|_{X^{m-1,1}}^2 \,\mathrm{d}t

+ \e\int\limits_0^T\|\partial_z v\|_{X^{m-1,1}}^2\,\mathrm{d}t.
\end{array}
\end{equation}

While by $(\ref{Sect2_NormalDer_Vorticity_Estimate_5})$, we have
\begin{equation}\label{Sect2_Vorticity_Estimate_15}
\begin{array}{ll}
\|\partial_z v_h\|_{L^4([0,T],X^{m-1})}^2 \\[7pt]
\lem \|\omega_h\|_{L^4([0,T],X^{m-1})}^2 + |h|_{L^4([0,T],X^{m-1,\frac{1}{2}})}^2 + \|v\|_{L^4([0,T],X^{m-1,1})}^2.
\end{array}
\end{equation}

By the divergence free condition $(\ref{Sect2_DivFreeCondition})$, we have
\begin{equation}\label{Sect2_Vorticity_Estimate_17}
\begin{array}{ll}
\|\partial_z v^3\|_{L^4([0,T],X^{m-1})}^2 \\[7pt]
\lem \|\partial_z v_h\|_{L^4([0,T],X^{m-1})}^2 + \|\nabla\varphi\|_{L^4([0,T],X^{m-1})}^2 + \|\partial_j v^i\|_{L^4([0,T],X^{m-1})}^2 \\[7pt]
\lem \|\omega_h\|_{L^4([0,T],X^{m-1})}^2 + |h|_{L^4([0,T],X^{m-1,\frac{1}{2}})}^2 + \|v\|_{L^4([0,T],X^{m-1,1})}^2.
\end{array}
\end{equation}
Thus, Lemma $\ref{Sect2_Vorticity_Lemma}$ is proved.
\end{proof}

Refer to \cite{Masmoudi_Rousset_2012_FreeBC} for the $L^{\infty}$ estimates which imply
$\partial_z v, \mathcal{Z}^3\partial_z v, \sqrt{\e}\partial_{zz}v \in L^{\infty}$.
The argument is based on analyzing 1D Fokker Planck equation which has explicit Green function.

In the following lemma, we estimate $\|\partial_z v\|_{L^{\infty}([0,T],X^{m-2})}$.
Note that we can not have $\partial_z v\in L^{\infty}([0,T],X^{m-1})$ due to
$\omega|_{z=0} =\textsf{F}^{1,2} [\nabla\varphi](\partial_j v^i)$, see $(\ref{Sect1_Vorticity_H_Eq})$.
\begin{lemma}\label{Sect2_NormalDer_Lemma}
Assume $v$ and $\omega$ are the velocity and vorticity of the free surface Navier-Stokes equations $(\ref{Sect1_NS_Eq})$ respectively.
$\omega_h$ satisfies the following estimate:
\begin{equation}\label{Sect2_NormalDer_Estimate}
\begin{array}{ll}
\|\partial_z v\|_{X^{m-2}}^2 + \e\int\limits_0^t\|\partial_{zz}v\|_{X^{m-2}}^2
\mathrm{d}\mathcal{V}_t\mathrm{d}t
+ \|\omega\|_{X^{m-2}}^2 + \e\int\limits_0^t\|\nabla\omega\|_{X^{m-2}}^2
\mathrm{d}\mathcal{V}_t\mathrm{d}t \\[6pt]

\lem \|\partial_z v_0\|_{X^{m-2}}^2 + \int\limits_0^t \|v\|_{X^{m-1,1}}^2 + |h|_{X^{m-1}}^2\,\mathrm{d}t
+ \|\partial_z v\|_{L^4([0,T],X^{m-1})}^2.
\end{array}
\end{equation}
\end{lemma}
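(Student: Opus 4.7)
The plan is to route everything through the vorticity equation $(\ref{Sect1_Vorticity_H_Eq})$ for $\omega_h=(\omega^1,\omega^2)^{\top}$: first produce an $L^{\infty}_t X^{m-2}$ bound on $\omega_h$ with the dissipative companion $\e\int_0^t\|\nabla\omega_h\|_{X^{m-2}}^2\,\mathrm{d}t$, and then recover $\partial_z v_h$ from the algebraic inversion $(\ref{Sect2_NormalDer_Vorticity_Estimate_5})$, $\partial_z v^3$ from the divergence-free constraint $(\ref{Sect2_DivFreeCondition})$, and $\omega^3$ from the algebraic identity $(\ref{Sect2_NormalDer_Vorticity_Estimate_1})_3$. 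The scheme parallels Lemma $\ref{Sect2_Vorticity_Lemma}$, but at one conormal order lower and in $L^{\infty}_t$ rather than $L^4_t$. Accordingly I would decompose $\omega_h=\omega_h^{nhom}+\omega_h^{hom}$ exactly as in $(\ref{Sect2_Vorticity_Estimate_2})$–$(\ref{Sect2_Vorticity_Estimate_3})$.

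For $\omega_h^{nhom}$, apply $\partial_t^{\ell}\mathcal{Z}^{\alpha}$ with $\ell+|\alpha|\leq m-2$, test against $\partial_t^{\ell}\mathcal{Z}^{\alpha}\omega_h^{nhom}$ in $L^2(\mathrm{d}\mathcal{V}_t)$, and reuse the commutator manipulations $(\ref{Sect2_Vorticity_Estimate_7_1})$–$(\ref{Sect2_Vorticity_Estimate_7_3})$. Since the trace vanishes, all boundary integrals produced by the viscous integration by parts drop, and Gronwall gives the $L^{\infty}_t X^{m-2}$ bound together with the dissipative piece $\e\int_0^t\|\nabla\omega_h^{nhom}\|_{X^{m-2}}^2\,\mathrm{d}t$, dominated by the right-hand side of $(\ref{Sect2_NormalDer_Estimate})$. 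The force $\vec{\textsf{F}}^0[\nabla\varphi](\omega_h,\partial_j v^i)$ is bilinear in $\omega_h$ and $\partial_j v^i$, so it is absorbed by pulling out $\omega$ or $\partial_z v$ in $L^{\infty}$ (bounded by the $L^{\infty}$ control stated in Proposition $\ref{Sect1_Proposition_TimeRegularity}$) against the $L^2$ quantities being estimated.

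For $\omega_h^{hom}$, the trace $\vec{\textsf{F}}^{1,2}[\nabla\varphi](\partial_j v^i)|_{z=0}$ is a tangential polynomial in $\partial_j v$, so it is controlled by $\big|v|_{z=0}\big|_{H^{m-1}(\mathbb{R}^2)}$, hence by $\|v\|_{X^{m-1,1}}+\|\partial_z v\|_{X^{m-1}}$ through the anisotropic trace inequality. I would introduce a lifting $\Theta$ of this trace into $\mathbb{R}^3_{-}$ via a $\psi\ast_y$-type convolution with a vertically decaying profile, in the spirit of $(\ref{Sect1_LagrCoord_Definition_3})$, set $\tilde{\omega}_h:=\omega_h^{hom}-\Theta$, and run $L^2$ energy estimates at order $m-2$ on $\tilde{\omega}_h$, which now has vanishing trace; the interior contributions $\partial_t\Theta$, $v\cdot\nabla^{\varphi}\Theta$, $\e\triangle^{\varphi}\Theta$ are moved to the right-hand side and absorbed via Cauchy–Schwarz in time using the $L^4_t X^{m-1}$ control of $\partial_z v$ furnished by Lemma $\ref{Sect2_Vorticity_Lemma}$ together with the $L^{\infty}_t$ tangential control of Lemma $\ref{Sect2_Tangential_Estimate_Lemma}$.

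Once $\|\omega_h\|_{X^{m-2}}^2+\e\int_0^t\|\nabla\omega_h\|_{X^{m-2}}^2\,\mathrm{d}t$ is in hand, Lemma $\ref{Sect2_NormalDer_Vorticity_Lemma}$ delivers $\|\partial_z v_h\|_{X^{m-2}}^2$, and then $(\ref{Sect2_DivFreeCondition})$ and $(\ref{Sect2_NormalDer_Vorticity_Estimate_1})_3$ supply $\|\partial_z v^3\|_{X^{m-2}}^2$ and $\|\omega^3\|_{X^{m-2}}^2$. For the dissipative piece $\e\int_0^t\|\partial_{zz}v\|_{X^{m-2}}^2\,\mathrm{d}t$, I would differentiate $(\ref{Sect2_NormalDer_Vorticity_Estimate_5})$ once in $\partial_z$ and combine $\e\int_0^t\|\nabla\omega_h\|_{X^{m-2}}^2\,\mathrm{d}t$ with $\e\int_0^t\|\partial_z v\|_{X^{m-1,1}}^2\,\mathrm{d}t$, both absorbed on the right-hand side. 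The main obstacle will be the $\omega_h^{hom}$ step: because the trace already costs a full derivative of $v$, the lifting $\Theta$ has to be designed so that its $X^{m-2}$ and $\e^{1/2}\nabla X^{m-2}$ norms are bounded by exactly the quantities already controlled by Lemmas $\ref{Sect2_Tangential_Estimate_Lemma}$ and $\ref{Sect2_Vorticity_Lemma}$, and the commutators $[\partial_t^{\ell}\mathcal{Z}^{\alpha},V_z\partial_z]$ and $[\partial_t^{\ell}\mathcal{Z}^{\alpha},\nabla^{\varphi}]$ acting on $\tilde{\omega}_h$ and on $\Theta$ must be estimated without ever demanding $\partial_{zz}v\in L^2_t X^{m-1}$; this is precisely what dictates the appearance of only $\|\partial_z v\|_{L^4([0,T],X^{m-1})}^2$, and not a stronger $L^2_t$ norm, on the right-hand side of $(\ref{Sect2_NormalDer_Estimate})$.
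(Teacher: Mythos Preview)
Your approach is viable but differs substantively from the paper's. Rather than decomposing $\omega_h$ and lifting the boundary trace of $\omega_h^{hom}$, the paper works directly with the \emph{momentum} equation: it applies $\partial_t^{\ell}\mathcal{Z}^{\alpha}$ to $(\ref{Sect1_NS_Eq})_1$, rewrites the viscous term as $-\e\nabla^{\varphi}\times(\nabla^{\varphi}\times\,\cdot\,)$ via $(\ref{Sect2_NormalDer_Estimate_Laplacian})$, and then tests against the double curl $\nabla^{\varphi}\times(\nabla^{\varphi}\times\partial_t^{\ell}\mathcal{Z}^{\alpha}v)$. One pass of the curl integration-by-parts formula $(\ref{Sect1_Formulas_CanNotUse})_3$ converts this into an energy identity for $\nabla^{\varphi}\times\partial_t^{\ell}\mathcal{Z}^{\alpha}v$; the pressure drops out because $\nabla^{\varphi}\times\nabla^{\varphi}\partial_t^{\ell}\mathcal{Z}^{\alpha}q=0$, and the only boundary contributions are of the form $\int_{z=0}(\cdots)\cdot\NN\times(\nabla^{\varphi}\times\partial_t^{\ell}\mathcal{Z}^{\alpha}v)\,\mathrm{d}y$, which are tangential-derivative quantities by Lemma~$\ref{Sect2_Vorticity_H_Eq_BC_Lemma}$ and hence land inside the time integral $\int_0^t\|\partial_z v\|_{X^{m-1}}^2\,\mathrm{d}t$ without any lifting.

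What the paper's route buys: no lifting needs to be constructed, the pressure is handled for free, and the same curl--curl test is reused verbatim in Lemma~$\ref{Sect5_NormalDer_Lemma}$ for the difference $\hat\omega$, where a $\zeta$-type subtraction would fail (see Remark~$\ref{Sect5_NormalDer_Remark}$). What your route buys: it is modular with Lemma~$\ref{Sect2_Vorticity_Lemma}$ and makes the role of the boundary trace explicit. The paper in fact acknowledges an approach close to yours in Remark~$\ref{Sect2_NormalDer_Remark}$, taking the specific lifting $\Theta=\vec{\textsf{F}}^{1,2}[\nabla\varphi](\partial_j v^i)$ extended by its interior formula; with that choice $\|\Theta(t)\|_{X^{m-2}}$ is controlled by $\|v\|_{X^{m-1}}+|h|_{X^{m-2,1/2}}$ in $L^\infty_t$, and the $\e\triangle^{\varphi}\Theta$ contribution costs only $\e\|\nabla v\|_{X^{m-1,1}}^2$ after integration by parts. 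A generic $\psi\ast_y$-type lifting, by contrast, does not obviously keep $\|\Theta(t)\|_{X^{m-2}}$ pointwise-in-time bounded without invoking $\|\partial_z v(t)\|_{X^{m-1}}$, which is only $L^4_t$ controlled; if you pursue your version, you should either use the natural interior extension of Remark~$\ref{Sect2_NormalDer_Remark}$ or verify that your Poisson-type lifting gains the half-derivative needed to stay at order $m-2$.
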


\begin{proof}
By the divergence free condition $\nabla^{\varphi}\cdot v =0$, we have
\begin{equation}\label{Sect2_NormalDer_Estimate_Laplacian}
\begin{array}{ll}
\triangle^{\varphi} v = \nabla^{\varphi}(\nabla^{\varphi}\cdot v) - \nabla^{\varphi}\times (\nabla^{\varphi}\times v)
= - \nabla^{\varphi}\times\omega.
\end{array}
\end{equation}

Firstly, we develop the $L^2$ estimate of $\omega$. Multiple $(\ref{Sect1_NS_Eq})$ with $\nabla^{\varphi}\times \omega$,
integrate in $\mathbb{R}^3_{-}$, use the integration by parts formula $(\ref{Sect1_Formulas_CanNotUse})_3$, we get
\begin{equation}\label{Sect2_NormalDer_Estimate_L2_1}
\begin{array}{ll}
\int\limits_{\mathbb{R}^3_{-}} \big(\partial_t^{\varphi} v + v\cdot\nabla^{\varphi} v + \nabla^{\varphi} q
+ \e\nabla^{\varphi}\times \omega \big)
\cdot \nabla^{\varphi}\times \omega \mathrm{d}\mathcal{V}_t =0, \\[16pt]

\int\limits_{\mathbb{R}^3_{-}} \big(\partial_t^{\varphi} \omega + v\cdot\nabla^{\varphi} \omega + \nabla^{\varphi}\times\nabla^{\varphi} q \big)
\cdot \omega \mathrm{d}\mathcal{V}_t
+ \e\int\limits_{\mathbb{R}^3_{-}} |\nabla^{\varphi}\times \omega|^2 \mathrm{d}\mathcal{V}_t \\[2pt]\quad

= - \int\limits_{z=0} \big(\partial_t v + v_y\cdot\nabla_y v + \nabla^{\varphi} q \big)
\cdot \NN\times \omega \mathrm{d}\mathcal{V}_t

- \int\limits_{\mathbb{R}^3_{-}} (\sum\limits_{i=1}^3 \nabla^{\varphi} v^i \partial_i^{\varphi} v
\cdot \omega \mathrm{d}\mathcal{V}_t, \\[13pt]

\|\omega\|_{L^2}^2 + \e\int\limits_0^t \|\nabla \omega\|_{L^2}^2 \mathrm{d}_t \leq \big\|\omega|_{t=0}\big\|_{L^2}^2 + b.t.\, .
\end{array}
\end{equation}
Note that $\int\limits_{z=0} \nabla^{\varphi} q \cdot \NN\times \omega \mathrm{d}\mathcal{V}_t
\lem \big|\nabla^{\varphi} q|_{z=0}\big|_{-\frac{1}{2}} + \big|\NN\times \omega|_{z=0}\big|_{\frac{1}{2}} =b.t. \, .$

\vspace{0.2cm}
When $1\leq \ell+|\alpha|\leq m-2$, apply $\partial_t^{\ell}\mathcal{Z}^{\alpha}$ to $(\ref{Sect1_NS_Eq})$ and rewrite the viscous terms, we have
\begin{equation}\label{Sect2_NormalDer_Estimate_1}
\begin{array}{ll}
\partial_t^{\varphi} \partial_t^{\ell}\mathcal{Z}^{\alpha}v
+ v\cdot\nabla^{\varphi} \partial_t^{\ell}\mathcal{Z}^{\alpha} v
+ \nabla^{\varphi} \partial_t^{\ell}\mathcal{Z}^{\alpha} q
+ \e\nabla^{\varphi}\times(\nabla^{\varphi}\times \partial_t^{\ell}\mathcal{Z}^{\alpha}v) = \e\, \mathcal{I}_{1,1} + \mathcal{I}_{1,2},
\end{array}
\end{equation}
where
\begin{equation*}
\begin{array}{ll}
\mathcal{I}_{1,1}
= - [\partial_t^{\ell}\mathcal{Z}^{\alpha}, \nabla^{\varphi} \times] \omega
- \nabla^{\varphi} \times [\partial_t^{\ell}\mathcal{Z}^{\alpha}, \nabla^{\varphi}\times] v, \\[8pt]

\mathcal{I}_{1,2}
= - [\partial_t^{\ell}\mathcal{Z}^{\alpha}, v_y \cdot \nabla_y + V_z \partial_z] v
- [\partial_t^{\ell}\mathcal{Z}^{\alpha}, \NN\partial_z^{\varphi}] q.
\end{array}
\end{equation*}

Multiple $(\ref{Sect2_NormalDer_Estimate_1})$ with
$\nabla^{\varphi}\times (\nabla^{\varphi}\times \partial_t^{\ell}\mathcal{Z}^{\alpha}v)$,
integrate in $\mathbb{R}^3_{-}$, we have
\begin{equation}\label{Sect2_NormalDer_Estimate_2}
\begin{array}{ll}
\int\limits_{\mathbb{R}^3_{-}}
\partial_t^{\varphi} \partial_t^{\ell}\mathcal{Z}^{\alpha}v \cdot \nabla^{\varphi}\times (\nabla^{\varphi}\times \partial_t^{\ell}\mathcal{Z}^{\alpha}v)
\mathrm{d}\mathcal{V}_t
\\[7pt]\
+ \int\limits_{\mathbb{R}^3_{-}}v\cdot\nabla^{\varphi} \partial_t^{\ell}\mathcal{Z}^{\alpha} v
\cdot \nabla^{\varphi}\times (\nabla^{\varphi}\times \partial_t^{\ell}\mathcal{Z}^{\alpha}v)
\mathrm{d}\mathcal{V}_t \\[7pt]\

+ \int\limits_{\mathbb{R}^3_{-}}\nabla^{\varphi} \partial_t^{\ell}\mathcal{Z}^{\alpha} q
\cdot \nabla^{\varphi}\times (\nabla^{\varphi}\times \partial_t^{\ell}\mathcal{Z}^{\alpha}v)
\mathrm{d}\mathcal{V}_t

+ \e\int\limits_{\mathbb{R}^3_{-}} |\nabla^{\varphi}\times (\nabla^{\varphi}\times \partial_t^{\ell}\mathcal{Z}^{\alpha}v)|^2
\mathrm{d}\mathcal{V}_t \\[7pt]\

= \int\limits_{\mathbb{R}^3_{-}}(\e\, \mathcal{I}_{1,1} + \mathcal{I}_{1,2}) \cdot \nabla^{\varphi}\times (\nabla^{\varphi}\times \partial_t^{\ell}\mathcal{Z}^{\alpha}v) \mathrm{d}\mathcal{V}_t,
\end{array}
\end{equation}

Use the integration by parts formula $(\ref{Sect1_Formulas_CanNotUse})_3$ and note that
$[\partial_t^{\varphi}, \nabla^{\varphi}] =0$, we have
\begin{equation}\label{Sect2_NormalDer_Estimate_3}
\begin{array}{ll}
\int\limits_{\mathbb{R}^3_{-}}
\partial_t^{\varphi} |\nabla^{\varphi}\times\partial_t^{\ell}\mathcal{Z}^{\alpha}v|^2
\mathrm{d}\mathcal{V}_t
+ \int\limits_{\mathbb{R}^3_{-}}v\cdot\nabla^{\varphi} |\nabla^{\varphi}\times \partial_t^{\ell}\mathcal{Z}^{\alpha} v |^2
\mathrm{d}\mathcal{V}_t
\\[8pt]\quad

+ \int\limits_{\mathbb{R}^3_{-}} (\nabla^{\varphi}\times \nabla^{\varphi} \partial_t^{\ell}\mathcal{Z}^{\alpha} q)
\cdot  (\nabla^{\varphi}\times \partial_t^{\ell}\mathcal{Z}^{\alpha}v)
\mathrm{d}\mathcal{V}_t

+ \e\int\limits_{\mathbb{R}^3_{-}} |\nabla^{\varphi}\times (\nabla^{\varphi}\times \partial_t^{\ell}\mathcal{Z}^{\alpha}v)|^2
\mathrm{d}\mathcal{V}_t \\[8pt]

= - \int\limits_{z=0}
(\partial_t^{\varphi} \partial_t^{\ell}\mathcal{Z}^{\alpha}v + v\cdot\nabla^{\varphi} \partial_t^{\ell}\mathcal{Z}^{\alpha} v)
\cdot \NN\times (\nabla^{\varphi}\times \partial_t^{\ell}\mathcal{Z}^{\alpha}v)
\mathrm{d}y
\\[6pt]\quad

- \int\limits_{\mathbb{R}^3_{-}} [(\sum\limits_{i=1}^3 \nabla^{\varphi}v^i\cdot\partial_i^{\varphi})\times \partial_t^{\ell}\mathcal{Z}^{\alpha} v]
\cdot (\nabla^{\varphi}\times \partial_t^{\ell}\mathcal{Z}^{\alpha}v)
\mathrm{d}\mathcal{V}_t \\[6pt]\quad

- \int\limits_{z=0}\nabla^{\varphi} \partial_t^{\ell}\mathcal{Z}^{\alpha} q
\cdot \NN\times (\nabla^{\varphi}\times \partial_t^{\ell}\mathcal{Z}^{\alpha}v)
\mathrm{d}y
+ \int\limits_{\mathbb{R}^3_{-}} \e\,\mathcal{I}_{1,1} \cdot \nabla^{\varphi}\times (\nabla^{\varphi}\times \partial_t^{\ell}\mathcal{Z}^{\alpha}v) \mathrm{d}\mathcal{V}_t\\[8pt]\quad

+ \int\limits_{z=0}\mathcal{I}_{1,2} \cdot \NN\times (\nabla^{\varphi}\times \partial_t^{\ell}\mathcal{Z}^{\alpha}v) \mathrm{d}y
+ \int\limits_{\mathbb{R}^3_{-}} \nabla^{\varphi}\times\mathcal{I}_{1,2} \cdot (\nabla^{\varphi}\times \partial_t^{\ell}\mathcal{Z}^{\alpha}v) \mathrm{d}\mathcal{V}_t.
\end{array}
\end{equation}

By $\nabla^{\varphi}\times \nabla^{\varphi} \partial_t^{\ell}\mathcal{Z}^{\alpha} q =0$, we have
\begin{equation}\label{Sect2_NormalDer_Estimate_4}
\begin{array}{ll}
\frac{\mathrm{d}}{\mathrm{d}t} \int\limits_{\mathbb{R}^3_{-}}
|\nabla^{\varphi}\times\partial_t^{\ell}\mathcal{Z}^{\alpha}v|^2
\mathrm{d}\mathcal{V}_t

+ 2\e\int\limits_{\mathbb{R}^3_{-}} |\nabla^{\varphi}\times (\nabla^{\varphi}\times \partial_t^{\ell}\mathcal{Z}^{\alpha}v)|^2
\mathrm{d}\mathcal{V}_t \\[7pt]

\lem \|\nabla^{\varphi}\times\partial_t^{\ell}\mathcal{Z}^{\alpha}v\|_{L^2}^2
+ \e\|\nabla^{\varphi}\times (\nabla^{\varphi}\times \partial_t^{\ell}\mathcal{Z}^{\alpha}v)\|_{L^2}^2

+ \big|v |_{z=0}\big|_{X_{tan}^{m-1}}^2 \\[7pt]\quad
+ \big|\nabla^{\varphi}\times\partial_t^{\ell}\mathcal{Z}^{\alpha}v |_{z=0}\big|_{\frac{1}{2}}^2

+ \big|\nabla^{\varphi}\partial_t^{\ell}\mathcal{Z}^{\alpha} q|_{z=0} \big|_{-\frac{1}{2}}^2
+ \|\nabla \partial_t^{\ell}\mathcal{Z}^{\alpha}v\|_{L^2}^2

\\[7pt]\quad
+ \sum\limits_{\ell_1 +|\alpha|_1 \leq m-3}\big|\nabla^{\varphi}\partial_t^{\ell_1}\mathcal{Z}^{\alpha_1} q|_{z=0} \big|_{-\frac{1}{2}}^2
+ \e\|\mathcal{I}_{1,1}\|_{L^2}^2
+ \|\nabla^{\varphi}\times \mathcal{I}_{1,2}\|_{L^2}^2.
\end{array}
\end{equation}

It is easy to prove that $\|\mathcal{I}_{1,1}\|_{L^2} \lem \|\nabla \omega\|_{X^{m-2}}$. Then we estimate $\nabla^{\varphi}\times \mathcal{I}_{1,2}$.
\begin{equation}\label{Sect2_NormalDer_Estimate_5}
\begin{array}{ll}
\|\nabla^{\varphi}\times [\partial_t^{\ell}\mathcal{Z}^{\alpha}, V_z \partial_z] v\|_{L^2}
\lem \sum\limits_{\ell^1+|\alpha^1|>0} \big(\|\frac{1-z}{z}\partial_t^{\ell^1}\mathcal{Z}^{\alpha^1} V_z \cdot
\frac{z}{1-z}\nabla^{\varphi}\times \partial_t^{\ell^2}\mathcal{Z}^{\alpha^2} \partial_z v \|_{L^2} \\[7pt]\qquad
+ \|\nabla^{\varphi}\partial_t^{\ell^1}\mathcal{Z}^{\alpha^1} V_z
\times \partial_t^{\ell^2}\mathcal{Z}^{\alpha^2} \partial_z v \|_{L^2} \big) 
\end{array}
\end{equation}

\begin{equation*}
\begin{array}{ll}
\quad
\lem \sum\limits_{\ell^1+|\alpha^1|>0} \big(\|\nabla\partial_t^{\ell^1}\mathcal{Z}^{\alpha^1} V_z \|_{L^2}
+ \|\nabla^{\varphi}\times \partial_t^{\ell^2}\mathcal{Z}^{\alpha^2} \mathcal{Z}^3 v \|_{L^2}
+ \|\partial_t^{\ell^2}\mathcal{Z}^{\alpha^2} \partial_z v \|_{L^2} \big) \\[10pt]\quad

\lem \|\omega\|_{X^{m-2}} + \|v\|_{X^{m-1}} + \|\partial_z v\|_{X^{m-2}}
+ \|\partial_{zz} \eta\|_{X^{m-2}} + \|\partial_{zt} \eta\|_{X^{m-2}} \\[6pt]\quad

\lem \|\omega\|_{X^{m-2}} + \|v\|_{X^{m-1}} + |h|_{X^{m-1}}.

\\[16pt]
\|\nabla^{\varphi}\times [\partial_t^{\ell}\mathcal{Z}^{\alpha}, \NN\partial_z^{\varphi}] q\|_{L^2} \\[6pt]\quad

\lem \|(\nabla_y, 0)^{\top}\times [\partial_t^{\ell}\mathcal{Z}^{\alpha}, \NN\partial_z^{\varphi}] q\|_{L^2}
+ \|\NN\partial_z^{\varphi}\times [\partial_t^{\ell}\mathcal{Z}^{\alpha}, \NN\partial_z^{\varphi}] q\|_{L^2} \\[7pt]\quad

\lem \|\nabla q\|_{X^{m-1}} + \sum\limits_{\ell^1+|\alpha^1|>0} \big(\|\NN\partial_z^{\varphi}\times \partial_t^{\ell^1}\mathcal{Z}^{\alpha^1}\NN
\cdot \partial_t^{\ell^2}\mathcal{Z}^{\alpha^2} \partial_z^{\varphi} q \|_{L^2} \\[11pt]\qquad
+ \|\NN\times \partial_t^{\ell^1}\mathcal{Z}^{\alpha^1}\NN
\cdot \partial_z^{\varphi}\partial_t^{\ell^2}\mathcal{Z}^{\alpha^2} \partial_z^{\varphi} q \|_{L^2} \\[7pt]\quad

\lem \|\nabla q\|_{X^{m-1}} + \|\partial_{zz}^{\varphi} q \|_{X^{m-3}}
\lem \|\nabla q\|_{X^{m-1}} + \|\triangle^{\varphi} q \|_{X^{m-3}} \\[7pt]\quad

\lem \|\nabla q\|_{X^{m-1}} + \|\partial_i^{\varphi} v^j \partial_j^{\varphi} v^i\|_{X^{m-3}}

\lem \|\nabla q\|_{X^{m-1}} + \|v\|_{X^{m-2}} + \|\omega\|_{X^{m-2}}.
\end{array}
\end{equation*}

Plug $(\ref{Sect2_NormalDer_Estimate_5})$ into $(\ref{Sect2_NormalDer_Estimate_4})$, integrate in time and apply the integral form of Gronwall's inequality, then we have
\begin{equation}\label{Sect2_NormalDer_Estimate_6}
\begin{array}{ll}
\|\nabla^{\varphi}\times\partial_t^{\ell}\mathcal{Z}^{\alpha}v\|_{L^2}^2
+ \e\int\limits_0^t\|\nabla^{\varphi}\times (\nabla^{\varphi}\times \partial_t^{\ell}\mathcal{Z}^{\alpha}v)\|_{L^2}^2 \mathrm{d}t \\[6pt]

\lem \big\|\nabla^{\varphi}\times\partial_t^{\ell}\mathcal{Z}^{\alpha}v |_{t=0}\big\|_{L^2}^2
+ \int\limits_0^t \|\omega\|_{X^{m-2}}^2 + \|v\|_{X^{m-1,1}}^2 + |h|_{X^{m-1}}^2 \\[9pt]\quad
+ \|\partial_z v\|_{X^{m-1}}^2 + \|\nabla q\|_{X^{m-1}}^2\,\mathrm{d}t + b.t.
\end{array}
\end{equation}

$\partial_t^{\ell}\mathcal{Z}^{\alpha} \omega$ is equivalent to $\nabla^{\varphi} \times \partial_t^{\ell}\mathcal{Z}^{\alpha} v$, due to
$\ell +|\alpha|\leq m-2$ and
\begin{equation}\label{Sect2_NormalDer_Estimate_2_Formula}
\begin{array}{ll}
\partial_t^{\ell}\mathcal{Z}^{\alpha} \omega - \partial_t^{\ell}\mathcal{Z}^{\alpha} (\nabla^{\varphi} \times v)
= \sum\limits_{\ell_1 +|\alpha_1| >0} \partial_t^{\ell_1}\mathcal{Z}^{\alpha_1} (\frac{\NN}{\partial_z\varphi})\partial_z
\times \partial_t^{\ell_2}\mathcal{Z}^{\alpha_2} v, \\[14pt]

\|\partial_t^{\ell}\mathcal{Z}^{\alpha} \omega - \partial_t^{\ell}\mathcal{Z}^{\alpha} (\nabla^{\varphi} \times v)\|_{L^2}
\lem \|\partial_z v\|_{X^{m-3}} + |h|_{X^{m-2,\frac{1}{2}}}.
\end{array}
\end{equation}
Then we have the estimate of the vorticity:
\begin{equation}\label{Sect2_NormalDer_Estimate_7}
\begin{array}{ll}
\|\omega\|_{X^{m-2}}^2 + \e\int\limits_0^t\|\nabla\omega\|_{X^{m-2}}^2
\mathrm{d}\mathcal{V}_t\mathrm{d}t \\[6pt]

\lem \|\omega_0 \|_{X^{m-2}}^2 + \int\limits_0^t \|v\|_{X^{m-1,1}}^2 + |h|_{X^{m-1}}^2
+ \|\partial_z v\|_{X^{m-1}}^2 + \|\nabla q\|_{X^{m-1}}^2\,\mathrm{d}t.
\end{array}
\end{equation}
Thus, Lemma $\ref{Sect2_NormalDer_Lemma}$ is proved.
\end{proof}

\begin{remark}\label{Sect2_NormalDer_Remark}
There is another approach to estimate $\|\omega_h\|_{X^{m-2}}$ and $\|\partial_z v\|_{X^{m-2}}$, that is, we define variables:
\begin{equation}\label{Sect2_NormalDer_Remark_1}
\left\{\begin{array}{ll}
\zeta^1 = \omega^1 -\textsf{F}^1 [\nabla\varphi](\partial_j v^i), \ j=1,2,\ i=1,2,3,
\\[9pt]

\zeta^2 = \omega^2 -\textsf{F}^2 [\nabla\varphi](\partial_j v^i), \ j=1,2,\ i=1,2,3,
\end{array}\right.
\end{equation}
then $\zeta$ satisfies the following equation:
\begin{equation}\label{Sect2_NormalDer_Remark_2}
\left\{\begin{array}{ll}
\partial_t \zeta + v_y\cdot\nabla_y\zeta + V_z\partial_z\zeta - \e\triangle^{\varphi}\zeta

= \vec{\textsf{F}}^0[\nabla\varphi](\zeta + \vec{\textsf{F}}^{1,2}[\nabla\varphi](\partial_j v^i),\partial_j v^i)  \\[6pt]\quad
+ \vec{\textsf{F}}^{1,2}[\nabla\varphi](\partial_j\partial_i^{\varphi} q + [\partial_j, \partial_t^{\varphi} + v\cdot\nabla^{\varphi} - \e\triangle^{\varphi}]v^i)

- \partial_j v^i(\partial_t^{\varphi} \vec{\textsf{F}}^{1,2}[\nabla\varphi] \\[6pt]\quad
+ v\cdot\nabla^{\varphi} \vec{\textsf{F}}^{1,2}[\nabla\varphi]
- \e\triangle^{\varphi} \vec{\textsf{F}}^{1,2}[\nabla\varphi])
+ \e\nabla^{\varphi} \vec{\textsf{F}}^{1,2}[\nabla\varphi] \cdot \nabla^{\varphi}\partial_j v^i, \\[8pt]

\zeta|_{z=0} =0, \\[6pt]

\zeta|_{t=0} = \omega_{h,0} -\textsf{F}^{1,2} [\nabla\varphi](\partial_j v^i)|_{t=0}.
\end{array}\right.
\end{equation}
Then we have the estimate of $\zeta$:
\begin{equation}\label{Sect2_NormalDer_Remark_3}
\begin{array}{ll}
\|\zeta\|_{X^{m-2}}^2 + \e \int\limits_0^t\|\nabla \zeta\|_{X^{m-2}}^2 \,\mathrm{d}t  \\[7pt]
\lem \big\|\zeta|_{t=0}\big\|_{X^{m-2}}^2
+ \int\limits_0^t\|v\|_{X^{m-2,2}}^2 + \|\nabla q\|_{X^{m-2,1}}^2 + |h|_{X^{m}}^2 \,\mathrm{d}t \\[10pt]

\lem \big\|\omega|_{t=0}\big\|_{X^{m-2}}^2 + \big\|h|_{t=0}\big\|_{X^{m-2,\frac{1}{2}}}^2 + \big\|v|_{t=0}\big\|_{X^{m-2,1}}^2
+ \int\limits_0^t \cdots \,\mathrm{d}t.
\end{array}
\end{equation}

By using $(\ref{Sect2_NormalDer_Remark_1})$ and $(\ref{Sect2_NormalDer_Remark_3})$, we can estimate $\|\omega\|_{X^{m-2}}$. However, we can not use this method and variables $(\ref{Sect2_NormalDer_Remark_1})$ to estimate convergence rates of the inviscid limit, see Remark $\ref{Sect5_NormalDer_Remark}$.
\end{remark}

By the estimates proved in Lemmas $\ref{Sect2_Tangential_Estimate_Lemma}, \ref{Sect2_Vorticity_Lemma}, \ref{Sect2_NormalDer_Lemma}$,
it is standard to prove Proposition $\ref{Sect1_Proposition_TimeRegularity}$.


\section{Strong Vorticity Layer Caused by Strong Initial Vorticity Layer}

In this section, we study the strong vorticity layer for the free surface N-S equations $(\ref{Sect1_NS_Eq})$,
which arises from the strong initial vorticity layer.

\subsection{The Equations Transformed in Lagrangian Coordinates}
In this preliminaries, we derive the evolution equations of $\hat{\omega}_h = \omega_h^{\e} -\omega_h$,
and construct a variable which satisfies the heat equations with damping.

$\hat{\omega}$ satisfies the equations $(\ref{Sect1_N_Derivatives_Difference_Eq})$,
plug the following equality into $(\ref{Sect1_N_Derivatives_Difference_Eq})$,
\begin{equation}\label{Sect3_Preliminaries_Vorticity_Eq_2}
\begin{array}{ll}
\vec{\textsf{F}}^0[\nabla\varphi^{\e}](\omega_h^{\e},\partial_j v^{\e,i})
- \vec{\textsf{F}}^0[\nabla\varphi](\omega_h,\partial_j v^i) \\[7pt]

= f^7[\nabla\varphi^{\e},\nabla\varphi,\partial_j v^{\e,i},\partial_j v^i]\hat{\omega}_h
+ f^8[\nabla\varphi^{\e},\nabla\varphi,\partial_j v^{\e,i},\partial_j v^i,\omega_h^{\e},\omega_h]\partial_j\hat{v}^i \\[7pt]\quad
+ f^9[\nabla\varphi^{\e},\nabla\varphi,\partial_j v^{\e,i},\partial_j v^i,\omega_h^{\e},\omega_h]\nabla\hat{\varphi},
\end{array}
\end{equation}
where these coefficients $f^7[\cdots],f^8[\cdots],f^9[\cdots]$ are uniformly bounded with respect to $\e$.
Then we obtain the following system of $\hat{\omega}_h$:
\begin{equation}\label{Sect3_Vorticity_Eq}
\left\{\begin{array}{ll}
\partial_t^{\varphi^{\e}} \hat{\omega}_h
+ v^{\e} \cdot\nabla^{\varphi^{\e}} \hat{\omega}_h
- \e\triangle^{\varphi^{\e}}\hat{\omega}_h
- f^7[\nabla\varphi^{\e},\nabla\varphi,\partial_j v^{\e,i},\partial_j v^i]\hat{\omega}_h \\[8pt]\
= f^8[\nabla\varphi^{\e},\nabla\varphi,\partial_j v^{\e,i},\partial_j v^i,\omega_h^{\e},\omega_h]\partial_j\hat{v}^i
+ f^9[\nabla\varphi^{\e},\nabla\varphi,\partial_j v^{\e,i},\partial_j v^i,\omega_h^{\e},\omega_h]\nabla\hat{\varphi} \\[7pt]\quad\
+ \e\triangle^{\varphi^{\e}}\omega_h
+ \partial_z^{\varphi}\omega_h \partial_t^{\varphi^{\e}} \hat{\varphi}
+ \partial_z^{\varphi} \omega_h\, v^{\e}\cdot \nabla^{\varphi^{\e}}\hat{\varphi}
- \hat{v}\cdot\nabla^{\varphi} \omega_h
, \\[8pt]

\hat{\omega}_h|_{z=0} =\textsf{F}^{1,2} [\nabla\varphi^{\e}](\partial_j v^{\e,i}) - \omega_h^b := \hat{\omega}_h^b, \\[6pt]

\hat{\omega}_h|_{t=0} =\omega^{\e}_{h,0} -\omega_{h,0} := \hat{\omega}_{h,0}.
\end{array}\right.
\end{equation}

Similar to \cite{Masmoudi_Rousset_2012_FreeBC}, we eliminate the convection term by using the Lagrangian parametrization of $\Omega_t$:
\begin{equation}\label{Sect3_Preliminaries_Vorticity_Eq_3}
\begin{array}{ll}
\partial_t \mathcal{X}(t,x) = u^{\e}(t,\mathcal{X}(t,x)) = v^{\e}(t,\, \Phi^{-1} \circ \mathcal{X}), \quad \mathcal{X}(0,x) =\Phi(0,x).
\end{array}
\end{equation}
Define the Jacobian of the change of variables $J(t,x) = |\det\nabla \mathcal{X}(t,x)|$, then $J(t,x) = J(0,x):=J_0(x)$ due to the divergence free condition.
Denote $a_0 = |J_0(x)|^{\frac{1}{2}}$, define the matrix $(a_{ij}) = |J_0|^{\frac{1}{2}} P^{-1}$, where
the matrix $P$ satisfies $P_{ij} =\partial_i \mathcal{X}\cdot \partial_j \mathcal{X}$.

Define $W = e^{-\gamma t} \hat{\omega}_h(t,\, \Phi^{-1}\circ \mathcal{X})$, then $W$ satisfies the equation:
\begin{equation}\label{Sect3_Preliminaries_HeatEq_Damping}
\begin{array}{ll}
a_0\partial_t W - \e\partial_i(a_{ij}\partial_j W)
+ \big(\gamma a_0 - f^7[\nabla\varphi^{\e},\nabla\varphi,\partial_j v^{\e,i},\partial_j v^i]\big) W \\[9pt]

= \e\, e^{-\gamma t}\triangle^{\varphi^{\e}}\omega_h
+ e^{-\gamma t}\partial_z^{\varphi}\omega_h \partial_t^{\varphi^{\e}} \hat{\varphi}
+ e^{-\gamma t}\partial_z^{\varphi} \omega_h\, v^{\e}\cdot \nabla^{\varphi^{\e}}\hat{\varphi} \\[7pt]\quad\
- e^{-\gamma t}\hat{v}\cdot\nabla^{\varphi} \omega_h

+ e^{-\gamma t}f^8[\nabla\varphi^{\e},\nabla\varphi,\partial_j v^{\e,i},\partial_j v^i,\omega_h^{\e},\omega_h]\partial_j\hat{v}^i \\[7pt]\quad\
+ e^{-\gamma t}f^9[\nabla\varphi^{\e},\nabla\varphi,\partial_j v^{\e,i},\partial_j v^i,\omega_h^{\e},\omega_h]\nabla\hat{\varphi}

:=\mathcal{I}_2,
\end{array}
\end{equation}
where $\|\mathcal{I}_2\|_{L^{\infty}} \rto 0$ as $\e\rto 0$.

Since $a_0>0$, we can choose suitably large $\gamma>0$ such that
\begin{equation}\label{Sect3_Preliminaries_Vorticity_Eq_5}
\begin{array}{ll}
\gamma a_0 - f^7[\nabla\varphi^{\e},\nabla\varphi,\partial_j v^{\e,i},\partial_j v^i] >0,
\end{array}
\end{equation}
then $\big(\gamma a_0 - f^7[\nabla\varphi^{\e},\nabla\varphi,\partial_j v^{\e,i},\partial_j v^i]\big) W$ is a damping term.
Since the matrix $(a_{ij})$ is definitely positive, $- \e\partial_i(a_{ij}\partial_j W)$ is the diffusion term.

\subsection{$L^{\infty}$ Estimate of Strong Vorticity Layer}
In this subsection, we prove that if the initial vorticity layer is strong, then
the vorticity layer is strong.

Before proving our results, let us investigate the simplest model by using the heat kernel, that is the following heat equation with damping,
Dirichlet boundary condition and constant coefficients.
\begin{proposition}\label{Sect3_HeatEq_Diffusion_Proposition}
Assume $\|W\|_{X_{tan}^2} <+\infty$, $w^{ini} \neq 0$, $\gamma>0$ is constant, $W$ is the solution of the following heat equation with damping:
\begin{equation}\label{Sect3_HeatEq_Diffusion_1}
\left\{\begin{array}{ll}
\partial_t W - \e\triangle W + \gamma W =0, \\[5pt]

W|_{z=0} = 0, \\[5pt]

W|_{t=0} = w^{ini} \nrightarrow 0,
\end{array}\right.
\end{equation}
then $\lim\limits_{\e\rto 0}\|W\|_{L^{\infty}(\mathbb{R}^3_{-} \times (0,T])} \neq 0$.
\end{proposition}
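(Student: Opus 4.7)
The plan is to reduce the equation to a pure heat equation and then exploit explicit representation by heat kernels to track pointwise behavior in the interior. First I would set $\widetilde{W}(t,x) = e^{\gamma t} W(t,x)$, which removes the damping term and yields the pure Dirichlet heat problem
\begin{equation*}
\partial_t \widetilde{W} - \e\triangle \widetilde{W} = 0, \quad \widetilde{W}|_{z=0} = 0, \quad \widetilde{W}|_{t=0} = w^{ini}.
\end{equation*}
Since $|W(t,x)| = e^{-\gamma t}|\widetilde{W}(t,x)|$ and $e^{-\gamma T} > 0$ is a fixed positive constant, it suffices to show $\lim_{\e \to 0}\|\widetilde{W}\|_{L^\infty(\mathbb{R}^3_-\times(0,T])}\neq 0$.

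Next I would write $\widetilde{W}$ using the method of images. Let $G_\e(t,x) = (4\pi\e t)^{-3/2}\exp(-|x|^2/(4\e t))$ denote the Gaussian heat kernel in $\mathbb{R}^3$, and for $y=(y_1,y_2,y_3)$ set $y^* = (y_1,y_2,-y_3)$. Then
\begin{equation*}
\widetilde{W}(t,x) = \int_{\mathbb{R}^3_-} \bigl[G_\e(t,x-y) - G_\e(t,x-y^*)\bigr] w^{ini}(y)\,\mathrm{d}y,
\end{equation*}
which satisfies the Dirichlet condition automatically. The regularity assumption $\|W\|_{X_{tan}^2}<\infty$ (and hence of $w^{ini}$) suffices to make this representation rigorous and to justify passing to the limit.

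Since $w^{ini}\not\equiv 0$, by continuity there is an interior point $x_0 = (y_0, z_0)$ with $z_0 < 0$ at which $w^{ini}(x_0) \neq 0$. I would fix such an $x_0$ and a time $t_0 \in (0,T]$ and show pointwise convergence $\widetilde{W}(t_0, x_0) \to w^{ini}(x_0)$ as $\e\to 0$. For the first summand, $G_\e(t_0,\cdot)$ is a standard approximate identity as $\e\to 0$, so $\int G_\e(t_0, x_0 - y) w^{ini}(y)\,\mathrm{d}y \to w^{ini}(x_0)$. For the image term, the reflected point $x_0^* = (y_0,-z_0)$ lies outside $\overline{\mathbb{R}^3_-}$ and satisfies $|x_0 - y^*| \geq |z_0| > 0$ for every $y \in \mathbb{R}^3_-$, so
\begin{equation*}
\int_{\mathbb{R}^3_-} G_\e(t_0, x_0 - y^*) w^{ini}(y)\,\mathrm{d}y \;\lesssim\; \frac{e^{-z_0^2/(4\e t_0)}}{(4\pi\e t_0)^{3/2}} \|w^{ini}\|_{L^1} \;\longrightarrow\; 0.
\end{equation*}
Therefore $\widetilde{W}(t_0,x_0) \to w^{ini}(x_0) \neq 0$, giving
\begin{equation*}
\liminf_{\e\to 0}\|W\|_{L^\infty(\mathbb{R}^3_-\times(0,T])} \geq e^{-\gamma t_0}|w^{ini}(x_0)| > 0.
\end{equation*}

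The main obstacle is not analytical but regularity-theoretic: the hypothesis ``$w^{ini}\neq 0$'' must be read as ``$w^{ini}$ is not identically zero and has an interior point of nonvanishing.'' The norm $\|W\|_{X_{tan}^2}$ controls only tangential derivatives, but via standard Sobolev embedding combined with the heat equation's parabolic smoothing it gives enough continuity to select such an $x_0$. An alternative route, avoiding pointwise values of $w^{ini}$, is to test against a bump $\chi$ compactly supported in an open set where $w^{ini}$ is nonzero in $L^1$-sense; the same approximate-identity argument then yields $\int \widetilde{W}(t_0,\cdot)\chi \to \int w^{ini}\chi \neq 0$, which again forces $\|\widetilde{W}\|_{L^\infty}$ to stay uniformly positive.
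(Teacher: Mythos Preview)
Your proposal is correct and follows essentially the same route as the paper: the substitution $\widetilde{W}=e^{\gamma t}W$ to strip the damping, the method-of-images heat kernel representation on $\mathbb{R}^3_-$, and the approximate-identity argument to recover $w^{ini}$ pointwise in the interior as $\e\to 0$. Your treatment is in fact more explicit than the paper's --- you separate the direct and reflected Gaussian contributions and bound the image term via the fixed positive distance $|z_0|$ to the boundary, whereas the paper simply asserts that the kernel expression converges strongly to $w^{ini}(x)$ as $\e t\to 0$.
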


\begin{proof}
Define $\tilde{W} = e^{\gamma t}W$, the equations $(\ref{Sect3_HeatEq_Diffusion_1})$ are rewritten as
\begin{equation}\label{Sect3_HeatEq_Diffusion_2}
\left\{\begin{array}{ll}
\partial_t \tilde{W} - \e\triangle \tilde{W} = 0, \\[5pt]

\tilde{W}|_{z=0} = 0, \\[5pt]

\tilde{W}|_{t=0} = w^{ini} \neq 0,
\end{array}\right.
\end{equation}

Note that $\e\triangle \tilde{W} \nrightarrow 0$ as $\e\rto 0$. Otherwise, we have $\partial_t \tilde{W}=0$,
then $\tilde{W} = w^{ini}(y,\frac{z}{\sqrt{\e}})$. However, $\e\partial_{zz} \tilde{W} = \partial_{zz} w^{ini}(y,\frac{z}{\sqrt{\e}}) \neq 0$.
This is a contradiction.

$\e\rto 0$ implies $\e t\rto 0$,
then the limit of the solution $\tilde{W}$ satisfies
\begin{equation}\label{Sect3_HeatEq_Diffusion_3}
\begin{array}{ll}
\tilde{W}(t,x) = \frac{1}{\sqrt{4\pi \e t}} \int\limits_{\mathbb{R}^3_{-}} w^{ini}(y)
\big(\exp\{-\frac{|x-y|^2}{4\e t}\} -\exp\{-\frac{|x+y|^2}{4\e t}\}\big) \,\mathrm{d}y \\[8pt]\hspace{1.22cm}

\rto w^{ini}(x), \text{\ as\ } \e t\rto 0.
\end{array}
\end{equation}
The convergence $(\ref{Sect3_HeatEq_Diffusion_3})$ is strong. Because $\lim\limits_{\e\rto 0}\|w^{ini}(x)\|_{L^{\infty}(\mathbb{R}^3_{-})} \neq 0$,
then we have $\lim\limits_{\e\rto 0}\|W\|_{L^{\infty}(\mathbb{R}^3_{-} \times (0,T])} \neq 0$.
Thus, Proposition $\ref{Sect3_HeatEq_Diffusion_Proposition}$ is proved.
\end{proof}

In order to prove that the strong initial vorticity layer is one of sufficient conditions for the
existence of strong vorticity layer, we assume that the Euler boundary data satisfies
$\Pi\mathcal{S}^{\varphi}v\nn|_{z=0} =0$.
\begin{theorem}\label{Sect3_BoundarLayer_Initial_Thm}
Assume $\omega^{\e},v^{\e}$ are the vorticity, velocity of Navier-Stokes equations $(\ref{Sect1_NS_Eq})$,
$\omega,v,\nn$ are the vorticity, velocity, normal vector of Euler equations $(\ref{Sect1_Euler_Eq})$.
If the initial Navier-Stokes velocity satisfies $\lim\limits_{\e\rto 0}(\nabla^{\varphi^{\e}}\times v_0^{\e})
- \nabla^{\varphi}\times\lim\limits_{\e\rto 0} v_0^{\e} \neq 0$ in the initial set $\mathcal{A}_0$,
the Euler solution satisfies $\Pi\mathcal{S}^{\varphi} v\nn|_{z=0} = 0$ in $[0,T]$,
then $\lim\limits_{\e\rto 0}\|\omega^{\e} -\omega\|_{L^{\infty}(\mathcal{X}(\mathcal{A}_0)\times (0,T])} \neq 0$.
\end{theorem}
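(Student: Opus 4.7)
I would work throughout with the Lagrangian reformulation already set up in Section~3.1: in Lagrangian coordinates $W(t,x)=e^{-\gamma t}\hat\omega_h(t,\Phi^{-1}\!\circ\mathcal X(t,x))$ satisfies the parabolic system $(\ref{Sect3_Preliminaries_HeatEq_Damping})$ with Dirichlet data $\hat\omega_h^b$ and initial data $\hat\omega_{h,0}$, with $\gamma$ chosen large enough (see $(\ref{Sect3_Preliminaries_Vorticity_Eq_5})$) that the coefficient $\gamma a_0-f^7$ is strictly positive, so $(\ref{Sect3_Preliminaries_HeatEq_Damping})$ is a genuine damped heat equation with uniformly elliptic principal part $-\e\partial_i(a_{ij}\partial_j\,\cdot)$. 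Because a point $x\in\mathcal A_0$ is transported by the Euler flow $\mathcal X$ into $\mathcal X(\mathcal A_0)$, proving $\lim_{\e\to 0}\|W\|_{L^\infty(\mathcal A_0\times(0,T])}\ne 0$ is equivalent to the desired Eulerian nonvanishing $\lim_{\e\to 0}\|\omega^\e-\omega\|_{L^\infty(\mathcal X(\mathcal A_0)\times(0,T])}\ne 0$.

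Next, I would split $W=W^{\mathrm{ini}}+W^{\mathrm{f}}$, where $W^{\mathrm{ini}}$ solves $(\ref{Sect3_Preliminaries_HeatEq_Damping})$ with zero source and zero boundary data but initial data $\hat\omega_{h,0}$, and $W^{\mathrm{f}}$ carries the source $\mathcal I_2$ and the boundary profile $\hat\omega_h^b$ with zero initial data. For $W^{\mathrm{f}}$ the decisive point is that the hypothesis $\Pi\mathcal S^\varphi v\,\nn|_{z=0}=0$ combined with Lemma~$\ref{Sect2_Vorticity_H_Eq_BC_Lemma}$ applied to the Euler side gives $\omega_h|_{z=0}=\textsf F^{1,2}[\nabla\varphi](\partial_j v^i)$; hence $\hat\omega_h^b=\textsf F^{1,2}[\nabla\varphi^\e](\partial_j v^{\e,i})-\textsf F^{1,2}[\nabla\varphi](\partial_j v^i)$, which tends to $0$ in $L^\infty(\{z=0\})$ since only tangential derivatives of $v,v^\e$ and Jacobians of $\varphi,\varphi^\e$ enter, and all of these converge uniformly by Proposition~$\ref{Sect1_Proposition_TimeRegularity}$ together with the convergence of the tangential part of the initial data. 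A maximum-principle bound for the damped heat equation with this boundary datum and source $\mathcal I_2$ (whose viscous piece $\e\triangle^{\varphi^\e}\omega_h$ is $O(\e)$ and whose remaining pieces are products of factors converging to zero in $L^\infty$ with uniformly bounded factors) then yields $\lim_{\e\to 0}\|W^{\mathrm{f}}\|_{L^\infty}=0$.

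For $W^{\mathrm{ini}}$ I would adapt the constant-coefficient model of Proposition~$\ref{Sect3_HeatEq_Diffusion_Proposition}$. Freezing coefficients at a base point $x_0\in\mathcal A_0$ and rescaling $(\tau,\xi)=(t,(x-x_0)/\sqrt{\e})$, the Green's function of the variable-coefficient damped heat operator converges, as $\e t\to 0$, to a delta on the diagonal times the damping factor $e^{-((\gamma a_0-f^7)/a_0)\,t}$; because $\hat\omega_{h,0}$ itself does not tend to zero on $\mathcal A_0$ by hypothesis, one obtains pointwise on $\mathcal A_0$ that $W^{\mathrm{ini}}(t,x)\to e^{-((\gamma a_0-f^7)/a_0)t}\,\hat\omega_{h,0}(x)\ne 0$, whence $\lim_{\e\to 0}\|W^{\mathrm{ini}}\|_{L^\infty(\mathcal A_0\times(0,T])}\ne 0$. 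Combining with the previous step and undoing the Lagrangian change of variables yields the strong vorticity layer on $\mathcal X(\mathcal A_0)\times(0,T]$; the claimed nonvanishing for $\partial_z^{\varphi^\e}v^\e-\partial_z^\varphi v$ and $\mathcal S^{\varphi^\e}v^\e-\mathcal S^\varphi v$ then follows algebraically from the relations $(\ref{Sect2_NormalDer_Vorticity_Estimate_5})$ and $(\ref{Sect1_Transport_DifferenceEq})$, and the nonvanishing of $\nabla^{\varphi^\e}q^\e-\nabla^\varphi q$ follows from the momentum equation together with $\e\triangle^{\varphi^\e}v^\e=-\e\nabla^{\varphi^\e}\times\omega^\e$, since in each case $\omega^\e-\omega$ is the only summand carrying an $O(1)$ limit.

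The main obstacle I anticipate is making the decomposition in the second step honest when $\mathcal I_2$ itself involves $\partial_j\hat v^i$ through $f^8$ and $\nabla\hat\varphi$ through $f^9$: the normal piece $\partial_z\hat v$ is precisely the quantity whose nontriviality is being asserted in $(\ref{Sect1_Thm_StrongLayer_1})$, so one cannot simply claim it vanishes in $L^\infty$. I would handle this by observing that after the parabolic rescaling $\xi=x/\sqrt{\e}$ each occurrence of $\partial_z\hat v$ in Duhamel's formula acquires an extra factor of $\sqrt{\e t}$, while the coefficients $f^8,f^9$ evaluated near $t=0$ depend only on limits of tangential data; a short-time Gr\"onwall/fixed-point bootstrap on $W^{\mathrm f}$ then closes the loop without any circular dependence on the quantity whose nonvanishing is the goal of the theorem.
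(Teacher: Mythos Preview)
Your approach is essentially the paper's: Lagrangian reformulation, linear decomposition of $W$, maximum principle for the piece carrying the source and boundary data, and a Green's-function/parabolic-scaling argument for the piece carrying the initial layer (the paper splits $W=W^{f\!o}+W^{bdy}+W^{ini}$ into three pieces rather than two, treating source and boundary separately, but this is cosmetic). One simplification: the obstacle you anticipate with $f^8\,\partial_j\hat v^i$ in $\mathcal I_2$ is not real, since throughout $(\ref{Sect3_Preliminaries_Vorticity_Eq_2})$ and $(\ref{Sect1_Vorticity_H_Eq})$ the index $j$ ranges over $\{1,2\}$ only, so these are \emph{tangential} derivatives of $\hat v$, which tend to zero in $L^\infty$ directly without any bootstrap; the paper accordingly asserts $\|\mathcal I_2\|_{L^\infty}\to 0$ immediately after $(\ref{Sect3_Preliminaries_HeatEq_Damping})$ and handles $W^{f\!o}$ by the maximum principle alone.
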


\begin{proof}
Since $\Pi\mathcal{S}^{\varphi} v\nn|_{z=0} = 0$ in $[0,T]$, $|\omega_0^{\e}|_{z=0} -\omega_0|_{z=0}|_{\infty} \rto 0$ as $\e\rto 0$.
then there exist a set $\mathcal{A}_0\cap \{x|z<0\} \neq \emptyset$ such that
$\lim\limits_{\e\rto 0}(\nabla^{\varphi^{\e}}\times v_0^{\e})
- \nabla^{\varphi}\times\lim\limits_{\e\rto 0} v_0^{\e} \neq 0$ in the initial set $\mathcal{A}_0$.

We study the equations $(\ref{Sect3_Vorticity_Eq})$ in the Lagrangian coordinates:
\begin{equation}\label{Sect3_BoundarLayer_Initial_Eq_1}
\left\{\begin{array}{ll}
a_0\partial_t W - \e\partial_i(a_{ij}\partial_j W)
+ \big(\gamma a_0 - f^7[\nabla\varphi^{\e},\nabla\varphi,\partial_j v^{\e,i},\partial_j v^i]\big) W =\mathcal{I}_2, \\[8pt]

W|_{z=0} = \hat{\omega}_h^b := \omega_0^{\e}|_{z=0} -\omega_0|_{z=0} \rto 0, \\[7pt]

W|_{t=0} = \hat{\omega}_{h,0} \nrightarrow 0.
\end{array}\right.
\end{equation}

We decompose $W = W^{f\!o} + W^{bdy} + W^{ini}$, such that $W^{f\!o}$ satisfies the nonhomogeneous equations:
\begin{equation}\label{Sect3_BoundarLayer_Initial_Eq_Force}
\left\{\begin{array}{ll}
a_0\partial_t W^{f\!o} - \e\partial_i(a_{ij}\partial_j W^{f\!o})
+ \big(\gamma a_0 - f^7[\nabla\varphi^{\e},\nabla\varphi,\partial_j v^{\e,i},\partial_j v^i]\big) W^{f\!o} =\mathcal{I}_2, \\[8pt]

W^{f\!o}|_{z=0} = 0, \\[7pt]

W^{f\!o}|_{t=0} = 0,
\end{array}\right.
\end{equation}
$W^{bdy}$ satisfies the following equations:
\begin{equation}\label{Sect3_BoundarLayer_Initial_Eq_Boundary}
\left\{\begin{array}{ll}
a_0\partial_t W^{bdy} - \e\partial_i(a_{ij}\partial_j W^{bdy})
+ \big(\gamma a_0 - f^7[\nabla\varphi^{\e},\nabla\varphi,\partial_j v^{\e,i},\partial_j v^i]\big) W^{bdy} =0,\\[8pt]

W^{bdy}|_{z=0} = \hat{\omega}_h^b, \\[8pt]

W^{bdy}|_{t=0} = 0,
\end{array}\right.
\end{equation}
and $W^{ini}$ satisfies the homogeneous equations:
\begin{equation}\label{Sect3_BoundarLayer_Initial_Eq_Initial}
\left\{\begin{array}{ll}
a_0\partial_t W^{ini} - \e\partial_i(a_{ij}\partial_j W^{ini})
+ \big(\gamma a_0 - f^7[\nabla\varphi^{\e},\nabla\varphi,\partial_j v^{\e,i},\partial_j v^i]\big) W^{ini} =0,\\[8pt]

W^{ini}|_{z=0} = 0, \\[8pt]

W^{ini}|_{t=0} = \hat{\omega}_{h,0},
\end{array}\right.
\end{equation}
where $\lim\limits_{\e\rto 0}\|\hat{\omega}_{h,0}\|_{L^{\infty}(\mathcal{A}_0)} \neq 0$.

Note the diffusion term and damping term in $(\ref{Sect3_BoundarLayer_Initial_Eq_Force})$,
it is easy to use the maximal principle to prove $\|W^{f\!o}\|_{L^{\infty}} \rto 0$ as $\e\rto 0$, due to the force term that vanishes when $\e\rto 0$.

For $(\ref{Sect3_BoundarLayer_Initial_Eq_Boundary})$, we define
\begin{equation}\label{Sect3_BoundarLayer_Initial_Eq_Boundary_1}
\begin{array}{ll}
\phi = W^{bdy} - \big(\textsf{F}^{1,2} [\nabla\varphi^{\e}](\partial_j v^{\e,i}) - \textsf{F}^{1,2} [\nabla\varphi](\partial_j v^i)\big),
\end{array}
\end{equation}
then $\phi$ satisfies the following equations:
\begin{equation}\label{Sect3_BoundarLayer_Initial_Eq_Boundary_2}
\left\{\begin{array}{ll}
a_0\partial_t W^{bdy} - \e\partial_i(a_{ij}\partial_j W^{bdy})
+ \big(\gamma a_0 - f^7[\nabla\varphi^{\e},\nabla\varphi,\partial_j v^{\e,i},\partial_j v^i]\big) W^{bdy} \\[7pt]\quad
= - a_0\partial_t \big(\textsf{F}^{1,2} [\nabla\varphi^{\e}](\partial_j v^{\e,i}) - \textsf{F}^{1,2} [\nabla\varphi](\partial_j v^i)\big)
\\[7pt]\qquad
+ \e\partial_i \big[a_{ij}\partial_j \big(\textsf{F}^{1,2}
[\nabla\varphi^{\e}](\partial_j v^{\e,i}) - \textsf{F}^{1,2} [\nabla\varphi](\partial_j v^i)\big)\big]
\\[7pt]\qquad
- \big(\gamma a_0 - f^7[\nabla\varphi^{\e},\nabla\varphi,\partial_j v^{\e,i},\partial_j v^i]\big)
\big(\textsf{F}^{1,2} [\nabla\varphi^{\e}](\partial_j v^{\e,i}) - \textsf{F}^{1,2} [\nabla\varphi](\partial_j v^i)\big),\\[8pt]

\phi|_{z=0} = 0, \\[8pt]

\phi|_{t=0} = - \textsf{F}^{1,2} [\nabla\varphi^{\e}](\partial_j v^{\e,i})|_{t=0} + \textsf{F}^{1,2} [\nabla\varphi](\partial_j v^i)|_{t=0}.
\end{array}\right.
\end{equation}

It is easy to prove $\|\phi\|_{L^{\infty}} \rto 0$ as $\e\rto 0$, due to the force term and the initial data $\phi|_{t=0}$ that vanish when $\e\rto 0$.
Thus, it follows from $(\ref{Sect3_BoundarLayer_Initial_Eq_Boundary_1})$ that $\|W^{bdy}\|_{L^{\infty}} \rto 0$ as $\e\rto 0$.

Next, we study the equations $(\ref{Sect3_BoundarLayer_Initial_Eq_Initial})$ which are already expressed in the Lagrangian coordinates.
In order to prove $\lim\limits_{\e\rto 0}\|\omega^{\e} -\omega\|_{L^{\infty}(\mathcal{X}(\mathcal{A}_0)\times (0,T])} \neq 0$,
we have to prove $\lim\limits_{\e\rto 0}\|W^{ini}\|_{L^{\infty}(\mathcal{A}_0\times (0,T])} \neq 0$.

By defining the variable
\begin{equation}\label{Sect3_BoundarLayer_Initial_Eq_Initial_1}
\begin{array}{ll}
\tilde{W}^{ini} = W^{ini} \exp\{\int_0^t \frac{1}{a_0}
(\gamma a_0 - f^7[\nabla\varphi^{\e},\nabla\varphi,\partial_j v^{\e,i},\partial_j v^i]) \,\mathrm{d}t\},
\end{array}
\end{equation}
$(\ref{Sect3_BoundarLayer_Initial_Eq_Initial})$ can be rewritten as
\begin{equation}\label{Sect3_BoundarLayer_Initial_Eq_Initial_2}
\left\{\begin{array}{ll}
\partial_t \tilde{W}^{ini} - \frac{\sqrt{\e}}{a_0} \exp\{-\int_0^t \frac{1}{a_0}
(\gamma a_0 - f^7[\cdots]) \,\mathrm{d}t\} \cdot a_{ij} \partial_{ij} \tilde{W}^{ini}
 = \sqrt{\e}\,\mathcal{I}_3,\\[8pt]

\tilde{W}|_{z=0} = 0, \\[8pt]

\tilde{W}|_{t=0} = \hat{\omega}_{h,0},
\end{array}\right.
\end{equation}
where
\begin{equation}\label{Sect3_BoundarLayer_Initial_Eq_Initial_3}
\begin{array}{ll}
\mathcal{I}_3 = \frac{\sqrt{\e}}{a_0}\sum\limits_{i,j =1}^3\partial_i a_{ij} \cdot \partial_j \big(\tilde{W}^{ini} \exp\{-\int_0^t \frac{1}{a_0}
(\gamma a_0 - f^7[\cdots]) \,\mathrm{d}t\} \big)
\\[11pt]\hspace{0.85cm}

+ \frac{\sqrt{\e}}{a_0}\sum\limits_{i,j=1}^3 a_{ij} \partial_i \tilde{W}^{ini} \cdot \partial_j \big(\exp\{-\int_0^t \frac{1}{a_0}
(\gamma a_0 - f^7[\cdots]) \,\mathrm{d}t\}\big).
\end{array}
\end{equation}
Note that $\|\mathcal{I}_3\|_{L^{\infty}} <+\infty$, because $\mathcal{I}_3$ contains normal differential operator $\partial_z$ of order at most one.

$(\ref{Sect3_BoundarLayer_Initial_Eq_Initial_2})$ is uniformly parabolic, which has fundament solution satisfying the parabolic scaling.
Let $\textsf{H}(\frac{x}{\sqrt{\e t}})$ to denote the fundament solution of the following homogeneous parabolic equation in $\mathbb{R}^3$:
\begin{equation}\label{Sect3_BoundarLayer_Initial_Eq_Initial_4}
\begin{array}{ll}
\partial_t f - \frac{\e}{a_0} a_{ij} \exp\{-\int_0^t \frac{1}{a_0}
(\gamma a_0 - f^7[\cdots]) \,\mathrm{d}t\} \cdot \partial_{ij} f =0,
\end{array}
\end{equation}
by using the fundament solution $\textsf{H}$, the equations $(\ref{Sect3_BoundarLayer_Initial_Eq_Initial_2})$ have the explicit formula
by using Duhamel's principle,
\begin{equation}\label{Sect3_BoundarLayer_Initial_Eq_Initial_5}
\begin{array}{ll}
\tilde{W}^{ini}(t,x) = \int\limits_{\mathbb{R}^3_{-}} \hat{\omega}_{h,0}(y) (\textsf{H}(\frac{x-y}{\sqrt{\e t}})
- \textsf{H}(\frac{x+y}{\sqrt{\e t}})) \,\mathrm{d}y \\[7pt]\hspace{2cm}
+ \int\limits_0^t
\int\limits_{\mathbb{R}^3_{-}} \sqrt{\e}\,\mathcal{I}_3(t- s,y) (\textsf{H}(\frac{x-y}{\sqrt{\e s}}) - \textsf{H}(\frac{x+y}{\sqrt{\e s}})) \,\mathrm{d}y
\mathrm{d}s, \\[-0.3cm]
\end{array}
\end{equation}
then $\tilde{W}^{ini}(t,x)\rto \hat{\omega}_{h,0} + \sqrt{\e} O(1) \rto \hat{\omega}_{h,0}$ pointwisely, as $\e\rto 0$,
$\lim\limits_{\e\rto 0} \tilde{W}^{ini}(t,x)$ and
$\lim\limits_{\e\rto 0} \hat{\omega}_{h,0}(x)$ have the same support. The limit of the solution is equal to that of the initial data
in Lagrangian coordinates, namely the limit of the vorticity is also transported by the velocity field in Eulerian coordinates.

By $(\ref{Sect3_BoundarLayer_Initial_Eq_Initial_1})$, we have
\begin{equation}\label{Sect3_BoundarLayer_Initial_Eq_Initial_6}
\begin{array}{ll}
\lim\limits_{\e\rto 0}\|W^{ini}\|_{L^{\infty}(\mathcal{A}_0)}
= \lim\limits_{\e\rto 0}\|\tilde{W}^{ini}\exp\{- \int_0^t \frac{1}{a_0}
(\gamma a_0 - f^7[\cdots]) \,\mathrm{d}t\}\|_{L^{\infty}(\mathcal{A}_0)} \neq 0.
\end{array}
\end{equation}
So $\lim\limits_{\e\rto 0}\|\hat{\omega}_h\|_{L^{\infty}(\mathcal{X}(\mathcal{A}_0)\times (0,T])} \neq 0$.
Thus, Theorem $\ref{Sect3_BoundarLayer_Initial_Thm}$ is proved.
\end{proof}

It is easy to show that the strong vorticity layer implies the strong boundary layers of the following variables:
\begin{equation}\label{Sect3_StrongVorticityLayer_Corollary}
\begin{array}{ll}
\lim\limits_{\e\rto 0}\|\partial_z v^{\e} -\partial_z v\|_{L^{\infty}(\mathcal{X}(\mathcal{A}_0)\times (0,T])} \neq 0, \\[9pt]
\lim\limits_{\e\rto 0}\|\mathcal{S}v^{\e} -\mathcal{S}v\|_{L^{\infty}(\mathcal{X}(\mathcal{A}_0)\times (0,T])} \neq 0,\\[9pt]
\lim\limits_{\e\rto 0}\|\nabla q^{\e} - \nabla q\|_{L^{\infty}(\mathcal{X}(\mathcal{A}_0)\times (0,T])} \neq 0.
\end{array}
\end{equation}

\section{Strong Vorticity Layer Caused by the Discrepancy between Boundary Values of Vorticities}
In this section, we study the strong vorticity layer for the free surface N-S equations $(\ref{Sect1_NS_Eq})$,
which arises from the discrepancy between boundary value of Navier-Stokes vorticity and boundary value of Euler vorticity.

\subsection{Discrepancy of the Vorticity on the Free Boundary}
The following lemma shows that if the tangential projection on the free surface of the Euler strain tensor multiply by normal vector
does not vanishes, then there is a discrepancy between Navier-Stokes vorticity and Euler vorticity.
\begin{lemma}\label{Sect4_Vorticity_Discrepancy_Lemma}
Assume $\omega^{\e},v^{\e},\NN^{\e}$ are the vorticity, velocity, normal vector of Navier-Stokes equations $(\ref{Sect1_NS_Eq})$,
$\omega,v,\NN$ are the vorticity, velocity, normal vector of Euler equations $(\ref{Sect1_Euler_Eq})$,
$\omega^{\e,b}$ and $\omega^b$ are boundary values of $\omega^{\e},\omega$ respectively.
If $\Pi\mathcal{S}^{\varphi} v\nn|_{z=0} \neq 0$ in $(0,T]$, then $\lim\limits_{\e\rto 0}|\omega^{\e,b}-\omega^b|
_{L^{\infty}(\mathbb{R}^2\times (0,T])} \neq 0$.
\end{lemma}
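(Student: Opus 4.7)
The plan is to exploit the fact that, while Navier-Stokes always enjoys the boundary identity $\Pi\mathcal{S}^{\varphi^{\e}}v^{\e}\nn^{\e}|_{z=0}=0$ (coming from the tangential projection of the dynamic boundary condition $(\ref{Sect1_NS_Eq})_4$ for $\e>0$), the Euler solution need not satisfy it. The formula for $\omega_h|_{z=0}$ proved in Lemma $\ref{Sect2_Vorticity_H_Eq_BC_Lemma}$ was built from exactly this compatibility, so $\textsf{F}^{1,2}[\nabla\varphi](\partial_j v^i)$ computed from Euler data genuinely differs from the actual Euler boundary vorticity $\omega^b$ whenever $\Pi\mathcal{S}^{\varphi}v\nn|_{z=0}\neq 0$. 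The proof will quantify this difference and show it survives the inviscid limit.

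First, I will write $\omega^{\e,b}-\omega^b$ as the telescoping sum
\[
\omega^{\e,b}-\omega^b=\bigl(\textsf{F}^{1,2}[\nabla\varphi^{\e}](\partial_j v^{\e,i})-\textsf{F}^{1,2}[\nabla\varphi](\partial_j v^i)\bigr)\Big|_{z=0}+\bigl(\textsf{F}^{1,2}[\nabla\varphi](\partial_j v^i)\big|_{z=0}-\omega^b\bigr),
\]
where the first term uses $\omega^{\e,b}=\textsf{F}^{1,2}[\nabla\varphi^{\e}](\partial_j v^{\e,i})|_{z=0}$ from $(\ref{Sect1_Vorticity_H_Eq})$. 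For the first bracket I will use that $\textsf{F}^{1,2}$ depends only on $\nabla\varphi$ and on the \emph{tangential} derivatives $\partial_j v^i$ ($j=1,2$), quantities which converge in $L^{\infty}(\{z=0\}\times(0,T])$ by Proposition $\ref{Sect1_Proposition_TimeRegularity}$ together with the Sobolev embedding $X^{m-1,1}\cap Y_{tan}^{m-3}\hookrightarrow L^{\infty}$ on the trace; thus the first bracket vanishes in $L^{\infty}$ as $\e\rto 0$.

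For the second bracket, I will retrace the algebra in the proof of Lemma $\ref{Sect2_Vorticity_H_Eq_BC_Lemma}$ without imposing $\Pi\mathcal{S}^{\varphi}v\nn=0$. Writing the true Euler normal derivatives as $\partial_z v^{1}=f^5[\nabla\varphi](\partial_j v^i)+\delta^1$ and $\partial_z v^{2}=f^6[\nabla\varphi](\partial_j v^i)+\delta^2$, the vector $(\delta^1,\delta^2)^{\top}$ is precisely the solution of the $2\times 2$ linear system in $(\ref{Sect2_Vorticity_H_BC_2})$ whose right-hand side is the (nonzero) components of $\Pi\mathcal{S}^{\varphi}v\nn$ after clearing the denominators. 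Substituting into the general identity $(\ref{Sect2_NormalDer_Vorticity_Estimate_2})$ yields
\[
\omega^{b}-\textsf{F}^{1,2}[\nabla\varphi](\partial_j v^i)\big|_{z=0}=\mathsf{A}[\nabla\varphi]\,\bigl(\delta^1,\delta^2\bigr)^{\top},
\]
with $\mathsf{A}[\nabla\varphi]$ an invertible matrix (its determinant is the same non-vanishing expression $(\ref{Sect2_NormalDer_Vorticity_Estimate_4})$), and with $(\delta^1,\delta^2)$ an invertible linear image of $\Pi\mathcal{S}^{\varphi}v\nn$ (the coefficient matrix $\textsf{M}$ of $(\ref{Sect2_Vorticity_H_BC_9})$ is strictly diagonally dominant and hence nondegenerate). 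Consequently the second bracket vanishes if and only if $\Pi\mathcal{S}^{\varphi}v\nn|_{z=0}\equiv 0$, contradicting the hypothesis in $(0,T]$. Combining the two brackets gives a strictly positive $L^{\infty}$ liminf and proves the claim.

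The main obstacle is controlling the first bracket uniformly in $L^{\infty}$ on the boundary: since $\textsf{F}^{1,2}$ contains \emph{products} of derivatives and of $1/\partial_z\varphi^{\e}$, I will need the trace convergence of $v^{\e}\to v$ in $W^{1,\infty}_y$ and of $\nabla\varphi^{\e}\to\nabla\varphi$ in $L^{\infty}$, which both follow from the uniform estimates of Proposition $\ref{Sect1_Proposition_TimeRegularity}$ and the pointwise convergence of the free surface $h^{\e}\to h$. The algebraic step identifying the second bracket as an invertible image of $\Pi\mathcal{S}^{\varphi}v\nn$ is the pivot of the argument, and must be carried out carefully to guarantee that nonvanishing of $\Pi\mathcal{S}^{\varphi}v\nn$ propagates to nonvanishing of the $L^{\infty}$ norm on the full time interval $(0,T]$.
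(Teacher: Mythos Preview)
Your proposal is correct and follows essentially the same route as the paper: both split $\omega^{\e,b}-\omega^b$ into the term $\textsf{F}^{1,2}[\nabla\varphi^{\e}](\partial_j v^{\e,i})-\textsf{F}^{1,2}[\nabla\varphi](\partial_j v^i)$ (shown to vanish in $L^\infty$ by strong convergence of the tangential data) plus the discrepancy $\textsf{F}^{1,2}[\nabla\varphi](\partial_j v^i)-\omega^b$, and compute the latter by re-running the algebra of Lemma~\ref{Sect2_Vorticity_H_Eq_BC_Lemma} with the nonzero source $\Pi\mathcal{S}^{\varphi}v\nn$ (which the paper encodes as $\mathcal{S}^{\varphi}v\nn\times\nn=(\Theta^1,\Theta^2,\Theta^3)^{\top}$), concluding by invertibility/reversibility of that derivation that it cannot vanish. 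The only cosmetic difference is that the paper writes out the explicit scalar coefficients $\varsigma_1,\dots,\varsigma_6$ in $(\ref{Sect4_Vorticity_Discrepancy_5})$--$(\ref{Sect4_Vorticity_Discrepancy_6})$ rather than packaging them as your abstract matrices $\mathsf{A}$ and $\textsf{M}^{-1}$.
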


\begin{proof}
We denote $\textsf{S}_n =\Pi \mathcal{S}^{\varphi} v \nn$. Since $\textsf{S}_n \neq 0$,
$\mathcal{S}^{\varphi}v\nn = (\mathcal{S}^{\varphi}v \nn\cdot\nn)\nn + \Pi\mathcal{S}^{\varphi} v\nn$,
then $\mathcal{S}^{\varphi}v \nn$ is not parallel to $\nn$, namely,
\begin{equation}\label{Sect4_Vorticity_Discrepancy_1}
\begin{array}{ll}
\mathcal{S}^{\varphi}v \nn\times\nn = (\mathcal{S}^{\varphi}v \nn\cdot\nn)\nn \times\nn + \Pi\mathcal{S}^{\varphi} v\nn \times\nn
= \Pi\mathcal{S}^{\varphi} v\nn \times\nn \neq 0.
\end{array}
\end{equation}
Denote $\Pi\mathcal{S}^{\varphi} v\nn \times\nn := (\Theta^1,\Theta^2,\Theta^3)^{\top}$, which is a nonzero vector.

By $\mathcal{S}^{\varphi}v \nn\times\nn = (\Theta^1,\Theta^2,\Theta^3)^{\top}$, similar to $(\ref{Sect2_Vorticity_H_BC_2})$, we have
\begin{equation}\label{Sect4_Vorticity_Discrepancy_2}
\left\{\begin{array}{ll}
n^3[n^1\partial_1^{\varphi} v^1 + \frac{n^2}{2}(\partial_1^{\varphi} v^2 + \partial_2^{\varphi} v^1)
+ \frac{n^3}{2}(\partial_1^{\varphi} v^3 + \partial_z^{\varphi} v^1)] \\[5pt]\quad

= n^1[\frac{n^1}{2}(\partial_1^{\varphi} v^3 + \partial_z^{\varphi} v^1) + \frac{n^2}{2}(\partial_2^{\varphi} v^3 + \partial_z^{\varphi} v^2)
- n^3\partial_1^{\varphi} v^1 - n^3\partial_2^{\varphi} v^2] -\Theta^2,

\\[10pt]

n^3[\frac{n^1}{2}(\partial_1^{\varphi} v^2 + \partial_2^{\varphi} v^1) + n^2\partial_2^{\varphi} v^2
+ \frac{n^3}{2}(\partial_2^{\varphi} v^3 + \partial_z^{\varphi} v^2)] \\[5pt]\quad

= n^2[\frac{n^1}{2}(\partial_1^{\varphi} v^3 + \partial_z^{\varphi} v^1) + \frac{n^2}{2}(\partial_2^{\varphi} v^3 + \partial_z^{\varphi} v^2)
- n^3\partial_1^{\varphi} v^1 - n^3\partial_2^{\varphi} v^2] +\Theta^1,

\\[10pt]

n^2[n^1\partial_1^{\varphi} v^1 + \frac{n^2}{2}(\partial_1^{\varphi} v^2 + \partial_2^{\varphi} v^1)
+ \frac{n^3}{2}(\partial_1^{\varphi} v^3 + \partial_z^{\varphi} v^1)] \\[5pt]\quad

= n^1[\frac{n^1}{2}(\partial_1^{\varphi} v^2 + \partial_2^{\varphi} v^1) + n^2\partial_2^{\varphi} v^2
+ \frac{n^3}{2}(\partial_2^{\varphi} v^3 + \partial_z^{\varphi} v^2)] + \Theta^3.
\end{array}\right.
\end{equation}

Then we have the following two equations involving $\partial_z v^1$ and $\partial_z v^2$:
\begin{equation}\label{Sect4_Vorticity_Discrepancy_3}
\begin{array}{ll}
\big[(n^1)^2 + \frac{(n^3)^2}{2} + \frac{1}{2}\frac{(n^1)^4}{(n^3)^2} - \frac{1}{2}\frac{(n^2)^4}{(n^3)^2}
\big]\partial_z v^1
+ \big[n^1n^2 + \frac{(n^1)^3n^2}{(n^3)^2} + \frac{n^1(n^2)^3}{(n^3)^2} \big]\partial_z v^2 \\[10pt]

= -[\frac{(n^3)^2}{2} - \frac{(n^1)^2}{2} - \frac{(n^2)^2}{2}]\big[\partial_z\varphi\partial_1 v^3 - \partial_1\varphi
[- \partial_z\varphi(\partial_1 v^1 + \partial_2 v^2)] \big]
\\[5pt]\quad

+ (\frac{n^1(n^2)^2}{n^3} - 2n^1n^3)(\partial_z\varphi\partial_1 v^1)
- (n^1n^3 + \frac{n^1(n^2)^2}{n^3})(\partial_z\varphi\partial_2 v^2) \\[5pt]\quad

+ [\frac{(n^2)^2}{n^3}\frac{n^2}{2}- \frac{n^1n^2}{n^3}\frac{n^1}{2} -\frac{n^2n^3}{2}]
(\partial_z\varphi\partial_1 v^2 + \partial_z\varphi\partial_2 v^1)-\partial_z\varphi\Theta^2 - \frac{n^2}{n^3}\partial_z\varphi\Theta^3, \\[15pt]

\big[n^1n^2 + \frac{(n^1)^3n^2}{(n^3)^2} + \frac{n^1(n^2)^3}{(n^3)^2} \big]\partial_z v^1
+ \big[(n^2)^2 + \frac{1}{2}(n^3)^2 + \frac{(n^2)^4}{2(n^3)^2} - \frac{(n^1)^4}{2(n^3)^2}\big]\partial_z v^2 \\[10pt]

= -[\frac{(n^3)^2}{2} - \frac{(n^2)^2}{2} - \frac{(n^1)^2}{2}]\big[\partial_z\varphi\partial_2 v^3 - {\partial_z\varphi}
[- \partial_z\varphi(\partial_1 v^1 + \partial_2 v^2)]\big] \\[5pt]\quad

- (n^2n^3 + \frac{(n^1)^2n^2}{n^3})(\partial_z\varphi\partial_1 v^1)
+ (\frac{(n^1)^2n^2}{n^3} -2n^2n^3)(\partial_z\varphi\partial_2 v^2) \\[5pt]\quad

+ (\frac{(n^1)^2}{n^3}\frac{n^1}{2} -\frac{n^1n^3}{2} - \frac{n^1n^2}{n^3}\frac{n^2}{2})
(\partial_z\varphi\partial_1 v^2 + \partial_z\varphi\partial_2 v^1)+\partial_z\varphi\Theta^1 + \frac{n^1}{n^3}\partial_z\varphi\Theta^3.
\end{array}
\end{equation}

When $|\nabla h|_{\infty}$ is suitably small, the coefficient matrix of $(\partial_z v^1, \partial_z v^2)^{\top}$ is nondegenerate, then we solve
\begin{equation}\label{Sect4_Vorticity_Discrepancy_4}
\left\{\begin{array}{ll}
\partial_z v^1 = f^5[\nabla\varphi](\partial_j v^i)
-\textsf{M}^{11}(\partial_z\varphi\Theta^2 + \frac{n^2}{n^3}\partial_z\varphi\Theta^3)
+\textsf{M}^{12}(\partial_z\varphi\Theta^1 + \frac{n^1}{n^3}\partial_z\varphi\Theta^3),
\\[6pt]\hspace{1.2cm} j=1,2,\ i=1,2,3,\\[8pt]

\partial_z v^2 = f^6[\nabla\varphi](\partial_j v^i)
-\textsf{M}^{21}(\partial_z\varphi\Theta^2 + \frac{n^2}{n^3}\partial_z\varphi\Theta^3)
+\textsf{M}^{22}(\partial_z\varphi\Theta^1 + \frac{n^1}{n^3}\partial_z\varphi\Theta^3),
\\[6pt]\hspace{1.2cm} j=1,2,\ i=1,2,3,
\end{array}\right.
\end{equation}
where the matrix $\textsf{M} = (\textsf{M}_{ij})$ is defined in $(\ref{Sect2_Vorticity_H_BC_9})$, $(\textsf{M}^{ij}) = (\textsf{M}_{ij})^{-1}$.

By $(\ref{Sect2_NormalDer_Vorticity_Estimate_2})$ and $(\ref{Sect4_Vorticity_Discrepancy_4})$, we have the boundary values of $\omega_h = (\omega^1,\omega^2)$:
\begin{equation}\label{Sect4_Vorticity_Discrepancy_5}
\begin{array}{ll}
\omega^1 = - \frac{\partial_1\varphi\partial_2\varphi}{\partial_z\varphi}\partial_z v^1
- \frac{1 + (\partial_2\varphi)^2}{\partial_z\varphi}\partial_z v^2
+ \partial_2 v^3 + \partial_2\varphi(\partial_1 v^1 + \partial_2 v^2) \\[8pt]\hspace{0.47cm}

:= \textsf{F}^1 [\nabla\varphi](\partial_j v^i) + \varsigma_1\Theta^1 + \varsigma_2\Theta^2 + \varsigma_3\Theta^3, \\[9pt]

\omega^2 = \frac{1+(\partial_1\varphi)^2}{\partial_z\varphi}\partial_z v^1
+ \frac{\partial_1\varphi\partial_2\varphi}{\partial_z\varphi}\partial_z v^2
- \partial_1 v^3 - \partial_1\varphi(\partial_1 v^1 + \partial_2 v^2) \\[8pt]\hspace{0.47cm}

:= \textsf{F}^2 [\nabla\varphi](\partial_j v^i) + \varsigma_4\Theta^1 + \varsigma_5\Theta^2 + \varsigma_6\Theta^3,
\end{array}
\end{equation}
where the coefficients $\varsigma_i$ are as follows:
\begin{equation}\label{Sect4_Vorticity_Discrepancy_6}
\begin{array}{ll}
\varsigma_1 =\partial_z\varphi[\partial_1\varphi\partial_2\varphi\textsf{M}^{12}
+ (1+(\partial_2\varphi)^2)\textsf{M}^{22}], \\[8pt]
\varsigma_2 =\partial_1\varphi\partial_2\varphi\textsf{M}^{11} + (1 + (\partial_2\varphi)^2)\textsf{M}^{21}, \\[8pt]

\varsigma_3 =\big[(1+(\partial_1\varphi)^2)
\big(-\textsf{M}^{11}\frac{n^2}{n^3} +\textsf{M}^{12}\frac{n^1}{n^3}\partial_z\varphi\big) \\[7pt]\hspace{0.85cm}
+ \partial_1\varphi\partial_2\varphi
\big(-\textsf{M}^{21}\frac{n^2}{n^3}
+\textsf{M}^{22}\frac{n^1}{n^3}\partial_z\varphi\big)\big], \\[9pt]

\varsigma_4 = \partial_z\varphi[(1+(\partial_1\varphi)^2)\textsf{M}^{12} + \partial_1\varphi\partial_2\varphi\textsf{M}^{22}], \\[8pt]
\varsigma_5 = -(1+(\partial_1\varphi)^2)\textsf{M}^{11} - \partial_1\varphi\partial_2\varphi \textsf{M}^{21}, \\[8pt]

\varsigma_6 = (1+(\partial_1\varphi)^2)
\big(-\textsf{M}^{11}\frac{n^2}{n^3}
+\textsf{M}^{12}\frac{n^1}{n^3}\partial_z\varphi\big) \\[7pt]\hspace{0.85cm}
+ \partial_1\varphi\partial_2\varphi
\big(-\textsf{M}^{21}\frac{n^2}{n^3}
+\textsf{M}^{22}\frac{n^1}{n^3}\partial_z\varphi\big).
\end{array}
\end{equation}

If $|\varsigma_1\Theta^1 + \varsigma_2\Theta^2 + \varsigma_3\Theta^3|_{\infty}
= |\varsigma_4\Theta^1 + \varsigma_5\Theta^2 + \varsigma_6\Theta^3|_{\infty} = 0$, then
\begin{equation}\label{Sect4_Vorticity_Discrepancy_7}
\left\{\begin{array}{ll}
\partial_z v^1 = f^5[\nabla\varphi](\partial_j v^i),\ j=1,2,\ i=1,2,3,\\[8pt]

\partial_z v^2 = f^6[\nabla\varphi](\partial_j v^i),\ j=1,2,\ i=1,2,3.
\end{array}\right.
\end{equation}
Since the proof of Lemma $\ref{Sect2_Vorticity_H_Eq_BC_Lemma}$ is revertible,
$(\ref{Sect4_Vorticity_Discrepancy_7})$ implies $\mathcal{S}^{\varphi}v \nn\times\nn =0$. This strongly contradicts with $(\ref{Sect4_Vorticity_Discrepancy_1})$.

Thus, either $|\varsigma_1\Theta^1 + \varsigma_2\Theta^2 + \varsigma_3\Theta^3|_{\infty}\neq 0$
or $|\varsigma_4\Theta^1 + \varsigma_5\Theta^2 + \varsigma_6\Theta^3|_{\infty}\neq 0$.
Without lose of generality, we assume the former holds.

As $\e\rto 0$, $|\textsf{F}^1 [\nabla\varphi^{\e}](\partial_j v^{\e,i}) - \textsf{F}^1 [\nabla\varphi](\partial_j v^i)|_{L^{\infty}}\rto 0$,
this convergence is strong due to enough uniform regularities in conormal Sobolev space of Navier-Stokes solutions and its tangential derivatives
(see \cite{Masmoudi_Rousset_2012_FreeBC}).
Thus, when $\e$ is sufficiently small,
$|\textsf{F}^1 [\nabla\varphi^{\e}](\partial_j v^{\e,i}) - \textsf{F}^1 [\nabla\varphi](\partial_j v^i)|_{\infty}
\leq \frac{1}{2}|\varsigma_1\Theta^1 + \varsigma_2\Theta^2 + \varsigma_3\Theta^3|_{\infty}$. It follows from $(\ref{Sect4_Vorticity_Discrepancy_5})$ that
\begin{equation}\label{Sect4_Vorticity_Discrepancy_8}
\begin{array}{ll}
|\omega^{\e,1} -\omega^1|_{\infty} \geq |\varsigma_1\Theta^1 + \varsigma_2\Theta^2 + \varsigma_3\Theta^3|_{\infty}
- \big|\textsf{F}^1 [\nabla\varphi^{\e}](\partial_j v^{\e,i}) - \textsf{F}^1 [\nabla\varphi](\partial_j v^i)\big|_{\infty} \\[8pt]\hspace{2cm}

\geq \frac{1}{2}|\varsigma_1\Theta^1 + \varsigma_2\Theta^2 + \varsigma_3\Theta^3|_{\infty}.
\end{array}
\end{equation}
Then
\begin{equation}\label{Sect4_Vorticity_Discrepancy_9}
\begin{array}{ll}
|\omega_h^{\e,b} -\omega_h^b|_{L^{\infty}(\mathbb{R}^2\times (0,T])}
\geq \max\{ |\omega^{\e,1} -\omega^1|_{\infty} ,\, |\omega^{\e,2} -\omega^2|_{\infty}\}
\\[8pt]

\geq \frac{1}{2}\max\{|\varsigma_1\Theta^1 + \varsigma_2\Theta^2 + \varsigma_3\Theta^3|_{\infty}, \,
|\varsigma_4\Theta^1 + \varsigma_5\Theta^2 + \varsigma_6\Theta^3|_{\infty}\} >0.
\end{array}
\end{equation}

Thus, Lemma $\ref{Sect4_Vorticity_Discrepancy_Lemma}$ is proved.
\end{proof}

\subsection{$L^{\infty}$ Estimate of Strong Vorticity Layer}
In this subsection, we prove the existence of strong vorticity layer when the Euler boundary data satisfies
$\Pi\mathcal{S}^{\varphi} v\nn|_{z=0} \neq 0$ in $(0,T]$.

Before proving our results, let us investigate the simplest model, that is the following heat equation with damping and constant coefficients.
\begin{proposition}\label{Sect4_HeatEq_Diffusion_Proposition}
Assume $w^b \in H^4(\mathbb{R}^2\times[0,T])$, $w^b \nrightarrow 0$, $\gamma>0$ is constant, $W$ is the solution of the following heat equation with damping:
\begin{equation}\label{Sect4_HeatEq_Diffusion_1}
\left\{\begin{array}{ll}
\partial_t W - \e\triangle W + \gamma W =0, \\[5pt]

W|_{z=0} = w^b \nrightarrow 0, \\[5pt]

W|_{t=0} = 0,
\end{array}\right.
\end{equation}
then $\lim\limits_{\e\rto 0}\|W\|_{L^{\infty}(\mathbb{R}^3_{-} \times (0,T])} \neq 0$.
\end{proposition}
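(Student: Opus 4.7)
The plan is to use a boundary-layer rescaling to exhibit a non-trivial $O(1)$ limit profile inside a layer of width $\sqrt{\e}$, and then control the difference between $W$ and this profile in $L^{\infty}$ so the lower bound is preserved in the inviscid limit.

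First, I would substitute $\tilde W = e^{\gamma t} W$ to remove the damping, reducing the problem to
\begin{equation*}
\partial_t \tilde W - \e\triangle \tilde W = 0 \text{ on }\mathbb{R}^3_{-}\times (0,T],\quad \tilde W|_{z=0} = e^{\gamma t} w^b,\quad \tilde W|_{t=0}=0.
\end{equation*}
Because $e^{\gamma t}$ is bounded above and below on $[0,T]$, it suffices to show $\lim_{\e\rto 0}\|\tilde W\|_{L^{\infty}}\neq 0$. Next I would rescale the normal variable by $\zeta=z/\sqrt{\e}$, so that the equation becomes $\partial_t\tilde W - \e\triangle_y\tilde W - \partial_{\zeta\zeta}\tilde W = 0$. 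Formally sending $\e\rto 0$ at fixed $(t,y,\zeta)$ gives the leading-order profile equation
\begin{equation*}
\partial_t \tilde W^0 - \partial_{\zeta\zeta}\tilde W^0 = 0,\ \zeta<0;\quad \tilde W^0|_{\zeta=0} = e^{\gamma t}w^b(t,y),\quad \tilde W^0|_{t=0}=0,\quad \tilde W^0|_{\zeta\to-\infty}=0,
\end{equation*}
in which the transverse variable $y$ enters only as a parameter.

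The profile $\tilde W^0$ can then be written explicitly via the 1D half-line Poisson--Duhamel formula,
\begin{equation*}
\tilde W^0(t,y,\zeta) = -\int_0^t \frac{\zeta}{\sqrt{4\pi(t-s)^3}}\exp\!\Bigl(-\frac{\zeta^2}{4(t-s)}\Bigr)\, e^{\gamma s} w^b(s,y)\,\mathrm{d}s.
\end{equation*}
Since $w^b\nrightarrow 0$, I can pick $(t_0,y_0)$ where $w^b$ does not vanish together with a fixed $\zeta_0<0$ so that $\tilde W^0(t_0,y_0,\zeta_0)\neq 0$; this follows because the kernel above is strictly positive on $\zeta<0$, so near the boundary the sign of $\tilde W^0$ is inherited from the sign of $w^b$ (and in the general oscillatory case one localises in $(t,y)$ where $w^b$ has fixed sign).

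Finally, I would estimate the remainder $R^\e(t,y,z):=\tilde W(t,y,z)-\tilde W^0(t,y,z/\sqrt{\e})$ in $L^{\infty}$. Plugging the ansatz in shows that $R^\e$ solves the same heat equation with source $\e\triangle_y\tilde W^0(t,y,z/\sqrt{\e})=O(\e)$ (using the $H^4$-regularity of $w^b$ to bound $\triangle_y\tilde W^0$ independently of $\zeta$) and with vanishing initial and boundary data. A parabolic maximum-principle / barrier argument then gives $\|R^\e\|_{L^{\infty}(\mathbb{R}^3_{-}\times[0,T])}\lem \e\,\|w^b\|_{H^4}=o(1)$. Evaluating at the point $(t_0,y_0,\sqrt{\e}\,\zeta_0)\in\mathbb{R}^3_{-}\times(0,T]$ yields $|\tilde W(t_0,y_0,\sqrt{\e}\,\zeta_0)|\to |\tilde W^0(t_0,y_0,\zeta_0)|\neq 0$, proving $\lim_{\e\rto 0}\|\tilde W\|_{L^{\infty}}\neq 0$ and hence the claim for $W$. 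The main obstacle I anticipate is this last step: controlling $R^\e$ uniformly up to the corner $\{t=0\}\cap\{z=0\}$, where the boundary and initial data may be incompatible if $w^b(0,\cdot)\neq 0$. I expect to handle it either by mollifying $w^b$ near $t=0$ and tracking the mollification error, or by a refined barrier that absorbs the standard $\sqrt{\e t}$ corner layer.
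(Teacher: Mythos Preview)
Your proof is correct but follows a genuinely different route from the paper. The paper never removes the damping or builds an explicit boundary-layer profile; instead it takes a Fourier transform in $(t,y)$, reduces the equation to the second-order ODE
\[
\partial_{zz}\mathcal{F}[W] - \tfrac{1}{\e}(i\tau + \e|\xi|^2 + \gamma)\,\mathcal{F}[W] = 0,
\]
writes the decaying solution $\mathcal{F}[W](\tau,\xi,z)=\exp\{(i\tau+\e|\xi|^2+\gamma)^{1/2}\,z/\sqrt{\e}\}\,\mathcal{F}[w^b]$, and then reads off directly that for $z$ at scale $\e^{1/2+\delta_z}$ the exponential stays of order one while for $z$ at scale $\e^{1/2-\delta_z}$ it vanishes. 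The nonvanishing of $\|W\|_{L^\infty}$ is then argued by contradiction via $\|\mathcal{F}[W]\|_{L^\infty}\neq 0$. Your approach---substitute out the damping, rescale $\zeta=z/\sqrt{\e}$, identify the 1D half-line heat profile $\tilde W^0$, and control the remainder by the maximum principle---is more elementary and arguably cleaner for this constant-coefficient model; it also makes the pointwise lower bound at a concrete point $(t_0,y_0,\sqrt{\e}\,\zeta_0)$ explicit. The paper's Fourier/symbolic argument, on the other hand, is designed to feed into the variable-coefficient Theorem~4.2 that follows, where the same ODE-in-$z$ structure persists at the level of symbols and paradifferential calculus; your profile-plus-remainder scheme would require a parametrix or freezing-coefficients argument to reach that generality. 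The corner issue you flag is real but mild here: since $R^\e$ has zero initial and boundary data and the source $\e\triangle_y\tilde W^0$ is bounded (this is where the $H^4$ hypothesis on $w^b$ is used), the maximum principle closes without a separate corner analysis.
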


\begin{proof}
We define the following Fourier transformation with respect to $(t,y)\in\mathbb{R}_{+}\times\mathbb{R}^2$,
\begin{equation}\label{Sect4_BoundarLayer_Boundary_Fourier}
\begin{array}{ll}
\mathcal{F}[W](\tau,\xi,z) = \int\limits_0^{+\infty}\int\limits_{\mathbb{R}^3_{-}}
e^{-i \tau t - i\xi\cdot y} W(t,y,z) \,\mathrm{d}t\mathrm{d}y,
\end{array}
\end{equation}
Note that $W|_{t=0} = 0$, there is no term involving $W|_{t=0}$ appears as a force term.

By applying Fourier transformation $(\ref{Sect4_BoundarLayer_Boundary_Fourier})$ to $(\ref{Sect4_HeatEq_Diffusion_1})$,
we get the second-order ordinary differential equation:
\begin{equation}\label{Sect4_HeatEq_Diffusion_2}
\begin{array}{ll}
i\tau \mathcal{F}[W] -\e\partial_{zz} \mathcal{F}[W] + \e|\xi|^2\mathcal{F}[W] + \gamma \mathcal{F}[W] =0, \\[9pt]

\partial_{zz} \mathcal{F}[W] - \frac{1}{\e}(i\tau + \e|\xi|^2 + \gamma)\mathcal{F}[W] =0, \\[9pt]

\mathcal{F}[W](\tau,\xi,z) = \exp\{(i\tau + \e|\xi|^2 + \gamma)^{\frac{1}{2}}\frac{z}{\sqrt{\e}}\} \mathcal{F}[w^b](\tau,\xi),
\end{array}
\end{equation}
where the complex root $(i\tau + \e|\xi|^2 + \gamma)^{\frac{1}{2}}$ has two branches, one of which always has a positive real part
due to $\e|\xi|^2 + \gamma>0$, then we choose this branch.

If $|z| =O(\e^{\frac{1}{2} -\delta_z})$ where $\delta_z > 0$, we simply assume $z = - \e^{\frac{1}{2} -\delta_z}$, then as $\e\rto 0$,
\begin{equation}\label{Sect4_HeatEq_Diffusion_3}
\begin{array}{ll}
|\exp\{(i\tau + \e|\xi|^2 + \gamma)^{\frac{1}{2}}\frac{z}{\sqrt{\e}}\}|
=\exp\{-(\e|\xi|^2 + \gamma)^{\frac{1}{2}} \e^{-\delta_z}\} \leq \exp\{-\gamma^{\frac{1}{2}} \e^{-\delta_z}\} \rto 0, \\[7pt]

\|\mathcal{F}[W](\tau,\xi,z)\|_{L^1(\mathrm{d}\tau\mathrm{d}\xi)}
= \|\exp\{-(\e|\xi|^2 + \gamma)^{\frac{1}{2}}\e^{-\delta_z}\}
\mathcal{F}[w^b](\tau,\xi)\|_{L^1 (\mathrm{d}\tau\mathrm{d}\xi)} \rto 0,
\end{array}
\end{equation}
note that $\mathcal{F}[w^b]\in L^1 (\mathrm{d}\tau\mathrm{d}\xi)$ requires $w^b \in H^4(\mathbb{R}^2\times [0,T])$,
then
\begin{equation}\label{Sect4_HeatEq_Diffusion_4}
\begin{array}{ll}
\lim\limits_{\e\rto 0} \|W\|_{L^{\infty}(t,y,z)} = \lim\limits_{\e\rto 0}\|\mathcal{F}^{-1}[\mathcal{F}[W]]\|_{L^{\infty}(t,y,z)} \\[7pt]
\lem \|\mathcal{F}[W](\tau,\xi,z)\|_{L^1(\mathrm{d}\tau\mathrm{d}\xi)} \rto 0.
\end{array}
\end{equation}

If $|z| =O(\e^{\frac{1}{2} +\delta_z})$ where $\delta_z \geq 0$, we simply assume $z = - \e^{\frac{1}{2} +\delta_z}$,
then as $\e\rto 0$,
\begin{equation}\label{Sect4_HeatEq_Diffusion_5}
\begin{array}{ll}
|\exp\{(i\tau + \e|\xi|^2 + \gamma)^{\frac{1}{2}}\frac{z}{\sqrt{\e}}\}|
=\exp\{-(\e|\xi|^2 + \gamma)^{\frac{1}{2}} \e^{\delta_z}\} \rto 1 \text{\ or\ } e^{-\sqrt{\gamma}},
\end{array}
\end{equation}
for any finite $\xi\in \mathbb{R}^2$,
then $\|\mathcal{F}[W](\tau,\xi,z)\|_{L^{\infty} (\mathrm{d}\tau\mathrm{d}\xi)} \neq 0$.

We use the proof by contradiction.
Assume that $\|W\|_{L^{\infty}(t,y,z)} = 0$, then $\|W\|_{L^1(t,y,z)} = 0$, and then $\|\mathcal{F}[W](\tau,\xi,z)\|_{L^{\infty}} = 0$, this is a contradiction.
Thus, if $z =O(\e^{\frac{1}{2} +\delta_z})$, $\lim\limits_{\e\rto 0}\|W\|_{L^{\infty}(t,y,z)} \neq 0$.
Thus, Proposition $\ref{Sect4_HeatEq_Diffusion_Proposition}$ is proved.
\end{proof}

However, the proof of the following theorem is much more complicated than Proposition $\ref{Sect4_HeatEq_Diffusion_Proposition}$,
since we have to use the symbolic analysis and paradifferential calculus for our problem.

In order to prove that the discrepancy of boundary values of vorticities is one of sufficient conditions for the
existence of strong vorticity layer, we assume that the initial vorticity layer is weak.
\begin{theorem}\label{Sect4_BoundarLayer_Boundary_Thm}
Assume the conditions are the same with Lemma $\ref{Sect4_Vorticity_Discrepancy_Lemma}$.
If the Euler solution satisfies $\Pi\mathcal{S}^{\varphi} v\nn|_{z=0} \neq 0$ in $(0,T]$,
and $\Pi\mathcal{S}^{\varphi} v\nn|_{z=0} \in H^4(\mathbb{R}^2\times[0,T])$,
the initial Navier-Stokes velocity satisfies $\lim\limits_{\e\rto 0}(\nabla^{\varphi^{\e}}\times v_0^{\e})
- \nabla^{\varphi}\times\lim\limits_{\e\rto 0} v_0^{\e} = 0$,
then $\lim\limits_{\e\rto 0}\|\omega^{\e} -\omega\|_{L^{\infty}(\mathbb{R}^2\times [0, O(\sqrt{\e}))\times (0,T])} \neq 0$.
\end{theorem}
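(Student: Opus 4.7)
The plan is to mirror the architecture used for Theorem~\ref{Sect3_BoundarLayer_Initial_Thm}: pass to Lagrangian coordinates, write the equation for $\hat\omega_h$ as a damped heat equation, decompose the solution $W = W^{f\!o} + W^{bdy} + W^{ini}$ into a forced, a boundary-driven, and an initial-driven part, show that $W^{f\!o}$ and $W^{ini}$ vanish in $L^\infty$, and then prove that $W^{bdy}$ does \emph{not} vanish in a $\sqrt{\e}$-neighborhood of $\{z=0\}$. Lemma~\ref{Sect4_Vorticity_Discrepancy_Lemma} is the input that feeds the boundary component: it tells us $\lim_{\e\to 0}|\hat\omega_h^b|_{L^\infty(\mathbb{R}^2\times(0,T])}\neq 0$. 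The weak-initial-layer hypothesis kills $W^{ini}$ via the Duhamel/heat-kernel argument of $(\ref{Sect3_BoundarLayer_Initial_Eq_Initial_5})$ exactly as in Section~3, and $W^{f\!o}$ is handled by the maximum principle since the forcing $\mathcal{I}_2$ from $(\ref{Sect3_Preliminaries_HeatEq_Damping})$ goes to zero in $L^\infty$.

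The core of the argument is the analysis of $W^{bdy}$, and here the strategy is the symbolic/Fourier approach previewed by Proposition~\ref{Sect4_HeatEq_Diffusion_Proposition}. After the change to Lagrangian coordinates, $W^{bdy}$ solves a uniformly parabolic equation with variable coefficients, zero initial data, and boundary datum $\hat\omega_h^b$. I would first subtract off a harmonic extension or a heat extension of $\hat\omega_h^b$ into $\mathbb{R}^3_-$ to reduce to compactly-supported data and to isolate the boundary-layer contribution. Freezing coefficients at $z=0$ and treating the lower-order variable coefficient terms as a perturbation (paradifferential calculus, as in \cite{Masmoudi_Rousset_2012_FreeBC}), the tangential-time Fourier transform $\mathcal{F}$ in $(\ref{Sect4_BoundarLayer_Boundary_Fourier})$ reduces the problem to an ODE
\[
\partial_{zz}\mathcal{F}[W^{bdy}] - \tfrac{1}{\e}\lambda^2(\tau,\xi,\e)\,\mathcal{F}[W^{bdy}] = \mathcal{F}[\text{remainder}],
\]
with $\mathrm{Re}\,\lambda(\tau,\xi,\e)\geq c\sqrt{\gamma}>0$ uniformly. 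The outgoing branch gives the explicit representation $\mathcal{F}[W^{bdy}](\tau,\xi,z) = \exp\{\lambda(\tau,\xi,\e)\tfrac{z}{\sqrt\e}\}\,\mathcal{F}[\hat\omega_h^b](\tau,\xi) + (\text{small})$, and the regularity $\hat\omega_h^b\in H^4(\mathbb{R}^2\times[0,T])$ (inherited from the hypothesis on $\Pi\mathcal{S}^{\varphi}v\nn|_{z=0}$) puts $\mathcal{F}[\hat\omega_h^b]$ in $L^1(\mathrm{d}\tau\mathrm{d}\xi)$, so Fourier inversion is legitimate.

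With this representation in hand, the same dichotomy as in $(\ref{Sect4_HeatEq_Diffusion_3})$--$(\ref{Sect4_HeatEq_Diffusion_5})$ applies. On the scale $|z|=O(\e^{1/2+\delta_z})$, the exponential $|\exp\{\lambda \tfrac{z}{\sqrt\e}\}|$ stays bounded away from $0$, so by a proof-by-contradiction argument (if $\|W^{bdy}\|_{L^\infty}\to 0$ then $\|\mathcal{F}[W^{bdy}]\|_{L^\infty}\to 0$, contradicting $|\mathcal{F}[\hat\omega_h^b]|_{L^\infty}\nrightarrow 0$ which in turn follows from Lemma~\ref{Sect4_Vorticity_Discrepancy_Lemma}), $\|W^{bdy}\|_{L^\infty(\mathbb{R}^2\times[0,O(\sqrt\e))\times(0,T])}\nrightarrow 0$. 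Finally I push the result back to Eulerian variables: since the Lagrangian flow is bi-Lipschitz with constants independent of $\e$, a $\sqrt\e$-neighborhood of the boundary in Lagrangian coordinates maps to a comparable neighborhood of the free surface in Eulerian coordinates, yielding the stated $L^\infty$ non-convergence for $\omega^\e-\omega$.

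The principal obstacle I anticipate is making the symbolic reduction rigorous for genuinely variable coefficients, so that the clean ODE representation used in Proposition~\ref{Sect4_HeatEq_Diffusion_Proposition} survives. Concretely one must verify (i) that the commutator of the freezing operation with $\partial_{zz}$ produces lower-order terms which, when carried around through Duhamel, contribute only a $o(1)$ perturbation in $L^\infty$ on the scale $|z|\lesssim\sqrt\e$, and (ii) that the nontrivial boundary principal symbol from Lemma~\ref{Sect4_Vorticity_Discrepancy_Lemma} persists after paradifferential localization (so that at least one of the six coefficients $\varsigma_i$ in $(\ref{Sect4_Vorticity_Discrepancy_6})$ survives in the principal Fourier mode). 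The rest of the proof is, in essence, bookkeeping on top of Section~3's decomposition, but this symbolic step is where the geometry of $\Pi\mathcal{S}^{\varphi}v\nn\neq 0$ is genuinely used.
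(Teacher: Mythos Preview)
Your proposal is correct and follows essentially the same route as the paper's proof. The only cosmetic difference is that the paper uses a two-piece decomposition $W = W^{f\!o} + W^{bdy}$ (absorbing the vanishing initial data $\hat\omega_{h,0}\to 0$ into the forced piece $W^{f\!o}$ and handling both together by the maximum principle) rather than your three-piece split, and the paper writes the symbolic ODE directly with $z$-dependent multipliers $A_0,A_1$ (equation~$(\ref{Sect4_BoundarLayer_Boundary_BC_2})$) rather than first freezing coefficients at $z=0$; the paradifferential contradiction argument and the $H^4$/$L^1$-Fourier step are identical to what you outline.
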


\begin{proof}
Since the initial Navier-Stokes velocity satisfies $\lim\limits_{\e\rto 0}(\nabla\times u_0^{\e}) - \nabla\times\lim\limits_{\e\rto 0} u_0^{\e} = 0$,
$\lim\limits_{\e\rto 0}|\omega^{\e}_0 -\omega_0|_{L^{\infty}} = 0$, then $\Pi\mathcal{S}^{\varphi} v\nn|_{z=0,t=0} = 0$,
that does not contradict with  $\Pi\mathcal{S}^{\varphi} v\nn|_{z=0} \neq 0$ in $(0,T]$.

We study the equations $(\ref{Sect3_Vorticity_Eq})$ with small initial data:
\begin{equation}\label{Sect4_BoundarLayer_Boundary_Eq_1}
\left\{\begin{array}{ll}
a_0\partial_t W - \e\partial_i(a_{ij}\partial_j W)
+ \big(\gamma a_0 - f^7[\nabla\varphi^{\e},\nabla\varphi,\partial_j v^{\e,i},\partial_j v^i]\big) W =\mathcal{I}_2, \\[10pt]

W|_{z=0} = e^{-\gamma t}\hat{\omega}_h^b \nrightarrow 0, \\[9pt]

W|_{t=0} = \hat{\omega}_{h,0} \rto 0.
\end{array}\right.
\end{equation}

We decompose $W = W^{bdy} + W^{f\!o}$, such that $W^{f\!o}$ satisfies the nonhomogeneous equations:
\begin{equation}\label{Sect4_BoundarLayer_Boundary_Force}
\left\{\begin{array}{ll}
a_0\partial_t W^{f\!o} - \e\partial_i(a_{ij}\partial_j W^{f\!o})
+ \big(\gamma a_0 - f^7[\nabla\varphi^{\e},\nabla\varphi,\partial_j v^{\e,i},\partial_j v^i]\big) W^{f\!o} =\mathcal{I}_2, \\[10pt]

W^{f\!o}|_{z=0} = 0, \\[9pt]

W^{f\!o}|_{t=0} = \hat{\omega}_{h,0} \rto 0.
\end{array}\right.
\end{equation}
and $W^{bdy}$ satisfies the homogeneous equations:
\begin{equation}\label{Sect4_BoundarLayer_Boundary_BC}
\left\{\begin{array}{ll}
a_0\partial_t W^{bdy} - \e\partial_i(a_{ij}\partial_j W^{bdy})
+ \big(\gamma a_0 - f^7[\nabla\varphi^{\e},\nabla\varphi,\partial_j v^{\e,i},\partial_j v^i]\big) W^{bdy} =0, \\[10pt]

W^{bdy}|_{z=0} = e^{-\gamma t}\hat{\omega}_h^b \nrightarrow 0, \\[11pt]

W^{bdy}|_{t=0} = 0.
\end{array}\right.
\end{equation}

Note the diffusion term and damping term of $(\ref{Sect4_BoundarLayer_Boundary_Force})$, it is easy to prove that
$\|W^{f\!o}\|_{L^{\infty}} \leq \|W^{f\!o}|_{t=0}\|_{L^{\infty}}
+ \int\limits_0^t \|\mathcal{I}_2\|_{\infty}\,\mathrm{d}t \rto 0.$

Next, we study the homogeneous equations $(\ref{Sect4_BoundarLayer_Boundary_BC})$ with variable coefficients,
which differs from the equations $(\ref{Sect4_HeatEq_Diffusion_1})$ up to coefficients.
By using symbolic analysis, it is standard to prove that the limit of the solution of
$(\ref{Sect4_BoundarLayer_Boundary_BC})$ behaves similarly to that of $(\ref{Sect4_HeatEq_Diffusion_1})$.
However, we still show some keypoints.

We rewrite $(\ref{Sect4_BoundarLayer_Boundary_BC})$ in the following form:
\begin{equation}\label{Sect4_BoundarLayer_Boundary_BC_1}
\left\{\begin{array}{ll}
\e \partial_{zz} W^{bdy} + \e(\frac{\partial_z a_{33}}{a_{33}}+ \sum\limits_{j=1,2}\frac{\partial_j a_{j3}}{a_{33}})\partial_z W^{bdy}
+ 2\e \sum\limits_{j=1,2}\frac{a_{j3}}{a_{33}}\partial_{jz} W^{bdy} \\[11pt]\quad

+ \e \sum\limits_{j=1,2}\frac{\partial_z a_{j3}}{a_{33}}\partial_j W^{bdy} + \e \sum\limits_{j=1,2}\frac{\partial_i a_{ij}}{a_{33}}\partial_j W^{bdy}
+ \e \sum\limits_{j=1,2}\frac{a_{ij}}{a_{33}}\partial_{ij} W^{bdy} \\[13pt]\quad
- \frac{a_0}{a_{33}}\partial_t W^{bdy}
- \frac{1}{a_{33}}\big(\gamma a_0 - f^7[\nabla\varphi^{\e},\nabla\varphi,\partial_j v^{\e,i},\partial_j v^i]\big) W^{bdy} =0, \\[11pt]

W^{bdy}|_{z=0} = e^{-\gamma t}\hat{\omega}_h^b \nrightarrow 0, \\[8pt]

W^{bdy}|_{t=0} = 0.
\end{array}\right.
\end{equation}

Take $z$ as a parameter, then the symbolic version of $(\ref{Sect4_BoundarLayer_Boundary_BC_1})$ is
\begin{equation}\label{Sect4_BoundarLayer_Boundary_BC_2}
\left\{\begin{array}{ll}
\e \partial_{zz} \tilde{W}^{bdy} + A_1 \sqrt{\e} \partial_z \tilde{W}^{bdy} + A_0 \tilde{W}^{bdy} =0, \\[11pt]
\tilde{W}^{bdy}|_{z=0} = \mathcal{F}[e^{-\gamma t}\hat{\omega}_h^b] \nrightarrow 0, \\[8pt]
\tilde{W}^{bdy}|_{t=0} = 0.
\end{array}\right.
\end{equation}
where the Fourier multipliers are as follows:
\begin{equation}\label{Sect4_BoundarLayer_Boundary_BC_3}
\begin{array}{ll}
A_1 = \sqrt{\e}(\frac{\partial_z a_{33}}{a_{33}}+ \sum\limits_{j=1,2}\frac{\partial_j a_{j3}}{a_{33}}
+ 2i \sum\limits_{j=1,2}\frac{a_{j3}}{a_{33}} \xi^j) \\[14pt]

A_0 = i\e \sum\limits_{j=1,2}\frac{\partial_z a_{j3}}{a_{33}}\xi^j + i\e \sum\limits_{j=1,2}\frac{\partial_i a_{ij}}{a_{33}}\xi^j
- \e \sum\limits_{j=1,2}\frac{a_{ij}}{a_{33}}\xi^i\xi^j
- i\tau \frac{a_0}{a_{33}} \\[12pt]\hspace{0.87cm}
- \frac{1}{a_{33}}\big(\gamma a_0 - f^7[\nabla\varphi^{\e},\nabla\varphi,\partial_j v^{\e,i},\partial_j v^i]\big),
\end{array}
\end{equation}

Due to $|a_0| + |a_{ij}| + \sqrt{\e}|\partial_z a_{ij}| \leq C$ for some $C>0$ (see \cite{Masmoudi_Rousset_2012_FreeBC}), when $\e\rto 0$,
\begin{equation*}
\begin{array}{ll}
A_1 \rto \sqrt{\e}\frac{\partial_z a_{33}}{a_{33}}, \\[7pt]

- A_0 \rto \frac{1}{a_{33}}\big(\gamma a_0 - f^7[\nabla\varphi^{\e},\nabla\varphi,\partial_j v^{\e,i},\partial_j v^i]\big)
+ i\tau \frac{a_0}{a_{33}} - i\e \sum\limits_{j=1,2}\frac{\partial_z a_{j3}}{a_{33}}\xi^j .
\end{array}
\end{equation*}
When $\e>0$ is sufficiently small, the values of $A_1$ and $A_0$ are around their limits.

The solution of the ODE $(\ref{Sect4_BoundarLayer_Boundary_BC_2})$ is that
\begin{equation}\label{Sect4_BoundarLayer_Boundary_BC_4}
\begin{array}{ll}
\tilde{W}^{bdy} = \exp\{\frac{- A_1 + \sqrt{A_1^2 -4 A_0}}{2} \frac{z}{\sqrt{\e}}\}\tilde{W}^{bdy}|_{z=0}.
\end{array}
\end{equation}

The complex root $\sqrt{A_1^2 -4 A_0}$ has two branches, but one of which always has positive real part,
since $\Re (A_1^2 -4 A_0)>0$ when $\e$ is sufficiently small, where $\Re$ represents the real part. Then we choose this branch.
Since $\Re (- 4 A_0) >0$ when $\e$ is sufficiently small, then $|\Re \sqrt{A_1^2 -4 A_0}| >| - A_1|$,
and then $\Re\frac{- A_1 + \sqrt{A_1^2 -4 A_0}}{2} >0$ and $\big\|\frac{- A_1 + \sqrt{A_1^2 -4 A_0}}{2}\big\|_{L^{\infty}}<+\infty$.

Define $\mathcal{T}[W^{bdy}] = \mathcal{F}^{-1}[\tilde{W}^{bdy}]$.
Note that $(\ref{Sect4_BoundarLayer_Boundary_BC_3})$ has the same form with $(\ref{Sect4_HeatEq_Diffusion_2})_3$,
apply the same argument in Proposition $\ref{Sect4_HeatEq_Diffusion_Proposition}$ to $(\ref{Sect4_BoundarLayer_Boundary_BC_3})$,
we can prove that if $z =O(\e^{\frac{1}{2} +\delta_z})$ where $\delta_z > 0$,
$\|\tilde{W}^{bdy}\|_{L^1} \nrightarrow 0$ as $\e\rto 0$,
then $\|\mathcal{T}[W^{bdy}]\|_{L^{\infty}} \neq 0$.
If $z =O(\e^{\frac{1}{2} -\delta_z})$ where $\delta_z \geq 0$,
$\|\tilde{W}^{bdy}\|_{L^1} \rto 0$ as $\e\rto 0$,
then $\|\mathcal{T}[W^{bdy}]\|_{L^{\infty}} \rto 0$.

The difference between $\mathcal{T}[W^{bdy}]$ and $W^{bdy}$ is bounded by $W^{bdy}$ (refer to the results of paradifferential calculus in 
\cite{Masmoudi_Rousset_2012_FreeBC,Metivier_Zumbrun_2005}).
If we assume $\lim\limits_{\e\rto 0}\|W^{bdy}\|_{L^{\infty}} =0$, then $\lim\limits_{\e\rto 0}\|\mathcal{T}[W^{bdy}]\|_{L^{\infty}} =0$.
This is a contradiction. So $\lim\limits_{\e\rto 0}\|W^{bdy}\|_{L^{\infty}} \neq 0$ in some set located in the interior.
Thus, Theorem $\ref{Sect4_BoundarLayer_Boundary_Thm}$ is proved.
\end{proof}

\section{Convergence Rates of Inviscid Limit for $\sigma=0$}

In this section, we estimate convergence rates of the inviscid limit when $\sigma=0$. We denote $\hat{v} =v^{\e} -v,\hat{q} =q^{\e} -q, \hat{h} =h^{\e} -h$,
we denote the $i-$th components of $v^{\e}$ and $v$ by $v^{\e,i}$ and $v^i$ respectively.

\subsection{Estimates for the Pressure Gradient}

\begin{lemma}\label{Sect5_Pressure_Lemma}
Assume $0\leq s\leq k-1,\, k \leq m-2$, the difference of the pressure $\hat{q}$ has the following gradient estimate:
\begin{equation}\label{Sect5_Pressure_Lemma_Eq}
\begin{array}{ll}
\|\nabla \hat{q}\|_{X^s} \lem \|\hat{v}\|_{X^{s,1}} + \|\partial_z\hat{v}\|_{X^s} + |\hat{h}|_{X^{s,\frac{1}{2}}}
+ O(\e).
\end{array}
\end{equation}
\end{lemma}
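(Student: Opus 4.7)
The plan is to write down the elliptic boundary value problem satisfied by $\hat{q}$, apply the conormal elliptic estimate already used in $(\ref{Sect2_Tangential_Estimate_Pressure_2})$, and then bound the resulting force and boundary data by the quantities on the right-hand side of $(\ref{Sect5_Pressure_Lemma_Eq})$. Using $\triangle^{\varphi^\e}$ as the ambient operator, I would write
\begin{equation*}
\begin{array}{ll}
\triangle^{\varphi^\e}\hat{q} = -\partial_j^{\varphi^\e} v^{\e,i}\partial_i^{\varphi^\e} v^{\e,j} + \partial_j^{\varphi} v^{i}\partial_i^{\varphi} v^{j} + (\triangle^{\varphi^\e}-\triangle^{\varphi})q, \\[6pt]
\hat{q}|_{z=0} = g\hat{h} + 2\e\,\mathcal{S}^{\varphi^\e}v^\e\,\nn^\e\cdot\nn^\e,
\end{array}
\end{equation*}
where the first identity is obtained by subtracting the two pressure equations $\triangle^{\varphi^\e}q^\e = -\partial_j^{\varphi^\e}v^{\e,i}\partial_i^{\varphi^\e}v^{\e,j}$ and $\triangle^{\varphi}q = -\partial_j^{\varphi}v^{i}\partial_i^{\varphi}v^{j}$ and recording that $\triangle^{\varphi^\e}q - \triangle^{\varphi}q$ involves only fractions of $\nabla\hat\eta$ multiplying bounded quantities built from $\nabla\varphi, \nabla\varphi^\e$ and the smooth Euler pressure $q$.

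Next, applying the conormal Poisson estimate exactly as in $(\ref{Sect2_Tangential_Estimate_Pressure_2})$ yields
\begin{equation*}
\|\nabla\hat{q}\|_{X^s} \lem \|\text{RHS}\|_{X^s} + \big|\hat{q}|_{z=0}\big|_{X^{s,\frac{1}{2}}},
\end{equation*}
so the task reduces to bounding the two pieces on the right. For the force I would factor each quadratic difference as
\begin{equation*}
\partial_j^{\varphi^\e}v^{\e,i}\partial_i^{\varphi^\e}v^{\e,j} - \partial_j^{\varphi}v^{i}\partial_i^{\varphi}v^{j} = \partial_j^{\varphi^\e}\hat{v}^i\,\partial_i^{\varphi^\e}v^{\e,j} + \partial_j^{\varphi}v^i\,\partial_i^{\varphi^\e}\hat{v}^j + \mathcal{R},
\end{equation*}
where $\mathcal{R}$ collects all terms in which a derivative of $v$ or $v^\e$ is multiplied by the coefficient discrepancy coming from $\nabla\varphi^\e-\nabla\varphi = \nabla\hat{\eta}$. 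Splitting each $\partial_i^{\varphi^\e}$ into its tangential part and $\frac{1}{\partial_z\varphi^\e}\partial_z$ part gives the expected bound $\|\hat{v}\|_{X^{s,1}} + \|\partial_z\hat{v}\|_{X^s}$ for the first two summands, while $\mathcal{R}$ and the operator-discrepancy term $(\triangle^{\varphi^\e}-\triangle^\varphi)q$ are controlled by $\|\nabla\hat\eta\|_{X^s}\lem|\hat{h}|_{X^{s,\frac{1}{2}}}$, which in turn follows from $\hat\eta = \psi\ast_y\hat{h}$ and the $L^1(\mathrm{d}z)$-decay assumptions on $\psi,\partial_z\psi,\dots,\partial_z^{m+1}\psi$.

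For the boundary piece, $|g\hat{h}|_{X^{s,\frac{1}{2}}}\lem|\hat{h}|_{X^{s,\frac{1}{2}}}$ is immediate, while
\begin{equation*}
\big|2\e\,\mathcal{S}^{\varphi^\e}v^\e\,\nn^\e\cdot\nn^\e\big|_{X^{s,\frac{1}{2}}} \lem \e\big(\|v^\e\|_{X_{tan}^{s+1,1}} + \|\partial_z v^\e\|_{X_{tan}^{s+1}}\big) = O(\e)
\end{equation*}
is controlled uniformly in $\e$ by the regularity bounds $(\ref{Sect1_Proposition_TimeRegularity_2})$ with $s\leq k-1\leq m-3$. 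Combining these three estimates produces $(\ref{Sect5_Pressure_Lemma_Eq})$. The main obstacle I anticipate is bookkeeping: ensuring that after commuting $\partial_t^\ell \mathcal{Z}^\alpha$ through the operator difference $(\triangle^{\varphi^\e}-\triangle^\varphi)q$ no term requires more than one standard normal derivative of $\hat v$ or more than $s+1/2$ tangential derivatives of $\hat h$, so that each contribution fits into one of the three norms on the right-hand side and no higher-order quantity is inadvertently generated.
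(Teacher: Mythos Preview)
Your approach is correct and reaches the same conclusion, but the paper organizes the computation differently. Rather than subtracting the two Laplacians and carrying the operator discrepancy $(\triangle^{\varphi^\e}-\triangle^{\varphi})q$ as a force term, the paper rewrites everything in divergence form using the matrices $\textsf{E},\textsf{P}$ from \cite{Masmoudi_Rousset_2012_FreeBC}: since $\partial_z\varphi\,\triangle^{\varphi}q=\nabla\cdot(\textsf{E}\nabla q)$ and $\partial_z\varphi\,\nabla^{\varphi}\cdot(v\cdot\nabla^{\varphi}v)=\nabla\cdot(\textsf{P}(v\cdot\nabla^{\varphi}v))$, the difference equation becomes
\[
\nabla\cdot(\textsf{E}^{\e}\nabla\hat{q})=-\nabla\cdot\big((\textsf{E}^{\e}-\textsf{E})\nabla q\big)-\nabla\cdot\big((\textsf{P}^{\e}-\textsf{P})(v\cdot\nabla^{\varphi}v)\big)-\nabla\cdot\big(\textsf{P}^{\e}(\cdots)\big),
\]
with Dirichlet data $g\hat{h}+2\e\,\mathcal{S}^{\varphi^\e}v^\e\nn^\e\cdot\nn^\e$. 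The gradient estimate then only requires the $X^s$ norm of the quantities \emph{inside} the divergence, i.e.\ only first derivatives of $q$ and of $\hat{v}$.

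Your route is more direct but costs an extra derivative on the Euler pressure: the term $(\triangle^{\varphi^\e}-\triangle^{\varphi})q$ contains coefficient differences multiplying $\partial_{zz}q$, so you must separately argue that $\partial_{zz}q$ for the Euler solution lies in $X^s\cap L^\infty$ (which it does, via the elliptic equation $\triangle^{\varphi}q=-\partial_i^{\varphi}v^j\partial_j^{\varphi}v^i$ and the Euler regularity, since $s\le m-3$). The paper's divergence-form trick sidesteps this entirely. Either way the boundary contribution is handled identically and yields the $O(\e)$ term.
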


\begin{proof}
\cite{Masmoudi_Rousset_2012_FreeBC} introduced the following matrices $\textsf{E}$ and $\textsf{P}$ satisfying $\textsf{E} = \frac{1}{\partial_z\varphi} \textsf{P} \textsf{P}^{\top}$:
\begin{equation*}
\begin{array}{ll}
\textsf{E} = \left(\begin{array}{ccc}
\partial_z \varphi & 0 & -\partial_1\varphi \\[1pt]
0 & \partial_z \varphi & -\partial_2\varphi \\[2pt]
-\partial_1 \varphi & -\partial_2 \varphi & \frac{1+(\partial_1\varphi)^2 + (\partial_2\varphi)^2}{\partial_z\varphi}
\end{array}\right)

\, , \quad

\textsf{P} = \left(\begin{array}{ccc}
\partial_z \varphi & 0 & 0 \\[2pt]
0 & \partial_z \varphi & 0 \\[2pt]
-\partial_1 \varphi & -\partial_2 \varphi & 1
\end{array}\right)\, .
\end{array}
\end{equation*}

Apply the divergence operator $\nabla^{\varphi^{\e}}\cdot$ to $(\ref{Sect1_NS_Eq})_1$ and apply the divergence operator $\nabla^{\varphi}\cdot$ to $(\ref{Sect1_Euler_Eq})_1$, then we get
\begin{equation}\label{Sect5_Pressure_Estimates_1}
\left\{\begin{array}{ll}
\nabla\cdot(\textsf{E}^{\e}\nabla q^{\e}) = \partial_z \varphi^{\e}\triangle^{\varphi^{\e}} q^{\e}
= -\partial_z \varphi^{\e}\nabla^{\varphi^{\e}}\cdot (v^{\e} \cdot\nabla^{\varphi^{\e}} v^{\e}), \\[6pt]

\nabla\cdot(\textsf{E}\nabla q) = \partial_z \varphi\triangle^{\varphi} q
= -\partial_z \varphi\nabla^{\varphi}\cdot (v \cdot\nabla^{\varphi} v).
\end{array}\right.
\end{equation}

It follows from $(\ref{Sect5_Pressure_Estimates_1})$ and $(\ref{SectA_Difference_Transform_2})$ that
\begin{equation}\label{Sect5_Pressure_Estimates_2}
\begin{array}{ll}
\nabla\cdot(\textsf{E}^{\e}\nabla \hat{q}) + \nabla\cdot((\textsf{E}^{\e} -\textsf{E}) \nabla q)
= \nabla\cdot(\textsf{E}^{\e}\nabla q^{\e}) - \nabla\cdot(\textsf{E}\nabla q) \\[7pt]

= - \partial_z \varphi^{\e}\nabla^{\varphi^{\e}}\cdot (v^{\e} \cdot\nabla^{\varphi^{\e}} v^{\e})
+ \partial_z \varphi \nabla^{\varphi}\cdot (v \cdot\nabla^{\varphi} v) \\[7pt]

= -\nabla\cdot \big[\textsf{P}^{\e} (v^{\e} \cdot\nabla^{\varphi^{\e}} v^{\e})\big]
+ \nabla\cdot \big[\textsf{P} (v \cdot\nabla^{\varphi} v)\big] \\[7pt]

= -\nabla\cdot \big[\textsf{P}^{\e} (v^{\e} \cdot\nabla^{\varphi^{\e}} v^{\e} - v \cdot\nabla^{\varphi} v)\big]
- \nabla\cdot \big[(\textsf{P}^{\e} - \textsf{P}) (v \cdot\nabla^{\varphi} v)\big] \\[7pt]

= -\nabla\cdot \big[\textsf{P}^{\e} (v^{\e} \cdot\nabla^{\varphi^{\e}} \hat{v} - v^{\e}\cdot\nabla^{\varphi^{\e}}\hat{\varphi}\partial_z^{\varphi} v
+ \hat{v}\cdot\nabla^{\varphi} v)\big]
- \nabla\cdot \big[(\textsf{P}^{\e} - \textsf{P}) (v \cdot\nabla^{\varphi} v)\big].
\end{array}
\end{equation}

Namely, $\hat{q}$ satisfies the following elliptic equation:
\begin{equation}\label{Sect5_Pressure_Estimates_3}
\left\{\begin{array}{ll}
\nabla\cdot(\textsf{E}^{\e}\nabla \hat{q})
= -\nabla\cdot((\textsf{E}^{\e} -\textsf{E}) \nabla q) - \nabla\cdot [(\textsf{P}^{\e} - \textsf{P}) (v \cdot\nabla^{\varphi} v)] \\[5pt]\hspace{2.1cm}

-\nabla\cdot [\textsf{P}^{\e} (v^{\e} \cdot\nabla^{\varphi^{\e}} \hat{v} - v^{\e}\cdot\nabla^{\varphi^{\e}}\hat{\varphi}\partial_z^{\varphi} v
+ \hat{v}\cdot\nabla^{\varphi} v)], \\[7pt]

q|_{z=0} = g\hat{h} + 2\e\mathcal{S}^{\varphi^{\e}} v^{\e} \nn^{\e}\cdot\nn^{\e}.
\end{array}\right.
\end{equation}

The matrix $\textsf{E}^{\e}$ is definitely positive, then
it is standard to prove that $\hat{q}$ satisfies the following gradient estimate:
\begin{equation}\label{Sect5_Pressure_Estimates_6}
\begin{array}{ll}
\|\nabla \hat{q}\|_{X^s} \lem \|(\textsf{E}^{\e} -\textsf{E}) \nabla q \|_{X^s}
+ \|(\textsf{P}^{\e} - \textsf{P}) (v \cdot\nabla^{\varphi} v)\|_{X^s} \\[6pt]\hspace{1.74cm}

+ \|\textsf{P}^{\e} (v^{\e} \cdot\nabla^{\varphi^{\e}} \hat{v} - v^{\e}\cdot\nabla^{\varphi^{\e}}\hat{\varphi}\partial_z^{\varphi} v
+ \hat{v}\cdot\nabla^{\varphi} v)\|_{X^s} \\[5pt]\hspace{1.74cm}

+ |g\hat{h} + 2\e\mathcal{S}^{\varphi^{\e}} v^{\e} \nn^{\e}\cdot\nn^{\e}|_{X^{s,\frac{1}{2}}} \\[9pt]\hspace{1.3cm}

\lem \|\textsf{E}^{\e} -\textsf{E}\|_{X^s} + \|\textsf{P}^{\e} - \textsf{P}\|_{X^s}
+ \|\hat{v}\|_{X^s} + \|\nabla\hat{v}\|_{X^s} + \|\nabla\hat{\varphi}\|_{X^s}\\[7pt]\hspace{1.74cm}

+ g|\hat{h}|_{X^{s,\frac{1}{2}}} + 2\e \big|\mathcal{S}^{\varphi^{\e}} v^{\e}|_{z=0}\big|_{X^{s,\frac{1}{2}}} \\[9pt]\hspace{1.3cm}

\lem \|\hat{v}\|_{X^{s,1}} + \|\partial_z\hat{v}\|_{X^s} + \|\nabla\hat{\eta}\|_{X^s}
+ g|\hat{h}|_{X^{s,\frac{1}{2}}} + 2\e \big|\mathcal{S}^{\varphi^{\e}} v^{\e}|_{z=0}\big|_{X^{s,\frac{1}{2}}} \\[9pt]\hspace{1.3cm}

\lem \|\hat{v}\|_{X^{s,1}} + \|\partial_z\hat{v}\|_{X^s} + |\hat{h}|_{X^{s,\frac{1}{2}}}
+ O(\e),
\end{array}
\end{equation}
where $\big|\mathcal{S}^{\varphi^{\e}} v^{\e}|_{z=0}\big|_{X^{s,\frac{1}{2}}}
\lem \|\partial_z\partial_j v^{\e}\|_{X^s}^{\frac{1}{2}}\|\partial_j v^{\e}\|_{X^{s+1}}^{\frac{1}{2}} <+\infty$.

Thus, Lemma $\ref{Sect5_Pressure_Lemma}$ is proved.
\end{proof}

\subsection{Estimates for Tangential Derivatives}

In order to close the estimates of tangential derivatives of $\hat{v}$, that is to bound $\|\partial_t^{\ell} \hat{v}\|_{L^2}$ and
$\sqrt{\e}\|\nabla \partial_t^{\ell}\mathcal{Z}^{\alpha}\hat{v}\|_{L^2}$, we must prove two preliminary lemmas of 
$\hat{h}$ by using the kinetical boundary condition $(\ref{Sect1_T_Derivatives_Difference_Eq})_3$. 

The first preliminary lemma concerns $|\partial_t^{\ell} \hat{h}|_{L^2}$ where $0\leq\ell\leq k-1$.
Note that the estimates of mix derivatives $\partial_t^{\ell}\mathcal{Z}^{\alpha} \hat{h}$
will be obtained when we estimate mix derivatives $\partial_t^{\ell}\mathcal{Z}^{\alpha} \hat{v}$, where $|\alpha|>0$.
\begin{lemma}\label{Sect5_Height_Estimates_Lemma}
Assume $0\leq k\leq m-2$, $0\leq\ell\leq k-1$,
$|\partial_t^{\ell}\hat{h}|_{L^2}$ has the estimates:
\begin{equation}\label{Sect5_Height_Estimates_Lemma_Eq}
\begin{array}{ll}
|\partial_t^{\ell}\hat{h}|_{L^2}^2 
\lem |\hat{h}_0|_{X^{k-1}}^2
+ \int\limits_0^t |\hat{h}|_{X^{k-1,1}}^2 + \|\hat{v}\|_{X^{k-1,1}}^2 \,\mathrm{d}t
+ \|\partial_z\hat{v}\|_{L^4([0,T],X^{k-1})}^2.
\end{array}
\end{equation}
\end{lemma}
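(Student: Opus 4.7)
The plan is to mirror the proof of Lemma \ref{Sect2_Height_Estimates_Lemma} but applied to the difference system $(\ref{Sect1_T_Derivatives_Difference_Eq})$. Starting from the kinetical boundary condition
\[
\partial_t \hat{h} + v_y\cdot\nabla_y \hat{h} = \hat{v}\cdot\NN^{\e},
\]
I would apply $\partial_t^{\ell}$ (for $0\leq \ell\leq k-1$) to obtain the transport equation
\[
\partial_t \partial_t^{\ell}\hat{h} + v_y\cdot\nabla_y \partial_t^{\ell}\hat{h} = \partial_t^{\ell}(\hat{v}\cdot\NN^{\e}) - [\partial_t^{\ell}, v_y\cdot\nabla_y]\hat{h}.
\]
The right-hand side contains the trace term $\partial_t^{\ell}(\hat{v}\cdot\NN^{\e})|_{z=0}$ plus a commutator, where the commutator expands into terms involving $\partial_t^{\ell_1}v_y$ for $\ell_1\leq \ell$ (bounded uniformly by the uniform regularity Proposition \ref{Sect1_Proposition_TimeRegularity}) multiplying tangential derivatives $\partial_t^{\ell_2}\nabla_y\hat{h}$ with $\ell_2\leq k-1$, and similarly a term with $\partial_t^{\ell}\NN^{\e}\sim \partial_t^{\ell}\nabla h^{\e}$ paired with $\hat{v}$.

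Next I would multiply by $\partial_t^{\ell}\hat{h}$, integrate over $\mathbb{R}^2$, and apply integration by parts to the transport term, producing
\[
\frac{\mathrm{d}}{\mathrm{d}t}\int_{\mathbb{R}^2} |\partial_t^{\ell}\hat{h}|^2\,\mathrm{d}y
\lem |\partial_t^{\ell}\hat{h}|_{L^2}^2 + |\hat{h}|_{X^{k-1,1}}^2 + \big|\hat{v}|_{z=0}\big|_{X^{\ell}}^2,
\]
where the factor $|\nabla_y\cdot v_y|_{L^{\infty}}$ is controlled by the uniform regularity of $v$. The critical step is to trade the boundary trace of $\hat{v}$ against an interior norm and a normal derivative norm via the standard trace inequality
\[
\big|\hat{v}|_{z=0}\big|_{X^{\ell}}^2 \lem \|\hat{v}\|_{X^{\ell,1}}\cdot\|\partial_z \hat{v}\|_{X^{\ell}} + \|\hat{v}\|_{X^{\ell,1}}^2,
\]
so that, summing in $\ell$ up to $k-1$,
\[
\sum_{\ell\leq k-1}\big|\hat{v}|_{z=0}\big|_{X^{\ell}}^2 \lem \|\hat{v}\|_{X^{k-1,1}}^2 + \|\hat{v}\|_{X^{k-1,1}}\cdot \|\partial_z\hat{v}\|_{X^{k-1}}.
\]

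Integrating in time from $0$ to $t$ and applying the integral form of Gronwall's inequality yields
\[
|\partial_t^{\ell}\hat{h}|_{L^2}^2 \lem |\hat{h}_0|_{X^{k-1}}^2 + \int_0^t |\hat{h}|_{X^{k-1,1}}^2 + \|\hat{v}\|_{X^{k-1,1}}^2\,\mathrm{d}t + \int_0^t \|\hat{v}\|_{X^{k-1,1}}\,\|\partial_z\hat{v}\|_{X^{k-1}}\,\mathrm{d}t.
\]
The main (and only nonroutine) obstacle is converting the last cross term into the $L^4_t$ norm of $\partial_z\hat{v}$ appearing on the right-hand side of $(\ref{Sect5_Height_Estimates_Lemma_Eq})$; I would do this by Cauchy--Schwarz followed by H\"older, namely $\int_0^t \|\hat{v}\|_{X^{k-1,1}}\|\partial_z\hat{v}\|_{X^{k-1}}\,\mathrm{d}t \leq \big(\int_0^t \|\hat{v}\|_{X^{k-1,1}}^2\,\mathrm{d}t\big)^{1/2}\cdot T^{1/4}\|\partial_z\hat{v}\|_{L^4([0,T],X^{k-1})}$, and then absorbing the $\|\hat{v}\|$-integral into the $\int_0^t\|\hat{v}\|_{X^{k-1,1}}^2\,\mathrm{d}t$ term already present, while the $\|\partial_z\hat{v}\|_{L^4([0,T],X^{k-1})}^2$ piece matches the right-hand side of the claimed estimate. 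Summing over $\ell\leq k-1$ completes the bound.
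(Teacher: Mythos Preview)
Your proposal is correct and follows essentially the same route as the paper: apply $\partial_t^{\ell}$ to the kinematic boundary condition for $\hat{h}$, multiply by $\partial_t^{\ell}\hat{h}$, integrate, use the trace inequality to bound $\big|\hat{v}|_{z=0}\big|_{X^{\ell}}$, and conclude via Gronwall and H\"older in time. The only cosmetic difference is that the paper uses the squared form of the trace inequality $\big|\hat{v}|_{z=0}\big|_{X^{\ell}}^2 \lem \|\hat{v}\|_{X^{k-1,1}}^2 + \|\partial_z\hat{v}\|_{X^{k-1}}^2$ directly (then H\"older $\int_0^t\|\partial_z\hat{v}\|_{X^{k-1}}^2\,\mathrm{d}t \leq T^{1/2}\|\partial_z\hat{v}\|_{L^4_t X^{k-1}}^2$), avoiding the cross term you generate and then split; both lead to the same estimate.
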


\begin{proof}
Apply $\partial_t^{\ell}$ to the kinetical boundary condition $(\ref{Sect1_T_Derivatives_Difference_Eq})_3$, we get
\begin{equation}\label{Sect5_Height_Estimates_Lemma_Eq_1}
\begin{array}{ll}
\partial_t \partial_t^{\ell}\hat{h} + v_y\cdot \nabla_y \partial_t^{\ell}\hat{h} = \partial_t^{\ell}\hat{v}\cdot\NN^{\e}
+ [\partial_t^{\ell}, \NN^{\e}\cdot]\hat{v} - [\partial_t^{\ell}, v_y\cdot \nabla_y]\hat{h}.
\end{array}
\end{equation}

Multiply $(\ref{Sect5_Height_Estimates_Lemma_Eq_1})$ with $\partial_t^{\ell}\hat{h}$, integrate in $\mathbb{R}^2$, we have
\begin{equation}\label{Sect5_Height_Estimates_Lemma_Eq_2}
\begin{array}{ll}
\frac{\mathrm{d}}{\mathrm{d}t}\int\limits_{\mathbb{R}^2} |\partial_t^{\ell}\hat{h}|^2 \,\mathrm{d}y

= 2\int\limits_{\mathbb{R}^2}\big( \partial_t^{\ell}\hat{v}\cdot\NN^{\e}
+ [\partial_t^{\ell}, \NN^{\e}\cdot]\hat{v} - [\partial_t^{\ell}, v_y\cdot \nabla_y]\hat{h} \big) \partial_t^{\ell}\hat{h} \,\mathrm{d}y \\[7pt]\quad
+ \int\limits_{\mathbb{R}^2} |\partial_t^{\ell}\hat{h}|^2 \nabla_y\cdot v_y  \,\mathrm{d}y 

\lem |\partial_t^{\ell}\hat{h}|_{L^2}^2 + |\hat{h}|_{X^{\ell,1}}^2 + \big|\hat{v}|_{z=0}\big|_{X^{\ell}}^2 \\[7pt]
\lem |\partial_t^{\ell}\hat{h}|_{L^2}^2 + |\hat{h}|_{X^{k-1,1}}^2 + \|\hat{v}\|_{X^{k-1,1}}^2 + \|\partial_z\hat{v}\|_{X^{k-1}}^2.
\end{array}
\end{equation}

Sum $\ell$, integrate $(\ref{Sect5_Height_Estimates_Lemma_Eq_2})$ in time and apply the integral form of Gronwall's inequality, we have
\begin{equation}\label{Sect5_Height_Estimates_Lemma_Eq_3}
\begin{array}{ll}
\int\limits_{\mathbb{R}^2} |\partial_t^{\ell}\hat{h}|^2 \,\mathrm{d}y
\lem |\hat{h}_0|_{X^{k-1}}^2
+ \int\limits_0^t |\hat{h}|_{X^{k-1,1}}^2 + \|\hat{v}\|_{X^{k-1,1}}^2 + \|\partial_z\hat{v}\|_{X^{k-1}}^2
\,\mathrm{d}t \\[8pt]

\lem |\hat{h}_0|_{X^{k-1}}^2
+ \int\limits_0^t |\hat{h}|_{X^{k-1,1}}^2 + \|\hat{v}\|_{X^{k-1,1}}^2 \,\mathrm{d}t
+ \|\partial_z\hat{v}\|_{L^4([0,T],X^{k-1})}^2.
\end{array}
\end{equation}
Thus, Lemma $\ref{Sect5_Height_Estimates_Lemma}$ is proved.
\end{proof}

The second preliminary lemma concerns $\sqrt{\e}|\partial_t^{\ell}\mathcal{Z}^{\alpha} \hat{h}|_{\frac{1}{2}}$,
by which we bound $\sqrt{\e}\|\mathcal{S}^{\varphi}\partial_t^{\ell}\mathcal{Z}^{\alpha}\hat{\eta}\|_{L^2}$ and then
we can bound $\sqrt{\e}\|\mathcal{S}^{\varphi}\partial_t^{\ell}\mathcal{Z}^{\alpha} \hat{v}\|_{L^2}$.
\begin{lemma}\label{Sect5_Height_Viscous_Estimates_Lemma}
Assume $0\leq k\leq m-2$, $0\leq\ell\leq k-1$, $\ell+|\alpha| \leq k$,
$\sqrt{\e}|\partial_t^{\ell}\mathcal{Z}^{\alpha}\hat{h}|_{\frac{1}{2}}$ has the estimates:
\begin{equation}\label{Sect5_Height_Viscous_Estimates_Lemma_Eq}
\begin{array}{ll}
\e|\hat{h}|_{X^{k-1,\frac{3}{2}}}^2 \leq \e|\hat{h}_0|_{X^{k-1,\frac{3}{2}}}^2 + \int\limits_0^t|\hat{h}|_{X^{k-1,1}}^2
+ \e\|\nabla \hat{v}\|_{X^{k-1,1}}^2 \,\mathrm{d}t.
\end{array}
\end{equation}
\end{lemma}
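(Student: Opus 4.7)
The plan is to mimic the proof of Lemma \ref{Sect2_Height_Viscous_Estimates_Lemma}, replacing every quantity by its difference analogue. Apply the half-order tangential derivative $\partial_t^{\ell}\mathcal{Z}^{\alpha}\Lambda^{1/2}$ (with $\Lambda = (1-\Delta_y)^{1/2}$) to the kinematic boundary equation $(\ref{Sect1_T_Derivatives_Difference_Eq})_3$,
\[
\partial_t \hat{h} + v_y\cdot\nabla_y \hat{h} = \hat{v}\cdot\NN^{\e},
\]
and obtain a transport equation for $\partial_t^{\ell}\mathcal{Z}^{\alpha}\Lambda^{1/2}\hat{h}$ whose right-hand side consists of $\partial_t^{\ell}\mathcal{Z}^{\alpha}\Lambda^{1/2}(\hat{v}\cdot\NN^{\e})$ and a commutator $[\partial_t^{\ell}\mathcal{Z}^{\alpha}\Lambda^{1/2}, v_y\cdot\nabla_y]\hat{h}$.

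Multiply by $\e\,\partial_t^{\ell}\mathcal{Z}^{\alpha}\Lambda^{1/2}\hat{h}$ and integrate on $\mathbb{R}^2$. The transport part contributes $\tfrac{\e}{2}\tfrac{d}{dt}|\partial_t^{\ell}\mathcal{Z}^{\alpha}\Lambda^{1/2}\hat{h}|_{L^2}^2$ plus a harmless $\e|\hat h|^2\nabla_y\cdot v_y$ term; the commutator is controlled by $|\hat{h}|_{X^{k-1,1}}^2$ using uniform bounds on $v$ proved in Proposition \ref{Sect1_Proposition_TimeRegularity}. For the source term one applies Young's inequality and the trace inequality to get
\[
\e\bigl|\partial_t^{\ell}\mathcal{Z}^{\alpha}\Lambda^{1/2}(\hat{v}\cdot\NN^{\e})\bigr|_{L^2}^2
\lesssim \e\,\bigl|\hat{v}|_{z=0}\bigr|_{X^{k-1,3/2}}^2 + |\hat{h}|_{X^{k-1,1}}^2,
\]
followed by $\e\,\bigl|\hat v|_{z=0}\bigr|_{X^{k-1,3/2}}^2 \lesssim \e\|\nabla \hat v\|_{X^{k-1,1}}^2 + \e\|\partial_z \hat v\|_{X^{k-1,1}}^2$, which is already part of the right-hand side of $(\ref{Sect5_Height_Viscous_Estimates_Lemma_Eq})$.

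The main obstacle, exactly as in Lemma \ref{Sect2_Height_Viscous_Estimates_Lemma}, is that writing $\partial_z \hat v$ directly in a $\sqrt{\e}$ weighted trace is too costly, so one must instead split through Alinhac's good unknown $\hat{V}^{\ell,\alpha} = \partial_t^{\ell}\mathcal{Z}^{\alpha}\hat{v} - \partial_z^{\varphi}v\,\partial_t^{\ell}\mathcal{Z}^{\alpha}\hat{\eta}$ and estimate
\[
\e\|\partial_z \partial_t^{\ell}\mathcal{Z}^{\alpha}\hat v\|_{L^2}^2
\lesssim \e\|\partial_z \hat V^{\ell,\alpha}\|_{L^2}^2
+ \|\partial_z^{\varphi} v\|_{L^{\infty}}^2\,\e\|\partial_z \partial_t^{\ell}\mathcal{Z}^{\alpha}\hat\eta\|_{L^2}^2
+ (\sqrt{\e}\|\partial_{zz}^{\varphi} v\|_{L^{\infty}})^2 \|\partial_t^{\ell}\mathcal{Z}^{\alpha}\hat\eta\|_{L^2}^2,
\]
using the bounds $\|\partial_z^{\varphi} v\|_{L^{\infty}}, \sqrt{\e}\|\partial_{zz}^{\varphi} v\|_{L^{\infty}} \leq C$ supplied by Proposition \ref{Sect1_Proposition_TimeRegularity}. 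The $\hat\eta$ factors are then absorbed into $|\hat h|_{X^{k-1,1}}^2$ via $\hat\eta = \psi\ast_y \hat h$ and the fast decay of $\psi$ in $z$.

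Finally, sum over $\ell\leq k-1$, $\ell+|\alpha|\leq k$, integrate in time, and apply the integral Gronwall inequality to absorb the $|\hat{h}|_{X^{k-1,1}}^2$ and $\e|\hat{h}|_{X^{k-1,3/2}}^2$ terms on the right (using smallness of $T$ or a bootstrap if necessary). This yields $(\ref{Sect5_Height_Viscous_Estimates_Lemma_Eq})$.
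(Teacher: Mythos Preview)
Your proposal is correct and follows essentially the same approach as the paper: apply $\partial_t^{\ell}\mathcal{Z}^{\alpha}\Lambda^{1/2}$ to the kinematic boundary condition for $\hat h$, multiply by $\e\,\partial_t^{\ell}\mathcal{Z}^{\alpha}\Lambda^{1/2}\hat h$, estimate the resulting source and commutator terms via the trace inequality, sum, integrate in time and apply Gronwall. The only difference is that the paper's proof of this lemma stops directly at the bound $\e\|\nabla\hat v\|_{X^{k-1,1}}^2$ (which is exactly the dissipation term produced in Lemma~\ref{Sect5_Tangential_Estimates_Lemma}) and does \emph{not} pass through the good unknown $\hat V^{\ell,\alpha}$; your extra splitting mimics Lemma~\ref{Sect2_Height_Viscous_Estimates_Lemma} too literally and is unnecessary here, though it is harmless and leads to the same estimate.
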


\begin{proof}
The differential operator $\Lambda$ is defined in the proof of Lemma $\ref{Sect2_Height_Viscous_Estimates_Lemma}$.
Apply $\partial_t^{\ell}\mathcal{Z}^{\alpha}\Lambda^{\frac{1}{2}}$ to the kinetical boundary condition $(\ref{Sect1_T_Derivatives_Difference_Eq})_3$, we get
\begin{equation}\label{Sect5_Height_Viscous_Estimates_Lemma_Eq_1}
\begin{array}{ll}
\partial_t \partial_t^{\ell}\mathcal{Z}^{\alpha}\Lambda^{\frac{1}{2}}\hat{h} 
+ v_y\cdot \nabla_y \partial_t^{\ell}\mathcal{Z}^{\alpha}\Lambda^{\frac{1}{2}}\hat{h} \\[6pt]
= \partial_t^{\ell}\mathcal{Z}^{\alpha}\Lambda^{\frac{1}{2}}\hat{v}\cdot\NN^{\e} 
+ [\partial_t^{\ell}\mathcal{Z}^{\alpha}\Lambda^{\frac{1}{2}}, \NN^{\e}\cdot]\hat{v} 
- [\partial_t^{\ell}\mathcal{Z}^{\alpha}\Lambda^{\frac{1}{2}}, v_y\cdot \nabla_y]\hat{h}.
\end{array}
\end{equation}

Multiply $(\ref{Sect5_Height_Viscous_Estimates_Lemma_Eq_1})$ with $\e\partial_t^{\ell}\mathcal{Z}^{\alpha}\Lambda^{\frac{1}{2}} \hat{h}$,
integrate in $\mathbb{R}^2$, we have
\begin{equation}\label{Sect5_Height_Viscous_Estimates_Lemma_Eq_2}
\begin{array}{ll}
\e\frac{\mathrm{d}}{\mathrm{d}t}\int\limits_{\mathbb{R}^2} |\partial_t^{\ell}\mathcal{Z}^{\alpha}\Lambda^{\frac{1}{2}} \hat{h}|^2 \,\mathrm{d}y

= \e\int\limits_{\mathbb{R}^2} |\partial_t^{\ell}\mathcal{Z}^{\alpha}\Lambda^{\frac{1}{2}}\hat{h}|^2 \nabla_y\cdot v_y  \,\mathrm{d}y \\[7pt]\quad
+ 2\e\int\limits_{\mathbb{R}^2}\big(\partial_t^{\ell}\mathcal{Z}^{\alpha}\Lambda^{\frac{1}{2}}\hat{v}\cdot\NN^{\e}
+ [\partial_t^{\ell}\mathcal{Z}^{\alpha}\Lambda^{\frac{1}{2}}, \NN^{\e}\cdot]\hat{v}
- [\partial_t^{\ell}\mathcal{Z}^{\alpha}\Lambda^{\frac{1}{2}}, v_y\cdot \nabla_y]\hat{h}\big)
\partial_t^{\ell}\mathcal{Z}^{\alpha}\Lambda^{\frac{1}{2}} \hat{h} \,\mathrm{d}y 
\end{array}
\end{equation}

\begin{equation*}
\begin{array}{ll}
\lem \e|\hat{h}|_{X^{k-1,\frac{3}{2}}}^2 + \e\big|\hat{v}|_{z=0}\big|_{X^{k-1,\frac{3}{2}}}^2
+ \e|\partial_t^{\ell}\mathcal{Z}^{\alpha}\Lambda^{\frac{1}{2}} \hat{h}|_{L^2}^2\\[7pt]
\lem \e|\hat{h}|_{X^{k-1,\frac{3}{2}}}^2
+ \e\|\hat{v}\|_{X_{tan}^{k-1,2}}^2 + \e\|\partial_z \hat{v}\|_{X_{tan}^{k-1,1}}^2
+ \e|\partial_t^{\ell}\mathcal{Z}^{\alpha}\Lambda^{\frac{1}{2}} \hat{h}|_{L^2}^2\\[8pt]

\lem \e|\hat{h}|_{X^{k-1,\frac{3}{2}}}^2 +|\hat{h}|_{X^{k-1,1}}^2 + \e\|\nabla \hat{v}\|_{X^{k-1,1}}^2.
\hspace{5.cm}
\end{array}
\end{equation*}

Sum $\ell,\alpha$, integrate $(\ref{Sect5_Height_Viscous_Estimates_Lemma_Eq_2})$ in time and apply the integral form of Gronwall's inequality, 
we have $(\ref{Sect5_Height_Viscous_Estimates_Lemma_Eq})$.
Thus, Lemma $\ref{Sect5_Height_Viscous_Estimates_Lemma}$ is proved.
\end{proof}

We state that $\partial_z \hat{v}^3$ can be estimated by $\partial_z \hat{v}_h$,
that is $\|\partial_z \hat{v}^3\|_{X^s} \lem \|\hat{v}_h\|_{X^{s,1}} + \|\partial_z \hat{v}_h\|_{X^s}
+ |\hat{h}|_{X^{s,\frac{1}{2}}}$. The proof is based on the following equality that follows from
the divergence free condition $(\ref{Sect2_DivFreeCondition})$.
\begin{equation}\label{Sect5_NDerivatives_Lemma_Eq_1}
\begin{array}{ll}
\partial_z \hat{v}^3 = -\partial_z\varphi^{\e}(\partial_1 \hat{v}^1 + \partial_2 \hat{v}^2)
- \partial_z\hat{\varphi}(\partial_1 v^1 + \partial_2 v^2) \\[7pt]\hspace{1.15cm}

+\partial_1\varphi^{\e}\partial_z \hat{v}^1 +\partial_1\hat{\varphi}\partial_z v^1
+ \partial_2\varphi^{\e}\partial_z \hat{v}^2 +\partial_2\hat{\varphi}\partial_z v^2.
\end{array}
\end{equation}

Now we develop the estimates for tangential derivatives.
\begin{lemma}\label{Sect5_Tangential_Estimates_Lemma}
Assume $0\leq k\leq m-2$,
$\partial_t^{\ell}\mathcal{Z}^{\alpha}\hat{v}$ and $\partial_t^{\ell}\mathcal{Z}^{\alpha}\hat{h}$ have the estimates:
\begin{equation}\label{Sect5_Tangential_Estimates_Lemma_Eq}
\begin{array}{ll}
\|\hat{v}\|_{X^{k-1,1}}^2 + |\hat{h}|_{X^{k-1,1}}^2 + \e|\hat{h}|_{X^{k-1,\frac{3}{2}}}^2
+ \e\int\limits_0^t\|\nabla\hat{v}\|_{X^{k-1,1}}^2 \,\mathrm{d}t \\[5pt]

\lem \|\hat{v}_0\|_{X^{k-1,1}}^2 + |\hat{h}_0|_{X^{k-1,1}}^2 + \e|\hat{h}_0|_{X^{k-1,\frac{3}{2}}}^2
+\|\partial_z \hat{v}\|_{L^4([0,T],X^{k-1})}^2 \\[6pt]\quad
+ |\partial_t^k\hat{h}|_{L^4([0,T],L^2)}^2
+ \|\nabla\hat{q}\|_{L^4([0,T],X^{k-1})}^2 + O(\e).
\end{array}
\end{equation}
\end{lemma}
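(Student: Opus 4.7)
The plan is to mimic the scheme of Lemma \ref{Sect2_Tangential_Estimate_Lemma}, but applied to the difference system $(\ref{Sect1_T_Derivatives_Difference_Eq})$ and using the good unknown $(\ref{Sect1_Good_Unknown_2})$. First, apply $\partial_t^{\ell}\mathcal{Z}^{\alpha}$ to $(\ref{Sect1_T_Derivatives_Difference_Eq})$, commute with $\partial_i^{\varphi^{\e}}$ using the standard identity $[\partial_t^{\ell}\mathcal{Z}^{\alpha},\partial_i^{\varphi^{\e}}]f = -\partial_z^{\varphi^{\e}} f\,\partial_i^{\varphi^{\e}}(\partial_t^{\ell}\mathcal{Z}^{\alpha}\hat{\eta}) + \text{b.t.}$ applied to $\hat{v}$ and $\hat{q}$, and rewrite the system in terms of $\hat{V}^{\ell,\alpha}$ and $\hat{Q}^{\ell,\alpha}$. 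The outcome is a transport-viscous system of the same structural type as $(\ref{Sect2_Tangential_Estimate_5})$, but with three new source families: (i) the viscous Euler remainder $\e\triangle^{\varphi^{\e}} v$, which under time integration yields an $O(\e)$ contribution by the uniform bounds of Proposition \ref{Sect1_Proposition_TimeRegularity}; (ii) the transport remainder $-\hat{v}\cdot\nabla^{\varphi} v$, which is absorbed by a Gronwall-type argument; and (iii) the boundary-data remainder on the free surface coming from $2\e\mathcal{S}^{\varphi^{\e}}v\,\NN^{\e}$ in $(\ref{Sect1_T_Derivatives_Difference_Eq})_4$, which after commuting is controlled by $\sqrt{\e}\|\partial_j v\|_{X^{k,1}}$ and thus yields $O(\e)$ after squaring.

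For $|\alpha|\geq 1$ with $\ell+|\alpha|\leq k$ I will carry out the weighted $L^2$ estimate: multiply the $\hat{V}^{\ell,\alpha}$-equation by $\hat{V}^{\ell,\alpha}$ and integrate against $\mathrm{d}\mathcal{V}_t$. The pressure-divergence term cancels up to lower order using $\nabla^{\varphi^{\e}}\cdot\hat{V}^{\ell,\alpha}=\text{b.t.}$, and the viscous term produces $\e\|\mathcal{S}^{\varphi^{\e}}\hat{V}^{\ell,\alpha}\|_{L^2}^2$ plus a boundary integral. On the boundary, the dynamical condition $(\ref{Sect1_T_Derivatives_Difference_Eq})_4$ replaced with the good-unknown version supplies, exactly as in $(\ref{Sect2_Tangential_Estimate_6})$, the quadratic form $-\tfrac{1}{2}\frac{\mathrm{d}}{\mathrm{d}t}\int_{z=0}(g-\partial_z^{\varphi^{\e}}q^{\e})|\partial_t^{\ell}\mathcal{Z}^{\alpha}\hat{h}|^2\,\mathrm{d}y$, whose coercivity comes from the Taylor sign condition $g-\partial_z^{\varphi^{\e}}q^{\e}|_{z=0}\geq c_0>0$. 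The extra boundary contribution from $2\e\mathcal{S}^{\varphi^{\e}}v\,\NN^{\e}$ is handled by Cauchy--Schwarz, trace inequalities and $|\e\mathcal{S}^{\varphi^{\e}}v|_{X^{k-1,1/2}}=O(\e)$; Korn's inequality $(\ref{Sect1_Good_Useful_Inequalities})_1$ then converts the remaining $\|\mathcal{S}^{\varphi^{\e}}\hat{V}^{\ell,\alpha}\|_{L^2}^2$ into the viscous dissipation $\e\|\nabla\hat{v}\|_{X^{k-1,1}}^2$ up to $\e\|\hat{v}\|_{L^2}^2$.

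For $|\alpha|=0$ (pure time derivatives, $0\leq \ell\leq k-1$), I cannot use the dynamical boundary condition or the good unknown $\hat{Q}^{\ell,0}$ because $\|\partial_t^{\ell}\hat{q}\|_{L^2}$ is not controlled on the infinite-depth domain. Instead I adopt the device used in $(\ref{Sect2_Tangential_Estimate_10})$--$(\ref{Sect2_Tangential_Estimate_12})$: keep $\partial_t^{\ell}\nabla^{\varphi^{\e}}\hat{q}$ as a forcing, pair with $\hat{V}^{\ell,0}$ and bound by $\|\partial_t^{\ell}\nabla\hat{q}\|_{L^2}^2$, which by Lemma \ref{Sect5_Pressure_Lemma} is dominated by $\|\hat{v}\|_{X^{k-1,1}}^2+\|\partial_z\hat{v}\|_{X^{k-1}}^2+|\hat{h}|_{X^{k-1,1/2}}^2+O(\e)$. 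The trace of the viscous boundary term is then eliminated by invoking the analog of $(\ref{Sect2_Vorticity_H_BC_10})$ for $\hat{v}$ at $z=0$, which expresses $\partial_z\hat{v}|_{z=0}$ in terms of tangential derivatives up to an $O(\e)$ defect produced by the non-vanishing $\Pi\mathcal{S}^{\varphi^{\e}}\hat{v}\,\nn^{\e}|_{z=0}=-\Pi\mathcal{S}^{\varphi^{\e}}v\,\nn^{\e}|_{z=0}+\Pi\mathcal{S}^{\varphi}v\,\nn|_{z=0}=O(\e)$. Coupling this with $\sqrt{\e}\|\partial_t^{\ell}\mathcal{Z}^{\alpha}\partial_j \hat{v}|_{z=0}\|_{L^2}\lesssim \sqrt{\e}\|\nabla\hat{v}\|_{X^{k-1,1}}$ and the Hardy-type control of $\sqrt{\e}|\partial_t^{\ell}\hat{h}|_{1/2}$ from Lemma \ref{Sect5_Height_Viscous_Estimates_Lemma} closes the dissipation bookkeeping.

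Finally, summing over $\ell,\alpha$, inserting Lemma \ref{Sect5_Height_Estimates_Lemma} to bound $|\partial_t^{\ell}\hat{h}|_{L^2}^2$ for $|\alpha|=0$, Lemma \ref{Sect5_Height_Viscous_Estimates_Lemma} to bound $\e|\hat{h}|_{X^{k-1,3/2}}^2$, and Lemma \ref{Sect5_Pressure_Lemma} to convert $\|\nabla\hat{q}\|$ into the right-hand side of $(\ref{Sect5_Tangential_Estimates_Lemma_Eq})$, and integrating in time, Gronwall's inequality yields the claim. The main obstacle I expect is the control of the extra boundary term $2\e\mathcal{S}^{\varphi^{\e}}v\,\NN^{\e}$ at high conormal regularity: it requires moving one $\partial_t^{\ell}\mathcal{Z}^{\alpha}$ onto $v$, producing $\sqrt{\e}|\cdot|_{1/2}$ traces that are exactly at the limit of what the uniform regularity of $v$ in Proposition \ref{Sect1_Proposition_TimeRegularity} allows, which is why the estimate can only be closed for $\ell+|\alpha|\leq k\leq m-2$ rather than up to $m-1$.
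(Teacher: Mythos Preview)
Your proposal follows the same architecture as the paper's proof: good unknowns $(\hat V^{\ell,\alpha},\hat Q^{\ell,\alpha})$, the split $|\alpha|\ge 1$ versus $|\alpha|=0$, Taylor--sign coercivity on the boundary for the former, pressure kept as a forcing for the latter, and closure via Lemmas \ref{Sect5_Height_Estimates_Lemma}--\ref{Sect5_Height_Viscous_Estimates_Lemma}. Two details need correcting, though neither breaks the argument.

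First, the boundary quadratic form that appears is $(g-\partial_z^{\varphi}q)|\partial_t^{\ell}\mathcal Z^{\alpha}\hat h|^2$, with the \emph{Euler} pressure $q$, not $(g-\partial_z^{\varphi^{\e}}q^{\e})$. This is forced by the definition $\hat Q^{\ell,\alpha}=\partial_t^{\ell}\mathcal Z^{\alpha}\hat q-\partial_z^{\varphi}q\,\partial_t^{\ell}\mathcal Z^{\alpha}\hat\eta$: when you insert the dynamical boundary condition into $\hat Q^{\ell,\alpha}\NN^{\e}$, the coefficient that emerges is $g-\partial_z^{\varphi}q$. Coercivity therefore uses the Euler Taylor sign hypothesis, which is part of the standing assumptions.

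Second, your claim $\Pi\mathcal S^{\varphi^{\e}}\hat v\,\nn^{\e}|_{z=0}=O(\e)$ is false in general. Since $\Pi\mathcal S^{\varphi^{\e}}v^{\e}\nn^{\e}|_{z=0}=0$, one has $\Pi\mathcal S^{\varphi^{\e}}\hat v\,\nn^{\e}|_{z=0}=-\Pi\mathcal S^{\varphi^{\e}}v\,\nn^{\e}|_{z=0}$, which is $O(1)$ whenever the Euler boundary data satisfies $\Pi\mathcal S^{\varphi}v\,\nn|_{z=0}\neq 0$ (precisely the regime treated here). The paper does not try to express $\partial_z\hat v|_{z=0}$ via tangentials; it simply bounds the viscous boundary term for $|\alpha|=0$ by
\[
2\e\int_{\{z=0\}}\mathcal S^{\varphi^{\e}}\partial_t^{\ell}\hat v\,\NN^{\e}\cdot\hat V^{\ell,0}\,\mathrm dy
\ \lesssim\ \big|\partial_t^{\ell}\hat v|_{z=0}\big|_{L^2}^2+|\partial_t^{\ell}\hat h|_{L^2}^2+O(\e),
\]
using that $|\mathcal S^{\varphi^{\e}}\partial_t^{\ell}\hat v|_{z=0}|_{L^2}=O(1)$ by the uniform regularity of $v^{\e}$ and $v$ for $\ell\le k-1\le m-3$, together with the $\e$ prefactor. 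This is cleaner than your detour through an analogue of $(\ref{Sect2_Vorticity_H_BC_10})$ for $\hat v$, and avoids the incorrect $O(\e)$ assertion. With these two fixes your sketch coincides with the paper's proof.
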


\begin{proof}
$(\hat{V}^{\ell,\alpha}, \hat{Q}^{\ell,\alpha})$ satisfy the following equations:
\begin{equation}\label{Sect5_TangentialEstimates_Diff_Eq}
\left\{\begin{array}{ll}
\partial_t^{\varphi^{\e}} \hat{V}^{\ell,\alpha}
+ v^{\e} \cdot\nabla^{\varphi^{\e}} \hat{V}^{\ell,\alpha}
+ \nabla^{\varphi^{\e}} \hat{Q}^{\ell,\alpha}
- 2\e\nabla^{\varphi^{\e}}\cdot\mathcal{S}^{\varphi^{\e}} \partial_t^{\ell}\mathcal{Z}^{\alpha}\hat{v} \\[8pt]\quad

= 2\e[\partial_t^{\ell}\mathcal{Z}^{\alpha},\nabla^{\varphi^{\e}}\cdot]\mathcal{S}^{\varphi^{\e}} \hat{v}
+ 2\e\nabla^{\varphi^{\e}}\cdot[\partial_t^{\ell}\mathcal{Z}^{\alpha},\mathcal{S}^{\varphi^{\e}}] \hat{v}
+ \e\partial_t^{\ell}\mathcal{Z}^{\alpha}\triangle^{\varphi^\e} v
\\[8pt]\quad

- \partial_t^{\ell}\mathcal{Z}^{\alpha}\hat{\varphi}\partial_t^{\varphi^{\e}}\partial_z^{\varphi} v
-  \partial_t^{\ell}\mathcal{Z}^{\alpha}\hat{\varphi}\, v^{\e}\cdot \nabla^{\varphi^{\e}}\partial_z^{\varphi} v
- \partial_t^{\ell}\mathcal{Z}^{\alpha}\hat{v}\cdot\nabla^{\varphi} v
- \partial_t^{\ell}\mathcal{Z}^{\alpha}\hat{\varphi} \nabla^{\varphi^{\e}}\partial_z^{\varphi} q
\\[8pt]\quad

- [\partial_t^{\ell}\mathcal{Z}^{\alpha},\partial_t^{\varphi^{\e}}]\hat{v}
+ [\partial_t^{\ell}\mathcal{Z}^{\alpha}, \partial_z^{\varphi} v \partial_t^{\varphi^{\e}}]\hat{\varphi}

- [\partial_t^{\ell}\mathcal{Z}^{\alpha}, v^{\e} \cdot\nabla^{\varphi^{\e}}] \hat{v}
- [\partial_t^{\ell}\mathcal{Z}^{\alpha}, \nabla^{\varphi} v\cdot]\hat{v}\\[8pt]\quad

+ [\partial_t^{\ell}\mathcal{Z}^{\alpha}, \partial_z^{\varphi} v \, v^{\e}\cdot \nabla^{\varphi^{\e}}]\hat{\varphi}

- [\partial_t^{\ell}\mathcal{Z}^{\alpha},\nabla^{\varphi^{\e}}] \hat{q}
+ [\partial_t^{\ell}\mathcal{Z}^{\alpha},\partial_z^{\varphi} q\nabla^{\varphi^{\e}}]\hat{\varphi} 

:= \mathcal{I}_4 , \\[12pt]

\nabla^{\varphi^{\e}}\cdot \hat{V}^{\ell,\alpha}
= -[\partial_t^{\ell}\mathcal{Z}^{\alpha},\nabla^{\varphi^{\e}}\cdot] \hat{v}
+ [\partial_t^{\ell}\mathcal{Z}^{\alpha},\partial_z^{\varphi}v \cdot\nabla^{\varphi^{\e}}]\hat{\eta}
- \partial_t^{\ell}\mathcal{Z}^{\alpha}\hat{\eta} \nabla^{\varphi^{\e}}\cdot \partial_z^{\varphi}v,
\\[12pt]

\partial_t \partial_t^{\ell}\mathcal{Z}^{\alpha}\hat{h} + v_y^{\e}\cdot \nabla_y \partial_t^{\ell}\mathcal{Z}^{\alpha}\hat{h}
- \NN^{\e}\cdot \hat{V}^{\ell,\alpha}
= \NN^{\e}\cdot \partial_z^{\varphi} v \partial_t^{\ell}\mathcal{Z}^{\alpha}\hat{\eta} \\[7pt]\quad
 - \hat{v}_y \cdot \nabla_y \partial_t^{\ell}\mathcal{Z}^{\alpha} h
- \partial_y \hat{h}\cdot \partial_t^{\ell}\mathcal{Z}^{\alpha}v_y
 + [\partial_t^{\ell}\mathcal{Z}^{\alpha}, \hat{v},\NN^{\e}]
 - [\partial_t^{\ell}\mathcal{Z}^{\alpha}, v_y, \partial_y \hat{h}], \\[12pt]

\hat{Q}^{\ell,\alpha}\NN^{\e}

-2\e \mathcal{S}^{\varphi^{\e}} \partial_t^{\ell}\mathcal{Z}^{\alpha}\hat{v}\,\NN^{\e}
- (g-\partial_z^{\varphi}q)\partial_t^{\ell}\mathcal{Z}^{\alpha}\hat{h} \,\NN^{\e}
\\[8pt]\quad
= 2\e [\partial_t^{\ell}\mathcal{Z}^{\alpha},\mathcal{S}^{\varphi^{\e}}] v^{\e}\,\NN^{\e}

+ (2\e \mathcal{S}^{\varphi^{\e}}v^{\e} - 2\e \mathcal{S}^{\varphi^{\e}}v^{\e}\nn^{\e}\cdot\nn^{\e})
\,\partial_t^{\ell}\mathcal{Z}^{\alpha}\NN^{\e} \\[8pt]\quad
- [\partial_t^{\ell}\mathcal{Z}^{\alpha},2\e \mathcal{S}^{\varphi^{\e}}v^{\e}\nn^{\e}\cdot\nn^{\e},\NN^{\e}]
+2\e [\partial_t^{\ell}\mathcal{Z}^{\alpha},\mathcal{S}^{\varphi^{\e}}v^{\e}, \NN^{\e}]
+ 2\e \mathcal{S}^{\varphi^{\e}} \partial_t^{\ell}\mathcal{Z}^{\alpha}v\,\NN^{\e}, \\[12pt]

(\partial_t^{\ell}\mathcal{Z}^{\alpha}\hat{v},\partial_t^{\ell}\mathcal{Z}^{\alpha}\hat{h})|_{t=0}
= (\partial_t^{\ell}\mathcal{Z}^{\alpha}v_0^\e -\partial_t^{\ell}\mathcal{Z}^{\alpha}v_0,
\partial_t^{\ell}\mathcal{Z}^{\alpha}h_0^\e -\partial_t^{\ell}\mathcal{Z}^{\alpha}h_0),
\end{array}\right.
\end{equation}

When $|\alpha|\geq 1, \ell\leq k-1, 1\leq \ell+|\alpha|\leq k$, we develop the $L^2$ estimate of $\hat{V}^{\ell,\alpha}$, we have
\begin{equation}\label{Sect5_Tangential_Estimates_1}
\begin{array}{ll}
\frac{1}{2}\frac{\mathrm{d}}{\mathrm{d}t} \int\limits_{\mathbb{R}^3_{-}} |\hat{V}^{\ell,\alpha}|^2 \,\mathrm{d}\mathcal{V}_t
- \int\limits_{\mathbb{R}^3_{-}} \hat{Q}^{\ell,\alpha} \nabla^{\varphi^{\e}}\cdot \hat{V}^{\ell,\alpha} \,\mathrm{d}\mathcal{V}_t
+ 2\e \int\limits_{\mathbb{R}^3_{-}} |\mathcal{S}^{\varphi^{\e}}\partial_t^{\ell}\mathcal{Z}^{\alpha} \hat{v}|^2
\,\mathrm{d}\mathcal{V}_t
\\[10pt]

= -\int\limits_{\{z=0\}} \big(\hat{Q}^{\ell,\alpha} \NN^{\e} - 2\e \mathcal{S}^{\varphi^{\e}}\partial_t^{\ell}\mathcal{Z}^{\alpha} \hat{v} \NN^{\e} \big)
\cdot \hat{V}^{\ell,\alpha} \,\mathrm{d}y

+ \int\limits_{\mathbb{R}^3_{-}} \mathcal{I}_4 \cdot V^{\ell,\alpha}
\,\mathrm{d}\mathcal{V}_t \\[10pt]\quad

+ 2\e \int\limits_{\mathbb{R}^3_{-}} \mathcal{S}^{\varphi^{\e}}\partial_t^{\ell}\mathcal{Z}^{\alpha} \hat{v}
\cdot \mathcal{S}^{\varphi^{\e}} (\partial_z^{\varphi}v \partial_t^{\ell}\mathcal{Z}^{\alpha}\hat{\eta})\,\mathrm{d}\mathcal{V}_t \\[10pt]

\lem -\int\limits_{\{z=0\}} \big(\hat{Q}^{\ell,\alpha} \NN^{\e} - 2\e \mathcal{S}^{\varphi^{\e}}\partial_t^{\ell}\mathcal{Z}^{\alpha} \hat{v} \NN^{\e} \big)
\cdot \hat{V}^{\ell,\alpha} \,\mathrm{d}y + \|\hat{V}^{\ell,\alpha}\|_{L^2}^2

+ \|\partial_z \hat{v}\|_{X^{k-1}}^2 \\[10pt]\quad
 + \|\hat{v}\|_{X^{k-1,1}}^2
+ \|\hat{\eta}\|_{X^{k-1,1}}^2 + \e|\hat{h}|_{X^{k-1,\frac{3}{2}}}^2 + \|\partial_t^k\hat{\eta}\|_{L^2}^2
+ \|\nabla\hat{q}\|_{X^{k-1}}^2 + O(\e).
\end{array}
\end{equation}

We develop the boundary estimates in $(\ref{Sect5_Tangential_Estimates_1})$,
\begin{equation}\label{Sect5_Tangential_Estimates_2}
\begin{array}{ll}
-\int\limits_{\{z=0\}} \big(\hat{Q}^{\ell,\alpha} \NN^{\e} - 2\e \mathcal{S}^{\varphi^{\e}}\partial_t^{\ell}\mathcal{Z}^{\alpha} \hat{v} \NN^{\e} \big)
\cdot \hat{V}^{\ell,\alpha} \,\mathrm{d}y \\[10pt]

= \int\limits_{\{z=0\}}
-(g -\partial_z^{\varphi}q) \partial_t^{\ell}\mathcal{Z}^{\alpha}\hat{h}\NN^{\e}\cdot \hat{V}^{\ell,\alpha}
- \big(2\e [\partial_t^{\ell}\mathcal{Z}^{\alpha},\mathcal{S}^{\varphi^{\e}}] v^{\e}\,\NN^{\e} \\[9pt]\qquad

+ (2\e \mathcal{S}^{\varphi^{\e}}v^{\e} - 2\e \mathcal{S}^{\varphi^{\e}}v^{\e}\nn^{\e}\cdot\nn^{\e})\,\partial_t^{\ell}\mathcal{Z}^{\alpha}\NN^{\e}
- [\partial_t^{\ell}\mathcal{Z}^{\alpha},2\e \mathcal{S}^{\varphi^{\e}}v^{\e}\nn^{\e}\cdot\nn^{\e},\NN^{\e}] \\[7pt]\qquad
+ 2\e [\partial_t^{\ell}\mathcal{Z}^{\alpha},\mathcal{S}^{\varphi^{\e}}v^{\e}, \NN^{\e}]
+ 2\e \mathcal{S}^{\varphi^{\e}} \partial_t^{\ell}\mathcal{Z}^{\alpha}v\,\NN^{\e}\big)\cdot \hat{V}^{\ell,\alpha}
\,\mathrm{d}y \\[8pt]

\lem \int\limits_{\{z=0\}}
-(g -\partial_z^{\varphi}q) \partial_t^{\ell}\mathcal{Z}^{\alpha}\hat{h}\NN^{\e}\cdot \hat{V}^{\ell,\alpha} \,\mathrm{d}y + O(\e),
\end{array}
\end{equation}
note that normal derivatives $\partial_z v^{\e}$ on the boundary can be expressed in terms of tangential derivatives of $v^{\e}$, thus we get $O(\e)$ rather than $O(\sqrt{\e})$. Namely,
\begin{equation}\label{Sect5_Tangential_Estimates_3}
\begin{array}{ll}
-\int\limits_{\{z=0\}} \big(\hat{Q}^{\ell,\alpha} \NN^{\e} - 2\e \partial_t^{\ell}\mathcal{Z}^{\alpha}\mathcal{S}^{\varphi^{\e}} \hat{v} \NN^{\e} \big)
\cdot \hat{V}^{\ell,\alpha} \,\mathrm{d}y \\[13pt]

\lem \int\limits_{\{z=0\}}
-(g -\partial_z^{\varphi}q) \partial_t^{\ell}\mathcal{Z}^{\alpha}\hat{h}
\big(\partial_t \partial_t^{\ell}\mathcal{Z}^{\alpha}\hat{h} + v_y^{\e}\cdot \nabla_y \partial_t^{\ell}\mathcal{Z}^{\alpha}\hat{h}
- \NN^{\e}\cdot \partial_z^{\varphi} v \partial_t^{\ell}\mathcal{Z}^{\alpha}\hat{\eta} \\[11pt]\quad
+ \hat{v}_y \cdot \nabla_y \partial_t^{\ell}\mathcal{Z}^{\alpha} h
+ \partial_y \hat{h}\cdot \partial_t^{\ell}\mathcal{Z}^{\alpha}v_y
- [\partial_t^{\ell}\mathcal{Z}^{\alpha}, \hat{v},\NN^{\e}]
+ [\partial_t^{\ell}\mathcal{Z}^{\alpha}, v_y, \partial_y \hat{h}]\big)\,\mathrm{d}y
+ O(\e) \\[11pt]

\lem - \frac{1}{2}\frac{\mathrm{d}}{\mathrm{d}t}\int\limits_{\{z=0\}}
(g -\partial_z^{\varphi}q) |\partial_t^{\ell}\mathcal{Z}^{\alpha}\hat{h}|^2
\,\mathrm{d}y

+ \frac{1}{2}\int\limits_{\{z=0\}} \nabla\cdot (g v_y -v_y \partial_z^{\varphi}q) |\partial_t^{\ell}\mathcal{Z}^{\alpha}\hat{h}|^2
\,\mathrm{d}y \\[11pt]\quad

- \frac{1}{2}\int\limits_{\{z=0\}}
\partial_t\partial_z^{\varphi}v |\partial_t^{\ell}\mathcal{Z}^{\alpha}\hat{h}|^2
\,\mathrm{d}y

- \int\limits_{\{z=0\}}
(g -\partial_z^{\varphi}q) \partial_t^{\ell}\mathcal{Z}^{\alpha}\hat{h}
\big(- \NN^{\e}\cdot \partial_z^{\varphi} v \partial_t^{\ell}\mathcal{Z}^{\alpha}\hat{\eta} \\[11pt]\quad
+ \hat{v}_y \cdot \nabla_y \partial_t^{\ell}\mathcal{Z}^{\alpha} h
+ \partial_y \hat{h}\cdot \partial_t^{\ell}\mathcal{Z}^{\alpha}v_y
- [\partial_t^{\ell}\mathcal{Z}^{\alpha}, \hat{v},\NN^{\e}]
+ [\partial_t^{\ell}\mathcal{Z}^{\alpha}, v_y, \partial_y \hat{h}]\big)\,\mathrm{d}y + O(\e)
 \\[11pt]

\lem - \frac{1}{2}\frac{\mathrm{d}}{\mathrm{d}t}\int\limits_{\{z=0\}}
(g -\partial_z^{\varphi}q) |\partial_t^{\ell}\mathcal{Z}^{\alpha}\hat{h}|^2
\,\mathrm{d}y

+ \|\hat{v}\|_{X^{k-1}} + \|\partial_z\hat{v}\|_{X^{k-1}} + |\hat{h}|_{X^{k-1,1}}
+ O(\e).
\end{array}
\end{equation}

By $(\ref{Sect5_Tangential_Estimates_1})$ and $(\ref{Sect5_Tangential_Estimates_3})$, we have
\begin{equation}\label{Sect5_Tangential_Estimates_4}
\begin{array}{ll}
\frac{\mathrm{d}}{\mathrm{d}t} \int\limits_{\mathbb{R}^3_{-}} |\hat{V}^{\ell,\alpha}|^2 \,\mathrm{d}\mathcal{V}_t
+ \frac{\mathrm{d}}{\mathrm{d}t}\int\limits_{\{z=0\}}
(g -\partial_z^{\varphi}q) |\partial_t^{\ell}\mathcal{Z}^{\alpha}\hat{h}|^2
\,\mathrm{d}y

+ \e \int\limits_{\mathbb{R}^3_{-}} |\partial_t^{\ell}\mathcal{Z}^{\alpha}\mathcal{S}^{\varphi^{\e}} \hat{v}|^2 \,\mathrm{d}\mathcal{V}_t
\\[13pt]

\lem \|\hat{V}^{\ell,\alpha}\|_{L^2}^2

+ \|\partial_z \hat{v}\|_{X^{k-1}}^2  + \|\hat{v}\|_{X^{k-1,1}}^2
 + |\hat{h}|_{X^{k-1,1}}^2 + |\partial_t^k\hat{h}|_{L^2}^2
 + \e|\hat{h}|_{X^{k-1,\frac{3}{2}}}^2 \\[5pt]\quad
+ \|\nabla\hat{q}\|_{X^{k-1}}^2 
+ O(\e).
\end{array}
\end{equation}

Since $g -\partial_z^{\varphi}q \geq c_0>0$, integrate $(\ref{Sect5_Tangential_Estimates_4})$ in time, apply the integral form of Gronwall's inequality, we get
\begin{equation}\label{Sect5_Tangential_Estimates_5}
\begin{array}{ll}
\|\partial_t^{\ell}\mathcal{Z}^{\alpha}\hat{v}\|_{L^2}^2
+ |\partial_t^{\ell}\mathcal{Z}^{\alpha}\hat{h}|^2 
+\e|\hat{h}|_{X^{k-1,\frac{3}{2}}}^2
+ \e \int\limits_0^t\|\nabla\partial_t^{\ell}\mathcal{Z}^{\alpha}\hat{v}\|_{L^2}^2 \,\mathrm{d}t
\\[8pt]
\lem \|\hat{v}_0\|_{X^{k-1,1}}^2 + |\hat{h}_0|_{X^{k-1,1}}^2
+ \int\limits_0^t \|\hat{v}\|_{X^{k-1,1}}^2 + |\hat{h}|_{X^{k-1,1}}^2 +\e|\hat{h}|_{X^{k-1,\frac{3}{2}}}^2 \,\mathrm{d}t
\\[8pt]\quad

+ \|\partial_z \hat{v}\|_{L^4([0,T],X^{k-1})}^2 + |\partial_t^k\hat{h}|_{L^4([0,T],L^2)}^2
+ \|\nabla\hat{q}\|_{L^4([0,T],X^{k-1})}^2 + O(\e).
\end{array}
\end{equation}

When $|\alpha|=0, \, 0\leq\ell\leq k-1$, we have no bounds of $\hat{q}$ and $\partial_t^{\ell} \hat{q}$,
so we neither use the variable $\hat{Q}^{\ell,\alpha}$ and nor apply the integration by parts to the pressure terms.
Also, the dynamical boundary condition will not be used. Since the main equation of $\hat{V}^{\ell,0}$ and its kinetical boundary condition
satisfy
\begin{equation}\label{Sect5_Tangential_Estimates_Time}
\left\{\begin{array}{ll}
\partial_t^{\varphi^{\e}} \hat{V}^{\ell,0}
+ v^{\e} \cdot\nabla^{\varphi^{\e}}\hat{V}^{\ell,0}
- 2\e\nabla^{\varphi^{\e}}\cdot\mathcal{S}^{\varphi^{\e}} \partial_t^{\ell}\hat{v} \\[8pt]\quad

= \e\partial_t^{\ell}\triangle^{\varphi^\e} v^\e
+ 2\e [\partial_t^{\ell},\nabla^{\varphi^{\e}}\cdot]\mathcal{S}^{\varphi^{\e}} \hat{v}
+ 2\e \nabla^{\varphi^{\e}}\cdot[\partial_t^{\ell},\mathcal{S}^{\varphi^{\e}}] \hat{v} 

- \partial_t^{\ell}\nabla^{\varphi^{\e}}\hat{q} \\[8pt]\quad
+ \partial_z^{\varphi} q\nabla^{\varphi^{\e}} \partial_t^{\ell}\hat{\varphi}

- \partial_t^{\ell}\hat{\varphi}\partial_t^{\varphi^{\e}}\partial_z^{\varphi} v
- \partial_t^{\ell}\hat{v}\cdot\nabla^{\varphi} v
- \partial_t^{\ell}\hat{\varphi}\, v^{\e}\cdot \nabla^{\varphi^{\e}}\partial_z^{\varphi} v \\[8pt]\quad

- [\partial_t^{\ell},\partial_t^{\varphi^{\e}}]\hat{v}
+ [\partial_t^{\ell}, \partial_z^{\varphi} v \partial_t^{\varphi^{\e}}]\hat{\varphi} 

- [\partial_t^{\ell}, v^{\e} \cdot\nabla^{\varphi^{\e}}] \hat{v}
+ [\partial_t^{\ell}, \partial_z^{\varphi} v \, v^{\e}\cdot \nabla^{\varphi^{\e}}]\hat{\varphi} \\[8pt]\quad
- [\partial_t^{\ell}, \nabla^{\varphi} v\cdot]\hat{v}

+ [\partial_t^{\ell},\partial_z^{\varphi} q\nabla^{\varphi^{\e}}]\hat{\varphi}
:=\mathcal{I}_5,  \\[13pt]

\partial_t \partial_t^{\ell}\hat{h} + v_y^{\e}\cdot \nabla_y \partial_t^{\ell}\hat{h}
- \NN^{\e}\cdot \hat{V}^{\ell,0}
= \NN^{\e}\cdot \partial_z^{\varphi} v \partial_t^{\ell}\hat{\eta} \\[7pt]\quad
 - \hat{v}_y \cdot \nabla_y \partial_t^{\ell} h
- \partial_y \hat{h}\cdot \partial_t^{\ell}v_y
 + [\partial_t^{\ell}, \hat{v},\NN^{\e}]
 - [\partial_t^{\ell}, v_y, \partial_y \hat{h}], \\[13pt]

(\partial_t^{\ell}\hat{v},\partial_t^{\ell}\hat{h})|_{t=0}
= (\partial_t^{\ell}v_0^\e -\partial_t^{\ell}v_0,
\partial_t^{\ell}h_0^\e -\partial_t^{\ell}h_0),
\end{array}\right.
\end{equation}
then we have $L^2$ estimate of $\hat{V}^{\ell,0}$:
\begin{equation}\label{Sect5_Tangential_Estimates_6}
\begin{array}{ll}
\frac{1}{2}\frac{\mathrm{d}}{\mathrm{d}t} \int\limits_{\mathbb{R}^3_{-}} |\hat{V}^{\ell,0}|^2 \,\mathrm{d}\mathcal{V}_t
+ 2\e \int\limits_{\mathbb{R}^3_{-}} |\mathcal{S}^{\varphi^{\e}}\partial_t^{\ell} \hat{v}|^2
\,\mathrm{d}\mathcal{V}_t 

= 2\e \int\limits_{\{z=0\}} \mathcal{S}^{\varphi^{\e}}\partial_t^{\ell} \hat{v} \NN^{\e} \cdot \hat{V}^{\ell,0} \,\mathrm{d}y \\[12pt]\quad
 + 2\e \int\limits_{\mathbb{R}^3_{-}} \mathcal{S}^{\varphi^{\e}}\partial_t^{\ell} \hat{v}
 \cdot \mathcal{S}^{\varphi^{\e}} (\partial_z^{\varphi}v\partial_t^{\ell} \hat{\eta}) \,\mathrm{d}\mathcal{V}_t
+ \int\limits_{\mathbb{R}^3_{-}}\mathcal{I}_5 \cdot \hat{V}^{\ell,0} \,\mathrm{d}\mathcal{V}_t.
\end{array}
\end{equation}

\vspace{-0.3cm}
Now we estimate the right hand side of $(\ref{Sect5_Tangential_Estimates_6})$:
\begin{equation}\label{Sect5_Tangential_Estimates_7}
\begin{array}{ll}
2\e\int\limits_{\{z=0\}} \mathcal{S}^{\varphi^\e} \partial_t^{\ell} v^\e \NN^{\e} \cdot \hat{V}^{\ell,0} \,\mathrm{d}y
= 2\e\int\limits_{\{z=0\}} \mathcal{S}^{\varphi^\e} \partial_t^{\ell} v^\e \NN^{\e} \cdot
(\partial_t^{\ell}\hat{v} - \partial_z^{\varphi} v \partial_t^{\ell}\hat{\eta}) \,\mathrm{d}y \\[12pt]

\lem \big|\partial_t^{\ell}\hat{v}|_{z=0}\big|_{L^2}^2 + |\partial_t^{\ell}\hat{h}|_{L^2}^2 + O(\e)
\lem \|\partial_t^{\ell}\hat{v}\|_{L^2}^2 + \|\partial_t^{\ell}\partial_z \hat{v}\|_{L^2}^2 + |\partial_t^{\ell}\hat{h}|_{L^2}^2 + O(\e).
\end{array}
\end{equation}

It is easy to check that
\begin{equation}\label{Sect5_Tangential_Estimates_8}
\begin{array}{ll}
\int\limits_{\mathbb{R}^3_{-}}\mathcal{I}_5 \cdot \hat{V}^{\ell,0} \,\mathrm{d}\mathcal{V}_t
\lem \|\hat{V}^{\ell,0}\|_{L^2}^2 + \|\partial_t^{\ell-1}\partial_z \hat{v}\|_{L^2}^2 + \|\partial_t^{\ell-1}\partial_y \hat{v}\|_{L^2}^2
+ \|\partial_t^{\ell-1} \hat{v}\|_{L^2}^2 \\[8pt]\quad

+ \|\partial_t^{\ell-1}\nabla\hat{\eta}\|_{L^2}^2 + \|\partial_t^{\ell}\hat{\eta}\|_{L^2}^2 + \|\partial_t^{\ell}\nabla\hat{\eta}\|_{L^2}^2
+ \|\partial_t^{\ell}\nabla\hat{q}\|_{L^2}^2 +O(\e).
\end{array}
\end{equation}

By $(\ref{Sect5_Tangential_Estimates_6})$, $(\ref{Sect5_Tangential_Estimates_7})$ and $(\ref{Sect5_Tangential_Estimates_8})$, we have
\begin{equation}\label{Sect5_Tangential_Estimates_9}
\begin{array}{ll}
\frac{\mathrm{d}}{\mathrm{d}t} \int\limits_{\mathbb{R}^3_{-}} |\hat{V}^{\ell,0}|^2 \,\mathrm{d}\mathcal{V}_t
+ \e \int\limits_{\mathbb{R}^3_{-}} |\mathcal{S}^{\varphi^{\e}}\partial_t^{\ell} \hat{v}|^2
\,\mathrm{d}\mathcal{V}_t 
\lem \|\hat{V}^{\ell,0}\|_{L^2}^2 + \|\partial_z \hat{v}\|_{X^{k-1}}^2 + \|\hat{v}\|_{X^{k-1,1}}^2 \\[8pt]\quad
+ |\hat{h}|_{X^{k-1,1}}^2 + \e|\hat{h}|_{X^{k-1,\frac{3}{2}}}^2 
+ \|\nabla\hat{q}\|_{X^{k-1}}^2 +O(\e).
\end{array}
\end{equation}

Integrate $(\ref{Sect5_Tangential_Estimates_9})$ in time, apply the integral form of Gronwall's inequality, we have
\begin{equation}\label{Sect5_Tangential_Estimates_10}
\begin{array}{ll}
\|\hat{V}^{\ell,0}\|_{L^2}^2 + \e\|\nabla\partial_t^{\ell}\hat{v}\|_{L^2}^2 \\[5pt]

\lem \|\hat{v}_0\|_{X^{k-1}}^2 +|\hat{h}_0|_{X^{k-1}}^2 + \int\limits_0^t \|\partial_z \hat{v}\|_{X^{k-1}}^2 + \|\hat{v}\|_{X^{k-1,1}}^2
+ |\hat{h}|_{X^{k-1,1}}^2  \\[7pt]\quad
+ \|\nabla\hat{q}\|_{X^{k-1}}^2 + \e|\hat{h}|_{X^{k-1,\frac{3}{2}}}^2\,\mathrm{d}t +O(\e) \\[5pt]

\lem \|\hat{v}_0\|_{X^{k-1}}^2 +|\hat{h}_0|_{X^{k-1}}^2 + \int\limits_0^t \|\hat{v}\|_{X^{k-1,1}}^2 + |\hat{h}|_{X^{k-1,1}}^2 
+ \e|\hat{h}|_{X^{k-1,\frac{3}{2}}}^2\,\mathrm{d}t
 \\[9pt]\quad

 + \|\partial_z \hat{v}\|_{L^4([0,T],X^{k-1})}^2 + \|\nabla\hat{q}\|_{L^4([0,T],X^{k-1})}^2 +O(\e).
\end{array}
\end{equation}

Combining $(\ref{Sect5_Tangential_Estimates_10})$ and Lemma $\ref{Sect5_Height_Estimates_Lemma}$, we have
\begin{equation}\label{Sect5_Tangential_Estimates_11}
\begin{array}{ll}
\|\partial_t^{\ell}\hat{v}\|_{L^2}^2 + |\partial_t^{\ell}\hat{h}|_{L^2}^2 + \e|\hat{h}|_{X^{k-1,\frac{3}{2}}}^2
+ \e \int\limits_0^t\|\nabla\partial_t^{\ell}\hat{v}\|_{L^2}^2 \,\mathrm{d}t
\\[5pt] 

\lem \|\hat{v}_0\|_{X^{k-1}}^2 +|\hat{h}_0|_{X^{k-1}}^2 + \int\limits_0^t \|\hat{v}\|_{X^{k-1,1}}^2 + |\hat{h}\|_{X^{k-1,1}}^2 
+ \e|\hat{h}|_{X^{k-1,\frac{3}{2}}}^2\,\mathrm{d}t
 \\[9pt]\quad

+ \|\partial_z \hat{v}\|_{L^4([0,T],X^{k-1})}^2 + \|\nabla\hat{q}\|_{L^4([0,T],X^{k-1})}^2 +O(\e).
\end{array}
\end{equation}

Apply the integral form of Gronwall's inequality to $(\ref{Sect5_Tangential_Estimates_5})$ and $(\ref{Sect5_Tangential_Estimates_11})$,
we get $(\ref{Sect5_Tangential_Estimates_Lemma_Eq})$.
Thus, Lemma $\ref{Sect5_Tangential_Estimates_Lemma}$ is proved.
\end{proof}

\subsection{Estimates for Normal Derivatives when $\Pi\mathcal{S}^{\varphi} v\nn|_{z=0} \neq 0$}

In this subsection, we develop the estimates for normal derivatives $\partial_z \hat{v}$.
In the following lemma, we estimate $\|\partial_z\hat{v}\|_{L^4([0,T],X^{k-1})}^2$ by studying the equations of $\hat{\omega}_h$.
\begin{lemma}\label{Sect5_Vorticity_Lemma}
Assume $k\leq m-2$, if $\Pi\mathcal{S}^{\varphi} v \nn|_{z=0} \neq 0$,
then the vorticity has the following estimate:
\begin{equation}\label{Sect5_Vorticity_Lemma_Eq}
\begin{array}{ll}
\|\partial_z\hat{v}_h\|_{L^4([0,T],X^{k-1})}^2 + \|\hat{\omega}_h\|_{L^4([0,T],X^{k-1})}^2
\\[6pt]
\lem \big\|\hat{\omega}_0\big\|_{X^{k-1}}^2
+ \int\limits_0^T\|\hat{v}\|_{X^{k-1,1}}^2 \,\mathrm{d}t
+ \int\limits_0^T|\hat{h}|_{X^{k-1,1}}^2 \,\mathrm{d}t
+ \|\partial_t^k\hat{h}\|_{L^4([0,T],L^2)}^2
+ O(\sqrt{\e}).
\end{array}
\end{equation}
\end{lemma}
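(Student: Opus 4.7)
The plan is to mimic the decomposition argument used for Lemma \ref{Sect2_Vorticity_Lemma}, but now applied to the difference equation $(\ref{Sect3_Vorticity_Eq})$ satisfied by $\hat{\omega}_h$. First I will split $\hat{\omega}_h = \hat{\omega}_h^{nhom} + \hat{\omega}_h^{hom}$, where $\hat{\omega}_h^{nhom}$ absorbs the forcing (including the transport–diffusion source, the $\e\triangle^{\varphi^\e}\omega_h$ term, and the lower‐order pieces $f^7\hat{\omega}_h+ f^8\partial_j\hat v^i+ f^9\nabla\hat\varphi$) together with the initial discrepancy $\hat\omega_{h,0}$, and satisfies a homogeneous Dirichlet condition at $z=0$; while $\hat{\omega}_h^{hom}$ solves the homogeneous convection–diffusion problem with zero initial data and boundary value $\hat\omega_h^b = \vec{\textsf{F}}^{1,2}[\nabla\varphi^\e](\partial_jv^{\e,i})-\omega_h^b$. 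The point of this split is that for the nonhomogeneous part we can afford a straightforward energy estimate, while for the homogeneous part we will exploit the $\e$‑scaling of the heat–type trace inequality.

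For $\hat{\omega}_h^{nhom}$, I would apply $\partial_t^\ell \mathcal{Z}^\alpha$ with $\ell+|\alpha|\leq k-1$ and test with $\partial_t^\ell\mathcal{Z}^\alpha\hat{\omega}_h^{nhom}$ in $L^2$ against the measure $\mathrm{d}\mathcal{V}_t^\e$, as was done around $(\ref{Sect2_Vorticity_Estimate_7})$–$(\ref{Sect2_Vorticity_Estimate_8})$. The Dirichlet trace kills the boundary integrals produced by integration by parts on the viscous term, and the commutators $[\partial_t^\ell\mathcal{Z}^\alpha, V_z^\e\partial_z]$, $[\partial_t^\ell\mathcal{Z}^\alpha,\nabla^{\varphi^\e}]$ and $[\partial_t^\ell\mathcal{Z}^\alpha,\nabla^{\varphi^\e}\cdot]$ are controlled as in $(\ref{Sect2_Vorticity_Estimate_7_1})$–$(\ref{Sect2_Vorticity_Estimate_7_3})$, now in terms of $\|\hat v\|_{X^{k-1,1}}$, $|\hat h|_{X^{k-1,1}}$, $\|\partial_z\hat v\|_{X^{k-1}}$, $\|\hat\omega_h\|_{X^{k-1}}$ plus an $O(\sqrt\e)$ contribution coming from $\e\triangle^{\varphi^\e}\omega_h$ (whose Navier–Stokes regularity from Proposition \ref{Sect1_Proposition_TimeRegularity} provides the needed bounds). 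Squaring the resulting $L^\infty([0,T];X^{k-1})$ bound then yields the needed $L^4$ bound of $\hat\omega_h^{nhom}$.

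For $\hat{\omega}_h^{hom}$, I would apply the same $H^{1/4}$ trace argument used in $(\ref{Sect2_Vorticity_Estimate_11})$: for each $\partial_t^\ell\mathcal{Z}^\alpha$ with $\ell+|\alpha|\leq k-1$,
\begin{equation*}
\|\partial_t^\ell\mathcal{Z}^\alpha\hat\omega_h^{hom}\|_{L^4([0,T];L^2)}^2
\lesssim \sqrt{\e}\int_0^T\big|\partial_t^\ell\mathcal{Z}^\alpha\hat\omega_h^b\big|_{L^2(\mathbb{R}^2)}^2 \,\mathrm{d}t.
\end{equation*}
The boundary datum $\hat\omega_h^b=\vec{\textsf{F}}^{1,2}[\nabla\varphi^\e](\partial_jv^{\e,i})-\omega_h^b$ is bounded uniformly in $\e$ (by Proposition \ref{Sect1_Proposition_TimeRegularity} and Lemma \ref{Sect4_Vorticity_Discrepancy_Lemma}); in particular, under $\Pi\mathcal{S}^{\varphi}v\nn|_{z=0}\neq 0$ it does \emph{not} tend to zero, which is exactly the source of the $O(\sqrt\e)$ rather than any faster rate. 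This produces the irreducible $O(\sqrt\e)$ term on the right–hand side of $(\ref{Sect5_Vorticity_Lemma_Eq})$.

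Finally, I would combine the two bounds, feed $\|\hat\omega_h\|_{L^4([0,T];X^{k-1})}^4$ back into the $\hat\omega_h^{nhom}$ estimate and use the integral Gronwall inequality exactly as in $(\ref{Sect2_Vorticity_Estimate_13})$–$(\ref{Sect2_Vorticity_Estimate_14})$, absorbing the $\sqrt\e\int_0^T\|\partial_z\hat v\|_{X^{k-1,1}}^2$ type contributions into small terms since $k\leq m-2$ and we have Proposition \ref{Sect1_Proposition_TimeRegularity}. The passage from $\hat\omega_h$ back to $\partial_z\hat v_h$ is then immediate from Lemma \ref{Sect2_NormalDer_Vorticity_Lemma} applied to the difference (that is, expressing $\partial_z\hat v_h$ in terms of $\hat\omega_h$ modulo tangential derivatives of $\hat v$ and $\hat h$), at the cost of adding $\|\hat v\|_{X_{tan}^{k-1,1}}+|\hat h|_{X^{k-1,1/2}}$, both already present on the right–hand side of $(\ref{Sect5_Vorticity_Lemma_Eq})$. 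The main obstacle I anticipate is not the trace inequality itself but the commutator $\|[\partial_t^\ell\mathcal{Z}^\alpha,V_z^\e\partial_z]\hat\omega_h^{nhom}\|_{L^2}$ and the $\partial_t^k\hat h$ term arising when $\partial_t$ lands on $V_z^\e$ or on $\varphi^\e$: unlike in the pure $v^\e$ case these produce time derivatives of $\hat h$ at top order, which is precisely why the $\|\partial_t^k\hat h\|_{L^4([0,T];L^2)}^2$ term appears explicitly on the right of $(\ref{Sect5_Vorticity_Lemma_Eq})$ and must be carried (rather than absorbed) through the argument.
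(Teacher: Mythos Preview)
Your proposal is correct and follows essentially the same approach as the paper: the same decomposition $\hat\omega_h=\hat\omega_h^{nhom}+\hat\omega_h^{hom}$, the same $L^2$ energy estimate on the nonhomogeneous piece leading to an $L^4$-in-time bound, the same $H^{1/4}$ trace inequality on the homogeneous piece producing the $O(\sqrt\e)$ term, and the same Gronwall closure followed by recovering $\partial_z\hat v_h$ from $\hat\omega_h$. One small imprecision: the $\e\triangle^{\varphi^\e}\omega_h$ forcing contributes $O(\e)$ rather than $O(\sqrt\e)$ in the nonhomogeneous estimate, and the $\sqrt\e\int_0^T\|\partial_z\hat v\|_{X^{k-1,1}}^2$ term you mention does not arise here (it belongs to the $\Pi\mathcal{S}^\varphi v\,\nn|_{z=0}=0$ case, Lemma~\ref{Sect5_Special_EularData_Lemma}); the $O(\sqrt\e)$ in the present lemma comes solely from the uniformly bounded boundary datum of $\hat\omega_h^{hom}$.
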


\begin{proof}
Assume $\ell+|\alpha|\leq k-1$, we study the equations $(\ref{Sect1_N_Derivatives_Difference_Eq})$ and decompose $\hat{\omega}_h = \hat{\omega}_h^{nhom} + \hat{\omega}_h^{hom}$, such that
$\hat{\omega}_h^{nhom}$ satisfies the following nonhomogeneous equations:
\begin{equation}\label{Sect5_N_Derivatives_Difference_Eq_Nonhom}
\left\{\begin{array}{ll}
\partial_t^{\varphi^{\e}} \hat{\omega}_h^{nhom}
+ v^{\e} \cdot\nabla^{\varphi^{\e}} \hat{\omega}_h^{nhom}
- \e\triangle^{\varphi^{\e}}\hat{\omega}_h^{nhom} \\[6pt]\quad
= \vec{\textsf{F}}^0[\nabla\varphi^{\e}](\omega_h^{\e},\partial_j v^{\e,i})
- \vec{\textsf{F}}^0[\nabla\varphi](\omega_h,\partial_j v^i)
+ \e\triangle^{\varphi^{\e}}\omega_h \\[6pt]\qquad
+ \partial_z^{\varphi}\omega_h \partial_t^{\varphi^{\e}} \hat{\eta}
+ \partial_z^{\varphi} \omega_h\, v^{\e}\cdot \nabla^{\varphi^{\e}}\hat{\eta}
- \hat{v}\cdot\nabla^{\varphi} \omega_h , \\[11pt]

\hat{\omega}_h^{nhom}|_{z=0} =0, \\[7pt]

\hat{\omega}_h^{nhom}|_{t=0} = (\hat{\omega}_0^1, \hat{\omega}_0^2)^{\top},
\end{array}\right.
\end{equation}
and $\hat{\omega}_h^{hom}$ satisfies the following homogeneous equations:
\begin{equation}\label{Sect5_N_Derivatives_Difference_Eq_Hom}
\left\{\begin{array}{ll}
\partial_t^{\varphi^{\e}} \hat{\omega}_h^{hom}
+ v^{\e} \cdot\nabla^{\varphi^{\e}} \hat{\omega}_h^{hom}
- \e\triangle^{\varphi^{\e}}\hat{\omega}_h^{hom} =0, \\[9pt]

\hat{\omega}_h^{hom}|_{z=0} =\textsf{F}^{1,2} [\nabla\varphi^{\e}](\partial_j v^{\e,i}) - \omega^b, \\[7pt]

\hat{\omega}_h^{hom}|_{t=0} = 0,
\end{array}\right.
\end{equation}

By using $(\ref{Sect3_Preliminaries_Vorticity_Eq_2})$ and $\partial_t^{\varphi^{\e}} + v^{\e} \cdot\nabla^{\varphi^{\e}}
= \partial_t + v_y^{\e}\cdot\nabla_y + V_z^{\e}\partial_z$,
$(\ref{Sect5_N_Derivatives_Difference_Eq_Nonhom})$ is equivalent to the following equations:
\begin{equation}\label{Sect5_N_Derivatives_Difference_Eq_Nonhom_1}
\left\{\begin{array}{ll}
\partial_t \hat{\omega}_h^{nhom} + v_y^{\e}\cdot\nabla_y\hat{\omega}_h^{nhom}
+ V_z^{\e}\partial_z \hat{\omega}_h^{nhom}
- \e\triangle^{\varphi^{\e}}\hat{\omega}_h^{nhom} \\[8pt]\quad
= f^7[\nabla\varphi^{\e},\nabla\varphi,\partial_j v^{\e,i},\partial_j v^i]\hat{\omega}_h
+ f^8[\nabla\varphi^{\e},\nabla\varphi,\partial_j v^{\e,i},\partial_j v^i,\omega_h^{\e},\omega_h]\partial_j\hat{v}^i \\[7pt]\qquad
+ f^9[\nabla\varphi^{\e},\nabla\varphi,\partial_j v^{\e,i},\partial_j v^i,\omega_h^{\e},\omega_h]\nabla\hat{\varphi}
+ \e\triangle^{\varphi^{\e}}\omega_h \\[7pt]\qquad
+ \partial_z^{\varphi}\omega_h \partial_t^{\varphi^{\e}} \hat{\eta}
+ \partial_z^{\varphi} \omega_h\, v^{\e}\cdot \nabla^{\varphi^{\e}}\hat{\eta}
- \hat{v}\cdot\nabla^{\varphi} \omega_h := \mathcal{I}_6, \\[9pt]

\hat{\omega}_h^{nhom}|_{z=0} =0, \\[7pt]

\hat{\omega}_h^{nhom}|_{t=0} = (\hat{\omega}_0^1, \hat{\omega}_0^2)^{\top},
\end{array}\right.
\end{equation}

Apply $\partial_t^{\ell}\mathcal{Z}^{\alpha}$ to $(\ref{Sect5_N_Derivatives_Difference_Eq_Nonhom_1})$, we get
\begin{equation}\label{Sect5_N_Derivatives_Difference_Eq_Nonhom_2}
\left\{\begin{array}{ll}
\partial_t \partial_t^{\ell}\mathcal{Z}^{\alpha}\hat{\omega}_h^{nhom} + v_y^{\e}\cdot\nabla_y \partial_t^{\ell}\mathcal{Z}^{\alpha}\hat{\omega}_h^{nhom}
+ V_z^{\e}\partial_z \partial_t^{\ell}\mathcal{Z}^{\alpha}\hat{\omega}_h^{nhom}
- \e\triangle^{\varphi^{\e}}\partial_t^{\ell}\mathcal{Z}^{\alpha}\hat{\omega}_h^{nhom} \\[8pt]\quad
= \partial_t^{\ell}\mathcal{Z}^{\alpha} \mathcal{I}_6
-[\partial_t^{\ell}\mathcal{Z}^{\alpha}, v_y^{\e}\cdot\nabla_y]\hat{\omega}_h^{nhom}
-[\partial_t^{\ell}\mathcal{Z}^{\alpha}, V_z\partial_z]\hat{\omega}_h^{nhom} \\[8pt]\qquad

+ \e\nabla^{\varphi^{\e}}\cdot [\partial_t^{\ell}\mathcal{Z}^{\alpha}, \nabla^{\varphi}]\hat{\omega}_h^{nhom}
+ \e[\partial_t^{\ell}\mathcal{Z}^{\alpha}, \nabla^{\varphi}\cdot]\nabla^{\varphi^{\e}}\hat{\omega}_h^{nhom}, \\[9pt]

\partial_t^{\ell}\mathcal{Z}^{\alpha}\hat{\omega}_h^{nhom}|_{z=0} =0, \\[7pt]

\partial_t^{\ell}\mathcal{Z}^{\alpha}\hat{\omega}_h^{nhom}|_{t=0}
= (\partial_t^{\ell}\mathcal{Z}^{\alpha}\hat{\omega}_0^1, \partial_t^{\ell}\mathcal{Z}^{\alpha}\hat{\omega}_0^2)^{\top},
\end{array}\right.
\end{equation}

Develop the $L^2$ estimate of $\partial_t^{\ell}\mathcal{Z}^{\alpha}\hat{\omega}_h^{nhom}$, we get
\begin{equation}\label{Sect5_N_Derivatives_Difference_Eq_Nonhom_3}
\begin{array}{ll}
\frac{\mathrm{d}}{\mathrm{d}t} \|\partial_t^{\ell}\mathcal{Z}^{\alpha}\hat{\omega}_h^{nhom}\|_{L^2}^2
+ 2\e \|\nabla^{\varphi^{\e}}\partial_t^{\ell}\mathcal{Z}^{\alpha}\hat{\omega}_h^{nhom}\|_{L^2}^2 \\[10pt]
\lem \|\partial_t^{\ell}\mathcal{Z}^{\alpha}\hat{\omega}_h^{nhom}\|_{L^2}^2
+ \|\partial_t^{\ell}\mathcal{Z}^{\alpha} \mathcal{I}_6\|_{L^2}^2
+ \|[\partial_t^{\ell}\mathcal{Z}^{\alpha}, V_z\partial_z]\hat{\omega}_h^{nhom}\|_{L^2}^2 \\[8pt]\quad

+ \e \int\limits_{\mathbb{R}^3_{-}}\nabla^{\varphi^{\e}}\cdot [\partial_t^{\ell}\mathcal{Z}^{\alpha}, \nabla^{\varphi}]\hat{\omega}_h^{nhom}
\partial_t^{\ell}\mathcal{Z}^{\alpha}\hat{\omega}_h^{nhom} \,\mathrm{d}\mathcal{V}_t \\[8pt]\quad
+ \e \int\limits_{\mathbb{R}^3_{-}}[\partial_t^{\ell}\mathcal{Z}^{\alpha}, \nabla^{\varphi}\cdot]\nabla^{\varphi^{\e}}\hat{\omega}_h^{nhom}
\partial_t^{\ell}\mathcal{Z}^{\alpha}\hat{\omega}_h^{nhom} \,\mathrm{d}\mathcal{V}_t 
\hspace{3.5cm}
\end{array}
\end{equation}

\begin{equation*}
\begin{array}{ll}
\lem \|\partial_t^{\ell}\mathcal{Z}^{\alpha}\hat{\omega}_h^{nhom}\|_{L^2}^2
+ \|\hat{\omega}_h\|_{X^{k-1}}^2 + \|\hat{v}\|_{X^{k-1,1}}^2 + \|\nabla\hat{\eta}\|_{X^{k-1}}^2
+ \|\partial_t^k\hat{\eta}\|_{L^2}^2 \\[8pt]\
+ \sum\limits_{\ell_1+|\alpha_1|>0}
\|\frac{1-z}{z}\partial_t^{\ell}\mathcal{Z}^{\alpha} V_z \cdot \partial_t^{\ell}\mathcal{Z}^{\alpha} \frac{z}{1-z}\partial_z\hat{\omega}_h^{nhom}\|_{L^2}^2

\\[8pt]\
- \e \int\limits_{\mathbb{R}^3_{-}}[\partial_t^{\ell}\mathcal{Z}^{\alpha}, \NN\partial_z^{\varphi}]\hat{\omega}_h^{nhom}
\cdot \nabla^{\varphi^{\e}}\partial_t^{\ell}\mathcal{Z}^{\alpha}\hat{\omega}_h^{nhom} \,\mathrm{d}\mathcal{V}_t \\[8pt]\
+ \e \int\limits_{\mathbb{R}^3_{-}} \sum\limits_{\ell_1+|\alpha_1|>0}\big[(\partial_z^{\varphi})^{-1}
\partial_t^{\ell_1}\mathcal{Z}^{\alpha_1}(\frac{\NN}{\partial_z\varphi})
\partial_t^{\ell_2}\mathcal{Z}^{\alpha_2}\partial_z\big]\cdot
\nabla^{\varphi^{\e}}\hat{\omega}_h^{nhom} \,
\partial_z^{\varphi}\partial_t^{\ell}\mathcal{Z}^{\alpha}\hat{\omega}_h^{nhom} \,\mathrm{d}\mathcal{V}_t.
\end{array}
\end{equation*}
where the notation $(\partial_z^{\varphi})^{-1}$ means a cancellation such that $(\partial_z^{\varphi})^{-1}(\partial_z^{\varphi}) =1$.

Integrate $(\ref{Sect5_N_Derivatives_Difference_Eq_Nonhom_3})$ in time, apply the integral form of Gronwall's inequality, it is easy to have
\begin{equation}\label{Sect5_N_Derivatives_Difference_Eq_Nonhom_4}
\begin{array}{ll}
\|\hat{\omega}_h^{nhom}\|_{X^{k-1}}^2
+ 2\e \int\limits_0^t\|\nabla\hat{\omega}_h^{nhom}\|_{X^{k-1}}^2 \,\mathrm{d}t \\[7pt]

\leq \|\hat{\omega}_{0,h}\|_{X^{k-1}}^2 + \int\limits_0^t\|\hat{\omega}_h\|_{X^{k-1}}^2\,\mathrm{d}t
+ \|\hat{h}\|_{X^{k-1,1}}^2\,\mathrm{d}t + \|\partial_t^k\hat{h}\|_{L^2}^2\,\mathrm{d}t + O(\e).
\end{array}
\end{equation}

Similar to $(\ref{Sect2_Vorticity_Estimate_10})$, we have
\begin{equation}\label{Sect5_N_Derivatives_Difference_Eq_Nonhom_5}
\begin{array}{ll}
\|\hat{\omega}_h^{nhom}\|_{L^4([0,T],X^{k-1})}^2
\lem \sqrt{T}\big\|\hat{\omega}_{0,h}\big\|_{X^{k-1}}^2 + T \|\hat{\omega}_h\|_{L^4([0,T],X^{k-1})}^2 \\[9pt]\quad

+ \sqrt{T}\int\limits_0^T\|\hat{v}\|_{X^{k-1,1}}^2 \,\mathrm{d}t + \sqrt{T}\int\limits_0^T|\hat{h}|_{X^{k-1,1}}^2 \,\mathrm{d}t
+ \sqrt{T}|\partial_t^k \hat{h}|_{L^4([0,T],L^2)}^2 + O(\e).
\end{array}
\end{equation}

For the homogeneous equations $(\ref{Sect5_N_Derivatives_Difference_Eq_Hom})$,
similar to the estimates of the equations $(\ref{Sect2_Vorticity_Estimate_3})$
or \cite{Masmoudi_Rousset_2012_FreeBC}, we have
\begin{equation}\label{Sect5_N_Derivatives_Difference_Eq_Hom_1}
\begin{array}{ll}
\|\partial_t^{\ell}\mathcal{Z}^{\alpha}\hat{\omega}_h^{hom}\|_{L^4([0,T],L^2(\mathbb{R}^3_{-}))}^2
\lem \|\partial_t^{\ell}\mathcal{Z}^{\alpha}\hat{\omega}_h^{hom}\|_{H^{\frac{1}{4}}([0,T],L^2(\mathbb{R}^3_{-}))}^2 \\[10pt]
\lem \sqrt{\e}\int\limits_0^T\big|\hat{\omega}_h^{hom}|_{z=0}\big|_{X^{k-1}(\mathbb{R}^2)}^2 \,\mathrm{d}t \\[10pt]

\lem \sqrt{\e}\int\limits_0^T\big|\textsf{F}^{1,2}[\nabla\varphi^{\e}](\partial_j v^{\e,i}) -\textsf{F}^{1,2}[\nabla\varphi](\partial_j v^i)
\big|_{X^{k-1}(\mathbb{R}^2)}^2 \,\mathrm{d}t \\[8pt]\quad
+ \sqrt{\e}\int\limits_0^T\big|\varsigma_1\Theta^1 + \varsigma_2\Theta^2 + \varsigma_3\Theta^3\big|_{X^{k-1}(\mathbb{R}^2)}^2 \,\mathrm{d}t \\[8pt]\quad
+ \sqrt{\e}\int\limits_0^T\big|\varsigma_4\Theta^4 + \varsigma_5\Theta^5 + \varsigma_6\Theta^6\big|_{X^{k-1}(\mathbb{R}^2)}^2 \,\mathrm{d}t

\lem O(\sqrt{\e}),
\end{array}
\end{equation}
where $\varsigma_i$ and $\Theta^i$ are defined in the proof of Lemma $\ref{Sect4_Vorticity_Discrepancy_Lemma}$.

By $(\ref{Sect5_N_Derivatives_Difference_Eq_Nonhom_5})$ and $(\ref{Sect5_N_Derivatives_Difference_Eq_Hom_1})$,
we have
\begin{equation}\label{Sect5_NormalEstimates_1}
\begin{array}{ll}
\|\hat{\omega}_h\|_{L^4([0,T],X^{k-1})}^2
\lem \|\hat{\omega}_h^{nhom}\|_{L^4([0,T],X^{k-1})}^2
+ \|\hat{\omega}_h^{hom}\|_{L^4([0,T],X^{k-1})}^2 \\[9pt]

\lem \big\|\hat{\omega}_{0,h}\big\|_{X^{k-1}}^2
+ |\partial_t^k\hat{h}|_{L^4([0,T],L^2)}^2
+ \int\limits_0^T\|\hat{v}\|_{X^{k-1,1}}^2 \,\mathrm{d}t
+ \int\limits_0^T|\hat{h}|_{X^{k-1,1}}^2 \,\mathrm{d}t
+ O(\sqrt{\e}).
\end{array}
\end{equation}
Thus, Lemma $\ref{Sect5_Vorticity_Lemma}$ is proved.
\end{proof}

\begin{remark}\label{Sect5_Euler_BoundaryData_Remark}
If $\Pi\mathcal{S}^{\varphi} v\nn|_{z=0} = 0$, then $\Theta^i =0$ where $i=1,\cdots,6$,
and then the estimate $(\ref{Sect5_N_Derivatives_Difference_Eq_Hom_1})$ is reduced into the following estimate:
\begin{equation}\label{Sect5_Euler_BoundaryData_Remark_Estimate}
\begin{array}{ll}
\|\partial_t^{\ell}\mathcal{Z}^{\alpha}\hat{\omega}_h^{hom}\|_{L^4([0,T],L^2(\mathbb{R}^3_{-}))}^2 \\[7pt]

\lem \sqrt{\e}\int\limits_0^T\big|\textsf{F}^{1,2}[\nabla\varphi^{\e}](\partial_j v^{\e,i}) -\textsf{F}^{1,2}[\nabla\varphi](\partial_j v^i)
\big|_{X^{k-1}(\mathbb{R}^2)}^2 \,\mathrm{d}t

\lem O(\sqrt{\e}),
\end{array}
\end{equation}
Since we do not have the convergence rates of $|\partial_j v^{\e,i} - \partial_j v^i|_{X^{k-1}(\mathbb{R}^2)}$,
thus we can not improve the convergence rates of $\|\omega\|_{L^4([0,T],X^{k-1})}^2$.
However, we can improve the convergence rates of $\|\omega\|_{L^4([0,T],X^{k-2})}^2$,
see subsection 5.4.
\end{remark}

If $\Pi\mathcal{S}^{\varphi}v\nn|_{z=0} \neq 0$, we estimate $\|\partial_z \hat{v}\|_{L^{\infty}([0,T],X^{m-4})}$ and $\|\hat{\omega}\|_{L^{\infty}([0,T],X^{m-4})}$.
Note that when $\Pi\mathcal{S}^{\varphi}v\nn|_{z=0} \neq 0$,
not only $\big|\nabla^{\varphi^{\e}} \times \partial_t^{\ell}\mathcal{Z}^{\alpha}(v^{\e} -v)|_{z=0}\big|_{L^2} \neq 0$ but also
$\big|\NN^{\e}\times(\nabla^{\varphi^{\e}} \times \partial_t^{\ell}\mathcal{Z}^{\alpha}(v^{\e} -v))|_{z=0}\big|_{L^2} \neq 0$.
\begin{lemma}\label{Sect5_NormalDer_Lemma}
Assume $0\leq k\leq m-2$,
 $\hat{\omega}_h =\omega_h^{\e} -\omega_h$, $\partial_z\hat{v} =\partial_z v^{\e} -\partial_z v$, then
$\hat{\omega}_h$ and $\partial_z\hat{v}$ satisfy the following estimate:
\begin{equation}\label{Sect5_NormalDer_Estimate}
\begin{array}{ll}
\|\hat{\omega}\|_{X^{k-2}}^2 + \|\partial_z\hat{v}\|_{X^{k-2}}^2

\lem \|\hat{\omega}_0\|_{X^{k-2}}^2

+ \int\limits_0^t\|\hat{v}\|_{X^{k-2}} + \|\partial_z \hat{v}\|_{X^{k-2}} \\[6pt]\quad
+ \|\nabla\hat{q} \|_{X^{k-2}} + \|\hat{h}\|_{X^{k-1}}\,\mathrm{d}t + O(\e).
\end{array}
\end{equation}
\end{lemma}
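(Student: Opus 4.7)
The plan is to imitate the curl-based energy estimate of Lemma $\ref{Sect2_NormalDer_Lemma}$, but applied to the difference momentum equation $(\ref{Sect1_T_Derivatives_Difference_Eq})_1$. First I would rewrite its viscous term through the identity $\triangle^{\varphi^{\e}}\hat{v} = \nabla^{\varphi^{\e}}(\nabla^{\varphi^{\e}}\cdot \hat{v}) - \nabla^{\varphi^{\e}}\times(\nabla^{\varphi^{\e}}\times \hat{v})$, where by $(\ref{Sect1_T_Derivatives_Difference_Eq})_2$ the divergence $\nabla^{\varphi^{\e}}\cdot\hat{v} = \partial_z^{\varphi}v\cdot\nabla^{\varphi^{\e}}\hat{\eta}$ is an $\e$-independent low-order quantity. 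For every $\ell+|\alpha|\leq k-2$ with $\ell\leq k-2$, I would then apply $\partial_t^{\ell}\mathcal{Z}^{\alpha}$ and take the $\mathrm{d}\mathcal{V}_t$-inner product of the resulting equation against $\nabla^{\varphi^{\e}}\times(\nabla^{\varphi^{\e}}\times\partial_t^{\ell}\mathcal{Z}^{\alpha}\hat{v})$. The pressure contribution drops out thanks to $\nabla^{\varphi^{\e}}\times\nabla^{\varphi^{\e}}\partial_t^{\ell}\mathcal{Z}^{\alpha}\hat{q}=0$, and integration by parts via $(\ref{Sect1_Formulas_CanNotUse})_3$ converts the leading bulk term into $\tfrac{1}{2}\tfrac{\mathrm{d}}{\mathrm{d}t}\|\nabla^{\varphi^{\e}}\times\partial_t^{\ell}\mathcal{Z}^{\alpha}\hat{v}\|_{L^2}^2$ together with a nonnegative viscous dissipation $2\e\|\nabla^{\varphi^{\e}}\times(\nabla^{\varphi^{\e}}\times\partial_t^{\ell}\mathcal{Z}^{\alpha}\hat{v})\|_{L^2}^2$, which I would simply discard on the left.

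To return from the curl of the difference to the difference of the vorticities, I would invoke the analogue of $(\ref{Sect2_NormalDer_Estimate_2_Formula})$ applied to $\hat{v}$ and $\varphi^{\e}$: $\partial_t^{\ell}\mathcal{Z}^{\alpha}\hat{\omega} = \nabla^{\varphi^{\e}}\times\partial_t^{\ell}\mathcal{Z}^{\alpha}\hat{v}$ modulo commutator remainders controlled by $\|\partial_z\hat{v}\|_{X^{k-3}} + |\hat{h}|_{X^{k-2,\frac{1}{2}}}$. After summing over $\ell,\alpha$, the left-hand side therefore controls $\|\hat{\omega}\|_{X^{k-2}}^2$; the companion bound on $\|\partial_z\hat{v}\|_{X^{k-2}}^2$ follows by applying Lemma $\ref{Sect2_NormalDer_Vorticity_Lemma}$ to $\hat{v}$ and using $(\ref{Sect5_NDerivatives_Lemma_Eq_1})$ to handle $\partial_z\hat{v}^3$. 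The right-hand side of the identity consists of three groups: boundary integrals at $\{z=0\}$ coming from both the $\NN^{\e}\times(\nabla^{\varphi^{\e}}\times\partial_t^{\ell}\mathcal{Z}^{\alpha}\hat{v})$ trace and the pressure gradient trace; commutator terms $[\partial_t^{\ell}\mathcal{Z}^{\alpha},\partial_t^{\varphi^{\e}}+v^{\e}\cdot\nabla^{\varphi^{\e}}]\hat{v}$, $[\partial_t^{\ell}\mathcal{Z}^{\alpha},\NN^{\e}\partial_z^{\varphi}]\hat{q}$, etc.; and the explicit source $\e\triangle^{\varphi^{\e}}v$ from $(\ref{Sect1_T_Derivatives_Difference_Eq})_1$. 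The last produces a term of size $O(\e)$ by Proposition $\ref{Sect1_Proposition_TimeRegularity}$, and the commutators reduce via $(\ref{Sect2_Tangential_Estimate_1})$ to quantities of the form $\|\hat{v}\|_{X^{k-2,1}} + \|\partial_z\hat{v}\|_{X^{k-3}} + \|\nabla\hat{q}\|_{X^{k-3}} + |\hat{h}|_{X^{k-1}}$, all allowed by the right-hand side of $(\ref{Sect5_NormalDer_Estimate})$.

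The boundary terms require more care. For the trace of the vorticity and the normal derivative of the velocity I would use Lemmas $\ref{Sect2_NormalDer_Vorticity_Lemma}$ and $\ref{Sect2_Vorticity_H_Eq_BC_Lemma}$, which express $\partial_z v^{\e,i}|_{z=0}$ and $\omega_h^{\e}|_{z=0}$ in terms of tangential derivatives, so that $\big|(\nabla^{\varphi^{\e}}\times\partial_t^{\ell}\mathcal{Z}^{\alpha}\hat{v})|_{z=0}\big|_{-1/2}$ becomes a tangential quantity absorbed by Lemma $\ref{Sect5_Tangential_Estimates_Lemma}$, up to an $O(\e)$ contribution from the $\e$-factor in $\mathcal{S}^{\varphi^{\e}}v^{\e}\nn^{\e}\cdot\nn^{\e}$. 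For the trace of $\nabla^{\varphi^{\e}}\partial_t^{\ell}\mathcal{Z}^{\alpha}\hat{q}$ I would exploit the dynamical boundary condition $(\ref{Sect1_T_Derivatives_Difference_Eq})_4$, which yields $\hat{q}|_{z=0} = g\hat{h} + 2\e\mathcal{S}^{\varphi^{\e}}\hat{v}\nn^{\e}\cdot\nn^{\e} + 2\e\mathcal{S}^{\varphi^{\e}}v\nn^{\e}\cdot\nn^{\e}$, so that the tangential part of the pressure gradient on the boundary is controlled by $|\hat{h}|_{X^{k-1}}+O(\e)$, and the normal part of the pressure gradient by $\|\nabla\hat{q}\|_{X^{k-2}}$ through Lemma $\ref{Sect5_Pressure_Lemma}$. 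Summing over $\ell+|\alpha|\leq k-2$ and integrating in $t$, Gronwall's inequality applied in the standard way produces $(\ref{Sect5_NormalDer_Estimate})$.

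The main obstacle will be the pressure-trace boundary term $\int_{\{z=0\}}\nabla^{\varphi^{\e}}\partial_t^{\ell}\mathcal{Z}^{\alpha}\hat{q}\cdot\NN^{\e}\times(\nabla^{\varphi^{\e}}\times\partial_t^{\ell}\mathcal{Z}^{\alpha}\hat{v})\,\mathrm{d}y$. Unlike the $\sigma=0$ regularity estimate of Lemma $\ref{Sect2_NormalDer_Lemma}$, where $\|\nabla q\|_{X^{m-1}}$ is uniformly available, here Lemma $\ref{Sect5_Pressure_Lemma}$ controls $\|\nabla\hat{q}\|_{X^{k-2}}$ only in terms of $\|\partial_z\hat{v}\|_{X^{k-2}}$ and the tangential norms, creating a circular coupling with the very quantity I am estimating. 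I would break this loop by noticing that the cancellation $\nabla^{\varphi^{\e}}\times\nabla^{\varphi^{\e}}\hat{q}=0$ eliminates the need to use the full gradient of $\hat{q}$ in the bulk and leaves only its tangential trace; replacing the normal-gradient contribution by what the dynamical boundary condition $(\ref{Sect1_T_Derivatives_Difference_Eq})_4$ dictates, one controls this piece by $|\hat{h}|_{X^{k-1}}$ plus an $O(\e)$ term and a small multiple of $\|\partial_z\hat{v}\|_{X^{k-2}}$ that can be absorbed by the Gronwall closure.
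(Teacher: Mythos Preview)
Your overall curl-based energy strategy---apply $\partial_t^{\ell}\mathcal{Z}^{\alpha}$ to $(\ref{Sect1_T_Derivatives_Difference_Eq})_1$, test against $\nabla^{\varphi^{\e}}\times(\nabla^{\varphi^{\e}}\times\partial_t^{\ell}\mathcal{Z}^{\alpha}\hat{v})$, integrate by parts via $(\ref{Sect1_Formulas_CanNotUse})_3$, and use $\nabla^{\varphi^{\e}}\times\nabla^{\varphi^{\e}}\hat{q}=0$---is exactly the paper's. The commutator handling and the recovery of $\|\partial_z\hat{v}\|_{X^{k-2}}$ from $\|\hat{\omega}\|_{X^{k-2}}$ are also the same.

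There is, however, a genuine gap in your treatment of the boundary traces. You propose to control $\big|(\nabla^{\varphi^{\e}}\times\partial_t^{\ell}\mathcal{Z}^{\alpha}\hat{v})|_{z=0}\big|_{-1/2}$ as a tangential difference quantity ``absorbed by Lemma~\ref{Sect5_Tangential_Estimates_Lemma}, up to an $O(\e)$ contribution''. This is false. Lemma~\ref{Sect2_Vorticity_H_Eq_BC_Lemma} expresses $\omega_h^{\e}|_{z=0}$ in terms of tangential derivatives \emph{only because} $v^{\e}$ satisfies $\Pi\mathcal{S}^{\varphi^{\e}}v^{\e}\nn^{\e}|_{z=0}=0$; the Euler velocity $v$ does not satisfy this, and Lemma~\ref{Sect4_Vorticity_Discrepancy_Lemma} shows precisely that when $\Pi\mathcal{S}^{\varphi}v\nn|_{z=0}\neq 0$ the trace $\hat{\omega}_h|_{z=0}$ stays $O(1)$, not $o(1)$. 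Consequently $\big|\NN^{\e}\times(\nabla^{\varphi^{\e}}\times\partial_t^{\ell}\mathcal{Z}^{\alpha}\hat{v})\big|_{z=0}$ is merely \emph{bounded} (by the uniform regularity of Proposition~\ref{Sect1_Proposition_TimeRegularity}), not small. The paper exploits this: in each boundary pairing it puts the $O(1)$ factor on $\NN^{\e}\times(\nabla^{\varphi^{\e}}\times\partial_t^{\ell}\mathcal{Z}^{\alpha}\hat{v})$ and the small factor on $\partial_t^{\ell}\mathcal{Z}^{\alpha}\hat{v}|_{z=0}$, $\nabla^{\varphi^{\e}}\partial_t^{\ell}\mathcal{Z}^{\alpha}\hat{q}|_{z=0}$, or $\mathcal{I}_{7,2}|_{z=0}$. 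This is why the right-hand side of $(\ref{Sect5_NormalDer_Estimate})$ carries \emph{first powers} $\int_0^t\|\hat{v}\|_{X^{k-2}}+\|\partial_z\hat{v}\|_{X^{k-2}}+\|\nabla\hat{q}\|_{X^{k-2}}+\|\hat{h}\|_{X^{k-1}}\,\mathrm{d}t$ rather than squares---a structural feature your proposal neither produces nor explains.

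Your ``main obstacle'' (pressure circularity) is not an obstacle here: the paper simply leaves $\|\nabla\hat{q}\|_{X^{k-2}}$ on the right-hand side of $(\ref{Sect5_NormalDer_Estimate})$ and defers its control, via Lemma~\ref{Sect5_Pressure_Lemma}, to the final closure in Theorem~\ref{Sect5_ConvergenceRates_Thm}.
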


\begin{proof}
By using $(\ref{Sect2_NormalDer_Estimate_Laplacian})$, we rewrite $(\ref{Sect1_T_Derivatives_Difference_Eq})_1$ as
\begin{equation}\label{Sect5_NormalDer_Estimate_L2_1}
\begin{array}{ll}
\partial_t^{\varphi^{\e}}\hat{v}-\partial_z^{\varphi} v \partial_t^{\varphi^{\e}}\hat{\eta}
+ v^{\e} \cdot\nabla^{\varphi^{\e}} \hat{v} - v^{\e}\cdot \nabla^{\varphi^{\e}}\hat{\eta}\, \partial_z^{\varphi} v + \hat{v}\cdot\nabla^{\varphi} v \\[7pt]\quad
+ \nabla^{\varphi^{\e}} \hat{q} - \partial_z^{\varphi} q\nabla^{\varphi^{\e}}\hat{\eta}
= -\e\nabla^{\varphi^{\e}}\times \hat{\omega} -\e\nabla^{\varphi^{\e}}\times \omega
\end{array}
\end{equation}

Firstly, we develop $L^2$ estimate of $\hat{\omega}$. Multiple $(\ref{Sect5_NormalDer_Estimate_L2_1})$
with $\nabla^{\varphi^{\e}}\times(\nabla^{\varphi^{\e}}\times\hat{v})$, integrate in $\mathbb{R}^3_{-}$, use the integration by parts formula $(\ref{Sect1_Formulas_CanNotUse})_3$,
we get
\begin{equation}\label{Sect5_NormalDer_Estimate_L2_2}
\begin{array}{ll}
\int\limits_{\mathbb{R}^3_{-}}
\big(\partial_t^{\varphi^{\e}}\hat{v}-\partial_z^{\varphi} v \partial_t^{\varphi^{\e}}\hat{\eta}
+ v^{\e} \cdot\nabla^{\varphi^{\e}} \hat{v} - v^{\e}\cdot \nabla^{\varphi^{\e}}\hat{\eta}\, \partial_z^{\varphi} v + \hat{v}\cdot\nabla^{\varphi} v
\\[7pt]\
+ \nabla^{\varphi^{\e}} \hat{q} - \partial_z^{\varphi} q\nabla^{\varphi^{\e}}\hat{\eta}
+ \e\nabla^{\varphi^{\e}}\times (\nabla^{\varphi^{\e}}\times\hat{v}) +\e\triangle^{\varphi^{\e}}v \big)
\cdot \nabla^{\varphi^{\e}}\times(\nabla^{\varphi^{\e}}\times\hat{v}) \,\mathrm{d}\mathcal{V}_t =0, \\[13pt]

\int\limits_{\mathbb{R}^3_{-}}
\nabla^{\varphi^{\e}}\times\big(\partial_t^{\varphi^{\e}}\hat{v}
+ v^{\e} \cdot\nabla^{\varphi^{\e}} \hat{v} + \nabla^{\varphi^{\e}} \hat{q}\big) \,\mathrm{d}\mathcal{V}_t

+ \e\int\limits_{\mathbb{R}^3_{-}} |\nabla^{\varphi^{\e}}\times(\nabla^{\varphi^{\e}}\times\hat{v})|^2 \,\mathrm{d}\mathcal{V}_t\\[8pt]\quad

= \int\limits_{\mathbb{R}^3_{-}}
\nabla^{\varphi^{\e}}\times\big(\partial_z^{\varphi} v \partial_t^{\varphi^{\e}}\hat{\eta}
+ v^{\e}\cdot \nabla^{\varphi^{\e}}\hat{\eta}\, \partial_z^{\varphi} v - \hat{v}\cdot\nabla^{\varphi} v
+ \partial_z^{\varphi} q\nabla^{\varphi^{\e}}\hat{\eta}\big)\cdot \hat{\omega} \,\mathrm{d}\mathcal{V}_t \\[8pt]\qquad

- \int\limits_{z=0}
\big(\partial_t\hat{v} + v^{\e}_y \cdot\nabla_y \hat{v} + \hat{v}\cdot\nabla^{\varphi} v
-\partial_z^{\varphi} v \partial_t\hat{\eta}
- v^{\e}_y\cdot \nabla_y \hat{\eta}\, \partial_z^{\varphi} v

+ \nabla^{\varphi^{\e}} \hat{q} \\[11pt]\qquad
- \partial_z^{\varphi} q\nabla^{\varphi^{\e}}\hat{\eta}
\big)\cdot \NN^{\e}\times (\nabla^{\varphi^{\e}}\times\hat{v}) \,\mathrm{d}y

- \e\int\limits_{\mathbb{R}^3_{-}}
\nabla^{\varphi^{\e}}\times \omega \cdot \nabla^{\varphi^{\e}}\times (\nabla^{\varphi^{\e}}\times\hat{v}) \,\mathrm{d}\mathcal{V}_t \\[8pt]\quad

\lem \|\hat{\omega}\|_{L^2}^2 + |\hat{h}|_{X^{1,\frac{1}{2}}}^2 + \|\hat{v}\|_{X^1}^2 + \|\partial_z \hat{v}\|_{L^2}^2
+ \frac{\e}{2}\int\limits_{\mathbb{R}^3_{-}} |\nabla^{\varphi^{\e}}\times\hat{\omega}|^2 \,\mathrm{d}\mathcal{V}_t + O(\e)\\[8pt]\qquad
+ \big|\NN^{\e}\times (\nabla^{\varphi^{\e}}\times\hat{v})|_{z=0}\big|_{L^2} \big(\big|\hat{v}|_{z=0}\big|_{X_{tan}^1}
+ \big|\hat{h}|_{z=0}\big|_{X^1} \big) \\[8pt]\qquad
+ \big|\NN^{\e}\times (\nabla^{\varphi^{\e}}\times\hat{v})|_{z=0}\big|_{\frac{1}{2}} \big|\nabla \hat{q}|_{z=0}\big|_{-\frac{1}{2}}.
\end{array}
\end{equation}

Since $\nabla^{\varphi^{\e}}\times \nabla^{\varphi^{\e}} \hat{q} =0$, $\nabla^{\varphi^{\e}}\times \omega$ is bounded,
$\big|\NN^{\e}\times (\nabla^{\varphi^{\e}}\times\hat{v})|_{z=0}\big|_{L^2}\neq 0$,
\begin{equation}\label{Sect5_NormalDer_Estimate_L2_3}
\begin{array}{ll}
\|\hat{\omega}|_{L^2}
+ \e\int\limits_0^t \|\nabla\hat{\omega}\|^2 \,\mathrm{d}t
\lem \|\hat{\omega}_0\|_{L^2}^2 + \int\limits_0^t |\hat{h}|_{X^{1,1}}^2 + \|\hat{v}\|_{X^1}^2 + \|\partial_z \hat{v}\|_{L^2}^2
\\[8pt]\quad
+ \big|\hat{v}|_{z=0}\big|_{X_{tan}^1} + \big|\hat{h}|_{z=0}\big|_{X^1}
+ \big|\nabla \hat{q}|_{z=0}\big|_{-\frac{1}{2}} \,\mathrm{d}t  + O(\e) \\[9pt]

\lem \|\hat{\omega}_0\|_{L^2}^2 + \int\limits_0^t |\hat{h}|_{X^{1,1}}^2 + \|\hat{v}\|_{X^1}^2 + \|\partial_z \hat{v}\|_{L^2}^2
+ \|\hat{v}\|_{X_{tan}^1} + \|\partial_z\hat{v}\|_{X_{tan}^1} \\[8pt]\quad
+ \|\hat{h}\|_{X^1} + \|\nabla \hat{q}\|_{L^2} \,\mathrm{d}t  + O(\e).
\end{array}
\end{equation}

When $\ell+|\alpha|\leq k-2$, we study the quantity $\nabla^{\varphi^{\e}}\times \partial_t^{\ell}\mathcal{Z}^{\alpha}\hat{v}$.

The equations $(\ref{SectA_Difference_Eq2_1})_2$ is rewritten as
\begin{equation}\label{Sect5_NormalDer_Estimate_1}
\begin{array}{ll}
\partial_t^{\varphi^{\e}} \partial_t^{\ell}\mathcal{Z}^{\alpha}\hat{v}
+ v^{\e} \cdot\nabla^{\varphi^{\e}} \partial_t^{\ell}\mathcal{Z}^{\alpha}\hat{v}
+ \nabla^{\varphi^{\e}} \partial_t^{\ell}\mathcal{Z}^{\alpha}\hat{q}
+ \e\nabla^{\varphi^\e}\times \nabla^{\varphi^\e} \partial_t^{\ell}\mathcal{Z}^{\alpha} \hat{v}

= \e \, \mathcal{I}_{7,1} + \mathcal{I}_{7,2},
\end{array}
\end{equation}
where
\begin{equation}\label{Sect5_NormalDer_Estimate_2}
\begin{array}{ll}
\mathcal{I}_{7,1} = -[\partial_t^{\ell}\mathcal{Z}^{\alpha}, \nabla^{\varphi^\e}\times]\nabla^{\varphi^\e}\times\hat{v}
- \nabla^{\varphi^\e}\times[\partial_t^{\ell}\mathcal{Z}^{\alpha}, \nabla^{\varphi^\e}\times] \hat{v}
+ \partial_t^{\ell}\mathcal{Z}^{\alpha}\triangle^{\varphi^{\e}}v,
\\[9pt]

\mathcal{I}_{7,2} := \partial_z^{\varphi} v (\partial_t + v^{\e}_y\cdot \nabla_y + V_z^{\e}\partial_z)
\partial_t^{\ell}\mathcal{Z}^{\alpha}\hat{\varphi}
- \partial_t^{\ell}\mathcal{Z}^{\alpha}\hat{v}\cdot\nabla^{\varphi} v
+ \partial_z^{\varphi} q\nabla^{\varphi^{\e}}\partial_t^{\ell}\mathcal{Z}^{\alpha}\hat{\varphi} \\[8pt]\quad

- [\partial_t^{\ell}\mathcal{Z}^{\alpha},\partial_t + v^{\e}\partial_y + V_z^{\e}\partial_z]\hat{v}
+ [\partial_t^{\ell}\mathcal{Z}^{\alpha}, \partial_z^{\varphi} v (\partial_t + v^{\e}_y\cdot \nabla_y + V_z^{\e}\partial_z]\hat{\varphi} \\[8pt]\quad

- [\partial_t^{\ell}\mathcal{Z}^{\alpha}, \nabla^{\varphi} v\cdot]\hat{v}
- [\partial_t^{\ell}\mathcal{Z}^{\alpha},\nabla^{\varphi^{\e}}] \hat{q}
+ [\partial_t^{\ell}\mathcal{Z}^{\alpha},\partial_z^{\varphi} q\nabla^{\varphi^{\e}}]\hat{\varphi}.
\end{array}
\end{equation}

Multiply $(\ref{Sect5_NormalDer_Estimate_1})$ with $\nabla^{\varphi^{\e}}\times (\nabla^{\varphi^{\e}}\times
\partial_t^{\ell}\mathcal{Z}^{\alpha}\hat{v})$, integrate in $\mathbb{R}^3_{-}$, we get
\begin{equation}\label{Sect5_NormalDer_Estimate_3}
\begin{array}{ll}
\int\limits_{\mathbb{R}^3_{-}}\partial_t^{\varphi^{\e}} \partial_t^{\ell}\mathcal{Z}^{\alpha}\hat{v} \cdot
\nabla^{\varphi^{\e}}\times (\nabla^{\varphi^{\e}}\times
\partial_t^{\ell}\mathcal{Z}^{\alpha}\hat{v}) \,\mathrm{d}\mathcal{V}_t^{\e} \\[10pt]\quad

+ \int\limits_{\mathbb{R}^3_{-}}v^{\e} \cdot\nabla^{\varphi^{\e}} \partial_t^{\ell}\mathcal{Z}^{\alpha}\hat{v} \cdot
\nabla^{\varphi^{\e}}\times (\nabla^{\varphi^{\e}}\times
\partial_t^{\ell}\mathcal{Z}^{\alpha}\hat{v}) \,\mathrm{d}\mathcal{V}_t^{\e} \\[10pt]\quad

+ \int\limits_{\mathbb{R}^3_{-}}\nabla^{\varphi^{\e}} \partial_t^{\ell}\mathcal{Z}^{\alpha}\hat{q} \cdot
\nabla^{\varphi^{\e}}\times (\nabla^{\varphi^{\e}}\times
\partial_t^{\ell}\mathcal{Z}^{\alpha}\hat{v}) \,\mathrm{d}\mathcal{V}_t^{\e} \\[11pt]\quad

+ \e\int\limits_{\mathbb{R}^3_{-}}|\nabla^{\varphi^\e}\times \nabla^{\varphi^\e} \partial_t^{\ell}\mathcal{Z}^{\alpha} \hat{v}|^2
\,\mathrm{d}\mathcal{V}_t^{\e} \\[11pt]

= \int\limits_{\mathbb{R}^3_{-}}(\e \, \mathcal{I}_{7,1} + \mathcal{I}_{7,2}) \cdot \nabla^{\varphi^{\e}}\times (\nabla^{\varphi^{\e}}\times
\partial_t^{\ell}\mathcal{Z}^{\alpha}\hat{v}) \,\mathrm{d}\mathcal{V}_t^{\e}.
\end{array}
\end{equation}

Use the integration by parts formula $(\ref{Sect1_Formulas_CanNotUse})_3$ and note that
$[\partial_t^{\varphi^{\e}}, \nabla^{\varphi^{\e}}] =0$, we have
\begin{equation}\label{Sect5_NormalDer_Estimate_4}
\begin{array}{ll}
\int\limits_{z=0}\partial_t^{\varphi^{\e}} \partial_t^{\ell}\mathcal{Z}^{\alpha}\hat{v} \cdot
\NN^{\e}\times (\nabla^{\varphi^{\e}}\times
\partial_t^{\ell}\mathcal{Z}^{\alpha}\hat{v}) \,\mathrm{d}y \\[10pt]\quad
+ \int\limits_{\mathbb{R}^3_{-}}\partial_t^{\varphi^{\e}} (\nabla^{\varphi^{\e}}\times
\partial_t^{\ell}\mathcal{Z}^{\alpha}\hat{v}) \cdot (\nabla^{\varphi^{\e}}\times
\partial_t^{\ell}\mathcal{Z}^{\alpha}\hat{v}) \,\mathrm{d}\mathcal{V}_t^{\e} \\[10pt]\quad

+ \int\limits_{z=0}v^{\e} \cdot\nabla^{\varphi^{\e}} \partial_t^{\ell}\mathcal{Z}^{\alpha}\hat{v} \cdot
\NN^{\e}\times (\nabla^{\varphi^{\e}}\times
\partial_t^{\ell}\mathcal{Z}^{\alpha}\hat{v}) \,\mathrm{d}y \\[10pt]\quad

+ \int\limits_{\mathbb{R}^3_{-}}
v^{\e} \cdot\nabla^{\varphi^{\e}} (\nabla^{\varphi^{\e}}\times \partial_t^{\ell}\mathcal{Z}^{\alpha}\hat{v})
\cdot (\nabla^{\varphi^{\e}}\times
\partial_t^{\ell}\mathcal{Z}^{\alpha}\hat{v}) \,\mathrm{d}\mathcal{V}_t^{\e} \hspace{2.5cm}
\end{array}
\end{equation}

\begin{equation*}
\begin{array}{ll}
\quad 
+ \int\limits_{\mathbb{R}^3_{-}} [(\sum\limits_{i=1}^3 \nabla^{\varphi^{\e}}
v^{\e,i} \cdot\partial_i^{\varphi^{\e}}) \times \partial_t^{\ell}\mathcal{Z}^{\alpha}\hat{v}]
\cdot (\nabla^{\varphi^{\e}}\times
\partial_t^{\ell}\mathcal{Z}^{\alpha}\hat{v}) \,\mathrm{d}\mathcal{V}_t^{\e} \\[10pt]\quad

+ \int\limits_{z=0}\nabla^{\varphi^{\e}} \partial_t^{\ell}\mathcal{Z}^{\alpha}\hat{q} \cdot
\NN^{\e}\times (\nabla^{\varphi^{\e}}\times
\partial_t^{\ell}\mathcal{Z}^{\alpha}\hat{v}) \,\mathrm{d}y \\[10pt]\quad
+ \int\limits_{\mathbb{R}^3_{-}}\nabla^{\varphi^{\e}}\times \nabla^{\varphi^{\e}} \partial_t^{\ell}\mathcal{Z}^{\alpha}\hat{q}
\cdot (\nabla^{\varphi^{\e}}\times
\partial_t^{\ell}\mathcal{Z}^{\alpha}\hat{v}) \,\mathrm{d}\mathcal{V}_t^{\e} \\[11pt]\quad

+ \e\int\limits_{\mathbb{R}^3_{-}}|\nabla^{\varphi^\e}\times \nabla^{\varphi^\e} \partial_t^{\ell}\mathcal{Z}^{\alpha} \hat{v}|^2
\,\mathrm{d}\mathcal{V}_t^{\e}

= \e\int\limits_{\mathbb{R}^3_{-}}\mathcal{I}_{7,1} \cdot \nabla^{\varphi^{\e}}\times (\nabla^{\varphi^{\e}}\times
\partial_t^{\ell}\mathcal{Z}^{\alpha}\hat{v}) \,\mathrm{d}\mathcal{V}_t^{\e} \\[11pt]\quad

+ \int\limits_{z=0}\mathcal{I}_{7,2} \cdot \NN^{\e}\times (\nabla^{\varphi^{\e}}\times
\partial_t^{\ell}\mathcal{Z}^{\alpha}\hat{v}) \,\mathrm{d}y
+ \int\limits_{\mathbb{R}^3_{-}} \nabla^{\varphi^{\e}}\times \mathcal{I}_{7,2} \cdot (\nabla^{\varphi^{\e}}\times
\partial_t^{\ell}\mathcal{Z}^{\alpha}\hat{v}) \,\mathrm{d}\mathcal{V}_t^{\e}.
\end{array}
\end{equation*}

Note that $\nabla^{\varphi^{\e}}\times \nabla^{\varphi^{\e}} \partial_t^{\ell}\mathcal{Z}^{\alpha}\hat{q} =0$
and $(\partial_t^{\varphi^{\e}} + v^{\e} \cdot\nabla^{\varphi^{\e}})|_{z=0} = (\partial_t + v_y^{\e}\cdot\nabla_y)$, we have
\begin{equation}\label{Sect5_NormalDer_Estimate_5}
\begin{array}{ll}
\frac{1}{2}\frac{\mathrm{d}}{\mathrm{d}t} \int\limits_{\mathbb{R}^3_{-}}
|\nabla^{\varphi^{\e}}\times\partial_t^{\ell}\mathcal{Z}^{\alpha}\hat{v}|^2 \,\mathrm{d}\mathcal{V}_t^{\e}

+ \e\int\limits_{\mathbb{R}^3_{-}}|\nabla^{\varphi^\e}\times \nabla^{\varphi^\e} \partial_t^{\ell}\mathcal{Z}^{\alpha} \hat{v}|^2
\,\mathrm{d}\mathcal{V}_t^{\e}
 \\[10pt]

= - \int\limits_{z=0}(\partial_t + v_y^{\e}\cdot\nabla_y) \partial_t^{\ell}\mathcal{Z}^{\alpha}\hat{v} \cdot
\NN^{\e}\times (\nabla^{\varphi^{\e}}\times
\partial_t^{\ell}\mathcal{Z}^{\alpha}\hat{v}) \,\mathrm{d}y \\[10pt]\quad

- \int\limits_{\mathbb{R}^3_{-}} [(\sum\limits_{i=1}^3 \nabla^{\varphi^{\e}}
v^{\e,i} \cdot\partial_i^{\varphi^{\e}}) \times \partial_t^{\ell}\mathcal{Z}^{\alpha}\hat{v}]
\cdot (\nabla^{\varphi^{\e}}\times
\partial_t^{\ell}\mathcal{Z}^{\alpha}\hat{v}) \,\mathrm{d}\mathcal{V}_t^{\e} \\[10pt]\quad

- \int\limits_{z=0}\nabla^{\varphi^{\e}} \partial_t^{\ell}\mathcal{Z}^{\alpha}\hat{q} \cdot
\NN^{\e}\times (\nabla^{\varphi^{\e}}\times
\partial_t^{\ell}\mathcal{Z}^{\alpha}\hat{v}) \,\mathrm{d}y \\[10pt]\quad

+ \e\int\limits_{\mathbb{R}^3_{-}}\mathcal{I}_{7,1} \cdot \nabla^{\varphi^{\e}}\times (\nabla^{\varphi^{\e}}\times
\partial_t^{\ell}\mathcal{Z}^{\alpha}\hat{v}) \,\mathrm{d}\mathcal{V}_t^{\e}

+ \int\limits_{z=0}\mathcal{I}_{7,2} \cdot \NN^{\e}\times (\nabla^{\varphi^{\e}}\times
\partial_t^{\ell}\mathcal{Z}^{\alpha}\hat{v}) \,\mathrm{d}y \\[11pt]\quad
+ \int\limits_{\mathbb{R}^3_{-}} \nabla^{\varphi^{\e}}\times \mathcal{I}_{7,2} \cdot (\nabla^{\varphi^{\e}}\times
\partial_t^{\ell}\mathcal{Z}^{\alpha}\hat{v}) \,\mathrm{d}\mathcal{V}_t^{\e} \\[12pt]

\lem \|\nabla^{\varphi^{\e}}\times \partial_t^{\ell}\mathcal{Z}^{\alpha}\hat{v}\|_{L^2}^2

+ |\NN^{\e}\times (\nabla^{\varphi^{\e}}\times \partial_t^{\ell}\mathcal{Z}^{\alpha}\hat{v})|_{\frac{1}{2}}
\big|\nabla^{\varphi^{\e}} \partial_t^{\ell}\mathcal{Z}^{\alpha}\hat{q}|_{z=0} \big|_{-\frac{1}{2}} \\[10pt]\quad

+ |\NN^{\e}\times (\nabla^{\varphi^{\e}}\times \partial_t^{\ell}\mathcal{Z}^{\alpha}\hat{v})|_{L^2}
\big(\big|\mathcal{I}_{7,2}|_{z=0}\big|_{L^2}
+ \big|\partial_t^{\ell}\mathcal{Z}^{\alpha}\hat{v}|_{z=0} \big|_{X_{tan}^1} \big)

 \\[10pt]\quad

+ \|\partial_t^{\ell}\mathcal{Z}^{\alpha}\hat{v}\|_{X^1}^2
+ \|\partial_z \partial_t^{\ell}\mathcal{Z}^{\alpha}\hat{v}\|_{L^2}^2

+ \e\|\mathcal{I}_{7,1}\|_{L^2}^2
+ \|\nabla^{\varphi^{\e}}\times \mathcal{I}_{7,2}\|_{L^2}^2.
\end{array}
\end{equation}

It is easy to prove that
\begin{equation}\label{Sect5_NormalDer_Estimate_6}
\begin{array}{ll}
\big|\mathcal{I}_{7,2}|_{z=0}\big|_{L^2} \lem |\hat{h}|_{X^{k-1}} + \big|\hat{v}|_{z=0}\big|_{X^{k-2}}
+ \big|\nabla\hat{q}|_{z=0}\big|_{X^{k-3}} \\[6pt]\hspace{1.85cm}

\lem |\hat{h}|_{X^{k-1}} + \|\hat{v}\|_{X^{k-2}} + \|\partial_z\hat{v}\|_{X^{k-2}}
+ \|\nabla\hat{q}\|_{X^{k-2}}, \\[9pt]

\e\|\mathcal{I}_{7,1}\|_{L^2}^2 \lem \e\sum\limits_{\ell+|\alpha|\leq k-2} \|\nabla^{\varphi^\e}\times \nabla^{\varphi^\e} \partial_t^{\ell}\mathcal{Z}^{\alpha} \hat{v}\|_{L^2}^2 + O(\e), \\[13pt]

\|\nabla^{\varphi^{\e}}\times \mathcal{I}_{7,2}\|_{L^2}^2 \lem \|\hat{\eta}\|_{X^{k-1,1}}^2
+ \|\nabla^{\varphi^{\e}}\times \hat{v}\|_{X_{tan}^{k-2}}^2
+ \|\nabla^{\varphi^{\e}}\times [\partial_t^{\ell}\mathcal{Z}^{\alpha},V_z^{\e}\partial_z]\hat{v}\|_{L^2}^2
\\[6pt]\hspace{2.8cm}
+ \|\nabla^{\varphi^{\e}}\times[\partial_t^{\ell}\mathcal{Z}^{\alpha},\NN^{\e}\partial_z^{\varphi^{\e}}] \hat{q}\|_{L^2}^2,
\end{array}
\end{equation}
where the estimates for the last two terms are similar to $(\ref{Sect2_NormalDer_Estimate_5}), (\ref{Sect2_NormalDer_Estimate_6})$.

Integrate $(\ref{Sect5_NormalDer_Estimate_5})$ in time, apply the integral form of Gronwall's inequality, we have
\begin{equation}\label{Sect5_NormalDer_Estimate_7}
\begin{array}{ll}
\|\nabla^{\varphi^{\e}}\times \partial_t^{\ell}\mathcal{Z}^{\alpha}\hat{v}\|_{L^2}^2
+ \e\int\limits_{\mathbb{R}^3_{-}}|\nabla^{\varphi^\e}\times \nabla^{\varphi^\e} \partial_t^{\ell}\mathcal{Z}^{\alpha} \hat{v}|^2
\,\mathrm{d}\mathcal{V}_t^{\e} \\[5pt]
\lem \big\|\nabla^{\varphi^{\e}}\times \partial_t^{\ell}\mathcal{Z}^{\alpha}\hat{v}|_{t=0}\big\|_{L^2}^2
+ \int\limits_0^t \|\hat{v}\|_{X^{k-2}} + \|\partial_z \hat{v}\|_{X^{k-2}}

+ \|\nabla\hat{q} \|_{X^{k-2}} + \|\hat{h}\|_{X^{k-1}} \,\mathrm{d}t \\[7pt]\quad

+ \int\limits_0^t \|\hat{v}\|_{X^{k-1}}^2 + \|\partial_z \hat{v}\|_{X^{k-1}}^2 + \|\hat{h}\|_{X^{k-1,1}}^2 \,\mathrm{d}t
+ O(\e) \\[8pt]

\lem \big\|\partial_t^{\ell}\mathcal{Z}^{\alpha}\hat{\omega}|_{t=0}\big\|_{L^2}^2

+ \int\limits_0^t\|\hat{v}\|_{X^{k-2}} + \|\partial_z \hat{v}\|_{X^{k-2}}
+ \|\nabla\hat{q} \|_{X^{k-2}} + \|\hat{h}\|_{X^{k-1}}\,\mathrm{d}t + O(\e).
\end{array}
\end{equation}

Since $\hat{\omega} = \nabla^{\varphi^{\e}} \times \hat{v} - \nabla^{\varphi^{\e}}\hat{\eta} \times \partial_z^{\varphi} v$, we have
\begin{equation}\label{Sect5_NormalDer_Estimate_8}
\begin{array}{ll}
\|\hat{\omega}\|_{X^{k-2}}^2
+ \|\partial_z\hat{v}\|_{X^{k-2}}^2

\lem \|\hat{\omega}|_{t=0}\|_{X^{k-2}}^2

+ \int\limits_0^t\|\hat{v}\|_{X^{k-2}} + \|\partial_z \hat{v}\|_{X^{k-2}} \\[6pt]\quad
+ \|\nabla\hat{q} \|_{X^{k-2}} + \|\hat{h}\|_{X^{k-1}}\,\mathrm{d}t + O(\e).
\end{array}
\end{equation}
Thus, Lemma $\ref{Sect5_NormalDer_Lemma}$ is proved.
\end{proof}

\begin{remark}\label{Sect5_NormalDer_Remark}
We can not use the following variables to estimate $\hat{\omega}_h$:
\begin{equation}\label{Sect5_NormalDer_Remark_1}
\left\{\begin{array}{ll}
\hat{\zeta}^1 = \hat{\omega}^1 -\textsf{F}^1 [\nabla\varphi^{\e}](\partial_j v^{\e,i}) + \omega^{b,1}, \ j=1,2,\ i=1,2,3,
\\[6pt]

\hat{\zeta}^2 = \hat{\omega}^2 -\textsf{F}^2 [\nabla\varphi^{\e}](\partial_j v^{\e,i}) + \omega^{b,2}, \ j=1,2,\ i=1,2,3,
\end{array}\right.
\end{equation}
because $\textsf{F}^{1,2} [\nabla\varphi^{\e}](\partial_j v^{\e,i})$ may not converge to the extension of $\omega^b$.
That is, $\hat{\zeta}$ may not be a small quantity.
\end{remark}

\subsection{Estimates for Normal Derivatives when $\Pi\mathcal{S}^{\varphi} v\nn|_{z=0} = 0$}

When $\Pi\mathcal{S}^{\varphi} v\nn|_{z=0} = 0$, the boundary value of Navier-Stokes vorticity converges to that of Euler vorticity,
the convergence rates of the inviscid limits can be improved.
In the following lemma, we estimate normal derivatives for the special Euler boundary data.
\begin{lemma}\label{Sect5_Special_EularData_Lemma}
Assume $k\leq m-2$, if $\Pi\mathcal{S}^{\varphi} v \nn|_{z=0} = 0$,
then the vorticity has the following estimate:
\begin{equation}\label{Sect5_Special_EularData_Lemma_Eq}
\begin{array}{ll}
\|\partial_z\hat{v}_h\|_{L^4([0,T],X^{k-2})}^2 + \|\hat{\omega}_h\|_{L^4([0,T],X^{k-2})}^2
\\[6pt]
\lem \big\|\hat{\omega}_0\big\|_{X^{k-2}}^2
+ \int\limits_0^T\|\hat{v}\|_{X^{k-1,1}}^2 \,\mathrm{d}t
+ \int\limits_0^T|\hat{h}|_{X^{k-2,1}}^2 \,\mathrm{d}t
+ \|\partial_t^{k-1}\hat{h}\|_{L^4([0,T],L^2)}^2 \\[8pt]\quad
+ \sqrt{\e}\|\partial_z\hat{v}\|_{L^4([0,T],X^{k-1})}^2
+ O(\e).
\end{array}
\end{equation}
\end{lemma}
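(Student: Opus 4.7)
The plan is to follow the splitting strategy already used in Lemma \ref{Sect5_Vorticity_Lemma}, namely to decompose $\hat{\omega}_h = \hat{\omega}_h^{nhom} + \hat{\omega}_h^{hom}$ with the boundary trace placed entirely on the homogeneous piece, and to repeat the argument at the lower index $k-2$ while exploiting the additional cancellation supplied by $\Pi\mathcal{S}^{\varphi}v\,\nn|_{z=0}=0$. For the nonhomogeneous piece I would apply $\partial_t^\ell\mathcal{Z}^\alpha$ with $\ell+|\alpha|\leq k-2$ to the transport-diffusion equation analogous to (\ref{Sect5_N_Derivatives_Difference_Eq_Nonhom}), estimate the commutators $[\partial_t^\ell\mathcal{Z}^\alpha,v_y^{\e}\cdot\nabla_y]$, $[\partial_t^\ell\mathcal{Z}^\alpha,V_z^{\e}\partial_z]$ and $[\partial_t^\ell\mathcal{Z}^\alpha,\NN\partial_z^\varphi]$ exactly as in (\ref{Sect5_N_Derivatives_Difference_Eq_Nonhom_3}), and absorb the $\e$-level viscous cross terms into the dissipation. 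Since all derivatives are at level $k-2$, all force terms in $\mathcal{I}_6$ of (\ref{Sect5_N_Derivatives_Difference_Eq_Nonhom_1}) are controlled by $\|\hat v\|_{X^{k-1,1}}$, $|\hat h|_{X^{k-2,1}}$ and $\|\partial_t^{k-1}\hat h\|_{L^2}$ together with an $O(\e)$ remainder coming from $\e\triangle^{\varphi^\e}\omega_h$.

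The decisive step is the trace estimate for the homogeneous piece. Invoking the same $H^{1/4}$-in-time estimate of Masmoudi--Rousset used in (\ref{Sect5_N_Derivatives_Difference_Eq_Hom_1}) gives
\begin{equation*}
\|\hat{\omega}_h^{hom}\|_{L^4([0,T],X^{k-2})}^2 \lem \sqrt{\e}\int_0^T\big|\hat{\omega}_h^{hom}|_{z=0}\big|_{X^{k-2}(\mathbb{R}^2)}^2\,\mathrm{d}t.
\end{equation*}
Under the hypothesis $\Pi\mathcal{S}^{\varphi}v\,\nn|_{z=0}=0$, the coefficients $\Theta^i$ constructed in the proof of Lemma \ref{Sect4_Vorticity_Discrepancy_Lemma} all vanish, so (\ref{Sect4_Vorticity_Discrepancy_5}) collapses to
\begin{equation*}
\hat{\omega}_h^{hom}\big|_{z=0} = \textsf{F}^{1,2}[\nabla\varphi^{\e}](\partial_j v^{\e,i}) - \textsf{F}^{1,2}[\nabla\varphi](\partial_j v^i),
\end{equation*}
which is a smooth nonlinear combination of the tangential first derivatives of $v^{\e}-v$ and $\nabla\varphi^{\e}-\nabla\varphi$. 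The product rule and the uniform bounds from Proposition \ref{Sect1_Proposition_TimeRegularity} yield $|\hat{\omega}_h^{hom}|_{z=0}|_{X^{k-2}} \lem |\hat v|_{z=0}|_{X_{tan}^{k-2,1}} + |\hat h|_{X^{k-2,1}} + O(\e)$.

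To close the loop I would convert the boundary norm of $\hat v$ into interior conormal norms via the anisotropic trace inequality $|f|_{H^s(\mathbb{R}^2)}^2 \lem \|f\|_{X_{tan}^s}\|\partial_z f\|_{X_{tan}^s}$ (cf.\ \cite{Masmoudi_Rousset_2012_FreeBC}), which produces
\begin{equation*}
\sqrt{\e}\int_0^T \big|\hat v|_{z=0}\big|_{X_{tan}^{k-2,1}}^2\,\mathrm{d}t \lem \int_0^T \|\hat v\|_{X^{k-1,1}}^2\,\mathrm{d}t + \sqrt{\e}\|\partial_z\hat v\|_{L^4([0,T],X^{k-1})}^2,
\end{equation*}
exactly matching the last two terms on the right side of (\ref{Sect5_Special_EularData_Lemma_Eq}). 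Finally, the counterpart estimate for $\|\partial_z\hat v_h\|_{L^4([0,T],X^{k-2})}$ follows from $\|\partial_z\hat v\|_{X^{k-2}} \lem \|\hat{\omega}_h\|_{X^{k-2}} + \|\hat v\|_{X^{k-2,1}} + |\hat h|_{X^{k-2,\frac12}} + O(\e)$, which is the difference analogue of (\ref{Sect2_NormalDer_Vorticity_Estimate_5}).

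The main obstacle is the derivative-bookkeeping that forces the drop from $k-1$ to $k-2$: the estimate on $\hat{\omega}_h^b$ requires tangential derivatives of $\hat v$ on $\{z=0\}$, and the anisotropic trace inequality costs exactly one half-derivative in the normal direction, which is paid for by the $\sqrt{\e}\|\partial_z\hat v\|_{L^4([0,T],X^{k-1})}^2$ term appearing explicitly in the statement. A secondary delicacy is controlling $\hat v\cdot\nabla^\varphi\omega_h$ in $\mathcal{I}_6$: since $\nabla^\varphi\omega_h$ contains the normal derivative $\partial_z\omega_h$, one must use the $X^{m-2}$ uniform bound of Proposition \ref{Sect1_Proposition_TimeRegularity} together with $k\leq m-2$ to keep that term at the expected level without inflating the index budget.
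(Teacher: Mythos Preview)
Your proposal is correct and follows essentially the same route as the paper: decompose $\hat\omega_h=\hat\omega_h^{nhom}+\hat\omega_h^{hom}$ as in Lemma~\ref{Sect5_Vorticity_Lemma} but at level $k-2$, use the vanishing of the $\Theta^i$ from Lemma~\ref{Sect4_Vorticity_Discrepancy_Lemma} to reduce the homogeneous boundary trace to $\textsf{F}^{1,2}[\nabla\varphi^{\e}](\partial_j v^{\e,i})-\textsf{F}^{1,2}[\nabla\varphi](\partial_j v^i)$, and then convert $\sqrt{\e}\,\big|\hat v|_{z=0}\big|_{X^{k-1}}^2$ into $\|\hat v\|_{X^{k-1,1}}^2+\sqrt{\e}\,\|\partial_z\hat v\|_{L^4_t X^{k-1}}^2$ via the trace inequality. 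Your bookkeeping of the index drop and of the $\hat v\cdot\nabla^{\varphi}\omega_h$ term is accurate and matches the paper's use of the Euler regularity in Proposition~\ref{Sect1_Proposition_TimeRegularity}.
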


\begin{proof}
If $\Pi\mathcal{S}^{\varphi} v\nn|_{z=0} = 0$, $\Theta^i =0$ where $i=1,\cdots,6$, See Remark $\ref{Sect5_Euler_BoundaryData_Remark}$.

When $\ell+|\alpha|\leq k-2$, we study the equations $(\ref{Sect1_N_Derivatives_Difference_Eq})$ and decompose $\hat{\omega}_h = \hat{\omega}_h^{nhom} + \hat{\omega}_h^{hom}$, such that
$\hat{\omega}_h^{nhom}$ satisfies the nonhomogeneous equations $(\ref{Sect5_N_Derivatives_Difference_Eq_Nonhom})$
and $\hat{\omega}_h^{hom}$ satisfies the homogeneous equations $(\ref{Sect5_N_Derivatives_Difference_Eq_Hom})$.

While $\hat{\omega}_h^{nhom}$ satisfies the following estimate:
\begin{equation}\label{Sect5_Special_EularData_1}
\begin{array}{ll}
\|\hat{\omega}_h^{nhom}\|_{L^4([0,T],X^{k-2})}^2
\lem \sqrt{T}\big\|\hat{\omega}_{0,h}\big\|_{X^{k-2}}^2 + T \|\hat{\omega}_h\|_{L^4([0,T],X^{k-2})}^2 \\[9pt]\quad

+ \sqrt{T}\int\limits_0^T\|\hat{v}\|_{X^{k-2,1}}^2 \,\mathrm{d}t + \sqrt{T}\int\limits_0^T|\hat{h}|_{X^{k-2,1}}^2 \,\mathrm{d}t
+ \sqrt{T}|\partial_t^{k-1} \hat{h}|_{L^4([0,T],L^2)}^2 + O(\e).
\end{array}
\end{equation}

When $\ell+|\alpha|\leq k-2$, the estimate $(\ref{Sect5_N_Derivatives_Difference_Eq_Hom_1})$ is reduced as follows:
\begin{equation}\label{Sect5_Special_EularData_2}
\begin{array}{ll}
\|\partial_t^{\ell}\mathcal{Z}^{\alpha}\hat{\omega}_h^{hom}\|_{L^4([0,T],L^2(\mathbb{R}^3_{-}))}^2 \\[7pt]

\lem \sqrt{\e}\int\limits_0^T\big|\textsf{F}^{1,2}[\nabla\varphi^{\e}](\partial_j v^{\e,i}) -\textsf{F}^{1,2}[\nabla\varphi](\partial_j v^i)
\big|_{X^{k-2}(\mathbb{R}^2)}^2 \,\mathrm{d}t \\[7pt]

\lem \sqrt{\e}\int\limits_0^T\big|\hat{v}|_{z=0}\big|_{X^{k-1}(\mathbb{R}^2)}^2 \,\mathrm{d}t
+ \sqrt{\e}\int\limits_0^T|\hat{h}|_{X^{k-2,1}(\mathbb{R}^2)}^2 \,\mathrm{d}t \\[7pt]

\lem \sqrt{\e}\int\limits_0^T\|\hat{v}\|_{X^{k-1,1}(\mathbb{R}^2)}^2 \,\mathrm{d}t
+ \sqrt{\e}\sqrt{T}\|\partial_z\hat{v}\|_{L^4([0,T],X^{k-1})}^2
+ \sqrt{\e}\int\limits_0^T|\hat{h}|_{X^{k-2,1}(\mathbb{R}^2)}^2 \,\mathrm{d}t.
\end{array}
\end{equation}

By $(\ref{Sect5_Special_EularData_1})$ and $(\ref{Sect5_Special_EularData_2})$, we have
\begin{equation}\label{Sect5_Special_EularData_3}
\begin{array}{ll}
\|\partial_z\hat{v}_h\|_{L^4([0,T],X^{k-2})}^2 + \|\hat{\omega}_h\|_{L^4([0,T],X^{k-2})}^2
\\[10pt]

\lem \|\hat{\omega}_h^{nhom}\|_{L^4([0,T],X^{k-2})}^2
+ \|\partial_t^{\ell}\mathcal{Z}^{\alpha}\hat{\omega}_h^{hom}\|_{L^4([0,T],L^2(\mathbb{R}^3_{-}))}^2 \\[7pt]

\lem \big\|\hat{\omega}_0\big\|_{X^{k-2}}^2
+ \int\limits_0^T\|\hat{v}\|_{X^{k-1,1}}^2 \,\mathrm{d}t
+ \int\limits_0^T|\hat{h}|_{X^{k-2,1}}^2 \,\mathrm{d}t
+ \|\partial_t^{k-1}\hat{h}\|_{L^4([0,T],L^2)}^2 \\[8pt]\quad
+ \sqrt{\e}\|\partial_z\hat{v}\|_{L^4([0,T],X^{k-1})}^2
+ O(\e).
\end{array}
\end{equation}

Thus, Lemma $\ref{Sect5_Special_EularData_Lemma}$ is proved.
\end{proof}

\subsection{Convergence Rates of the Inviscid Limit}
In this subsection, we calculate convergence rates of the inviscid limit.

\begin{theorem}\label{Sect5_ConvergenceRates_Thm}
Assume $T>0$ is finite, fixed and independent of $\e$, $(v^{\e},h^{\e})$ is the solution in $[0,T]$ of Navier-Stokes equations $(\ref{Sect1_NS_Eq})$ with initial data $(v^{\e}_0,h^{\e}_0)$ satisfying $(\ref{Sect1_Proposition_TimeRegularity_1})$, $\omega^{\e}$ is its vorticity.
$(v,h)$ is the solution in $[0,T]$ of Euler equations $(\ref{Sect1_Euler_Eq})$ with initial data $(v_0,h_0)\in X^{m-1,1}(\mathbb{R}^3_{-}) \times X^{m-1,1}(\mathbb{R}^2)$, $\omega$ is its vorticity. Assume there exists an integer $k$ where $1\leq k\leq m-2$, such that $\|v^{\e}_0 -v_0\|_{X^{k-1,1}(\mathbb{R}^3_{-})} =O(\e^{\lambda^v})$, $|h^{\e}_0 -h_0|_{X^{k-1,1}(\mathbb{R}^2)} =O(\e^{\lambda^h})$, $\|\omega^{\e}_0 - \omega_0\|_{X^{k-1}(\mathbb{R}^3_{-})} =O(\e^{\lambda^{\omega}_1})$, where
$\lambda^v>0, \lambda^h>0, \lambda^{\omega}_1>0$.

If the Euler boundary data satisfies $\Pi\mathcal{S}^{\varphi} v\nn|_{z=0}\neq 0$ in $[0,T]$, then the convergence rates of the inviscid limit
satisfy $(\ref{Sect1_Thm4_ConvergenceRates_1})$.

If the Euler boundary data satisfies $\Pi\mathcal{S}^{\varphi} v\nn|_{z=0} = 0$ in $[0,T]$,
assume $\|\omega^{\e}_0 - \omega_0\|_{X^{k-2}(\mathbb{R}^3_{-})} =O(\e^{\lambda^{\omega}_2})$ where $\lambda^{\omega}_2>0$,
then the convergence rates of the inviscid limit satisfy $(\ref{Sect1_Thm4_ConvergenceRates_2})$.
\end{theorem}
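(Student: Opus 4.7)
The plan is to chain together Lemmas \ref{Sect5_Pressure_Lemma}, \ref{Sect5_Tangential_Estimates_Lemma}, \ref{Sect5_Vorticity_Lemma}, \ref{Sect5_NormalDer_Lemma} (and \ref{Sect5_Special_EularData_Lemma} for the special Euler data case) in a bootstrap argument, then translate the energy rates into $L^{\infty}$ rates via a trace-type interpolation inequality. First I would combine Lemma \ref{Sect5_Tangential_Estimates_Lemma} with Lemma \ref{Sect5_Pressure_Lemma} to rewrite $\|\nabla\hat{q}\|_{L^4([0,T],X^{k-1})}^2$ in terms of $\|\hat{v}\|_{X^{k-1,1}}^2$, $\|\partial_z\hat{v}\|_{X^{k-1}}^2$, $|\hat{h}|_{X^{k-1,\frac{1}{2}}}^2$ plus an $O(\e^2)$ residue, so that the tangential estimate closes in the quantity $\mathcal E^{tan}(t):=\|\hat{v}\|_{X^{k-1,1}}^2 + |\hat{h}|_{X^{k-1,1}}^2 + \e|\hat{h}|_{X^{k-1,\frac{3}{2}}}^2$ modulo the normal-derivative contribution $\|\partial_z\hat{v}\|_{L^4([0,T],X^{k-1})}^2$.

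For case (1), $\Pi\mathcal{S}^{\varphi}v\nn|_{z=0}\neq 0$, Lemma \ref{Sect5_Vorticity_Lemma} supplies
\begin{equation*}
\|\partial_z\hat{v}_h\|_{L^4([0,T],X^{k-1})}^2 + \|\hat{\omega}_h\|_{L^4([0,T],X^{k-1})}^2
\lem \|\hat{\omega}_0\|_{X^{k-1}}^2 + \int_0^T \mathcal E^{tan}\,\mathrm{d}t + O(\sqrt{\e}),
\end{equation*}
the $O(\sqrt{\e})$ residue coming from the homogeneous vorticity layer with jump $\textsf{F}^{1,2}[\nabla\varphi^\e](\partial_j v^{\e,i})-\omega^b$; inserting this back into the tangential estimate and applying integral Gronwall yields $\mathcal E^{tan}(t) \lem O(\e^{2\lambda^v}) + O(\e^{2\lambda^h}) + O(\e^{2\lambda^{\omega}_1}) + O(\sqrt{\e})$. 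Taking the square root gives the first two lines of $(\ref{Sect1_Thm4_ConvergenceRates_1})$ with exponent $\min\{\tfrac14,\lambda^v,\lambda^h,\lambda^{\omega}_1\}$. Feeding this tangential rate into Lemma \ref{Sect5_NormalDer_Lemma}, whose right-hand side involves unsquared $L^1_t$-norms rather than squared ones, Gronwall then produces $\|\hat{\omega}\|_{X^{k-2}}^2 + \|\partial_z\hat{v}\|_{X^{k-2}}^2 = O(\e^{\min\{1/4,\lambda^v,\lambda^h,\lambda^{\omega}_1\}})$, so square-rooting gives the halved exponent in the third line of $(\ref{Sect1_Thm4_ConvergenceRates_1})$. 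The pressure lines follow by plugging these rates back into Lemma \ref{Sect5_Pressure_Lemma} (and using the elliptic equation $(\ref{Sect5_Pressure_Estimates_3})_1$ to convert $\triangle^{\varphi^\e}\hat q$ into tangential and normal norms of $\hat v$, $\hat h$). Finally, the $Y_{tan}$-rates follow from $\|f\|_{L^{\infty}}^2 \lem \|f\|_{H_{tan}^{s_1}}\|\partial_z f\|_{H_{tan}^{s_2}}$ with $s_1+s_2>2$, combined with the tangential and normal rates just established.

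For case (2), $\Pi\mathcal{S}^{\varphi}v\nn|_{z=0}=0$, one drops one degree of conormal regularity (hence the shift $k-1\mapsto k-2$ in the tangential norms) and replaces Lemma \ref{Sect5_Vorticity_Lemma} by Lemma \ref{Sect5_Special_EularData_Lemma}. The residue there is $O(\sqrt{\e}\,\|\partial_z\hat{v}\|_{L^4([0,T],X^{k-1})}^2)$ rather than $O(\sqrt{\e})$; since the factor $\|\partial_z\hat{v}\|_{L^4(X^{k-1})}^2$ is itself $O(1)$ uniformly in $\e$ (by Proposition \ref{Sect1_Proposition_TimeRegularity}), this residue is of pure order $\sqrt{\e}$, and more importantly the $\sqrt\e$ factor doubles the effective exponent because it multiplies a quantity we already control; the same bootstrap with Gronwall therefore yields the improved tangential rate $O(\e^{\min\{1/2,\lambda^v,\lambda^h,\lambda^{\omega}_2\}})$ and, via Lemma \ref{Sect5_NormalDer_Lemma}, the halved rate $O(\e^{\min\{1/4,\lambda^v/2,\lambda^h/2,\lambda^{\omega}_2/2\}})$ for normal derivatives, vorticity, and pressure gradient, giving $(\ref{Sect1_Thm4_ConvergenceRates_2})$.

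The main obstacle is the circular coupling between tangential and normal estimates: the tangential bound in Lemma \ref{Sect5_Tangential_Estimates_Lemma} needs $\|\partial_z\hat{v}\|_{L^4([0,T],X^{k-1})}^2$ and $\|\nabla\hat q\|_{L^4([0,T],X^{k-1})}^2$ on the right, while the vorticity estimate in Lemma \ref{Sect5_Vorticity_Lemma}/\ref{Sect5_Special_EularData_Lemma} needs the tangential $\mathcal E^{tan}$ and the term $|\partial_t^k\hat h|_{L^4([0,T],L^2)}^2$ on the right (the latter being controlled by the uniform regularity proposition, not by the convergence rate). One must verify that this circular dependence actually closes after one Gronwall iteration without producing an $\e$-divergent feedback, which amounts to carefully tracking the precise location in which the $\sqrt{\e}$ residue enters and confirming that the factor $\sqrt{T}$ in $(\ref{Sect5_N_Derivatives_Difference_Eq_Nonhom_5})$ does not obstruct absorption for fixed $T$. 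The other delicate point is that the sharpness of the $\tfrac14$ versus $\tfrac12$ exponent dichotomy hinges on whether the boundary trace $\textsf F^{1,2}[\nabla\varphi^\e](\partial_j v^{\e,i})-\omega^b$ itself converges in tangential conormal norms (the $\Pi\mathcal{S}^{\varphi}v\nn|_{z=0}=0$ case) or only carries a uniform $O(1)$ bound (the opposite case), and this must be explicitly extracted at the boundary trace step rather than buried in the commutators.
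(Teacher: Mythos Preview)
Your overall strategy matches the paper's proof exactly: chain Lemmas \ref{Sect5_Pressure_Lemma}, \ref{Sect5_Tangential_Estimates_Lemma}, \ref{Sect5_Vorticity_Lemma}, \ref{Sect5_NormalDer_Lemma} (and \ref{Sect5_Special_EularData_Lemma} in case~(2)), close via Gronwall, then interpolate for the $Y_{tan}$ bounds. Two of your stated justifications, however, would actually break the argument.

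First, the term $|\partial_t^k\hat h|_{L^4([0,T],L^2)}^2$ on the right of Lemmas \ref{Sect5_Tangential_Estimates_Lemma} and \ref{Sect5_Vorticity_Lemma} \emph{cannot} be controlled by the uniform regularity Proposition~\ref{Sect1_Proposition_TimeRegularity}: that would contribute an $O(1)$ term to the right-hand side and destroy every convergence rate. You must instead use the kinematic boundary condition $\partial_t\hat h + v_y\cdot\nabla_y\hat h = \hat v\cdot\NN^\e$, apply $\partial_t^{k-1}$, and bound the resulting trace $|\partial_t^{k-1}\hat v|_{z=0}|_{L^2}$ by $\|\hat v\|_{X^{k-1}}+\|\partial_z\hat v\|_{X^{k-1}}$; this puts $|\partial_t^k\hat h|_{L^4(L^2)}^2$ back among the convergence-rate quantities already appearing in the Gronwall loop, which is how the paper implicitly arrives at \eqref{Sect5_ConvergenceRates_Thm_Eq_1}.

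Second, in case~(2) your residue accounting is wrong. Bounding $\|\partial_z\hat v\|_{L^4(X^{k-1})}^2$ merely by $O(1)$ gives $\sqrt{\e}\cdot O(1)=O(\sqrt{\e})$ in Lemma~\ref{Sect5_Special_EularData_Lemma}, which after Gronwall yields only the case~(1) tangential rate $O(\e^{1/4})$, not the improved $O(\e^{1/2})$. The paper's mechanism is a genuine bootstrap: first run the case~(1) argument (valid here too, by Remark~\ref{Sect5_Euler_BoundaryData_Remark}) to obtain $\|\partial_z\hat v\|_{L^4(X^{k-1})}^2=O(\e^{\min\{\frac12,2\lambda^v,2\lambda^h,2\lambda^{\omega}_1\}})$ as in \eqref{Sect5_ConvergenceRates_Thm_Eq_3}, then insert this into \eqref{Sect5_Special_EularData_Lemma_Eq} to get the residue $\sqrt{\e}\cdot O(\e^{1/2})=O(\e)$ that produces \eqref{Sect5_ConvergenceRates_Thm_Eq_7}. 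Your phrase ``the $\sqrt\e$ factor doubles the effective exponent because it multiplies a quantity we already control'' is the right intuition, but the control you need is the case~(1) convergence rate, not uniform regularity.
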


\begin{proof}
If $\Pi\mathcal{S}^{\varphi} v\nn|_{z=0}\neq 0$, we prove the converge rates of the inviscid limit:

By Lemmas $\ref{Sect5_Pressure_Lemma}, \ref{Sect5_Tangential_Estimates_Lemma},
\ref{Sect5_Vorticity_Lemma}$, we have
\begin{equation}\label{Sect5_ConvergenceRates_Thm_Eq_1}
\begin{array}{ll}
\|\hat{v}\|_{X^{k-1,1}}^2 + |\hat{h}|_{X^{k-1,1}}^2

\lem \|\hat{v}_0\|_{X^{k-1,1}}^2 + |\hat{h}_0|_{X^{k-1,1}}^2 + |\hat{\omega}_0|_{X^{k-1}}^2 \\[5pt]\quad
+ \int\limits_0^t\|\hat{v}\|_{X^{k-1,1}}^2
+ \int\limits_0^t\|\hat{h}\|_{X^{k-1,1}}^2 \,\mathrm{d}t + O(\sqrt{\e}).
\end{array}
\end{equation}

Apply the integral form of Gronwall's inequality to $(\ref{Sect5_ConvergenceRates_Thm_Eq_1})$, we get
\begin{equation}\label{Sect5_ConvergenceRates_Thm_Eq_2}
\begin{array}{ll}
\|\hat{v}\|_{X^{k-1,1}}^2 + |\hat{h}|_{X^{k-1,1}}^2

\lem \|\hat{v}_0\|_{X^{k-1,1}}^2 + |\hat{h}_0|_{X^{k-1,1}}^2
+ \big\|\hat{\omega}_0\big\|_{X^{k-1}}^2 + O(\sqrt{\e}) \\[7pt]\hspace{3.3cm}

\lem O(\e^{\min\{\frac{1}{2}, 2\lambda^v, 2\lambda^h, 2\lambda^{\omega}_1\}}).
\end{array}
\end{equation}

By Lemma $\ref{Sect5_Vorticity_Lemma}$, we have
\begin{equation}\label{Sect5_ConvergenceRates_Thm_Eq_3}
\begin{array}{ll}
\|\partial_z\hat{v}_h\|_{L^4([0,T],X^{k-1})}^2 + \|\hat{\omega}_h\|_{L^4([0,T],X^{k-1})}^2

\lem O(\e^{\min\{\frac{1}{2}, 2\lambda^v, 2\lambda^h, 2\lambda^{\omega}_1\}}).
\end{array}
\end{equation}

By Lemmas $\ref{Sect5_Pressure_Lemma}, \ref{Sect5_Vorticity_Lemma}, \ref{Sect5_NormalDer_Lemma}$, we have
\begin{equation}\label{Sect5_ConvergenceRates_Thm_Eq_4}
\begin{array}{ll}
\|\hat{\omega}\|_{X^{k-2}}^2 + \|\partial_z\hat{v}\|_{X^{k-2}}^2

\lem \|\hat{\omega}_0\|_{X^{k-2}}^2

+ \int\limits_0^t\|\hat{v}\|_{X^{k-2}} + \|\partial_z \hat{v}\|_{X^{k-2}} \\[6pt]\quad
+ \|\nabla\hat{q} \|_{X^{k-2}} + \|\hat{h}\|_{X^{k-1}}\,\mathrm{d}t + O(\e)

\lem O(\e^{\min\{\frac{1}{4}, \lambda^v, \lambda^h, \lambda^{\omega}_1\}}).
\end{array}
\end{equation}

If $\Pi\mathcal{S}^{\varphi} v\nn|_{z=0}= 0$, we prove the converge rates of the inviscid limit:

By Lemma $\ref{Sect5_Special_EularData_Lemma}$, we have
\begin{equation}\label{Sect5_ConvergenceRates_Thm_Eq_5}
\begin{array}{ll}
\|\partial_z\hat{v}_h\|_{L^4([0,T],X^{k-2})}^2 + \|\hat{\omega}_h\|_{L^4([0,T],X^{k-2})}^2
\\[6pt]
\lem \big\|\hat{\omega}_0\big\|_{X^{k-2}}^2
+ \int\limits_0^T\|\hat{v}\|_{X^{k-1,1}}^2 \,\mathrm{d}t
+ \int\limits_0^T|\hat{h}|_{X^{k-2,1}}^2 \,\mathrm{d}t
+ \sqrt{\e}O(\e^{\min\{\frac{1}{2}, 2\lambda^v, 2\lambda^h, 2\lambda^{\omega}_1\}}).
\end{array}
\end{equation}

Couple $(\ref{Sect5_ConvergenceRates_Thm_Eq_5})$ with the following tangential estimates,
\begin{equation}\label{Sect5_ConvergenceRates_Thm_Eq_6}
\begin{array}{ll}
\|\hat{v}\|_{X^{k-2,1}}^2 + |\hat{h}|_{X^{k-2,1}}^2

\lem \|\hat{v}_0\|_{X^{k-2,1}}^2 + |\hat{h}_0|_{X^{k-2,1}}^2
+\|\partial_z \hat{v}\|_{L^4([0,T],X^{k-2})}^2 \\[8pt]\quad
+ \int\limits_0^t\|\hat{v}\|_{X^{k-2,1}}^2
+ \int\limits_0^t\|\hat{h}\|_{X^{k-2,1}}^2 \,\mathrm{d}t + O(\e),
\end{array}
\end{equation}
apply the integral form of Gronwall's inequality, then we get
\begin{equation}\label{Sect5_ConvergenceRates_Thm_Eq_7}
\begin{array}{ll}
\|\hat{v}\|_{X^{k-2,1}}^2 + |\hat{h}|_{X^{k-2,1}}^2 \\[6pt]
\lem \big\|\hat{\omega}_0\big\|_{X^{k-2}}^2
+ \|\hat{v}_0\|_{X^{k-2,1}}^2 + |\hat{h}_0|_{X^{k-2,1}}^2 + \sqrt{\e}O(\e^{\min\{\frac{1}{2}, 2\lambda^v, 2\lambda^h, 2\lambda^{\omega}_1\}})
\\[8pt]
\lem O(\e^{\min\{1, 2\lambda^v, 2\lambda^h, 2\lambda^{\omega}_2, 2\lambda^{\omega}_1+1\}})
= O(\e^{\min\{1, 2\lambda^v, 2\lambda^h, 2\lambda^{\omega}_2\}}).
\end{array}
\end{equation}

Similar to $(\ref{Sect5_ConvergenceRates_Thm_Eq_4})$, we have
\begin{equation}\label{Sect5_ConvergenceRates_Thm_Eq_8}
\begin{array}{ll}
\|\hat{\omega}\|_{X^{k-3}}^2 + \|\partial_z\hat{v}\|_{X^{k-3}}^2
\lem O(\e^{\min\{\frac{1}{2}, \lambda^v, \lambda^h, \lambda^{\omega}_2\}}).
\end{array}
\end{equation}
Thus, Theorem $\ref{Sect5_ConvergenceRates_Thm}$ is proved.
\end{proof}

To estimate $\NN^{\e}\cdot \partial_z^{\varphi^{\e}} v^{\e} - \NN \cdot \partial_z^{\varphi} v$, we use the equality
$\NN\cdot\partial_z^{\varphi} v = - (\partial_1 v^1 + \partial_2 v^2)$.
To estimate $\NN^{\e}\cdot {\omega}^{\e} -\NN\cdot \omega$, we use the following equality:
\begin{equation}\label{Sect5_Vorticity_Multiple_N}
\begin{array}{ll}
\NN\cdot\omega
= -\partial_1\varphi(\partial_2 v^3 - \frac{\partial_2\varphi}{\partial_z\varphi}\partial_z v^3
- \frac{1}{\partial_z\varphi}\partial_z v^2)
-\partial_2\varphi(- \partial_1 v^3 + \frac{\partial_1\varphi}{\partial_z\varphi}\partial_z v^3
+ \frac{1}{\partial_z\varphi}\partial_z v^1)
\\[8pt]\hspace{1.3cm}
+ \partial_1 v^2 - \frac{\partial_1\varphi}{\partial_z\varphi}\partial_z v^2 - \partial_2 v^1 + \frac{\partial_2\varphi}{\partial_z\varphi}\partial_z v^1
\\[8pt]\hspace{0.9cm}

= -\partial_1\varphi\partial_2 v^3 +\partial_2\varphi\partial_1 v^3 + \partial_1 v^2 - \partial_2 v^1.
\end{array}
\end{equation}

\section{Regularity Structure of Navier-Stokes Solutions for Fixed $\sigma>0$}
In this section, $\sigma>0$, we prove Proposition $\ref{Sect1_Proposition_Regularity_Tension}$
on the regularities of Navier-Stokes solutions and Euler solutions.
For simplicity, we omit the superscript ${}^{\e}$ in this section, which represents Navier-Stokes solutions.

Since the estimates of normal derivatives are the same as the $\sigma=0$ case, we only focus on the estimates of the pressure gradient and
tangential derivatives when $\sigma>0$.

The following lemma concerns the estimate of the pressure gradient.
\begin{lemma}\label{Sect6_Pressure_Estimates}
Assume the pressure $q$ satisfies the elliptic equation with Neumann boundary condition $(\ref{Sect1_Pressure_Neumann})$, then
$q$ has the following gradient estimate:
\begin{equation}\label{Sect6_Pressure_Estimate_Eq}
\begin{array}{ll}
\|\nabla q\|_{X^{m-1}}
\lem \|\partial_t^m v\|_{X^m} + \|v\|_{X^{m-1,1}} + \|\partial_z v\|_{X^{m-1}} +  |h|_{X^{m,1}} \\[6pt]\quad
+ \e \|\nabla_y v\|_{X^{m-1,1}} + \e \|\partial_z v\|_{X^{m-1}} + \e |h|_{X^{m-1,\frac{3}{2}}}.
\end{array}
\end{equation}
\end{lemma}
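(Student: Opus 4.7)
The plan is to transform the elliptic problem to the fixed half-space and apply standard elliptic regularity for the Neumann problem associated with the uniformly elliptic operator $\nabla\cdot(\textsf{E}\,\nabla\cdot)$, where $\textsf{E}$ is the Dirichlet matrix from Lemma $\ref{Sect5_Pressure_Lemma}$. Using the identities $\triangle^{\varphi}q = (1/\partial_z\varphi)\nabla\cdot(\textsf{E}\nabla q)$ and $\nabla^{\varphi}q\cdot\NN = (\textsf{E}\nabla q)\cdot e_3\big|_{z=0}$, the system $(\ref{Sect1_Pressure_Neumann})$ becomes
\begin{equation*}
\nabla\cdot(\textsf{E}\nabla q) = -\partial_z\varphi\,\partial_j^{\varphi}v^i\partial_i^{\varphi}v^j, \qquad (\textsf{E}\nabla q)\cdot e_3\big|_{z=0} = g_N,
\end{equation*}
with $g_N = -\partial_t^{\varphi}v\cdot\NN - v\cdot\nabla^{\varphi}v\cdot\NN + \e\triangle^{\varphi}v\cdot\NN$. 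The matrix $\textsf{E}$ is uniformly positive definite under the regularity assumptions on $\varphi$, so the elliptic estimate is available.

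The key technical step is to commute tangential operators $\partial_t^{\ell}\partial_y^{\alpha}$ with $\ell+|\alpha|\leq m-1$ through the equation and perform the $L^2$ energy estimate. Testing the differentiated equation against $\partial_t^{\ell}\partial_y^{\alpha}q$ and integrating by parts produces the boundary contribution $\int_{z=0}\partial_t^{\ell}\partial_y^{\alpha}g_N\,\partial_t^{\ell}\partial_y^{\alpha}q\,\mathrm{d}y$, which is bounded via duality by $|g_N|_{X^{m-1,-\frac{1}{2}}}\,|q|_{z=0}|_{X^{m-1,\frac{1}{2}}}$; the second factor is absorbed by the gradient norm using the trace inequality. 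This gives control of tangential derivatives $\|\nabla q\|_{X_{tan}^{m-1}}$, and the normal component $\partial_{zz}q$ is then recovered algebraically from the equation itself, expressing it as a linear combination of mixed tangential-normal derivatives, the interior force term, and lower-order contributions.

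The main obstacle lies in estimating the Neumann datum $g_N$ in the boundary norm $X^{m-1,-\frac{1}{2}}$. Differentiating $\partial_t^{\varphi}v\cdot\NN$ produces the top-order time derivative $\partial_t^m v$ at $z=0$ after $m-1$ time derivatives, which forces the appearance of the $\|\partial_t^m v\|$ term on the right-hand side; this is the price of working with the Neumann formulation rather than the Dirichlet one and is the reason the bound requires a norm involving $\partial_t^m v$ (whose $L^4([0,T];L^2)$ control is separately established in $(\ref{Sect1_Proposition_Regularity_Tension_2})$). The convective term $v\cdot\nabla^{\varphi}v\cdot\NN$ is handled by product estimates combined with trace inequalities of the form $|f|_{X^{s,\frac{1}{2}}}\lesssim \|f\|_{X^s}^{\frac{1}{2}}\|\partial_z f\|_{X^s}^{\frac{1}{2}}$, which produce the $\|v\|_{X^{m-1,1}}$ and $\|\partial_z v\|_{X^{m-1}}$ contributions. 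All factors of $\NN$ are expanded in terms of $\nabla h$, which accounts for $|h|_{X^{m,1}}$.

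Finally, the viscous term $\e\triangle^{\varphi}v\cdot\NN$ contributes the $\e$-weighted pieces: applying tangential derivatives and the trace estimate yields $\e|\nabla v|_{z=0}|_{X^{m-1,\frac{1}{2}}}$, which is bounded via $\e(\|\nabla_y v\|_{X^{m-1,1}}+\|\partial_z v\|_{X^{m-1}})$, while the terms from $\NN$ generate $\e|h|_{X^{m-1,\frac{3}{2}}}$. Summing all contributions and absorbing the lower-order pressure term $\|q\|_{L^2}$ (or a suitable weighted analogue, justified by the decay of $q$ arising from the $h$ and surface tension terms in the Dirichlet trace) into the left-hand side via Poincaré-type arguments completes the proof of $(\ref{Sect6_Pressure_Estimate_Eq})$.
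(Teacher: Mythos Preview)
Your overall framework---reducing to a Neumann problem for $\nabla\cdot(\textsf{E}\nabla q)$ and bounding the Neumann datum in $X^{m-1,-\frac{1}{2}}$---matches the paper's approach. However, two of your boundary estimates have genuine gaps.

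\textbf{The viscous term.} You assert that $\e|\triangle^{\varphi}v\cdot\NN|_{X^{m-1,-\frac12}}\lesssim \e|\nabla v|_{z=0}|_{X^{m-1,\frac12}}$, but $\triangle^{\varphi}v$ contains the second normal derivative $\partial_{zz}v$, which is not tangential and cannot be moved by integration by parts along the boundary. The paper's key observation (see $(\ref{Sect6_Pressure_Estimate_2})$) is structural: from $\nabla^{\varphi}\cdot v=0$ one has $\partial_z v\cdot\NN = -\partial_z\varphi\,\nabla_y\cdot v_y$, and differentiating in $z$ gives
\[
\partial_{zz}v\cdot\NN\big|_{z=0} = -\partial_z^2\varphi\,\nabla_y\cdot v_y - \partial_z\varphi\,\nabla_y\cdot(\partial_z v_y) + \partial_z v_y\cdot\partial_z\partial_y\varphi.
\]
The term $\partial_z v_y|_{z=0}$ is then replaced by $f^{5,6}[\nabla\varphi](\partial_j v^i)$ via the dynamic boundary condition $\Pi\mathcal{S}^{\varphi}v\,\nn|_{z=0}=0$ (this is formula $(\ref{Sect2_Vorticity_H_BC_10})$). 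Only after this substitution does $\triangle^{\varphi}v\cdot\NN|_{z=0}$ become an expression in tangential derivatives of $v$ and $\varphi$, yielding the bound you wrote. Without invoking both the divergence-free condition and the boundary stress condition, your claimed reduction to first-order traces is unjustified.

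\textbf{The time-derivative term.} For $|\partial_t^{m}v\cdot\NN|_{-\frac12}$ you cannot use the multiplicative trace inequality $|f|_{\frac12}\lesssim\|f\|^{1/2}\|\partial_z f\|^{1/2}$, since that would require $\|\partial_z\partial_t^m v\|_{L^2}$, which is not among the controlled quantities. The paper instead uses the $H(\mathrm{div})$ normal-trace inequality $|f\cdot\NN|_{-\frac12}\lesssim \|f\|_{L^2}+\|\nabla^{\varphi}\cdot f\|_{L^2}$ (cited from \cite{Wang_Xin_2015}); applied to $f=\partial_t^{\ell}\mathcal{Z}^{\alpha}\partial_t v$ this gives control by $\|\partial_t^m v\|_{L^2}$ plus $\|\nabla^{\varphi}\cdot\partial_t^{\ell}\mathcal{Z}^{\alpha}\partial_t v\|_{L^2}$, and the latter is lower order because $\nabla^{\varphi}\cdot v=0$ makes the commutator tame. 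This is the mechanism that produces $\|v\|_{X^m}$ without a normal derivative at top time order.

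Finally, your remark about absorbing $\|q\|_{L^2}$ via Poincar\'e is problematic on the infinite-depth domain $\mathbb{R}^3_{-}$; the paper explicitly notes that $\|\partial_t^{\ell}q\|_{L^2}$ has no bound in general. The energy estimate for the Neumann problem should be set up so that only $\|\nabla q\|$ appears, with no zeroth-order pressure term to absorb.
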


\begin{proof}
The $L^2$ estimate of the elliptic equation with its Neumann boundary condition $(\ref{Sect1_Pressure_Neumann})$ is standard, that is
\begin{equation}\label{Sect6_Pressure_Estimate_L^2}
\begin{array}{ll}
\|\nabla q\|_{L^2} \lem \|v\cdot \nabla^{\varphi} v\|_{L^2}
+ \big|\nabla^{\varphi} q\cdot\NN|_{z=0} \big|_{-\frac{1}{2}} \\[8pt]

\lem \|v\|_{X^{0,1}}+ \|\partial_z v\|_{L^2} + |h|_{X^{0,1}}
+ \big|\partial_t^{\varphi} v\cdot\NN|_{z=0}\big|_{-\frac{1}{2}} \\[6pt]\quad
+ \big|v\cdot\nabla^{\varphi} v\cdot\NN|_{z=0} \big|_{-\frac{1}{2}}
+ \e\big|\triangle^{\varphi} v\cdot\NN|_{z=0}\big|_{-\frac{1}{2}} \\[8pt]

\lem \|v\|_{X^{0,1}}+ \|\partial_z v\|_{L^2} + |h|_{X^{0,1}}
+ \|\partial_t^{\varphi} v\|_{L^2} + \|\nabla^{\varphi}\cdot\partial_t^{\varphi} v\|_{L^2} \\[6pt]\quad
+ \|v\cdot\nabla^{\varphi} v\|_{L^2} + \|\nabla\cdot(v\cdot\nabla^{\varphi} v)\|_{L^2}
+ \e\big|v|_{z=0}\big|_{\frac{3}{2}} + \e|h|_{\frac{3}{2}}.
\end{array}
\end{equation}
where we used the inequality $|v\cdot\NN|_{-\frac{1}{2}} \lem \|v\| + \|\nabla^{\varphi}\cdot v\|$ (see \cite{Wang_Xin_2015}).

Similar to \cite{Wang_Xin_2015,Masmoudi_Rousset_2012_NavierBC}, we have higher order estimates for $(\ref{Sect1_Pressure_Neumann})$:
\begin{equation}\label{Sect6_Pressure_Estimate_1}
\begin{array}{ll}
\|\nabla q\|_{X^{m-1}} \lem \|v\cdot \nabla^{\varphi} v\|_{X^{m-1}}
+ \big|\nabla^{\varphi} q\cdot\NN|_{z=0} \big|_{X^{m-1, -\frac{1}{2}}} \\[9pt]

\lem \|v\|_{X^{m-1,1}}+ \|\partial_z v\|_{X^{m-1}} + |h|_{X^{m-1,1}}
+ \big|\nabla^{\varphi} q\cdot\NN|_{z=0} \big|_{X^{m-1, -\frac{1}{2}}}.
\end{array}
\end{equation}

Next, we estimate $\big|\nabla^{\varphi} q\cdot\NN|_{z=0} \big|_{X^{m-1, -\frac{1}{2}}}$.

Firstly, it is easy to estimate $\e \big|\triangle^{\varphi} v\cdot\NN|_{z=0} \big|_{X^{m-1,-\frac{1}{2}}}$.
It follows from the divergence free condition $\nabla^{\varphi}\cdot v=0$ that
\begin{equation}\label{Sect6_Pressure_Estimate_2}
\begin{array}{ll}
\partial_z v \cdot\NN = -\partial_z\varphi \nabla_y\cdot v_y, \quad z\leq 0, \\[11pt]

\partial_{zz} v\cdot\NN|_{z=0} = \partial_z(\partial_z v \cdot\NN) -\partial_z v \cdot \partial_z\NN
= - \partial_z(\partial_z\varphi \nabla_y\cdot v_y) + \partial_z v_y \cdot \partial_z\partial_y\varphi \\[5pt]
= - \partial_z^2\varphi \nabla_y\cdot v_y - \partial_z\varphi \nabla_y\cdot\partial_z v_y
+ \partial_z v_y \cdot \partial_z\partial_y\varphi  \\[5pt]
= - \partial_z^2\varphi \nabla_y\cdot v_y - \partial_z\varphi \nabla_y\cdot[f^{5,6}[\nabla\varphi](\partial_j v^i)]
+ [f^{5,6}[\nabla\varphi](\partial_j v^i)] \cdot \partial_z\partial_y\varphi ,
\end{array}
\end{equation}
where $\partial_z v_h = f^{5,6}[\nabla\varphi](\partial_j v^i)$ is proved in $(\ref{Sect2_Vorticity_H_BC_10})$.
Thus,
\begin{equation}\label{Sect6_Pressure_Estimate_3}
\begin{array}{ll}
\e \big|\triangle^{\varphi} v\cdot\NN|_{z=0} \big|_{X^{m-1,-\frac{1}{2}}}
\lem \e \big|\nabla_y v|_{z=0}\big|_{X^{m-1,\frac{1}{2}}} + \e |h|_{X^{m-1,\frac{3}{2}}}
 \\[7pt]

\lem \e \|\nabla_y v\|_{X^{m-1,1}}
 + \e \|\partial_z v\|_{X^{m-1}} + \e |h|_{X^{m-1,\frac{3}{2}}}.
\end{array}
\end{equation}

Secondly, we estimate $\big|(\partial_t^{\varphi} v + v\cdot\nabla^{\varphi} v)\cdot\NN|_{z=0} \big|_{X^{m-1, -\frac{1}{2}}}$.
\begin{equation}\label{Sect6_Pressure_Estimate_4}
\begin{array}{ll}
\big|(\partial_t^{\varphi} v + v\cdot\nabla^{\varphi} v)\cdot\NN|_{z=0} \big|_{X^{m-1, -\frac{1}{2}}}
= \big|(\partial_t v + v_y\cdot\nabla_y v)\cdot\NN|_{z=0} \big|_{X^{m-1, -\frac{1}{2}}} \\[8pt]

\lem \sum\limits_{\ell+|\alpha|\leq m-1}
\big(\big|\partial_t^{\ell}\mathcal{Z}^{\alpha}(\partial_t v + v_y\cdot\nabla_y v)\cdot\NN|_{z=0} \big|_{-\frac{1}{2}}
+ \big|\partial_t^{\ell}\mathcal{Z}^{\alpha}\NN|_{z=0} \big|_{-\frac{1}{2}}\big) \\[12pt]

\lem \sum\limits_{\ell+|\alpha|\leq m-1}
\big(\|\partial_t^{\ell}\mathcal{Z}^{\alpha} \partial_t v\|_{L^2}
+ \|\nabla^{\varphi}\cdot\partial_t^{\ell}\mathcal{Z}^{\alpha} \partial_t v\|_{L^2}\big) \\[10pt]\quad
+ \sum\limits_{\ell+|\alpha|\leq m-1}
\big(\|\partial_t^{\ell}\mathcal{Z}^{\alpha} \nabla_y v\|_{L^2}
+ \|\nabla^{\varphi}\cdot\partial_t^{\ell}\mathcal{Z}^{\alpha} \nabla_y v\|_{L^2}\big)
+ |h|_{X^{m-1, \frac{1}{2}}} \\[10pt]

\lem \|v\|_{X^m} + \|\partial_z v\|_{X^{m-1}} +  |h|_{X^{m,1}}.
\end{array}
\end{equation}
Thus, Lemma $\ref{Sect6_Pressure_Estimates}$ is proved.
\end{proof}

Before estimating tangential derivatives of $v$, we have the estimate of $\partial_t^{\ell} h$
by using the kinetical boundary condition $(\ref{Sect1_NS_Eq_ST})_3$, which is the same with $(\ref{Sect1_NS_Eq})_3$,
we give the following lemma without proof, which is the same with Lemma $\ref{Sect2_Height_Estimates_Lemma}$.
\begin{lemma}\label{Sect6_Height_Estimates_Lemma}
Assume $0\leq\ell\leq m-1$,
$\partial_t^{\ell}h$ have the estimates:
\begin{equation}\label{Sect6_Height_Estimates_Lemma_Eq}
\begin{array}{ll}
\int\limits_{\mathbb{R}^2} |\partial_t^{\ell}h|^2 \,\mathrm{d}y

\lem |h_0|_{X^{m-1}}^2
+ \int\limits_0^t |h|_{X^{m-1,1}}^2 + \|v\|_{X^{m-1,1}}^2 \,\mathrm{d}t
+ \|\partial_z v\|_{L^4([0,T],X^{m-1})}^2.
\end{array}
\end{equation}
\end{lemma}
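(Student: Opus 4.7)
The plan is to mirror the proof of Lemma \ref{Sect2_Height_Estimates_Lemma} verbatim, since the kinetical boundary condition $(\ref{Sect1_NS_Eq_ST})_3$ is identical to $(\ref{Sect1_NS_Eq})_3$; surface tension enters only through the dynamical boundary condition, which is not used here. So the estimate for $|\partial_t^\ell h|_{L^2}$ in the $\sigma>0$ setting has exactly the same structure as in the $\sigma=0$ setting.

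First, I would apply $\partial_t^\ell$ (for $0\le\ell\le m-1$) to the kinetical boundary condition in the form $\partial_t h + v_y\cdot\nabla_y h = v^3|_{z=0}$, obtaining
\begin{equation*}
\partial_t\partial_t^\ell h + v_y\cdot\nabla_y\partial_t^\ell h = \partial_t^\ell v^3|_{z=0} - [\partial_t^\ell, v_y\cdot\nabla_y]h.
\end{equation*}
Then I would test this equation against $\partial_t^\ell h$ in $L^2(\mathbb{R}^2)$. The transport term produces, after integration by parts in $y$, a harmless contribution $\tfrac12 |\partial_t^\ell h|_{L^2}^2\,\|\nabla_y\cdot v_y\|_{L^\infty}$. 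The commutator $[\partial_t^\ell, v_y\cdot\nabla_y]h$ is bounded by $|h|_{X^{m-1,1}}$ using the uniform regularity of $v$ from Proposition \ref{Sect1_Proposition_Regularity_Tension}.

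The main term to handle is the trace $\partial_t^\ell v^3|_{z=0}$. I would control it by $\bigl|v|_{z=0}\bigr|_{X^{\ell}}^2\lesssim \|v\|_{X^{k-1,1}}^2 + \|\partial_z v\|_{X^{k-1}}^2$ via the standard anisotropic trace inequality used throughout the paper. This yields the differential inequality
\begin{equation*}
\frac{\mathrm{d}}{\mathrm{d}t}|\partial_t^\ell h|_{L^2}^2 \lesssim |\partial_t^\ell h|_{L^2}^2 + |h|_{X^{m-1,1}}^2 + \|v\|_{X^{m-1,1}}^2 + \|\partial_z v\|_{X^{m-1}}^2.
\end{equation*}
Summing over $\ell\le m-1$, integrating in time, and applying Gronwall's inequality produces the bound with $\int_0^t\|\partial_z v\|_{X^{m-1}}^2\mathrm{d}t$ on the right. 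The final step is to replace this $L^2_tX^{m-1}$ norm by the $L^4_tX^{m-1}$ norm via $\int_0^t\|\partial_z v\|_{X^{m-1}}^2\mathrm{d}t\lesssim \sqrt{T}\,\|\partial_z v\|_{L^4([0,T],X^{m-1})}^2$ (Cauchy--Schwarz), giving exactly $(\ref{Sect6_Height_Estimates_Lemma_Eq})$.

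There is no real obstacle: the argument is purely a transport/trace estimate, and the only subtlety is remembering to pass from $L^2_t$ to $L^4_t$ at the end to match the bookkeeping used elsewhere in the paper. Since this lemma makes no use of the dynamical boundary condition (where the curvature term $\sigma H$ lives), the proof is literally identical to that of Lemma \ref{Sect2_Height_Estimates_Lemma}, and I would simply refer to that proof for brevity.
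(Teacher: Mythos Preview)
Your proposal is correct and matches the paper's approach exactly: the paper states this lemma without proof, noting explicitly that the kinetical boundary condition $(\ref{Sect1_NS_Eq_ST})_3$ coincides with $(\ref{Sect1_NS_Eq})_3$ and that the argument is identical to Lemma~\ref{Sect2_Height_Estimates_Lemma}. Your sketch faithfully reproduces that proof (modulo the harmless typo $k-1$ for $m-1$ in the trace bound).
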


Now, we develop a priori estimates for tangential derivatives including time derivatives.
Our equations and variables are different from \cite{Wang_Xin_2015} which used Alinhac's good unknown.
\begin{lemma}\label{Sect6_Tangential_Estimate_Lemma}
Assume the conditions are the same with those of Proposition $\ref{Sect1_Proposition_Regularity_Tension}$,
then $v$ and $h$ satisfy the a priori estimate:
\begin{equation}\label{Sect6_Tangential_Estimate}
\begin{array}{ll}
\|v\|_{X^{m-1,1}}^2 + |h|_{X^{m-1,1}}^2 + \e |h|_{X^{m-1,\frac{3}{2}}}^2 + \sigma |h|_{X^{m-1,2}}^2
+ \e\int\limits_0^t \|\nabla v\|_{X^{m-1,1}}^2 \,\mathrm{d}t \\[9pt]

\lem \|v_0\|_{X^{m-1,1}}^2 + |h_0|_{X^{m-1,1}}^2 + \e |h_0|_{X^{m-1,\frac{3}{2}}}^2 + \sigma |h_0|_{X^{m-1,2}}^2 \\[6pt]\quad
+ \|\partial_z v\|_{L^4([0,T],X^{m-1})}^2 + \|\partial_t^m v\|_{L^4([0,T],L^2)}^2 + \|\partial_t^m h\|_{L^4([0,T],X^{0,1})}^2.
\end{array}
\end{equation}
\end{lemma}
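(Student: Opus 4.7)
The plan is to develop a direct $L^2$ energy estimate on $\partial_t^\ell\mathcal{Z}^\alpha v$ for $\ell+|\alpha|\le m$, $\ell\le m-1$, \emph{without} introducing Alinhac's good unknown. The key observation is that the surface tension term $-\sigma H\nn$ in the dynamical boundary condition provides a coercive contribution strong enough to replace both Alinhac's good unknown and the Taylor sign condition used in the $\sigma=0$ case of Lemma~\ref{Sect2_Tangential_Estimate_Lemma}.

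For $|\alpha|\ge 1$, I would apply $\partial_t^\ell\mathcal{Z}^\alpha$ to $(\ref{Sect1_NS_Eq_ST})_1$, multiply by $\partial_t^\ell\mathcal{Z}^\alpha v$, and integrate over $\mathbb{R}^3_{-}$ in the measure $\mathrm{d}\mathcal{V}_t$. The viscous term, after the formula $\triangle^\varphi v=2\nabla^\varphi\cdot\mathcal{S}^\varphi v-\nabla^\varphi(\nabla^\varphi\cdot v)$ and Korn's inequality $(\ref{Sect1_Good_Useful_Inequalities})_1$, yields the full dissipation $\e\int_0^t\|\nabla\partial_t^\ell\mathcal{Z}^\alpha v\|_{L^2}^2\,\mathrm{d}t$. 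The convection and pressure terms are integrated by parts in the usual way; since we do not use the good unknown, the commutator errors $[\partial_t^\ell\mathcal{Z}^\alpha,\nabla^\varphi]q$ and $[\partial_t^\ell\mathcal{Z}^\alpha,\nabla^\varphi\cdot]v$ are controlled directly using $\|\nabla q\|_{X^{m-1}}$ from Lemma~\ref{Sect6_Pressure_Estimates} and the uniform regularity of $\varphi$. The resulting boundary term takes the form
\begin{equation*}
-\int_{\{z=0\}}\bigl(gh-\sigma H\bigr)\,\partial_t^\ell\mathcal{Z}^\alpha v\cdot\NN\,\mathrm{d}y + \text{commutators},
\end{equation*}
and by the kinematic boundary condition $(\ref{Sect1_NS_Eq_ST})_3$ the factor $\partial_t^\ell\mathcal{Z}^\alpha v\cdot\NN$ can be replaced by $\partial_t\partial_t^\ell\mathcal{Z}^\alpha h+v_y\cdot\nabla_y\partial_t^\ell\mathcal{Z}^\alpha h$ plus commutators. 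The $gh$ part then yields $\tfrac{1}{2}\tfrac{\mathrm{d}}{\mathrm{d}t}\int g|\partial_t^\ell\mathcal{Z}^\alpha h|^2$, while linearizing $H=\nabla_y\cdot\bigl(\nabla_y h/\sqrt{1+|\nabla_y h|^2}\bigr)$ and integrating by parts in $y$ produces the crucial surface-tension energy $\tfrac{\sigma}{2}\tfrac{\mathrm{d}}{\mathrm{d}t}\int|\nabla_y\partial_t^\ell\mathcal{Z}^\alpha h|^2\,\mathrm{d}y$, which supplies the $\sigma|h|_{X^{m-1,2}}^2$ term on the left-hand side. Because $\sigma>0$ is fixed, the quadratic form $g|h|^2+\sigma|\nabla_y h|^2$ is coercive on $\dot H^1(\mathbb{R}^2)$ regardless of the sign of $g-\partial_z^\varphi q|_{z=0}$, which is why no Taylor sign condition is invoked.

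For the case $|\alpha|=0$ with $0\le\ell\le m-1$, we follow the same strategy as in $(\ref{Sect2_Tangential_Estimate_10})$: since $\|\partial_t^\ell q\|_{L^2}$ is not bounded on the infinite-depth domain, we do not integrate the pressure term by parts but leave $\int\partial_t^\ell\nabla^\varphi q\cdot\partial_t^\ell v\,\mathrm{d}\mathcal{V}_t$ on the right and absorb it through $\|\partial_t^\ell\nabla q\|_{L^2}$, which by Lemma~\ref{Sect6_Pressure_Estimates} is controlled by $\|\partial_t^m v\|_{L^2}$ plus tangential norms and $|h|_{X^{m,1}}$. This is precisely the mechanism by which $\|\partial_t^m v\|_{L^4([0,T],L^2)}^2$ enters the RHS. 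The commutators $[\partial_t^\ell\mathcal{Z}^\alpha,\nabla^\varphi]$ acting on top-order terms produce quantities of the form $\partial_t^m\eta$ and $\partial_t^m h$, which are absorbed into $\|\partial_t^m h\|_{L^4([0,T],X^{0,1})}^2$. The estimate of $\|\partial_t^\ell v\|_{L^2}$ itself then follows by combining this with Lemma~\ref{Sect6_Height_Estimates_Lemma}, and the $\e|h|_{X^{m-1,3/2}}^2$ bound is obtained by a direct adaptation of Lemma~\ref{Sect2_Height_Viscous_Estimates_Lemma} applied to the kinematic boundary condition with the $\e\Lambda^{1/2}$ weight.

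The main obstacle is tracking the commutator structure so that the surface-tension energy $\sigma|\nabla_y\partial_t^\ell\mathcal{Z}^\alpha h|^2$ emerges cleanly as a total time derivative without spurious remainders at top order. In particular, the linearization $H\approx\nabla_y\cdot(\nabla_y h/\sqrt{1+|\nabla_y h|^2})$ generates quasilinear lower-order pieces in the commutator $[\partial_t^\ell\mathcal{Z}^\alpha, H]$; these must be handled by absorbing their Laplacian-type leading structure into the $\sigma|h|_{X^{m-1,2}}^2$ energy (losing a small fraction via Cauchy--Schwarz) while the remaining nonlinear corrections are bounded by $|h|_{X^{m-1,1}}^2$ times a factor depending on $|\nabla h|_\infty$, which is uniformly small. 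Once these commutators are identified and absorbed, summing over $(\ell,\alpha)$ and applying the integral Gronwall inequality yields $(\ref{Sect6_Tangential_Estimate})$.
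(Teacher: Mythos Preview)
Your proposal is correct and follows essentially the same approach as the paper: for $|\alpha|\ge 1$ you work directly with $\partial_t^\ell\mathcal{Z}^\alpha v$ (no Alinhac good unknown), extract the surface-tension energy $\tfrac{\sigma}{2}\tfrac{\mathrm{d}}{\mathrm{d}t}\int|\nabla_y\partial_t^\ell\mathcal{Z}^\alpha h|^2$ from the linearized curvature via the kinematic boundary condition and an integration by parts in $y$, and for $|\alpha|=0$ you avoid integrating the pressure by parts and instead bound $\|\partial_t^\ell\nabla q\|_{L^2}$ through Lemma~\ref{Sect6_Pressure_Estimates}, which is exactly where $\|\partial_t^m v\|_{L^4_t L^2}$ enters. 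The only cosmetic slip is that your displayed boundary integrand should read $g\,\partial_t^\ell\mathcal{Z}^\alpha h-\sigma\,\partial_t^\ell\mathcal{Z}^\alpha H$ rather than $gh-\sigma H$, but your subsequent description makes clear you have the right expression in mind.
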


\begin{proof}
For the fixed $\sigma>0$, we do not need Alinhac's good unknown $(\ref{Sect1_Good_Unknown_1})$. 
Apply $\partial_t^{\ell}\mathcal{Z}^{\alpha}$ to $(\ref{Sect1_NS_Eq_ST})$, 
then $\partial_t^{\ell}\mathcal{Z}^{\alpha}v$ and $\partial_t^{\ell}\mathcal{Z}^{\alpha}q$ satisfy the following equations:
\begin{equation}\label{Sect6_Tangential_Estimate_2}
\left\{\begin{array}{ll}
\partial_t^{\varphi} \partial_t^{\ell}\mathcal{Z}^{\alpha}v
+ v\cdot\nabla^{\varphi} \partial_t^{\ell}\mathcal{Z}^{\alpha} v
+ \nabla^{\varphi} \partial_t^{\ell}\mathcal{Z}^{\alpha} q
- 2\e\nabla^{\varphi}\cdot \mathcal{S}^{\varphi} \partial_t^{\ell}\mathcal{Z}^{\alpha}v
\\[8pt]\quad
= \partial_t^{\ell+1}\mathcal{Z}^{\alpha}\eta \partial_z^{\varphi}v
+ \partial_t^{\ell}\mathcal{Z}^{\alpha}\nabla \eta \cdot v \partial_z^{\varphi} v
+ \partial_t^{\ell}\mathcal{Z}^{\alpha}\nabla \eta \cdot \partial_z^{\varphi} q \\[8pt]\quad

+ 2\e\nabla^{\varphi}\cdot [\partial_t^{\ell}\mathcal{Z}^{\alpha}, \mathcal{S}^{\varphi}]v
+ 2\e[\partial_t^{\ell}\mathcal{Z}^{\alpha}, \nabla^{\varphi}\cdot] \mathcal{S}^{\varphi}v + b.t.,  \\[10pt]

\nabla^{\varphi}\cdot \partial_t^{\ell}\mathcal{Z}^{\alpha}v = \partial_t^{\ell}\mathcal{Z}^{\alpha}\nabla\eta \cdot \partial_z^{\varphi}v + b.t. , \\[10pt]

\partial_t \partial_t^{\ell}\mathcal{Z}^{\alpha} h
+ v_y \cdot\nabla_y \partial_t^{\ell}\mathcal{Z}^{\alpha} h
= \partial_t^{\ell}\mathcal{Z}^{\alpha}v \cdot \NN
+ [\partial_t^{\ell}\mathcal{Z}^{\alpha}, v,\NN],  \\[10pt]

\partial_t^{\ell}\mathcal{Z}^{\alpha}q \NN
-2\e \mathcal{S}^{\varphi} \partial_t^{\ell}\mathcal{Z}^{\alpha}v\,\NN \\[8pt]\quad

= g \partial_t^{\ell}\mathcal{Z}^{\alpha}h\NN
- \sigma \nabla_y\cdot\frac{1}{\sqrt{1+|\nabla_y h|^2}}
\big(\nabla_y\partial_t^{\ell}\mathcal{Z}^{\alpha} h
- \frac{\nabla_y h(\nabla_y h\cdot \nabla_y\partial_t^{\ell}\mathcal{Z}^{\alpha}h)}{1+|\nabla_y h|^2}\big) \NN \\[10pt]\quad

+ 2\e [\partial_t^{\ell}\mathcal{Z}^{\alpha},\mathcal{S}^{\varphi}] v\,\NN

+ (2\e \mathcal{S}^{\varphi}v - (q-g h))\,\partial_t^{\ell}\mathcal{Z}^{\alpha}\NN \\[8pt]\quad
- [\partial_t^{\ell}\mathcal{Z}^{\alpha},q-g h,\NN]
+2\e [\partial_t^{\ell}\mathcal{Z}^{\alpha},\mathcal{S}^{\varphi}v, \NN]

-\sigma [\partial_t^{\ell}\mathcal{Z}^{\alpha}, \NN]H \\[8pt]\quad

- \sigma \nabla_y\cdot [\partial_t^{\ell}\mathcal{Z}^{\alpha}, \nabla_y h, \frac{1}{\sqrt{1+|\nabla_y h|^2}}] \NN, \\[12pt]

(\partial_t^{\ell}\mathcal{Z}^{\alpha}v, \partial_t^{\ell}\mathcal{Z}^{\alpha}h)|_{t=0}
= (\partial_t^{\ell}\mathcal{Z}^{\alpha}v_0, \partial_t^{\ell}\mathcal{Z}^{\alpha}h_0).
\end{array}\right.
\end{equation}

When $|\alpha|\geq 1, \, 1\leq \ell+|\alpha|\leq m, 0\leq\ell\leq m-1$, we develop the $L^2$ estimate 
$\partial_t^{\ell}\mathcal{Z}^{\alpha} v$ and $\partial_t^{\ell}\mathcal{Z}^{\alpha} h$, we get
\begin{equation}\label{Sect6_Tangential_Estimate_3}
\begin{array}{ll}
\frac{1}{2}\frac{\mathrm{d}}{\mathrm{d}t}\int\limits_{\mathbb{R}^3_{-}} |\partial_t^{\ell}\mathcal{Z}^{\alpha}v|^2 \,\mathrm{d}\mathcal{V}_t
- \int\limits_{\mathbb{R}^3_{-}} \partial_t^{\ell}\mathcal{Z}^{\alpha}q \, \nabla^{\varphi}\cdot \partial_t^{\ell}\mathcal{Z}^{\alpha}v \,\mathrm{d}\mathcal{V}_t
+ 2\e \int\limits_{\mathbb{R}^3_{-}} |\mathcal{S}^{\varphi}\partial_t^{\ell}\mathcal{Z}^{\alpha}v|^2 \,\mathrm{d}\mathcal{V}_t \\[14pt]

\leq \int\limits_{\{z=0\}} (2\e \mathcal{S}^{\varphi}\partial_t^{\ell}\mathcal{Z}^{\alpha}v\NN
- \partial_t^{\ell}\mathcal{Z}^{\alpha}q\NN)\cdot \partial_t^{\ell}\mathcal{Z}^{\alpha}v \mathrm{d}y
+ \|\partial_z v\|_{X^{m-1}}^2 + \|\nabla q\|_{X^{m-1}}^2 \\[11pt]\quad
+ |h|_{X^{m-1,2}}^2 + |\partial_t^m h|_{L^2}^2
+ \text{b.t.} \\[7pt]

\leq -\int\limits_{\{z=0\}} \big[
g \partial_t^{\ell}\mathcal{Z}^{\alpha}h
- \sigma \nabla_y\cdot\frac{1}{\sqrt{1+|\nabla_y h|^2}}
\big(\nabla_y\partial_t^{\ell}\mathcal{Z}^{\alpha} h
- \frac{\nabla_y h(\nabla_y h\cdot \nabla_y\partial_t^{\ell}\mathcal{Z}^{\alpha}h)}{1+|\nabla_y h|^2}\big)
\big]\NN \\[13pt]\quad
\cdot \partial_t^{\ell}\mathcal{Z}^{\alpha}v \mathrm{d}y
+ \|\partial_z v\|_{X^{m-1}}^2 + \|\nabla q\|_{X^{m-1}}^2
+ |h|_{X^{m-1,2}}^2 + |\partial_t^m h|_{L^2}^2 + \text{b.t.} \\[8pt]

\leq \sigma\int\limits_{\{z=0\}} \nabla_y\cdot\frac{1}{\sqrt{1+|\nabla_y h|^2}}
\big(\nabla_y\partial_t^{\ell}\mathcal{Z}^{\alpha} h
- \frac{\nabla_y h(\nabla_y h\cdot \nabla_y\partial_t^{\ell}\mathcal{Z}^{\alpha}h)}{1+|\nabla_y h|^2}\big)

\cdot(\partial_t \partial_t^{\ell}\mathcal{Z}^{\alpha} h \\[13pt]\quad
+ v_y \cdot\nabla_y \partial_t^{\ell}\mathcal{Z}^{\alpha} h) \mathrm{d}y

-\int\limits_{\{z=0\}}
g \partial_t^{\ell}\mathcal{Z}^{\alpha}h
\cdot(\partial_t \partial_t^{\ell}\mathcal{Z}^{\alpha} h
+ v_y \cdot\nabla_y \partial_t^{\ell}\mathcal{Z}^{\alpha} h) \mathrm{d}y \\[10pt]\quad
+ \|\partial_z v\|_{X^{m-1}}^2 + \|\nabla q\|_{X^{m-1}}^2
+ |h|_{X^{m-1,2}}^2 + |\partial_t^m h|_{L^2}^2 + \text{b.t.} \\[7pt]

\leq - \sigma \int\limits_{\{z=0\}} \frac{1}{\sqrt{1+|\nabla_y h|^2}}
\big(\nabla_y\partial_t^{\ell}\mathcal{Z}^{\alpha} h
- \frac{\nabla_y h(\nabla_y h\cdot \nabla_y\partial_t^{\ell}\mathcal{Z}^{\alpha}h)}{1+|\nabla h|^2}\big)
\cdot(\partial_t \nabla_y\partial_t^{\ell}\mathcal{Z}^{\alpha} h \\[13pt]\quad
+ v_y \cdot\nabla_y \nabla_y\partial_t^{\ell}\mathcal{Z}^{\alpha} h) \mathrm{d}y

- \frac{g}{2}\frac{\mathrm{d}}{\mathrm{d}t}
\int\limits_{\{z=0\}} |\partial_t^{\ell}\mathcal{Z}^{\alpha}h|^2 \mathrm{d}y
+ \|\partial_z v\|_{X^{m-1}}^2 \\[10pt]\quad
+ \|\nabla q\|_{X^{m-1}}^2
+ |h|_{X^{m-1,2}}^2 + |\partial_t^m h|_{L^2}^2 + \text{b.t.} 
\end{array}
\end{equation}

\begin{equation*}
\begin{array}{ll}
\leq - \frac{g}{2}\frac{\mathrm{d}}{\mathrm{d}t}
\int\limits_{\{z=0\}} |\partial_t^{\ell}\mathcal{Z}^{\alpha}h|^2 \mathrm{d}y

- \frac{\sigma}{2}\frac{\mathrm{d}}{\mathrm{d}t} \int\limits_{\{z=0\}} \frac{1}{\sqrt{1+|\nabla_y h|^2}}
\big(|\nabla_y\partial_t^{\ell}\mathcal{Z}^{\alpha} h|^2 \\[12pt]\quad
- \frac{|\nabla_y h\cdot \nabla_y\partial_t^{\ell}\mathcal{Z}^{\alpha}h|^2}{1+|\nabla_y h|^2}\big) \mathrm{d}y

+ \|\partial_z v\|_{X^{m-1}}^2 + \|\nabla q\|_{X^{m-1}}^2
+ |h|_{X^{m-1,2}}^2 + |\partial_t^m h|_{L^2}^2 + \text{b.t.}
\end{array}
\end{equation*}
then
\begin{equation}\label{Sect6_Tangential_Estimate_4}
\begin{array}{ll}
\frac{1}{2}\frac{\mathrm{d}}{\mathrm{d}t}\int\limits_{\mathbb{R}^3_{-}} |\partial_t^{\ell}\mathcal{Z}^{\alpha}v|^2 \,\mathrm{d}\mathcal{V}_t
+ \frac{g}{2}\frac{\mathrm{d}}{\mathrm{d}t}
\int\limits_{\{z=0\}} |\partial_t^{\ell}\mathcal{Z}^{\alpha}h|^2 \mathrm{d}y
+ 2\e \int\limits_{\mathbb{R}^3_{-}} |\mathcal{S}^{\varphi}\partial_t^{\ell}\mathcal{Z}^{\alpha}v|^2 \,\mathrm{d}\mathcal{V}_t \\[10pt]\quad

+ \frac{\sigma}{2}\frac{\mathrm{d}}{\mathrm{d}t} \int\limits_{\{z=0\}} \frac{1}{\sqrt{1+|\nabla_y h|^2}}
\big(|\nabla_y\partial_t^{\ell}\mathcal{Z}^{\alpha} h|^2
- \frac{|\nabla_y h\cdot \nabla_y\partial_t^{\ell}\mathcal{Z}^{\alpha}h|^2}{1+|\nabla_y h|^2}\big) \mathrm{d}y \\[12pt]

\leq \|\partial_z v\|_{X^{m-1}}^2 + \|\nabla q\|_{X^{m-1}}^2
+ |h|_{X^{m-1,2}}^2 + |\partial_t^m h|_{L^2}^2 + \text{b.t.}
\end{array}
\end{equation}

Integrate $(\ref{Sect6_Tangential_Estimate_4})$ in time, apply the integral form of Gronwall's inequality, we have
\begin{equation}\label{Sect6_Tangential_Estimate_5}
\begin{array}{ll}
\|\partial_t^{\ell}\mathcal{Z}^{\alpha}v\|^2 + |\partial_t^{\ell}\mathcal{Z}^{\alpha} h|^2 + \e |\partial_t^{\ell}\mathcal{Z}^{\alpha} h|_{\frac{1}{2}}^2
+ \frac{\sigma}{4} |\partial_t^{\ell}\mathcal{Z}^{\alpha} h|_{1}^2
+ \e\int\limits_0^t \|\nabla \partial_t^{\ell}\mathcal{Z}^{\alpha}v\|^2 \,\mathrm{d}t \\[9pt]

\lem \|v_0\|_{X^{m-1,1}}^2 + |h_0|_{X^{m-1,1}}^2 + \e |h_0|_{X^{m-1,\frac{3}{2}}}^2 + \sigma |h_0|_{X^{m-1,2}}^2 \\[7pt]\quad
+ \int\limits_0^T \|\partial_z v\|_{X^{m-2,1}}^2 + \|\nabla q\|_{X^{m-2,1}}^2 + |\partial_t^m h|_{L^2}^2\,\mathrm{d}t.
\end{array}
\end{equation}
Note that we use the following inequality to control the surface tension term:
\begin{equation}\label{Sect6_Tangential_Estimate_6}
\begin{array}{ll}
\frac{\sigma}{2}\int\limits_{\{z=0\}} \frac{1}{\sqrt{1+|\nabla_y h|^2}}
\big(|\nabla_y\partial_t^{\ell}\mathcal{Z}^{\alpha} h|^2
- \frac{|\nabla_y h\cdot \nabla_y\partial_t^{\ell}\mathcal{Z}^{\alpha}h|^2}{1+|\nabla_y h|^2}\big) \mathrm{d}y \\[15pt]
\geq \frac{\sigma}{2}\int\limits_{\{z=0\}}\frac{1}{2(1+|\nabla_y h|^2)^{\frac{3}{2}}} |\nabla_y\partial_t^{\ell}\mathcal{Z}^{\alpha} h|^2 \,\mathrm{d}y

\geq \frac{\sigma}{4}\int\limits_{\{z=0\}}|\nabla_y\partial_t^{\ell}\mathcal{Z}^{\alpha} h|^2 \,\mathrm{d}y.
\end{array}
\end{equation}

When $|\alpha|=0$ and $0\leq\ell\leq m-1$, we have no bounds of $q$ and $\partial_t^{\ell} q$,
so we can not apply the integration by parts to the pressure terms.
The divergence free condition and the dynamical boundary condition will not be used here. Then
\begin{equation}\label{Sect6_Tangential_Estimate_7}
\begin{array}{ll}
\frac{1}{2}\frac{\mathrm{d}}{\mathrm{d}t}\int\limits_{\mathbb{R}^3_{-}} |\partial_t^{\ell} v|^2 \,\mathrm{d}\mathcal{V}_t
+ 2\e \int\limits_{\mathbb{R}^3_{-}} |\mathcal{S}^{\varphi}\partial_t^{\ell} v|^2 \,\mathrm{d}\mathcal{V}_t \\[12pt]

\leq - \int\limits_{\mathbb{R}^3_{-}} \partial_t^{\ell}\nabla^{\varphi} q \cdot \partial_t^{\ell} v \,\mathrm{d}\mathcal{V}_t
+ \int\limits_{\{z=0\}} 2\e \mathcal{S}^{\varphi}\partial_t^{\ell} v\NN \cdot \partial_t^{\ell} v \mathrm{d}y + \|\partial_z v\|_{X^{m-2}}^2 \\[6pt]\quad
+ \|\partial_z q\|_{X^{m-2}}^2
+ \sum\limits_{\ell=0}^{m-1}|\partial_t^{\ell+1} h|_{L^2}^2 + \text{b.t.} \\[11pt]

\lem \|\nabla q\|_{X^{m-1}}^2
+ \|\partial_t^{\ell} v\|_{L^2}^2 + \|\partial_z v\|_{X^{m-2}}^2 + \e\|\nabla_y\partial_t^{\ell} v\|_{X^{0,1}}^2 
+ \e\|\partial_z\nabla_y\partial_t^{\ell} v\|_{L^2}^2 \\[4pt]\quad
+ \sum\limits_{\ell=0}^m|\partial_t^{\ell} h|_{L^2}^2 + \text{b.t.}
\end{array}
\end{equation}

Combining $(\ref{Sect6_Tangential_Estimate_7})$ and $(\ref{Sect6_Height_Estimates_Lemma_Eq})$, we have
\begin{equation}\label{Sect6_Tangential_Estimate_8}
\begin{array}{ll}
\|\partial_t^{\ell} v\|^2 + \|\partial_t^{\ell} h\|^2 + \e\int\limits_0^t \|\nabla \partial_t^{\ell} v\|^2 \,\mathrm{d}t 

\lem \|v_0\|_{X^{m-1}}^2 + \|\nabla q\|_{L^4([0,T],X^{m-1})}^2 \\[6pt]\quad
+ \|\partial_z v\|_{L^4([0,T],X^{m-1})}^2 + |\partial_t^m h|_{L^4([0,T],L^2)}^2 + b.t.
\end{array}
\end{equation}

Sum $\ell$ and $\alpha$. By $(\ref{Sect6_Tangential_Estimate_5}),(\ref{Sect6_Tangential_Estimate_8})$ and Lemma $\ref{Sect6_Height_Estimates_Lemma}$, we get the estimate $(\ref{Sect6_Tangential_Estimate})$.
Thus, Lemma $\ref{Sect6_Tangential_Estimate_Lemma}$ is proved.
\end{proof}

In order to close our estimates of tangential derivatives, we need to bound $\|\partial_t^m v\|_{L^4([0,T],L^2)}^2$ and
$\|\partial_t^m h\|_{L^4([0,T],X^{0,1})}^2$, which appear in Lemma $\ref{Sect6_Tangential_Estimate_Lemma}$.
Thus, we estimate $\partial_t^m v$ and $\partial_t^m h$.
\begin{lemma}\label{Sect6_TimeDer_Estimate_Lemma}
$\partial_t^m v, \partial_t^m h, \partial_t^{m+1}h$ satisfies the following estimate:
\begin{equation}\label{Sect6_TimeDer_Estimate}
\begin{array}{ll}
\|\partial_t^m v\|_{L^4([0,T],L^2)}^2 + |\partial_t^m h|_{L^4([0,T],X^{0,1})}^2 + |\partial_t^{m+1}\nabla h|_{L^4([0,T],L^2)}^2
\\[6pt]

\lem \|\partial_t^m v_0\|_{L^2}^2 + g|\partial_t^m h_0|_{L^2}^2 + \sigma|\partial_t^m\nabla h_0|_{L^2}^2
+ \|\partial_z\partial_t^{m-1} v_0\|_{L^2}^2
\\[6pt]\quad

+ \|\partial_z v\|_{L^4([0,T],X^{m-1})}^2
+ \text{b.t.}
\end{array}
\end{equation}
\end{lemma}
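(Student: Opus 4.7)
I would apply $\partial_t^m$ to the momentum equation \eqref{Sect1_NS_Eq_ST}$_1$ and test it in $L^2(\mathrm{d}\mathcal{V}_t)$ against $\partial_t^m v$. After substituting the dynamical boundary condition $(\ref{Sect1_NS_Eq_ST})_4$ into the resulting boundary integral and pairing it with $\partial_t^m v\cdot\NN|_{z=0}=\partial_t^{m+1}h+[\partial_t^{m+1},\NN\cdot]h$ coming from the kinematical boundary condition, the principal contributions assemble into the desired energy identity
\[
\frac{1}{2}\frac{\mathrm{d}}{\mathrm{d}t}\Big(\|\partial_t^m v\|_{L^2}^2+g|\partial_t^m h|_{L^2}^2+\sigma|\nabla_y\partial_t^m h|_{L^2}^2\Big)+2\e\int_{\mathbb{R}^3_-}|\mathcal{S}^{\varphi}\partial_t^m v|^2\,\mathrm{d}\mathcal{V}_t=\text{R.H.S.},
\]
where the gravity part emerges directly from $g\partial_t^m h\,\partial_t^{m+1}h$ on $\{z=0\}$, and the surface-tension part comes from expanding $\partial_t^m H\approx-\nabla_y\cdot\bigl(\nabla_y\partial_t^m h/\sqrt{1+|\nabla_y h|^2}\bigr)+\text{l.o.t.}$ and integrating by parts once in $y$, mimicking the computation $(\ref{Sect6_Tangential_Estimate_3})$--$(\ref{Sect6_Tangential_Estimate_6})$ but one time-order higher.

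\textbf{Handling the pressure.} The key difficulty, already flagged in the discussion preceding the lemma, is the interior term
\[
-\int_{\mathbb{R}^3_-}\partial_t^m q\,\nabla^{\varphi}\!\cdot\partial_t^m v\,\mathrm{d}\mathcal{V}_t=-\int_{\mathbb{R}^3_-}\partial_t^m q\,[\partial_t^m,\nabla^{\varphi}\cdot]v\,\mathrm{d}\mathcal{V}_t,
\]
since no uniform $L^2$ bound on $\partial_t^m q$ is available in the infinite-depth strip. I would transfer one time derivative off the pressure by integrating the energy identity in time twice (in the Wang--Xin sense), so only $\partial_t^{m-1}q$ appears; then rewrite each commutator summand of shape $\partial_t^{m-1}q\cdot(\partial_t^k\nabla\eta)(\partial_t^{m-k}\partial_j^{\varphi}v^i)$ as a $\partial_z$--derivative of a product, exploiting that $\eta=\psi\ast_y h$ with $\psi$ decaying rapidly in $z$, and apply Hardy's inequality \eqref{Sect1_HardyIneq} to bound
\[
\Bigl\|\tfrac{1}{1-z}\partial_t^{m-1}q\Bigr\|_{L^2}\lem \bigl|\partial_t^{m-1}q|_{z=0}\bigr|_{L^2}+\|\partial_z\partial_t^{m-1}q\|_{L^2},
\]
both of which are already controlled by Lemma \ref{Sect6_Pressure_Estimates} together with the tangential estimate of Lemma \ref{Sect6_Tangential_Estimate_Lemma}. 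The boundary viscous remainders $\e[\partial_t^m,\mathcal{S}^{\varphi}]v\,\NN$ are absorbed by $\tfrac12\e\int|\mathcal{S}^{\varphi}\partial_t^m v|^2$ via Korn's inequality \eqref{Sect1_Good_Useful_Inequalities}$_1$ and the lifting of $\partial_z v|_{z=0}$ into tangential derivatives via $(\ref{Sect2_Vorticity_H_BC_10})$.

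\textbf{$L^4$-in-time bound and $\partial_t^{m+1}\nabla h$.} Integrating the differential inequality in time yields $L^{\infty}([0,T],L^2)$ control of $\partial_t^m v,\,\partial_t^m h,\,\nabla_y\partial_t^m h$; the $L^4([0,T])$ upgrade follows from an $H^{1/4}$--in--time interpolation exactly as for $\omega_h^{\mathrm{hom}}$ in the proof of Lemma \ref{Sect2_Vorticity_Lemma} (cf.\ Theorem 10.6 of Masmoudi--Rousset), using the $\sqrt{\e}$ extracted from the viscous dissipation $\e\int_0^T\|\nabla\partial_t^m v\|_{L^2}^2\mathrm{d}t$. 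Finally, $\partial_t^{m+1}\nabla_y h$ is read off directly from the kinematical condition: $\partial_t^{m+1}\nabla_y h=\nabla_y(\partial_t^m v\cdot\NN)|_{z=0}+[\nabla_y\partial_t^m,\NN\cdot]v$, whose $L^4([0,T],L^2(\mathbb{R}^2))$ norm is controlled by the trace of $\partial_t^m\nabla_y v$ already bounded above. The main obstacle will be the Hardy-based rewriting of the pressure commutator: one must check, term by term, that every commutator between $\partial_t^m$ and the variable-coefficient operators $\nabla^{\varphi}\cdot,\partial_z^{\varphi},\NN$ is either of strictly lower time order (hence absorbable by Lemma \ref{Sect6_Tangential_Estimate_Lemma}) or carries a $\partial_z$ factor allowing the application of \eqref{Sect1_HardyIneq} to $\partial_t^{m-1}q$ instead of $\partial_t^m q$, while simultaneously preserving the positivity of both the gravity and surface-tension boundary energies so that the final estimate retains the exact structure claimed in \eqref{Sect6_TimeDer_Estimate}.
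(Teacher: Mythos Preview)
Your energy identity, the surface-tension unwinding, and the Hardy-based treatment of $\partial_t^{m-1}q$ all match the paper. The gap is in how you pass to $L^4$ in time.

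You claim that after one time integration you obtain $L^\infty([0,T],L^2)$ control of $(\partial_t^m v,\partial_t^m h,\nabla_y\partial_t^m h)$, and then upgrade to $L^4$ via an $H^{1/4}$-in-time interpolation analogous to the paradifferential argument for $\omega_h^{\mathrm{hom}}$. Neither step works. The time-shift $\partial_t^m q\to\partial_t^{m-1}q$ is carried out \emph{inside} the first time integral (write $\int_0^t\partial_t^m q\cdot f=\partial_t^{m-1}q\cdot f\big|_0^t-\int_0^t\partial_t^{m-1}q\cdot\partial_t f$), and after Hardy's inequality the boundary term at time $t$ leaves a residual $\mathcal{I}_8(t)$ containing $\|\partial_t^m v(t)\|_{L^2}^2+|\partial_t^m h(t)|_{X^{0,1}}^2$ with $O(1)$ constants on the right-hand side; the resulting inequality is of the form $A(t)\lesssim A_0+A(t)+\|A\|_{L^4([0,T])}^2+\ldots$, which does \emph{not} close in $L^\infty_t$. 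The $H^{1/4}$ argument of Lemma~\ref{Sect2_Vorticity_Lemma} is tied to the heat kernel in the $\omega_h^{\mathrm{hom}}$ problem and produces a $\sqrt{\e}$ smallness from the Dirichlet boundary data; there is no analogous mechanism for the full momentum equation.

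What the paper does instead is exactly what ``integrate in time twice'' means in \cite{Wang_Xin_2015}: after the first time integration yields the pointwise-in-$t$ inequality above, one \emph{squares} it and integrates in $t$ once more, obtaining an inequality for $\|\partial_t^m v\|_{L^4([0,t],L^2)}^4$ in terms of itself integrated over $[0,t]$ plus data; Gronwall on $t\mapsto\|\partial_t^m v\|_{L^4([0,t],L^2)}^4$ then closes. The $L^4$ norm is not an upgrade of an $L^\infty$ bound---it is the only norm in which the estimate closes, precisely because of the $\mathcal{I}_8(t)$ residual. Finally, your proposed bound on $\partial_t^{m+1}\nabla_y h$ via the trace of $\partial_t^m\nabla_y v$ also fails, since that trace is only controlled with an $\e$-weight through the dissipation; in the paper the quantity actually established in \eqref{Sect6_TimeDer_Estimate_9} is $|\partial_t^m\nabla_y h|_{L^4([0,T],L^2)}$, which comes directly from the surface-tension energy.
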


\begin{proof}
In $(\ref{Sect6_Tangential_Estimate_2})$, let $\alpha=0$ and $\ell=m$. Then multiply with $\partial_t^m v$, integrate in $\mathbb{R}^3_{-}$, then we get
\begin{equation}\label{Sect6_TimeDer_Estimate_1}
\begin{array}{ll}
\frac{1}{2}\frac{\mathrm{d}}{\mathrm{d}t}\int\limits_{\mathbb{R}^3_{-}} |\partial_t^m v|^2 \,\mathrm{d}\mathcal{V}_t
- \int\limits_{\mathbb{R}^3_{-}} \partial_t^m q \, \nabla^{\varphi}\cdot \partial_t^m v \,\mathrm{d}\mathcal{V}_t
+ 2\e \int\limits_{\mathbb{R}^3_{-}} |\mathcal{S}^{\varphi}\partial_t^m v|^2 \,\mathrm{d}\mathcal{V}_t \\[14pt]

\leq \int\limits_{\{z=0\}} (2\e \mathcal{S}^{\varphi}\partial_t^m v\NN
- \partial_t^m q\NN)\cdot \partial_t^m v \mathrm{d}y
+ \|\partial_z v\|_{X^{m-1}}^2 + \|\nabla q\|_{X^{m-1}}^2 \\[11pt]\quad
+ |h|_{X^{m-1,2}}^2 + |\partial_t^m h|_{X^{0,1}}^2  + |\partial_t^{m+1} h|_{L^2}^2
+ \text{b.t.} \\[7pt]

\leq -\int\limits_{\{z=0\}} \big[
g \partial_t^m h
- \sigma \nabla_y\cdot\frac{1}{\sqrt{1+|\nabla_y h|^2}}
\big(\nabla_y\partial_t^m  h
- \frac{\nabla_y h(\nabla_y h\cdot \nabla_y\partial_t^m h)}{1+|\nabla_y h|^2}\big)
\big]\NN \\[13pt]\quad
\cdot \partial_t^m v \mathrm{d}y
+ \|\partial_z v\|_{X^{m-1}}^2 + \|\nabla q\|_{X^{m-1}}^2
+ |h|_{X^{m-1,2}}^2 + |\partial_t^m h|_{X^{0,1}}^2 \\[6pt]\quad
+ |\partial_t^{m+1} h|_{L^2}^2 + \text{b.t.} \\[6pt]

\leq \sigma\int\limits_{\{z=0\}} \nabla_y\cdot\frac{1}{\sqrt{1+|\nabla_y h|^2}}
\big(\nabla_y\partial_t^m  h
- \frac{\nabla_y h(\nabla_y h\cdot \nabla_y\partial_t^m h)}{1+|\nabla_y h|^2}\big)

\cdot(\partial_t \partial_t^m  h \\[13pt]\quad
+ v_y \cdot\nabla_y \partial_t^m  h) \mathrm{d}y

-\int\limits_{\{z=0\}}
g \partial_t^m h
\cdot(\partial_t \partial_t^m  h
+ v_y \cdot\nabla_y \partial_t^m  h) \mathrm{d}y \\[10pt]\quad
+ \|\partial_z v\|_{X^{m-1}}^2 + \|\nabla q\|_{X^{m-1}}^2
+ |h|_{X^{m-1,2}}^2 + |\partial_t^m h|_{X^{0,1}}^2  + |\partial_t^{m+1} h|_{L^2}^2 + \text{b.t.} \\[7pt]

\leq - \sigma \int\limits_{\{z=0\}} \frac{1}{\sqrt{1+|\nabla_y h|^2}}
\big(\nabla_y\partial_t^m  h
- \frac{\nabla_y h(\nabla_y h\cdot \nabla_y\partial_t^m h)}{1+|\nabla_y h|^2}\big)
\cdot(\partial_t \nabla_y\partial_t^m  h \\[13pt]\quad
+ v_y \cdot\nabla_y \nabla_y\partial_t^m  h) \mathrm{d}y

- \frac{g}{2}\frac{\mathrm{d}}{\mathrm{d}t}
\int\limits_{\{z=0\}} |\partial_t^m h|^2 \mathrm{d}y
+ \|\partial_z v\|_{X^{m-1}}^2 \\[10pt]\quad
+ \|\nabla q\|_{X^{m-1}}^2
+ |h|_{X^{m-1,2}}^2 + |\partial_t^m h|_{X^{0,1}}^2  + |\partial_t^{m+1} h|_{L^2}^2 + \text{b.t.} \\[8pt]

\leq - \frac{g}{2}\frac{\mathrm{d}}{\mathrm{d}t}
\int\limits_{\{z=0\}} |\partial_t^m h|^2 \mathrm{d}y

- \frac{\sigma}{2}\frac{\mathrm{d}}{\mathrm{d}t} \int\limits_{\{z=0\}} \frac{1}{\sqrt{1+|\nabla_y h|^2}}
\big(|\nabla_y\partial_t^m  h|^2
- \frac{|\nabla_y h\cdot \nabla_y\partial_t^m h|^2}{1+|\nabla_y h|^2}\big) \mathrm{d}y
\\[12pt]\quad
+ \|\partial_z v\|_{X^{m-1}}^2 + \|\nabla q\|_{X^{m-1}}^2
+ |h|_{X^{m-1,2}}^2 + |\partial_t^m h|_{X^{0,1}}^2  + |\partial_t^{m+1} h|_{L^2}^2 + \text{b.t.}
\end{array}
\end{equation}

The same as \cite{Wang_Xin_2015}, we will integrate in time twice, we get the $L^4([0,T],L^2)$ type estimate.
After the first integration in time, we have
\begin{equation}\label{Sect6_TimeDer_Estimate_3}
\begin{array}{ll}
\|\partial_t^m v\|_{L^2}^2 + g|\partial_t^m h|_{L^2}^2 + \frac{\sigma}{2}|\partial_t^m\nabla h|_{L^2}^2
+ 4\e \int\limits_{\mathbb{R}^3_{-}} |\mathcal{S}^{\varphi}\partial_t^m v|^2 \,\mathrm{d}\mathcal{V}_t
\\[6pt]

\lem \|\partial_t^m v_0\|_{L^2}^2 + g|\partial_t^m h_0|_{L^2}^2 + \sigma|\partial_t^m\nabla h_0|_{L^2}^2
+ \int\limits_0^t\int\limits_{\mathbb{R}^3_{-}} \partial_t^m q \, \nabla^{\varphi}\cdot \partial_t^m v \,\mathrm{d}\mathcal{V}_t\mathrm{d}t 
\end{array}
\end{equation}

\begin{equation*}
\begin{array}{ll}
\quad
+ \int\limits_0^t\|\partial_z v\|_{X^{m-1}}^2 + \|\nabla q\|_{X^{m-1}}^2
+ |h|_{X^{m-1,2}}^2 + |\partial_t^m h|_{X^{0,1}}^2  + |\partial_t^{m+1} h|_{L^2}^2\mathrm{d}t + \text{b.t.} \\[6pt]

\lem \|\partial_t^m v_0\|_{L^2}^2 + g|\partial_t^m h_0|_{L^2}^2 + \sigma|\partial_t^m\nabla h_0|_{L^2}^2
+ \int\limits_0^t\int\limits_{\mathbb{R}^3_{-}} \partial_t^m q \, \nabla^{\varphi}\cdot \partial_t^m v \,\mathrm{d}\mathcal{V}_t\mathrm{d}t \\[6pt]\quad

+ \|\partial_z v\|_{L^4([0,T],X^{m-1})}^2 + |\partial_t^m h|_{L^4([0,T],X^{0,1})}^2
+ |\partial_t^m v|_{L^4([0,T],L^2)}^2 + \text{b.t.}.
\end{array}
\end{equation*}

Now we deal with the pressure term:
\begin{equation}\label{Sect6_TimeDer_Estimate_4}
\begin{array}{ll}
\int\limits_0^t\int\limits_{\mathbb{R}^3_{-}} \partial_t^m q \, \nabla^{\varphi}\cdot \partial_t^m v \,\mathrm{d}\mathcal{V}_t\mathrm{d}t
= -\int\limits_0^t\int\limits_{\mathbb{R}^3_{-}} \partial_t^m q \, [\partial_t^m, \nabla^{\varphi}\cdot] v \,\mathrm{d}\mathcal{V}_t\mathrm{d}t \\[7pt]

= \sum\limits_{\ell_1>0}\int\limits_0^t
\int\limits_{\mathbb{R}^3_{-}} \partial_t^m q \, \big(\partial_z \varphi\partial_t^{\ell_1}(\frac{\NN}{\partial_z \varphi}) \big)
\cdot \partial_t^{\ell_2}\partial_z v \,\mathrm{d}x\mathrm{d}t \\[11pt]

= \sum\limits_{\ell_1>0}\int\limits_0^t\frac{\mathrm{d}}{\mathrm{d}t}\int\limits_{\mathbb{R}^3_{-}} \partial_t^{m-1} q \,
\big(\partial_z \varphi\partial_t^{\ell_1}(\frac{\NN}{\partial_z \varphi}) \big)
\cdot \partial_t^{\ell_2}\partial_z v \,\mathrm{d}x\mathrm{d}t \\[11pt]\quad

- \sum\limits_{\ell_1>0}\int\limits_0^t\int\limits_{\mathbb{R}^3_{-}} \partial_t^{m-1} q \,
\partial_t\big(\partial_z \varphi\partial_t^{\ell_1}(\frac{\NN}{\partial_z \varphi}) \big)
\cdot \partial_t^{\ell_2}\partial_z v \,\mathrm{d}x\mathrm{d}t \\[11pt]\quad

- \sum\limits_{\ell_1>0}\int\limits_0^t\int\limits_{\mathbb{R}^3_{-}} \partial_t^{m-1} q \,
\big(\partial_z \varphi\partial_t^{\ell_1}(\frac{\NN}{\partial_z \varphi}) \big)
\cdot \partial_t^{\ell_2+1}\partial_z v \,\mathrm{d}x\mathrm{d}t \\[11pt]

= \sum\limits_{\ell_1>0}\int\limits_{\{z=0\}} \partial_t^{m-1} q \,
\big(\partial_z \varphi\partial_t^{\ell_1}(\frac{\NN}{\partial_z \varphi}) \big)
\cdot \partial_t^{\ell_2}v \,\mathrm{d}y \\[11pt]\quad

-\sum\limits_{\ell_1>0}\int\limits_{\mathbb{R}^3_{-}} \partial_z\big[\partial_t^{m-1} q \,
\big(\partial_z \varphi\partial_t^{\ell_1}(\frac{\NN}{\partial_z \varphi}) \big)\big]
\cdot \partial_t^{\ell_2} v \,\mathrm{d}x \\[11pt]\quad

- \sum\limits_{\ell_1>0} \int\limits_{\{z=0\}} \partial_t^{m-1} q|_{t=0} \,
\big(\partial_z \varphi\partial_t^{\ell_1}(\frac{\NN}{\partial_z \varphi})|_{t=0} \big)
\cdot \partial_t^{\ell_2} v|_{t=0} \,\mathrm{d}y \\[11pt]\quad

+ \sum\limits_{\ell_1>0} \int\limits_{\mathbb{R}^3_{-}} \partial_z\big[\partial_t^{m-1} q|_{t=0} \,
\big(\partial_z \varphi\partial_t^{\ell_1}(\frac{\NN}{\partial_z \varphi}) \big)|_{t=0}\big]
\cdot \partial_t^{\ell_2} v|_{t=0} \,\mathrm{d}x \\[11pt]\quad

- \sum\limits_{\ell_1>0}\int\limits_0^t\int\limits_{\{z=0\}} \partial_t^{m-1} q \,
\partial_t\big(\partial_z \varphi\partial_t^{\ell_1}(\frac{\NN}{\partial_z \varphi}) \big)
\cdot \partial_t^{\ell_2} v \,\mathrm{d}y\mathrm{d}t \\[11pt]\quad

+ \sum\limits_{\ell_1>0}\int\limits_0^t\int\limits_{\mathbb{R}^3_{-}} \partial_z\big[\partial_t^{m-1} q \,
\partial_t\big(\partial_z \varphi\partial_t^{\ell_1}(\frac{\NN}{\partial_z \varphi}) \big)\big]
\cdot \partial_t^{\ell_2}v \,\mathrm{d}x\mathrm{d}t \\[11pt]\quad

- \sum\limits_{\ell_1>0} \int\limits_0^t\int\limits_{\{z=0\}} \partial_t^{m-1} q \,
\big(\partial_z \varphi\partial_t^{\ell_1}(\frac{\NN}{\partial_z \varphi}) \big)
\cdot \partial_t^{\ell_2+1} v \,\mathrm{d}y\mathrm{d}t \\[11pt]\quad

+ \sum\limits_{\ell_1>0} \int\limits_0^t\int\limits_{\mathbb{R}^3_{-}} \partial_z\big[\partial_t^{m-1} q \,
\big(\partial_z \varphi\partial_t^{\ell_1}(\frac{\NN}{\partial_z \varphi}) \big)\big]
\cdot \partial_t^{\ell_2+1} v \,\mathrm{d}x\mathrm{d}t .
\end{array}
\end{equation}

$(\ref{Sect6_TimeDer_Estimate_4})$ contains $\partial_z\big[\partial_t^{m-1} q f \big]$, where $f$ represents the terms
$\big(\partial_z \varphi\partial_t^{\ell_1}(\frac{\NN}{\partial_z \varphi}) \big)$,
$\big(\partial_z \varphi\partial_t^{\ell_1}(\frac{\NN}{\partial_z \varphi}) \big)|_{t=0}$,
$\partial_t\big(\partial_z \varphi\partial_t^{\ell_1}(\frac{\NN}{\partial_z \varphi}) \big)$.
By using Hardy's inequality $(\ref{Sect1_HardyIneq})$, we get
\begin{equation}\label{Sect6_TimeDer_Estimate_5}
\begin{array}{ll}
\int\limits_{\mathbb{R}^3_{-}}\partial_z\big[\partial_t^{m-1} q f \big]\cdot \partial_t^{\ell_2}v \,\mathrm{d}x \\[7pt]
= \int\limits_{\mathbb{R}^3_{-}}(\partial_z\partial_t^{m-1} q) f \cdot \partial_t^{\ell_2}v \,\mathrm{d}x
+ \int\limits_{\mathbb{R}^3_{-}}\frac{1}{1-z}\partial_t^{m-1} q[(1-z)\partial_z f]\cdot \partial_t^{\ell_2}v \,\mathrm{d}x 
\end{array}
\end{equation}

\begin{equation*}
\begin{array}{ll}
\lem \|\partial_z \partial_t^{m-1} q\|_{L^2}^2 + \|\frac{1}{1-z}\partial_t^{m-1} q\|_{L^2}^2
+ \|(1-z)\partial_z f\|_{L^2}^2 + \|\partial_t^{\ell_2}v\|_{L^2}^2 \\[7pt]

\lem \|\partial_z \partial_t^{m-1} q\|_{L^2}^2 + \big|\partial_t^{m-1} q|_{z=0}\big|_{L^2}^2
+ |\partial_t^m h|_{X^{0,1}}^2 + \|\partial_t^{m-1}v\|_{L^2}^2, \hspace{1.3cm}
\end{array}
\end{equation*}
where $(1-z)\partial_z f \sim (1-z)\partial_z \psi\ast \partial_t^{\ell} h$ and $(1-z)\partial_z\psi \in L^1(\mathrm{d}z)$.

Denote
\begin{equation}\label{Sect6_TimeDer_Estimate_6}
\begin{array}{ll}
\mathcal{I}_8 := \|\partial_z \partial_t^{m-1} q\|_{L^2}^2 + \big|\partial_t^{m-1} q|_{z=0}\big|_{L^2}^2
+ |\partial_t^m h|_{X^{0,1}}^2 \\[5pt]\hspace{0.94cm}
+ \|\partial_t^{m-1}v\|_{L^2}^2 + \big|\partial_t^{m-1}v|_{z=0}\big|_{L^2}^2 \\[8pt]\hspace{0.53cm}

\lem \|\partial_t^m v\|_{L^2}^2 + \|\partial_z\partial_t^{m-1} v\|_{L^2}^2
+ |\partial_t^m h|_{X^{0,1}}^2 + b.t.
\end{array}
\end{equation}

Plug $(\ref{Sect6_TimeDer_Estimate_5})$ into $(\ref{Sect6_TimeDer_Estimate_4})$, we get
\begin{equation}\label{Sect6_TimeDer_Estimate_7}
\begin{array}{ll}
\int\limits_0^t\int\limits_{\mathbb{R}^3_{-}} \partial_t^m q \, \nabla^{\varphi}\cdot \partial_t^m v
\,\mathrm{d}\mathcal{V}_t\mathrm{d}t

\lem \mathcal{I}_8|_{t=0} + \mathcal{I}_8 + \int\limits_0^T \mathcal{I}_8 \,\mathrm{d}s \\[10pt]

\lem \|\partial_t^m v_0\|_{L^2}^2 + \|\partial_z\partial_t^{m-1} v_0\|_{L^2}^2
+ |\partial_t^m h_0|_{X^{0,1}}^2

+ \|\partial_t^m v\|_{L^2}^2 + \|\partial_z\partial_t^{m-1} v\|_{L^2}^2 \\[6pt]\quad
+ |\partial_t^m h|_{X^{0,1}}^2

+ \|\partial_t^m v\|_{L^4([0,T],L^2)}^2
+ \|\partial_z\partial_t^{m-1} v\|_{L^4([0,T],L^2)}^2
+ |\partial_t^m h|_{L^4([0,T],L^2)}^2.
\end{array}
\end{equation}

By $(\ref{Sect6_TimeDer_Estimate_3})$ and $(\ref{Sect6_TimeDer_Estimate_7})$, we get
\begin{equation}\label{Sect6_TimeDer_Estimate_8}
\begin{array}{ll}
\|\partial_t^m v\|_{L^2}^2 + g|\partial_t^m h|_{L^2}^2 + \frac{\sigma}{2}|\partial_t^m\nabla_y h|_{L^2}^2
\\[6pt]

\lem \|\partial_t^m v_0\|_{L^2}^2 + g|\partial_t^m h_0|_{L^2}^2 + \sigma|\partial_t^m\nabla_y h_0|_{L^2}^2
 + \|\partial_z\partial_t^{m-1} v_0\|_{L^2}^2

+ \|\partial_t^m v\|_{L^2}^2 \\[6pt]\quad
+ \|\partial_z\partial_t^{m-1} v\|_{L^2}^2
+ |\partial_t^m h|_{X^{0,1}}^2

+ \|\partial_t^m v\|_{L^4([0,T],L^2)}^2
+ \|\partial_z v\|_{L^4([0,T],X^{m-1})}^2 \\[6pt]\quad
+ |\partial_t^m h|_{L^4([0,T],X^{0,1})}^2 + \text{b.t.}
\end{array}
\end{equation}

Square $(\ref{Sect6_TimeDer_Estimate_8})$ and integrate in time again, apply the integral form of Gronwall's inequality, we have
\begin{equation}\label{Sect6_TimeDer_Estimate_9}
\begin{array}{ll}
\|\partial_t^m v\|_{L^4([0,T],L^2)}^2 + g|\partial_t^m h|_{L^4([0,T],L^2)}^2 + \frac{\sigma}{2}|\partial_t^m\nabla_y h|_{L^4([0,T],L^2)}^2
\\[6pt]

\lem \|\partial_t^m v_0\|_{L^2}^2 + g|\partial_t^m h_0|_{L^2}^2 + \sigma|\partial_t^m\nabla_y h_0|_{L^2}^2
+ \|\partial_z\partial_t^{m-1} v_0\|_{L^2}^2
\\[6pt]\quad

+ \|\partial_z v\|_{L^4([0,T],X^{m-1})}^2
+ \text{b.t.}
\end{array}
\end{equation}
Thus, Lemma $\ref{Sect6_TimeDer_Estimate_Lemma}$ is proved.
\end{proof}

The same as the $\sigma=0$ case, we have the estimates of normal derivatives:
\begin{equation}\label{Sect6_NormalDer_Estimates}
\begin{array}{ll}
\partial_z v,\, \omega \in L^4([0,T],X^{m-1}) \cap L^{\infty}([0,T],X^{m-2}).
\end{array}
\end{equation}
Couple Lemmas $\ref{Sect6_Pressure_Estimate_Eq},
\ref{Sect6_Tangential_Estimate_Lemma}, \ref{Sect6_TimeDer_Estimate_Lemma}$ with the normal estimates $(\ref{Sect6_NormalDer_Estimates})$,
it is standard to prove Proposition $\ref{Sect1_Proposition_Regularity_Tension}$.

\section{Convergence Rates of Inviscid Limit for Fixed $\sigma>0$}

In this section, we estimate convergence rates of the inviscid limit for the $\sigma>0$ case.

We denote $\hat{v} =v^{\e} -v,\hat{q} =q^{\e} -q, \hat{h} =h^{\e} -h$, we denote the $i-$th components of $v^{\e}$ and $v$ by $v^{\e,i}$ and $v^i$ respectively.
$\hat{v},\hat{h},\hat{q}$ satisfy the following equations
\begin{equation}\label{Sect7_DifferenceEq_1}
\left\{\begin{array}{ll}
\partial_t^{\varphi^{\e}}\hat{v}
+ v^{\e} \cdot\nabla^{\varphi^{\e}} \hat{v}
+ \nabla^{\varphi^{\e}} \hat{q}
- 2\e \nabla^{\varphi^{\e}} \cdot\mathcal{S}^{\varphi^{\e}} \hat{v} \\[6pt]\quad
= \partial_z^{\varphi} v \partial_t^{\varphi^{\e}}\hat{\eta}
+ v^{\e}\cdot \nabla^{\varphi^{\e}}\hat{\eta}\, \partial_z^{\varphi} v
- \hat{v}\cdot\nabla^{\varphi} v
+ \partial_z^{\varphi} q\nabla^{\varphi^{\e}}\hat{\eta}
+ \e \triangle^{\varphi^{\e}} v, \quad  x\in\mathbb{R}^3_{-}, \\[9pt]

\nabla^{\varphi^{\e}}\cdot \hat{v} = \partial_z^{\varphi}v \cdot\nabla^{\varphi^{\e}}\hat{\eta}, \hspace{6.42cm} x\in\mathbb{R}^3_{-},\\[7pt]

\partial_t \hat{h} + v_y\cdot \nabla \hat{h} = \hat{v}\cdot\NN^{\e},  \hspace{6.2cm} \{z=0\},
\\[8pt]

\hat{q}\NN^{\e} -2\e \mathcal{S}^{\varphi^\e} \hat{v} \,\NN^\e
= g \hat{h} \NN^{\e} -\sigma \nabla_y \cdot \big( \mathfrak{H}_1\nabla_y\hat{h} \\[5pt]\quad
+ \mathfrak{H}_2 \nabla_y\hat{h}\cdot\nabla_y(h^{\e}+h) \nabla_y (h^{\e} +h)\big)\NN^{\e}
+ 2\e \mathcal{S}^{\varphi^\e}v\,\NN^\e, \hspace{1.47cm} \{z=0\},
\\[9pt]

(\hat{v},\hat{h})|_{t=0} = (v_0^\e -v_0,h_0^\e -h_0),
\end{array}\right.
\end{equation}
where the quantities $\mathfrak{H}_1$ and $\mathfrak{H}_2$ are defined as
\begin{equation}\label{Sect7_DifferenceEq_2}
\begin{array}{ll}
\mathfrak{H}_1 = \frac{1}{2\sqrt{1+|\nabla_y h^{\e}|^2}}+ \frac{1}{2\sqrt{1+|\nabla_y h|^2}}, \\[12pt]

\mathfrak{H}_2 = \frac{-1}
{2\sqrt{1+|\nabla_y h^{\e}|^2}\sqrt{1+|\nabla_y h|^2}(\sqrt{1+|\nabla_y h^{\e}|^2} + \sqrt{1+|\nabla_y h|^2})}.
\end{array}
\end{equation}

Since the estimates for normal derivatives are the same as the $\sigma>0$ case, we focus on the estimates of the pressure and tangential derivatives.

The following lemma concerns the estimate of $\nabla\hat{q} = \nabla q^{\e} -\nabla q$:
\begin{lemma}\label{Sect7_Pressure_Lemma}
Assume $0\leq s\leq k-1,\, k \leq m-1$, the difference of the pressure $\hat{q}$ has the following gradient estimate:
\begin{equation}\label{Sect7_Pressure_Lemma_Eq}
\begin{array}{ll}
\|\nabla \hat{q}\|_{X^s}
\lem \|\hat{v}\|_{X^{s,1}} + \|\partial_z\hat{v}\|_{X^s} + \|\partial_t^{s+1}\hat{v}\|_{L^2} + |\partial_t^s\hat{h}\|_{X^{0,\frac{1}{2}}} + |\hat{h}|_{X^{s,\frac{3}{2}}} +O(\e).
\end{array}
\end{equation}
\end{lemma}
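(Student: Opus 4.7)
The plan is to follow the Neumann-based strategy of Lemma $\ref{Sect6_Pressure_Estimates}$ rather than the Dirichlet route of Lemma $\ref{Sect5_Pressure_Lemma}$. The reason is that for fixed $\sigma>0$ the boundary value of $\hat{q}$ at $z=0$ carries the mean-curvature difference $\sigma(H^{\e}-H)$, which costs two tangential derivatives of $\hat{h}$ and therefore cannot be inverted into a usable estimate on $\nabla\hat{q}$. Using the normal component of the momentum equation at $z=0$ avoids these surface-tension terms altogether, at the price of bringing in one extra time derivative of $\hat{v}$.

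First I would subtract the divergence of $(\ref{Sect1_NS_Eq_ST})_1$ and $(\ref{Sect1_Euler_Eq_ST})_1$ exactly as in $(\ref{Sect5_Pressure_Estimates_1})$--$(\ref{Sect5_Pressure_Estimates_3})$, with the same matrices $\textsf{E},\textsf{P}$ satisfying $\textsf{E}=\frac{1}{\partial_z\varphi}\textsf{P}\textsf{P}^{\top}$, to arrive at an elliptic equation of the form
$$\nabla\cdot(\textsf{E}^{\e}\nabla\hat{q}) = -\nabla\cdot\bigl((\textsf{E}^{\e}-\textsf{E})\nabla q\bigr) - \nabla\cdot\bigl[(\textsf{P}^{\e}-\textsf{P})(v\cdot\nabla^{\varphi}v)\bigr] - \nabla\cdot\bigl[\textsf{P}^{\e}\bigl(v^{\e}\cdot\nabla^{\varphi^{\e}}\hat{v}-v^{\e}\cdot\nabla^{\varphi^{\e}}\hat{\varphi}\,\partial_z^{\varphi}v+\hat{v}\cdot\nabla^{\varphi}v\bigr)\bigr].$$
Then I would take the normal component of the momentum difference at $z=0$ to produce the Neumann data
$$\nabla^{\varphi^{\e}}\hat{q}\cdot\NN^{\e}\big|_{z=0} = -\partial_t^{\varphi^{\e}}\hat{v}\cdot\NN^{\e}-(v^{\e}\cdot\nabla^{\varphi^{\e}}\hat{v}+\hat{v}\cdot\nabla^{\varphi}v)\cdot\NN^{\e} + \e\triangle^{\varphi^{\e}}\hat{v}\cdot\NN^{\e} + \mathcal{R},$$
where $\mathcal{R}$ absorbs the mismatches from $\partial_t^{\varphi^{\e}}-\partial_t^{\varphi}$, $v^{\e}\cdot\nabla^{\varphi^{\e}}-v\cdot\nabla^{\varphi}$, $\e\triangle^{\varphi^{\e}}v$, and $\NN^{\e}-\NN$, each of which is either $O(\e)$ or linear in $\hat{\eta}$, $\hat{h}$ with coefficients that are uniformly bounded by Proposition $\ref{Sect1_Proposition_Regularity_Tension}$.

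Next I would apply $\partial_t^{\ell}\mathcal{Z}^{\alpha}$ with $\ell+|\alpha|\leq s$ and run the standard Neumann elliptic estimate for the strictly positive definite operator $\nabla\cdot(\textsf{E}^{\e}\nabla\cdot)$, as done in $(\ref{Sect6_Pressure_Estimate_1})$. The bulk contributions are treated exactly as in $(\ref{Sect5_Pressure_Estimates_6})$ and give $\|\hat{v}\|_{X^{s,1}}+\|\partial_z\hat{v}\|_{X^s}+|\hat{h}|_{X^{s,1/2}}+O(\e)$. The Neumann trace is estimated in $X^{s,-1/2}$ using the identity $|f\cdot\NN^{\e}|_{-1/2}\lem \|f\|+\|\nabla^{\varphi^{\e}}\cdot f\|$ already invoked in $(\ref{Sect6_Pressure_Estimate_L^2})$--$(\ref{Sect6_Pressure_Estimate_4})$: the time-derivative term yields $\|\partial_t^{s+1}\hat{v}\|_{L^2}$ (note the index shift, since one extra $\partial_t$ appears inside the boundary condition) plus $|\partial_t^{s}\hat{h}|_{X^{0,1/2}}$ from the $\partial_t^{\varphi^{\e}}-\partial_t^{\varphi}$ mismatch and from controlling $\nabla^{\varphi^{\e}}\cdot\hat{v}$ through the constraint $\nabla^{\varphi^{\e}}\cdot\hat{v}=\partial_z^{\varphi}v\cdot\nabla^{\varphi^{\e}}\hat{\eta}$; the convective terms reduce to $\|\hat{v}\|_{X^{s,1}}+\|\partial_z\hat{v}\|_{X^s}$; and the viscous piece $\e\triangle^{\varphi^{\e}}\hat{v}\cdot\NN^{\e}$, rewritten via $\partial_z\hat{v}\cdot\NN^{\e}=-\partial_z\varphi^{\e}\nabla_y\cdot\hat{v}_y-\partial_z v\cdot\nabla_y\hat{\eta}$ as in $(\ref{Sect6_Pressure_Estimate_2})$, contributes $\e$ times $\|\nabla_y\hat{v}\|_{X^{s,1}}+\|\partial_z\hat{v}\|_{X^s}+|\hat{h}|_{X^{s,3/2}}$, i.e. $|\hat{h}|_{X^{s,3/2}}$ weighted by $\e$ plus an $O(\e)$ remainder.

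The main obstacle will be handling the $H^{-1/2}$ trace of $\partial_t^{\ell+1}\hat{v}\cdot\NN^{\e}$ when $\ell+1=s+1$: a direct trace would lose half a derivative, so I must invoke the bound $|g\cdot\NN^{\e}|_{-1/2}\lesssim \|g\|_{L^2}+\|\nabla^{\varphi^{\e}}\cdot g\|_{L^2}$ and then replace $\nabla^{\varphi^{\e}}\cdot\partial_t^{s+1}\hat{v}$ using the (differentiated) divergence-free constraint in $(\ref{Sect7_DifferenceEq_1})_2$, which produces bounded terms plus precisely $|\partial_t^{s}\hat{h}|_{X^{0,1/2}}$ after commuting $\partial_t$. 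The remaining commutator $[\partial_t^{\ell}\mathcal{Z}^{\alpha},\NN^{\e}\cdot]\partial_t\hat{v}$ is of lower order and is absorbed into $\|\hat{v}\|_{X^{s,1}}$ and $|\hat{h}|_{X^{s,1/2}}\leq |\hat{h}|_{X^{s,3/2}}$. Collecting every contribution gives $(\ref{Sect7_Pressure_Lemma_Eq})$.
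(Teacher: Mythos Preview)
Your approach is essentially the same as the paper's: derive the elliptic equation for $\hat{q}$ with Neumann data $\mathcal{I}_9$ obtained from the normal component of the momentum difference at $z=0$ (thereby bypassing the surface-tension contribution), then estimate $|\mathcal{I}_9|_{X^{s,-1/2}}$ via the inequality $|f\cdot\NN^{\e}|_{-1/2}\lesssim\|f\|+\|\nabla^{\varphi^{\e}}\cdot f\|$ combined with the divergence constraint $(\ref{Sect7_DifferenceEq_1})_2$ on $\hat{v}$. The only cosmetic difference is that the paper keeps the full viscous term $\e\triangle^{\varphi^{\e}}v^{\e}\cdot\NN^{\e}$ intact and refers to $(\ref{Sect6_Pressure_Estimate_2})$ to absorb it into $O(\e)$, rather than splitting it into $\hat{v}$ and $v$ pieces as you do.
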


\begin{proof}
The Navier-Stokes pressure $q^{\e}$ satisfies the elliptic equations $(\ref{Sect1_Pressure_Neumann})$,
while the Euler pressure satisfies the following equations:
\begin{equation}\label{Sect1_Pressure_Neumann_EulerEq}
\left\{\begin{array}{ll}
\triangle^{\varphi} q = -\partial_j^{\varphi} v^i \partial_i^{\varphi} v^j, \\[6pt]

\nabla^{\varphi} q\cdot\NN|_{z=0} = - \partial_t^{\varphi} v\cdot\NN - v\cdot\nabla^{\varphi} v\cdot\NN,
\end{array}\right.
\end{equation}

Then the difference between boundary values is
\begin{equation}\label{Sect7_Pressure_Estimates_1}
\begin{array}{ll}
\nabla^{\varphi^{\e}} q^{\e}\cdot\NN^{\e} |_{z=0} - \nabla^{\varphi} q\cdot\NN |_{z=0} \\[5pt]
= \e\triangle^{\varphi^{\e}} v^{\e}\cdot\NN^{\e} - (\partial_t^{\varphi^{\e}} v^{\e}\cdot\NN^{\e}-\partial_t^{\varphi} v\cdot\NN)
- (v^{\e}\cdot\nabla^{\varphi^{\e}} v^{\e}\cdot\NN^{\e} - v\cdot\nabla^{\varphi} v\cdot\NN), 
\end{array}
\end{equation}

\begin{equation*}
\begin{array}{ll}
(\nabla^{\varphi^{\e}} \hat{q} -\partial_z^{\varphi}q \nabla^{\varphi^{\e}}\hat{\eta}) \cdot\NN^{\e} |_{z=0}
+ \nabla^{\varphi} q\cdot \hat{\NN}|_{z=0} \\[6pt]

= \e\triangle^{\varphi^{\e}} v^{\e}\cdot\NN^{\e}
- (\partial_t^{\varphi^{\e}} \hat{v} - \partial_z^{\varphi} v\partial_t^{\varphi^{\e}}\hat{\eta})\cdot\NN^{\e}
- \partial_t^{\varphi} v \cdot\hat{\NN} \\[6pt]\quad
- (v^{\e}\cdot\nabla^{\varphi^{\e}} \hat{v} - v^{\e}\cdot\nabla^{\varphi^{\e}} \hat{\eta}\, \partial_z^{\varphi}v
+ \hat{v}\cdot\nabla^{\varphi} v)\cdot\NN^{\e}
- v\cdot\nabla^{\varphi} v\cdot\hat{\NN}, \\[10pt]

\nabla^{\varphi^{\e}} \hat{q} \cdot\NN^{\e} |_{z=0} =\partial_z^{\varphi}q \nabla^{\varphi^{\e}}\hat{\eta} \cdot\NN^{\e} |_{z=0}
- \nabla^{\varphi} q\cdot \hat{\NN}|_{z=0} \\[7pt]\quad

+ \e\triangle^{\varphi^{\e}} v^{\e}\cdot\NN^{\e}
- (\partial_t \hat{v} + v^{\e}_y\cdot\nabla_y \hat{v})\cdot\NN^{\e}
- (\partial_t v + v_y\cdot\nabla_y v) \cdot\hat{\NN} \\[7pt]\quad
+ [(\partial_t\hat{\eta} + v_y^{\e}\cdot\nabla_y \hat{\eta})\, \partial_z^{\varphi}v
- \hat{v}\cdot\nabla^{\varphi} v]\cdot\NN^{\e} :=\mathcal{I}_9.
\end{array}
\end{equation*}

Similar to $(\ref{Sect5_Pressure_Estimates_3})$, $\hat{q}$ satisfies the following elliptic equation:
\begin{equation}\label{Sect7_Pressure_Estimates_2}
\left\{\begin{array}{ll}
\nabla\cdot(\textsf{E}^{\e}\nabla \hat{q})
= -\nabla\cdot((\textsf{E}^{\e} -\textsf{E}) \nabla q) - \nabla\cdot [(\textsf{P}^{\e} - \textsf{P}) (v \cdot\nabla^{\varphi} v)] \\[5pt]\hspace{2.07cm}

-\nabla\cdot [\textsf{P}^{\e} (v^{\e} \cdot\nabla^{\varphi^{\e}} \hat{v} - v^{\e}\cdot\nabla^{\varphi^{\e}}\hat{\varphi}\partial_z^{\varphi} v
+ \hat{v}\cdot\nabla^{\varphi} v)], \\[7pt]

\nabla^{\varphi^{\e}} \hat{q} \cdot\NN^{\e} |_{z=0} = \mathcal{I}_9.
\end{array}\right.
\end{equation}

The matrix $\textsf{E}^{\e}$ is definitely positive, then it is standard to prove that $\hat{q}$ satisfies the following gradient estimate:
\begin{equation}\label{Sect7_Pressure_Estimates_3}
\begin{array}{ll}
\|\nabla \hat{q}\|_{X^s} \lem \|(\textsf{E}^{\e} -\textsf{E}) \nabla q \|_{X^s}
+ \|(\textsf{P}^{\e} - \textsf{P}) (v \cdot\nabla^{\varphi} v)\|_{X^s} \\[5pt]\quad

+ \|\textsf{P}^{\e} (v^{\e} \cdot\nabla^{\varphi^{\e}} \hat{v} - v^{\e}\cdot\nabla^{\varphi^{\e}}\hat{\varphi}\partial_z^{\varphi} v
+ \hat{v}\cdot\nabla^{\varphi} v)\|_{X^s}

+ |\mathcal{I}_9|_{X^{s,-\frac{1}{2}}} \\[9pt]

\lem \|\textsf{E}^{\e} -\textsf{E}\|_{X^s} + \|\textsf{P}^{\e} - \textsf{P}\|_{X^s}
+ \|\hat{v}\|_{X^s} + \|\nabla\hat{v}\|_{X^s}
+ \|\nabla\hat{\varphi}\|_{X^s}

+ |\mathcal{I}_9|_{X^{s,-\frac{1}{2}}} \\[9pt]

\lem \|\hat{v}\|_{X^{s,1}} + \|\partial_z\hat{v}\|_{X^s} + |\hat{h}|_{X^{s,\frac{1}{2}}}
+ |\mathcal{I}_9|_{X^{s,-\frac{1}{2}}}.
\end{array}
\end{equation}

Now we estimate the boundary terms.
\begin{equation}\label{Sect7_Pressure_Estimates_4}
\begin{array}{ll}
|\mathcal{I}_9|_{X^{s,-\frac{1}{2}}}

\lem \big|\partial_t\hat{v}\cdot\NN^{\e}|_{z=0}\big|_{X^{s,-\frac{1}{2}}}
+ \big|v_y^{\e}\cdot\nabla_y \hat{v}\cdot\NN^{\e}|_{z=0}\big|_{X^{s,-\frac{1}{2}}}
+ \big|\hat{v}\cdot\NN^{\e}|_{z=0}\big|_{X^{s,-\frac{1}{2}}} \\[9pt]\quad
+ \big|\partial_t\hat{\eta}\cdot\NN^{\e}|_{z=0}\big|_{X^{s,-\frac{1}{2}}}
+ \big|\nabla_y \hat{\eta}\cdot\NN^{\e}|_{z=0}\big|_{X^{s,-\frac{1}{2}}}
+|\hat{h}|_{X^{s,\frac{1}{2}}} +O(\e) \\[10pt]

\lem |\hat{h}|_{X^{s,\frac{1}{2}}}
+ \|\partial_t\hat{v}\|_{X^s} + \|\hat{v}\|_{X^{s,1}}
+ \|\partial_t\hat{\eta}\|_{X^s} + \|\partial_t\hat{\eta}\|_{X^{s,1}} + \|\nabla\hat{\eta}\|_{X^{s,1}} +O(\e) \\[10pt]

\lem \|\partial_t^{s+1}\hat{v}\|_{L^2} + \|\hat{v}\|_{X^{s,1}}
+ |\partial_t\hat{h}\|_{X^{s,\frac{1}{2}}} + |\hat{h}|_{X^{s,\frac{3}{2}}} +O(\e),
\end{array}
\end{equation}
refer to $(\ref{Sect6_Pressure_Estimate_2})$ for the estimate of $\e\triangle^{\varphi^{\e}}\cdot\NN^{\e}$.

By $(\ref{Sect7_Pressure_Estimates_3})$ and $(\ref{Sect7_Pressure_Estimates_4})$, we obtain $(\ref{Sect7_Pressure_Lemma_Eq})$.
Thus, Lemma $\ref{Sect7_Pressure_Lemma}$ is proved.
\end{proof}

Before estimating tangential derivatives of $\hat{v}$, we have the estimate of $\partial_t^{\ell} \hat{h}$
by using the kinetical boundary condition $(\ref{Sect7_DifferenceEq_1})_3$, which is the same with $(\ref{Sect1_T_Derivatives_Difference_Eq})_3$,
we give the following lemma without proof, which is the same with Lemma $\ref{Sect5_Height_Estimates_Lemma}$.
\begin{lemma}\label{Sect7_Height_Estimates_Lemma}
Assume $0\leq k\leq m-2$, $0\leq\ell\leq k-1$,
$\partial_t^{\ell}\hat{h}$ have the estimates:
\begin{equation}\label{Sect7_Height_Estimates_Lemma_Eq}
\begin{array}{ll}
\int\limits_{\mathbb{R}^2} |\partial_t^{\ell}\hat{h}|^2 \,\mathrm{d}y

\lem |\hat{h}_0|_{X^{k-1}}^2
+ \int\limits_0^t |\hat{h}|_{X^{k-1,1}}^2 + \|\hat{v}\|_{X^{k-1,1}}^2 \,\mathrm{d}t
+ \|\partial_z\hat{v}\|_{L^4([0,T],X^{k-1})}^2.
\end{array}
\end{equation}
\end{lemma}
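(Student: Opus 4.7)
The plan is to mirror exactly the argument used for Lemma \ref{Sect5_Height_Estimates_Lemma}, since the kinetical boundary condition $(\ref{Sect7_DifferenceEq_1})_3$ for $\hat{h}$ in the $\sigma>0$ setting coincides in form with the one in the $\sigma=0$ setting. First I would apply $\partial_t^{\ell}$ to $(\ref{Sect7_DifferenceEq_1})_3$ to obtain
\begin{equation*}
\partial_t\partial_t^{\ell}\hat{h} + v_y\cdot\nabla_y\partial_t^{\ell}\hat{h}
= \partial_t^{\ell}\hat{v}\cdot\NN^{\e}
+ [\partial_t^{\ell},\NN^{\e}\cdot]\hat{v}
- [\partial_t^{\ell}, v_y\cdot\nabla_y]\hat{h},
\end{equation*}
where the commutators distribute time derivatives onto the moving geometry $\NN^{\e}$ and on $v_y$, yielding lower-order contributions already controlled by $|\hat h|_{X^{\ell,1}}$ and $\||\hat v|_{z=0}\|_{X^{\ell}}$.

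Next I would multiply by $\partial_t^{\ell}\hat{h}$ and integrate over $\mathbb{R}^2$. After integration by parts on the transport term (generating $\nabla_y\cdot v_y$ times $|\partial_t^{\ell}\hat h|^2$, bounded by standing regularity of $v$), I obtain
\begin{equation*}
\frac{d}{dt}\int_{\mathbb{R}^2}|\partial_t^{\ell}\hat{h}|^2\,dy
\lesssim |\partial_t^{\ell}\hat{h}|_{L^2}^2 + |\hat{h}|_{X^{k-1,1}}^2
+ \bigl||\hat{v}|_{z=0}\bigr|_{X^{k-1}}^2.
\end{equation*}
The trace then gives $||\hat v|_{z=0}|_{X^{k-1}}^2\lesssim \|\hat v\|_{X^{k-1,1}}^2+\|\partial_z\hat v\|_{X^{k-1}}^2$.

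Summing over $\ell\le k-1$, integrating in time, and applying the integral form of Gronwall's inequality yields
\begin{equation*}
\int_{\mathbb{R}^2}|\partial_t^{\ell}\hat{h}|^2\,dy
\lesssim |\hat h_0|_{X^{k-1}}^2
+ \int_0^t |\hat h|_{X^{k-1,1}}^2 + \|\hat v\|_{X^{k-1,1}}^2 + \|\partial_z\hat v\|_{X^{k-1}}^2\,dt.
\end{equation*}
Finally I would convert the last term by Cauchy--Schwarz, $\int_0^t \|\partial_z\hat v\|_{X^{k-1}}^2\,dt \lesssim \|\partial_z\hat v\|_{L^4([0,T],X^{k-1})}^2$ (on a finite interval), to arrive at $(\ref{Sect7_Height_Estimates_Lemma_Eq})$.

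There is essentially no main obstacle here; the only mildly delicate step is controlling the boundary trace in the right conormal topology and handling the commutator $[\partial_t^{\ell},\NN^{\e}\cdot]\hat{v}$, which requires $\partial_t^{\ell}\NN^{\e}$ to be bounded via the a priori regularity of $h^{\e}$ supplied by Proposition \ref{Sect1_Proposition_Regularity_Tension}. Since this regularity is already in hand for fixed $\sigma>0$, the argument proceeds exactly as in the $\sigma=0$ case, which is precisely why the author elects to omit the proof.
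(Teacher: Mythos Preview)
Your proposal is correct and matches the paper's approach exactly: the paper omits the proof of Lemma~\ref{Sect7_Height_Estimates_Lemma} and states that it is identical to that of Lemma~\ref{Sect5_Height_Estimates_Lemma}, and you have faithfully reproduced that argument (apply $\partial_t^{\ell}$ to the kinetical boundary condition, multiply by $\partial_t^{\ell}\hat h$, integrate, use the trace estimate, sum and apply Gronwall, then pass to the $L^4$-in-time norm).
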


We develop the estimates for tangential derivatives.
\begin{lemma}\label{Sect7_Tangential_Estimates_Lemma}
Assume $0\leq k\leq m-2$,
$\partial_t^{\ell}\mathcal{Z}^{\alpha}\hat{v}$ and $\partial_t^{\ell}\mathcal{Z}^{\alpha}\hat{h}$ have the estimates:
\begin{equation}\label{Sect7_Tangential_Estimates_Lemma_Eq}
\begin{array}{ll}
\|\hat{v}\|_{X^{k-1,1}}^2 + |\hat{h}|_{X^{k-1,1}}^2

\lem \|\hat{v}_0\|_{X^{k-1,1}}^2 + |\hat{h}_0|_{X^{k-1,1}}^2
+ \int\limits_0^t \|\partial_z \hat{v}\|_{X^{k-1}}^2 \\[8pt]\quad
+ \|\hat{v}\|_{X^{k-1,1}}^2
+ \|\hat{h}\|_{X^{k-1,1}}^2 \,\mathrm{d}t + O(\e).
\end{array}
\end{equation}
\end{lemma}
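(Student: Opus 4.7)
The plan is to adapt the scheme of Lemma \ref{Sect6_Tangential_Estimate_Lemma} (the uniform regularity estimate for a single solution with surface tension) to the difference system $(\ref{Sect7_DifferenceEq_1})$. A key simplification compared to the $\sigma=0$ case of Lemma \ref{Sect5_Tangential_Estimates_Lemma} is that for fixed $\sigma>0$ no Taylor sign condition and no Alinhac good unknown $\hat{V}^{\ell,\alpha}$ is needed: I will directly estimate $\partial_t^{\ell}\mathcal{Z}^{\alpha}\hat{v}$ and $\partial_t^{\ell}\mathcal{Z}^{\alpha}\hat{h}$, with the surface tension in the dynamical boundary condition $(\ref{Sect7_DifferenceEq_1})_4$ providing the positive coercive contribution $\tfrac{\sigma}{4}|\partial_t^{\ell}\mathcal{Z}^{\alpha}\nabla_y\hat h|_{L^2}^2$ that replaces the role of $(g-\partial_z^{\varphi}q)>0$.

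First, for $|\alpha|\geq 1$ with $\ell+|\alpha|\leq k$ and $\ell\leq k-1$, I would apply $\partial_t^{\ell}\mathcal{Z}^{\alpha}$ to $(\ref{Sect7_DifferenceEq_1})_1$, multiply by $\partial_t^{\ell}\mathcal{Z}^{\alpha}\hat{v}$ and integrate over $\mathbb{R}^3_{-}$ against $\mathrm{d}\mathcal{V}_t^{\e}$. The divergence-free remainder $(\ref{Sect7_DifferenceEq_1})_2$ sends the interior pressure term into commutators and height quantities that are bounded by $\|\hat v\|_{X^{k-1,1}}+|\hat h|_{X^{k-1,1}}+O(\e)$. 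The boundary term is treated by inserting the full jump condition $(\ref{Sect7_DifferenceEq_1})_4$: the $\e \mathcal S^{\varphi^{\e}}v\,\NN^{\e}$ contribution is $O(\e)$ (since normal derivatives on $\{z=0\}$ are expressible from tangential ones by $(\ref{Sect2_Vorticity_H_BC_10})$), while the gravity and capillarity pieces produce, via the kinematic boundary condition $(\ref{Sect7_DifferenceEq_1})_3$ and an integration by parts in $y$ exactly as in $(\ref{Sect6_Tangential_Estimate_3})$, the two good time derivatives
\begin{equation*}
-\tfrac{g}{2}\tfrac{\mathrm d}{\mathrm dt}\!\int_{\{z=0\}}\!|\partial_t^{\ell}\mathcal{Z}^{\alpha}\hat h|^2\mathrm dy
-\tfrac{\sigma}{2}\tfrac{\mathrm d}{\mathrm dt}\!\int_{\{z=0\}}\tfrac{|\nabla_y\partial_t^{\ell}\mathcal{Z}^{\alpha}\hat h|^2-\frac{|\nabla_y\tilde h\cdot\nabla_y\partial_t^{\ell}\mathcal{Z}^{\alpha}\hat h|^2}{1+|\nabla_y\tilde h|^2}}{\sqrt{1+|\nabla_y\tilde h|^2}}\,\mathrm dy,
\end{equation*}
after using the structure $\mathfrak H_1,\mathfrak H_2$ in $(\ref{Sect7_DifferenceEq_2})$ and the Cauchy--Schwarz coercivity of $(\ref{Sect6_Tangential_Estimate_6})$. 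The remaining errors are absorbed into $\|\hat v\|_{X^{k-1,1}}^2+|\hat h|_{X^{k-1,1}}^2+\|\partial_z\hat v\|_{X^{k-1}}^2+O(\e)$.

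For the pure time derivative case $|\alpha|=0$, $0\leq\ell\leq k-1$, one cannot close by integration by parts on the pressure because $\partial_t^{\ell}\hat q$ has no $L^2$ bound on the infinite-depth domain. Following the device used in $(\ref{Sect5_Tangential_Estimates_10})$ and $(\ref{Sect6_Tangential_Estimate_7})$, I will instead keep $\partial_t^{\ell}\nabla^{\varphi^{\e}}\hat q$ inside the energy identity and control it by the gradient estimate $(\ref{Sect7_Pressure_Lemma_Eq})$ of Lemma \ref{Sect7_Pressure_Lemma}; since that estimate bounds $\|\nabla\hat q\|_{X^{k-1}}$ in terms of $\|\hat v\|_{X^{k-1,1}}$, $\|\partial_z\hat v\|_{X^{k-1}}$, $\|\partial_t^k\hat v\|_{L^2}$, $|\partial_t^{k-1}\hat h|_{X^{0,1/2}}$, $|\hat h|_{X^{k-1,3/2}}$ and $O(\e)$, these all fit inside the energy functional once $\partial_t^{\ell}\hat h$ is in turn controlled via Lemma \ref{Sect7_Height_Estimates_Lemma} from the kinematic boundary condition, giving the bound for $\|\partial_t^{\ell}\hat v\|_{L^2}$ parallel to $(\ref{Sect5_Tangential_Estimates_11})$.

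Finally, I will sum over $\ell,\alpha$, integrate in time, and apply the integral form of Gronwall's inequality, absorbing the factor $\e|\hat h|_{X^{k-1,3/2}}^2$ into the surface tension coercivity $\sigma|\hat h|_{X^{k-1,2}}^2$ (possible for $\sigma>0$ fixed and $\e$ small), yielding $(\ref{Sect7_Tangential_Estimates_Lemma_Eq})$. The main obstacle is the interplay at the boundary between the variable-coefficient surface tension nonlinearity $\sigma\nabla_y\cdot(\mathfrak H_1\nabla_y\hat h+\mathfrak H_2(\cdots))$ from $(\ref{Sect7_DifferenceEq_1})_4$, the commutator $[\partial_t^{\ell}\mathcal{Z}^{\alpha},\NN^{\e}\cdot]\hat v$ from the kinematic boundary condition, and the viscous remainder $2\e \mathcal S^{\varphi^{\e}}v\,\NN^{\e}$: one must carefully track the two extra $y$-derivatives that the capillary operator loads onto $\hat h$ and verify, via $\mathfrak H_1=O(1),\mathfrak H_2=O(1)$ and the uniform regularity of Proposition \ref{Sect1_Proposition_Regularity_Tension}, that these pieces indeed give a positive coercive boundary quadratic form plus an $O(\e)$ or lower-order remainder, so that the left side of $(\ref{Sect7_Tangential_Estimates_Lemma_Eq})$ closes without loss of regularity.
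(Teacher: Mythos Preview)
Your proposal is correct and follows essentially the same route as the paper: split into the cases $|\alpha|\geq 1$ (energy estimate on $\partial_t^{\ell}\mathcal{Z}^{\alpha}\hat v$ directly, with the $\mathfrak{H}_1,\mathfrak{H}_2$ capillary structure from $(\ref{Sect7_DifferenceEq_2})$ yielding the coercive boundary term via the inequality $\mathfrak{H}_1-|\mathfrak{H}_2||\nabla_y(h^{\e}+h)|^2\geq 4|\mathfrak{H}_2|\geq\delta_\sigma>0$) and $|\alpha|=0$ (no integration by parts on the pressure, instead invoking Lemma~\ref{Sect7_Pressure_Lemma}), then sum and combine with Lemma~\ref{Sect7_Height_Estimates_Lemma} before Gronwall. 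The only cosmetic difference is that the paper writes the coercivity directly in terms of $\mathfrak{H}_1,\mathfrak{H}_2$ rather than reducing to the single-solution form $(\ref{Sect6_Tangential_Estimate_6})$, but the mechanism is the same.
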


\begin{proof}
$(\partial_t^{\ell}\mathcal{Z}^{\alpha}\hat{v}, \partial_t^{\ell}\mathcal{Z}^{\alpha}\hat{h}, \partial_t^{\ell}\mathcal{Z}^{\alpha}\hat{q})$ satisfy the following equations:
\begin{equation}\label{Sect7_TangentialEstimates_Diff_Eq}
\left\{\begin{array}{ll}
\partial_t^{\varphi^{\e}} \partial_t^{\ell}\mathcal{Z}^{\alpha}\hat{v}
+ v^{\e} \cdot\nabla^{\varphi^{\e}} \partial_t^{\ell}\mathcal{Z}^{\alpha}\hat{v}
+ \nabla^{\varphi^{\e}} \partial_t^{\ell}\mathcal{Z}^{\alpha}\hat{q}
- 2\e \nabla^{\varphi^{\e}}\cdot\mathcal{S}^{\varphi^{\e}} \partial_t^{\ell}\mathcal{Z}^{\alpha}\hat{v} \\[8pt]\quad

= \e\partial_t^{\ell}\mathcal{Z}^{\alpha}\triangle^{\varphi^\e} v
+ 2\e [\partial_t^{\ell}\mathcal{Z}^{\alpha},\nabla^{\varphi^{\e}}\cdot]\mathcal{S}^{\varphi^{\e}} \hat{v}
+ 2\e \nabla^{\varphi^{\e}}\cdot[\partial_t^{\ell}\mathcal{Z}^{\alpha}, \mathcal{S}^{\varphi^{\e}}] \hat{v} \\[8pt]\quad

+ \partial_z^{\varphi} v \partial_t^{\varphi^{\e}} \partial_t^{\ell}\mathcal{Z}^{\alpha}\hat{\varphi}
+ \partial_z^{\varphi} v \, v^{\e}\cdot \nabla^{\varphi^{\e}}\partial_t^{\ell}\mathcal{Z}^{\alpha}\hat{\varphi}
- \partial_t^{\ell}\mathcal{Z}^{\alpha}\hat{v}\cdot\nabla^{\varphi} v
+ \partial_z^{\varphi} q\nabla^{\varphi^{\e}}\partial_t^{\ell}\mathcal{Z}^{\alpha}\hat{\varphi}
\\[8pt]\quad

- [\partial_t^{\ell}\mathcal{Z}^{\alpha},\partial_t^{\varphi^{\e}}]\hat{v}
+ [\partial_t^{\ell}\mathcal{Z}^{\alpha}, \partial_z^{\varphi} v \partial_t^{\varphi^{\e}}]\hat{\varphi}

- [\partial_t^{\ell}\mathcal{Z}^{\alpha}, v^{\e} \cdot\nabla^{\varphi^{\e}}] \hat{v}
- [\partial_t^{\ell}\mathcal{Z}^{\alpha}, \nabla^{\varphi} v\cdot]\hat{v}\\[8pt]\quad
+ [\partial_t^{\ell}\mathcal{Z}^{\alpha}, \partial_z^{\varphi} v \, v^{\e}\cdot \nabla^{\varphi^{\e}}]\hat{\varphi}

- [\partial_t^{\ell}\mathcal{Z}^{\alpha},\nabla^{\varphi^{\e}}] \hat{q}
+ [\partial_t^{\ell}\mathcal{Z}^{\alpha},\partial_z^{\varphi} q\nabla^{\varphi^{\e}}]\hat{\varphi} :=\mathcal{I}_{10}, \\[11pt]

\nabla^{\varphi^{\e}}\cdot \partial_t^{\ell}\mathcal{Z}^{\alpha}\hat{v}
= \partial_z^{\varphi}v \cdot\nabla^{\varphi^{\e}}\partial_t^{\ell}\mathcal{Z}^{\alpha}\hat{\eta}
-[\partial_t^{\ell}\mathcal{Z}^{\alpha},\nabla^{\varphi^{\e}}\cdot] \hat{v}
+ [\partial_t^{\ell}\mathcal{Z}^{\alpha},\partial_z^{\varphi}v \cdot\nabla^{\varphi^{\e}}]\hat{\eta}, \\[11pt]

\partial_t \partial_t^{\ell}\mathcal{Z}^{\alpha}\hat{h} + v_y^{\e}\cdot \nabla_y \partial_t^{\ell}\mathcal{Z}^{\alpha}\hat{h}
- \NN^{\e}\cdot \partial_t^{\ell}\mathcal{Z}^{\alpha}\hat{v}
 = - \hat{v}_y \cdot \nabla_y \partial_t^{\ell}\mathcal{Z}^{\alpha} h
- \partial_y \hat{h}\cdot \partial_t^{\ell}\mathcal{Z}^{\alpha}v_y \\[7pt]\quad
 + [\partial_t^{\ell}\mathcal{Z}^{\alpha}, \hat{v},\NN^{\e}]
 - [\partial_t^{\ell}\mathcal{Z}^{\alpha}, v_y, \partial_y \hat{h}], \\[11pt]

\partial_t^{\ell}\mathcal{Z}^{\alpha} \hat{q}\NN^{\e}
-2\e \mathcal{S}^{\varphi^{\e}} \partial_t^{\ell}\mathcal{Z}^{\alpha} \hat{v}\,\NN^{\e}

- g\partial_t^{\ell}\mathcal{Z}^{\alpha}\hat{h}\NN^{\e}
+ \sigma \nabla_y \cdot \big( \mathfrak{H}_1\nabla_y \partial_t^{\ell}\mathcal{Z}^{\alpha} \hat{h} \big) \NN^{\e} \\[8pt]\quad
+ \sigma\nabla_y \cdot \big( \mathfrak{H}_2 \nabla_y \partial_t^{\ell}\mathcal{Z}^{\alpha} \hat{h}\cdot\nabla_y(h^{\e}+h) \nabla_y (h^{\e} +h)\big)\NN^{\e}
 
= \mathcal{I}_{11,1} + \mathcal{I}_{11,2},
\\[11pt]

(\partial_t^{\ell}\mathcal{Z}^{\alpha}\hat{v},\partial_t^{\ell}\mathcal{Z}^{\alpha}\hat{h})|_{t=0}
= (\partial_t^{\ell}\mathcal{Z}^{\alpha}v_0^\e -\partial_t^{\ell}\mathcal{Z}^{\alpha}v_0,
\partial_t^{\ell}\mathcal{Z}^{\alpha}h_0^\e -\partial_t^{\ell}\mathcal{Z}^{\alpha}h_0),
\end{array}\right.
\end{equation}
where 
\begin{equation}\label{Sect7_TangentialEstimates_Diff_Eq_Appendix}
\begin{array}{ll}
\mathcal{I}_{11,1}
:= 2\e \mathcal{S}^{\varphi^{\e}} \partial_t^{\ell}\mathcal{Z}^{\alpha}v\,\NN^{\e}
+ 2\e(\mathcal{S}^{\varphi^{\e}}v^{\e} - \mathcal{S}^{\varphi^{\e}}v^{\e}\nn^{\e}\cdot\nn^{\e})\,\partial_t^{\ell}\mathcal{Z}^{\alpha}\NN^{\e} 
\\[4pt]\hspace{1.25cm}
+ 2\e[\partial_t^{\ell}\mathcal{Z}^{\alpha}, \mathcal{S}^{\varphi^{\e}}v^{\e} - \mathcal{S}^{\varphi^{\e}}v^{\e}\nn^{\e}\cdot\nn^{\e}, \NN^{\e}] 
- 2\e [\partial_t^{\ell}\mathcal{Z}^{\alpha},\mathcal{S}^{\varphi^{\e}}] v^{\e}\,\NN^{\e}, \\[10pt]

\mathcal{I}_{11,2}
:= - \sigma\nabla_y \cdot \big([\partial_t^{\ell}\mathcal{Z}^{\alpha}, \mathfrak{H}_1\nabla_y] \hat{h}\big) \NN^{\e} \\[4pt]\hspace{1.25cm}
- \sigma\nabla_y \cdot \big([\partial_t^{\ell}\mathcal{Z}^{\alpha}, 
\mathfrak{H}_2 \nabla_y (h^{\e} +h) \nabla_y(h^{\e}+h)\cdot \nabla_y] \hat{h} \big)\NN^{\e}.
\end{array}
\end{equation}

When $|\alpha|\geq 1$ and $1\leq \ell+|\alpha|\leq k$, we develop the $L^2$ estimate of $\partial_t^{\ell}\mathcal{Z}^{\alpha}\hat{v}$,
we have
\begin{equation}\label{Sect7_Tangential_Estimates_1}
\begin{array}{ll}
\frac{1}{2}\frac{\mathrm{d}}{\mathrm{d}t} \int\limits_{\mathbb{R}^3_{-}} |\partial_t^{\ell}\mathcal{Z}^{\alpha}\hat{v}|^2 \,\mathrm{d}\mathcal{V}_t^{\e}
- \int\limits_{\mathbb{R}^3_{-}} \partial_t^{\ell}\mathcal{Z}^{\alpha}\hat{q} \nabla^{\varphi^{\e}}\cdot \partial_t^{\ell}\mathcal{Z}^{\alpha}\hat{v} \,\mathrm{d}\mathcal{V}_t^{\e}
+ 2\e\int\limits_{\mathbb{R}^3_{-}} |\mathcal{S}^{\varphi^{\e}} \partial_t^{\ell}\mathcal{Z}^{\alpha}\hat{v}|^2 \,\mathrm{d}\mathcal{V}_t^{\e}
\\[10pt]

\lem -\int\limits_{\{z=0\}} \big(\partial_t^{\ell}\mathcal{Z}^{\alpha}\hat{q} \NN^{\e}
- 2\e \mathcal{S}^{\varphi^{\e}}\partial_t^{\ell}\mathcal{Z}^{\alpha} \hat{v} \NN^{\e} \big)
\cdot \partial_t^{\ell}\mathcal{Z}^{\alpha}\hat{v} \,\mathrm{d}y
+ \int\limits_{\mathbb{R}^3_{-}} \mathcal{I}_{10} \cdot \partial_t^{\ell}\mathcal{Z}^{\alpha}\hat{v} \,\mathrm{d}\mathcal{V}_t^{\e} \\[11pt]

\lem \int\limits_{\{z=0\}} \big[- g\partial_t^{\ell}\mathcal{Z}^{\alpha}\hat{h}
+ \sigma \nabla_y \cdot \big( \mathfrak{H}_1\nabla_y \partial_t^{\ell}\mathcal{Z}^{\alpha} \hat{h} \big)  \\[9pt]\quad
+ \sigma\nabla_y \cdot \big( \mathfrak{H}_2 \nabla_y \partial_t^{\ell}\mathcal{Z}^{\alpha} \hat{h}\cdot\nabla_y(h^{\e}+h) \nabla_y (h^{\e} +h)\big)
  \big] \NN^{\e}\cdot \partial_t^{\ell}\mathcal{Z}^{\alpha}\hat{v} \,\mathrm{d}y \\[9pt]\quad

- \int\limits_{\{z=0\}} (\mathcal{I}_{11,1} + \mathcal{I}_{11,2}) \cdot \partial_t^{\ell}\mathcal{Z}^{\alpha}\hat{v} \,\mathrm{d}y
+ \int\limits_{\mathbb{R}^3_{-}} \mathcal{I}_{10}\cdot \partial_t^{\ell}\mathcal{Z}^{\alpha}\hat{v} \,\mathrm{d}\mathcal{V}_t^{\e} +O(\e) 
\end{array}
\end{equation}

\begin{equation*}
\begin{array}{ll}
\lem \int\limits_{\{z=0\}} \big[- g\partial_t^{\ell}\mathcal{Z}^{\alpha}\hat{h}
+ \sigma \nabla_y \cdot \big( \mathfrak{H}_1\nabla_y \partial_t^{\ell}\mathcal{Z}^{\alpha} \hat{h} \big)  \\[8pt]\quad
+ \sigma\nabla_y \cdot \big( \mathfrak{H}_2 \nabla_y \partial_t^{\ell}\mathcal{Z}^{\alpha} \hat{h}\cdot\nabla_y(h^{\e}+h) \nabla_y (h^{\e} +h)\big)\big] \\[8pt]\quad
\cdot\Big(\partial_t \partial_t^{\ell}\mathcal{Z}^{\alpha}\hat{h} + v_y^{\e}\cdot \nabla_y \partial_t^{\ell}\mathcal{Z}^{\alpha}\hat{h}
+ \hat{v}_y \cdot \nabla_y \partial_t^{\ell}\mathcal{Z}^{\alpha} h
+ \partial_y \hat{h}\cdot \partial_t^{\ell}\mathcal{Z}^{\alpha}v_y \\[7pt]\quad
- [\partial_t^{\ell}\mathcal{Z}^{\alpha}, \hat{v},\NN^{\e}]
+ [\partial_t^{\ell}\mathcal{Z}^{\alpha}, v_y, \partial_y \hat{h}]
\Big) \,\mathrm{d}y

+ \int\limits_{\mathbb{R}^3_{-}} \mathcal{I}_{10}\cdot \partial_t^{\ell}\mathcal{Z}^{\alpha}\hat{v} \,\mathrm{d}\mathcal{V}_t^{\e} \\[7pt]\quad
- \int\limits_{\{z=0\}} \mathcal{I}_{11,1} \cdot \partial_t^{\ell}\mathcal{Z}^{\alpha}\hat{v} \,\mathrm{d}y
- \int\limits_{\{z=0\}} \mathcal{I}_{11,2} \cdot \partial_t^{\ell}\mathcal{Z}^{\alpha}\hat{v} \,\mathrm{d}y +O(\e).
\hspace{2.3cm}
\end{array}
\end{equation*}

We develop the following boundary estimates in $(\ref{Sect7_Tangential_Estimates_1})$,
\begin{equation}\label{Sect7_Tangential_Estimates_2_1}
\begin{array}{ll}
\int\limits_{\{z=0\}} \big(\partial_t \partial_t^{\ell}\mathcal{Z}^{\alpha}\hat{h} + v_y^{\e}\cdot \nabla_y \partial_t^{\ell}\mathcal{Z}^{\alpha}\hat{h}\big)
\big(- g\partial_t^{\ell}\mathcal{Z}^{\alpha}\hat{h}
+ \sigma \nabla_y \cdot \big( \mathfrak{H}_1\nabla_y \partial_t^{\ell}\mathcal{Z}^{\alpha} \hat{h} \big)  \\[8pt]\quad
+ \sigma\nabla_y \cdot \big( \mathfrak{H}_2 \nabla_y \partial_t^{\ell}\mathcal{Z}^{\alpha} \hat{h}\cdot\nabla_y(h^{\e}+h) \nabla_y (h^{\e} +h)\big)\big)
 \,\mathrm{d}y
\\[9pt]

= -\frac{g}{2} \frac{\mathrm{d}}{\mathrm{d}t}
\int\limits_{\{z=0\}} |\partial_t^{\ell}\mathcal{Z}^{\alpha}\hat{h}|^2 \,\mathrm{d}y
+ \frac{g}{2} \int\limits_{\{z=0\}} |\partial_t^{\ell}\mathcal{Z}^{\alpha}\hat{h}|^2 \nabla_y\cdot v^{\e}_y \,\mathrm{d}y \\[9pt]\quad

- \sigma\int\limits_{\{z=0\}} \big(\partial_t \nabla_y\partial_t^{\ell}\mathcal{Z}^{\alpha}\hat{h} + v_y^{\e}\cdot \nabla_y \nabla_y\partial_t^{\ell}\mathcal{Z}^{\alpha}\hat{h} + \nabla_y v_y^{\e,j}\cdot \partial_j \partial_t^{\ell}\mathcal{Z}^{\alpha}\hat{h}\big)  \\[8pt]\quad
\cdot \big[\mathfrak{H}_1\nabla_y \partial_t^{\ell}\mathcal{Z}^{\alpha} \hat{h}
+ \mathfrak{H}_2 \nabla_y \partial_t^{\ell}\mathcal{Z}^{\alpha} \hat{h}\cdot\nabla_y(h^{\e}+h) \nabla_y (h^{\e} +h)\big]
 \,\mathrm{d}y \\[10pt]

\lem -\frac{g}{2} \frac{\mathrm{d}}{\mathrm{d}t}
\int\limits_{\{z=0\}} |\partial_t^{\ell}\mathcal{Z}^{\alpha}\hat{h}|^2 \,\mathrm{d}y

- \frac{\sigma}{2}\frac{\mathrm{d}}{\mathrm{d}t}
\int\limits_{\{z=0\}} \mathfrak{H}_1|\nabla_y \partial_t^{\ell}\mathcal{Z}^{\alpha} \hat{h}|^2 \\[11pt]\quad

+ \mathfrak{H}_2 |\nabla_y \partial_t^{\ell}\mathcal{Z}^{\alpha} \hat{h}\cdot\nabla_y(h^{\e}+h)|^2
 \,\mathrm{d}y 
+ \|\nabla_y \partial_t^{\ell}\mathcal{Z}^{\alpha} \hat{h}\|_{L^2}^2 + \|\partial_t^{\ell}\mathcal{Z}^{\alpha} \hat{h}\|_{L^2}^2.
\end{array}
\end{equation}

It is easy to check that
\begin{equation}\label{Sect7_Tangential_Estimates_2_2}
\begin{array}{ll}
- \int\limits_{\{z=0\}} \mathcal{I}_{11,1} \cdot \partial_t^{\ell}\mathcal{Z}^{\alpha}\hat{v} \,\mathrm{d}y =O(\e).
\end{array}
\end{equation}
Another boundary estimate is that
\begin{equation}\label{Sect7_Tangential_Estimates_2_3}
\begin{array}{ll}
- \int\limits_{\{z=0\}} \mathcal{I}_{11,2} \cdot \partial_t^{\ell}\mathcal{Z}^{\alpha}\hat{v} \,\mathrm{d}y

= \sigma\int\limits_{\{z=0\}} \nabla_y \cdot \Big( [\partial_t^{\ell}\mathcal{Z}^{\alpha}, \mathfrak{H}_1\nabla_y] \hat{h} \\[12pt]\quad
+ [\partial_t^{\ell}\mathcal{Z}^{\alpha},
\mathfrak{H}_2 \nabla_y (h^{\e} +h) \nabla_y(h^{\e}+h)\cdot \nabla_y] \hat{h} \Big)
\NN^{\e}\cdot \partial_t^{\ell}\mathcal{Z}^{\alpha}\hat{v}\,\mathrm{d}y \\[10pt]

= \sigma\int\limits_{\{z=0\}} \nabla_y \cdot \Big( [\partial_t^{\ell}\mathcal{Z}^{\alpha}, \mathfrak{H}_1\nabla_y] \hat{h} 
+ [\partial_t^{\ell}\mathcal{Z}^{\alpha},
\mathfrak{H}_2 \nabla_y (h^{\e} +h) \nabla_y(h^{\e}+h)\cdot \nabla_y] \hat{h} \Big)\\[12pt]\quad

\cdot\Big(\partial_t \partial_t^{\ell}\mathcal{Z}^{\alpha}\hat{h} + v_y^{\e}\cdot \nabla_y \partial_t^{\ell}\mathcal{Z}^{\alpha}\hat{h}
+ \hat{v}_y \cdot \nabla_y \partial_t^{\ell}\mathcal{Z}^{\alpha} h
+ \partial_y \hat{h}\cdot \partial_t^{\ell}\mathcal{Z}^{\alpha}v_y \\[7pt]\quad
- [\partial_t^{\ell}\mathcal{Z}^{\alpha}, \hat{v},\NN^{\e}]
+ [\partial_t^{\ell}\mathcal{Z}^{\alpha}, v_y, \partial_y \hat{h}]
\Big)\,\mathrm{d}y \\[10pt]

\lem \sigma |\hat{h}|_{X^{k-1,2}}^2 + \sigma |\partial_t^k \hat{h}|_{X^{0,1}}^2 + \big|\hat{v}|_{z=0}\big|_{X^{k-1}}^2 \\[8pt]
\lem \sigma |\hat{h}|_{X^{k-1,2}}^2 + \sigma |\partial_t^k \hat{h}|_{X^{0,1}}^2 
+ \|\hat{v}\big|_{X^{k-1,1}}^2 + \|\partial_z\hat{v}\big|_{X^{k-1}}^2 .
\end{array}
\end{equation}

Plug $(\ref{Sect7_Tangential_Estimates_2_1}),(\ref{Sect7_Tangential_Estimates_2_2}),(\ref{Sect7_Tangential_Estimates_2_3})$ into $(\ref{Sect7_Tangential_Estimates_1})$, we have
\begin{equation}\label{Sect7_Tangential_Estimates_3}
\begin{array}{ll}
\frac{1}{2}\frac{\mathrm{d}}{\mathrm{d}t} \int\limits_{\mathbb{R}^3_{-}} |\partial_t^{\ell}\mathcal{Z}^{\alpha}\hat{v}|^2 \,\mathrm{d}\mathcal{V}_t^{\e}
+ 2\e\int\limits_{\mathbb{R}^3_{-}} |\mathcal{S}^{\varphi^{\e}} \partial_t^{\ell}\mathcal{Z}^{\alpha}\hat{v}|^2 \,\mathrm{d}\mathcal{V}_t^{\e}
+ \frac{g}{2} \frac{\mathrm{d}}{\mathrm{d}t}
\int\limits_{\{z=0\}} |\partial_t^{\ell}\mathcal{Z}^{\alpha}\hat{h}|^2 \,\mathrm{d}y \\[10pt]\quad

+ \frac{\sigma}{2}\frac{\mathrm{d}}{\mathrm{d}t}
\int\limits_{\{z=0\}} \mathfrak{H}_1|\nabla_y \partial_t^{\ell}\mathcal{Z}^{\alpha} \hat{h}|^2
+ \mathfrak{H}_2 |\nabla_y \partial_t^{\ell}\mathcal{Z}^{\alpha} \hat{h}\cdot\nabla_y(h^{\e}+h)|^2\big]
 \,\mathrm{d}y \\[10pt]

\lem \|\hat{v}\|_{X^{k-1,1}}^2 + \|\partial_t^k\hat{v}\|_{L^2}^2 + |\hat{h}|_{X^{k-1,2}}^2 + |\partial_t^k \hat{h}|_{X^{0,1}}^2
+ \|\hat{q}\|_{X^{k-1}}^2 + O(\e).
\end{array}
\end{equation}

Since
\begin{equation}\label{Sect7_Tangential_Estimates_4}
\begin{array}{ll}
\int\limits_{\{z=0\}} \mathfrak{H}_1|\nabla_y \partial_t^{\ell}\mathcal{Z}^{\alpha} \hat{h}|^2
+ \mathfrak{H}_2 |\nabla_y \partial_t^{\ell}\mathcal{Z}^{\alpha} \hat{h}\cdot\nabla_y(h^{\e}+h)|^2\big]
 \,\mathrm{d}y \\[10pt]

\geq \int\limits_{\{z=0\}} |\nabla_y \partial_t^{\ell}\mathcal{Z}^{\alpha} \hat{h}|^2
(\mathfrak{H}_1 - |\mathfrak{H}_2| |\nabla_y(h^{\e}+h)|^2) \,\mathrm{d}y

\geq \int\limits_{\{z=0\}} 4|\mathfrak{H}_2| |\nabla_y \partial_t^{\ell}\mathcal{Z}^{\alpha} \hat{h}|^2 \,\mathrm{d}y.
\end{array}
\end{equation}
where $4|\mathfrak{H}_2| \geq \delta_{\sigma}>0$.

Integrate $(\ref{Sect7_Tangential_Estimates_3})$ in time, apply the integral form of Gronwall's inequality, note that $(\ref{Sect7_Tangential_Estimates_4})$,
we get
\begin{equation}\label{Sect7_Tangential_Estimates_5}
\begin{array}{ll}
\int\limits_{\mathbb{R}^3_{-}} |\partial_t^{\ell}\mathcal{Z}^{\alpha}\hat{v}|^2 \,\mathrm{d}\mathcal{V}_t^{\e}
+ g \int\limits_{\{z=0\}} |\partial_t^{\ell}\mathcal{Z}^{\alpha}\hat{h}|^2 \,\mathrm{d}y

+ \sigma\int\limits_{\{z=0\}} |\nabla_y \partial_t^{\ell}\mathcal{Z}^{\alpha} \hat{h}|^2 \,\mathrm{d}y \\[10pt]

\lem \|\hat{v}_0\|_{X^{k-1,1}}^2 + |\hat{h}_0|_{X^{k-1,1}}^2
+ \int\limits_0^t \|\partial_t^k\hat{v}\|_{L^2}^2 + |\partial_t^k \hat{h}|_{X^{0,1}}^2
+ \|\hat{q}\|_{X^{k-1}}^2 \,\mathrm{d}t + O(\e).
\end{array}
\end{equation}

When $|\alpha|=0, \, 0\leq\ell\leq k-1$, we have no bounds of $\hat{q}$ and $\partial_t^{\ell} \hat{q}$,
so we can not apply the integration by parts to the pressure terms.
Also, the dynamical boundary condition will not be used. Since the main equation of $\partial_t^{\ell}\hat{v}$ and its kinetical boundary condition
satisfy
\begin{equation}\label{Sect7_Tangential_Estimates_Time}
\left\{\begin{array}{ll}
\partial_t^{\varphi^{\e}} \partial_t^{\ell}\hat{v}
+ v^{\e} \cdot\nabla^{\varphi^{\e}} \partial_t^{\ell}\hat{v}
- 2\e\nabla^{\varphi^{\e}}\cdot\mathcal{S}^{\varphi^{\e}} \partial_t^{\ell}\hat{v} \\[8pt]\quad

= - \nabla^{\varphi^{\e}} \partial_t^{\ell}\hat{q}
+ 2\e [\partial_t^{\ell},\nabla^{\varphi^{\e}}\cdot]\mathcal{S}^{\varphi^{\e}} \hat{v}
+ 2\e\nabla^{\varphi^{\e}}\cdot[\partial_t^{\ell}, \mathcal{S}^{\varphi^{\e}}] \hat{v}
+ \e\partial_t^{\ell}\triangle^{\varphi^\e} v \\[8pt]\quad

+ \partial_z^{\varphi} v \partial_t^{\varphi^{\e}} \partial_t^{\ell}\hat{\varphi}
+ \partial_z^{\varphi} v \, v^{\e}\cdot \nabla^{\varphi^{\e}}\partial_t^{\ell}\hat{\varphi}
- \partial_t^{\ell}\hat{v}\cdot\nabla^{\varphi} v
+ \partial_z^{\varphi} q\nabla^{\varphi^{\e}}\partial_t^{\ell}\hat{\varphi}
\\[8pt]\quad

- [\partial_t^{\ell},\partial_t^{\varphi^{\e}}]\hat{v}
+ [\partial_t^{\ell}, \partial_z^{\varphi} v \partial_t^{\varphi^{\e}}]\hat{\varphi}

- [\partial_t^{\ell}, v^{\e} \cdot\nabla^{\varphi^{\e}}] \hat{v}
- [\partial_t^{\ell}, \nabla^{\varphi} v\cdot]\hat{v}\\[8pt]\quad
+ [\partial_t^{\ell}, \partial_z^{\varphi} v \, v^{\e}\cdot \nabla^{\varphi^{\e}}]\hat{\varphi}

- [\partial_t^{\ell},\nabla^{\varphi^{\e}}] \hat{q}
+ [\partial_t^{\ell},\partial_z^{\varphi} q\nabla^{\varphi^{\e}}]\hat{\varphi} :=\mathcal{I}_{12}, \\[11pt]

\partial_t \partial_t^{\ell}\hat{h} + v_y^{\e}\cdot \nabla_y \partial_t^{\ell}\hat{h}
- \NN^{\e}\cdot \partial_t^{\ell}\hat{v}
 = - \hat{v}_y \cdot \nabla_y \partial_t^{\ell} h
- \partial_y \hat{h}\cdot \partial_t^{\ell}v_y \\[6pt]\quad
 + [\partial_t^{\ell}, \hat{v},\NN^{\e}]
 - [\partial_t^{\ell}, v_y, \partial_y \hat{h}], \\[11pt]

(\partial_t^{\ell}\hat{v},\partial_t^{\ell}\hat{h})|_{t=0}
= (\partial_t^{\ell}v_0^\e -\partial_t^{\ell}v_0,
\partial_t^{\ell}h_0^\e -\partial_t^{\ell}h_0),
\end{array}\right.
\end{equation}

then we have $L^2$ estimate of $\partial_t^{\ell}\hat{v}$:
\begin{equation}\label{Sect7_Tangential_Estimates_6}
\begin{array}{ll}
\frac{1}{2}\frac{\mathrm{d}}{\mathrm{d}t} \int\limits_{\mathbb{R}^3_{-}} |\partial_t^{\ell}\hat{v}|^2 \,\mathrm{d}\mathcal{V}_t^{\e}
+ 2\e\int\limits_{\mathbb{R}^3_{-}}|\mathcal{S}^{\varphi^{\e}} \partial_t^{\ell}\hat{v}|^2 \,\mathrm{d}\mathcal{V}_t^{\e}
\\[6pt]
\lem 2\e\int\limits_{\{z=0\}} \mathcal{S}^{\varphi^\e}\partial_t^{\ell} \hat{v} \NN^{\e} \cdot \partial_t^{\ell}\hat{v} \,\mathrm{d}y 
+ \int\limits_{\mathbb{R}^3_{-}}\mathcal{I}_{12}\cdot \partial_t^{\ell} \hat{v} \,\mathrm{d}\mathcal{V}_t^{\e} \\[6pt]

\lem \|\mathcal{I}_{12}\|_{L^2}^2 + \|\partial_t^{\ell} \hat{v}\|_{L^2}^2 + O(\e).
\end{array}
\end{equation}

It is easy to check the last term of $(\ref{Sect7_Tangential_Estimates_6})$ satisfies
\begin{equation}\label{Sect7_Tangential_Estimates_7}
\begin{array}{ll}
\|\mathcal{I}_{12}\|_{L^2} \lem \|\partial_z \hat{v}\|_{X^{k-1}} + \|\hat{v}\|_{X^{k-1,1}} + \|\partial_t^k\hat{v}\|_{L^2} \\[5pt]\hspace{1.67cm}
+ |\partial_t^k \hat{h}|_{L^2}  + |\hat{h}|_{X^{k-1,\frac{1}{2}}}
+ \|\nabla\hat{q}\|_{X^{k-1}} + O(\e).
\end{array}
\end{equation}

Plug $(\ref{Sect7_Tangential_Estimates_7})$ into $(\ref{Sect7_Tangential_Estimates_6})$, integrate in time and apply
the integral form of Gronwall's inequality, we have
\begin{equation}\label{Sect7_Tangential_Estimates_8}
\begin{array}{ll}
\|\partial_t^{\ell}\hat{v}\|_{L^2}^2 + \e\int\limits_0^t \|\nabla \partial_t^{\ell}\hat{v}\|_{L^2}^2 \\[7pt]

\lem \|\partial_t^{\ell}\hat{v}_0\|_{L^2}^2 + \int\limits_0^t\|\partial_z \hat{v}\|_{X^{k-1}}^2 
+ \|\hat{v}\|_{X^{k-1,1}}^2 + \|\partial_t^k\hat{v}\|_{L^2}^2
+ |\partial_t^k \hat{h}|_{L^2}^2  \\[6pt]\quad
+ |\hat{h}|_{X^{k-1,1}}^2 + \|\nabla\hat{q}\|_{X^{k-1}}^2\,\mathrm{d}t + O(\e) \\[6pt]

\lem \|\hat{v}_0\|_{X^{k-1}}^2
+ \|\partial_z \hat{v}\|_{L^4([0,T],X^{k-2})}^2 + |\partial_t^k \hat{h}|_{L^4([0,T],L^2)}^2 \\[5pt]\quad
+ \|\nabla\hat{q}\|_{L^4([0,T],X^{k-1})}^2

+ \int\limits_0^t\|\hat{v}\|_{X^{k-1}}^2 + |\hat{h}|_{X^{k-1,1}}^2 \,\mathrm{d}t + O(\e).
\end{array}
\end{equation}

Sum $\ell$ and $\alpha$.
By $(\ref{Sect7_Tangential_Estimates_5})$, $(\ref{Sect7_Tangential_Estimates_8})$ and Lemma $\ref{Sect7_Height_Estimates_Lemma}$,
we have $(\ref{Sect7_Tangential_Estimates_Lemma_Eq})$.
Thus, Lemma $\ref{Sect7_Tangential_Estimates_Lemma}$ is proved.
\end{proof}

In order to close our estimates of tangential derivatives, we need to bound $\|\partial_t^k \hat{v}\|_{L^4([0,T],L^2)}^2$ and
$\|\partial_t^k \hat{h}\|_{L^4([0,T],X^{0,1})}^2$, which appear in Lemma $\ref{Sect7_Tangential_Estimates_Lemma}$.
Thus, we estimate $\partial_t^k \hat{v}$ and $\partial_t^k \hat{h}$.
\begin{lemma}\label{Sect7_TimeDer_Estimate_Lemma}
$\partial_t^k \hat{v}, \partial_t^k \hat{h}, \partial_t^{k+1}\hat{h}$ satisfies the following estimate:
\begin{equation}\label{Sect7_TimeDer_Estimate}
\begin{array}{ll}
\|\partial_t^k \hat{v}\|_{L^4([0,T],L^2)}^2 + |\partial_t^k \hat{h}|_{L^4([0,T],X^{0,1})}^2 + |\partial_t^{k+1}\nabla \hat{h}|_{L^4([0,T],L^2)}^2
\\[6pt]

\lem \|\partial_t^k \hat{v}_0\|_{L^2}^2 + g|\partial_t^k \hat{h}_0|_{L^2}^2 + \sigma|\partial_t^k\nabla \hat{h}_0|_{L^2}^2
+ \|\partial_z\partial_t^{k-1} \hat{v}_0\|_{L^2}^2
\\[6pt]\quad

+ \|\partial_z \hat{v}\|_{L^4([0,T],X^{k-1})}^2 + O(\e).
\end{array}
\end{equation}
\end{lemma}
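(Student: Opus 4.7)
The plan is to mirror the strategy of Lemma \ref{Sect6_TimeDer_Estimate_Lemma} but applied to the difference system $(\ref{Sect7_TangentialEstimates_Diff_Eq})$ with $\ell=k$ and $|\alpha|=0$. First I would take the $L^2$ inner product of $(\ref{Sect7_TangentialEstimates_Diff_Eq})_1$ (for $\ell=k$, $\alpha=0$) with $\partial_t^k \hat{v}$ over $\mathbb{R}^3_{-}$ and integrate by parts the viscous and pressure terms, producing the dissipation $2\varepsilon\|\mathcal{S}^{\varphi^\e}\partial_t^k\hat{v}\|_{L^2}^2$ together with the boundary integral of $\partial_t^k\hat{q}\NN^\e - 2\e\mathcal{S}^{\varphi^\e}\partial_t^k\hat{v}\,\NN^\e$ against $\partial_t^k\hat{v}$. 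Substitute the dynamical boundary condition $(\ref{Sect7_TangentialEstimates_Diff_Eq})_4$ and couple with the kinematic boundary condition $(\ref{Sect7_TangentialEstimates_Diff_Eq})_3$ exactly as in $(\ref{Sect6_TimeDer_Estimate_1})$, so that the gravity term produces $-\tfrac{g}{2}\frac{d}{dt}|\partial_t^k\hat{h}|_{L^2}^2$ and the capillary term produces
\begin{equation*}
-\frac{\sigma}{2}\frac{d}{dt}\int_{\{z=0\}}\mathfrak{H}_1|\nabla_y\partial_t^k\hat{h}|^2
+\mathfrak{H}_2|\nabla_y\partial_t^k\hat{h}\cdot\nabla_y(h^\e+h)|^2\,dy,
\end{equation*}
which by $(\ref{Sect7_Tangential_Estimates_4})$ dominates $\tfrac{\sigma\delta_\sigma}{2}|\partial_t^k\nabla_y\hat{h}|_{L^2}^2$.

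The interior error term $\int\partial_t^k\hat{q}\,\nabla^{\varphi^\e}\!\cdot\partial_t^k\hat{v}\,d\mathcal{V}_t^\e$ is the main obstacle, because $\nabla^{\varphi^\e}\!\cdot\hat{v}\neq 0$ (unlike in Lemma \ref{Sect6_TimeDer_Estimate_Lemma} where one works with a divergence-free field) and because we cannot bound $\|\partial_t^k\hat{q}\|_{L^2}$ directly on the infinite strip. I would rewrite this term by using $(\ref{Sect7_TangentialEstimates_Diff_Eq})_2$ to express $\nabla^{\varphi^\e}\!\cdot\partial_t^k\hat{v}$ in terms of $\partial_t^k\hat{\eta}$ and commutators, then integrate by parts in \emph{time} to transfer one $\partial_t$ from $\partial_t^k\hat{q}$ onto the other factor, producing $\partial_t^{k-1}\hat{q}\cdot f$ type contributions with $f$ involving $\partial_z\partial_t^{k-1}\hat{v}$ and $(1-z)\partial_z f \in L^2$. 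Hardy's inequality $(\ref{Sect1_HardyIneq})$ then controls $\|\tfrac{1}{1-z}\partial_t^{k-1}\hat{q}\|_{L^2}$ by $|\partial_t^{k-1}\hat{q}|_{z=0}|_{L^2} + \|\partial_z\partial_t^{k-1}\hat{q}\|_{L^2}$, the first piece being controlled via the dynamical boundary condition by $|\partial_t^{k-1}\hat{h}|_{X^{0,2}} + O(\e)$ and the second by Lemma \ref{Sect7_Pressure_Lemma}, mimicking $(\ref{Sect6_TimeDer_Estimate_4})$--$(\ref{Sect6_TimeDer_Estimate_7})$. The remaining interior error terms coming from $\mathcal{I}_{10}$ at level $\ell=k$, $|\alpha|=0$ and the viscous boundary contribution $2\varepsilon\int\mathcal{S}^{\varphi^\e}\partial_t^k\hat{v}\,\NN^\e\cdot\partial_t^k\hat{v}\,dy$ are $O(\e)$ plus lower-order tangential quantities that have already been bounded in Lemma \ref{Sect7_Tangential_Estimates_Lemma} and Lemma \ref{Sect7_Pressure_Lemma}.

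Combining, I obtain a differential-in-time inequality analogous to $(\ref{Sect6_TimeDer_Estimate_8})$:
\begin{equation*}
\begin{array}{ll}
\|\partial_t^k\hat{v}\|_{L^2}^2 + g|\partial_t^k\hat{h}|_{L^2}^2 + \tfrac{\sigma\delta_\sigma}{2}|\partial_t^k\nabla_y\hat{h}|_{L^2}^2 \\[4pt]
\lem \|\partial_t^k\hat{v}_0\|_{L^2}^2 + g|\partial_t^k\hat{h}_0|_{L^2}^2 + \sigma|\partial_t^k\nabla\hat{h}_0|_{L^2}^2
+ \|\partial_z\partial_t^{k-1}\hat{v}_0\|_{L^2}^2 \\[4pt]
\quad + \|\partial_z\hat{v}\|_{X^{k-1}}^2 + |\partial_t^k\hat{h}|_{X^{0,1}}^2 + \|\partial_t^k\hat{v}\|_{L^2}^2
+ \|\partial_t^k\hat{v}\|_{L^4([0,T],L^2)}^2 + O(\e) + \text{b.t.}
\end{array}
\end{equation*}
Finally, following the device of Wang--Xin \cite{Wang_Xin_2015}, I would square this estimate, integrate in time a second time over $[0,t]$, absorb the $\|\partial_t^k\hat{v}\|_{L^4([0,T],L^2)}^4$ contribution via smallness of $T$ or via Gronwall, and conclude the claimed $L^4_t L^2_x$ bound $(\ref{Sect7_TimeDer_Estimate})$. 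The capillary coercivity $\sigma|\partial_t^{k+1}\nabla\hat{h}|_{L^2}^2$ appears after commuting $\partial_t$ with the kinematic boundary condition to trade one time derivative on $\hat{h}$ for $\partial_t^k\hat{v}\cdot\NN^\e$ plus bounded terms, and repeating the same $L^4$-integration argument.
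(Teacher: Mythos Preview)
Your proposal is correct and follows essentially the same route as the paper's own proof: energy estimate on $(\ref{Sect7_TangentialEstimates_Diff_Eq})$ at $\ell=k,\ |\alpha|=0$, substitution of the dynamical boundary condition to extract the gravity and capillary energies (with coercivity via $(\ref{Sect7_Tangential_Estimates_4})$), treatment of the pressure term $\int\partial_t^k\hat{q}\,\nabla^{\varphi^\e}\!\cdot\partial_t^k\hat{v}$ by shifting one $\partial_t$ and invoking Hardy's inequality $(\ref{Sect1_HardyIneq})$ exactly as in $(\ref{Sect6_TimeDer_Estimate_4})$--$(\ref{Sect6_TimeDer_Estimate_7})$, and finally the Wang--Xin double time integration to reach the $L^4_tL^2_x$ bound. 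One small clarification: the viscous boundary contribution is not estimated separately as $O(\e)$; rather the combination $\partial_t^k\hat{q}\,\NN^\e - 2\e\mathcal{S}^{\varphi^\e}\partial_t^k\hat{v}\,\NN^\e$ is replaced wholesale by the right-hand side of the dynamical boundary condition, and it is the residual $\mathcal{I}_{11,1}$ that is $O(\e)$.
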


\begin{proof}
$(\partial_t^k \hat{v},\partial_t^k \hat{h},\partial_t^k \hat{q})$ satisfy the following equations:
\begin{equation}\label{Sect7_TimeDer_Eq}
\left\{\begin{array}{ll}
\partial_t^{\varphi^{\e}} \partial_t^k\hat{v}
+ v^{\e} \cdot\nabla^{\varphi^{\e}} \partial_t^k\hat{v}
+ \nabla^{\varphi^{\e}} \partial_t^k\hat{q}
- 2\e \nabla^{\varphi^{\e}}\cdot\mathcal{S}^{\varphi^{\e}} \partial_t^k\hat{v} 

= \mathcal{I}_{10}|_{\ell=k,|\alpha|=0}, \\[11pt]

\nabla^{\varphi^{\e}}\cdot \partial_t^k\hat{v}
= \partial_z^{\varphi}v \cdot\nabla^{\varphi^{\e}}\partial_t^k\hat{\eta}
-[\partial_t^k,\nabla^{\varphi^{\e}}\cdot] \hat{v}
+ [\partial_t^k,\partial_z^{\varphi}v \cdot\nabla^{\varphi^{\e}}]\hat{\eta}, \\[11pt]

\partial_t \partial_t^k\hat{h} + v_y^{\e}\cdot \nabla_y \partial_t^k\hat{h}
- \NN^{\e}\cdot \partial_t^k\hat{v}
 = - \hat{v}_y \cdot \nabla_y \partial_t^k h
- \partial_y \hat{h}\cdot \partial_t^k v_y \\[5pt]\quad
 + [\partial_t^k, \hat{v},\NN^{\e}]
 - [\partial_t^k, v_y, \partial_y \hat{h}], \\[11pt]

\partial_t^k \hat{q}\NN^{\e}
-2\e \mathcal{S}^{\varphi^{\e}} \partial_t^k \hat{v}\,\NN^{\e}

- g\partial_t^k\hat{h}\NN^{\e}
+ \sigma \nabla_y \cdot \big( \mathfrak{H}_1\nabla_y \partial_t^k \hat{h} \big) \NN^{\e} \\[5pt]\quad
+ \sigma\nabla_y \cdot \big( \mathfrak{H}_2 \nabla_y \partial_t^k \hat{h}\cdot\nabla_y(h^{\e}+h) \nabla_y (h^{\e} +h)\big)\NN^{\e} \\[5pt]\quad

= \mathcal{I}_{11,1}|_{\ell=k,|\alpha|=0} + \mathcal{I}_{11,2}|_{\ell=k,|\alpha|=0},
\\[11pt]

(\partial_t^k\hat{v},\partial_t^k\hat{h})|_{t=0}
= (\partial_t^k v_0^\e -\partial_t^k v_0,
\partial_t^k h_0^\e -\partial_t^k h_0), 
\end{array}\right.
\end{equation}

Then multiply $(\ref{Sect7_TimeDer_Eq})$ with $\partial_t^k \hat{v}$,
integrate in $\mathbb{R}^3_{-}$, then we get
\begin{equation}\label{Sect7_TimeDer_Estimate_1}
\begin{array}{ll}
\frac{1}{2}\frac{\mathrm{d}}{\mathrm{d}t}\int\limits_{\mathbb{R}^3_{-}} |\partial_t^k \hat{v}|^2 \,\mathrm{d}\mathcal{V}_t^{\e}
- \int\limits_{\mathbb{R}^3_{-}} \partial_t^k \hat{q} \, \nabla^{\varphi^{\e}}\cdot \partial_t^k \hat{v} \,\mathrm{d}\mathcal{V}_t^{\e}
+ 2\e\int\limits_{\mathbb{R}^3_{-}} |\mathcal{S}^{\varphi^{\e}} \partial_t^k\hat{v}|^2 \,\mathrm{d}\mathcal{V}_t^{\e}
 \\[14pt]

\leq \int\limits_{\{z=0\}} (2\e \mathcal{S}^{\varphi}\partial_t^k \hat{v} \NN^{\e}
- \partial_t^k \hat{q}\NN^{\e})\cdot \partial_t^k \hat{v} \mathrm{d}y
+ \|\partial_z \hat{v}\|_{X^{k-1}}^2 + \|\nabla \hat{q}\|_{X^{k-1}}^2 \\[11pt]\quad
+ |\hat{h}|_{X^{k-1,2}}^2 + |\partial_t^k \hat{h}|_{X^{0,1}}^2  + |\partial_t^{k+1} \hat{h}|_{L^2}^2
+ O(\e) \\[7pt]

\leq -\int\limits_{\{z=0\}} \Big[
g\partial_t^{\ell}\hat{h}
- \sigma \nabla_y \cdot \big( \mathfrak{H}_1\nabla_y \partial_t^{\ell} \hat{h} \big) \\[6pt]\quad
- \sigma\nabla_y \cdot \big( \mathfrak{H}_2 \nabla_y \partial_t^{\ell} \hat{h} 
\cdot\nabla_y(h^{\e}+h) \nabla_y (h^{\e} +h)\big)
\Big]\NN^{\e}
\cdot \partial_t^k \hat{v} \mathrm{d}y \\[6pt]\quad
+ \|\partial_z \hat{v}\|_{X^{k-1}}^2 + \|\nabla \hat{q}\|_{X^{k-1}}^2
+ |\hat{h}|_{X^{k-1,2}}^2 + |\partial_t^k \hat{h}|_{X^{0,1}}^2
+ |\partial_t^{k+1} \hat{h}|_{L^2}^2 + O(\e) \\[6pt]

\leq \sigma\int\limits_{\{z=0\}} \nabla_y\cdot
\big(\mathfrak{H}_1\nabla_y \partial_t^{\ell} \hat{h}
+ \mathfrak{H}_2 \nabla_y \partial_t^{\ell} \hat{h}\cdot\nabla_y(h^{\e}+h) \nabla_y (h^{\e} +h) \big)

\cdot(\partial_t \partial_t^k  \hat{h} \\[13pt]\quad
+ v_y \cdot\nabla_y \partial_t^k  \hat{h}) \mathrm{d}y

-\int\limits_{\{z=0\}}
g \partial_t^k \hat{h}
\cdot(\partial_t \partial_t^k  \hat{h}
+ v_y \cdot\nabla_y \partial_t^k  \hat{h}) \mathrm{d}y \\[10pt]\quad
+ \|\partial_z \hat{v}\|_{X^{k-1}}^2 + \|\nabla \hat{q}\|_{X^{k-1}}^2
+ |\hat{h}|_{X^{k-1,2}}^2 + |\partial_t^k \hat{h}|_{X^{0,1}}^2
+ |\partial_t^{k+1} \hat{h}|_{L^2}^2 + O(\e) \\[7pt]

\leq - \sigma \int\limits_{\{z=0\}}
\big(\mathfrak{H}_1\nabla_y \partial_t^{\ell} \hat{h}
+ \mathfrak{H}_2 \nabla_y \partial_t^{\ell} \hat{h}\cdot\nabla_y(h^{\e}+h) \nabla_y (h^{\e} +h) \big)
\cdot(\partial_t \nabla_y\partial_t^k  \hat{h} \\[13pt]\quad
+ v_y \cdot\nabla_y \nabla_y\partial_t^k  \hat{h}) \mathrm{d}y

- \frac{g}{2}\frac{\mathrm{d}}{\mathrm{d}t}
\int\limits_{\{z=0\}} |\partial_t^k \hat{h}|^2 \mathrm{d}y
+ \|\partial_z \hat{v}\|_{X^{k-1}}^2 + \|\nabla \hat{q}\|_{X^{k-1}}^2 \\[7pt]\quad
+ |\hat{h}|_{X^{k-1,2}}^2 + |\partial_t^k \hat{h}|_{X^{0,1}}^2
+ |\partial_t^{k+1} \hat{h}|_{L^2}^2 + O(\e) \\[8pt]

\leq - \frac{g}{2}\frac{\mathrm{d}}{\mathrm{d}t}
\int\limits_{\{z=0\}} |\partial_t^k \hat{h}|^2 \mathrm{d}y

- \frac{\sigma}{2}\frac{\mathrm{d}}{\mathrm{d}t} \int\limits_{\{z=0\}}
\big(\mathfrak{H}_1|\nabla_y\partial_t^k \hat{h}|^2
+ \mathfrak{H}_2 |\nabla_y \partial_t^{\ell} \hat{h}\cdot\nabla_y(h^{\e}+h)|^2 \big) \mathrm{d}y \\[12pt]\quad
+ \|\partial_z \hat{v}\|_{X^{k-1}}^2 + \|\nabla \hat{q}\|_{X^{k-1}}^2
+ |\hat{h}|_{X^{k-1,2}}^2 + |\partial_t^k \hat{h}|_{X^{0,1}}^2
+ |\partial_t^{k+1} \hat{h}|_{L^2}^2 + O(\e).
\end{array}
\end{equation}

The same as $(\ref{Sect7_Tangential_Estimates_4})$, we have
\begin{equation}\label{Sect7_TimeDer_Estimate_1_Inequality}
\begin{array}{ll}
\int\limits_{\{z=0\}} \mathfrak{H}_1|\nabla_y \partial_t^k \hat{h}|^2
+ \mathfrak{H}_2 |\nabla_y \partial_t^k \hat{h}\cdot\nabla_y(h^{\e}+h)|^2\big]
 \,\mathrm{d}y \\[10pt]

\geq \int\limits_{\{z=0\}} |\nabla_y \partial_t^k \hat{h}|^2
(\mathfrak{H}_1 - |\mathfrak{H}_2| |\nabla_y(h^{\e}+h)|^2) \,\mathrm{d}y

\geq \int\limits_{\{z=0\}} 4|\mathfrak{H}_2| |\nabla_y \partial_t^k \hat{h}|^2 \,\mathrm{d}y.
\end{array}
\end{equation}
where $4|\mathfrak{H}_2| \geq \delta_{\sigma}>0$, since $|\nabla_y h^{\e}|_{\infty}$ and $|\nabla_y h|_{\infty}$ have their upper bounds.

The same as \cite{Wang_Xin_2015}, we will integrate in time twice, we get the $L^4([0,T],L^2)$ type estimate.
After the first integration in time, we have
\begin{equation}\label{Sect7_TimeDer_Estimate_3}
\begin{array}{ll}
\|\partial_t^k \hat{v}\|_{L^2}^2 + g|\partial_t^k \hat{h}|_{L^2}^2 + \sigma|\partial_t^k\nabla_y \hat{h}|_{L^2}^2
+\e\int\limits_0^t \|\nabla\partial_t^k \hat{v}\|_{L^2}^2 \,\mathrm{d}t
\\[6pt]

\lem \|\partial_t^k \hat{v}_0\|_{L^2}^2 + g|\partial_t^k \hat{h}_0|_{L^2}^2 + \sigma|\partial_t^k\nabla_y \hat{h}_0|_{L^2}^2
+ \int\limits_0^t\int\limits_{\mathbb{R}^3_{-}} \partial_t^k \hat{q} \, \nabla^{\varphi^{\e}}\cdot \partial_t^k \hat{v} \,\mathrm{d}\mathcal{V}_t^{\e}\mathrm{d}t \\[6pt]\quad

+ \int\limits_0^t\|\partial_z \hat{v}\|_{X^{k-1}}^2 + \|\nabla \hat{q}\|_{X^{k-1}}^2
+ |\hat{h}|_{X^{k-1,2}}^2 + |\partial_t^k \hat{h}|_{X^{0,1}}^2  + |\partial_t^{k+1} \hat{h}|_{L^2}^2\mathrm{d}t + O(\e) 
\end{array}
\end{equation}

\begin{equation*}
\begin{array}{ll}
\lem \|\partial_t^k \hat{v}_0\|_{L^2}^2 + g|\partial_t^k \hat{h}_0|_{L^2}^2 + \sigma|\partial_t^k\nabla \hat{h}_0|_{L^2}^2
+ \int\limits_0^t\int\limits_{\mathbb{R}^3_{-}} \partial_t^k \hat{q} \, \nabla^{\varphi^{\e}}\cdot \partial_t^k \hat{v} \,\mathrm{d}\mathcal{V}_t^{\e}\mathrm{d}t \\[6pt]\quad

+ \|\partial_z \hat{v}\|_{L^4([0,T],X^{k-1})}^2 + |\partial_t^k \hat{h}|_{L^4([0,T],X^{0,1})}^2
+ |\partial_t^k \hat{v}|_{L^4([0,T],L^2)}^2 + O(\e). \hspace{1cm}
\end{array}
\end{equation*}

Similar to the procedures
$(\ref{Sect6_TimeDer_Estimate_4})$, $(\ref{Sect6_TimeDer_Estimate_5})$, $(\ref{Sect6_TimeDer_Estimate_6})$, $(\ref{Sect6_TimeDer_Estimate_7})$,
we deal with the pressure term $\int\limits_0^t\int\limits_{\mathbb{R}^3_{-}} \partial_t^k \hat{q} \, \nabla^{\varphi^{\e}}\cdot \partial_t^k \hat{v} \,\mathrm{d}\mathcal{V}_t^{\e}\mathrm{d}t$ by using Hardy's inequality.  Denote
\begin{equation}\label{Sect7_TimeDer_Estimate_6}
\begin{array}{ll}
\mathcal{I}_{13} := \|\partial_z \partial_t^{k-1} \hat{q}\|_{L^2}^2 + \big|\partial_t^{k-1} \hat{q}|_{z=0}\big|_{L^2}^2
+ |\partial_t^k \hat{h}|_{X^{0,1}}^2 \\[4pt]\hspace{1.15cm}
+ \|\partial_t^{k-1}\hat{v}\|_{L^2}^2 + \big|\partial_t^{k-1}\hat{v}|_{z=0}\big|_{L^2}^2 \\[8pt]\hspace{0.7cm}

\lem \|\partial_t^k \hat{v}\|_{L^2}^2 + \|\partial_z\partial_t^{k-1} \hat{v}\|_{L^2}^2
+ |\partial_t^k \hat{h}|_{X^{0,1}}^2 + O(\e),
\end{array}
\end{equation}
then we have
\begin{equation}\label{Sect7_TimeDer_Estimate_7}
\begin{array}{ll}
\int\limits_0^t\int\limits_{\mathbb{R}^3_{-}} \partial_t^k \hat{q} \, \nabla^{\varphi}\cdot \partial_t^k \hat{v}
\,\mathrm{d}\mathcal{V}_t^{\e}\mathrm{d}t

\lem \mathcal{I}_{13}|_{t=0} + \mathcal{I}_{13} + \int\limits_0^T \mathcal{I}_{13} \,\mathrm{d}s \\[10pt]

\lem \|\partial_t^k \hat{v}_0\|_{L^2}^2 + \|\partial_z\partial_t^{k-1} \hat{v}_0\|_{L^2}^2
+ |\partial_t^k \hat{h}_0|_{X^{0,1}}^2

+ \|\partial_t^k \hat{v}\|_{L^2}^2 + \|\partial_z\partial_t^{k-1} \hat{v}\|_{L^2}^2 \\[6pt]\quad
+ |\partial_t^k \hat{h}|_{X^{0,1}}^2

+ \|\partial_t^k \hat{v}\|_{L^4([0,T],L^2)}^2
+ \|\partial_z\partial_t^{k-1} \hat{v}\|_{L^4([0,T],L^2)}^2
+ |\partial_t^k \hat{h}|_{L^4([0,T],L^2)}^2.
\end{array}
\end{equation}

By $(\ref{Sect7_TimeDer_Estimate_3})$ and $(\ref{Sect7_TimeDer_Estimate_7})$, we get
\begin{equation}\label{Sect7_TimeDer_Estimate_8}
\begin{array}{ll}
\|\partial_t^k \hat{v}\|_{L^2}^2 + g|\partial_t^k \hat{h}|_{L^2}^2 + \sigma|\partial_t^k\nabla_y \hat{h}|_{L^2}^2
\\[6pt]

\lem \|\partial_t^k \hat{v}_0\|_{L^2}^2 + g|\partial_t^k \hat{h}_0|_{L^2}^2 + \sigma|\partial_t^k\nabla_y \hat{h}_0|_{L^2}^2
 + \|\partial_z\partial_t^{k-1} \hat{v}_0\|_{L^2}^2

+ \|\partial_t^k \hat{v}\|_{L^2}^2 \\[6pt]\quad
+ \|\partial_z\partial_t^{k-1} \hat{v}\|_{L^2}^2
+ |\partial_t^k \hat{h}|_{X^{0,1}}^2

+ \|\partial_t^k \hat{v}\|_{L^4([0,T],L^2)}^2
+ \|\partial_z \hat{v}\|_{L^4([0,T],X^{k-1})}^2 \\[6pt]\quad
+ |\partial_t^k \hat{h}|_{L^4([0,T],X^{0,1})}^2 + O(\e).
\end{array}
\end{equation}

Square $(\ref{Sect7_TimeDer_Estimate_8})$ and integrate in time again (see \cite{Wang_Xin_2015}), apply the integral form of Gronwall's inequality, we have
\begin{equation}\label{Sect7_TimeDer_Estimate_9}
\begin{array}{ll}
\|\partial_t^k \hat{v}\|_{L^4([0,T],L^2)}^2 + g|\partial_t^k \hat{h}|_{L^4([0,T],L^2)}^2 + \sigma|\partial_t^k\nabla_y \hat{h}|_{L^4([0,T],L^2)}^2
\\[6pt]

\lem \|\partial_t^k \hat{v}_0\|_{L^2}^2 + g|\partial_t^k \hat{h}_0|_{L^2}^2 + \sigma|\partial_t^k\nabla_y \hat{h}_0|_{L^2}^2
+ \|\partial_z\partial_t^{k-1} \hat{v}_0\|_{L^2}^2
\\[6pt]\quad

+ \|\partial_z \hat{v}\|_{L^4([0,T],X^{k-1})}^2
+ O(\e).
\end{array}
\end{equation}
Thus, Lemma $\ref{Sect7_TimeDer_Estimate_Lemma}$ is proved.
\end{proof}

Based on Lemmas $\ref{Sect7_Height_Estimates_Lemma}$, $\ref{Sect7_Tangential_Estimates_Lemma}$ and $\ref{Sect7_TimeDer_Estimate_Lemma}$,
the estimates of tangential derivatives can be closed. The estimates of normal derivatives are the same as the $\sigma=0$ case.
Finally, it is standard to estimates $(\ref{Sect1_Thm7_ConvergenceRates_1})$ and $(\ref{Sect1_Thm7_ConvergenceRates_2})$ in
Theorem $\ref{Sect1_Thm_StrongLayer_ST}$.

\appendix

\section{Derivation of the Equations and Boundary Conditions}

In this appendix, we derive the equations and their boundary conditions for the $\sigma=0$ case.

Since $\partial_i^{\varphi^{\e}}\varphi^{\e} = \partial_i \varphi^{\e}
- \frac{\partial_i\varphi^{\e}}{\partial_z \varphi^{\e}}\partial_z \varphi^{\e} =0$ and
$\partial_z^{\varphi^{\e}}\varphi^{\e} = \frac{1}{\partial_z\varphi^{\e}}\partial_z \varphi^{\e} =1$,
\begin{equation}\label{SectA_Difference_Transform_1}
\begin{array}{ll}
\partial_i^{\varphi^{\e}}v^{\e} - \partial_i^{\varphi} v
= \partial_i^{\varphi^{\e}}\hat{v}
- (\frac{\partial_i \varphi^{\e}}{\partial_z \varphi^{\e}} - \frac{\partial_i \varphi}{\partial_z \varphi})\partial_z v

= \partial_i^{\varphi^{\e}}\hat{v}
+ (\partial_i \varphi - \frac{\partial_i \varphi^{\e}}{\partial_z \varphi^{\e}}\partial_z \varphi)\frac{1}{\partial_z \varphi}\partial_z v
\\[8pt]\hspace{1.95cm}

= \partial_i^{\varphi^{\e}}\hat{v}
+ \partial_z^{\varphi} v \partial_i^{\varphi^{\e}}\varphi
= \partial_i^{\varphi^{\e}}\hat{v}
+ \partial_z^{\varphi} v \partial_i^{\varphi^{\e}}\varphi -\partial_z^{\varphi} v \partial_i^{\varphi^{\e}}\varphi^{\e} \\[8pt]\hspace{1.95cm}

= \partial_i^{\varphi^{\e}}\hat{v}
-\partial_z^{\varphi} v \partial_i^{\varphi^{\e}}\hat{\varphi}

= \partial_i^{\varphi^{\e}}\hat{v}
- \partial_i^{\varphi^{\e}}\hat{\eta} \, \partial_z^{\varphi} v,
\hspace{0.5cm} i=t,1,2,\\[11pt]

\partial_z^{\varphi^{\e}}v^{\e} - \partial_z^{\varphi} v
= \partial_z^{\varphi^{\e}} \hat{v}
+ (\frac{1}{\partial_z \varphi^{\e}} - \frac{1}{\partial_z \varphi}) \partial_z v \\[8pt]\hspace{1.95cm}

= \partial_z^{\varphi^{\e}} \hat{v}
+ (\frac{1}{\partial_z \varphi^{\e}}\partial_z \varphi - 1) \frac{1}{\partial_z \varphi}\partial_z v
= \partial_z^{\varphi^{\e}} \hat{v}
+ (\partial_z^{\varphi^{\e}} \varphi - 1) \frac{1}{\partial_z \varphi}\partial_z v \\[8pt]\hspace{1.95cm}

= \partial_z^{\varphi^{\e}} \hat{v}
+ (\partial_z^{\varphi^{\e}} \varphi - \partial_z^{\varphi^{\e}} \varphi^{\e}) \partial_z^{\varphi} v \\[8pt]\hspace{1.95cm}

= \partial_z^{\varphi^{\e}} \hat{v}
- \partial_z^{\varphi} v\partial_z^{\varphi^{\e}}\hat{\varphi}

= \partial_z^{\varphi^{\e}} \hat{v}
- \partial_z^{\varphi^{\e}}\hat{\eta} \, \partial_z^{\varphi} v.
\end{array}
\end{equation}

Similarly, we have
\begin{equation}\label{SectA_Difference_Transform_2}
\begin{array}{ll}
\partial_i^{\varphi^{\e}}q^{\e} - \partial_i^{\varphi} q
= \partial_i^{\varphi^{\e}}\hat{q}
- \partial_i^{\varphi^{\e}}\hat{\eta} \, \partial_z^{\varphi} q,
\hspace{0.5cm} i=t,1,2, 3 \\[8pt]

v^{\e} \cdot\nabla^{\varphi^{\e}} v^{\e} - v\cdot\nabla^{\varphi} v
= v^{\e} \cdot\nabla^{\varphi^{\e}} \hat{v} - v^{\e}\cdot \nabla^{\varphi^{\e}}\hat{\eta}\, \partial_z^{\varphi} v + \hat{v}\cdot\nabla^{\varphi} v, \\[8pt]

\omega^{\e} -\omega = \nabla^{\varphi^{\e}}\times v^{\e} - \nabla^{\varphi}\times v
= \nabla^{\varphi^{\e}} \times \hat{v} - \nabla^{\varphi^{\e}}\hat{\eta} \times \partial_z^{\varphi} v.
\end{array}
\end{equation}

\begin{lemma}\label{SectA_DifferenceEq1_Lemma}
$(\hat{v} = v^{\e} -v,\ \hat{h} =h^{\e} -h,\ \hat{q} = q^{\e} -q)$ satisfy the equations
$(\ref{Sect1_T_Derivatives_Difference_Eq})$.
\end{lemma}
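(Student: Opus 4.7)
The plan is to derive each of the five equations in $(\ref{Sect1_T_Derivatives_Difference_Eq})$ by subtracting the corresponding equation of the Euler system $(\ref{Sect1_Euler_Eq})$ from the Navier-Stokes system $(\ref{Sect1_NS_Eq})$, and then rewriting every term that mixes the two coordinate systems $\varphi^\e$ and $\varphi$ using the pointwise identities $(\ref{SectA_Difference_Transform_1})$ and $(\ref{SectA_Difference_Transform_2})$. All three identities in $(\ref{SectA_Difference_Transform_1})$ have the same structural form, which is precisely what converts differences like $\partial_i^{\varphi^\e}v^\e-\partial_i^\varphi v$ into a derivative of $\hat v$ plus a ``ghost flux'' of $\hat\eta$ against $\partial_z^\varphi v$; that structural form is what produces the matched pairs of terms in $(\ref{Sect1_T_Derivatives_Difference_Eq})_1$.

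First I would treat the momentum equation. Subtracting $(\ref{Sect1_Euler_Eq})_1$ from $(\ref{Sect1_NS_Eq})_1$ and applying the three cited identities gives
\begin{equation*}
\partial_t^{\varphi^\e}\hat v-\partial_z^\varphi v\,\partial_t^{\varphi^\e}\hat\eta
+ v^\e\!\cdot\!\nabla^{\varphi^\e}\hat v - v^\e\!\cdot\!\nabla^{\varphi^\e}\hat\eta\,\partial_z^\varphi v + \hat v\!\cdot\!\nabla^\varphi v
+\nabla^{\varphi^\e}\hat q-\partial_z^\varphi q\,\nabla^{\varphi^\e}\hat\eta = \e\triangle^{\varphi^\e}v^\e .
\end{equation*}
Moving $\hat v\!\cdot\!\nabla^\varphi v$ to the right-hand side and splitting the viscous term as $\e\triangle^{\varphi^\e}v^\e=\e\triangle^{\varphi^\e}\hat v+\e\triangle^{\varphi^\e}v$, then using the identity $\triangle^{\varphi^\e}\hat v=2\nabla^{\varphi^\e}\!\cdot\!\mathcal S^{\varphi^\e}\hat v-\nabla^{\varphi^\e}(\nabla^{\varphi^\e}\!\cdot\!\hat v)$ (with the last piece known from the divergence identity below), yields $(\ref{Sect1_T_Derivatives_Difference_Eq})_1$.

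Next I would handle the divergence equation. Since $\nabla^{\varphi}\!\cdot v=0$ and $\nabla^{\varphi^\e}\!\cdot v^\e=0$, the same manipulation that produced $(\ref{SectA_Difference_Transform_1})$ applied to the scalar function $v$ (replacing the coordinate $\varphi$ by $\varphi^\e$) gives $\nabla^{\varphi^\e}\!\cdot v - \nabla^{\varphi}\!\cdot v = -\partial_z^\varphi v\!\cdot\!\nabla^{\varphi^\e}\hat\eta$. Hence $\nabla^{\varphi^\e}\!\cdot\hat v=\nabla^{\varphi^\e}\!\cdot v^\e-\nabla^{\varphi^\e}\!\cdot v=\partial_z^\varphi v\!\cdot\!\nabla^{\varphi^\e}\hat\eta$, which is $(\ref{Sect1_T_Derivatives_Difference_Eq})_2$. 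The kinematic boundary condition $(\ref{Sect1_T_Derivatives_Difference_Eq})_3$ follows by directly subtracting $(\ref{Sect1_Euler_Eq})_3$ from $(\ref{Sect1_NS_Eq})_3$, writing $v^\e\cdot\NN^\e-v\cdot\NN=\hat v\cdot\NN^\e+v\cdot\hat\NN$ and recognizing $v\cdot\hat\NN=-v_y\cdot\nabla_y\hat h$ at $z=0$, which recombines with $v^\e_y\cdot\nabla_y h^\e-v_y\cdot\nabla_y h$ to leave exactly $v_y\cdot\nabla_y\hat h$ on the left.

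The step I expect to be the most delicate is the dynamic boundary condition $(\ref{Sect1_T_Derivatives_Difference_Eq})_4$. Here one subtracts $(\ref{Sect1_Euler_Eq})_4$ (with $\sigma=0$, so $q=gh$) from $(\ref{Sect1_NS_Eq})_4$ to obtain
\begin{equation*}
q^\e\nn^\e-q\nn-2\e\mathcal S^{\varphi^\e}v^\e\,\nn^\e = gh^\e\nn^\e-gh\nn.
\end{equation*}
Writing $q^\e\nn^\e-q\nn=\hat q\,\nn^\e+q\,\hat\nn$ and $gh^\e\nn^\e-gh\nn=g\hat h\,\nn^\e+gh\,\hat\nn$, the two $\hat\nn$ contributions cancel because $q=gh$ on $\{z=0\}$. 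What remains is $(\hat q-g\hat h)\nn^\e=2\e\mathcal S^{\varphi^\e}v^\e\,\nn^\e=2\e\mathcal S^{\varphi^\e}\hat v\,\nn^\e+2\e\mathcal S^{\varphi^\e}v\,\nn^\e$; multiplying through by $|\NN^\e|$ gives $(\ref{Sect1_T_Derivatives_Difference_Eq})_4$. The initial condition $(\ref{Sect1_T_Derivatives_Difference_Eq})_5$ is immediate from the definitions. The main bookkeeping obstacle throughout is keeping track of which normal, strain, and pressure objects are evaluated in the $\varphi^\e$ coordinate versus the $\varphi$ coordinate, and confirming that the cancellations produced by the Euler boundary relation $q|_{z=0}=gh$ are what let the $\hat\nn$ terms disappear cleanly.
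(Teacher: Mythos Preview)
Your proposal is correct and follows essentially the same route as the paper: subtract the two systems and rewrite all mixed-coordinate differences via the identities $(\ref{SectA_Difference_Transform_1})$--$(\ref{SectA_Difference_Transform_2})$. One small simplification the paper exploits for the dynamical boundary condition is that the Euler relation $q=gh$ is \emph{scalar}, so it may be multiplied directly by $\NN^\e$ (not $\NN$) and subtracted from the Navier--Stokes condition, which avoids your detour through $\hat\nn$ and the cancellation $q\hat\nn=gh\hat\nn$; the two arguments are of course equivalent.
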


\begin{proof}

Plug $(\ref{SectA_Difference_Transform_1}),(\ref{SectA_Difference_Transform_2})$ into
\begin{equation}\label{SectA_Difference_Eq1_4}
\begin{array}{ll}
\partial_t^{\varphi^\e} v^\e -\partial_t^{\varphi} v + v^\e\cdot\nabla^{\varphi^\e} v^\e - v\cdot\nabla^{\varphi} v
+ \nabla^{\varphi^\e} q^\e - \nabla^{\varphi} q \\[5pt]
= 2\e \nabla^{\varphi^{\e}} \cdot\mathcal{S}^{\varphi^{\e}} (v^{\e}-v)
+ \e \triangle^{\varphi^{\e}} v,
\end{array}
\end{equation}
then we get the equation $(\ref{Sect1_T_Derivatives_Difference_Eq})_1$.

It follows from the divergence free condition that
\begin{equation}\label{SectA_Difference_Eq1_5}
\begin{array}{ll}
0 = \nabla^{\varphi^\e}\cdot v^\e - \nabla^{\varphi}\cdot v
= \sum\limits_{i=1}^3(\partial_i^{\varphi^{\e}}\hat{v}^i
-\partial_z^{\varphi} v^i \partial_i^{\varphi^{\e}}\hat{\eta})

= \nabla^{\varphi^{\e}}\cdot \hat{v} - \partial_z^{\varphi}v \cdot\nabla^{\varphi^{\e}}\hat{\eta}.
\end{array}
\end{equation}

It follows from the kinetical boundary condition that
\begin{equation}\label{SectA_Difference_Eq1_6}
\begin{array}{ll}
\partial_t\hat{h} = \partial_t h^{\e} -\partial_t h = v^\e(t,y,0)\cdot \NN^{\e} -v(t,y,0)\cdot \NN, \\[6pt]

v^\e\cdot \NN^{\e} -v\cdot \NN = \hat{v}\cdot \NN^{\e} +v\cdot \hat{\NN}
= v\cdot (-\nabla_y \hat{h},0) + \hat{v}\cdot \NN^{\e}, \\[6pt]

\partial_t \hat{h} + v_y\cdot \nabla_y \hat{h} = \hat{v}\cdot\NN^{\e}.
\end{array}
\end{equation}

The dynamical boundary condition for the Euler equation with $\sigma=0$
is a scalar equation, that is $q= gh$. For any vector such as $\NN^{\e}$,
$q \NN^{\e}= gh \NN^{\e}$ makes sense.
It follows from the dynamical boundary condition that
\begin{equation}\label{SectA_Difference_Eq1_7}
\begin{array}{ll}
q^\e \NN^{\e} -q\NN^{\e} -2\e \mathcal{S}^{\varphi^\e}(v^\e -v)\,\NN^\e =gh^{\e} \NN^{\e} -gh \NN^{\e}
+ 2\e \mathcal{S}^{\varphi^\e}v\,\NN^\e, \\[6pt]

\hat{q}\NN^{\e} -2\e \mathcal{S}^{\varphi^\e}\hat{v}\,\NN^\e =g \hat{h} \NN^{\e} + 2\e \mathcal{S}^{\varphi^\e}v\,\NN^\e, \\[6pt]

(\hat{q} -g \hat{h})\NN^{\e} -2\e \mathcal{S}^{\varphi^\e}\hat{v}\,\NN^\e = 2\e \mathcal{S}^{\varphi^\e}v\,\NN^\e.
\end{array}
\end{equation}
Thus, Lemma $\ref{SectA_DifferenceEq1_Lemma}$ is proved.
\end{proof}

\begin{lemma}\label{SectA_DifferenceEq2_Lemma}
Assume $0\leq \ell +|\alpha|\leq k, \ 0\leq \ell\leq k-1, \ |\alpha|\geq 1$,
let $\hat{V}^{\ell,\alpha} = \partial_t^{\ell}\mathcal{Z}^{\alpha}\hat{v} - \partial_z^{\varphi}v \partial_t^{\ell}\mathcal{Z}^{\alpha}\hat{\varphi}$,
$\hat{Q}^{\ell,\alpha} = \partial_t^{\ell}\mathcal{Z}^{\alpha}\hat{q} - \partial_z^{\varphi}q \partial_t^{\ell}\mathcal{Z}^{\alpha}\hat{\varphi}$,
then $\hat{V}^{\ell,\alpha}, \hat{Q}^{\ell,\alpha}$ satisfy the equations $(\ref{Sect5_TangentialEstimates_Diff_Eq})$.
\end{lemma}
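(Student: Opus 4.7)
My plan is to derive the system $(\ref{Sect5_TangentialEstimates_Diff_Eq})$ by applying the tangential-time operator $\partial_t^\ell \mathcal{Z}^\alpha$ to each of the five equations of system $(\ref{Sect1_T_Derivatives_Difference_Eq})$ for $(\hat v,\hat h,\hat q)$ obtained in Lemma A.1, and then using the fundamental commutator identity $(\ref{Sect2_Tangential_Estimate_1})$, namely
\begin{equation*}
[\partial_t^\ell \mathcal{Z}^\alpha,\partial_i^{\varphi^\e}]\, f \;=\; -\,\partial_z^{\varphi^\e} f\; \partial_i^{\varphi^\e}(\partial_t^\ell \mathcal{Z}^\alpha \eta^\e) \;+\;\text{b.t.},\qquad i=t,1,2,3.
\end{equation*}
Applied to $\hat v$ and $\hat q$, these commutators automatically produce the corrective terms $\partial_z^{\varphi^\e} v^\e\,\partial_t^\ell\mathcal{Z}^\alpha\hat\eta$ and $\partial_z^{\varphi^\e} q^\e\,\partial_t^\ell\mathcal{Z}^\alpha\hat\eta$, which, after writing $\partial_z^{\varphi^\e}v^\e = \partial_z^{\varphi}v+(\partial_z^{\varphi^\e}v^\e-\partial_z^{\varphi}v)$ and putting the difference into the bounded-terms pocket, reorganize into the Alinhac unknowns $\hat V^{\ell,\alpha}$ and $\hat Q^{\ell,\alpha}$. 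This is the same mechanism used for the un-hatted system in Lemma $\ref{Sect2_Tangential_Estimate_Lemma}$, now carried out with the Navier--Stokes geometry $\varphi^\e$ while the corrections inside the definitions of $\hat V^{\ell,\alpha},\hat Q^{\ell,\alpha}$ are built with the Euler geometry $\varphi$.

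For the momentum equation I would expand $\partial_t^\ell\mathcal{Z}^\alpha$ through each term: $\partial_t^{\varphi^\e}\hat v$, $v^\e\cdot\nabla^{\varphi^\e}\hat v$, $\nabla^{\varphi^\e}\hat q$, $2\e\nabla^{\varphi^\e}\cdot \mathcal S^{\varphi^\e}\hat v$, and the right-hand source $\partial_z^{\varphi}v\,\partial_t^{\varphi^\e}\hat\eta + v^\e\cdot\nabla^{\varphi^\e}\hat\eta\,\partial_z^{\varphi}v - \hat v\cdot\nabla^{\varphi}v + \partial_z^{\varphi}q\,\nabla^{\varphi^\e}\hat\eta + \e\triangle^{\varphi^\e}v$. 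The commutator identity applied to $\partial_t^{\varphi^\e}$ and $\nabla^{\varphi^\e}$ acting on $\hat v$ and $\hat q$ produces exactly $\partial_z^{\varphi^\e} v^\e\, \partial_t^{\ell}\mathcal{Z}^\alpha\hat\eta\cdot(\partial_t^{\varphi^\e}+v^\e\cdot\nabla^{\varphi^\e})$ and $\partial_z^{\varphi^\e}q^\e\,\nabla^{\varphi^\e}\partial_t^\ell\mathcal{Z}^\alpha\hat\eta$; combining with the $\partial_t^\ell\mathcal{Z}^\alpha$ of the original source terms and moving every remainder into the bounded-terms bucket yields the right-hand side $\mathcal{I}_4$ displayed in $(\ref{Sect5_TangentialEstimates_Diff_Eq})$. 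For the divergence-free condition I would apply $\partial_t^\ell\mathcal{Z}^\alpha$ to $\nabla^{\varphi^\e}\cdot \hat v = \partial_z^{\varphi}v\cdot\nabla^{\varphi^\e}\hat\eta$; the commutator identity moves the $\nabla^{\varphi^\e}$ inside, producing the stated equation for $\nabla^{\varphi^\e}\cdot\hat V^{\ell,\alpha}$.

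For the kinematic boundary condition I would apply $\partial_t^\ell\mathcal{Z}^\alpha$ to $\partial_t\hat h + v_y\cdot\nabla_y\hat h = \hat v\cdot\NN^\e$ at $z=0$, then split $\partial_t^\ell\mathcal{Z}^\alpha\hat v = \hat V^{\ell,\alpha}+\partial_z^{\varphi}v\,\partial_t^\ell\mathcal{Z}^\alpha\hat\eta$ in the right-hand trace, giving the third equation of the system with the symmetric bracket $[\partial_t^\ell\mathcal{Z}^\alpha,\hat v,\NN^\e]$ absorbing the low-order commutator residues. For the dynamic boundary condition I would apply $\partial_t^\ell\mathcal{Z}^\alpha$ to $(\hat q-g\hat h)\NN^\e - 2\e\mathcal{S}^{\varphi^\e}\hat v\,\NN^\e = 2\e\mathcal{S}^{\varphi^\e}v\,\NN^\e$, commute past $\mathcal{S}^{\varphi^\e}$ and $\NN^\e$, and use the trace definition $\hat Q^{\ell,\alpha}|_{z=0}=\partial_t^\ell\mathcal{Z}^\alpha\hat q-\partial_z^{\varphi}q\,\partial_t^\ell\mathcal{Z}^\alpha\hat h$ (since $\hat\eta|_{z=0}=\hat h$) together with the identity $2\e\mathcal S^{\varphi^\e}v\,\NN^\e = 2\e(\mathcal S^{\varphi^\e}v - \mathcal S^{\varphi^\e}v\,\nn^\e\!\cdot\nn^\e)\,\NN^\e + 2\e(\mathcal S^{\varphi^\e}v\,\nn^\e\!\cdot\nn^\e)\,\NN^\e$ to organize the tangential and normal pieces into exactly the fourth equation of $(\ref{Sect5_TangentialEstimates_Diff_Eq})$.

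The main obstacle, as usual for Alinhac's good unknown, will be disciplined bookkeeping rather than any deep estimate: one must track which combinations of commutators cancel with the explicit source terms from $(\ref{Sect1_T_Derivatives_Difference_Eq})$, which combinations reconstitute $\hat V^{\ell,\alpha}$ and $\hat Q^{\ell,\alpha}$, and which can safely be absorbed into the brackets $[\partial_t^\ell\mathcal{Z}^\alpha,\cdot,\cdot]$ or the $\mathcal{I}_4$-source. A subtle point is that the corrector in $\hat V^{\ell,\alpha},\hat Q^{\ell,\alpha}$ uses $\partial_z^{\varphi}v$ and $\partial_z^{\varphi}q$ (Euler geometry), while the commutator identity naturally yields $\partial_z^{\varphi^\e}v^\e,\partial_z^{\varphi^\e}q^\e$ (Navier--Stokes geometry); the discrepancy $\partial_z^{\varphi^\e}v^\e-\partial_z^{\varphi}v$ and $\partial_z^{\varphi^\e}q^\e-\partial_z^{\varphi}q$ must be rewritten via $(\ref{SectA_Difference_Transform_1})$--$(\ref{SectA_Difference_Transform_2})$ and its analogue for the pressure, producing additional difference-type contributions of the form $(\partial_z^{\varphi^\e}\hat v - \partial_z^{\varphi^\e}\hat\eta\,\partial_z^\varphi v)\partial_t^\ell\mathcal{Z}^\alpha\hat\eta$, which are quadratic in the small quantities and are assimilated into the remainder. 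Once this reorganization is completed and the initial data are matched by direct substitution at $t=0$, the lemma follows.
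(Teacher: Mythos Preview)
Your overall strategy---apply $\partial_t^\ell\mathcal{Z}^\alpha$ to the difference system $(\ref{Sect1_T_Derivatives_Difference_Eq})$ and reorganize---matches the paper. However, the mechanism you describe for generating the Alinhac corrections is incorrect, and carrying it out as written would not produce the claimed system.

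You assert that the commutator $[\partial_t^\ell\mathcal{Z}^\alpha,\partial_i^{\varphi^\e}]$ applied to $\hat v$ ``automatically produces'' $\partial_z^{\varphi^\e}v^\e\,\partial_t^\ell\mathcal{Z}^\alpha\hat\eta$. But identity $(\ref{Sect2_Tangential_Estimate_1})$ actually gives
\[
[\partial_t^\ell\mathcal{Z}^\alpha,\partial_i^{\varphi^\e}]\hat v \;=\; -\,\partial_z^{\varphi^\e}\hat v\;\partial_i^{\varphi^\e}(\partial_t^\ell\mathcal{Z}^\alpha\eta^\e)\;+\;\text{b.t.}:
\]
the prefactor is $\partial_z^{\varphi^\e}\hat v$ (a small difference quantity), not $\partial_z^{\varphi^\e}v^\e$, and the highest-order derivative falls on $\eta^\e$, not on $\hat\eta$. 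Your proposed reconciliation step $\partial_z^{\varphi^\e}v^\e=\partial_z^{\varphi}v+\text{difference}$ is therefore inapplicable, because $v^\e$ never appears.

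In the paper the Alinhac structure arises much more directly. The difference equation $(\ref{Sect1_T_Derivatives_Difference_Eq})_1$ \emph{already} carries the terms $-\partial_z^{\varphi}v\,\partial_t^{\varphi^\e}\hat\eta$, $-v^\e\!\cdot\!\nabla^{\varphi^\e}\hat\eta\,\partial_z^{\varphi}v$ and $-\partial_z^{\varphi}q\,\nabla^{\varphi^\e}\hat\eta$, inherited from $(\ref{SectA_Difference_Transform_1})$--$(\ref{SectA_Difference_Transform_2})$. Applying $\partial_t^\ell\mathcal{Z}^\alpha$ to, say, $\partial_t^{\varphi^\e}\hat v-\partial_z^{\varphi}v\,\partial_t^{\varphi^\e}\hat\eta$ and using only the product rule gives
\[
\partial_t^{\varphi^\e}\bigl(\partial_t^\ell\mathcal{Z}^\alpha\hat v-\partial_z^{\varphi}v\,\partial_t^\ell\mathcal{Z}^\alpha\hat\eta\bigr)
\;+\;\partial_t^\ell\mathcal{Z}^\alpha\hat\eta\cdot\partial_t^{\varphi^\e}\partial_z^{\varphi}v
\;+\;\text{(explicit commutators)},
\]
so $\partial_t^{\varphi^\e}\hat V^{\ell,\alpha}$ appears with the Euler weight $\partial_z^{\varphi}v$ built in from the start; no geometry swap between $\varphi^\e$ and $\varphi$ is needed. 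Note also that $\mathcal{I}_4$ in $(\ref{Sect5_TangentialEstimates_Diff_Eq})$ is an exact expression, not a ``bounded-terms pocket''; this lemma is an algebraic derivation, not an estimate. For the kinematic boundary condition the paper applies $\partial_t^\ell\mathcal{Z}^\alpha$ separately to the Navier--Stokes and Euler conditions and then subtracts, which is how the transport velocity $v_y^\e$ (rather than $v_y$) and the cross-term $\hat v_y\cdot\nabla_y\partial_t^\ell\mathcal{Z}^\alpha h$ in $(\ref{Sect5_TangentialEstimates_Diff_Eq})_3$ arise; applying $\partial_t^\ell\mathcal{Z}^\alpha$ directly to $(\ref{Sect1_T_Derivatives_Difference_Eq})_3$ as you propose yields $v_y$ instead and misses these terms.
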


\begin{proof}
Apply $\partial_t^{\ell}\mathcal{Z}^{\alpha}$ to the equations $(\ref{Sect1_T_Derivatives_Difference_Eq})$, we prove
$(\ref{Sect5_TangentialEstimates_Diff_Eq})$. The derivation of the main equation $(\ref{Sect5_TangentialEstimates_Diff_Eq})_1$ is as follows:
\begin{equation}\label{SectA_Difference_Eq2_1}
\begin{array}{ll}
\partial_t^{\varphi^{\e}} \partial_t^{\ell}\mathcal{Z}^{\alpha}\hat{v} + [\partial_t^{\ell}\mathcal{Z}^{\alpha},\partial_t^{\varphi^{\e}}]\hat{v}
-\partial_z^{\varphi} v \partial_t^{\varphi^{\e}} \partial_t^{\ell}\mathcal{Z}^{\alpha}\hat{\varphi}
- [\partial_t^{\ell}\mathcal{Z}^{\alpha}, \partial_z^{\varphi} v \partial_t^{\varphi^{\e}}]\hat{\varphi} \\[7pt]\quad

+ v^{\e} \cdot\nabla^{\varphi^{\e}} \partial_t^{\ell}\mathcal{Z}^{\alpha}\hat{v} + [\partial_t^{\ell}\mathcal{Z}^{\alpha}, v^{\e} \cdot\nabla^{\varphi^{\e}}] \hat{v}
- \partial_z^{\varphi} v \, v^{\e}\cdot \nabla^{\varphi^{\e}}\partial_t^{\ell}\mathcal{Z}^{\alpha}\hat{\varphi} \\[7pt]\quad
- [\partial_t^{\ell}\mathcal{Z}^{\alpha}, \partial_z^{\varphi} v \, v^{\e}\cdot \nabla^{\varphi^{\e}}]\hat{\varphi}

+ \partial_t^{\ell}\mathcal{Z}^{\alpha}\hat{v}\cdot\nabla^{\varphi} v + [\partial_t^{\ell}\mathcal{Z}^{\alpha}, \nabla^{\varphi} v\cdot]\hat{v}

+ \nabla^{\varphi^{\e}} \partial_t^{\ell}\mathcal{Z}^{\alpha}\hat{q} \\[7pt]\quad
+ [\partial_t^{\ell}\mathcal{Z}^{\alpha},\nabla^{\varphi^{\e}}] \hat{q}
- \partial_z^{\varphi} q\nabla^{\varphi^{\e}}\partial_t^{\ell}\mathcal{Z}^{\alpha}\hat{\varphi}
- [\partial_t^{\ell}\mathcal{Z}^{\alpha},\partial_z^{\varphi} q\nabla^{\varphi^{\e}}]\hat{\varphi} \\[7pt]\quad
= \e\partial_t^{\ell}\mathcal{Z}^{\alpha}\triangle^{\varphi^\e} \hat{v} + \e\partial_t^{\ell}\mathcal{Z}^{\alpha}\triangle^{\varphi^\e} v, \\[13pt]

\partial_t^{\varphi^{\e}} \partial_t^{\ell}\mathcal{Z}^{\alpha}\hat{v}
-\partial_z^{\varphi} v \partial_t^{\varphi^{\e}} \partial_t^{\ell}\mathcal{Z}^{\alpha}\hat{\varphi}

+ v^{\e} \cdot\nabla^{\varphi^{\e}} \partial_t^{\ell}\mathcal{Z}^{\alpha}\hat{v}
- \partial_z^{\varphi} v \, v^{\e}\cdot \nabla^{\varphi^{\e}}\partial_t^{\ell}\mathcal{Z}^{\alpha}\hat{\varphi} \\[8pt]\quad

+ \partial_t^{\ell}\mathcal{Z}^{\alpha}\hat{v}\cdot\nabla^{\varphi} v

+ \nabla^{\varphi^{\e}} \partial_t^{\ell}\mathcal{Z}^{\alpha}\hat{q}
- \partial_z^{\varphi} q\nabla^{\varphi^{\e}}\partial_t^{\ell}\mathcal{Z}^{\alpha}\hat{\varphi}

- 2\e\partial_t^{\ell}\mathcal{Z}^{\alpha}\nabla^{\varphi^{\e}}\cdot\mathcal{S}^{\varphi^{\e}} \hat{v} \\[8pt]\quad

= \e\partial_t^{\ell}\mathcal{Z}^{\alpha}\triangle^{\varphi^\e} v
- [\partial_t^{\ell}\mathcal{Z}^{\alpha},\partial_t^{\varphi^{\e}}]\hat{v}
+ [\partial_t^{\ell}\mathcal{Z}^{\alpha}, \partial_z^{\varphi} v \partial_t^{\varphi^{\e}}]\hat{\varphi}

- [\partial_t^{\ell}\mathcal{Z}^{\alpha}, v^{\e} \cdot\nabla^{\varphi^{\e}}] \hat{v} \\[8pt]\quad
+ [\partial_t^{\ell}\mathcal{Z}^{\alpha}, \partial_z^{\varphi} v \, v^{\e}\cdot \nabla^{\varphi^{\e}}]\hat{\varphi}
- [\partial_t^{\ell}\mathcal{Z}^{\alpha}, \nabla^{\varphi} v\cdot]\hat{v}

- [\partial_t^{\ell}\mathcal{Z}^{\alpha},\nabla^{\varphi^{\e}}] \hat{q}
+ [\partial_t^{\ell}\mathcal{Z}^{\alpha},\partial_z^{\varphi} q\nabla^{\varphi^{\e}}]\hat{\varphi},
\\[13pt]

\partial_t^{\varphi^{\e}} (\partial_t^{\ell}\mathcal{Z}^{\alpha}\hat{v}
-\partial_z^{\varphi} v \partial_t^{\ell}\mathcal{Z}^{\alpha}\hat{\varphi})

+ v^{\e} \cdot\nabla^{\varphi^{\e}}(\partial_t^{\ell}\mathcal{Z}^{\alpha}\hat{v}
- \partial_z^{\varphi} v \partial_t^{\ell}\mathcal{Z}^{\alpha}\hat{\varphi}) \\[8pt]\quad

+ \nabla^{\varphi^{\e}} (\partial_t^{\ell}\mathcal{Z}^{\alpha}\hat{q}
- \partial_z^{\varphi} q\partial_t^{\ell}\mathcal{Z}^{\alpha}\hat{\varphi}) 

- 2\e\nabla^{\varphi^{\e}}\cdot\mathcal{S}^{\varphi^{\e}} \partial_t^{\ell}\mathcal{Z}^{\alpha}\hat{v}

= \mathcal{I}_4, \\[13pt]

\partial_t^{\varphi^{\e}} \hat{V}^{\ell,\alpha}
+ v^{\e} \cdot\nabla^{\varphi^{\e}} \hat{V}^{\ell,\alpha}
+ \nabla^{\varphi^{\e}} \hat{Q}^{\ell,\alpha} 
- 2\e\nabla^{\varphi^{\e}}\cdot\mathcal{S}^{\varphi^{\e}} \partial_t^{\ell}\mathcal{Z}^{\alpha}\hat{v}
= \mathcal{I}_4.
\end{array}
\end{equation}

The derivation of the divergence free condition $(\ref{Sect5_TangentialEstimates_Diff_Eq})_2$ is as follows:
\begin{equation}\label{SectA_Difference_Eq2_2}
\begin{array}{ll}
\nabla^{\varphi^{\e}}\cdot \partial_t^{\ell}\mathcal{Z}^{\alpha}\hat{v} + [\partial_t^{\ell}\mathcal{Z}^{\alpha},\nabla^{\varphi^{\e}}\cdot] \hat{v}
- \partial_z^{\varphi}v \cdot\nabla^{\varphi^{\e}}\partial_t^{\ell}\mathcal{Z}^{\alpha}\hat{\eta}
- [\partial_t^{\ell}\mathcal{Z}^{\alpha},\partial_z^{\varphi}v \cdot\nabla^{\varphi^{\e}}]\hat{\eta} =0,
\\[12pt]

\nabla^{\varphi^{\e}}\cdot \partial_t^{\ell}\mathcal{Z}^{\alpha}\hat{v}
- \partial_z^{\varphi}v \cdot\nabla^{\varphi^{\e}}\partial_t^{\ell}\mathcal{Z}^{\alpha}\hat{\eta} \\[8pt]\quad
= -[\partial_t^{\ell}\mathcal{Z}^{\alpha},\nabla^{\varphi^{\e}}\cdot] \hat{v}
+ [\partial_t^{\ell}\mathcal{Z}^{\alpha},\partial_z^{\varphi}v \cdot\nabla^{\varphi^{\e}}]\hat{\eta},
\\[12pt]

\nabla^{\varphi^{\e}}\cdot (\partial_t^{\ell}\mathcal{Z}^{\alpha}\hat{v}
- \partial_z^{\varphi}v \partial_t^{\ell}\mathcal{Z}^{\alpha}\hat{\eta})
+ \partial_t^{\ell}\mathcal{Z}^{\alpha}\hat{\eta} \nabla^{\varphi^{\e}}\cdot \partial_z^{\varphi}v \\[8pt]\quad
= -[\partial_t^{\ell}\mathcal{Z}^{\alpha},\nabla^{\varphi^{\e}}\cdot] \hat{v}
+ [\partial_t^{\ell}\mathcal{Z}^{\alpha},\partial_z^{\varphi}v \cdot\nabla^{\varphi^{\e}}]\hat{\eta},
\end{array}
\end{equation}

\begin{equation*}
\begin{array}{ll}
\nabla^{\varphi^{\e}}\cdot \hat{V}^{\ell,\alpha}
= -[\partial_t^{\ell}\mathcal{Z}^{\alpha},\nabla^{\varphi^{\e}}\cdot] \hat{v}
+ [\partial_t^{\ell}\mathcal{Z}^{\alpha},\partial_z^{\varphi}v \cdot\nabla^{\varphi^{\e}}]\hat{\eta}
- \partial_t^{\ell}\mathcal{Z}^{\alpha}\hat{\eta} \nabla^{\varphi^{\e}}\cdot \partial_z^{\varphi}v,
\end{array}
\end{equation*}

Next, we derive the kinetical boundary condition $(\ref{Sect5_TangentialEstimates_Diff_Eq})_3$.
Apply $\partial_t^{\ell}\mathcal{Z}^{\alpha}$ to Navier-Stokes and Euler kinetical boundary conditions, we get
\begin{equation}\label{SectA_Difference_Eq2_3}
\begin{array}{ll}
\partial_t \partial_t^{\ell}\mathcal{Z}^{\alpha} h^{\e}
+ v_y^{\e} \cdot\nabla_y \partial_t^{\ell}\mathcal{Z}^{\alpha} h^{\e}
= \partial_t^{\ell}\mathcal{Z}^{\alpha}v^{\e}\cdot \NN^{\e} + [\partial_t^{\ell}\mathcal{Z}^{\alpha}, v^{\e},\NN^{\e}], \\[8pt]

\partial_t \partial_t^{\ell}\mathcal{Z}^{\alpha} h
+ v_y \cdot\nabla_y \partial_t^{\ell}\mathcal{Z}^{\alpha} h
= \partial_t^{\ell}\mathcal{Z}^{\alpha}v\cdot \NN + [\partial_t^{\ell}\mathcal{Z}^{\alpha}, v,\NN],
\end{array}
\end{equation}
then the kinetical boundary condition $(\ref{Sect5_TangentialEstimates_Diff_Eq})_3$ is derived as follows:
\begin{equation}\label{SectA_Difference_Eq2_4}
\begin{array}{ll}
\partial_t \partial_t^{\ell}\mathcal{Z}^{\alpha}\hat{h} + v_y^{\e}\cdot \nabla_y \partial_t^{\ell}\mathcal{Z}^{\alpha}\hat{h}
+ \hat{v}_y \cdot \nabla_y \partial_t^{\ell}\mathcal{Z}^{\alpha} h
 = \NN^{\e}\cdot \partial_t^{\ell}\mathcal{Z}^{\alpha}\hat{v}
- \partial_y \hat{h}\cdot \partial_t^{\ell}\mathcal{Z}^{\alpha}v_y \\[7pt]\quad
 + [\partial_t^{\ell}\mathcal{Z}^{\alpha}, \hat{v},\NN^{\e}]
 - [\partial_t^{\ell}\mathcal{Z}^{\alpha}, v_y, \partial_y \hat{h}], \\[12pt]

\partial_t \partial_t^{\ell}\mathcal{Z}^{\alpha}\hat{h} + v_y^{\e}\cdot \nabla_y \partial_t^{\ell}\mathcal{Z}^{\alpha}\hat{h}
- \NN^{\e}\cdot \partial_t^{\ell}\mathcal{Z}^{\alpha}\hat{v}
 = - \hat{v}_y \cdot \nabla_y \partial_t^{\ell}\mathcal{Z}^{\alpha} h
- \partial_y \hat{h}\cdot \partial_t^{\ell}\mathcal{Z}^{\alpha}v_y \\[7pt]\quad
 + [\partial_t^{\ell}\mathcal{Z}^{\alpha}, \hat{v},\NN^{\e}]
 - [\partial_t^{\ell}\mathcal{Z}^{\alpha}, v_y, \partial_y \hat{h}], \\[12pt]

\partial_t \partial_t^{\ell}\mathcal{Z}^{\alpha}\hat{h} + v_y^{\e}\cdot \nabla_y \partial_t^{\ell}\mathcal{Z}^{\alpha}\hat{h}
- \NN^{\e}\cdot \hat{V}^{\ell,\alpha}
= \NN^{\e}\cdot \partial_z^{\varphi} v \partial_t^{\ell}\mathcal{Z}^{\alpha}\hat{\eta} \\[8pt]\quad
 - \hat{v}_y \cdot \nabla_y \partial_t^{\ell}\mathcal{Z}^{\alpha} h
- \partial_y \hat{h}\cdot \partial_t^{\ell}\mathcal{Z}^{\alpha}v_y
 + [\partial_t^{\ell}\mathcal{Z}^{\alpha}, \hat{v},\NN^{\e}]
 - [\partial_t^{\ell}\mathcal{Z}^{\alpha}, v_y, \partial_y \hat{h}],
\end{array}
\end{equation}

Finally, we derive the dynamical boundary condition $(\ref{Sect5_TangentialEstimates_Diff_Eq})_4$.
Apply $\partial_t^{\ell}\mathcal{Z}^{\alpha}$ to Navier-Stokes and Euler dynamical boundary conditions, we get
\begin{equation}\label{SectA_Difference_Eq2_5}
\begin{array}{ll}
(\partial_t^{\ell}\mathcal{Z}^{\alpha}q^{\e} - g\partial_t^{\ell}\mathcal{Z}^{\alpha}h^{\e})\NN^{\e}
-2\e \mathcal{S}^{\varphi^{\e}} \partial_t^{\ell}\mathcal{Z}^{\alpha}v^{\e}\,\NN^{\e} \\[8pt]\quad
= 2\e [\partial_t^{\ell}\mathcal{Z}^{\alpha},\mathcal{S}^{\varphi^{\e}}] v^{\e}\,\NN^{\e}

+ (2\e \mathcal{S}^{\varphi^{\e}}v^{\e} - (q^{\e}-g h^{\e}))\,\partial_t^{\ell}\mathcal{Z}^{\alpha}\NN^{\e} \\[8pt]\qquad
- [\partial_t^{\ell}\mathcal{Z}^{\alpha},q^{\e}-g h^{\e},\NN^{\e}]
+2\e [\partial_t^{\ell}\mathcal{Z}^{\alpha},\mathcal{S}^{\varphi^{\e}}v^{\e}, \NN^{\e}], \\[11pt]

\partial_t^{\ell}\mathcal{Z}^{\alpha}q = g\partial_t^{\ell}\mathcal{Z}^{\alpha}h,
\end{array}
\end{equation}
then the dynamical boundary condition $(\ref{Sect5_TangentialEstimates_Diff_Eq})_4$ is derived as follows:
\begin{equation}\label{SectA_Difference_Eq2_6}
\begin{array}{ll}
(\partial_t^{\ell}\mathcal{Z}^{\alpha}\hat{q} - g\partial_t^{\ell}\mathcal{Z}^{\alpha}\hat{h})\NN^{\e}
-2\e \mathcal{S}^{\varphi^{\e}} \partial_t^{\ell}\mathcal{Z}^{\alpha}\hat{v}\,\NN^{\e} \\[8pt]\quad
= 2\e [\partial_t^{\ell}\mathcal{Z}^{\alpha},\mathcal{S}^{\varphi^{\e}}] v^{\e}\,\NN^{\e}

+ (2\e \mathcal{S}^{\varphi^{\e}}v^{\e} - 2\e \mathcal{S}^{\varphi^{\e}}v^{\e}\nn^{\e}\cdot\nn^{\e})\,\partial_t^{\ell}\mathcal{Z}^{\alpha}\NN^{\e} \\[8pt]\qquad
- [\partial_t^{\ell}\mathcal{Z}^{\alpha},2\e \mathcal{S}^{\varphi^{\e}}v^{\e}\nn^{\e}\cdot\nn^{\e},\NN^{\e}]
+2\e [\partial_t^{\ell}\mathcal{Z}^{\alpha},\mathcal{S}^{\varphi^{\e}}v^{\e}, \NN^{\e}]
+ 2\e \mathcal{S}^{\varphi^{\e}} \partial_t^{\ell}\mathcal{Z}^{\alpha}v\,\NN^{\e}, \\[12pt]

\hat{Q}^{\ell,\alpha}\NN^{\e}

-2\e \mathcal{S}^{\varphi^{\e}} \partial_t^{\ell}\mathcal{Z}^{\alpha}\hat{v}\,\NN^{\e}
- (g-\partial_z^{\varphi}q)\partial_t^{\ell}\mathcal{Z}^{\alpha}\hat{h} \,\NN^{\e}
\\[8pt]\quad
= 2\e [\partial_t^{\ell}\mathcal{Z}^{\alpha},\mathcal{S}^{\varphi^{\e}}] v^{\e}\,\NN^{\e}

+ (2\e \mathcal{S}^{\varphi^{\e}}v^{\e} - 2\e \mathcal{S}^{\varphi^{\e}}v^{\e}\nn^{\e}\cdot\nn^{\e})\,\partial_t^{\ell}\mathcal{Z}^{\alpha}\NN^{\e} \\[8pt]\qquad
- [\partial_t^{\ell}\mathcal{Z}^{\alpha},2\e \mathcal{S}^{\varphi^{\e}}v^{\e}\nn^{\e}\cdot\nn^{\e},\NN^{\e}]
+2\e [\partial_t^{\ell}\mathcal{Z}^{\alpha},\mathcal{S}^{\varphi^{\e}}v^{\e}, \NN^{\e}]
+ 2\e \mathcal{S}^{\varphi^{\e}} \partial_t^{\ell}\mathcal{Z}^{\alpha}v\,\NN^{\e},
\end{array}
\end{equation}
Thus, Lemma $\ref{SectA_DifferenceEq2_Lemma}$ is proved.
\end{proof}

\begin{lemma}\label{SectA_DifferenceEq2_Lemma_Time}
Assume $0\leq \ell \leq k-1, |\alpha|=0$,
let $\hat{V}^{\ell,0} = \partial_t^{\ell}\hat{v} - \partial_z^{\varphi}v \partial_t^{\ell}\hat{\varphi}$,
then the main equation of $\hat{V}^{\ell,0}$ and its kinetical boundary condition satisfy $(\ref{Sect5_Tangential_Estimates_Time})$.
\end{lemma}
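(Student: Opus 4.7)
The plan is to follow the same strategy as in Lemma $\ref{SectA_DifferenceEq2_Lemma}$, restricted to the case $|\alpha|=0$, but now only deriving the momentum equation and the kinetical boundary condition; the divergence-free condition and the dynamical boundary condition are not needed here because, in the $|\alpha|=0$ case, we do not introduce the pressure good unknown $\hat{Q}^{\ell,0}$ nor apply integration by parts to the pressure term (compare with the estimate of $\hat{V}^{\ell,0}$ carried out in Lemma $\ref{Sect5_Tangential_Estimates_Lemma}$).

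First I would apply $\partial_t^{\ell}$ to the momentum equation $(\ref{Sect1_T_Derivatives_Difference_Eq})_1$. Because $\partial_t^{\ell}$ does not commute with $\partial_t^{\varphi^{\e}}$, $v^{\e}\cdot\nabla^{\varphi^{\e}}$, $\nabla^{\varphi^{\e}}$, or $\nabla^{\varphi^{\e}}\cdot\mathcal{S}^{\varphi^{\e}}$, each of these steps produces a commutator, and similarly the products $\partial_z^{\varphi}v\,\partial_t^{\varphi^{\e}}\hat{\eta}$, $v^{\e}\cdot\nabla^{\varphi^{\e}}\hat{\eta}\,\partial_z^{\varphi}v$, $\hat{v}\cdot\nabla^{\varphi}v$ and $\partial_z^{\varphi}q\,\nabla^{\varphi^{\e}}\hat{\eta}$ each yield an ``aligned'' piece (where $\partial_t^{\ell}$ falls on the factor involving $\hat{v}$, $\hat{\eta}$ or $\hat{q}$) plus a commutator remainder. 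Collecting every commutator and every ``bounded'' lower order piece into the source term $\mathcal{I}_5$ exactly as in $(\ref{Sect5_Tangential_Estimates_Time})$, the remaining aligned terms read
\begin{equation*}
\partial_t^{\varphi^{\e}}\partial_t^{\ell}\hat{v}
- \partial_z^{\varphi}v\,\partial_t^{\varphi^{\e}}\partial_t^{\ell}\hat{\eta}
+ v^{\e}\cdot\nabla^{\varphi^{\e}}\partial_t^{\ell}\hat{v}
- v^{\e}\cdot\nabla^{\varphi^{\e}}\partial_t^{\ell}\hat{\eta}\,\partial_z^{\varphi}v
- 2\e\nabla^{\varphi^{\e}}\cdot\mathcal{S}^{\varphi^{\e}}\partial_t^{\ell}\hat{v}
= \mathcal{I}_5 - \nabla^{\varphi^{\e}}\partial_t^{\ell}\hat{q}+\partial_z^{\varphi}q\nabla^{\varphi^\e}\partial_t^\ell\hat\varphi + (\text{aligned force}),
\end{equation*}
and the point is that the first four left-hand terms assemble into $\partial_t^{\varphi^{\e}}\hat{V}^{\ell,0}+v^{\e}\cdot\nabla^{\varphi^{\e}}\hat{V}^{\ell,0}$ by the product rule, exactly as in $(\ref{SectA_Difference_Eq2_1})$. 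This yields $(\ref{Sect5_Tangential_Estimates_Time})_1$.

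Next I would apply $\partial_t^{\ell}$ to the kinetical boundary condition $(\ref{Sect1_T_Derivatives_Difference_Eq})_3$, which is formally identical to the first step in $(\ref{SectA_Difference_Eq2_4})$ with $\mathcal{Z}^{\alpha}$ set to the identity. Commuting $\partial_t^{\ell}$ past $v_y\cdot\nabla_y$ and past the contraction with $\NN^{\e}$, one obtains two symmetric commutator terms, and after absorbing the $\partial_z^{\varphi}v\,\partial_t^{\ell}\hat{\eta}$ correction into the left-hand side via $\NN^{\e}\cdot\hat{V}^{\ell,0}=\NN^{\e}\cdot\partial_t^{\ell}\hat{v}-\NN^{\e}\cdot\partial_z^{\varphi}v\,\partial_t^{\ell}\hat{\eta}$, one arrives at $(\ref{Sect5_Tangential_Estimates_Time})_2$. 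The initial condition $(\ref{Sect5_Tangential_Estimates_Time})_3$ is immediate from evaluating at $t=0$.

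The main ``obstacle'' is purely bookkeeping: verifying that every term produced by the commutators $[\partial_t^{\ell},\partial_t^{\varphi^{\e}}]$, $[\partial_t^{\ell},v^{\e}\cdot\nabla^{\varphi^{\e}}]$, $[\partial_t^{\ell},\nabla^{\varphi^{\e}}]$, $[\partial_t^{\ell},\partial_z^{\varphi}v\,\partial_t^{\varphi^{\e}}]$, $[\partial_t^{\ell},\partial_z^{\varphi}v\,v^{\e}\cdot\nabla^{\varphi^{\e}}]$, $[\partial_t^{\ell},\nabla^{\varphi}v\cdot]$ and $[\partial_t^{\ell},\partial_z^{\varphi}q\,\nabla^{\varphi^{\e}}]$ matches the source $\mathcal{I}_5$ defined in $(\ref{Sect5_Tangential_Estimates_Time})_1$, together with the three ``aligned'' lower order pieces $\partial_z^{\varphi}v\,\partial_t^{\varphi^{\e}}\partial_t^{\ell}\hat{\varphi}$, $\partial_z^{\varphi}v\,v^{\e}\cdot\nabla^{\varphi^{\e}}\partial_t^{\ell}\hat{\varphi}$ and $\partial_z^{\varphi}q\,\nabla^{\varphi^{\e}}\partial_t^{\ell}\hat{\varphi}$. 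Since this is word-for-word the $|\alpha|=0$ specialization of the calculation carried out in detail in $(\ref{SectA_Difference_Eq2_1})$, no new analytic ideas are required, only the observation that no good unknown for $\hat{q}$ is introduced in $\mathcal{I}_5$, which is why the structural identity is stated as a separate lemma.
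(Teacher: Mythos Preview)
Your approach is correct and coincides with the paper's: apply $\partial_t^{\ell}$ to $(\ref{Sect1_T_Derivatives_Difference_Eq})_1$, separate commutators from principal parts, reassemble the principal parts into $\partial_t^{\varphi^{\e}}\hat V^{\ell,0}+v^{\e}\cdot\nabla^{\varphi^{\e}}\hat V^{\ell,0}$ via the product rule, and specialize the kinetical derivation $(\ref{SectA_Difference_Eq2_4})$ to $|\alpha|=0$. One bookkeeping slip: when you write that the four aligned terms ``assemble into $\partial_t^{\varphi^{\e}}\hat V^{\ell,0}+v^{\e}\cdot\nabla^{\varphi^{\e}}\hat V^{\ell,0}$'', the product rule actually throws off the extra pieces $-\partial_t^{\ell}\hat\varphi\,\partial_t^{\varphi^{\e}}\partial_z^{\varphi}v$ and $-\partial_t^{\ell}\hat\varphi\,v^{\e}\cdot\nabla^{\varphi^{\e}}\partial_z^{\varphi}v$ (the derivative lands on $\partial_z^{\varphi}v$, not on $\hat\varphi$), and it is these that sit inside $\mathcal I_5$, not the terms $\partial_z^{\varphi}v\,\partial_t^{\varphi^{\e}}\partial_t^{\ell}\hat\varphi$ etc.\ that you list at the end; your displayed line ``$=\mathcal I_5-\nabla^{\varphi^{\e}}\partial_t^{\ell}\hat q+\dots$'' also double-counts, since the pressure pieces are already inside $\mathcal I_5$. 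These are cosmetic mislabelings and do not affect the argument.
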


\begin{proof}
The derivation of the main equation of $\hat{V}^{\ell,0}$ is as follows:
\begin{equation}\label{SectA_DifferenceEq2_Lemma_Time_1}
\begin{array}{ll}
\partial_t^{\varphi^{\e}} (\partial_t^{\ell}\hat{v}
-\partial_z^{\varphi} v \partial_t^{\ell}\hat{\varphi})
+ \partial_t^{\ell}\hat{\varphi}\partial_t^{\varphi^{\e}}\partial_z^{\varphi} v

+ v^{\e} \cdot\nabla^{\varphi^{\e}}(\partial_t^{\ell}\hat{v}
- \partial_z^{\varphi} v \partial_t^{\ell}\hat{\varphi}) 
+ \partial_t^{\ell}\hat{v}\cdot\nabla^{\varphi} v\\[8pt]\quad
+  \partial_t^{\ell}\hat{\varphi}\, v^{\e}\cdot \nabla^{\varphi^{\e}}\partial_z^{\varphi} v

+ \partial_t^{\ell}\nabla^{\varphi^{\e}}\hat{q}
- \partial_z^{\varphi} q\nabla^{\varphi^{\e}} \partial_t^{\ell}\hat{\varphi}
- 2\e\partial_t^{\ell}\nabla^{\varphi^{\e}}\cdot\mathcal{S}^{\varphi^{\e}} \hat{v} \\[8pt]\quad

= \e\partial_t^{\ell}\triangle^{\varphi^\e} v^\e
- [\partial_t^{\ell},\partial_t^{\varphi^{\e}}]\hat{v}
+ [\partial_t^{\ell}, \partial_z^{\varphi} v \partial_t^{\varphi^{\e}}]\hat{\varphi}

- [\partial_t^{\ell}, v^{\e} \cdot\nabla^{\varphi^{\e}}] \hat{v} \\[8pt]\quad
+ [\partial_t^{\ell}, \partial_z^{\varphi} v \, v^{\e}\cdot \nabla^{\varphi^{\e}}]\hat{\varphi}
- [\partial_t^{\ell}, \nabla^{\varphi} v\cdot]\hat{v}

+ [\partial_t^{\ell},\partial_z^{\varphi} q\nabla^{\varphi^{\e}}]\hat{\varphi}, \\[12pt]

\partial_t^{\varphi^{\e}} \hat{V}^{\ell,0}
+ v^{\e} \cdot\nabla^{\varphi^{\e}}\hat{V}^{\ell,0}
- 2\e\nabla^{\varphi^{\e}}\cdot\mathcal{S}^{\varphi^{\e}} \partial_t^{\ell}\hat{v} 
= \mathcal{I}_5.
\end{array}
\end{equation}

The derivation of the kinetical boundary condition is the same as Lemma $\ref{SectA_DifferenceEq2_Lemma}$, 
but let $\alpha =0$ in $(\ref{Sect5_TangentialEstimates_Diff_Eq})_3$.
Thus, Lemma $\ref{SectA_DifferenceEq2_Lemma_Time}$ is proved.
\end{proof}

\begin{lemma}\label{SectA_Vorticity_Eq_Lemma}
$\hat{\omega}_h =\omega_h^{\e} -\omega_h$ satisfies the equations $(\ref{Sect1_N_Derivatives_Difference_Eq})$.
\end{lemma}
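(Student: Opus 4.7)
The plan is to subtract the vorticity equation for the Navier-Stokes solution (proved in Lemma \ref{Sect2_Vorticity_H_Eq_BC_Lemma}) from the analogous Euler vorticity equation, and then rewrite all the mismatched differential operators at configuration $\varphi^{\e}$ using the algebraic identities from Appendix A.

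First, by Lemma \ref{Sect2_Vorticity_H_Eq_BC_Lemma} applied to $(\ref{Sect1_NS_Eq})$ with parameter $\e$, the Navier-Stokes tangential vorticity $\omega_h^{\e}$ satisfies
$$\partial_t^{\varphi^{\e}} \omega_h^{\e} + v^{\e}\cdot\nabla^{\varphi^{\e}}\omega_h^{\e} - \e\triangle^{\varphi^{\e}}\omega_h^{\e} = \vec{\textsf{F}}^0[\nabla\varphi^{\e}](\omega_h^{\e},\partial_j v^{\e,i}),$$
with $\omega_h^{\e}|_{z=0} = \textsf{F}^{1,2}[\nabla\varphi^{\e}](\partial_j v^{\e,i})$. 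Taking the formal limit $\e \to 0$ in the same derivation (the proof of Lemma \ref{Sect2_Vorticity_H_Eq_BC_Lemma} is purely algebraic except for the viscous term), the Euler vorticity $\omega_h$ satisfies
$$\partial_t^{\varphi}\omega_h + v\cdot\nabla^{\varphi}\omega_h = \vec{\textsf{F}}^0[\nabla\varphi](\omega_h,\partial_j v^i),$$
with $\omega_h|_{z=0} = \omega_h^b$. Subtracting these two equations and inserting and subtracting the terms $\partial_t^{\varphi^{\e}}\omega_h$, $v^{\e}\cdot\nabla^{\varphi^{\e}}\omega_h$, and $\e\triangle^{\varphi^{\e}}\omega_h$ on the left produces
$$\partial_t^{\varphi^{\e}}\hat{\omega}_h + v^{\e}\cdot\nabla^{\varphi^{\e}}\hat{\omega}_h - \e\triangle^{\varphi^{\e}}\hat{\omega}_h = \vec{\textsf{F}}^0[\nabla\varphi^{\e}](\omega_h^{\e},\partial_j v^{\e,i}) - \vec{\textsf{F}}^0[\nabla\varphi](\omega_h,\partial_j v^i) + \e\triangle^{\varphi^{\e}}\omega_h + \mathcal{R},$$
where $\mathcal{R}$ collects the three discrepancy terms
$$\mathcal{R} = -(\partial_t^{\varphi^{\e}} - \partial_t^{\varphi})\omega_h - (v^{\e}\cdot\nabla^{\varphi^{\e}} - v\cdot\nabla^{\varphi})\omega_h.$$

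The second step is to identify $\mathcal{R}$ with the three remaining source terms in $(\ref{Sect1_N_Derivatives_Difference_Eq})$. Using exactly the identities $(\ref{SectA_Difference_Transform_1})$--$(\ref{SectA_Difference_Transform_2})$ applied with $v$ replaced by $\omega_h$ and $i=t$, I would compute
$$\partial_t^{\varphi^{\e}}\omega_h - \partial_t^{\varphi}\omega_h = -\partial_z^{\varphi}\omega_h\,\partial_t^{\varphi^{\e}}\hat{\eta},\qquad v^{\e}\cdot\nabla^{\varphi^{\e}}\omega_h - v\cdot\nabla^{\varphi}\omega_h = \hat{v}\cdot\nabla^{\varphi}\omega_h - \partial_z^{\varphi}\omega_h\,v^{\e}\cdot\nabla^{\varphi^{\e}}\hat{\eta},$$
where the second identity is derived by writing $v^{\e}\cdot\nabla^{\varphi^{\e}}\omega_h - v\cdot\nabla^{\varphi}\omega_h = \hat{v}\cdot\nabla^{\varphi^{\e}}\omega_h + v\cdot(\nabla^{\varphi^{\e}}-\nabla^{\varphi})\omega_h$ and then using the componentwise formula for $\partial_i^{\varphi^{\e}}\omega_h - \partial_i^{\varphi}\omega_h = -\partial_z^{\varphi}\omega_h\,\partial_i^{\varphi^{\e}}\hat{\eta}$. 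Substituting into the expression for $\mathcal{R}$ yields exactly $\partial_z^{\varphi}\omega_h\,\partial_t^{\varphi^{\e}}\hat{\eta} + \partial_z^{\varphi}\omega_h\,v^{\e}\cdot\nabla^{\varphi^{\e}}\hat{\eta} - \hat{v}\cdot\nabla^{\varphi}\omega_h$, which is the required right-hand side of $(\ref{Sect1_N_Derivatives_Difference_Eq})_1$.

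For the boundary condition at $z=0$, I subtract $\omega_h|_{z=0} = \omega_h^b$ from $\omega_h^{\e}|_{z=0} = \textsf{F}^{1,2}[\nabla\varphi^{\e}](\partial_j v^{\e,i})$ to obtain $\hat{\omega}_h|_{z=0} = \textsf{F}^{1,2}[\nabla\varphi^{\e}](\partial_j v^{\e,i}) - \omega_h^b$, and the initial condition follows trivially by definition of $\hat{\omega}_{h,0}$.

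The only genuinely delicate step is the bookkeeping in the convective difference $(v^{\e}\cdot\nabla^{\varphi^{\e}} - v\cdot\nabla^{\varphi})\omega_h$: one must carefully expand in coordinates using $V_z^{\e} = \frac{1}{\partial_z\varphi^{\e}}(v^{\e}\cdot\NN^{\e} - \partial_t\varphi^{\e})$ and cancel the pieces that regroup into $\hat{v}\cdot\nabla^{\varphi}\omega_h$ versus those contributing to $\partial_z^{\varphi}\omega_h\,v^{\e}\cdot\nabla^{\varphi^{\e}}\hat{\eta}$; every other step is a direct substitution. No new analytic ingredient beyond the identities already established in Appendix A is required.
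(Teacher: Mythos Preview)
Your proposal is correct and follows essentially the same route as the paper: write down the Navier--Stokes vorticity system from Lemma~\ref{Sect2_Vorticity_H_Eq_BC_Lemma}, the corresponding Euler vorticity transport equation, subtract, and rewrite the operator mismatches using the identities $(\ref{SectA_Difference_Transform_1})$--$(\ref{SectA_Difference_Transform_2})$ applied to $\omega_h$. One small slip: the decomposition you sketch for the convective difference, $\hat v\cdot\nabla^{\varphi^{\e}}\omega_h + v\cdot(\nabla^{\varphi^{\e}}-\nabla^{\varphi})\omega_h$, does not directly yield the stated formula; the clean route is the other splitting $v^{\e}\cdot(\nabla^{\varphi^{\e}}-\nabla^{\varphi})\omega_h + \hat v\cdot\nabla^{\varphi}\omega_h$, which immediately gives $-\partial_z^{\varphi}\omega_h\, v^{\e}\cdot\nabla^{\varphi^{\e}}\hat\eta + \hat v\cdot\nabla^{\varphi}\omega_h$ as required.
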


\begin{proof}

By Lemma $\ref{Sect2_Vorticity_H_Eq_BC_Lemma}$ and $(\ref{Sect2_Vorticity_Estimate_4})$, the tangential components of Navier-Stokes vorticity $\omega_h^{\e}$ satisfies
\begin{equation}\label{SectA_Vorticity_Lemma_Eq_1}
\left\{\begin{array}{ll}
\partial_t^{\varphi^{\e}} \omega_h^{\e} + v^{\e}\cdot\nabla^{\varphi^{\e}}\omega_h^{\e}
- \e\triangle^{\varphi^{\e}}\omega_h^{\e} = \vec{\textsf{F}}^0[\nabla\varphi^{\e}](\omega_h^{\e},\partial_j v^{\e,i}),
\\[7pt]

\omega^{\e,1}|_{z=0} =\textsf{F}^1 [\nabla\varphi^{\e}](\partial_j v^{\e,i}),
\\[7pt]

\omega^{\e,2}|_{z=0} =\textsf{F}^2 [\nabla\varphi^{\e}](\partial_j v^{\e,i}),
\end{array}\right.
\end{equation}

Similar to the arguments in $(\ref{Sect2_Vorticity_H_Eq_1}), (\ref{Sect2_Vorticity_H_Eq_2})$,
the tangential components of Euler vorticity $\omega_h^{\e}$ satisfies
\begin{equation}\label{SectA_Vorticity_Lemma_Eq_2}
\left\{\begin{array}{ll}
\partial_t^{\varphi} \omega_h + v\cdot\nabla^{\varphi}\omega_h
= \vec{\textsf{F}}^0[\nabla\varphi](\omega_h,\partial_j v^i),
\\[7pt]

\omega^1|_{z=0} = \partial_2^{\varphi} v^3 - \partial_z^{\varphi} v^2
= \partial_2 v^3 -\frac{\partial_2\varphi}{\partial_z\varphi}\partial_z v^3 - \frac{1}{\partial_z\varphi}\partial_z v^2 := \omega^{b,1},
\\[7pt]

\omega^2|_{z=0} = \partial_z^{\varphi} v^1 - \partial_1^{\varphi} v^3
= \frac{1}{\partial_z\varphi}\partial_z v^1 - \partial_1 v^3 +\frac{\partial_1\varphi}{\partial_z\varphi}\partial_z v^3 := \omega^{b,2},
\end{array}\right.
\end{equation}

By $(\ref{SectA_Vorticity_Lemma_Eq_1})-(\ref{SectA_Vorticity_Lemma_Eq_2})$, we get
\begin{equation}\label{SectA_Vorticity_Lemma_Eq_3}
\left\{\begin{array}{ll}
\partial_t^{\varphi^{\e}} \hat{\omega}_h - \partial_z^{\varphi} \omega_h \partial_t^{\varphi^{\e}} \hat{\eta}

+ v^{\e}\cdot\nabla^{\varphi^{\e}}\hat{\omega}_h - \partial_z^{\varphi} \omega_h v^{\e}\cdot\nabla^{\varphi^{\e}}\hat{\eta}
+ \hat{v}\cdot\nabla^{\varphi}\omega_h

- \e\triangle^{\varphi^{\e}}\hat{\omega}_h \\[6pt]\quad
= \vec{\textsf{F}}^0[\nabla\varphi^{\e}](\omega_h^{\e},\partial_j v^{\e,i})
- \vec{\textsf{F}}^0[\nabla\varphi](\omega_h,\partial_j v^i) + \e\triangle^{\varphi^{\e}}\omega_h, \\[7pt]

\hat{\omega}^1|_{z=0} =\textsf{F}^1 [\nabla\varphi^{\e}](\partial_j v^{\e,i}) - \omega^{b,1}, \\[6pt]

\hat{\omega}^2|_{z=0} =\textsf{F}^2 [\nabla\varphi^{\e}](\partial_j v^{\e,i}) - \omega^{b,2}.
\end{array}\right.
\end{equation}
Thus, Lemma $\ref{SectA_Vorticity_Eq_Lemma}$ is proved.
\end{proof}

\section{Derivation of the Equations for the Surface Tension}

In this appendix, we derive the equations and their boundary conditions for the $\sigma>0$ case.

\begin{lemma}\label{SectB_DifferenceEq1_Corollary}
$(\hat{v} = v^{\e} -v,\ \hat{h} =h^{\e} -h,\ \hat{q} = q^{\e} -q)$ satisfy the equations $(\ref{Sect7_DifferenceEq_1})$.
\end{lemma}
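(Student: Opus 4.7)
The plan is to follow the structure of Lemma~\ref{SectA_DifferenceEq1_Lemma} verbatim for the momentum equation, the divergence-free condition, and the kinematic boundary condition, since the interior equations and the transport boundary condition in $(\ref{Sect1_NS_Eq_ST})$ and $(\ref{Sect1_Euler_Eq_ST})$ are identical in form to their $\sigma=0$ counterparts. Subtracting the two systems and applying the transformation formulas $(\ref{SectA_Difference_Transform_1})$--$(\ref{SectA_Difference_Transform_2})$ will produce $(\ref{Sect7_DifferenceEq_1})_1$--$(\ref{Sect7_DifferenceEq_1})_3$ by exactly the same computations as in $(\ref{SectA_Difference_Eq1_4})$--$(\ref{SectA_Difference_Eq1_6})$. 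The only genuinely new task is deriving the dynamical boundary condition $(\ref{Sect7_DifferenceEq_1})_4$, which involves the surface tension contribution $-\sigma H^{\e}\nn^{\e} + \sigma H\nn$.

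To handle this, I will first subtract the two dynamical boundary conditions to get
\begin{equation*}
\hat{q}\NN^{\e} - 2\e\mathcal{S}^{\varphi^{\e}}\hat{v}\NN^{\e} - g\hat{h}\NN^{\e} + \sigma(H^{\e}-H)\NN^{\e} = 2\e\mathcal{S}^{\varphi^{\e}}v\NN^{\e} - \sigma H\hat{\NN} + g h \hat\NN - q\hat\NN,
\end{equation*}
where only the first part (those terms times $\NN^{\e}$) need to match $(\ref{Sect7_DifferenceEq_1})_4$ after absorbing $\hat{\NN}$ type terms into lower-order corrections. The key algebraic identity to establish is
\begin{equation*}
\frac{\nabla_y h^{\e}}{\sqrt{1+|\nabla_y h^{\e}|^2}} - \frac{\nabla_y h}{\sqrt{1+|\nabla_y h|^2}} = \mathfrak{H}_1 \nabla_y\hat{h} + \mathfrak{H}_2 \bigl(\nabla_y\hat{h}\cdot \nabla_y(h^{\e}+h)\bigr) \nabla_y(h^{\e}+h),
\end{equation*}
with $\mathfrak{H}_1, \mathfrak{H}_2$ as in $(\ref{Sect7_DifferenceEq_2})$. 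Applying $\nabla_y\cdot$ to both sides and using $H = \nabla_y\cdot\bigl(\nabla_y h/\sqrt{1+|\nabla_y h|^2}\bigr)$ then yields the stated decomposition of $\sigma(H^{\e}-H)$.

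The main obstacle is proving this vector identity, which I intend to do by a symmetrized splitting: writing
\begin{equation*}
\frac{\nabla_y h^{\e}}{\sqrt{1+|\nabla_y h^{\e}|^2}} - \frac{\nabla_y h}{\sqrt{1+|\nabla_y h|^2}} = \frac{\nabla_y h^{\e}\sqrt{1+|\nabla_y h|^2} - \nabla_y h \sqrt{1+|\nabla_y h^{\e}|^2}}{\sqrt{1+|\nabla_y h^{\e}|^2}\sqrt{1+|\nabla_y h|^2}},
\end{equation*}
then splitting the numerator in two symmetric ways (adding and subtracting $\nabla_y h^{\e}\sqrt{1+|\nabla_y h^{\e}|^2}$ and $\nabla_y h\sqrt{1+|\nabla_y h|^2}$ respectively), averaging, and rationalizing the square-root differences via $\sqrt{1+|a|^2}-\sqrt{1+|b|^2} = (|a|^2-|b|^2)/(\sqrt{1+|a|^2}+\sqrt{1+|b|^2})$ together with $|\nabla_y h^{\e}|^2 - |\nabla_y h|^2 = \nabla_y\hat h \cdot \nabla_y(h^{\e}+h)$. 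This produces exactly the claimed combination of $\mathfrak{H}_1$ and $\mathfrak{H}_2$.

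Finally, the initial condition $(\hat v,\hat h)|_{t=0}=(v_0^\e - v_0, h_0^\e - h_0)$ is immediate, and collecting all the pieces assembles $(\ref{Sect7_DifferenceEq_1})$. Aside from the algebraic identity above, every other step is a mechanical transcription of the proof of Lemma~\ref{SectA_DifferenceEq1_Lemma}, so the write-up will be brief: state that the first three components follow by the same manipulation, then present the surface-tension computation in detail.
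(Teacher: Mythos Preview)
Your approach is essentially the same as the paper's: the first three equations and the initial condition carry over unchanged from Lemma~\ref{SectA_DifferenceEq1_Lemma}, and the surface-tension decomposition via $\mathfrak{H}_1,\mathfrak{H}_2$ is exactly the algebraic identity the paper proves in $(\ref{SectB_DifferenceEq1_Corollary_3})$.

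One point to tighten. You write that the $\hat\NN$ terms on the right side ``absorb into lower-order corrections,'' but in fact they vanish \emph{exactly}: the combination $-q\hat\NN + gh\hat\NN - \sigma H\hat\NN = -(q-gh+\sigma H)\hat\NN$ is zero on the boundary by the Euler dynamical condition $q=gh-\sigma H$. The target equation $(\ref{Sect7_DifferenceEq_1})_4$ contains no $\hat\NN$ terms at all, so they cannot merely be ``lower order.'' The paper sidesteps this entirely by exploiting that the Euler boundary condition is scalar: it multiplies $q=gh-\sigma H$ by $\NN^{\e}$ (rather than $\NN$) before subtracting, so that no $\hat\NN$ terms ever appear. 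Either route works, but the paper's is cleaner and avoids the need to invoke the Euler condition a second time to kill the residual terms.
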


\begin{proof}
The surface tension term appear in the dynamical boundary condition, thus we only need to derive the difference equation of the
dynamical boundary condition, other equations and the kinetical boundary condition are the same with the $\sigma=0$ case,
see $(\ref{Sect1_T_Derivatives_Difference_Eq})$.

We derive the difference equation of the dynamical boundary condition.
The dynamical boundary condition for the Euler equation with $\sigma>0$
is a scalar equation, that is $q= gh -\sigma H$. For any vector such as $\NN^{\e}$,
$q \NN^{\e}= gh \NN^{\e} -\sigma H \NN^{\e}$ makes sense.

Denote $\hat{H} = H^{\e} -H$, it follows from the dynamical boundary condition that
\begin{equation}\label{SectB_DifferenceEq1_Corollary_1}
\begin{array}{ll}
q^\e \NN^{\e} -q\NN^{\e} -2\e \mathcal{S}^{\varphi^\e}v^{\e}\,\NN^\e
=gh^{\e} \NN^{\e} -gh \NN^{\e} -\sigma H^{\e} \NN^{\e} + \sigma H \NN^{\e}, \\[7pt]

\hat{q}\NN^{\e} -2\e \mathcal{S}^{\varphi^\e}\hat{v}\,\NN^\e
=(g \hat{h} -\sigma \hat{H})\NN^{\e}
+ 2\e \mathcal{S}^{\varphi^\e}v\,\NN^\e.
\end{array}
\end{equation}

$\hat{v},\hat{h},\hat{q}$ satisfy the following equations
\begin{equation}\label{SectB_DifferenceEq1_Corollary_2}
\left\{\begin{array}{ll}
\partial_t^{\varphi^{\e}}\hat{v}-\partial_z^{\varphi} v \partial_t^{\varphi^{\e}}\hat{\eta}
+ v^{\e} \cdot\nabla^{\varphi^{\e}} \hat{v} - v^{\e}\cdot \nabla^{\varphi^{\e}}\hat{\eta}\, \partial_z^{\varphi} v + \hat{v}\cdot\nabla^{\varphi} v \\[6pt]\quad
+ \nabla^{\varphi^{\e}} \hat{q} - \partial_z^{\varphi} q\nabla^{\varphi^{\e}}\hat{\eta}
= 2\e \nabla^{\varphi^{\e}} \cdot\mathcal{S}^{\varphi^{\e}} \hat{v} 
+ \e\triangle^{\varphi^{\e}} v, \hspace{2cm}  x\in\mathbb{R}^3_{-}, \\[7pt]

\nabla^{\varphi^{\e}}\cdot \hat{v} - \partial_z^{\varphi}v \cdot\nabla^{\varphi^{\e}}\hat{\eta} =0, \hspace{5.27cm} x\in\mathbb{R}^3_{-},\\[7pt]

\partial_t \hat{h} + v_y\cdot \nabla \hat{h} = \hat{v}\cdot\NN^{\e},  \hspace{5.66cm} \{z=0\},
\\[6pt]

\hat{q}\NN^{\e} -2\e \mathcal{S}^{\varphi^\e}\hat{v}\,\NN^\e
=(g \hat{h} -\sigma \hat{H})\NN^{\e} + 2\e \mathcal{S}^{\varphi^\e}v\,\NN^\e, \hspace{1.88cm} \{z=0\},
\\[6pt]

(\hat{v},\hat{h})|_{t=0} = (v_0^\e -v_0,h_0^\e -h_0),
\end{array}\right.
\end{equation}
where
\begin{equation}\label{SectB_DifferenceEq1_Corollary_3}
\begin{array}{ll}
\hat{H} = \nabla_y \cdot \Big(\frac{\nabla_y h^{\e}}{\sqrt{1+|\nabla_y h^{\e}|^2}}- \frac{\nabla_y h}{\sqrt{1+|\nabla_y h|^2}}\Big) \\[9pt]\quad

= \nabla_y \cdot \Big(\nabla_y \hat{h} \big(\frac{1}{2\sqrt{1+|\nabla_y h^{\e}|^2}}+ \frac{1}{2\sqrt{1+|\nabla_y h|^2}}\big)\Big) \\[11pt]\qquad
+ \nabla_y \cdot \Big(\frac{\sqrt{1+|\nabla_y h|^2} - \sqrt{1+|\nabla_y h^{\e}|^2}}{\sqrt{1+|\nabla_y h^{\e}|^2}\sqrt{1+|\nabla_y h|^2}}\cdot
\frac{\nabla_y h^{\e} +\nabla_y h}{2}\Big) \\[10pt]\quad

= \nabla_y \cdot \Big(\nabla_y\hat{h} \big(\frac{1}{2\sqrt{1+|\nabla_y h^{\e}|^2}}+ \frac{1}{2\sqrt{1+|\nabla_y h|^2}}\big)\Big) \\[10pt]\qquad
- \nabla_y \cdot \Big(\big(\frac{\nabla_y\hat{h}\cdot\nabla_y(h^{\e}+h)}
{2\sqrt{1+|\nabla_y h^{\e}|^2}\sqrt{1+|\nabla_y h|^2}(\sqrt{1+|\nabla_y h^{\e}|^2} + \sqrt{1+|\nabla_y h|^2})}\big)
\nabla_y (h^{\e} + h)\Big).
\end{array}
\end{equation}

Plug $(\ref{SectB_DifferenceEq1_Corollary_3})$ into $(\ref{SectB_DifferenceEq1_Corollary_2})$, we obtain $(\ref{Sect7_DifferenceEq_1})$
and $(\ref{Sect7_DifferenceEq_2})$.
Thus, Lemma $\ref{SectB_DifferenceEq1_Corollary}$ is proved.
\end{proof}

\begin{lemma}\label{SectB_DifferenceEq2_Corollary}
Assume $0\leq \ell +|\alpha|\leq k, \ 0\leq \ell\leq k-1, \ |\alpha|\geq 1$,
then $(\partial_t^{\ell}\mathcal{Z}^{\alpha}\hat{v},\ \partial_t^{\ell}\mathcal{Z}^{\alpha}\hat{h},\ \partial_t^{\ell}\mathcal{Z}^{\alpha}\hat{q})$
satisfy the equations $(\ref{Sect7_TangentialEstimates_Diff_Eq})$.
\end{lemma}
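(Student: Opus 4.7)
The plan is to follow the same strategy as Lemma \ref{SectA_DifferenceEq2_Lemma} in Appendix A, applying the tangential operator $\partial_t^{\ell}\mathcal{Z}^{\alpha}$ to each of the five equations comprising \eqref{Sect7_DifferenceEq_1}, and then reorganizing the resulting expressions using the Alinhac-type good unknown structure so that the principal terms appear as $\partial_t^{\ell}\mathcal{Z}^{\alpha}$ applied to $\hat{v}, \hat{q}, \hat{h}$, while all commutator remainders are absorbed into the source terms $\mathcal{I}_{10}, \mathcal{I}_{11,1}, \mathcal{I}_{11,2}$.

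First, I would derive the main equation \eqref{Sect7_TangentialEstimates_Diff_Eq}$_1$. Applying $\partial_t^{\ell}\mathcal{Z}^{\alpha}$ to \eqref{Sect7_DifferenceEq_1}$_1$ and using the commutator identity \eqref{Sect2_Tangential_Estimate_1} to exchange $\partial_t^{\ell}\mathcal{Z}^{\alpha}$ with $\partial_t^{\varphi^{\e}}, v^{\e}\cdot\nabla^{\varphi^{\e}}, \nabla^{\varphi^{\e}}$, and $\nabla^{\varphi^{\e}}\cdot\mathcal{S}^{\varphi^{\e}}$, the calculation is structurally identical to \eqref{SectA_Difference_Eq2_1} with $\partial_z^{\varphi}v\,\partial_t^{\ell}\mathcal{Z}^{\alpha}\hat{\eta}$ and $\partial_z^{\varphi}q\,\partial_t^{\ell}\mathcal{Z}^{\alpha}\hat{\eta}$ pulled into the transport and gradient operators to produce $\hat{V}^{\ell,\alpha}$ and $\hat{Q}^{\ell,\alpha}$ on the left. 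All remaining commutator terms, together with the terms forcing the linearization around $v, q$ and the viscous remainders from \eqref{Sect7_DifferenceEq_1}$_1$, are collected in $\mathcal{I}_{10}$ as displayed. The divergence-free condition \eqref{Sect7_TangentialEstimates_Diff_Eq}$_2$ follows by applying $\partial_t^{\ell}\mathcal{Z}^{\alpha}$ to \eqref{Sect7_DifferenceEq_1}$_2$ and rewriting $[\partial_t^{\ell}\mathcal{Z}^{\alpha},\nabla^{\varphi^{\e}}\cdot]\hat{v}$ and $[\partial_t^{\ell}\mathcal{Z}^{\alpha},\partial_z^{\varphi}v\cdot\nabla^{\varphi^{\e}}]\hat{\eta}$ as the error, mirroring \eqref{SectA_Difference_Eq2_2} verbatim. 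The kinetical boundary condition \eqref{Sect7_TangentialEstimates_Diff_Eq}$_3$ is derived exactly as in \eqref{SectA_Difference_Eq2_4}, because the kinetical boundary condition \eqref{Sect7_DifferenceEq_1}$_3$ is identical to the $\sigma=0$ case.

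The main obstacle, and the substantive new content compared to Appendix A, is the dynamical boundary condition \eqref{Sect7_TangentialEstimates_Diff_Eq}$_4$, which now carries the surface tension contribution. I apply $\partial_t^{\ell}\mathcal{Z}^{\alpha}$ to \eqref{Sect7_DifferenceEq_1}$_4$ term by term. The viscous and gravity pieces are handled exactly as in \eqref{SectA_Difference_Eq2_6}, producing $-2\e\mathcal{S}^{\varphi^{\e}}\partial_t^{\ell}\mathcal{Z}^{\alpha}\hat{v}\,\NN^{\e}$, $g\partial_t^{\ell}\mathcal{Z}^{\alpha}\hat{h}\,\NN^{\e}$, and the viscous commutator package $\mathcal{I}_{11,1}$ defined in \eqref{Sect7_TangentialEstimates_Diff_Eq_Appendix}. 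For the surface tension contribution $-\sigma\nabla_y\cdot\bigl(\mathfrak{H}_1\nabla_y\hat{h} + \mathfrak{H}_2\,\nabla_y\hat{h}\cdot\nabla_y(h^{\e}+h)\nabla_y(h^{\e}+h)\bigr)\NN^{\e}$, I apply $\partial_t^{\ell}\mathcal{Z}^{\alpha}$ inside the outer $\nabla_y\cdot$ (using that tangential derivatives commute with $\nabla_y$ on $\{z=0\}$), then commute $\partial_t^{\ell}\mathcal{Z}^{\alpha}$ past $\mathfrak{H}_1\nabla_y$ and past $\mathfrak{H}_2\,\nabla_y(h^{\e}+h)\nabla_y(h^{\e}+h)\cdot\nabla_y$ to extract the principal terms $\mathfrak{H}_1\nabla_y\partial_t^{\ell}\mathcal{Z}^{\alpha}\hat{h}$ and $\mathfrak{H}_2\,\nabla_y\partial_t^{\ell}\mathcal{Z}^{\alpha}\hat{h}\cdot\nabla_y(h^{\e}+h)\nabla_y(h^{\e}+h)$. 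Finally I move the $\partial_t^{\ell}\mathcal{Z}^{\alpha}\NN^{\e}$ correction out of the principal block (using that it is bounded and multiplied by quantities that are $O(1)$), collecting these together with the remaining surface tension commutators into $\mathcal{I}_{11,2}$ as in \eqref{Sect7_TangentialEstimates_Diff_Eq_Appendix}.

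The hard part is the bookkeeping in this last step: because $\mathfrak{H}_1$ and $\mathfrak{H}_2$ from \eqref{Sect7_DifferenceEq_2} are smooth nonlinear functions of $\nabla_y h^{\e}$ and $\nabla_y h$, each application of $\partial_t^{\ell}\mathcal{Z}^{\alpha}$ produces Faà di Bruno type sums where at least one derivative must fall on $\hat{h}$ to contribute to the principal term and the remaining distributions land on $\mathfrak{H}_1$, $\mathfrak{H}_2$, or $\nabla_y(h^{\e}+h)$. One verifies that the principal contribution is exactly $\sigma\nabla_y\cdot\bigl(\mathfrak{H}_1\nabla_y\partial_t^{\ell}\mathcal{Z}^{\alpha}\hat{h} + \mathfrak{H}_2\,\nabla_y\partial_t^{\ell}\mathcal{Z}^{\alpha}\hat{h}\cdot\nabla_y(h^{\e}+h)\nabla_y(h^{\e}+h)\bigr)\NN^{\e}$, and all other terms are of the form $\nabla_y\cdot[\partial_t^{\ell}\mathcal{Z}^{\alpha},\mathfrak{H}_1\nabla_y]\hat{h}$ or $\nabla_y\cdot[\partial_t^{\ell}\mathcal{Z}^{\alpha},\mathfrak{H}_2\nabla_y(h^{\e}+h)\nabla_y(h^{\e}+h)\cdot\nabla_y]\hat{h}$ matching precisely the definition of $\mathcal{I}_{11,2}$ in \eqref{Sect7_TangentialEstimates_Diff_Eq_Appendix}. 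Once this identification is verified, the claimed system \eqref{Sect7_TangentialEstimates_Diff_Eq} is established, and the initial conditions are inherited from \eqref{Sect7_DifferenceEq_1}$_5$.
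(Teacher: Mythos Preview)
Your approach is essentially the same as the paper's and is correct, with one organizational point to fix. The target system \eqref{Sect7_TangentialEstimates_Diff_Eq} is written directly for $\partial_t^{\ell}\mathcal{Z}^{\alpha}\hat{v}$ and $\partial_t^{\ell}\mathcal{Z}^{\alpha}\hat{q}$, \emph{not} for the good unknowns $\hat{V}^{\ell,\alpha}$, $\hat{Q}^{\ell,\alpha}$: the terms $\partial_z^{\varphi}v\,\partial_t^{\varphi^{\e}}\partial_t^{\ell}\mathcal{Z}^{\alpha}\hat{\varphi}$, $\partial_z^{\varphi}v\,v^{\e}\cdot\nabla^{\varphi^{\e}}\partial_t^{\ell}\mathcal{Z}^{\alpha}\hat{\varphi}$, $\partial_z^{\varphi}q\,\nabla^{\varphi^{\e}}\partial_t^{\ell}\mathcal{Z}^{\alpha}\hat{\varphi}$ sit on the right side inside $\mathcal{I}_{10}$ rather than being absorbed into the left. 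For fixed $\sigma>0$ the paper deliberately avoids Alinhac's good unknowns (the surface tension supplies the needed boundary regularity), so the derivation of the main equation stops at the analogue of \eqref{SectA_Difference_Eq2_1}$_2$ rather than proceeding to \eqref{SectA_Difference_Eq2_1}$_4$. Your treatment of the divergence and kinematic conditions, and in particular your extraction of the principal part $\sigma\nabla_y\cdot\bigl(\mathfrak{H}_1\nabla_y\partial_t^{\ell}\mathcal{Z}^{\alpha}\hat{h}+\mathfrak{H}_2\,\nabla_y\partial_t^{\ell}\mathcal{Z}^{\alpha}\hat{h}\cdot\nabla_y(h^{\e}+h)\nabla_y(h^{\e}+h)\bigr)\NN^{\e}$ from $\partial_t^{\ell}\mathcal{Z}^{\alpha}\hat{H}$ with the remainder going into $\mathcal{I}_{11,2}$, matches the paper's derivation exactly.
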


\begin{proof}
Apply $\partial_t^{\ell}\mathcal{Z}^{\alpha}$ to the equations $(\ref{Sect7_DifferenceEq_1})$, we prove $(\ref{Sect7_TangentialEstimates_Diff_Eq})$.
The main equation $(\ref{Sect7_TangentialEstimates_Diff_Eq})_1$ follows from $(\ref{SectA_Difference_Eq2_1})_2$,
\begin{equation}\label{SectB_DifferenceEq2_Corollary_1}
\begin{array}{ll}
\partial_t^{\varphi^{\e}} \partial_t^{\ell}\mathcal{Z}^{\alpha}\hat{v}
+ v^{\e} \cdot\nabla^{\varphi^{\e}} \partial_t^{\ell}\mathcal{Z}^{\alpha}\hat{v}
+ \nabla^{\varphi^{\e}} \partial_t^{\ell}\mathcal{Z}^{\alpha}\hat{q} 
- 2\e\partial_t^{\ell}\mathcal{Z}^{\alpha}\nabla^{\varphi^{\e}}\cdot\mathcal{S}^{\varphi^{\e}} \hat{v} \\[8pt]\quad

= \partial_z^{\varphi} v \partial_t^{\varphi^{\e}} \partial_t^{\ell}\mathcal{Z}^{\alpha}\hat{\varphi}
+ \partial_z^{\varphi} v \, v^{\e}\cdot \nabla^{\varphi^{\e}}\partial_t^{\ell}\mathcal{Z}^{\alpha}\hat{\varphi}
- \partial_t^{\ell}\mathcal{Z}^{\alpha}\hat{v}\cdot\nabla^{\varphi} v
+ \partial_z^{\varphi} q\nabla^{\varphi^{\e}}\partial_t^{\ell}\mathcal{Z}^{\alpha}\hat{\varphi}
\\[8pt]\quad

+ \e\partial_t^{\ell}\mathcal{Z}^{\alpha}\triangle^{\varphi^\e} v
- [\partial_t^{\ell}\mathcal{Z}^{\alpha},\partial_t^{\varphi^{\e}}]\hat{v}
+ [\partial_t^{\ell}\mathcal{Z}^{\alpha}, \partial_z^{\varphi} v \partial_t^{\varphi^{\e}}]\hat{\varphi}

- [\partial_t^{\ell}\mathcal{Z}^{\alpha}, v^{\e} \cdot\nabla^{\varphi^{\e}}] \hat{v} \\[8pt]\quad
+ [\partial_t^{\ell}\mathcal{Z}^{\alpha}, \partial_z^{\varphi} v \, v^{\e}\cdot \nabla^{\varphi^{\e}}]\hat{\varphi}
- [\partial_t^{\ell}\mathcal{Z}^{\alpha}, \nabla^{\varphi} v\cdot]\hat{v}

- [\partial_t^{\ell}\mathcal{Z}^{\alpha},\nabla^{\varphi^{\e}}] \hat{q}
+ [\partial_t^{\ell}\mathcal{Z}^{\alpha},\partial_z^{\varphi} q\nabla^{\varphi^{\e}}]\hat{\varphi}, \\[13pt]

\partial_t^{\varphi^{\e}} \partial_t^{\ell}\mathcal{Z}^{\alpha}\hat{v}
+ v^{\e} \cdot\nabla^{\varphi^{\e}} \partial_t^{\ell}\mathcal{Z}^{\alpha}\hat{v}
+ \nabla^{\varphi^{\e}} \partial_t^{\ell}\mathcal{Z}^{\alpha}\hat{q}
- 2\e \nabla^{\varphi^{\e}}\cdot\mathcal{S}^{\varphi^{\e}} \partial_t^{\ell}\mathcal{Z}^{\alpha}\hat{v} 
= \mathcal{I}_{10}.
\end{array}
\end{equation}

The divergence free condition $(\ref{Sect7_TangentialEstimates_Diff_Eq})_2$ follows from $(\ref{SectA_Difference_Eq2_2})$,
\begin{equation}\label{SectB_DifferenceEq2_Corollary_2}
\begin{array}{ll}
\nabla^{\varphi^{\e}}\cdot \partial_t^{\ell}\mathcal{Z}^{\alpha}\hat{v}
= \partial_z^{\varphi}v \cdot\nabla^{\varphi^{\e}}\partial_t^{\ell}\mathcal{Z}^{\alpha}\hat{\eta}
-[\partial_t^{\ell}\mathcal{Z}^{\alpha},\nabla^{\varphi^{\e}}\cdot] \hat{v}
+ [\partial_t^{\ell}\mathcal{Z}^{\alpha},\partial_z^{\varphi}v \cdot\nabla^{\varphi^{\e}}]\hat{\eta}.
\end{array}
\end{equation}

Next, the kinetical boundary condition $(\ref{Sect7_TangentialEstimates_Diff_Eq})_3$ is exactly $(\ref{SectA_Difference_Eq2_4})_2$.

Finally, we derive the dynamical boundary condition $(\ref{Sect7_TangentialEstimates_Diff_Eq})_4$.
Apply $\partial_t^{\ell}\mathcal{Z}^{\alpha}$ to Navier-Stokes and Euler dynamical boundary conditions, we get
\begin{equation}\label{SectB_DifferenceEq2_Corollary_3}
\begin{array}{ll}
(\partial_t^{\ell}\mathcal{Z}^{\alpha}q^{\e} - g\partial_t^{\ell}\mathcal{Z}^{\alpha}h^{\e}
+ \sigma \partial_t^{\ell}\mathcal{Z}^{\alpha}H^{\e})\NN^{\e}
-2\e \mathcal{S}^{\varphi^{\e}} \partial_t^{\ell}\mathcal{Z}^{\alpha}v^{\e}\,\NN^{\e} \\[8pt]\quad
= 2\e [\partial_t^{\ell}\mathcal{Z}^{\alpha},\mathcal{S}^{\varphi^{\e}}] v^{\e}\,\NN^{\e}

+ (2\e \mathcal{S}^{\varphi^{\e}}v^{\e} - (q^{\e}-g h^{\e} 
+ \sigma H^{\e}))\,\partial_t^{\ell}\mathcal{Z}^{\alpha}\NN^{\e} \\[8pt]\qquad
- [\partial_t^{\ell}\mathcal{Z}^{\alpha},q^{\e}-g h^{\e} + \sigma H^{\e},\NN^{\e}]
+2\e [\partial_t^{\ell}\mathcal{Z}^{\alpha},\mathcal{S}^{\varphi^{\e}}v^{\e}, \NN^{\e}], \\[9pt]

(\partial_t^{\ell}\mathcal{Z}^{\alpha}q - g\partial_t^{\ell}\mathcal{Z}^{\alpha}h + \sigma \partial_t^{\ell}\mathcal{Z}^{\alpha}H)\NN^{\e} =0.
\end{array}
\end{equation}

By $(\ref{SectB_DifferenceEq2_Corollary_3})_1 -(\ref{SectB_DifferenceEq2_Corollary_3})_2$, we get
\begin{equation}\label{SectB_DifferenceEq2_Corollary_4}
\begin{array}{ll}
(\partial_t^{\ell}\mathcal{Z}^{\alpha}\hat{q} - g\partial_t^{\ell}\mathcal{Z}^{\alpha}\hat{h}
+ \sigma \partial_t^{\ell}\mathcal{Z}^{\alpha}\hat{H})\NN^{\e}
- 2\e \mathcal{S}^{\varphi^{\e}} \partial_t^{\ell}\mathcal{Z}^{\alpha}\hat{v} \,\NN^{\e} \\[8pt]\quad

= 2\e [\partial_t^{\ell}\mathcal{Z}^{\alpha},\mathcal{S}^{\varphi^{\e}}] v^{\e}\,\NN^{\e}

+ (2\e \mathcal{S}^{\varphi^{\e}}v^{\e} - (q^{\e}-g h^{\e}
+ \sigma H^{\e}))\,\partial_t^{\ell}\mathcal{Z}^{\alpha}\NN^{\e} \\[8pt]\qquad
- [\partial_t^{\ell}\mathcal{Z}^{\alpha},q^{\e}-g h^{\e} + \sigma H^{\e},\NN^{\e}]
+2\e [\partial_t^{\ell}\mathcal{Z}^{\alpha},\mathcal{S}^{\varphi^{\e}}v^{\e}, \NN^{\e}]
+ 2\e \mathcal{S}^{\varphi^{\e}} \partial_t^{\ell}\mathcal{Z}^{\alpha}v\,\NN^{\e},
\end{array}
\end{equation}
and then we calculate $\partial_t^{\ell}\mathcal{Z}^{\alpha} \hat{H}$,
\begin{equation}\label{SectB_DifferenceEq2_Corollary_5}
\begin{array}{ll}
\partial_t^{\ell}\mathcal{Z}^{\alpha} \hat{H} = \partial_t^{\ell}\mathcal{Z}^{\alpha} \nabla_y \cdot \big[ \mathfrak{H}_1\nabla_y\hat{h}
+ \mathfrak{H}_2 \nabla_y\hat{h}\cdot\nabla_y(h^{\e}+h) \nabla_y (h^{\e} +h)\big] \\[6pt]

= \nabla_y \cdot \big[ \mathfrak{H}_1\nabla_y \partial_t^{\ell}\mathcal{Z}^{\alpha} \hat{h}
+ \mathfrak{H}_2 \nabla_y \partial_t^{\ell}\mathcal{Z}^{\alpha} \hat{h}\cdot\nabla_y(h^{\e}+h) \nabla_y (h^{\e} +h)\big] \\[6pt]\quad

+ \nabla_y \cdot \big[ [\partial_t^{\ell}\mathcal{Z}^{\alpha}, \mathfrak{H}_1\nabla_y] \hat{h}
+ [\partial_t^{\ell}\mathcal{Z}^{\alpha}, \mathfrak{H}_2 \nabla_y (h^{\e} +h) \nabla_y(h^{\e}+h)\cdot \nabla_y] \hat{h} \big].
\end{array}
\end{equation}

Plug $(\ref{SectB_DifferenceEq2_Corollary_5})$ into $(\ref{SectB_DifferenceEq2_Corollary_4})$, we get
\begin{equation}\label{SectB_DifferenceEq2_Corollary_6}
\begin{array}{ll}
\partial_t^{\ell}\mathcal{Z}^{\alpha} \hat{q}\NN^{\e}
-2\e \mathcal{S}^{\varphi^{\e}} \partial_t^{\ell}\mathcal{Z}^{\alpha} \hat{v}\,\NN^{\e}

= g\partial_t^{\ell}\mathcal{Z}^{\alpha}\hat{h}\NN^{\e}
- \sigma \nabla_y \cdot \big( \mathfrak{H}_1\nabla_y \partial_t^{\ell}\mathcal{Z}^{\alpha} \hat{h} \big) \NN^{\e} \\[8pt]\quad
- \sigma\nabla_y \cdot \big( \mathfrak{H}_2 \nabla_y \partial_t^{\ell}\mathcal{Z}^{\alpha} \hat{h}\cdot\nabla_y(h^{\e}+h) \nabla_y (h^{\e} +h)\big)\NN^{\e}
+ 2\e [\partial_t^{\ell}\mathcal{Z}^{\alpha},\mathcal{S}^{\varphi^{\e}}] v^{\e}\,\NN^{\e} \\[6pt]\quad

+ 2\e(\mathcal{S}^{\varphi^{\e}}v^{\e} - \mathcal{S}^{\varphi^{\e}}v^{\e}\nn^{\e}\cdot\nn^{\e})\,\partial_t^{\ell}\mathcal{Z}^{\alpha}\NN^{\e}
+ 2\e[\partial_t^{\ell}\mathcal{Z}^{\alpha}, \mathcal{S}^{\varphi^{\e}}v^{\e} - \mathcal{S}^{\varphi^{\e}}v^{\e}\nn^{\e}\cdot\nn^{\e}, \NN^{\e}]
\end{array}
\end{equation}

\begin{equation*}
\begin{array}{ll}
\quad 
+ 2\e \mathcal{S}^{\varphi^{\e}} \partial_t^{\ell}\mathcal{Z}^{\alpha}v\,\NN^{\e}
-  \sigma\nabla_y \cdot \big[ [\partial_t^{\ell}\mathcal{Z}^{\alpha}, \mathfrak{H}_1\nabla_y]\big] \hat{h}\NN^{\e}
\\[6pt]\quad
- \sigma\nabla_y \cdot \big[[\partial_t^{\ell}\mathcal{Z}^{\alpha}, \mathfrak{H}_2 \nabla_y (h^{\e} +h) \nabla_y(h^{\e}+h)\cdot \nabla_y] \hat{h} \big]\NN^{\e}.
\hspace{3cm}
\end{array}
\end{equation*}
Thus, Lemma $\ref{SectB_DifferenceEq2_Corollary}$ is proved.
\end{proof}

\begin{lemma}\label{SectB_DifferenceEq3_Corollary}
Assume $0\leq \ell\leq k-1, \ |\alpha|=0$,
then the main equation of $\partial_t^{\ell}\hat{v}$ and its kinetical boundary condition satisfy the equations $(\ref{Sect7_Tangential_Estimates_Time})$.
\end{lemma}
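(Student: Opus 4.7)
The plan is to derive $(\ref{Sect7_Tangential_Estimates_Time})$ by applying $\partial_t^{\ell}$ directly to the main equation $(\ref{Sect7_DifferenceEq_1})_1$ and the kinetical boundary condition $(\ref{Sect7_DifferenceEq_1})_3$, then rearranging the commutators. The key observation is that neither of these two equations involves the surface tension coefficient $\sigma$; the dynamical boundary condition $(\ref{Sect7_DifferenceEq_1})_4$ is the only place where $\sigma$ enters, and it plays no role in the statement to be proved. Consequently the derivation is structurally identical to the $\sigma=0$ case treated in Lemma $\ref{SectA_DifferenceEq2_Lemma_Time}$, specialized to $|\alpha|=0$.

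For the main equation, I would start from $(\ref{Sect7_DifferenceEq_1})_1$, apply $\partial_t^{\ell}$, and then in each term extract the principal piece $\partial_t^{\ell}(\cdots)$ plus a commutator remainder $[\partial_t^{\ell},\cdots](\cdots)$. The seven structural terms $\partial_t^{\varphi^{\e}}\hat{v}$, $v^{\e}\cdot\nabla^{\varphi^{\e}}\hat{v}$, $\nabla^{\varphi^{\e}}\hat{q}$, $-2\e\nabla^{\varphi^{\e}}\cdot\mathcal{S}^{\varphi^{\e}}\hat{v}$, and the three source terms $\partial_z^{\varphi}v\partial_t^{\varphi^{\e}}\hat{\eta}$, $v^{\e}\cdot\nabla^{\varphi^{\e}}\hat{\eta}\partial_z^{\varphi}v$, $\partial_z^{\varphi}q\nabla^{\varphi^{\e}}\hat{\eta}$, together with $\hat{v}\cdot\nabla^{\varphi}v$ and $\e\triangle^{\varphi^{\e}}v$, each produce their own commutator. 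Collecting the commutators into the quantity $\mathcal{I}_{12}$ reproduces exactly the right-hand side of $(\ref{Sect7_Tangential_Estimates_Time})_1$. This is precisely the computation carried out in $(\ref{SectA_DifferenceEq2_Lemma_Time_1})$ with $\alpha$ set to $0$.

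For the kinetical boundary condition, I would apply $\partial_t^{\ell}$ to $(\ref{Sect7_DifferenceEq_1})_3$, which reads $\partial_t\hat{h}+v_y\cdot\nabla_y\hat{h}=\hat{v}\cdot\NN^{\e}$. Expanding $\partial_t^{\ell}(v_y\cdot\nabla_y\hat{h})$ and $\partial_t^{\ell}(\hat{v}\cdot\NN^{\e})$ via the Leibniz rule and then isolating the leading-order terms with a trilinear commutator $[\partial_t^{\ell},\cdot,\cdot]$ produces $(\ref{Sect7_Tangential_Estimates_Time})_2$ in exactly the same manner as $(\ref{SectA_Difference_Eq2_4})$ specialized to $|\alpha|=0$; the additional inhomogeneous contribution $-\hat{v}_y\cdot\nabla_y\partial_t^{\ell}h-\partial_y\hat{h}\cdot\partial_t^{\ell}v_y$ arises from distributing $\partial_t^{\ell}$ across $v_y\cdot\nabla_y\hat{h}=v_y^{\e}\cdot\nabla_y\hat{h}-\hat{v}_y\cdot\nabla_y h-\partial_y\hat{h}\cdot v_y+\partial_y\hat{h}\cdot v_y$ and regrouping to isolate the $v_y^{\e}$ transport piece.

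There is no real obstacle here: the lemma is a purely algebraic identity obtained by applying $\partial_t^{\ell}$ and rearranging, and since the surface tension only enters $(\ref{Sect7_DifferenceEq_1})_4$, the argument reduces verbatim to the $\sigma=0$ computation of Lemma $\ref{SectA_DifferenceEq2_Lemma_Time}$ with $\alpha=0$. The only bookkeeping care required is to ensure that the commutators involving $\partial_t^{\varphi^{\e}}$ and $\nabla^{\varphi^{\e}}$ are written using the \emph{Navier--Stokes} diffeomorphism $\varphi^{\e}$ (so that the convective part of $\partial_t^{\varphi^{\e}}+v^{\e}\cdot\nabla^{\varphi^{\e}}$ appears in the principal operator), with all factors involving $\varphi$ and $q$ appearing only through the inhomogeneous source $\mathcal{I}_{12}$.
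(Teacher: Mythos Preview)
Your proposal is correct and matches the paper's approach: both apply $\partial_t^{\ell}$ to $(\ref{Sect7_DifferenceEq_1})_1$ and $(\ref{Sect7_DifferenceEq_1})_3$, extract the commutators into $\mathcal{I}_{12}$, and for the kinetical condition refer back to the $\sigma=0$ derivation with $\alpha=0$, exploiting that surface tension enters only the dynamical boundary condition. One minor caveat on your cross-reference: equation $(\ref{SectA_DifferenceEq2_Lemma_Time_1})$ goes one step further by recombining into the Alinhac good unknown $\hat{V}^{\ell,0}$, whereas the target $(\ref{Sect7_Tangential_Estimates_Time})$ stays with $\partial_t^{\ell}\hat{v}$ itself (since for fixed $\sigma>0$ the good unknown is not needed); the relevant computation is the first display in $(\ref{SectA_DifferenceEq2_Lemma_Time_1})$ before that final recombination, which is exactly what the paper reproduces in $(\ref{SectB_DifferenceEq3_Corollary_1})$.
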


\begin{proof}
The derivation of the main equation of $\partial_t^{\ell} \hat{v}$ is as follows:
\begin{equation}\label{SectB_DifferenceEq3_Corollary_1}
\begin{array}{ll}
\partial_t^{\varphi^{\e}} \partial_t^{\ell}\hat{v}
+ v^{\e} \cdot\nabla^{\varphi^{\e}} \partial_t^{\ell}\hat{v}
+ \nabla^{\varphi^{\e}} \partial_t^{\ell}\hat{q}
- 2\e\partial_t^{\ell}\nabla^{\varphi^{\e}}\cdot\mathcal{S}^{\varphi^{\e}} \hat{v} \\[8pt]\quad

= \partial_z^{\varphi} v \partial_t^{\varphi^{\e}} \partial_t^{\ell}\hat{\varphi}
+ \partial_z^{\varphi} v \, v^{\e}\cdot \nabla^{\varphi^{\e}}\partial_t^{\ell}\hat{\varphi}
- \partial_t^{\ell}\hat{v}\cdot\nabla^{\varphi} v
+ \partial_z^{\varphi} q\nabla^{\varphi^{\e}}\partial_t^{\ell}\hat{\varphi}
\\[8pt]\quad

+ \e\partial_t^{\ell}\triangle^{\varphi^\e} v
- [\partial_t^{\ell},\partial_t^{\varphi^{\e}}]\hat{v}
+ [\partial_t^{\ell}, \partial_z^{\varphi} v \partial_t^{\varphi^{\e}}]\hat{\varphi}

- [\partial_t^{\ell}, v^{\e} \cdot\nabla^{\varphi^{\e}}] \hat{v} \\[8pt]\quad
+ [\partial_t^{\ell}, \partial_z^{\varphi} v \, v^{\e}\cdot \nabla^{\varphi^{\e}}]\hat{\varphi}
- [\partial_t^{\ell}, \nabla^{\varphi} v\cdot]\hat{v}

- [\partial_t^{\ell},\nabla^{\varphi^{\e}}] \hat{q}
+ [\partial_t^{\ell},\partial_z^{\varphi} q\nabla^{\varphi^{\e}}]\hat{\varphi}, \\[11pt]

\partial_t^{\varphi^{\e}} \partial_t^{\ell}\hat{v}
+ v^{\e} \cdot\nabla^{\varphi^{\e}} \partial_t^{\ell}\hat{v}
- 2\e\partial_t^{\ell}\nabla^{\varphi^{\e}}\cdot\mathcal{S}^{\varphi^{\e}} \hat{v} = \mathcal{I}_{12}.
\end{array}
\end{equation}

The derivation of the kinetical boundary condition is the same as Lemma $\ref{SectA_DifferenceEq2_Lemma}$,
but let $\alpha =0$ in $(\ref{Sect7_TangentialEstimates_Diff_Eq})_3$.
Thus, Lemma $\ref{SectB_DifferenceEq3_Corollary}$ is proved.
\end{proof}

\section*{Acknowledgements}

The author is extremely grateful to Prof. S. T. Yau for all his help.
This research is conducted under the supervision of Prof. S. T. Yau and supported by the Center of Mathematical Sciences and Applications, Harvard University.
This research is also supported by the scholarship of Chinese Scholarship Council (No. 201500090074).

\bibliographystyle{siam}
\addcontentsline{toc}{section}{References}
\bibliography{FuzhouWu_NavierStokesEq}

\end{document}